\documentclass{aomart}
\usepackage{amsmath, amscd, amssymb}

\newcommand{\vect}{\overrightarrow}
\newtheorem{theorem}{Theorem}[section]

\newtheorem{lemma}[theorem]{Lemma}
\newtheorem{prop}[theorem]{Proposition}
\newtheorem{problem}[theorem]{Problem}
\newtheorem{corollary}[theorem]{Corollary}
\newtheorem{fact}[theorem]{Fact}
\newtheorem{conj}[theorem]{Conjecture}
\newtheorem{reduction}[theorem]{Reduction}

\newcommand{\kriva}{{\mathcal A}}
\newcommand{\krivb}{{\mathcal B}}
\newcommand{\krivd}{{\mathcal D}}

\theoremstyle{theorem}
\newtheorem{thm}[theorem]{Theorem}
\newtheorem{lem}[theorem]{Lemma}
\newtheorem*{thmun}{Theorem}
\newtheorem{forefthm}[theorem]{Theorem}

\newcommand{\rittmonoid}{\operatorname{RM}}
\newcommand{\linequi}{\operatorname{LE}}
\newcommand{\skewmonoid}{\operatorname{ST}}
\newcommand{\skequi}{\operatorname{SE}}
\newcommand{\borgarmonoid}{\operatorname{BG}}

\theoremstyle{definition}
\newtheorem{definition}[theorem]{Definition}
\newtheorem{nota}[theorem]{Notation}

\theoremstyle{definition}
\newtheorem{Def}[theorem]{Definition}

\theoremstyle{remark}
\newtheorem{Rk}[theorem]{Remark}

\newcommand{\id}{\operatorname{id}}
\newcommand{\sthat}{\hspace{.1cm}| \hspace{.1cm}}

\newcommand{\ord}{\operatorname{ord}}

\newcommand{\Ga}{{\mathbb G}_a}
\newcommand{\Gm}{{\mathbb G}_m}

\renewcommand{\AA}{{\mathbb A}}
\newcommand{\CC}{{\mathbb C}}

\newcommand{\NN}{{\mathbb N}}
\newcommand{\PP}{{\mathbb P}}
\newcommand{\QQ}{{\mathbb Q}}

\newcommand{\UU}{{\mathbb U}}
\newcommand{\ZZ}{{\mathbb Z}}

\newcommand{\per}{{\operatorname{inv}}}

\newcommand{\cO}{{\mathcal O}}

\newcommand{\fm}{{\mathfrak m}}

\newcommand{\ACFA}{\operatorname{ACFA}}

\newcommand{\alg}{{\rm alg}}

\title{Invariant varieties for polynomial dynamical systems}
\author{Alice Medvedev \and Thomas Scanlon}
\thanks{During the writing of this paper Medvedev was partially supported by NSF FRG DMS-0854839 while
Scanlon was partially supported by NSF grants DMS-0450010, DMS-0854839 and DMS-1001550 and a Templeton Infinity grant}
\address{University of California, Berkeley \\
Department of Mathematics \\
Evans Hall \\
Berkeley, CA 94720-3840}
\email{alice@math.berkeley.edu}

\address{University of California, Berkeley \\
Department of Mathematics \\
Evans Hall \\
Berkeley, CA 94720-3840}
\email{scanlon@math.berkeley.edu}

\begin{document}
\begin{abstract}

We study algebraic dynamical systems (and, more generally, $\sigma$-varieties) $\Phi:\AA^n_\CC \to \AA^n_\CC$ given by coordinatewise univariate polynomials by refining an old theorem of Ritt on compositional identities amongst polynomials.  More precisely,  we find a nearly
canonical way to write a polynomial as a composition of ``clusters'' from which one may easily
read off possible compositional identities.
  Our main result is an explicit description of the (weakly) skew-invariant varieties, that is, for a fixed field automorphism $\sigma:\CC \to \CC$ those algebraic varieties $X \subseteq \AA^n_\CC$ for which  $\Phi(X) \subseteq X^\sigma$.  As a special case, we show that if $f(x) \in \CC[x]$ is a polynomial of degree at
least two which is not conjugate to a monomial, Chebyshev polynomial or a negative Chebyshev polynomial, and
$X \subseteq \AA^2_\CC$ is an irreducible curve which is invariant under the action of $(x,y) \mapsto (f(x),f(y))$ and
projects dominantly in both directions, then $X$ must be the graph of a polynomial which commutes with
$f$ under composition.  As consequences, we deduce a variant of a conjecture of Zhang on the existence of rational points with Zariski dense forward orbits and a strong form of the dynamical Manin-Mumford conjecture for liftings of the Frobenius.

 We also show that in models of $\ACFA_0$, a disintegrated set defined by $\sigma(x) = f(x)$ for a polynomial $f$
 has Morley rank one and is usually strongly minimal, that model theoretic
 algebraic closure is a locally finite closure operator on the nonalgebraic points of this
  set unless the skew-conjugacy class of $f$ is defined over a fixed field of a power of $\sigma$, and that nonorthogonality between two such sets is definable in families if the skew-conjugacy class of
  $f$ is defined over a fixed field of a power of $\sigma$.

\end{abstract}

\maketitle

\section{Introduction}
Let $f_1,\ldots,f_n \in \CC[x]$ be a finite sequence of polynomials over the complex numbers and let $\Phi:\AA^n_\CC \to \AA^n_\CC$ be the map $(x_1,\ldots,x_n) \mapsto (f_1(x_1),\ldots,f_n(x_n))$ given by applying the polynomials coordinatewise.   We aim to explicitly describe those algebraic varieties $X \subseteq \AA^n_\CC$ which are invariant under $\Phi$.  To do so, we solve a more general problem.  We fix a field automorphism $\sigma:\CC \to \CC$, describe those algebraic varieties $X \subseteq \AA^n_\CC$ which are (weakly)
skew-invariant in the sense that $\Phi(X) \subseteq X^\sigma$, and recover the solution to the initial problem by taking $\sigma$ to be the identity map.

We consider this more general problem of classifying the skew-invariant varieties in order to import some techniques from the model theory of difference fields and because we are motivated by some fine structural problems in the model theory of difference fields.   Recall that a difference field is a field $K$ equipped with a distinguished endomorphism $\sigma:K \to K$.  The theory of difference fields, expressed in the first-order language of rings expanded by a unary function symbol for the endomorphism, admits a model companion, $\ACFA$, the models of which we call \emph{difference closed}, and it is the rich structure theory of the definable sets in difference closed fields developed in~\cite{CH-ACFA1} which we employ.

In~\cite{Med} the first author refined the trichotomy theorems of~\cite{CH-ACFA1,CHP} for sets
 defined by formulas of the form $\sigma(x) = f(x)$ where $f$ is a rational function showing
 that they are \emph{disintegrated}, or what is sometimes called \emph{trivial}, unless $f$ is covered by an isogeny of  algebraic groups in the sense that there is a one-dimensional algebraic group $G$, an isogeny $\phi:G \to G^\sigma$, and a dominant rational function $\pi:G \to \PP^1$ with $f \circ \pi = \pi^\sigma \circ \phi$.  In this context, disintegratedness is a very strong property which
asserts that all algebraic relations amongst solutions to disintegrated equations are reducible to \emph{binary} relations.  This consequence and the fact that
the dynamical systems
arising from isogenies are well-understood reduce the problem of describing general $\Phi$-skew-invariant varieties to that of describing skew-invariant curves in the affine plane.

Thus, the bulk of the technical work in this paper concerns the problem of describing those affine plane curves $C \subseteq \AA^2_\CC$ which are $(f,g)$-skew-invariant when $f$ and $g$ are disintegrated polynomials in the sense of the previous paragraph. It is not hard to reduce this problem to describing triples $(h, \pi, \rho)$ of polynomials satisfying $f \circ \pi = \pi^\sigma \circ h$ and $g \circ \rho = \rho^\sigma \circ h$ (see Proposition~\ref{curvetocomposition}).
Possible compositional identities involving polynomials over $\CC$ were explicitly classified by Ritt in~\cite{Ritt} and Ritt's work has been given a conceptually cleaner presentation and has been refined to give a very sharp answer to the question of which quadruples of polynomials $(a,b,c,d)$ in $\CC[x]$ satisfy $a \circ b = c \circ d$  
in~\cite{MZ}.

Our combinatorial analysis of the ingredients of Ritt's theorem
yields a refinement that is in some ways weaker and in other ways stronger than the ones in~\cite{MZ}.
Applying our refinements of Ritt's theorem to the compositional equations involving $f$, $g$, $h$, $\pi$, and $\rho$, we explicitly describe all $(f,g)$-skew-invariant plane curves in terms of a decomposition of $f$ as a compositional product.

We should say a few words as to what we mean by weaker and stronger.   Ritt's theorem asserts that
any one decompositions of a polynomial over $\CC$ may be obtained from any other decomposition
via a finite sequence of explicit identities, or what we call \emph{Ritt swaps}.  From this
theorem one might expect that it would be a routine matter to put a polynomial into a standard form as a composition of indecomposable polynomials.
However, ambiguity as to the character of certain polynomials may be introduced through compositions with linear polynomials.  A central
part of  our argument (as well as of~\cite{MZ}) consists of characterizing exactly how compositional identities involving the
special polynomials appearing in Ritt's theorem and linear polynomials may hold.  While the individual steps in these calculations
are very easy, pinning down all of the possibilities requires an exhaustive analysis.   From this point, our results on
canonical forms diverge. While the formalism of~\cite{MZ} is well suited to studying decompositions of compositional powers, it is not
well adapted to the problem of describing possible skew-invariant varieties.  We  discuss the comparison between our theorems on
polynomial decompositions and those from~\cite{MZ} in detail in the body of the paper.

Our key technical
innovation is the notion of a clustering of a decomposition whereby the various compositional
factors are grouped, or ``clustered'', according to their combinatorial properties, for example,
compatible Chebyshev polynomials are clustered together.  Clusterings are not
canonical, but using some invariants computed from clusterings we may read off properties of possible
compositional identities.  Specifically, with Theorem~\ref{swapsferries} we show that the number and
types of the clusters (see Definition~\ref{one-cluster-def}) appearing in a clustering of a decomposition, as well
as the location of the ``gates'' (see Definition~\ref{gate-def}) are invariants of a polynomial, independent
of a choice of decomposition.

In every reasonable sense, for almost every pair of polynomials $(f,g)$ there are
no $(f,g)$-skew-invariant curves other than products of the form $\{ \xi \} \times
\AA^1$ or $\AA^1 \times \{ \zeta \}$ where $f(\xi) = \sigma(\xi)$ (respectively,
$g(\zeta) = \sigma(\zeta)$).   Indeed, even if $f = g = g^\sigma$, in most cases,
the only additional $(f,f)$-invariant curves
are graphs of iterates of $f$ and their converse relations.  For instance, it
is easy to see that this holds for $f$ indecomposable by using our reformulation of
the existence of an $(f,g)$-skew-invariant curve in terms of compositional identities
$f \circ \pi = \pi^\sigma \circ h$ and $g \circ \rho = \rho^\sigma \circ h$.

More generally, there are four basic sources for skew-invariant curves. Some come from (skew) iteration.
If $f$ is any polynomial and $g = f^{\sigma^n}$, then the graph of $f^{\lozenge n} := f^{\sigma^{n-1}} \circ f^{\sigma^{n-2}} \circ \cdots \circ f$ is $(f,g)$-skew-invariant.
In particular, when $f = f^\sigma$ is fixed by $\sigma$, the graphs of iterates of $f$ (and their converse relations) are $(f,f)$-invariant.  If $f$ is polynomial of degree at least two, then the set of linear polynomials $L$
which skew commute with $f$ in the sense that $f \circ L = L^\sigma \circ f$ is finite, but sometimes is
nontrivial. The curve defined by $y = L(x)$ is necessarily $(f,f)$-skew-invariant.
When $f$ is expressible as a nontrivial compositional product, $f = a \circ b$ , then considering what we call a \emph{plain skew-twist} of $f$, $g := b^\sigma \circ a$, we see that the graph of $b$ is $(f,g)$-skew-invariant. While all possible plain skew-twists can be easily read off from one
expression of $f$ as a composition of indecomposable polynomials, it takes more work to characterize the possible sequences of plain skew-twists
which originate from $f$.
Finally, it can happen that graphs of monomial identities (or their conjugates via some linear change of variables) may be $(f,g)$-invariant.
For example, if $f(x) = x \cdot (1+x^3)^2$ and $g(y) = y \cdot (1+y^2)^3$, then the curve defined by $y^2 = x^3$ is $(f,g)$-invariant.
Our primary task is to prove a precise version of the assertion that these examples exhaust the possibilities for skew-invariant curves.

Our characterization of the invariant varieties appears as a combination of Theorem~\ref{coarsestructurethm}
with Theorem~\ref{realchar}. Given a regular map
$\Phi:\AA^N \to \AA^N$ of the form $(x_1,\ldots,x_N) \mapsto (f_1(x_1),\ldots,f_N(x_N))$ where each $f_i$ is a nonconstant polynomial,
the coordinates may be partitioned according the  trichotomy theorem for difference fields. That is,
$\Phi$ may be realized as a Cartesian product of three maps of this form
where in the first map each polynomial $f_i$ is linear, for the second map each $f_i$ is linearly conjugate to either a power
map, Chebyshev polynomial or negative Chebyshev polynomial, and for the third map each $f_i$ is disintegrated. Then, the $\Phi$-skew-invariant
varieties are products of the skew-invariant varieties for each of these three components.   It is a routine matter to classify
the skew-invariant varieties for sequences of linear polynomials.  It follows from
the theory of one-based groups (or a straightforward degree computation) that the skew-invariant subvarieties for sequences of power maps and
Chebyshev polynomials come from algebraic tori.  Moreover, a skew-invariant variety for sequences of power maps and Chebyshev polynomials
may be further decomposed into products of skew-invariant subvarieties for the subsequences consisting of power maps and Chebyshev polynomials
of the same degree.  We collect all of these observations with Theorem~\ref{coarsestructurethm}.

The most complicated class of skew-invariant varieties appear as skew-invariant subvarieties of $(\AA^N,\Phi)$ where for some sequence of disintegrated polynomials $f_1,\ldots,f_N$ the map $\Phi$ takes the form $(x_1,\ldots,x_N) \mapsto (f_1(x_1),\ldots,f_N(x_N))$.
Using disintegratedness, we see that any $\Phi$-skew-invariant variety must be a component of the intersection of
pullbacks of $(f_i,f_j)$-skew-invariant varieties ranging over all pairs $(i,j)$ with $1 \leq i < j \leq n$.  Thus, the classification of $\Phi$-skew-invariant
varieties reduces to the case that $N = 2$.  With Theorem~\ref{realchar} we clarify the sense in which such invariant
curves must come from compositions of skew twists, monomial equations and graphs of twisted iterates.  In particular, we give
very tight bounds on the degrees of the monomial equations which might appear in terms of some refined degrees (which are bounded
by the degree of the polynomial in the usual sense) of the indecomposable polynomials appearing in some complete decomposition of $f_1$.
The following is an especially notable special case.

\begin{thmun}[Theorem~\ref{hhprop}]
Let $f(x) \in \CC[x]$ be a polynomial of degree at least two which is not conjugate to a monomial,
a Chebyshev polynomial or a negative Chebyshev polynomial.
Let $N \in \ZZ_+$ be a positive integer and $X \subseteq \AA^N_\CC$ be
an irreducible subvariety of affine $N$-space over the complex numbers which is
invariant under the coordinatewise
action of $f$.   Then $X$ is defined by equations over
the form $x_i = g(x_j)$ and $x_k = \xi$ where $g$ is a polynomial which commutes
with $f$ and $\xi$ is a fixed point of $f$.   Moreover, $g$ takes the form $L \circ h^{\circ m}$ for
some $m \in \NN$ where $h^{\circ \ell} = f$ for some $\ell$ and $L$ is a linear polynomial
which commutes with a compositional power of $h$.
\end{thmun}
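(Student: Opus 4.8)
The plan is to deduce this from the plane‑curve classification of Theorem~\ref{realchar} together with the disintegratedness of $f$. Since $\sigma = \id$ here, ``invariant'' and ``skew-invariant'' coincide; write $\Phi$ for the coordinatewise action of $f$ on $\AA^N$. First I would invoke the theorem of~\cite{Med}: as $f$ has degree at least two and is not linearly conjugate to a monomial or Chebyshev polynomial, the $\sigma$-variety $(\AA^1_\CC, f)$ is disintegrated. By the reduction to binary relations that disintegratedness affords (see the discussion around Theorem~\ref{realchar}), the irreducible $\Phi$-invariant variety $X$ is an irreducible component of $Y := \bigcap_{1 \leq i < j \leq N} \pi_{ij}^{-1}(C_{ij})$, where $\pi_{ij}:\AA^N \to \AA^2$ is projection onto coordinates $i,j$ and $C_{ij} := \overline{\pi_{ij}(X)}$ is $(f,f)$-invariant. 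Irreducibility of $X$ makes each $\overline{\pi_i(X)}$ either $\AA^1$ or a point; since $f$ is finite (hence closed) and $\Phi(X) \subseteq X$, a point here must be a fixed point $\xi_i$ of $f$. Let $S$ be the set of $i$ with $\overline{\pi_i(X)}$ a point.

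Next I would run the pairwise classification. For $i,j \notin S$ we have $\overline{\pi_i(X)} = \overline{\pi_j(X)} = \AA^1$, so $C_{ij}$ is either $\AA^2$ or an $(f,f)$-invariant curve dominating $\AA^1$ in both projections; in the latter case Theorem~\ref{realchar} applies with $\sigma = \id$ and $g = f$. The disintegratedness of $f$ is decisive at this point: since $f$ is not linearly conjugate to a power map or a Chebyshev polynomial, the monomial-equation source of invariant curves is excluded --- the refined degree bounds of Theorem~\ref{realchar} force any such equation to be trivial --- while for $\sigma = \id$ the graphs of twisted iterates reduce to graphs of ordinary iterates of $f$ and plain skew-twists reduce to compositions of polynomials commuting with $f$. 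Hence $C_{ij}$ is the graph of a relation $x_j = g_{ij}(x_i)$ (or $x_i = g_{ij}(x_j)$) with $g_{ij}$ commuting with $f$, and Theorem~\ref{realchar} moreover puts $g_{ij}$ in the form $L \circ h^{\circ m}$ with $h^{\circ \ell} = f$ and $L$ linear commuting with a compositional power of $h$ --- precisely the shape demanded of $g$.

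Finally I would reassemble $X$ from this pairwise data. Declare $i \sim j$ (for $i,j \notin S$) when $C_{ij}$ is a curve; this is an equivalence relation, transitivity following because if $C_{ij}$ and $C_{jk}$ are curves then $x_i$ and $x_k$ are each finite-to-one over $x_j$ on $X$, so $\pi_{ijk}(X)$, and hence $C_{ik}$, is a curve. On each block, orient every edge from the coordinate determining the other; a directed cycle composes its $g_{ij}$'s to the identity, so its edges are all linear, and after contracting the linear edges one is left with an acyclic graph having a single source class. Picking any index $j_\alpha$ in that class, the direct edge to every other index $i$ of the block is oriented $j_\alpha \to i$, giving $x_i = g_{j_\alpha i}(x_{j_\alpha})$ on $X$. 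Using one free coordinate $j_\alpha$ per block, I would then verify that $\tilde Y := \{x : x_i = g_{j_\alpha i}(x_{j_\alpha})\text{ for }i\in B_\alpha,\ x_k=\xi_k\text{ for }k\in S\}$ --- which is irreducible, being parametrized by the free coordinates --- lies in $Y$, the required polynomial identities $g_{j_\alpha j} = g_{ij}\circ g_{j_\alpha i}$ holding because they hold on the Zariski-dense set $\pi_{j_\alpha}(X)$; since $X \subseteq \tilde Y$ and $X$ is an irreducible component of $Y$, this forces $X = \tilde Y$, which is exactly the asserted description.

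The substance of the argument is Theorem~\ref{realchar}, which is available to us; within this deduction the one delicate point is the middle step --- checking that the monomial-equation source genuinely collapses when $g = f$ and $\sigma = \id$, so that the only $(f,f)$-invariant curves dominating both axes are graphs of commuting polynomials of the form $L \circ h^{\circ m}$. The combinatorial gluing of the last step is routine bookkeeping once that classification is in hand.
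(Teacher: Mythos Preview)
Your overall architecture matches the paper's: reduce to plane curves via disintegratedness (Proposition~\ref{powertriv}), classify each $(f,f)$-invariant plane curve, then reassemble. The reassembly is sound, and in fact the body of the paper only proves the plane-curve case explicitly (that is what Theorem~\ref{hhprop} states there), leaving the passage to $\AA^N$ implicit exactly as you do it.

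The gap is precisely where you flag it, but the reason you give is wrong. Disintegratedness of $f$ does \emph{not} by itself exclude the monomial correspondences $\kriva$ and $\krivd$ of Theorem~\ref{realchar}: the paper's own example $f(x)=x(1+x^3)^2$ is disintegrated, yet admits the nontrivial monomial invariant curve $y^2=x^3$ with a suitable $g\neq f$, and the degree bounds in Theorem~\ref{realchar} merely \emph{bound} the degrees of $\kriva$ and $\krivd$ --- they do not force triviality. What actually works when source and target are both $f$ is an \emph{irreversibility} argument. A nontrivial $\epsilon_p$ or $\delta_p$ permanently changes the in- and out-degrees of the non-monomial indecomposable factors of $f$, and this cannot be undone by Ritt swaps or by $\krivd$, since by Definition~\ref{decomp-inout-def} the prime $p$ is not the degree of any monomial factor present; so a nontrivial $\kriva$ cannot land back on $f$. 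A parallel argument, using that the gate structure of a clustering is an invariant of the polynomial (Theorem~\ref{swapsferries}), rules out nontrivial $\krivd$ in the second case of Lemma~\ref{goodplaintwist}, while in the third case $\krivd$ again damages the in-/out-degree of the border-guarding factor. The only surviving non-diagonal contribution is $\epsilon_p\delta_p$, which gives the reducible curve $x^p=y^p$; its irreducible components are graphs of scalings by $p$th roots of unity and are absorbed into the linear $L$ of the conclusion rather than ``excluded''. Finally, after $\kriva$ and $\krivd$ collapse one is left with a plain-twist identity $h_0\circ h_1=h_1\circ h_0=f$, and the paper invokes Ritt's theorem on commuting polynomials~\cite{Ritt-permutable} to put the resulting $g$ in the form $L\circ h^{\circ m}$; you should cite this as well.
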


We apply our results on skew-invariant varieties to address problems of two different kinds.
We prove variants of two conjectures of Zhang~\cite{Zh} on the arithmetic of dynamical
systems. We also pin down definable structure on, and definable relations between, sets
defined by $\sigma(x) = f(x)$ for some polynomial $f$ in $\ACFA_0$.

Zhang conjectured that if $\phi:X \to X$ is a polarizable dynamical system over some number field $K$, then there is a
point $a \in X(K^{\alg})$ whose forward orbit under $\phi$ is Zariski dense~(Conjecture 4.1.6 of \cite{Zh}).  We consider
  a situation inspired by Zhang's conjecture, but which is at one level more general in that we drop the polarizability hypothesis and
  strengthen the conclusion in that one need not pass to the algebraic closure to find the desired point with a Zariski dense orbit,
  but in another sense more special in that the map $\phi$ is assumed to be given by a sequence of univariate polynomials.  Let us note
  here a somewhat special case of our Theorem~\ref{Zhangconjdense}.

\begin{thmun}
If $K$ is any field of characteristic zero and $\Phi:\AA^n_K \to \AA^n_K$ is given by a sequence
of univariate polynomials each of degree at least two, then there is a point $a \in \AA^n(K)$ with
a Zariski dense $\Phi$-forward orbit.
\end{thmun}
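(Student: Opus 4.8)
The plan is to combine our structural classification of invariant subvarieties (Theorems~\ref{coarsestructurethm}, \ref{realchar} and~\ref{hhprop}) with a height-counting argument. First I would reduce to the case that $K$ is finitely generated over $\QQ$: the map $\Phi$ is defined over some finitely generated subfield $K_0\subseteq K$, and for a set of $K_0$-points the property of being Zariski dense is insensitive to extending the base field, so it suffices to produce the point in $\AA^n(K_0)$, after which I rename $K_0$ as $K$. Now fix a Weil height $h$ on $\AA^1(\overline K)$ with the Northcott finiteness property (a Moriwaki height attached to a projective model of $K$, or a classical height when $K$ is a number field), together with, for each $i$, the canonical height $\hat h_i$ of $f_i$, so that $\hat h_i\circ f_i=(\deg f_i)\hat h_i$, $\hat h_i=h+O(1)$, and $\hat h_i(a)=0$ exactly when $a$ is $f_i$-preperiodic. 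In particular $K$ contains only finitely many $f_i$-preperiodic points, while $\#\{a\in K:h(a)\le B\}$ grows (already on $\QQ\subseteq K$) far faster than $\log B$.

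The key step is to translate ``$a$ has a non-dense $\Phi$-orbit'' into a finite list of explicit forbidden conditions on the $a_i$. If the orbit of $a$ is not Zariski dense, its closure is a proper $\Phi$-invariant subvariety; by Theorems~\ref{coarsestructurethm} and~\ref{realchar} (using disintegratedness for the nonlinear, non-exceptional coordinates, and one-basedness for the power-map and Chebyshev coordinates) such a subvariety must lie inside one of: a locus $\{x_k\in F\}$ with $F$ a finite $f_k$-invariant set, which forces $a_k$ to be $f_k$-preperiodic; a curve $\{x_i=g(x_j)\}$ (or $\{x_j=g'(x_i)\}$) between two disintegrated coordinates, which by Theorem~\ref{hhprop} can occur only when $f_i$ and $f_j$ become linearly conjugate after passing to a common iterate, in which case, after the linear conjugation making $f_i=f_j=:f$, the polynomial $g$ commutes with $f$ and hence has the form $L\circ\phi^{\circ k}$ for the common germ $\phi$ (with $\phi^{\circ e}=f$, so $\deg\phi\ge 2$ and $\hat h_f=\hat h_\phi$), $L$ linear, $k\ge 0$; or a torsion coset of a subtorus among the exceptional coordinates of one common degree, after the linear conjugations putting those $f_i$ into the form $x^d$, a Chebyshev polynomial, or a negative Chebyshev polynomial. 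Since the three trichotomy blocks are orthogonal (and a disintegrated $f$ is by definition not semiconjugate to a power map or Chebyshev polynomial), there are no further invariant curves mixing coordinates of different blocks, nor exceptional coordinates of different degrees. Conversely each such locus is $\Phi$-invariant, so it suffices to build $a\in K^n$ lying on none of them.

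For the exceptional coordinates I would take the associated $\Gm$-coordinates (respectively the Chebyshev pre-images) to be distinct rational primes: these are jointly multiplicatively independent and are not roots of unity, hence lie on no nontrivial torsion coset of any subtorus and are not preperiodic. For the disintegrated coordinates I would choose the $a_i$ one relatedness class at a time, and within a class one coordinate at a time. When coordinate $i$ is reached, the finitely many already-chosen values $a_j$ in its class (conjugated so $f_j=f$) forbid: the finite set of $f$-preperiodic points in $K$; for each $j$, the set $\{L(\phi^{\circ k}(a_j)):L\in S,\ k\ge 0\}$ coming from the curves $\{x_i=g(x_j)\}$; and for each $j$, the set $\bigcup_{g'}(g')^{-1}(a_j)$ coming from the curves $\{x_j=g'(x_i)\}$. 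The last of these is finite by Northcott, since $\hat h_f\circ g'=(\deg g')\hat h_f$ keeps its elements of bounded height; the middle one is infinite but sparse, since $\hat h_\phi(\phi^{\circ k}(a_j))=(\deg\phi)^k\hat h_f(a_j)\to\infty$, so it meets $\{h\le B\}$ in only $O(\log B)$ points. As $\#\{a\in K:h(a)\le B\}$ outgrows the total of these bounds for $B$ large, there is then a point of height $\le B$ avoiding all the forbidden sets; I take it to be $a_i$ and continue. The resulting $a$ lies on no proper $\Phi$-invariant subvariety, so its forward orbit is Zariski dense.

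I expect the main obstacle to be the translation step: extracting from Theorems~\ref{coarsestructurethm}, \ref{realchar} and~\ref{hhprop} a precise and uniform list of ``binary witnesses'' for non-density — correctly bookkeeping the linear conjugations that merge coordinates into a common relatedness class, the exact shape $L\circ\phi^{\circ k}$ of the commuting polynomials, and the interaction between power-map and Chebyshev coordinates of the same degree. Once that list is pinned down, the height and counting estimates above are routine.
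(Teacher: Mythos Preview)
Your overall architecture---split into trichotomy blocks, handle the group-like block with multiplicatively independent primes, and handle the disintegrated block by a counting argument---is sound, but the translation step you flag as the main obstacle is genuinely broken as written. You assert that an irreducible $(f_i,f_j)$-invariant curve between disintegrated coordinates must be a graph $\{x_i=g(x_j)\}$ and that $f_i,f_j$ then become linearly conjugate after a common iterate. Neither is true: the paper's own example $f(x)=x(1+x^3)^2$, $g(y)=y(1+y^2)^3$ has the $(f,g)$-invariant curve $y^2=x^3$, which is not the graph of a polynomial in either direction, and these $f,g$ are not linearly conjugate. Theorem~\ref{hhprop} only applies to $(h,h)$; for distinct $f_i,f_j$ the correct classification is Theorem~\ref{realchar} or Theorem~\ref{centralthm}, and the monomial pieces $\kriva,\krivd$ there are exactly what produce non-graph curves. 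Your height bounds, as stated, do not cover these.

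The paper's route is rather different and avoids this issue entirely. Rather than working directly with $(f_i,f_j)$-invariant curves, it first passes (via Corollary~\ref{nonorthotopoly}) to a common cover: polynomials $h,\pi,\rho$ with $\pi\circ h=f_i\circ\pi$ and $\rho\circ h=f_j\circ\rho$, so that every $(f_i,f_j)$-invariant curve is an image under $(\pi,\rho)$ of an $(h,h)$-periodic curve, to which Theorem~\ref{hhprop} does apply and gives honest graphs $y=L\circ\tilde h^{\ell}(x)$. Then, instead of heights, it uses an \emph{integrality} argument (Lemma~\ref{integrality}): choose $\hat a$ in a finitely generated ring $R$ (containing the relevant coefficients, with leading coefficients units) and $\hat b$ not integral over $R$; since the polynomials $L\circ\tilde h^{\ell}$ have coefficients in $R$ and unit leading term, no such graph can contain $(\hat a,\hat b)$. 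This sidesteps any need to count, and the inductive extension to $n$ coordinates (Lemma~\ref{trivdense}) runs by adjoining a new non-integral element at each step. Your height approach could likely be repaired by first making the same passage to the common cover $h$, but as written the shape of the forbidden loci is wrong.
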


In fact, we prove a somewhat stronger result in which some of the $f_i$s are allowed to be linear.

In another direction we prove a refined version of Zhang's Manin-Mumford conjecture for dynamical systems lifting a Frobenius.   Zhang
conjectured that if $\phi:X \to X$ were a polarized dynamical system over $\CC$ and $Y \subseteq X$ were an irreducible closed
subvariety for which the $\phi$-preperiodic points lying on $Y$ were Zariski dense in $Y$, then $Y$ must itself be a $\phi$-preperiodic
variety~(Conjecture 1.2.1 of~\cite{Zh}).  Counterexamples to this statement have been advanced and the conjecture
itself has been reformulated~\cite{GhTuZh}.  As with our theorem on the density of rational orbits, our dynamical Manin-Mumford theorem is both
more and less general than what is predicted by the corrected dynamical Manin-Mumford conjecture.  We do not require polarizability, though
we do consider only periodic points and dynamical systems arising from liftings of the Frobenius.   Our precise statement is
given as Theorem~\ref{frobpolyper}.    Let us mention a special case here.

\begin{thmun}
Suppose that $q$ is a power of a prime $p$ and that $f(x) \in \ZZ[x]$ is a polynomial of degree $q$ for which $f(x) \equiv x^q \mod p \ZZ[x]$ but $f$ is not conjugate to $x^q$ itself, the $q^\text{th}$ Chebyshev polynomial or a negative Chebyshev polynomial,
then any irreducible variety $X \subseteq \AA^n_\CC$
containing a Zariski dense set of $n$-tuples of $f$-periodic
points must be defined by finitely many equations of the forms $x_i = \zeta$ for some $f$-periodic point $\zeta$ and  $x_j = L \circ \alpha^{\circ m}(x_k)$ for some $m \in \NN$ where $\alpha^{\circ N} = f$ for some $N$ and $L$ commutes with some compositional power of $\alpha$.
\end{thmun}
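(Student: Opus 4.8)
The plan is to exploit the lifting-of-Frobenius hypothesis to recognize the $f$-periodic points as the finite-orbit locus of a genuine difference equation $\phi(x) = f(x)$, to deduce from this that $X$ is honestly invariant under the coordinatewise action of $f$, and then to invoke Theorem~\ref{hhprop} — noting that the hypothesis "$f$ is not conjugate to a monomial, a Chebyshev polynomial, or a negative Chebyshev polynomial" is exactly the disintegratedness hypothesis of that theorem. (This is the special case of Theorem~\ref{frobpolyper} in which every coordinate polynomial equals a fixed $f \in \ZZ[x]$; the argument below is the one behind that theorem, simplified by the fact that a Frobenius lift fixes $\ZZ$.)

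First I would descend: an $f$-periodic point is algebraic over $\QQ$, and since $X$ contains a Zariski-dense set of $n$-tuples of such points, $X$ is defined over a number field $K$. Fix a place $\fp$ of $K$ above $p$, let $\fM$ be the completion of the maximal unramified extension of the local field $K_\fp$, and let $\phi$ be the automorphism of $\fM$ raising residues to the $q$-th power; it fixes $K_\fp$, hence $K$, pointwise. The key lemma to establish is that $\phi(a) = f(a)$ for every $f$-periodic point $a$. The point is that $f \equiv x^q \pmod{p}$ in $\ZZ[x]$ forces $f^{\circ n}(x) - x \equiv x^{q^n} - x \pmod{\fp}$ for every $n$, and $x^{q^n} - x$ is separable; so $f^{\circ n}(x) - x$ has $q^n$ distinct $\fp$-integral roots with pairwise distinct reductions, each the unique Hensel lift of a root of $x^{q^n} - x$ in $\overline{\FF}_p$. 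In particular every $f$-periodic point lies in $\fM$ (and there are infinitely many of them); and for such an $a$, both $f(a)$ and $\phi(a)$ are roots of $f^{\circ n}(x) - x$ reducing to $\bar a^q$ modulo $\fp$, so they coincide. Hence the set $\mathcal{P}$ of $f$-periodic points is an infinite subset of $E := \{a \in \fM : \phi(a) = f(a)\}$.

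Next I would close up. Let $S$ be the given Zariski-dense set of $f$-periodic $n$-tuples on $X$; by the lemma $S \subseteq E^n \subseteq X(\fM)$. For $\bar a \in S$ the coordinatewise map $\Phi$ induced by $f$ agrees with $\phi$, namely $\Phi(\bar a) = \phi(\bar a)$; and since $X$ is defined over the $\phi$-fixed field $K$, the automorphism $\phi$ maps $X(\fM)$ into itself, so $\Phi(\bar a) \in X$. As $\Phi$ is continuous and $S$ is Zariski-dense in $X$, this gives $\Phi(X) \subseteq X$: the irreducible variety $X$ is invariant under the coordinatewise action of $f$. Since $f$ is disintegrated, Theorem~\ref{hhprop} (with $N = n$) applies and yields the asserted description of $X$, with the clause "$x_j = L \circ \alpha^{\circ m}(x_k)$ with $\alpha^{\circ N} = f$ and $L$ commuting with a compositional power of $\alpha$" being precisely its "Moreover" statement (and the constants it produces are in fact $f$-fixed, which is stronger than being $f$-periodic).

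The main obstacle — and essentially the only nonformal step — is the good-reduction lemma of the second paragraph: the identification, forced by the congruence $f \equiv x^q \pmod p$, of the $f$-periodic points with the finite-orbit locus of the difference equation $\phi(x) = f(x)$ for the $q$-power Frobenius $\phi$. Once this is in hand, the density argument (upgrading "$\phi$ stabilizes the relevant periodic tuples" to "$\Phi$ stabilizes $X$") and the appeal to Theorem~\ref{hhprop} are routine — all of the Ritt-theoretic and difference-field-theoretic work having been absorbed into the proof of that theorem. (For the general Theorem~\ref{frobpolyper} one must work over a ring of unramified Witt vectors rather than $\ZZ$, so that $\phi$ need not fix the coefficients of the coordinate polynomials, and there one genuinely needs the classification of weakly skew-invariant, rather than merely invariant, curves.)
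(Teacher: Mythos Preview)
Your approach is the paper's own---this is exactly the specialization of Theorem~\ref{frobpolyper} (via Theorem~\ref{periodeq}) to the case $f_i = f \in \ZZ[x]$, and your Hensel argument is the one-dimensional instance of the Newton approximation in Theorem~\ref{frobeq}.

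There is, however, one genuine gap. You assert that the $q$-power Frobenius lift $\phi$ on $\fM$ fixes $K_{\fp}$; but $\phi$ fixes $K_{\fp}$ only when the residue degree of $\fp$ divides $\log_p q$, and you have no control over this---$K$ is forced on you by $X$, and $p$ is fixed by the hypothesis. What your density argument actually proves (correctly) is $\Phi(\bar a)=\phi(\bar a)$ on $S$, hence $\Phi(X)\subseteq X^{\phi}$: weak \emph{skew}-invariance, not invariance. Two easy repairs are available. First, since $X$ is defined over a number field, some power $\phi^{d}$ fixes it; then $\Phi^{\circ d}(X)\subseteq X$, and irreducibility plus finiteness of $\Phi$ give $\Phi^{\circ d}(X)=X$, so Theorem~\ref{hhprop} applies to $f^{\circ d}$ (and Ritt's theorem lets you rewrite compositional roots of $f^{\circ d}$ in terms of roots of $f$). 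Second---and this is closer to what the paper actually does---simply accept skew-invariance: since $f^{\phi}=f$, every occurrence of $f^{\lozenge N}$ or $f^{\sigma^{N}}$ in Theorem~\ref{realchar} collapses to $f^{\circ N}$ or $f$, and the proof of Theorem~\ref{hhprop} goes through verbatim for the $(f,f)$-skew-invariant curves cutting out $X$. Your parenthetical at the end shows you already see this route; the point is that you need it here too, not only in the general case.
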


In the case of differential fields, Hrushovski and Itai showed that there are model complete theories
of differential fields other than the theory of differentially closed
fields~\cite{HI}.  It is still open whether or not there are model complete theories
of difference fields other than $\ACFA$, but if  there were some formula $\theta(x)$
defining in a difference closed field a set of $D$-rank one having only finitely many
algebraic realizations such that for every other formula $\eta(y,z)$ the set of
parameters $\{ b :  \theta(x) \text{ is nonorthogonal to } \eta(y,b) \}$
were definable, then one could produce a new model complete difference field by omitting
the nonalgebraic types in  $\theta$.  Towards this goal, we prove Theorem \ref{hruit-thm}.

\begin{thmun}
  For a nonconstant polynomial $f$, the set of polynomials $g$ with $({\AA}^1,
  g) \not \perp ({\AA}^1, f)$ is definable if and only if  $f$ is not skew-conjugate to $f^{\sigma^n}$
  for every positive integer $n \in \ZZ_+$.
\end{thmun}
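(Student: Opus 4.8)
The plan is to route everything through the classification of skew-invariant plane curves. The first step is to trade non-orthogonality for geometry: since $({\AA}^1,f)$ has Morley rank one (the statement quoted from \cite{Med}), $({\AA}^1,g)\not\perp({\AA}^1,f)$ holds exactly when, after naming parameters, there is an irreducible curve $C\subseteq{\AA}^2$ that is $(f,g)$-skew-invariant with both coordinate projections dominant and finite; an interalgebraicity witness between the two rank-one sets must be carried by the dynamics $\Phi=(f,g)$. A degree count along $C\to C^\sigma$ forces $\deg g=\deg f$, so $S_f$ lives inside a single affine space of polynomials, and by Proposition~\ref{curvetocomposition} every such $C$ is packaged by a triple $(h,\pi,\rho)$ with $f\circ\pi=\pi^\sigma\circ h$ and $g\circ\rho=\rho^\sigma\circ h$. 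Theorem~\ref{realchar}, together with the bookkeeping of Theorem~\ref{coarsestructurethm}, then describes all such triples, so $g\in S_f$ if and only if $g$ is skew-conjugate to a polynomial that one reads off from a clustering of a complete decomposition of some skew-iterate $f^{\lozenge k}$ by plain skew-twists, linear skew-commutations, and monomial/Chebyshev substitutions, with the permissible moves and their action on the cluster-and-gate data controlled by Theorem~\ref{swapsferries}.

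Next I would turn the definability question into a boundedness question. Write $S_f=\bigcup_D S_f^{\le D}$, where $S_f^{\le D}$ is the set of $g$ of the correct degree admitting a witnessing curve of degree at most $D$; each $S_f^{\le D}$ is definable, being a coordinate projection of the constructible set of such curves, so by compactness $S_f$ is definable precisely when this increasing union stabilizes, i.e. precisely when some $D_0$ bounds the complexity of a witnessing curve for all $g\in S_f$. The theorem is thus equivalent to: such a uniform bound exists if and only if $f$ is not skew-conjugate to $f^{\sigma^n}$ for any $n\in\ZZ_+$. For the contrapositive of ``if'', suppose $f^{\sigma^n}=L^\sigma\circ f\circ L^{-1}$ for some linear $L$ and some $n$; then $\pi:=L^{-1}\circ f^{\lozenge n}$ is a genuine (degree $>1$) self-semiconjugation $f\circ\pi=\pi^\sigma\circ f$, its compositional powers $\pi^{\circ m}$ are self-semiconjugations of degree $(\deg f)^{mn}$, and feeding $h=f$ together with $\pi^{\circ m}$ and a suitable compositional right-factor of it into the triple description produces pairwise non-skew-conjugate polynomials $g_m$ with $({\AA}^1,g_m)\not\perp({\AA}^1,f)$ whose only witnessing curves have degree growing with $m$; so no uniform $D_0$ exists and $S_f$ is not definable.

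For the ``if'' direction — rigidity ($f\not\sim f^{\sigma^n}$ for all $n$) yields definability — I would argue with Theorem~\ref{swapsferries} that the cluster-and-gate invariants of the successive $f^{\sigma^n}$ are pairwise incompatible in exactly the way that obstructs nontrivial compositional coincidences, so that in any triple $(h,\pi,\rho)$ attached to a $g\in S_f$ the factors $\pi$ and $\rho$ are pinned down up to bounded compositional data; this gives the uniform bound $D_0$, hence definability, and in fact exhibits $S_f$ as a union of the skew-conjugacy classes of the plain skew-twists of the $f^{\sigma^j}$ together with the graphs of the finitely many linear skew-commutations of $f$, where — crucially — the \emph{a priori} unbounded quantifier over skew-iterates collapses into a single formula because non-periodicity forces the relevant witnessing data to propagate $\sigma$-equivariantly. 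The main obstacle I expect is precisely this collapse: converting ``$f$ is, or is not, skew-conjugate to some $f^{\sigma^n}$'' into a clean statement about which triples $(h,\pi,\rho)$ can occur, and in particular showing that in the rigid case the $\sigma$-orbit of the twist data is captured first-order, requires running the exhaustive analysis of how linear polynomials interact with the special (power, Chebyshev) clusters — the calculations underlying Theorems~\ref{swapsferries} and~\ref{realchar} — not just for $f$ but simultaneously for all of its $\sigma$-twists.
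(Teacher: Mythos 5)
There is a fatal error: you have the two directions of the equivalence reversed, and the arguments you give for each direction do not survive inspection even on their own terms. The content of the theorem (as the paper's proof and abstract make clear, despite the awkward negation in the statement) is: $S_f := \{ g : (\AA^1,g) \not\perp (\AA^1,f) \}$ is definable exactly when $f$ \emph{is} skew-conjugate to $f^{\sigma^n}$ for \emph{some} $n$. In that periodic case one may assume $f = f^{\sigma^n}$ outright (Lemma~\ref{modulidef}), and Theorem~\ref{centralthm} then shows $S_f$ is a union of \emph{finitely many} skew-conjugacy classes, each definable by Lemma~\ref{skewconjdef}; the witnessing curves $y = L\circ f^{\lozenge m}(x)$ do have unbounded degree as $m$ grows, but they all witness nonorthogonality to the \emph{same} finitely many classes, since the $\sigma$-twists wrap around. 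Your claimed family $g_m$ of "pairwise non-skew-conjugate" polynomials built from powers of the self-semiconjugation $\pi = L^{-1}\circ f^{\lozenge n}$ does not exist: feeding $h = f$ and $\rho = \pi^{\circ m}$ into the triple description returns a $g$ skew-conjugate to $f$ itself for every $m$. More generally, "some $g \in S_f$ requires a high-degree witnessing curve" is not an obstruction to definability of $S_f$; your reduction of definability to a uniform degree bound on witnesses conflates the complexity of the witnesses with the size of the set of classes, which is the invariant that actually matters.

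In the aperiodic case the set is \emph{not} definable, and your "rigidity pins everything down" argument cannot be repaired: the polynomials $f^{\sigma}, f^{\sigma^2}, \ldots$ are pairwise non-skew-conjugate (any coincidence $f^{\sigma^i} \sim f^{\sigma^j}$ would give $f \sim f^{\sigma^{j-i}}$), and each lies in $S_f$ because the graph of $f^{\lozenge n}$ is an $(f, f^{\sigma^n})$-skew-invariant curve. Since the augmented skew-twist monoid is countable, $S_f$ is always a countable union of skew-conjugacy classes, and here it contains infinitely many of them; a definable set in a (saturated, uncountable) difference closed field cannot be partitioned into countably infinitely many uniformly definable classes, so $S_f$ is undefinable. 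This counting argument over skew-conjugacy classes — not any analysis of clusterings, gates, or degree bounds on curves — is the engine of the paper's proof, and it forces the biconditional to point in the opposite direction from the one you argue.
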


A byproduct of this analysis is an explicit characterization of the algebraic closure
operator on disintegrated sets defined by $\sigma(x) = f(x)$, and the observation that this set is strongly
minimal unless $f(x)$ is skew-conjugate to $x^k \cdot u(x)^n$ for some polynomial $u$ and some $n > 1$, and in any case has Morley rank one if $f$ is disintegrated (Theorem \ref{finiteRM}).

This paper is organized as follows. In Section~\ref{coarsestructure}
we lay out our notation and begin our analysis by reducing the
problem to that of describing skew-invariant curves for pairs of disintegrated polynomials.
We then convert this problem to one about compositional identities and lay out the problem
in terms of certain monoid actions.   The technical
work on compositional identities is spread over the next four sections.  In Section~\ref{section41}
we identify all of the possible ways in which one Ritt polynomial may be linearly related to
another Ritt polynomial.  In Section~\ref{clustersec} we study clusterings of decompositions ending the
section with a theorem on invariants of polynomials computed from these clusterings.  In Section~\ref{canformsec}
we intensify the study of the monoid actions producing canonical forms.  With Section~\ref{puttingtogether}
we complete the technical work converting the results on clusterings and monoid actions to a
characterization of the skew-invariant curves for actions of pairs of polynomials.
In Section~\ref{Applications} we conclude with three applications of our results to definability
of orthogonality, Zhang's conjecture on the density of dynamical orbits, and
a version of the dynamical Manin-Mumford conjecture for Frobenius lifts.

We thank M. Zieve for sharing a preliminary version of~\cite{MZ} and for discussing issues around compositional identities of polynomials and rational functions.  We thank the referee for subjecting our manuscript to a thorough review and suggesting numerous improvements.

\section{Coarse structure of skew-invariant varieties}
\label{coarsestructure}
In this section we compare the formalism of algebraic dynamical systems and of $\sigma$-varieties to
establish the relevance of the model theory of difference fields to the study of algebraic dynamics.  We then
interpret the fundamental trichotomy theorem for difference fields in terms of skew-invariant varieties.   Using
this interpretation, we reduce the analysis of skew-invariant varieties for maps given by coordinatewise
actions of univariate polynomials on affine space to that of describing the skew-invariant curves in
$\AA^2$ for pairs of disintegrated polynomials.
We close out this section by recalling Ritt's theorem on polynomial
decompositions in detail and by formalizing that theorem in
terms of actions of various monoids.  In so doing, we convert the
problem of describing invariant varieties into questions about
canonical forms for these monoid actions.

\subsection{Algebraic dynamics and $\sigma$-varieties}
\label{ADsect}

\begin{definition}
A \emph{difference field} $(L,\sigma)$ is a field $L$ given together with a distinguished field endomorphism $\sigma:L \to L$.
The \emph{fixed field} of a difference field $(L,\sigma)$ is the subfield
$\operatorname{Fix}(\sigma) := \{ a \in L ~:~ \sigma(a) = a \}$.
\end{definition}

If $X$ is an algebraic variety over the difference field $(L,\sigma)$, then the $\sigma$-transform of $X$, $X^\sigma$, is the
base change of $X$ to $L$ via $\sigma$.   More concretely, if $X$ is a closed subvariety of some
affine space, then $X^\sigma$ is defined by the same equations as $X$ but with $\sigma$ applied to the coefficients.
At the level of rational points, $a \in X(K) \leftrightarrow \sigma(a) \in X^\sigma(K)$.
The $\sigma$-transform
gives a endofunctor of the category of algebraic varieties over $L$.  That is, if $f:X \to Y$ is a morphism of varieties over
$L$, then there is a uniquely defined morphism $f^\sigma:X^\sigma \to Y^\sigma$ of varieties over $L$ where again, concretely,
on affine charts the polynomials defining $f^\sigma$ are the images under $\sigma$ of the polynomials defining $f$.

\begin{definition}
Following Pink and R\"{o}ssler~\cite{PR}, a \emph{$\sigma$-variety} over the difference field $(L,\sigma)$ is a pair $(X,f)$ where
$X$ is an algebraic variety over $L$ and $f:X \to X^\sigma$ is a dominant morphism from $X$ to its $\sigma$-transform $X^\sigma$.  A
morphism of $\sigma$-varieties $\alpha:(X,f) \to (Y,g)$ is given by a morphism of varieties $\alpha:X \to Y$ for which
$\alpha^\sigma \circ f = g \circ \alpha$.

$$
\begin{CD}
X @>{f}>> X^\sigma \\
@V{\alpha}VV   @VV{\alpha^\sigma}V \\
Y @>{g}>> Y^\sigma
\end{CD}
$$

We say that two $\sigma$-varieties $(X,f)$ and $(Y,g)$ are \emph{skew-conjugate} if
they are isomorphic as $\sigma$-varieties.
\end{definition}

In particular, univariate polynomials $f$ and $g$ (which give $\sigma$-varieties on $\AA^1$) are skew-conjugate if there is a linear polynomial $\alpha$ such that $g = \alpha^{\sigma} \circ f \circ \alpha^{-1}$.

\begin{definition}
An \emph{algebraic dynamical system} over a field $K$ is a pair $(X,f)$ consisting of an algebraic variety $X$ over $K$ and a dominant
regular map $f:X \to X$.  A morphism $\alpha:(X,f) \to (Y,g)$ of algebraic dynamical systems is given by a regular map $\alpha:X \to Y$
for which $\alpha \circ f = g \circ \alpha$.
\end{definition}

The algebraic dynamical system $(X,f)$ over $K$ may be regarded as a $\sigma$-variety over $(K,\operatorname{id}_K)$.

An algebraic dynamical system $(X,f)$ gives rise to a monoid action of $\NN$ on $X$ via iteration of $f$.  We define $f^{\circ n}$
by recursion on $n$ with $f^{\circ 0} := \operatorname{id}_X$ and $f^{\circ (n+1)} := f \circ f^{\circ n}$.  For
a rational point $a \in X(K)$ we define the forward orbit of $a$ under $f$ to be
$\cO_f(a) := \{ f^{\circ n} (a) ~:~ n \in \NN \}$.  The point $a$ is said to be \emph{periodic} if
$f^{\circ n}(a) = a$ for some $n \in \ZZ_+$ and to be \emph{pre-periodic} if $\cO_f(a)$ is finite (or, equivalently,
if $f^{\circ n}(a)$ is periodic for some $n \in \NN$).

If $(X,f)$ is an algebraic dynamical system and $Y \subseteq X$ is a subvariety, then we say that $Y$ is an $f$-invariant variety if
$f(Y) = Y$, or what is the same thing, that $f(Y(K))$ is Zariski dense in $Y$ when $K$ is an algebraically closed field.  We say that
$Y$ is weakly $f$-invariant if $f(Y) \subseteq Y$.
If $(X,f)$ is an algebraic dynamical system and $a \in X(K)$ is any point, then the Zariski closure of $\cO_f(a)$ is
a weakly $f$-invariant subvariety of $X$.   Thus, $\cO_f(a)$ is Zariski dense in $X$ if an only if for no
$n \in \NN$ does $f^{\circ n}(a)$ lie on a (possibly reducible) proper $f$-invariant variety.   In this
way Zhang's conjecture on the existence of algebraic points with Zariski dense forward orbits
(see Section~\ref{ddo}) may be understood as an assertion that there are few $f$-invariant varieties.

In the more general context of a $\sigma$-variety, iteration need not give rise to maps from $X$ back to itself, but it still makes sense.
For a $\sigma$-variety $(X,f)$ over $(L,\sigma)$
we define the skew-iteration of $(X,f)$ by recursion on $n$ setting
 $f^{\lozenge 0} := \operatorname{id}_X$ and
$f^{\lozenge (n+1)} := f^{\sigma^n} \circ f^{\lozenge n}$.
Observe that $(X,f^{\lozenge n})$ is a $\sigma^n$-variety over
$(L,\sigma^n)$.

To distinguish Cartesian powers from $f^{\lozenge n}$ and $f^{\circ n}$, we sometimes
write $f^{\times n}$ for the map $f^{\times n}:X^{\times n} \to (X^\sigma)^{\times n}$ given by
$(x_1,\ldots,x_n) \mapsto (f(x_1),\ldots,f(x_n))$.

For $(X,f)$ a $\sigma$-variety over the difference field $(L,\sigma)$, a sub-variety $Y \subseteq X$ is weakly $f$-skew-invariant if $f(Y) \subseteq Y^\sigma$.   The subvariety $Y$ is
$f$-skew-invariant  if $f(Y) = Y^\sigma$, or, equivalently, if
$(Y,f \upharpoonright Y)$ is a $\sigma$-variety.   A weakly skew-invariant variety need not be skew-invariant, but there is a naturally associated maximal $f$-skew-invariant subvariety.

\begin{Def}
Let $(X,f)$ be a $\sigma$-variety and $Y \subseteq X$ a subvariety of $X$.   The
$f$-skew-invariant part of $Y$ is the subvariety
$$
Y_\per := \bigcap_{n=0}^\infty (f^{\lozenge n} (Y))^{\sigma^{-n}} \text{ .}
$$
\end{Def}

\begin{prop}
\label{maxinv}
If $(X,f)$ is a $\sigma$-variety and $Y \subseteq X$ is a subvariety of $X$, then $Y_\per$ is the
maximal $f$-skew-invariant subvariety of $Y$.  If $Y$ is weakly $f$-skew-invariant, then
$Y_\per = (f^{\lozenge n}(Y))^{\sigma^{-n}}$ for $n \gg 0$.
\end{prop}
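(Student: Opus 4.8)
The plan is to first establish the descending-chain behavior of the sequence of varieties $Z_n := (f^{\lozenge n}(Y))^{\sigma^{-n}}$, then identify the stable value of this sequence with $Y_\per$, and finally check the universal property. First I would observe that the definition of $Y_\per$ only involves the images $f^{\lozenge n}(Y)$, so there is no harm in replacing $Y$ by $\overline{f(Y)}$ if needed; more usefully, I note that $f^{\lozenge(n+1)} = f^{\sigma^n} \circ f^{\lozenge n}$, so $f^{\lozenge(n+1)}(Y) = f^{\sigma^n}(f^{\lozenge n}(Y)) \subseteq (f^{\lozenge n}(Y))^{\sigma^n}$ whenever $f^{\lozenge n}(Y)$ is weakly invariant — but without that hypothesis one still has, applying $f^{\sigma^n}$ to $f^{\lozenge n}(Y) \subseteq X^{\sigma^n}$, that $f^{\lozenge(n+1)}(Y)$ is a subvariety of $(X^{\sigma^n})^{\sigma^n} = X^{\sigma^{2n}}$; the relevant point is simply that applying a morphism to a subvariety gives a subvariety, so each $Z_n$ is a well-defined subvariety of $X$, and the intersection defining $Y_\per$ is a genuine (possibly reducible) subvariety. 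By Noetherianity of $X$, the decreasing sequence of partial intersections $\bigcap_{n=0}^{m} Z_n$ stabilizes, so $Y_\per = \bigcap_{n=0}^{N} Z_n$ for some $N$.

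Next I would verify that $Y_\per$ is weakly $f$-skew-invariant, i.e. $f(Y_\per) \subseteq Y_\per^\sigma$. Since $Y_\per \subseteq Z_n$ for all $n$, we get $f^{\lozenge n}(Y_\per) \subseteq f^{\lozenge n}(Z_n) \subseteq f^{\lozenge n}((f^{\lozenge n}(Y))^{\sigma^{-n}})$; here the key computation is that $f^{\lozenge n}$ applied to $(f^{\lozenge n}(Y))^{\sigma^{-n}}$ lands inside $(f^{\lozenge n}(Y))^{\sigma^{n} \cdot \sigma^{-n}}$-twisted images, and unwinding this (using that $(X,f^{\lozenge n})$ is a $\sigma^n$-variety, as recorded just before the Definition) shows $f^{\lozenge n}(Y_\per) \subseteq \bigcap_m (\cdots)$ telescopes correctly so that $f(Y_\per) \subseteq Y_\per^\sigma$. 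Concretely, I would show $f(Z_{n+1}) \subseteq Z_n^\sigma$ directly: $f(Z_{n+1}) = f((f^{\lozenge(n+1)}(Y))^{\sigma^{-(n+1)}})$, and applying $\sigma^{n+1}$ twist-compatibility plus the identity $f \circ (W)^{\sigma^{-1}} $ relates to $(f^\sigma(W))^{\sigma^{-1}}$ — feeding in $W = f^{\lozenge(n+1)}(Y) = f^{\sigma^n}(f^{\lozenge n}(Y))$ and simplifying yields $f(Z_{n+1}) \subseteq (f^{\lozenge n}(Y))^{\sigma^{-n}\cdot\sigma} = Z_n^\sigma$. Intersecting over $n$ and using that $Y_\per = \bigcap_n Z_{n+1}$ as well (drop the $n=0$ term, which is $Z_0 = Y \supseteq Z_1 \supseteq \cdots$ once we know the chain is decreasing — this decreasingness is itself part of what must be checked, via $Z_{n+1} \subseteq Z_n$, which follows from weak invariance of each $Z_n$ established inductively, or more cleanly from the fact that $f^{\lozenge(n+1)}(Y) \subseteq (f^{\lozenge n}(Y))^{\sigma^n}$ whenever the previous stage is weakly invariant) gives $f(Y_\per) \subseteq Y_\per^\sigma$. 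To upgrade weak invariance to actual skew-invariance ($f(Y_\per) = Y_\per^\sigma$), I would use the stabilization: since $Y_\per = Z_N = Z_{N+1} = \cdots$, the inclusion $f(Z_{N+1}) \subseteq Z_N^\sigma$ together with a dimension/degree count (or the reverse inclusion coming from $f$ being dominant on the stabilized piece) forces equality, so $(Y_\per, f\restriction Y_\per)$ is a $\sigma$-variety.

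For maximality, let $W \subseteq Y$ be any $f$-skew-invariant subvariety, so $f(W) = W^\sigma$, hence by induction $f^{\lozenge n}(W) = W^{\sigma^n}$, and therefore $(f^{\lozenge n}(W))^{\sigma^{-n}} = W$. Since $W \subseteq Y$ gives $f^{\lozenge n}(W) \subseteq f^{\lozenge n}(Y)$, we get $W = (f^{\lozenge n}(W))^{\sigma^{-n}} \subseteq (f^{\lozenge n}(Y))^{\sigma^{-n}} = Z_n$ for every $n$, so $W \subseteq \bigcap_n Z_n = Y_\per$. This proves $Y_\per$ is the maximal $f$-skew-invariant subvariety of $Y$. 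Finally, for the last sentence of the proposition: if $Y$ is weakly $f$-skew-invariant, then $f^{\lozenge(n+1)}(Y) = f^{\sigma^n}(f^{\lozenge n}(Y)) \subseteq (f^{\lozenge n}(Y))^{\sigma^n}$, which after untwisting shows $Z_{n+1} \subseteq Z_n$ — the chain is decreasing from the start — so by Noetherianity it stabilizes at some $n \gg 0$, and the stable value equals $\bigcap_n Z_n = Y_\per$; thus $Y_\per = (f^{\lozenge n}(Y))^{\sigma^{-n}}$ for all sufficiently large $n$.

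The main obstacle I anticipate is purely bookkeeping: keeping the $\sigma$-twists and the indices aligned when composing $f^{\lozenge n}$ with $\sigma^{-n}$-untwisted varieties, since $f$ is not an endomorphism of a single variety but a map $X \to X^\sigma$, so every application shifts the ambient twisted copy. The cleanest way to manage this is to prove the single clean lemma "$f\bigl((W)^{\sigma^{-1}}\bigr) \subseteq \bigl(f^\sigma(W)\bigr)^{?}$" with the twist worked out once and for all — more precisely, for $W \subseteq X^\sigma$ one has $W^{\sigma^{-1}} \subseteq X$ and $f(W^{\sigma^{-1}})$ relates to $f^{\sigma^{-1}}$ applied appropriately — and then feed it the inputs $W = f^{\lozenge n}(Y)$ mechanically. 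Everything else (Noetherian stabilization, the induction $f^{\lozenge n}(W) = W^{\sigma^n}$ for invariant $W$) is routine.
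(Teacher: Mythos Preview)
Your maximality argument and the weakly-invariant clause are fine and match the paper. The gap is in your proof that $Y_\per$ itself is skew-invariant. You try to show \emph{weak} invariance $f(Y_\per)\subseteq Y_\per^\sigma$ first, via the claim $f(Z_{n+1})\subseteq Z_n^\sigma$ and the alleged decreasing chain $Z_{n+1}\subseteq Z_n$. Both of these fail for general $Y$. The correct identity is
\[
f(Z_n) \;=\; f\bigl((f^{\lozenge n}(Y))^{\sigma^{-n}}\bigr)
\;=\; \bigl(f^{\sigma^n}(f^{\lozenge n}(Y))\bigr)^{\sigma^{-n}}
\;=\; \bigl(f^{\lozenge(n+1)}(Y)\bigr)^{\sigma^{-n}}
\;=\; Z_{n+1}^\sigma,
\]
so the index goes \emph{up}, not down: $f(Z_{n+1})=Z_{n+2}^\sigma$, not $\subseteq Z_n^\sigma$. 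Likewise, $Z_{n+1}\subseteq Z_n$ is exactly the statement that $f^{\lozenge n}(Y)$ is weakly $f^{\sigma^n}$-invariant, which you cannot establish inductively because the base case $Z_1\subseteq Z_0$ already \emph{is} weak invariance of $Y$, which is not assumed in the first part of the proposition.

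The paper goes in the opposite direction: it shows $Y_\per^\sigma\subseteq f(Y_\per)$ first. This inclusion is the one that falls out naturally, because $Y_\per^\sigma=\bigcap_{n\ge 0}Z_n^\sigma\subseteq\bigcap_{n\ge 1}Z_n^\sigma=\bigcap_{m\ge 0}Z_{m+1}^\sigma=\bigcap_{m\ge 0}f(Z_m)$, using precisely the identity $f(Z_m)=Z_{m+1}^\sigma$ above; the paper then identifies this last intersection with $f(Y_\per)$. Having $Y_\per^\sigma\subseteq f(Y_\per)$, the dimension and component-count bounds $\dim Y_\per^\sigma=\dim Y_\per\ge\dim f(Y_\per)$ force equality. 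So the fix is not bookkeeping: you need to reverse the target inclusion and argue as the paper does, dropping the $n=0$ term from the intersection rather than trying to prove a decreasing chain that does not exist.
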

\begin{proof}
From its definition, we have $Y_\per^\sigma = (\bigcap_{n=0}^\infty (f^{\lozenge n}(Y))^{\sigma^{-n}})^\sigma
\subseteq (\bigcap_{n=1}^\infty (f^{\lozenge n}(Y))^{\sigma^{-n}})^\sigma =
\bigcap_{m=0}^\infty f^{\lozenge m+1} (Y)^{\sigma^{-m}} = f (\bigcap_{m=0}^{\infty} f^{\lozenge m}(Y)^{\sigma^{-m}})
= f(Y_\per)$.  As $\dim(Y_\per^\sigma) \geq \dim(f(Y_\per))$ and the number of components of $Y_\per^\sigma$
is at least that of $f(Y_\per)$, we conclude that $f(Y_\per) = Y_\per^\sigma$.  On the other hand, if
$Z \subseteq Y$ were $f$-skew-invariant, then for every $n$ we would have
$Z = f^{\lozenge n}(Z)^{\sigma^{-n}} \subseteq f^{\lozenge n}(Y)^{\sigma^{-n}}$.  Hence, $Z \subseteq Y_\per$ so
that $Y_\per$ is the maximal $f$-skew-invariant subvariety of $Y$.

If $Y$ were weakly $f$-skew-invariant, then the intersection defining $Y_\per$ would be an intersection over
 a decreasing chain, and, thus, equal to $f^{\lozenge n}(Y)^{\sigma^{-n}}$ for $n \gg 0$ by
 Noetherianity.
\end{proof}

\begin{prop}
\label{pushpullinv}
If $\pi:(X,f) \to (Y,g)$ is a map of $\sigma$-varieties, $Z \subseteq X$ is a subvariety of $X$ and $W \subseteq Y$
is a subvariety of $Y$, then $\pi(Z_\per) = \pi(Z)_\per$ and $\pi^{-1}(W_\per) = \pi^{-1}(W)_\per$.
\end{prop}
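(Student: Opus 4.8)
\emph{Proof proposal.} The plan is to reduce both equalities to two facts. First, since $\sigma$ is a field automorphism of $\CC$, the base-change operations $(-)^{\sigma^m}$ ($m \in \ZZ$) on varieties over $\CC$ are equivalences of categories which are homeomorphisms on underlying spaces; concretely they replace the coefficients of defining and structure polynomials by their $\sigma^m$-images. Hence $(-)^{\sigma^m}$ commutes with Zariski closure, with images and preimages of subvarieties, with arbitrary intersections of subvarieties, and with composition of morphisms. Second, from the defining identity $\pi^\sigma \circ f = g \circ \pi$ I would prove by induction on $n$ the iterated intertwining identity
\[
\pi^{\sigma^n} \circ f^{\lozenge n} = g^{\lozenge n} \circ \pi \qquad (n \geq 0);
\]
the inductive step applies $(-)^{\sigma^n}$ to the defining identity (getting $\pi^{\sigma^{n+1}}\circ f^{\sigma^n} = g^{\sigma^n}\circ \pi^{\sigma^n}$) and uses $f^{\lozenge(n+1)} = f^{\sigma^n}\circ f^{\lozenge n}$ and $g^{\lozenge(n+1)} = g^{\sigma^n}\circ g^{\lozenge n}$.

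For the preimage identity I would use that $\pi^{-1}$ commutes with arbitrary intersections and with the $\sigma^{-n}$-twists:
\[
\pi^{-1}(W_\per) = \bigcap_{n\geq 0}\pi^{-1}\bigl((g^{\lozenge n}(W))^{\sigma^{-n}}\bigr) = \bigcap_{n\geq 0}\bigl((\pi^{\sigma^n})^{-1}(g^{\lozenge n}(W))\bigr)^{\sigma^{-n}}.
\]
The intertwining identity shows $f^{\lozenge n}(\pi^{-1}(W)) \subseteq (\pi^{\sigma^n})^{-1}(g^{\lozenge n}(W))$, so twisting by $\sigma^{-n}$ and intersecting yields $\pi^{-1}(W)_\per \subseteq \pi^{-1}(W_\per)$. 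For the reverse inclusion I would invoke Proposition~\ref{maxinv}: it suffices to prove that $\pi^{-1}(W_\per)$ is an $f$-skew-invariant subvariety of $\pi^{-1}(W)$, since then it is contained in the maximal such, namely $\pi^{-1}(W)_\per$. That $f(\pi^{-1}(W_\per)) \subseteq (\pi^{-1}(W_\per))^\sigma$ is immediate from the intertwining identity together with $g(W_\per) = W_\per^\sigma$; the equality $f(\pi^{-1}(W_\per)) = (\pi^{-1}(W_\per))^\sigma$ is the delicate point (see below).

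For the image identity the first computation is unavailable, since images need not commute with intersections, so I would argue through maximality in both directions. One direction is easy: $\pi(Z_\per)$ is $g$-skew-invariant, because by the intertwining identity and $f(Z_\per) = Z_\per^\sigma$ one has $g(\pi(Z_\per)) = \overline{(g\circ\pi)(Z_\per)} = \overline{(\pi^\sigma\circ f)(Z_\per)} = \overline{\pi^\sigma(Z_\per^\sigma)} = (\pi(Z_\per))^\sigma$; as $\pi(Z_\per) \subseteq \pi(Z)$, Proposition~\ref{maxinv} gives $\pi(Z_\per) \subseteq \pi(Z)_\per$. For the other direction I would first record, by twisting the intertwining identity, that $\overline{\pi\bigl((f^{\lozenge n}(Z))^{\sigma^{-n}}\bigr)} = \bigl(g^{\lozenge n}(\pi(Z))\bigr)^{\sigma^{-n}}$, whence $\pi(Z)_\per = \bigcap_n \overline{\pi((f^{\lozenge n}(Z))^{\sigma^{-n}})}$; it then remains to push $\pi$ through the intersection, i.e. to show $\overline{\pi(Z_\per)} = \bigcap_n \overline{\pi((f^{\lozenge n}(Z))^{\sigma^{-n}})}$.

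The main obstacle is exactly these two remaining points — upgrading the evident set-theoretic containments to equalities of varieties: verifying that $\pi^{-1}(W_\per)$ is strictly (not merely weakly) $f$-skew-invariant, and commuting $\pi$ past the infinite intersection defining $Z_\per$. Both amount to controlling Zariski closures of images along $\pi$ and along the maps $f^{\lozenge n}$. I would attack them by reducing to finite intersections via Noetherianity (the partial intersections $\bigcap_{n\leq N}(f^{\lozenge n}(Z))^{\sigma^{-n}}$ form a descending chain, hence stabilize), and then using that each $f^{\lozenge n}$, and its restriction to the relevant subvariety, is dominant onto its image — so surjective off a proper closed subset — together with the dimension and number-of-components bookkeeping already employed in the proof of Proposition~\ref{maxinv}.
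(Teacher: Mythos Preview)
Your instinct to scrutinize the two ``remaining points'' is exactly right: they are not merely delicate, they are \emph{false} at the stated level of generality, and so is the proposition. The paper's own proof is a bare chain of equalities that asserts precisely the commutations you worry about---for instance $\bigcap_n (\pi^{\sigma^n}\!\circ f^{\lozenge n}(Z))^{\sigma^{-n}} = \pi\bigl(\bigcap_n (f^{\lozenge n}(Z))^{\sigma^{-n}}\bigr)$ and $f^{\lozenge n}(\pi^{-1}W) = (\pi^{\sigma^n})^{-1}(g^{\lozenge n}W)$---without justification. Your write-up is more careful than the paper's, not less complete.

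Counterexamples (take $\sigma=\id$). For the image identity: $X=\AA^2$, $Y=\AA^1$, $f(x,y)=(x^2,y^2)$, $g(x)=x^2$, $\pi(x,y)=x$, and $Z=V(y-x-1)$. One checks $Z\cap f(Z)=\{(0,1)\}$ and $(0,1)$ is $f$-fixed, so $Z_\per=\{(0,1)\}$ and $\pi(Z_\per)=\{0\}$; but $\pi(Z)=\AA^1$ is $g$-invariant, so $\pi(Z)_\per=\AA^1$. For the preimage identity: keep $Y$, $g$, $\pi$ but take $f(x,y)=(x^2,xy)$ (dominant) and $W=\{0\}$, which is $g$-invariant so $W_\per=W$. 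Then $\pi^{-1}(W_\per)=\{0\}\times\AA^1$, while $f(\{0\}\times\AA^1)=\{(0,0)\}$ gives $(\pi^{-1}W)_\per=\{(0,0)\}$. The second failure is exactly your ``delicate point'': $\pi^{-1}(W_\per)$ is only \emph{weakly} $f$-skew-invariant here.

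What does survive: your maximality arguments cleanly give the containments $\pi(Z_\per)\subseteq\pi(Z)_\per$ and $(\pi^{-1}W)_\per\subseteq\pi^{-1}(W_\per)$, and these are correct in general. The reverse inclusions hold under extra hypotheses that are present in every use of the proposition in the paper: in Lemma~\ref{compper} the input is weakly skew-invariant (so your Noetherianity reduction applies, since the chain $(f^{\lozenge n}(\cdot))^{\sigma^{-n}}$ is then descending and stabilizes), and in Lemma~\ref{reducetohh} the map $(\pi,\rho)$ is finite onto its image and the input is an invariant curve. So your plan of attack is the right one---it just proves a corrected statement with an added hypothesis, not the one on the page.
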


\begin{proof}
From the fact that $\pi^\sigma \circ f = g \circ \pi$, we conclude that
$\pi^{\sigma^n} \circ f^{\lozenge n} = g^{\lozenge n} \circ \pi$ for any $n \in \NN$.  Thus,
from the definition of the skew-invariant part we have
$\pi(Z)_\per = \bigcap_{n=0}^\infty (g^{\lozenge n}(\pi(Z)))^{\sigma^{-n}}
= \bigcap_{n=0}^\infty (\pi^{\sigma^n} \circ f^{\lozenge n} (Z))^{\sigma^{-n}} =
\pi (\bigcap_{n=0}^\infty (f^{\lozenge n}(Z))^{\sigma^{-n}}) = \pi(Z_\per)$, as claimed.

Likewise, $\pi^{-1}(W)_\per = \bigcap_{n=0}^\infty (f^{\lozenge n} \pi^{-1} W)^{\sigma^{-n}}
= \bigcap_{n=0}^\infty ((\pi^{\sigma^n})^{-1} g^{\lozenge n} (W))^{\sigma^{-n}}
= \pi^{-1} \bigcap_{n=0}^\infty (g^{\lozenge n}(W))^{\sigma^{-n}} = \pi^{-1}(W_\per)$.
\end{proof}

Let us note that if $(X,f)$ is a $\sigma$-variety over $(K,\operatorname{id}_K)$, then $(X,f)$ is simply an algebraic dynamical system  over $K$,
$f^{\lozenge n} =  f^{\circ n}$ for each $n \in \NN$, and a subvariety $Y \subseteq X$ is $f$-skew-invariant just in case it is
$f$-invariant.  However, if we start with an algebraic dynamical system
$(X,f)$ over some field $K$ and then form the base change $(X,f)_L$ to some difference
field $(L,\sigma)$ where $\sigma \upharpoonright K = \operatorname{id}_K$, the notions of an $f$-invariant subvariety and of an
$f$-skew-invariant variety need not coincide.  Moreover, there are algebraic dynamical systems
$(X,f)$ and $(Y,g)$ which are non-isomorphic as algebraic dynamical systems
(and remain so after any field extension), but which become isomorphic as $\sigma$-varieties after an appropriate base extension.
For example, $(\AA^1,x \mapsto x+1)$ and $(\AA^1,\operatorname{id})$ are clearly not isomorphic as algebraic dynamical systems, but after
base change to a difference field containing a solution to the difference equation $\sigma(b) = b+1$, they become isomorphic as
$\sigma$-varieties.

\subsection{Model theory of difference fields}
\label{mtprelim}

In this section we translate some of the fundamental theorems on the model theory of difference fields to a more geometric language.
The reader can find a more thorough treatment of these connections in~\cite{CH-AD1}.  All of the
theorems on the model theory of difference fields which we require can be found in~\cite{CH-ACFA1}.

A difference field $(K,\sigma)$
is \emph{difference closed} if it is existentially closed in the class of difference fields.  That is, if a finite system of
difference equations and inequations over $K$ has a solution in some difference field extending $(K,\sigma)$, then it
already has a solution in $(K,\sigma)$.  By successively adjoining solutions to such systems of difference equations and
  inequations, one sees that every difference field embeds into a difference closed field.
The class of difference closed fields is axiomatized by three schemata of axioms expressible in the language of difference fields, the
language of rings augmented by a unary function symbol for the distinguished endomorphism.  It is obvious that the first schema is
given by a first-order sentence.  A routine argument expresses the second as a countable list of sentences.  However, the
last schema requires absolute irreducibility of a variety to be a first-order property of the coefficients of
the defining equations.  This is attained by bounding the degrees for the ideal membership problem in polynomial rings.

\begin{fact}[Theorem 1.1 of~\cite{CH-ACFA1}]
A difference field $(K,\sigma)$ is difference closed if and only if
\begin{enumerate}
\item $\sigma$ is an automorphism of $K$,
\item $K$ is algebraically closed, and
\item for any irreducible affine variety $X$ defined over $K$ and any irreducible subvariety $Y \subseteq X \times X^\sigma$
for which the two projections $Y \to X$ and $Y \to X^\sigma$ are dominant, there is a point $a \in X(K)$ with $(a,\sigma(a)) \in Y(K)$.
\end{enumerate}
\end{fact}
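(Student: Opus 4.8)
\emph{Proof sketch.} The plan is to derive both implications straight from the definition of difference closedness as existential closedness in the class of difference fields; in each direction the substantive step is the construction of an auxiliary difference field extension over which the relevant system of equations and inequations is solvable.

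For the forward direction, assume $(K,\sigma)$ is existentially closed among difference fields. For (1): $\sigma$ is injective because it is a homomorphism of fields, and for surjectivity I would embed $(K,\sigma)$ into a difference field in which $\sigma$ is onto --- for instance its inversive closure --- where $\sigma(x)=a$ is trivially solvable for any $a\in K$; existential closedness then gives a solution in $K$. For (2): given a nonconstant $p\in K[x]$, I would extend $\sigma$ to an endomorphism of $\overline K$, obtaining a difference field extension containing a root of $p$, and conclude by existential closedness that $p$ has a root in $K$. For (3): I would take a generic point $(c,c')$ of $Y$ over $K$ in a large algebraically closed field; dominance of the two projections makes $c$ generic in $X$ and $c'$ generic in $X^\sigma$ over $K$. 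Since $X^\sigma$ is just $X$ with its coefficients transported along $\sigma$, there is a field isomorphism $K(X)\cong K(X^\sigma)$ restricting to $\sigma$ on $K$; under the identifications $K(X)=K(c)$, $K(X^\sigma)=K(c')$ this becomes an isomorphism $\theta\colon K(c)\to K(c')$ with $\theta\upharpoonright K=\sigma$ and $\theta(c)=c'$. Setting $M:=\overline{K(c,c')}$, I would extend $\theta$ to an embedding of $\overline{K(c)}$ into $M$, note that $\operatorname{trdeg}(M/\overline{K(c)})=\dim Y-\dim X=\operatorname{trdeg}(M/\theta(\overline{K(c)}))$, and match transcendence bases to extend $\theta$ to an automorphism $\psi$ of $M$. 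Then $(M,\psi)$ is a difference field extending $(K,\sigma)$ containing $c\in X$ with $(c,\psi(c))=(c,c')\in Y$, so the existential statement asserting the existence of such a point holds in $(M,\psi)$ and hence, by existential closedness, in $(K,\sigma)$.

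For the converse, assume (1)--(3) and let $\Sigma(\bar x)$ be a finite system of difference equations and inequations over $K$ solvable in some difference field extension $(L,\sigma_L)$, with solution $\bar\beta$. First I would normalize $\Sigma$: introduce new variables $y_{i,j}$ for $\sigma^j x_i$ up to the largest order appearing in $\Sigma$, adjoin the ``graph'' equations $\sigma(y_{i,j})=y_{i,j+1}$, and clear the inequations by a Rabinowitsch-style auxiliary variable, so that $\Sigma$ is replaced by finitely many polynomial equations over $K$ in variables $\bar y$ together with the graph equations. Letting $\bar\gamma\in L$ be the corresponding solution (the tuple of $\sigma_L$-translates of $\bar\beta$ and the auxiliary coordinates), I would take $X$ to be the locus of $\bar\gamma$ over $K$ and $Y$ the locus of $(\bar\gamma,\sigma_L\bar\gamma)$ over $K$. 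These are irreducible and affine, and since $\sigma$ is an automorphism the locus of $\sigma_L\bar\gamma$ is exactly $X^\sigma$, so $Y\subseteq X\times X^\sigma$ with both projections dominant. Applying (3) produces $\bar a\in X(K)$ with $(\bar a,\sigma(\bar a))\in Y(K)$; since $Y$ is the locus of $(\bar\gamma,\sigma_L\bar\gamma)$, every polynomial relation over $K$ valid at $(\bar\gamma,\sigma_L\bar\gamma)$ --- in particular the equations of the normalized $\Sigma$ and the graph equations --- is valid at $(\bar a,\sigma(\bar a))$, so reassembling $\bar a$ yields a solution of $\Sigma$ in $K$.

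The step I expect to be the main obstacle is the extension of the partial isomorphism $\theta\colon c\mapsto c'$ to a genuine automorphism of an algebraically closed field in the forward direction. Since $\dim Y$ may be strictly larger than $\dim X$, one cannot simply regard $\theta$ as an isomorphism of the ambient fraction fields, and the argument really rests on the transcendence-degree computation above together with the fact that an isomorphism between two algebraically closed subfields of an algebraically closed field $M$, over which $M$ has the same transcendence degree, extends to an automorphism of $M$. The remaining points --- the normalization in the converse and the verification that the graph equations genuinely lie in the ideal of the locus $Y$ --- are routine but should be handled with some care.
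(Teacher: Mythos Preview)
The paper does not prove this statement: it is recorded as a \emph{Fact}, attributed to Theorem~1.1 of~\cite{CH-ACFA1}, and is simply quoted as background from the literature. There is therefore no ``paper's own proof'' to compare against.

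Your sketch is a correct outline of the standard argument (essentially the one in the cited reference). The forward direction for~(3) is the only nontrivial point, and you have identified the right mechanism: take a generic point $(c,c')$ of $Y$, use dominance to identify $K(c)\cong K(X)$ and $K(c')\cong K(X^\sigma)$, transport $\sigma$ to an isomorphism $\theta:K(c)\to K(c')$, and extend $\theta$ to an automorphism of an algebraically closed $M\supseteq K(c,c')$ by matching transcendence bases. The transcendence-degree bookkeeping is right. The converse via prolongation variables, Rabinowitsch, and taking loci is routine and correctly described. One small clarification worth making explicit in the converse: you should check that $K$ is algebraically closed (which you have from~(2)) before invoking loci as irreducible varieties over $K$, and that the locus of $\sigma_L\bar\gamma$ over $K$ really equals $X^\sigma$ rather than merely being contained in it --- this uses that $\sigma$ is an automorphism of $K$, as you note.
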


From the axioms for difference closed fields, we see that if $(K,\sigma)$ is a difference closed field, $X$ is an irreducible
variety over $K$, $\Gamma \subseteq X \times X^\sigma$ is an irreducible subvariety of $X \times X^\sigma$ for which the
two projections $\Gamma \to X$ and $\Gamma \to X^\sigma$ are dominant and we define  the $(K,\sigma)$ points of
$(X,\Gamma)$ by
$$
(X,\Gamma)^\sharp(K,\sigma) := \{ a \in X(K) ~:~ (a,\sigma(a)) \in \Gamma(K) \} \text{ ,}
$$
then $(X,\Gamma)^\sharp(K,\sigma)$ is Zariski dense in $X$.  In particular, if $(X,f)$ is an irreducible
$\sigma$-variety over a difference closed field $(K,\sigma)$, then
$$
\{ a \in X(K) ~:~ f(a) = \sigma(a) \} = (X,\Gamma(f))^\sharp(K,\sigma)
$$
is
Zariski dense in $X$ where $\Gamma(f)$ is the graph of $f$.   Moreover, an absolutely irreducible
subvariety $Y \subseteq X$ is $f$-skew-invariant
if and only if $Y(K) \cap (X,\Gamma(f))^\sharp(K,\sigma)$ is Zariski dense in $Y$.  In this sense, we see that there
are enough rational points defined over a difference closed field to reflect the geometry of $\sigma$-varieties, or even of
algebraic dynamical systems regarded as $\sigma$-varieties obtained by base change from the fixed field.
We use this observation to
translate results from the structure theory of definable sets in difference closed fields to the language of algebraic dynamical systems and $\sigma$-varieties.

Let us recall the notion of orthogonality, specialized to the case of $\sigma$-varieties.   That $\sigma$-varieties
of different characters (\emph{eg} those coming from group actions versus those unrelated to groups) are
orthogonal is the first step in the reduction of the study of skew-invariant varieties in general to the special case
of skew-invariant curves in the plane.

\begin{definition}
Two  absolutely irreducible $\sigma$-varieties $(X,f)$ and $(Y,g)$ over a difference field $(K,\sigma)$
are \emph{almost orthogonal}, written $(X,f) \perp^a_K (Y,g)$, if every
$(f,g)$-skew-invariant subvariety of $X \times Y$ is a finite union of products of components of $f$-skew-invariant and
$g$-skew-invariant varieties. If for every extension of difference fields $(L,\sigma) \supseteq (K,\sigma)$, we
have $(X_L,f) \perp^a_L (Y_L,g)$, then  $(X,f)$ and $(Y,g)$ are \emph{orthogonal}, written $(X,f) \perp (Y,g)$,
\end{definition}

\begin{Rk}
\label{orthocheat}
What we are calling (almost) orthogonality is usually called \emph{full (almost) quantifier-free orthogonality} in the model theory literature.
The subtler notions of orthogonality for types, while present in the background, are not
directly relevant to the problems we consider here.   In fact,
nonorthogonality of $(X,f)$ and $(Y,g)$ need not imply that some type in $(X, f)^\sharp$
is nonorthogonal to some type in $(Y,g)^\sharp$, as the $(f,g)$-skew-invariant subvariety
witnessing nonorthogonality in our sense may have no sharp points at all.
Our notion does correspond to eventual non-orthogonality: if $(X,f)$ and $(Y,g)$ are non-orthogonal
in our sense, then for some $m$,
 some type in $(X, f^{\lozenge m})^{\sharp}$,
where $(X, f^{\lozenge m})$ is regarded as a $\sigma^m$-variety,
is nonorthogonal to some type
in $(Y, g^{\lozenge m})^{\sharp}$, even in the reduct to $\sigma^m$.
We return to issues around quantifier elimination in Section~\ref{newsectionstrmin}.
\end{Rk}

\begin{Rk}
The distinction between almost orthogonality and orthogonality is real.  For example, if $K$ is any
field of characteristic zero, then the $\sigma$-varieties $(\AA^1,\operatorname{id}_{\AA^1})$ and
$(\AA^1,x \mapsto x + 1)$ are almost orthogonal over $(K,\operatorname{id}_K)$, but after base extension to any
difference field containing a solution $a$ to the difference equation $\sigma(x) = x + 1$, these $\sigma$-varieties
are isomorphic as $\sigma$-varieties via the map $x \mapsto x + a$ so that the graph of this isomorphism
gives a skew-invariant variety not expressible as a product witnessing the non-orthogonality of these
two $\sigma$-varieties.    As a general rule, such instances of almost orthogonality but non-orthogonality are
mediated by the action of a definable group.   Thus, for the $\sigma$-varieties of principal concern to us,
the \emph{disintegrated} $\sigma$-varieties (see Definition~\ref{deftrivial}), at least when working over
an algebraically closed base, there is no difference between almost orthogonality and orthogonality.
\end{Rk}

The nonorthogonality relation defines an equivalence relation on the set of $\sigma$-varieties
whose underlying varieties are irreducible curves.

\begin{prop}
\label{nonorthoequiv}
If $(X,f)$, $(Y,g)$, and $(Z,h)$ are absolutely irreducible $\sigma$-varieties over some difference
field $(K,\sigma)$ for which each of $X$, $Y$ and $Z$ is a curve, $(X,f) \not \perp (Y,g)$
and $(Y,g) \not \perp (Z,h)$, then $(X,f) \not \perp (Z,h)$.
\end{prop}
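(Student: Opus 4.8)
The plan is to reformulate non-orthogonality of $\sigma$-varieties on curves as the existence of a ``correspondence'' curve, and then to compose correspondences, the composition being governed by transitivity of interalgebraicity of generic sharp points over a difference closed base. First I would record the reformulation: for $\sigma$-varieties $(X,f)$ and $(Y,g)$ whose underlying varieties are irreducible curves, $(X,f)\not\perp(Y,g)$ holds if and only if there are a positive integer $k$ and a difference field extension $(L,\sigma)\supseteq(K,\sigma)$ over which there is a geometrically irreducible curve $C\subseteq(X\times Y)_L$, dominating both $X$ and $Y$, which is $(f^{\lozenge k}\times g^{\lozenge k})$-skew-invariant (viewing $X$, $Y$, $X\times Y$ as $\sigma^k$-varieties). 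The ``if'' direction is a union-descent: with $F:=f\times g$, the subvariety $\tilde C:=\bigcup_{j=0}^{k-1}(F^{\lozenge j}(C))^{\sigma^{-j}}$ is, by a direct computation, $F$-skew-invariant, one-dimensional, and contains $C$, hence dominates both factors, so it is an $(f,g)$-skew-invariant subvariety of $X\times Y$ which is not a finite union of products of components of skew-invariant varieties --- a witness to non-orthogonality. For the ``only if'' direction one takes a witness $W$, passes to its one-dimensional part (still skew-invariant since $f\times g$ is a finite morphism), notes that $f\times g$ permutes the finitely many components of this part up to $\sigma$-transform so that a single orbit of length $k$ is $(f^{\lozenge k}\times g^{\lozenge k})$-skew-invariant, observes that some orbit must again be a witness, and checks that the witness property forces each component of that orbit to dominate both $X$ and $Y$ (an irreducible curve in a product of two curves that is degenerate in either direction has the form $\{\xi\}\times Y$ or $X\times\{\zeta\}$ with $\xi$, $\zeta$ skew-fixed, so a non-dominating orbit would be a union of products); a base change splits a chosen component into the desired $C$. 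Here Proposition~\ref{maxinv} is convenient for organizing the union-descent, and one uses that the reduct of a model of $\ACFA$ to a power of its automorphism is again a model of $\ACFA$.

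Granting the reformulation, transitivity goes as follows. From $(X,f)\not\perp(Y,g)$ and $(Y,g)\not\perp(Z,h)$ we obtain $k_1$, $C\subseteq X\times Y$ and $k_2$, $D\subseteq Y\times Z$; base-changing everything to a common difference closed $(L,\sigma)$ and setting $k:=\operatorname{lcm}(k_1,k_2)$, the curves $C$ and $D$ remain geometrically irreducible and are $(f^{\lozenge k}\times g^{\lozenge k})$- and $(g^{\lozenge k}\times h^{\lozenge k})$-skew-invariant, and $(L,\sigma^k)\models\ACFA$. Since $(C,(f^{\lozenge k}\times g^{\lozenge k})\upharpoonright C)$ is a $\sigma^k$-variety over the difference closed field $(L,\sigma^k)$, its sharp points are Zariski dense in $C$; I would choose a sharp point $(a,b)$ of $C$ that is Zariski-generic in $C$ over $L$ (passing to a sufficiently saturated extension if needed). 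Dominance of the generically finite maps $C\to X$ and $C\to Y$ forces $a$, $b$ to be Zariski-generic in $X$, $Y$ and interalgebraic over $L$, i.e. $\overline{L(a)}=\overline{L(b)}$. As $Y$ is geometrically irreducible, the sharp set of $(Y,g^{\lozenge k})$ has a unique generic type over $L$, which $b$ realizes; by homogeneity of the monster model there is then a sharp point $(b,c)$ of $D$ generic over $L$ with the same $Y$-coordinate, whence $\overline{L(b)}=\overline{L(c)}$. Therefore $\overline{L(a)}=\overline{L(c)}$, with $a$, $c$ Zariski-generic in $X$, $Z$ and $(a,c)$ a sharp point of $(X\times Z,\Gamma(f^{\lozenge k}\times h^{\lozenge k}))$; its Zariski locus $E$ over $L$ is a geometrically irreducible curve dominating both $X$ and $Z$, and because its Zariski-generic point $(a,c)$ is sharp, $E$ is $(f^{\lozenge k}\times h^{\lozenge k})$-skew-invariant. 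Applying the ``if'' direction of the reformulation to $E$ yields $(X,f)\not\perp(Z,h)$.

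The conceptual heart --- interalgebraicity of generic sharp points over a difference closed base is a transitive relation --- is immediate, and essentially all the labor is bookkeeping with compositional powers of $\sigma$. I expect the ``only if'' half of the reformulation to be the main obstacle: a witness to non-orthogonality may genuinely be reducible and may possess no sharp points at all (Remark~\ref{orthocheat}), so extracting from it a geometrically irreducible skew-invariant curve dominating both factors, after passing to an appropriate power $\sigma^k$, requires the component-permutation analysis, the verification that the witness property is inherited by some $(f\times g)$-orbit of components and is equivalent, for an irreducible curve in a product of two curves, to dominating both factors, and a check that the witness property is preserved under base change to a difference closed field.
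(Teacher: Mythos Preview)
Your approach is correct but takes a substantially longer route than the paper. The paper's proof is purely geometric and avoids types altogether: after a base change one has $(f,g)$-skew-invariant and $(g,h)$-skew-invariant curves $\Gamma\subseteq X\times Y$ and $\Xi\subseteq Y\times Z$ (with no horizontal or vertical components), and one simply forms the correspondence composite $\Xi\circ\Gamma$ and applies the $(-)_{\per}$ operation of Proposition~\ref{maxinv}. Lemma~\ref{compper} shows $(\Xi\circ\Gamma)_{\per}$ is $(f,h)$-skew-invariant; finiteness of the projections and of $f,g,h$ guarantees it is one-dimensional with all components dominating $X$ and $Z$, and this already witnesses $(X,f)\not\perp(Z,h)$. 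No passage to $\sigma^k$, no irreducibility of the witness, and no sharp points are needed.

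Your argument instead reformulates non-orthogonality in terms of a geometrically irreducible $\sigma^k$-invariant curve, then transports the problem to transitivity of interalgebraicity of generic sharp points over a difference closed base. This works (the uniqueness of the Zariski-generic type in $(Y,g^{\lozenge k})^\sharp$ over a model of $\ACFA$ that you invoke does hold, and you correctly note one must pass to a power so that the reduct is again a model of $\ACFA$), but the model-theoretic machinery is doing work that Lemma~\ref{compper} handles in one line. In effect your ``compose correspondences'' step is hidden inside the interalgebraicity chain $\overline{L(a)}=\overline{L(b)}=\overline{L(c)}$, and the bookkeeping with $\sigma^k$ and the reformulation is precisely what the $(-)_{\per}$ operation absorbs in the paper's argument. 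What your approach buys is an explicit connection to the usual stability-theoretic transitivity of non-orthogonality; what the paper's buys is brevity and the avoidance of the subtleties flagged in Remark~\ref{orthocheat}.
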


\begin{Rk}
From the model theoretic perspective, Proposition \ref{nonorthoequiv} is almost a special case of the fact that the nonorthogonality
relation is an equivalence relation on minimal types where the provisos from Remark~\ref{orthocheat} explain the
sense in which this remark is only approximately true.
\end{Rk}

When we view a subvariety $\Gamma \subseteq X \times Y$ as a many valued function from $X$ to $Y$, we call it a \emph{correspondence} from $X$ to $Y$.
Before proving Proposition~\ref{nonorthoequiv}, we recall what it means to compose correspondences and
 record some basic properties of this operation.

\begin{definition} \label{curve-compose-def}
Let $X$, $Y$, and $Z$ be three varieties over some field $K$, $\Gamma \subseteq X \times Y$ and
$\Xi \subseteq Y \times Z$ subvarieties of $X \times Y$ and $Y \times Z$, respectively.   Let
$\pi:X \times Y \times Z \to X \times Z$ be the projection map onto the first and third coordinates.
We define $\Xi \circ \Gamma := \pi ((\Gamma \times Z) \cap (X \times \Xi))$, the projection
of the fibre product of $\Gamma$ and $\Xi$ over $Y$.   If $W \subseteq X$ is any
subvariety, then $\Gamma(W) := \Gamma \circ \Delta_W$ where $\Delta_W \subseteq W \times X$ is the
graph of the embedding of $W$ in $X$.
\end{definition}

\begin{Rk}
At the level of points, provided that $K = K^{\text{alg}}$,  $\Xi \circ \Gamma$ is the
Zariski closure of the set
$$
\{ (a,c) \in (X \times Z) (K) ~:~ (\exists b \in Y(K))  (a,b) \in \Gamma(K) ~\&~ (b,c) \in \Xi(K) \}
$$
\end{Rk}

\begin{Rk}
One treats a correspondence $\Gamma \subseteq X \times Y$ as a many valued function from $X$ to $Y$.
Provided that the projection map $\pi:\Gamma \to X$ is dominant, this ``function'' is defined almost
everywhere.  If $f:X \to Y$ is a rational function, then one regards $f$ as a correspondence by identifying $f$ with
its graph $\Gamma(f)$. Switching the roles of input and output in $\Gamma(f)$ gives what we call \emph{the converse relation to the graph of $f$}. If the projection map to the output coordinate is finite, then $\Gamma$ may be regarded as a finite valued function. In the cases of interest to us, $X$ and $Y$ are irreducible curves and $\Gamma$ is a curve each of whose components projects dominantly to $X$ and to $Y$.  Here, $\Gamma(K)$ really is a finite-to-finite
correspondence between $X(K)$ and $Y(K)$.
\end{Rk}

\begin{Rk}
Even if $X$, $Y$, $Z$, $\Gamma$ and $\Xi$ are all irreducible, then $\Xi \circ \Gamma$ may be reducible.
For example, if $X = Y = Z = \AA^1$, $f:\AA^1 \to \AA^1$ is any polynomial of degree at least two, $\Gamma$
is the graph of $f$ and $\Xi$ is its converse relation, then $\Xi \circ \Gamma$ is defined by
$f(x) = f(z)$ which always has the diagonal as one component and other components
corresponding to the factors of the polynomial $\frac{f(x) - f(z)}{x-z}$.
\end{Rk}

\begin{lem}
\label{compper}
Let $(X,f)$, $(Y,g)$ and $(Z,h)$ be $\sigma$-varieties over some difference field $(K,\sigma)$.
Suppose that $\Gamma \subseteq X \times Y$ is weakly $(f,g)$-skew-invariant and that
$\Xi \subseteq Y \times Z$ is weakly $(g,h)$-skew-invariant.  Then $\Xi \circ \Gamma$ is
weakly $(f,h)$-skew-invariant and $(\Xi \circ \Gamma)_{\per}  = (\Xi_\per \circ \Gamma_\per)_\per$.
\end{lem}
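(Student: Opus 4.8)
The plan is to reduce everything to two ``soft'' functoriality properties of composition of correspondences and then invoke Proposition~\ref{maxinv}. The two properties I would isolate first are: (i) every $\sigma^m$-transform ($m \in \ZZ$, using that $\sigma$ is invertible here) commutes with composition, $(\Xi \circ \Gamma)^{\sigma^m} = \Xi^{\sigma^m} \circ \Gamma^{\sigma^m}$ --- immediate from Definition~\ref{curve-compose-def}, since base change along a field automorphism preserves products, fibre products, closed subvarieties, and Zariski closures of images; and (ii) the one-sided functoriality inclusion
$$
(\alpha \times \gamma)(\Xi \circ \Gamma) \subseteq \bigl((\beta \times \gamma)(\Xi)\bigr) \circ \bigl((\alpha \times \beta)(\Gamma)\bigr)
$$
for morphisms $\alpha\colon X \to X'$, $\beta\colon Y \to Y'$, $\gamma\colon Z \to Z'$, which I would check on $K^{\mathrm{alg}}$-points: a Zariski-dense set of points of $\Xi \circ \Gamma$ are pairs $(a,c)$ with $(a,b) \in \Gamma$ and $(b,c) \in \Xi$ for some $b$, whence $\bigl(\alpha(a),\gamma(c)\bigr)$ is witnessed by $\beta(b)$, and passing to closures gives the inclusion.

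Granting these, the first assertion is immediate: applying (ii) with $(\alpha,\beta,\gamma) = (f,g,h)$ and then the weak skew-invariance of $\Gamma$ and $\Xi$ together with (i),
$$
(f \times h)(\Xi \circ \Gamma) \subseteq \bigl((g \times h)(\Xi)\bigr) \circ \bigl((f \times g)(\Gamma)\bigr) \subseteq \Xi^\sigma \circ \Gamma^\sigma = (\Xi \circ \Gamma)^\sigma .
$$

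For the identity $(\Xi \circ \Gamma)_\per = (\Xi_\per \circ \Gamma_\per)_\per$, one inclusion is formal: composition is monotone in each argument and $(-)_\per$ is monotone (by the maximality clause of Proposition~\ref{maxinv} the $\per$-part of a subvariety is skew-invariant and contained in it, hence sits inside the $\per$-part of any larger variety), so from $\Xi_\per \circ \Gamma_\per \subseteq \Xi \circ \Gamma$ we get $(\Xi_\per \circ \Gamma_\per)_\per \subseteq (\Xi \circ \Gamma)_\per$. For the reverse, I would use that $\Gamma$, $\Xi$, and --- by the first assertion --- $\Xi \circ \Gamma$ are all weakly skew-invariant, so Proposition~\ref{maxinv} supplies a single $n$ with $\Gamma_\per = \bigl((f^{\lozenge n} \times g^{\lozenge n})(\Gamma)\bigr)^{\sigma^{-n}}$, $\Xi_\per = \bigl((g^{\lozenge n} \times h^{\lozenge n})(\Xi)\bigr)^{\sigma^{-n}}$, and $(\Xi \circ \Gamma)_\per = \bigl((f^{\lozenge n} \times h^{\lozenge n})(\Xi \circ \Gamma)\bigr)^{\sigma^{-n}}$ simultaneously (note $(f\times g)^{\lozenge n} = f^{\lozenge n}\times g^{\lozenge n}$, a trivial induction). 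Applying (ii) with $(\alpha,\beta,\gamma) = (f^{\lozenge n}, g^{\lozenge n}, h^{\lozenge n})$ and then (i) for the $\sigma^{-n}$-transform yields $(\Xi \circ \Gamma)_\per \subseteq \Xi_\per \circ \Gamma_\per$; since $(\Xi \circ \Gamma)_\per$ is skew-invariant it equals its own $\per$-part, so applying $(-)_\per$ once more gives $(\Xi \circ \Gamma)_\per \subseteq (\Xi_\per \circ \Gamma_\per)_\per$.

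The only genuinely delicate point is setting up (i) and (ii) correctly: the functoriality inclusion runs only one way, and throughout one must read ``$\phi(W)$'' as the Zariski closure of the image (as in the definitions of weak skew-invariance), so that all objects in play stay closed subvarieties. Everything else is routine bookkeeping with the $\lozenge n$ notation plus the stabilization clause of Proposition~\ref{maxinv}.
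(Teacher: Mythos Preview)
Your argument is correct. The two functoriality facts (i) and (ii) are exactly right, and the use of Proposition~\ref{maxinv} to pick one stabilizing $n$ for all three weakly skew-invariant varieties simultaneously is the key step that makes the reverse inclusion go through.

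The paper's route is a bit different: rather than isolating the one-sided functoriality (ii) and tracking the iterates $f^{\lozenge n}$, $g^{\lozenge n}$, $h^{\lozenge n}$ by hand, it observes that the projection $\pi:X\times Y\times Z \to X\times Z$ is a morphism of $\sigma$-varieties and invokes Proposition~\ref{pushpullinv} (that $\pi(W)_\per = \pi(W_\per)$) applied to the fibre product $W=(\Gamma\times Z)\cap(X\times\Xi)$. The identity then drops out of a single chain of equalities once one checks that $W_\per = (\Gamma_\per\times Z)\cap(X\times\Xi_\per)$, which follows from maximality since the right-hand side is already skew-invariant and contains $W_\per$. Your approach avoids Proposition~\ref{pushpullinv} entirely and relies only on the stabilization clause of Proposition~\ref{maxinv}; the price is having to argue the two inclusions separately, but you gain a proof that makes the role of weak skew-invariance (needed for stabilization) completely transparent.
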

\begin{proof}
Clearly, the intersection of two (weakly) $(f,g,h)$-skew-invariant varieties is $(f,g,h)$-skew-invariant
so that $(\Gamma \times Z) \cap (X \times \Xi)$ is weakly $(f,g,h)$-skew-invariant and
$(\Gamma_\per \times Z) \cap (X \times \Xi_\per)$ is $(f,g,h)$-skew-invariant. Let
$\pi:X \times Y \times Z \to X \times Z$ be the projection map.  By Proposition~\ref{pushpullinv},
$(\Xi \circ \Gamma)_\per = (\pi((\Gamma \times Z) \cap (X \times \Xi)))_\per =
( \pi((\Gamma_\per \times Z) \cap (X \times \Xi_\per)))_\per = (\Xi_\per \circ \Gamma_\per)_\per$.
\end{proof}

With our observations on compositions in place, we prove Proposition~\ref{nonorthoequiv}.

\begin{proof}
Taking a base change if need be, we find
$\Gamma \subseteq X \times Y$ and $\Xi \subseteq Y \times Z$ which are $(f,g)$-skew-invariant (respectively, $(g,h)$-skew-invariant) curves
witnessing $(X,f) \not \perp (Y,g)$ and $(Y,g) \not \perp (Z,h)$.  By Lemma~\ref{compper}, $(\Xi \circ \Gamma)_\per$ is
an $(f,h)$-skew-invariant subvariety of $X \times Z$.  Since $\Xi$ and $\Gamma$ are curves for which the restriction of the
various projection maps are all finite, $\Xi \circ \Gamma$ is a curve all of whose components project dominantly onto $X$ and $Z$.
Because the maps $f$, $g$, and $h$ are finite,
$\dim (\Xi \circ \Gamma)_\per = \dim (\Xi \circ \Gamma) = 1$.  Hence, $(\Xi \circ \Gamma)_\per$ witnesses $(X,f) \not \perp (Z,h)$.
\end{proof}

We employ the theory of orthogonality to reduce the study of skew-invariant varieties to that of plane
curves.  To this end we use a simple, but powerful, observation that
orthogonality of products of $\sigma$-varieties follows from pairwise orthogonality.

\begin{prop}
\label{productortho}
Given a difference field $(K,\sigma)$ and two sequences of $\sigma$-varieties
$(X_1,f_1)$, \ldots, $(X_n,f_n)$ and $(Y_1,g_1)$, \ldots, $(Y_m, g_m)$ for which $(X_i,f_i) \perp (Y_j,g_j)$ for each
$i \leq n$ and $j \leq m$, we have
$$
\prod_{i=1}^n (X_i, f_i) \perp \prod_{j=1}^m (Y_j, g_i)
$$
\end{prop}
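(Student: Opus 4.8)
The plan is to reduce, via a double induction and the symmetry of orthogonality, to the case of a single $\sigma$-variety against a product of two, and to settle that case by working with a generic sharp point in a difference closed field.

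\emph{Reductions.} Orthogonality is inherited by difference-field extensions and base change commutes with products, so it suffices, for an arbitrary difference field $K$ and assuming $(X_i,f_i)\perp(Y_j,g_j)$ for all $i,j$, to prove $\prod_i(X_i,f_i)\perp^a_K\prod_j(Y_j,g_j)$. First I would trade length for the case of one factor: by induction on $m$, the statement ``$(X,f)\perp(Y_j,g_j)$ for all $j\le m$ implies $(X,f)\perp\prod_{j\le m}(Y_j,g_j)$'' follows from the case $m=2$ applied to $(X,f)$, $\prod_{j<m}(Y_j,g_j)$ and $(Y_m,g_m)$ (products of absolutely irreducible varieties over an algebraically closed field are absolutely irreducible, so these are again legitimate absolutely irreducible $\sigma$-varieties). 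Applying this ``one versus a product'' statement once for each $i$, and then once more with the two sides interchanged, reduces the general case to the following: if $(X,f)\perp(Y,g)$ and $(X,f)\perp(Z,h)$, then every $(f,g\times h)$-skew-invariant subvariety $W\subseteq X\times Y\times Z$ is a finite union of products of a component of an $f$-skew-invariant variety with a component of a $(g\times h)$-skew-invariant variety.

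\emph{The key case.} Work inside a difference closed field $(\mathcal U,\sigma)\supseteq(K,\sigma)$; enlarging $K$ if necessary, take $K$ inversive and $W$ defined over $K$. Since $\sigma$ permutes the irreducible components of $W$ in orbits whose unions are again $(f,g\times h)$-skew-invariant, and since replacing $\sigma$ by a power is harmless here (the orthogonality hypotheses persist for the associated $\sigma^r$-varieties and $\sigma^r$-reducts of models of $\ACFA$ are models of $\ACFA$, cf.\ Remark~\ref{orthocheat}), I may assume $W$ absolutely irreducible. The one geometric observation needed is that if $(a,b,c)\in W$ is \emph{sharp}, i.e.\ $f(a)=\sigma(a)$, $g(b)=\sigma(b)$, $h(c)=\sigma(c)$, then for any difference subfield $K'$ with $K\subseteq K'\subseteq\mathcal U$ the locus $\operatorname{loc}(a,b,c/K')$ is $(f,g\times h)$-skew-invariant, because $(f,g\times h)$ carries a conjugate $\theta(a,b,c)$ (with $\theta$ fixing $K'$ and commuting with $\sigma$) to $\sigma(\theta(a,b,c))$. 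Now $W$ is skew-invariant and irreducible, so its sharp points are Zariski dense in it and one of them, $(a,b,c)$, is generic in $W$ over $K$. Then $\operatorname{loc}(a,b/K)$ is an irreducible $(f,g)$-skew-invariant subvariety of $X\times Y$, so by $(X,f)\perp^a_K(Y,g)$ it is a product, whence $a$ and $b$ are algebraically independent over $K$. Let $L$ be the difference field generated by $K\cup\{b\}$; since $b$ is sharp and $K$ is inversive, $L\subseteq\acl(K\cup\{b\})$, so the transcendence degree of $a$ over $L$ equals that over $K$. Next $\operatorname{loc}(a,c/L)$ is an irreducible $(f,h)$-skew-invariant subvariety of $X\times Z$, so by $(X,f)\perp^a_L(Z,h)$ (a consequence of $(X,f)\perp(Z,h)$) it is a product, whence $a$ and $c$ are algebraically independent over $L$. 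Combining these two independences with $\acl(L)=\acl(K\cup\{b\})$, a transcendence-degree count shows that $a$ is algebraically independent from $(b,c)$ over $K$; hence $\dim W=\dim\overline{\pi_X(W)}+\dim\overline{\pi_{YZ}(W)}$ (the closures of the projections of $W$ to $X$ and to $Y\times Z$), and since $W$ is an irreducible subvariety of $\overline{\pi_X(W)}\times\overline{\pi_{YZ}(W)}$ of that same dimension, $W=\overline{\pi_X(W)}\times\overline{\pi_{YZ}(W)}$. Finally $\overline{\pi_X(W)}$ and $\overline{\pi_{YZ}(W)}$ are images of the skew-invariant variety $W$ under morphisms of $\sigma$-varieties, hence are respectively $f$- and $(g\times h)$-skew-invariant, which finishes the key case and with it the Proposition.

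\emph{Main obstacle.} The reductions are routine bookkeeping. The crux is the passage from the algebraic independence of $a$ and $b$ over $K$ together with that of $a$ and $c$ over $L$ to the algebraic independence of $a$ and $(b,c)$ over $K$: to control the dependence contributed by $c$ one is forced off the base field $K$, and it is the orthogonality of $(X,f)$ and $(Z,h)$ \emph{over the extension $L$} that is used --- this is precisely why the hypothesis must be full orthogonality and not merely almost orthogonality over $K$. The other delicate point is the reduction to an absolutely irreducible $W$: one must keep track of how $\sigma$ permutes the irreducible components and ensure the availability of a generic sharp point, which is what forces the passage to a power of $\sigma$.
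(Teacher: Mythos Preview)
Your argument is correct and takes a genuinely different route from the paper's own proof. The paper fibres $U$ over points $a\in(Z,h)^\sharp$, uses $(X,f)\perp(Y,g)$ on each fibre $U_a$, invokes compactness to obtain uniform families $V_i\subseteq X\times Z$ and $W_i\subseteq Y\times Z$, and then applies $(X,f)\perp(Z,h)$ to split the $V_i$. You instead pick a generic sharp point $(a,b,c)$ of $W$ and run a transcendence-degree/forking calculation: $a\perp b$ over $K$ from $(X,f)\perp^a_K(Y,g)$, then $a\perp c$ over $K\langle b\rangle$ from $(X,f)\perp(Z,h)$, whence $a\perp(b,c)$ over $K$. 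This is exactly the stability-theoretic transitivity argument the paper alludes to in the remark following the statement; it is conceptually cleaner but leans on working inside a difference closed field, whereas the paper's fibration argument is more self-contained and geometric.

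One step deserves more care than you give it: the reduction to absolutely irreducible $W$ by passing to a power $\sigma^r$. Your claim that ``the orthogonality hypotheses persist for the associated $\sigma^r$-varieties'' is not what Remark~\ref{orthocheat} says (that remark goes in the opposite direction), and as stated it would require checking almost orthogonality over arbitrary $\sigma^r$-extensions, which need not be reducts of $\sigma$-extensions. What actually makes your argument work is that you stay inside the fixed $\sigma$-closed $(\mathcal U,\sigma)$: if $V$ is $(f^{\lozenge r},g^{\lozenge r})$-$\sigma^r$-skew-invariant over some $\sigma^r$-subfield, then $V':=\bigcup_{j=0}^{r-1}\big((f,g)^{\lozenge j}(V)\big)^{\sigma^{-j}}$ is $(f,g)$-$\sigma$-skew-invariant over a $\sigma$-subfield, so the original hypothesis applies to $V'$, and $V$ is one of its irreducible components, hence a product. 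With this made explicit (in both step~3 and step~5, the latter over the $\sigma$-closure of $K\langle b\rangle$), your proof goes through.
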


\begin{Rk}
In model theoretic stability theory, Proposition~\ref{productortho} is usually deduced as an immediate consequence of
transitivity for the independence relation coming from nonforking.
\end{Rk}

\begin{proof}
Working by induction, one sees that it suffices to show that if $(X,f)$, $(Y,g)$, and $(Z,h)$ are $\sigma$-varieties for which
$(X,f) \perp (Y,g)$ and $(X,f) \perp (Z,h)$, then $(X,f) \perp (Y \times Z, (g,h))$.   Let now $(L,\sigma)$ be some difference field extension of
$(K,\sigma)$ and $U \subseteq (X \times (Y \times Z))_L$ an $(f,g,h)$-skew-invariant variety over $L$.  For any
difference field extension $(M,\sigma)$ of $(L,\sigma)$ and point $a \in (Z,h)^\sharp(M,\sigma)$, the fibre
$U_a$ of $U$ is a $(f,g)$-skew-invariant subvariety of $(X \times Y)_M$.  Since $(X,f) \perp (Y,g)$ we know that
$U_a$ is a finite union of varieties of the form $V(a) \times W(a)$ where $V(a) \subseteq X_M$ is $f$-skew-invariant
and $W(a) \subseteq Y_M$ is $g$-skew-invariant.  Since this is true for every point in $(Z,h)^\sharp$, it
follows from compactness that there are finite sequences of locally closed sets $V_i \subseteq X \times Z$
and $W_i \subseteq Y \times Z$
(for $i \leq n$) so that for any $a \in (Z,h)^\sharp(M,\sigma)$ there is some $J \subseteq \{ 1, \ldots, n \}$ with
$$
U_a = \bigcup_{i \in J} ((V_i)_a \times (W_i)_a)
$$

Taking the sequences to be minimal, we see that each $V_i$ is a component of an $(f,h)$-skew-invariant and each $W_i$ is
a component of an $(g,h)$-skew-invariant.
Hence, by orthogonality, we may write each $V_i$ as a finite union of products of $f$-skew-invariant varieties with $h$-skew-invariant
varieties.  Hence, $U$ itself
is a finite union of products of components of $f$-skew-invariant varieties with components of $(g,h)$-skew-invariant varieties.
\end{proof}

It is difficult to determine whether two given $\sigma$-varieties are orthogonal, though one expects that
``most'' pairs of $\sigma$-varieties are orthogonal.  However, we
exhibit a procedure to determine orthogonality  in the special case of $\sigma$-varieties of the form $(\AA^1,f)$.
On the other hand, there are some easily verified sufficient conditions for orthogonality.
For example, if $f:\PP^1 \to \PP^1$ and $g:\PP^1 \to \PP^1$ are two rational functions and $\deg(f) \neq \deg(g)$, then
$(\PP^1,f) \perp (\PP^1,g)$. (This follows immediately from limit degree computations; see \cite{CH-ACFA1} for details.)  In a different direction, the dichotomy between $\sigma$-varieties
coming from group actions and \emph{disintegrated} $\sigma$-varieties gives a basic instance of
orthogonality.

Two contradictory notions of triviality for $\sigma$-varieties appear in the literature.  Sometimes (see, for example,~\cite{CH-AD1}),
one says that $(X,f)$ is \emph{trivial} if it is isomorphic (as a $\sigma$-variety) to a $\sigma$-variety of the form $(Y,\operatorname{id}_Y)$.
On the other hand, sometimes (see, for instance, the preprint version of this very paper~\cite{selfarXiv}), one says that
$(X,f)$ is trivial if every type in $(X,f)^\sharp$ is trivial in the sense of its forking geometry.  Since this latter property also
goes under the name of \emph{disintegratedness}, we use this term.
Just as orthogonality is usually defined using the theory of forking, so is disintegratedness, but we give a geometric
definition for $\sigma$-varieties.

\begin{definition}
\label{deftrivial}
Let $(X,f)$ be a  $\sigma$-variety over the difference field $(K,\sigma)$.  We say that
$(X,f)$ is \emph{disintegrated} if for each natural number $n \in \NN$ and each algebraically closed difference field
$(L,\sigma)$ extending $(K,\sigma)$, each component of an $f^{\times n}$-skew-invariant subvariety
$Z \subseteq X_L^{\times n}$ is a component of the intersection $\bigcap_{1 \leq i \leq j \leq n } \pi_{i,j}^{-1} \pi_{i,j}(Z)$
where $\pi_{i,j}:X^{\times n} \to X^{\times 2}$ is the projection map $(x_1,\ldots,x_n) \mapsto (x_i,x_j)$.
\end{definition}

With the next proposition we note that for any product of disintegrated $\sigma$-varieties the algebraic relations
are essentially binary.

\begin{prop}
\label{powertriv}
If $(X_1,f_1), \ldots, (X_n,f_n)$ is a finite sequence of disintegrated $\sigma$-varieties over the
difference field $(K,\sigma)$ where each $X_i$ is an absolutely irreducible curve, then
for every difference field $(L,\sigma)$ extending $(K,\sigma)$ every component $Z$ of a skew-invariant
subvariety of $\prod_{i=1}^n (X_i,f_i)$  is a component of $\bigcap_{1 \leq i \leq j \leq n } \pi_{i,j}^{-1} \pi_{i,j}(Z)$
where $\pi_{i,j}:\prod_{i=1}^n X_i \to X_i \times X_j$ is the projection map $(x_1,\ldots,x_n) \mapsto (x_i,x_j)$.
\end{prop}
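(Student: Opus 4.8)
The plan is to reduce Proposition~\ref{powertriv} to the hypothesis of disintegratedness (Definition~\ref{deftrivial}) applied not to a single $\sigma$-variety but to a suitable auxiliary $\sigma$-variety built out of the $(X_i,f_i)$. The obstacle is purely that Definition~\ref{deftrivial} is phrased for the Cartesian powers $X_L^{\times n}$ of a \emph{single} $(X,f)$, whereas here we have $n$ possibly nonisomorphic curves $(X_i,f_i)$. So the first step is to manufacture a common container: working over $(L,\sigma)$, set $X := \coprod_{i=1}^n X_i$ (a disjoint union of curves, hence a smooth affine curve, not irreducible, but that is harmless if one is willing to work componentwise — alternatively pass to a single curve dominating all of them, but the disjoint union is cleanest) and define $f:X \to X^\sigma$ to restrict to $f_i$ on the $i$-th component. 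Then $\prod_{i=1}^n (X_i,f_i)$ is a union of connected components of $(X,f)^{\times n}$, namely those components of $X^{\times n}$ of the form $X_{j_1} \times \cdots \times X_{j_n}$; the component $\prod_i X_i$ is the one where the indices $j_1,\dots,j_n$ are $1,2,\dots,n$ in order. A skew-invariant subvariety of $\prod_i(X_i,f_i)$ is then a skew-invariant subvariety of $(X,f)^{\times n}$ contained in that component.

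Second, I must check that disintegratedness of the $(X_i,f_i)$ individually yields disintegratedness of $(X,f)$ in the sense of Definition~\ref{deftrivial}. An $f^{\times n}$-skew-invariant $Z \subseteq X_L^{\times n}$ decomposes along the components of $X_L^{\times n}$; on each component $X_{j_1} \times \cdots \times X_{j_n}$ the piece of $Z$ is skew-invariant for $f_{j_1}^{\times 1} \times \cdots \times f_{j_n}^{\times 1}$. When the indices $j_1,\dots,j_n$ are pairwise distinct this is not literally a power of a single map, so here one genuinely needs the mixed statement — meaning this reduction is slightly circular. The honest route, then, is the reverse: prove the mixed Proposition~\ref{powertriv} directly by imitating the proof that Definition~\ref{deftrivial} would have, and the point is that \emph{pairwise orthogonality is not needed} — only the binary-generation property of each factor is. Concretely: given a component $Z$ of a skew-invariant subvariety of $\prod_i(X_i,f_i)$, let $Z' := \bigcap_{i<j}\pi_{i,j}^{-1}\pi_{i,j}(Z)$; one has $Z \subseteq Z'$ trivially, $Z'$ is again skew-invariant (intersections and pullbacks of skew-invariant varieties are skew-invariant, by Proposition~\ref{pushpullinv} and the remark preceding Lemma~\ref{compper}), and one must show $\dim Z' = \dim Z$ and, more, that $Z$ is a whole component of $Z'$.

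Third — and this is the real content — the equality $\dim Z = \dim Z'$ (and the component statement) must be extracted from the disintegratedness of each $(X_i,f_i)$ \emph{separately}. The argument is an induction on $n$: project $Z$ to $\prod_{i=1}^{n-1} X_i$; by the inductive hypothesis its closure $\bar Z_{n-1}$ is a component of $\bigcap_{i<j\le n-1}\pi_{i,j}^{-1}\pi_{i,j}(\bar Z_{n-1})$. The fibre of $Z$ over a generic point of $\bar Z_{n-1}$ is a skew-invariant subvariety of a base-changed copy of $X_n$ — or, after also fixing the value in each coordinate $i<n$, one sees that the constraints coupling $x_n$ to the earlier coordinates are, by disintegratedness of $(X_n,f_n)$ applied to the appropriate power, already implied by the pairwise ones $\pi_{i,n}$. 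I expect the cleanest way to make this rigorous is in fact to invoke Definition~\ref{deftrivial} for $(X_n, f_n)$ with the power equal to the number of coordinates among $1,\dots,n-1$ that are "genuinely constrained", via a diagonal map $X_n \to X_n^{\times k}$; disintegratedness forces the $k$-ary constraint to be binary, and binary constraints among copies of $x_n$ are trivial (they are the diagonal), leaving only the genuinely binary constraints $\pi_{i,n}$. The main obstacle, then, is bookkeeping: correctly organizing which coordinates are "new", setting up the diagonal embeddings so that Definition~\ref{deftrivial} applies to each $(X_i,f_i)$ in turn, and verifying that skew-invariance is preserved under all the projections, fibrations, and intersections involved. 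Once that combinatorial scaffolding is in place the geometric input is exactly the hypothesis, used $n$ times.
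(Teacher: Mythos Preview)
Your inductive approach has a genuine gap, and it is not merely bookkeeping. Disintegratedness of $(X_n,f_n)$ in the sense of Definition~\ref{deftrivial} is a statement about skew-invariant subvarieties of $X_n^{\times m}$; it says nothing about how the coordinate in $X_n$ can be constrained by coordinates living in the \emph{other} curves $X_1,\ldots,X_{n-1}$. Your suggested diagonal embedding $X_n \to X_n^{\times k}$ does not help: after embedding, the constraints coupling $x_n$ to the $x_i$'s still involve $X_i \times X_n$, not $X_n \times X_n$, so the hypothesis on $(X_n,f_n)$ still does not apply. In the simplest case $n=3$ with pairwise distinct curves, your argument would have to rule out an irreducible skew-invariant surface $Z \subseteq X_1 \times X_2 \times X_3$ projecting onto each $X_i \times X_j$, using only disintegratedness of each factor separately --- and there is no mechanism in your sketch that does this. (Your fibre remark is also delicate: the fibre over a generic point of $\bar Z_{n-1}$ is not obviously skew-invariant, since a generic point is not a $\sharp$-point.)

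The paper's proof supplies exactly the missing bridge between the different curves. It first partitions the factors into nonorthogonality classes using Proposition~\ref{nonorthoequiv} and reduces to a single class via Proposition~\ref{productortho}. Within one class every pair $(X_i,f_i)$, $(X_1,f_1)$ is nonorthogonal, so there exist $(f_i,f_1)$-skew-invariant curves $Y_i \subseteq X_i \times X_1$ with no horizontal or vertical components. Setting $Y := \prod_i Y_i \subseteq \prod_i X_i \times X_1^{\times n}$, one uses the two projections $\rho:Y\to\prod_i X_i$ and $\eta:Y\to X_1^{\times n}$ as a correspondence to transport the given $Z$ to a skew-invariant subvariety of $X_1^{\times n}$. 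There the disintegratedness of $(X_1,f_1)$ applies directly, and since $Y$ respects the product structure, the binary-generation statement pulls back to $Z$. The correspondences $Y_i$ are the device you are missing for moving between distinct curves.
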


\begin{proof}
By Proposition~\ref{nonorthoequiv} we may partition
the components of this product so that the factors are non-orthogonal within each block of the partition but are
orthogonal between blocks.
By Proposition~\ref{productortho} we may assume that for every pair
we have $(X_i,f_i) \not \perp (X_j,f_j)$.   In particular, for each $i \leq n$ there is an $(f_i,f_1)$-skew
invariant curve $Y_i \subseteq X_i \times X_1$ none of whose components is a vertical or horizontal line.  Let
$Y := \prod Y_i$ regarded as an $(f_1,\ldots,f_n;f_1^{\times n})$-skew-invariant subvariety of
$\prod_{i=1}^n X_i \times X_1^{\times n}$.   Let $\rho:Y \to \prod_{i=1}^n X_i$
 be the restriction of the projection map onto the
first $n$-coordinates and $\eta:Y \to X_1^{\times n}$ the projection onto the last $n$ coordinates.

Suppose now that $Z \subseteq \prod_{i=1}^n X_i$ is an $(f_1,\ldots,f_n)$-skew-invariant variety and
that $W \subseteq Z$ is an irreducible component. By Lemma~\ref{compper} $Y(Z)$
is a weakly $f_1^{\times n}$-skew-invariant variety and $Z = Z_\per = (Y^{-1} (Y(Z)_\per))_\per$.
Thus, there is a component $V \subseteq (Y(Z))_\per$ with $W \subseteq Y^{-1}(V)$.

Since $(X_1,f_1)$ is
disintegrated,  $V$
is a component of $\bigcap_{1 \leq i \leq j \leq n} \pi_{i,j}^{-1} \pi_{i,j} V$.  Since
$Y$ respects the product decomposition, it follows that $Y^{-1}(V)$ is contained
in $\bigcap_{1 \leq i \leq j \leq n} \pi_{i,j}^{-1} \pi_{i,j} Y^{-1}(V)$.  As $W$ is a component of $Y^{-1}(V)$,
the result follows.
\end{proof}

As we noted above, $\sigma$-varieties coming from algebraic groups are never disintegrated.

\begin{prop}
\label{groupnontrivial}
Let  $(K,\sigma)$ be a difference field, $G$  a connected positive dimensional algebraic group over $K$,
$\phi:G \to G^\sigma$ a dominant map of algebraic groups, and $g \in G(K)$ a $K$-rational point.
Let $\tau_g:G \to G$ be defined by $\tau_g(x) := gx$.  Let $f:G \to G^\sigma$ be given by $f := \phi \circ \tau_g$. Then $(G,f)$ is \emph{not} disintegrated.
\end{prop}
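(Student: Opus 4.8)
The plan is to exhibit directly, for $n=3$, an irreducible $f^{\times 3}$-skew-invariant subvariety $Z\subseteq G^3$ which is a proper subvariety of $G^3$ and all three of whose binary projections $\pi_{i,j}(Z)$ are Zariski dense in $G\times G$. Given such a $Z$, the intersection $\bigcap_{1\le i<j\le 3}\pi_{i,j}^{-1}(\overline{\pi_{i,j}(Z)})$ equals $G^3$, an irreducible variety whose only component is $G^3$ itself; hence $Z$ is not a component of it and $(G,f)$ violates Definition~\ref{deftrivial}. Since $\phi$ is a dominant homomorphism of algebraic groups it is surjective, and $f(x)=\phi(gx)=\phi(g)\phi(x)$. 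The variety I would use is
$$
Z:=\{(x_1,x_2,x_3)\in G^3 ~:~ x_1^{-1}x_3=(x_1^{-1}x_2)^2\}\text{.}
$$

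For skew-invariance, note that $f(x_i)^{-1}f(x_j)=\phi(x_i)^{-1}\phi(g)^{-1}\phi(g)\phi(x_j)=\phi(x_i^{-1}x_j)$, so the left translation by $\phi(g)$ cancels in every ratio; thus $(x_1,x_2,x_3)\in Z$ forces $f(x_1)^{-1}f(x_3)=\phi(x_1^{-1}x_3)=\phi((x_1^{-1}x_2)^2)=(f(x_1)^{-1}f(x_2))^2$, and since $Z^\sigma$ is cut out by the same group-word identity in $G^\sigma$ we get $f^{\times 3}(Z)\subseteq Z^\sigma$. Equality holds because $f$ is surjective: given $(y_1,y_2,y_3)\in Z^\sigma$, pick $f$-preimages $x_1,x_2$ of $y_1,y_2$ and set $x_3:=x_1(x_1^{-1}x_2)^2$; then $(x_1,x_2,x_3)\in Z$ and $f(x_3)=f(x_1)(f(x_1)^{-1}f(x_2))^2=y_1(y_1^{-1}y_2)^2=y_3$. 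Moreover $Z$ is the image of the injective morphism $G\times G\to G^3$, $(x_1,x_2)\mapsto(x_1,x_2,x_1(x_1^{-1}x_2)^2)$, hence irreducible of dimension $2\dim G$; since $\dim G\ge 1$, $\dim Z=2\dim G<3\dim G$, so $Z\subsetneq G^3$.

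Finally, the binary projections are dense: $\pi_{1,2}(Z)=G\times G$ by the parametrization; $\pi_{1,3}(Z)$ contains $\{(x_1,x_1h^2):x_1,h\in G\}$, which is dense since the squaring map $h\mapsto h^2$ on $G$ is dominant (its differential at the identity is multiplication by $2$ on the tangent space, invertible in characteristic zero); and for $\pi_{2,3}$, given $(x_2,x_3)$ one sets $h:=x_2^{-1}x_3$ and $x_1:=x_2h^{-1}$ and checks that $x_1^{-1}x_2=h$ and $x_1^{-1}x_3=h\cdot x_2^{-1}x_3=h^2$, so $(x_1,x_2,x_3)\in Z$ and $\pi_{2,3}(Z)=G\times G$. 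Therefore $\pi_{i,j}^{-1}(\overline{\pi_{i,j}(Z)})=G^3$ for each pair $i<j$, the triple intersection is the irreducible variety $G^3$, and the proper irreducible $Z$ is not one of its components; so $(G,f)$ is not disintegrated.

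The crux is choosing $Z$ so that neither the translation by $g$ nor non-commutativity of $G$ gets in the way. Two observations make it go through: passing to ratio coordinates $x_i^{-1}x_j$ absorbs the left translation $\tau_g$ regardless of commutativity (because $\phi$ is a homomorphism), and one must use a genuinely nonlinear group word — here $h\mapsto h^2$ — rather than a coset-type relation such as $x_1x_2=x_3$: the identity $h^{-1}\cdot h^2=h$ is precisely what makes $\pi_{2,3}$ surjective, while $h\mapsto h^2$ is itself dominant. Alternatively one could first reduce to the case $g=e$ by conjugating $(G,f)$ to $(G,\phi)$ over a difference field extension carrying a solution of $\sigma(c)=\phi(cg^{-1})$ and then use the graph of multiplication on $G$, but the direct argument needs no base change.
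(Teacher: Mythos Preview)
Your approach is genuinely different from the paper's and mostly works, but it has a characteristic-$2$ gap that the paper's proof avoids.

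The paper passes to a difference field extension $(L,\sigma)$ containing a solution $h$ of $\sigma(h)=\phi(h)\phi(g)^{-1}$ and takes the \emph{linear} variety $\Gamma=\{z=x\cdot h\cdot y\}\subseteq G_L^{3}$. All three coordinate projections of $\Gamma$ are literally surjective onto $G^2$ (one solves for the missing variable by a single group inversion and multiplication), so no dominance argument is needed and the proof is characteristic-free. Your route trades the base change for a \emph{quadratic} group word $x_1^{-1}x_3=(x_1^{-1}x_2)^2$, which lets you work over $K$ itself; this is a nice idea, and the cancellation $f(x_i)^{-1}f(x_j)=\phi(x_i^{-1}x_j)$ that kills the translation by $g$ is exactly right.

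The gap is your dominance claim for $\pi_{1,3}$: you appeal to the differential of squaring at $e$ being multiplication by $2$, which requires characteristic $\neq 2$. The proposition carries no characteristic hypothesis, and the failure is not merely cosmetic: for $G=\mathbb{G}_a$ in characteristic $2$ the squaring map is identically zero, your $Z$ collapses to $\{x_3=x_1\}$, and then $\bigcap_{i<j}\pi_{i,j}^{-1}\overline{\pi_{i,j}(Z)}=\{x_3=x_1\}=Z$, so this $Z$ \emph{is} a component of the intersection and witnesses nothing. There is no obvious fix by changing the exponent: with $Z_n=\{x_1^{-1}x_3=(x_1^{-1}x_2)^n\}$ your $\pi_{1,3}$ argument needs the $n$-th power map dominant while your $\pi_{2,3}$ argument needs the $(n-1)$-st power map dominant, and in characteristic $2$ one of $n,n-1$ is always even. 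In characteristic zero (the regime the paper ultimately works in) your proof is correct and pleasantly avoids base change; but as the proposition is stated, the paper's linear-word-plus-base-change argument is the one that goes through uniformly.
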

\begin{proof}
Let $(L,\sigma)$ be a difference field extending $(K,\sigma)$ and containing a solution $h$ to the difference
equation $\sigma(h) = \phi(h) \cdot \phi(g)^{-1}$.  One checks immediately that the subvariety $\Gamma$ of
$G_L^{\times 3}$ defined by the equation $z = x \cdot h \cdot y$ is an irreducible, proper closed
$f^{\times 3}$-skew-invariant variety which projects onto $G^{\times 2}$ for each pair of coordinate projections,
witnessing that $(G,f)$ is not disintegrated.
\end{proof}

More
generally, quotients of such $\sigma$-varieties and $\sigma$-varieties coming from actions of algebraic groups are never disintegrated.
In a precise sense, the main theorem of~\cite{CH-ACFA1,CHP} asserts that the presence of a group action is the only
obstruction to disintegratedness.  Specializing to the case of $\sigma$-varieties of the form $(\AA^1,f)$ over a difference field
of characteristic zero, the main theorem of the first author's doctoral dissertation~\cite{Med} characterizes the nondisintegrated
$\sigma$-varieties as exactly those coming from monomials and Chebyshev polynomials.

\begin{definition}
\label{defCheb}
For each positive integer $n \in \ZZ_+$ we write $P_n(x) := x^n \in \ZZ[x]$ for the standard $n^\text{th}$ power monomial.
We define $C_n(x) \in \ZZ[x]$ to be the unique polynomial satisfying the functional equation
$C_n \circ \pi = \pi \circ P_n$ where $\pi:\Gm \to \AA^1$ is given by $x \mapsto x + \frac{1}{x}$.  We call
$C_n$ the \emph{$n^\text{th}$ Chebyshev polynomial}.   By a \emph{negative Chebyshev polynomial} we mean a polynomial of
the form $-C_n$ for some $n \in \ZZ_+$.  In practice, when we speak of a power function, Chebyshev polynomial or
negative Chebyshev polynomial we mean one of degree at least two.
\end{definition}

\begin{Rk}
What we call the $n^\text{th}$ Chebyshev polynomial is sometimes called the $n^\text{th}$ Dickson polynomial.  Moreover,
our normalization differs from that of the Chebyshev polynomials of the first kind, $T_n(x)$, defined by the
relation $T_n(\cos(\theta)) = \cos(n \theta)$, in that $C_n(x) = 2 T_n (\frac{1}{2} x)$.
\end{Rk}

In the following theorem and throughout this paper
we abuse notation by saying that $f$ is a disintegrated polynomial (respectively, rational function) when we mean that $(\AA^1,f)$ (respectively, $({\mathbb P}^1,f)$) is a  disintegrated $\sigma$-variety.

\begin{fact}[Theorem 10 of~\cite{Med}]
\label{medpoly}
Over a difference field of characteristic zero, a polynomial of degree greater than one is disintegrated unless
it is (possibly after base change) skew-conjugate to a Chebyshev polynomial, negative Chebyshev polynomial or a monomial.
\end{fact}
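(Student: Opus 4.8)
The plan is to reduce, by means of the Zilber-style trichotomy for models of $\ACFA_0$, to a purely geometric classification of coverings of $f$ by isogenies of one-dimensional algebraic groups, and then to carry out that classification by hand.

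First I would fix a difference-closed field $(K,\sigma)\models\ACFA_0$ containing the coefficients of $f$ and examine the definable set $(\AA^1,f)^\sharp=\{a\in K:\sigma(a)=f(a)\}$. The equation $\sigma(x)=f(x)$ has transformal order one: the transformal closure of a solution $a$ over $K$ has transcendence degree one over $K$, being $\acl$-generated by $a$ together with the finitely many roots of $f(y)=a$. Hence the generic type $p$ of $(\AA^1,f)^\sharp$ has $\mathrm{SU}$-rank exactly one and is minimal, so by Definition~\ref{deftrivial} together with the correspondence between a $\sigma$-variety and its sharp points it suffices to show that $p$ is disintegrated whenever $f$ is not skew-conjugate to a monomial, a Chebyshev polynomial or a negative Chebyshev polynomial. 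By the trichotomy theorem for $\ACFA_0$ of Chatzidakis--Hrushovski and Chatzidakis--Hrushovski--Peterzil \cite{CH-ACFA1,CHP}, a non-disintegrated minimal type is non-orthogonal either to the fixed field $\operatorname{Fix}(\sigma^n)$ of some power of $\sigma$, or to the generic type of a nontrivial minimal $\sigma$-variety supported on a one-dimensional connected algebraic group. I would eliminate the first alternative by a limit-degree argument: since $\operatorname{Fix}(\sigma^n)=(\AA^1,\operatorname{id})^\sharp$ regarded as a $\sigma^n$-variety, non-orthogonality of $p$ to it would, via the eventual-non-orthogonality reformulation of Remark~\ref{orthocheat}, produce $(\AA^1,f^{\lozenge m})\not\perp(\AA^1,\operatorname{id})$ over $(K,\sigma^m)$ for some $m$, whence $(\deg f)^m=\deg(f^{\lozenge m})=\deg(\operatorname{id})=1$ by the limit-degree criterion (see the discussion following Proposition~\ref{productortho} and \cite{CH-ACFA1}), contradicting $\deg f\ge 2$.

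Thus $p$ is non-orthogonal to a group $\sigma$-variety, and here the group-configuration and rigidity apparatus of \cite{CH-ACFA1} should let me descend from this abstract non-orthogonality to genuine geometric data: after a further base change there is a one-dimensional connected algebraic group $G$ over $K$, an isogeny $\phi:G\to G^\sigma$ possibly composed with a translation, and a dominant rational map $\pi:G\to\PP^1$ with $f\circ\pi=\pi^\sigma\circ\phi$; that is, $f$ is covered by $\phi$. It then remains to determine the triples $(G,\phi,\pi)$ producing a polynomial $f$ of degree at least two. In characteristic zero $G$ is $\Ga$, $\Gm$, or an elliptic curve $E$. If $G=\Ga$ then every isogeny, even twisted by a translation, is affine-linear of degree one, so $f\circ\pi=\pi^\sigma\circ\phi$ forces $\deg f=1$, which is excluded; this step genuinely uses characteristic zero. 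If $G=E$ then $\pi$ has degree at least two and $\phi$ is \'etale, so comparing the ramification of $f\circ\pi$ and of $\pi^\sigma\circ\phi$ above the totally ramified fixed point $\infty$ of the polynomial $f$ is incompatible with $\deg f\ge 2$ except when $\deg f=2$, in which case $f$ is already conjugate to $x^2$; this is the classical fact that a polynomial is never a Latt\`es map. The substantive case is $G=\Gm$: here $\phi$ has the form $x\mapsto c x^n$ and $\pi$ is a Laurent polynomial, and the identity $f(\pi(x))=\pi^\sigma(c x^n)$ with $f$ a polynomial is precisely the semiconjugacy situation classified by Ritt \cite{Ritt}. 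Up to linear changes of coordinate its only solutions are $\pi(x)=x^k$, whence $f$ is skew-conjugate to the monomial $x^n$, and $\pi(x)=x^k+x^{-k}$, whence $f$ is skew-conjugate to $\pm C_n$ (Definition~\ref{defCheb}), the sign and the conjugacy class being dictated by whether $c^k=1$ or $c^k=-1$ and by the linear changes used. In every admissible case $f$ is skew-conjugate to a monomial, a Chebyshev polynomial or a negative Chebyshev polynomial, which is what we wanted. (That these $f$ are in fact non-disintegrated is the reverse implication, supplied by Proposition~\ref{groupnontrivial}.)

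The main obstacle is the descent step in the third paragraph: recovering the algebraic group $G$, the isogeny $\phi$, and the covering $\pi$ from the mere non-orthogonality of $p$ to a group $\sigma$-variety. This needs the modularity dichotomy for $\ACFA_0$ together with the identification of the relevant binding group as an algebraic group and the rigidity of isogenies, so that the type-definable correspondence witnessing non-orthogonality is forced to come from an honest rational map satisfying the semiconjugacy equation. A secondary, purely computational obstacle is the $\Gm$-case classification --- verifying that no Laurent polynomial beyond $x^k$ and $x^k+x^{-k}$ (modulo linear changes) can semiconjugate $x\mapsto c x^n$ to a polynomial --- which is exactly where the combinatorics of Ritt's theorem, the subject of the remainder of the paper, makes its first appearance.
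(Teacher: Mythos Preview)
The paper does not prove this statement: it is recorded as a \emph{Fact} and attributed to Theorem~10 of~\cite{Med}, with no argument given here. So there is no ``paper's own proof'' to compare against; your proposal is effectively a sketch of the proof from the cited source.

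As such a sketch, your outline is broadly correct and follows the expected route: invoke the $\ACFA_0$ trichotomy for the minimal type of $(\AA^1,f)^\sharp$, rule out the fixed-field case by limit degree, and in the group case produce a covering $\pi:G\to\PP^1$ with $f\circ\pi=\pi^\sigma\circ\phi$ for an isogeny $\phi$, then classify by $G\in\{\Ga,\Gm,E\}$. Two small points are worth tightening. First, in the elliptic-curve case your ``except when $\deg f=2$'' escape clause is spurious: no Latt\`es map is a polynomial in any degree, since the ramification portrait of a Latt\`es map (four critical values, each with preimage multiplicities dividing $2$, $3$, $4$, or $6$) is incompatible with having a totally ramified fixed point at $\infty$; the case simply does not arise. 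Second, calling the $\Gm$ classification ``precisely the semiconjugacy situation classified by Ritt~\cite{Ritt}'' is a slight mislabel: \cite{Ritt} is the decomposition theorem, whereas what you need here is the (easier) determination of which Laurent polynomials $\pi$ satisfy $f\circ\pi=\pi^\sigma\circ(cx^n)$ for a genuine polynomial $f$, which comes down to analyzing the pole structure of $\pi$ at $0$ and $\infty$ and yields $\pi(x)=x^k$ or $\pi(x)=x^k+x^{-k}$ up to linear changes. The descent step you flag as the main obstacle is indeed where the real work in~\cite{Med} lies.
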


Using Proposition~\ref{productortho}, the observation that polynomials of different degrees are orthogonal, and Fact~\ref{medpoly} we see that for a  $\sigma$-variety $(\AA^n,\Phi)$ where $\Phi:\AA^n \to \AA^n$ takes the form $(x_1,\ldots,x_n) \mapsto (f_1(x_1),\ldots,f_n(x_n))$, we may partition the coordinates so that
$(\AA^n,\Phi)$ is a product of pairwise orthogonal $\sigma$-varieties, each of which has the form
$(\AA^m,\Psi)$ where $\Psi(x_1,\ldots,x_m) = (g_1(x_1),\ldots,g_n(x_n))$ with the $g_i$'s univariate
polynomials for which exactly one of the following occurs:

\begin{itemize}
\item  each $g_i$ is linear,
\item  there is some $N \in \ZZ_+$ so that each $g_i$ is skew-conjugate to $\pm C_N$ or $P_N$, or
\item  $(\AA^m,\Psi)$ is disintegrated and the polynomials $g_i$ are pairwise
nonorthogonal.
\end{itemize}

The skew-invariant varieties for $\sigma$-varieties of the first two kinds are very easy to describe.

After base change, a $\sigma$-variety of the form $(\AA^1,g)$ with $g$ linear is skew-conjugate to
$(\AA^1,\operatorname{id})$.  Clearly, the skew-invariant subvarieties of $(\AA^m,\operatorname{id})$
are precisely those varieties which are defined over the fixed field.  Thus, if $\Phi:\AA^n \to \AA^n$
is any dominant affine map, then, after a base change required to find an isomorphism of
$\sigma$-varieties $\alpha:(\AA^n,\Phi) \to (\AA^n,\operatorname{id})$, the
$\Phi$-skew-invariant varieties are precisely the varieties of the form $\alpha^{-1} Y$ where
$Y \subseteq {\mathbb A}^n$ is a variety defined over the fixed field.   Which of these descend to
$\Phi$-skew-invariant varieties defined over our base field can be an interesting question best
addressed through the Picard-Vessiot theory for difference equations.  We do not pursue the matter here
other than to spell out what happens in the category of algebraic dynamics.

In general, if $K$ is an algebraically closed field, $G$ is an algebraic group over $K$,
$\mu:G \times X \to X$ is a morphism of varieties giving an action of $G$ on $X$, and $g \in G(K)$ is
any $K$-rational point, then we have an algebraic dynamical system $(X,\mu(g,\cdot))$ given by the action of $g$ on $X$.  Let
$H \subseteq G$ be the Zariski closure of the group generated by $g$, which is itself an algebraic group.
Note that a subvariety $Y \subseteq X$ is $\mu(g,\cdot)$-invariant just in case it is $H$-invariant.  Thus,
the $\mu(g,\cdot)$-invariant varieties correspond exactly to the $H$-orbits.   Specializing to the case that
$G$ is the affine group acting on $\AA^n$ and $\mu(g,\cdot)$ is given by a sequence of univariate
linear polynomials, it is easy to see that we may make a change of variables so that each such component has
the form $f_i(x) = \lambda_i \cdot x$ or $f_j(x) = x + 1$.  For the remainder of this calculation, we
 shall assume that the polynomials do have this form.  The Zariski closure $H$ of the group generated by $g$
is then isomorphic to either $\Gm^r$ or $\Gm^r \times \Ga$ where $r$ is the rational rank of the multiplicative
group generated by the scalars $\lambda_i$ and there is a $\Ga$ factor just in case at least one of the
$f_j$ is $x+1$.  For any point $a \in \Gm^n(K)$, the stabilizer  of $a$ in $H$ is trivial.  Hence, as long
as we arrange for $a_i \neq 0$ when $f_i(x) = \lambda_i x$, the dimension of the Zariski closure of the
$\mu(g,\cdot)$-orbit of $a = (a_1,\ldots,a_n)$ is $\dim(H) = r$ or $r+1$.  Let us isolate this observation as a
proposition.

\begin{Def}
\label{independentpoly}
Let $K$ be a field of characteristic zero, $f_1, \ldots, f_n \in K[x]$ a sequence of linear polynomials over
$K$ is \emph{independent} if  either the numbers $f_1'(0), \ldots, f_n'(0)$
are multiplicatively independent or the multiplicative
group generated by $f_1'(0),\ldots,f_n'(0)$ has rank $n-1$ and for some $j \leq n$ we have $f_j(x) = x + b$ with
$b \neq 0$.
\end{Def}

\begin{prop}
\label{lineardense}
Let $K$ be a field of characteristic zero, $f_1, \ldots, f_n \in K[x]$ an independent sequence of linear polynomials over
$K$, and define $\Phi:\AA^n_K \to \AA^n_K$ by $\Phi(x_1,\ldots,x_n) := (f_1(x_1),\ldots,f_n(x_n))$.
Then there is some $a = (a_1,\ldots,a_n) \in \AA^n(K)$ for which $\cO_\Phi(a)$ is Zariski dense.
\end{prop}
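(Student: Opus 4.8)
The plan is to derive this from the group-action description set up just before the proposition. After the normal-form reduction we may assume each $f_i$ is $x\mapsto\lambda_i x$ or $x\mapsto x+1$. Unpacking Definition~\ref{independentpoly}, independence forces exactly one of two situations: either (a) every $f_i$ is $x\mapsto\lambda_i x$ with $\lambda_1,\dots,\lambda_n$ multiplicatively independent, or (b) exactly one $f_i$, say $f_n$, is $x\mapsto x+1$ while $\lambda_1,\dots,\lambda_{n-1}$ are multiplicatively independent. (No coordinate can be the identity, and there can be at most one translation coordinate, since a second one would drop the rank of the group generated by $f_1'(0),\dots,f_n'(0)$ below $n-1$; in case (b), having $n-1$ scalars generate a group of rank $n-1$ means they are multiplicatively independent.) In either case $\Phi$ is the action on $\AA^n$ of the element $g$ of the algebraic group $H$, where $H=\Gm^n$ acts by coordinatewise scaling and $g=(\lambda_1,\dots,\lambda_n)$ in case (a), and $H=\Gm^{n-1}\times\Ga$ with the torus scaling the first $n-1$ coordinates and $\Ga$ translating the last while $g=(\lambda_1,\dots,\lambda_{n-1};1)$ in case (b); in both cases $\dim H=n$.

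Next I would show $\overline{\langle g\rangle}=H$. For the torus part this is the classical fact that the Zariski closure of the cyclic group $\langle(\lambda_1,\dots,\lambda_r)\rangle$ inside $\Gm^r$ is cut out by the relations $\prod_i x_i^{e_i}=1$ over the integer tuples $(e_i)$ with $\prod_i\lambda_i^{e_i}=1$, hence is all of $\Gm^r$ exactly when the $\lambda_i$ are multiplicatively independent. In case (b) one further notes that $\overline{\langle g\rangle}$ is a closed subgroup of $\Gm^{n-1}\times\Ga$ surjecting onto $\Gm^{n-1}$ (by the torus computation) and onto $\Ga$ (as $\ZZ$ is Zariski dense in $\AA^1$ in characteristic zero); its intersection with $\{1\}\times\Ga$ is a closed subgroup of $\Ga$, and were it trivial then $\overline{\langle g\rangle}$ would map isomorphically onto $\Gm^{n-1}$, forcing a nonzero homomorphism $\Gm^{n-1}\to\Ga$, which cannot exist in characteristic zero; hence $\{1\}\times\Ga\subseteq\overline{\langle g\rangle}$, and with surjectivity onto $\Gm^{n-1}$ this gives $\overline{\langle g\rangle}=\Gm^{n-1}\times\Ga$.

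Now choose $a\in\AA^n(K)$ with coordinate $1$ at every multiplicative place and $0$ at the additive place (if present). Then the $H$-stabilizer of $a$ is trivial, so the orbit map $\phi\colon H\to\AA^n$, $h\mapsto h\cdot a$, is injective with image $H\cdot a$ equal to $\Gm^n$ in case (a) and to $\Gm^{n-1}\times\AA^1$ in case (b), which in either case is dense in $\AA^n$. Finally, $\cO_\Phi(a)=\{g^k\cdot a:k\in\NN\}=\phi(M)$ where $M:=\{g^k:k\in\NN\}$ is a submonoid of $H$ containing the identity, and I would invoke the standard fact that the Zariski closure $\overline M$ of a submonoid of an algebraic group is itself a subgroup: for $s\in\overline M$ the descending chain $\overline M\supseteq s\overline M\supseteq s^2\overline M\supseteq\cdots$ stabilizes, say $s^k\overline M=s^{k+1}\overline M$, so $s\overline M=\overline M$ after translating by $s^{-k}$, and since $e\in\overline M$ this forces $s^{-1}\in\overline M$. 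Thus $\overline M$ is a closed subgroup of $H$ containing $g$, whence $\overline M=\overline{\langle g\rangle}=H$; by continuity of $\phi$, $\overline{\cO_\Phi(a)}\supseteq\phi(\overline M)=\phi(H)=H\cdot a$, and since $H\cdot a$ is dense in $\AA^n$ we conclude $\overline{\cO_\Phi(a)}=\AA^n$. The only step carrying genuine content beyond routine algebraic-group facts is the translation of Definition~\ref{independentpoly} into the dichotomy (a)/(b) together with the identification of $H$; case (b), which needs the characteristic-zero input, is where I would take the most care.
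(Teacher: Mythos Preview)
Your proof is correct and follows the same approach as the paper, which states the proposition as isolating the preceding discussion about the algebraic group $H$, trivial stabilizers, and the dimension of the orbit closure. You supply more detail than the paper does: you make the dichotomy (a)/(b) coming from Definition~\ref{independentpoly} explicit, you verify directly that $\overline{\langle g\rangle}=H$ in case (b) via the no-homomorphism $\Gm^{n-1}\to\Ga$ argument, and you handle the passage from the forward $\NN$-orbit to the full $\ZZ$-orbit with the monoid-closure-is-a-group fact, a point the paper's discussion glosses over.
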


In the case of the power functions, for $N > 1$ by a fairly routine argument with degrees, one shows that
any irreducible skew-invariant subvariety of $(\Gm^g,x \mapsto x^N)$ is a translate of an
algebraic group~\cite{Hindry}.  From the point of view of the model theory of difference fields, this result is
a special case of the classification of definable groups~\cite{Chat}.  Since the
map $\pi:(\Gm,P_N) \to (\AA^1,C_N)$ is a dominant map of $\sigma$-varieties, we see that any skew-invariant
subvariety of $(\AA^n,(f_1,\ldots,f_n))$ where each $f_i$ is either $P_N$ or $C_N$ pulls back to a weakly
skew-invariant variety for $(\AA^n,(P_N,\ldots,P_N))$ and thus comes from images of multiplicative
translates of algebraic tori.  In general,
if each $f_i:\AA^1 \to \AA^1$ is merely (after base change) skew-conjugate to $P_N$ or $\pm C_N$, then as
with the linear polynomials, after base change, the skew-invariant varieties are precisely the images under the
isomorphism with the standard polynomials of certain images of  torsion translates of algebraic tori, but the question of which ones
descend to skew-invariant varieties over our base field reduces to problems in difference Galois theory. Since it is
easy to find points in $\Gm^n$ not contained in any proper algebraic subgroups, for example, take $a = (p_1,\ldots,p_n) \in \Gm^n(\QQ)$
where the $p_i$'s are distinct primes, one sees that for dynamical systems given by sequences of power maps and Chebyshev
polynomials, there are rational points with Zariski dense orbits.  Again, let us note this as a proposition.

\begin{prop}
\label{chebpowdense}
Let $K$ be a field of characteristic zero and $f_1, \ldots, f_n \in K[x]$ a sequence of polynomials of degree at least two
such that each $f_i$ is a power function,
a Chebyshev polynomial or a negative Chebyshev polynomial.
Let $\Phi:\AA^n_K \to \AA^n_K$ be
defined by $\Phi(x_1,\ldots,x_n) := (f_1(x_1),\ldots,f_n(x_n))$.  Then there is a point $a \in \AA^n(K)$ for which
$\cO_\Phi(a)$ is Zariski dense.
\end{prop}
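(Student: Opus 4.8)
The plan is to transport the entire system to a self-map of the torus $\Gm^n$ of the shape ``isogeny followed by translation by a $2$-torsion point'', where the existence of a rational point with a Zariski dense orbit reduces to the multiplicative independence of distinct rational primes. Since each $f_i$ already lies in $\ZZ[x]\subseteq K[x]$, it is enough to exhibit $a\in\AA^n(\QQ)\subseteq\AA^n(K)$ whose forward $\Phi$-orbit is Zariski dense in $\AA^n$ (Zariski density of a set of $\QQ$-points is insensitive to extending the ground field). Writing $d_i:=\deg f_i\ge 2$, I would first record, using Definition~\ref{defCheb}, a surjective morphism of algebraic dynamical systems $\pi_i:(\Gm,\psi_i)\to(\AA^1,f_i)$ for each $i$: when $f_i=P_{d_i}$ take $\pi_i$ to be the inclusion $\Gm\hookrightarrow\AA^1$ and $\psi_i(x)=x^{d_i}$; when $f_i=\varepsilon C_{d_i}$ with $\varepsilon\in\{1,-1\}$ take $\pi_i(x)=\varepsilon(x+x^{-1})$ and $\psi_i(x)=\varepsilon^{d_i}x^{d_i}$, the identity $\varepsilon C_{d_i}(\varepsilon u)=\varepsilon^{d_i+1}C_{d_i}(u)$ making the intertwining $\pi_i\circ\psi_i=f_i\circ\pi_i$ a one-line check in all cases. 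The one place where something genuinely new happens is $f_i=-C_{d_i}$ with $d_i$ odd --- the single case among power maps and (negative) Chebyshev polynomials that is \emph{not} linearly conjugate over $\QQ$ to a standard model --- where one is forced to the sign-twisted map $\psi_i(x)=-x^{d_i}$. Taking products gives a surjective $\pi=(\pi_1,\dots,\pi_n):(\Gm^n,\Psi)\to(\AA^n,\Phi)$ with $\Phi^{\circ m}\circ\pi=\pi\circ\Psi^{\circ m}$, so $\cO_\Phi(\pi(b))=\pi(\cO_\Psi(b))$; since $\pi$ is dominant this is Zariski dense in $\AA^n$ as soon as $\cO_\Psi(b)$ is Zariski dense in $\Gm^n$.

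Thus I am reduced to finding $b\in\Gm^n(\QQ)$ with $\cO_\Psi(b)$ Zariski dense, where $\Psi(x_1,\dots,x_n)=(\pm x_1^{d_1},\dots,\pm x_n^{d_n})$. I would take $b:=(p_1,\dots,p_n)$ with the $p_i$ distinct primes; as $d_i\ge2$ the orbit is infinite. If it were not Zariski dense, the descending chain $\overline{\{\Psi^{\circ k}(b):k\ge m\}}$ stabilizes (Noetherianity) to a proper $\Psi$-invariant closed subvariety of $\Gm^n$ meeting the orbit in an infinite set; chasing components, one extracts an irreducible component $W$ that is invariant under some iterate $\Psi^{\circ k}$, is proper in $\Gm^n$, and still contains $\Psi^{\circ m}(b)$ for infinitely many $m$. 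Now $\Psi^{\circ k}$ is again an isogeny of $\Gm^n$ composed with a torsion translation, so by the classification of its invariant subvarieties --- a theorem of Hindry~\cite{Hindry}, a special case of the classification of definable groups in $\ACFA$~\cite{Chat} --- $W$ is a coset $tT$ of a proper subtorus $T\subsetneq\Gm^n$ by a torsion point $t$. Picking a nontrivial character $\chi=x_1^{e_1}\cdots x_n^{e_n}$, $(e_1,\dots,e_n)\neq0$, that is trivial on $T$ gives $\chi(\Psi^{\circ m}(b))=\chi(t)$, a root of unity, for infinitely many $m$; but $\chi(\Psi^{\circ m}(b))=\pm\prod_i p_i^{e_i d_i^{m}}$ is a nonzero rational, hence equal to $\pm1$, and unique factorization forces every $e_i=0$, contradicting $\chi\neq1$. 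So $\cO_\Psi(b)$ --- and therefore $\cO_\Phi(\pi(b))$, with $a:=\pi(b)\in\AA^n(\QQ)$ --- is Zariski dense.

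There is no deep obstacle here: the whole argument rests on the classical fact (the cited theorems of Hindry and of Chatzidakis) that invariant subvarieties of these torsion-twisted isogenies of a torus are torsion cosets of subtori, together with the triviality that distinct primes generate a free multiplicative group. What requires care is purely organizational --- matching each of $P_{d_i}$, $C_{d_i}$, $-C_{d_i}$ to the right power-type self-map of $\Gm$, with the odd-degree negative Chebyshev polynomials the one case needing an honest sign twist --- and the minor point-set bookkeeping of passing from the weakly invariant orbit closure to a genuinely invariant component.
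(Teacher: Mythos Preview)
Your proposal is correct and follows essentially the same route as the paper: the paragraph preceding Proposition~\ref{chebpowdense} is its proof, and it too lifts each $(\AA^1,f_i)$ to $(\Gm,\psi_i)$ via the standard cover $x\mapsto x+x^{-1}$, invokes Hindry's classification of invariant subvarieties on the torus, and takes the point $(p_1,\dots,p_n)$ of distinct primes. Your write-up simply fills in details the paper leaves implicit --- in particular the explicit treatment of $-C_{d_i}$ via the sign-twisted map $\psi_i(x)=-x^{d_i}$ when $d_i$ is odd, and the clean character computation at the end --- but the architecture is the same.
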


Let us collect all of these observations into a single theorem in which we reduce the problem of
describing skew-invariant varieties for $\sigma$-varieties on $\AA^n$ given by sequences of univariate polynomials to
the study of disintegrated polynomials.

\begin{nota}
If we are given a finite sequence of polynomials $f_1, \ldots, f_n$ and a subset $S \subseteq \{1, \ldots, n\}$,
then we write $(\AA^S,f_S)$ for the $\sigma$-variety  $\prod_{i \in S} (\AA^i,f_i)$.
\end{nota}

\begin{thm}
\label{coarsestructurethm}
Suppose that $(K,\sigma)$ is an algebraically closed difference field of characteristic zero and $f_1,\ldots,f_n$ is a
sequence of nonconstant polynomials.  Then there is a partition ${\mathsf P}$ of $\{1 ,\ldots, n \}$ so that for distinct $S$ and $T$ from
${\mathsf P}$,  $(\AA^S,f_S) \perp (\AA^T,f_T)$ implying that if $X \subseteq \AA^n$ is a component of an
$(f_1,\ldots,f_n)$-skew-invariant variety, then it is a product of components of
$f_S$-skew-invariant varieties as $S$ ranges through ${\mathsf P}$ and for each
$S \in {\mathsf P}$ exactly one of the following is true.
\begin{itemize}
\item The polynomial $f_i$ has degree one for each $i \in S$ and the $f_S$-skew-invariant varieties are obtained (after base change)
by pullback from an isomorphism with $(\AA^S,\operatorname{id})$ from the varieties defined over the fixed field,
\item there is a number $N > 1$ so that each $f_i$ is skew-conjugate to $P_N$ or $\pm C_N$ for $i \in S$ and the $f_S$-skew-invariant
varieties are obtained from algebraic tori, or
\item all of the polynomials $f_i$ are pairwise nonorthogonal and disintegrated for $i \in S$ implying that the irreducible $f_S$-skew-invariant varieties are components of
intersections of pullbacks of $(f_i,f_j)$-skew-invariant curves in $\AA^2$ and $(i,j)$ ranges through $S^2$.
\end{itemize}
\end{thm}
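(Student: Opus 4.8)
The plan is to assemble Theorem~\ref{coarsestructurethm} from the pieces developed earlier in the section, with the only genuinely new ingredient being the construction of the partition ${\mathsf P}$ and the verification that it has the advertised product-decomposition consequence. First I would invoke Fact~\ref{medpoly}: each $f_i$ of degree greater than one is, possibly after base change, skew-conjugate either to a monomial, a Chebyshev or negative Chebyshev polynomial, or else is disintegrated; the linear $f_i$ are treated separately. Combined with the elementary observation that $\sigma$-varieties $(\PP^1,f)$ and $(\PP^1,g)$ with $\deg f \neq \deg g$ are orthogonal (limit-degree computation, \cite{CH-ACFA1}), and with the fact that a monomial/Chebyshev pair of the \emph{same} degree is nonorthogonal while being orthogonal to every disintegrated $(\AA^1,g)$ (this is the group-versus-disintegrated dichotomy, Propositions~\ref{groupnontrivial} and the main theorem of \cite{Med}), I get a first-pass partition of $\{1,\dots,n\}$ into a block of indices carrying linear polynomials, one block for each integer $N>1$ occurring as the degree of some monomial/Chebyshev $f_i$, and then the indices carrying disintegrated polynomials. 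On the last group I refine further using Proposition~\ref{nonorthoequiv}: nonorthogonality is an equivalence relation on $\sigma$-varieties whose underlying varieties are curves, so the disintegrated indices split into blocks within which the polynomials are pairwise nonorthogonal and between which they are orthogonal. Declaring ${\mathsf P}$ to be the resulting total partition, by construction $(\AA^S,f_S) \perp (\AA^T,f_T)$ for $S \neq T$ follows from Proposition~\ref{productortho} applied to each such pair of blocks.

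Next I would deduce the product decomposition of an arbitrary component $X$ of an $(f_1,\dots,f_n)$-skew-invariant variety. Since $(\AA^n, (f_1,\dots,f_n)) = \prod_{S \in {\mathsf P}} (\AA^S, f_S)$ and the factors are pairwise orthogonal, Proposition~\ref{productortho} (iterated) gives $(\AA^S,f_S) \perp \prod_{T \neq S}(\AA^T,f_T)$ for each $S$, and hence every skew-invariant subvariety of the product is a finite union of products of components of $f_S$-skew-invariant varieties ranging over $S \in {\mathsf P}$; passing to the irreducible component $X$ isolates a single such product. This is exactly the asserted shape.

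It then remains to read off the three alternatives for a single block $S \in {\mathsf P}$, and here I would simply cite the discussion already carried out in the text. If every $f_i$, $i \in S$, is linear, then after a base change realizing a $\sigma$-variety isomorphism $\alpha:(\AA^S,f_S)\to(\AA^S,\operatorname{id})$ the skew-invariant varieties are the $\alpha^{-1}(Y)$ with $Y$ defined over $\operatorname{Fix}(\sigma)$, as noted in the paragraph following the linear-polynomials reduction. If some (hence, by the degree argument, every) $f_i$ with $i \in S$ is skew-conjugate to $P_N$ or $\pm C_N$ for a fixed $N>1$, then pulling back along $\pi:(\Gm,P_N)\to(\AA^1,C_N)$ reduces to $(\Gm^{|S|},x\mapsto x^N)$, whose irreducible skew-invariant subvarieties are torsion translates of subtori by \cite{Hindry} (equivalently, the classification of definable groups \cite{Chat}); projecting back down yields the stated description via algebraic tori. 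Finally, if the $f_i$, $i\in S$, are disintegrated and pairwise nonorthogonal, Proposition~\ref{powertriv} tells us that each irreducible $f_S$-skew-invariant variety $W$ is a component of $\bigcap_{i<j,\ i,j\in S}\pi_{i,j}^{-1}\pi_{i,j}(W)$, i.e.\ is cut out from pullbacks of $(f_i,f_j)$-skew-invariant curves in $\AA^2$. The main obstacle is bookkeeping rather than mathematics: one must be careful that the ``base change'' needed in Fact~\ref{medpoly}, in Proposition~\ref{nonorthoequiv}, and in Proposition~\ref{productortho} can be taken uniformly over all pairs simultaneously, so that a single algebraically closed difference field extension of $(K,\sigma)$ suffices to witness all the required orthogonalities and nonorthogonalities at once; this is routine since only finitely many pairs $(i,j)$ are involved and orthogonality, once it holds over some extension, holds over every further extension.
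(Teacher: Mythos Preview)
Your proposal is correct and matches the paper's approach: the paper presents Theorem~\ref{coarsestructurethm} explicitly as ``collecting all of these observations into a single theorem,'' and the observations it collects are precisely the ones you cite --- Fact~\ref{medpoly} for the trichotomy, the degree argument for orthogonality, Proposition~\ref{nonorthoequiv} for the equivalence relation on the disintegrated block, Proposition~\ref{productortho} for the product decomposition, the discussion of the linear and group cases for the first two bullets, and Proposition~\ref{powertriv} for the third. Your write-up is simply a more explicit unpacking of what the paper leaves as a summary paragraph.
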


\begin{Rk}
A version of Theorem~\ref{coarsestructurethm} holds for rational functions in arbitrary characteristic.
The first case must include purely inseparable maps and the second case must include Latt\`{e}s maps and their
additive analogues in positive characteristic (see~\cite{Med}).
\end{Rk}

\subsection{From curves to polynomials}
We now convert the problem of describing $(f,g)$-skew-invariant curves
to a question about polynomial compositional identities.

\begin{nota}
In what follows we work with an algebraically closed
difference field $(K,\sigma)$ of characteristic zero on which $\sigma$ is an automorphism.  When we speak of a polynomial $f$ we mean a
polynomial with coefficients from $K$.  For the associated $\sigma$-variety, we may write
$(\AA^1,f)$ or in some cases $(\PP^1,f)$.   We will say that a rational function $g:\PP^1 \to \PP^1$ is
a polynomial if $\infty$ is a totally ramified fixed point for $g$.
\end{nota}

\begin{lem}
\label{polycov}
Let $f$ be a disintegrated polynomial.  If $C$ is a smooth, projective, irreducible curve, $(C,h)$ is a
 $\sigma$-variety and $\gamma:(C,h) \to (\PP^1,f)$ is a nonconstant morphism of $\sigma$-varieties,
then $C = \PP^1$ and $h$ and $\gamma$ are polynomials.
\end{lem}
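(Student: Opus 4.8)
The plan is to combine a degree count and a ramification analysis of the defining identity $\gamma^\sigma\circ h=f\circ\gamma$ with the Riemann--Hurwitz formula, using the hypothesis that $f$ is disintegrated only at the very end. Throughout, $\infty$ denotes the point of $\PP^1$ at which the polynomial $f$ has a totally ramified fixed point.

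First I would record the easy bookkeeping. Since $C$ is a smooth projective irreducible curve and $\gamma$ is nonconstant, $\gamma$ is finite and surjective, and likewise $h\colon C\to C^\sigma$ is finite and surjective. Comparing degrees in $\gamma^\sigma\circ h=f\circ\gamma$ and cancelling $\deg\gamma$ gives $\deg h=\deg f=:d$; and $d\ge 2$ because degree-one polynomials come from the affine group and so are not disintegrated (Proposition~\ref{groupnontrivial}). Set $S:=\gamma^{-1}(\infty)\subseteq C$, a nonempty finite set. The crucial step is to extract ramification of $h$ from the fact that $f$ is a polynomial. Since $f^{-1}(\infty)=\{\infty\}$, the two sides of $\gamma^\sigma\circ h=f\circ\gamma$ have equal fibre over $\infty$, which forces $S=h^{-1}(S^\sigma)$ as sets; as $h$ is surjective, $h(S)=S^\sigma$, and since $|S|=|S^\sigma|$ the restriction $h\colon S\to S^\sigma$ is a bijection. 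Hence $h$ is totally ramified over every point of $S^\sigma$, so its ramification divisor $R_h$ satisfies $\deg R_h\ge |S|(d-1)>0$. On the other hand $C$ and $C^\sigma$ have the same genus $g$, so Riemann--Hurwitz reads $\deg R_h=(2g-2)(1-d)$. As $d\ge 2$ and $\deg R_h>0$, this forces $2g-2<0$, i.e.\ $g=0$ and $C\cong\PP^1$; and then $\deg R_h=2(d-1)$, whence $|S|(d-1)\le 2(d-1)$, i.e.\ $|S|\le 2$.

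It remains to treat the two cases. If $|S|=1$, write $S=\{P\}$ and choose the coordinate on $C=\PP^1$ with $P=\infty$: then $\gamma$ restricts to a morphism $\AA^1\to\AA^1$, hence is a polynomial, while $h(P)=P^\sigma$ and $h$ is totally ramified there, so $h$ is also a polynomial — which is the desired conclusion. Suppose instead $|S|=2$. Then the bound $\deg R_h=2(d-1)=|S|(d-1)$ is attained, so $h$ is unramified over $C^\sigma\setminus S^\sigma$; that is, $h$ restricts to a finite unramified degree-$d$ self-cover of $\PP^1$ minus two points, so in suitable coordinates identifying $S$ and $S^\sigma$ with $\{0,\infty\}$ it has the form $u\mapsto cu^{\pm d}$, a translation of an isogeny of $\Gm$. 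By Proposition~\ref{groupnontrivial} (with $G=\Gm$), $(C,h)$ is then not disintegrated. But the graph $\Gamma_\gamma\subseteq C\times\PP^1$ of $\gamma$ is an irreducible $(h,f)$-skew-invariant curve projecting dominantly onto each factor, so it is neither a vertical nor a horizontal product and hence witnesses $(C,h)\not\perp(\PP^1,f)$ (over $K$, and after any base change). Since $f$ is disintegrated, $(\PP^1,f)$ is orthogonal to every $\sigma$-variety coming from an algebraic group as in Proposition~\ref{groupnontrivial} (equivalently, disintegratedness is preserved under nonorthogonality of $\sigma$-varieties on curves), a contradiction. Therefore $|S|=1$, and we are done.

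I expect the ramification bookkeeping to be routine; the real content — and the place where the hypothesis matters — is the exclusion of $|S|=2$. That case genuinely occurs when $f$ is skew-conjugate to a Chebyshev polynomial (take $h=P_d$ and $\gamma(z)=z+z^{-1}$), so one cannot avoid invoking the classification of disintegrated polynomials (Fact~\ref{medpoly}) or, equivalently, the orthogonality machinery, to convert ``$h$ is a monomial map'' into a contradiction with $f$ being disintegrated.
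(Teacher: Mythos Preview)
Your proof is correct and follows essentially the same route as the paper's. The paper compresses your Riemann--Hurwitz computation into a citation of Theorem~1.6 of Silverman, and in the $|S|=2$ case simply asserts that $(\PP^1,h)$, being isomorphic to $(\PP^1,x\mapsto 1/x^{\deg f})$, cannot be disintegrated since its restriction to $\Gm$ is an isogeny---leaving implicit the step you make explicit, namely that the finite dominant $\gamma$ forces $(\PP^1,f)$ to inherit non-disintegratedness from $(C,h)$ (your orthogonality argument via the graph of $\gamma$). Your handling of the $\sigma$-twisting in $h^{-1}(S^\sigma)=S$ is also slightly more careful than the paper's $h^{-1}(S)=S$.
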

\begin{proof}
The preimage $S := \gamma^{-1} (\{\infty\} )$ of $\infty$ under $\gamma$ is finite,
 and totally invariant for $h$ (that is, $h^{-1}(S) = S$).
 Thus, by an easy Riemann-Hurwitz argument (see Theorem 1.6 of~\cite{Silverman}), we see that $C = \PP^1$ and either $h$ and $\gamma$ are polynomials, or $S$  
 has exactly two elements and $(\PP^1,h)$ is isomorphic to $(\PP^1,x \mapsto \frac{1}{x^{\deg(f)}})$.  However, such a
$\sigma$-variety cannot be disintegrated as the restriction of this map to $\Gm$ is an isogeny.
\end{proof}

It follows from Lemma~\ref{polycov} that all $(f,g)$-invariant curves for $f$ a disintegrated polynomial
come from solutions to polynomial compositional identities.

\begin{prop}
\label{curvetocomposition}
If $f$ and $g$
are disintegrated polynomials and $C \subseteq \AA^2$ is an irreducible
$(f,g)$-skew-invariant curve, then there are
a polynomial $h$ and polynomial morphisms of $\sigma$-varieties $\pi:(\AA^1,h) \to (\AA^1,f)$ and
$\rho:(\AA^1,h) \to (\AA^1,g)$ for which $C$ is parametrized by the map $t \mapsto (\pi(t),\rho(t))$.
That is, there are polynomials $h$, $\rho$ and $\pi$ satisfying the compositional equations
$f \circ \pi = \pi^\sigma \circ h$ and $g \circ \rho = \rho^\sigma \circ h$.

$$
\begin{CD}
\AA^1 @<{\pi}<< \AA^1 @>{\rho}>> \AA^1 \\
@V{f}VV   @VV{h}V   @VV{g}V \\
\AA^1 @<{\pi^\sigma}<< \AA^1 @>{\rho^\sigma}>> \AA^1
\end{CD}
$$
\end{prop}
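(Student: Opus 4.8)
The plan is to take an irreducible $(f,g)$-skew-invariant curve $C \subseteq \AA^2$, normalize it, and recognize the normalization map together with the induced self-map as a diagram of polynomials. First I would let $\nu: \tilde C \to \bar C \subseteq \PP^1 \times \PP^1$ be the normalization of the projective closure $\bar C$ of $C$, so $\tilde C$ is a smooth projective irreducible curve. Composing $\nu$ with the two coordinate projections gives nonconstant morphisms $\pi_0: \tilde C \to \PP^1$ and $\rho_0: \tilde C \to \PP^1$. Because $C$ is weakly $(f,g)$-skew-invariant, the correspondence $(f,g)$ restricted to $C$ maps $C$ dominantly to $C^\sigma$; passing to normalizations, this induces a dominant (hence finite, since $\tilde C$ is a smooth projective curve) morphism $h_0: \tilde C \to \tilde C^\sigma$ making $(\tilde C, h_0)$ into a $\sigma$-variety and making $\pi_0:(\tilde C,h_0) \to (\PP^1,f)$ and $\rho_0:(\tilde C,h_0) \to (\PP^1,g)$ into morphisms of $\sigma$-varieties. (One should check here that the skew-invariance really does produce a single-valued $h_0$ after normalization; the point is that $C$ being a curve with both projections finite means that the fibre product defining the restricted correspondence, once normalized, is again $\tilde C$.)

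Next I would apply Lemma~\ref{polycov} to the morphism $\pi_0:(\tilde C, h_0) \to (\PP^1, f)$. Since $f$ is a disintegrated polynomial and $\tilde C$ is smooth, projective, and irreducible, the lemma forces $\tilde C = \PP^1$ and forces both $h_0$ and $\pi_0$ to be polynomials (i.e.\ $\infty$ is a totally ramified fixed point). Renaming $\pi := \pi_0$, $\rho := \rho_0$, $h := h_0$, we now have $\PP^1 = \AA^1 \cup \{\infty\}$ with $\pi, h$ polynomials. Applying Lemma~\ref{polycov} a second time, now to $\rho:(\AA^1,h) \to (\PP^1,g)$ — using that $g$ is also disintegrated and $h$ is already known to be a polynomial so that $\{\infty\}$ is totally $h$-invariant — gives that $\rho$ is a polynomial as well. (Alternatively, once $\pi$ and $h$ are polynomials, $\rho^{-1}(\infty)$ is totally $h$-invariant and hence, by the same Riemann–Hurwitz bookkeeping as in the proof of Lemma~\ref{polycov}, must be the single point $\infty$, so $\rho$ is a polynomial.)

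Finally, the defining diagram of a morphism of $\sigma$-varieties gives exactly $\pi^\sigma \circ h = f \circ \pi$ and $\rho^\sigma \circ h = g \circ \rho$, which are the claimed compositional equations; and the map $t \mapsto (\pi(t),\rho(t))$ is a parametrization of $C$ because $\nu$ was the normalization of $\bar C$ and $C$ is the affine part, so the image of this map is Zariski dense in $\bar C$ and its affine restriction dominates $C$, whence (both being irreducible curves) it parametrizes $C$. The main obstacle I anticipate is the first step: verifying carefully that weak skew-invariance of the curve $C$, which a priori only says $(f\times g)(C) \subseteq C^\sigma$ as a containment of correspondences, actually descends to a genuine morphism $h$ on the normalization — one must rule out the normalization map of the graph of $(f\times g)|_C$ having the "wrong" degree or failing to be $\tilde C$ itself. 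This is where finiteness of the projections $C \to \AA^1$ (automatic since $C$ is an affine plane curve surjecting onto a line under a dominant coordinate projection, after discarding any vertical/horizontal components, which cannot occur for an irreducible curve meeting both projections dominantly) and the fact that $f$, $g$ are finite maps do the work.
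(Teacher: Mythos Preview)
Your proposal is correct and follows essentially the same route as the paper: pass to the projective closure $\overline{C} \subseteq \PP^1 \times \PP^1$, normalize to get a smooth curve with an induced self-map $h$, then apply Lemma~\ref{polycov} to each coordinate projection composed with the normalization to force the normalization to be $\PP^1$ and all of $h$, $\pi$, $\rho$ to be polynomials. The paper handles your ``main obstacle'' (that skew-invariance yields a genuine morphism on the normalization) with the one-line observation that the normalization map is an isomorphism off a finite set, so the restriction of $(f,g)$ to $\overline{C}$ lifts uniquely to a regular map between smooth curves; your more detailed worry about this step is well-placed but not ultimately problematic.
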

\begin{proof}

Passing to the closures in $\PP^1 \times \PP^1$, the projective curve $\overline{C}$ is
$(f,g)$-skew-invariant.  Let $\overline{h}$ be the restriction of $(f,g)$ to $\overline{C}$.
Let $\beta:C' \to \overline{C}$ be the normalization map.  Since $C'$ is a smooth curve and $\beta$ is an
 isomorphism off a finite set, there is regular map $h:C' \to C'$ for which
 $\beta:(C',h) \to (\overline{C},\overline{h})$ is a map of $\sigma$-varieties.

Let $\alpha_i:C \to \AA^1$ be the projection map onto the $i^\text{th}$ coordinate for $i = 1$ or $2$.
Since the result is obvious if either projection map is constant, we shall assume that both $\alpha_1$ and
$\alpha_2$ are nonconstant.   By Lemma~\ref{polycov} applied to $\gamma = \alpha_1 \circ \beta$
(or $\gamma = \alpha_2 \circ \beta$), $C' = \PP^1$ and $h$ is a polynomial.
Take $\pi := \alpha_1 \circ \beta$ and $\rho := \alpha_2 \circ \beta$.
\end{proof}

Combining these observations we see that nonorthogonality between disintegrated polynomials is always witnessed
by a solution to a system of polynomial compositional identities.

\begin{corollary}
\label{nonorthotopoly}
Given two disintegrated polynomials $f$ and $g$, then $(\AA^1,f) \not \perp (\AA^1,g)$ if and only if there are a natural number $M$
and nonconstant polynomials
$\pi$, $\rho$ and $h$ for which $f^{\lozenge M} \circ \pi = \pi^{\sigma^M} \circ h$ and $g^{\lozenge M} \circ \rho = \rho^{\sigma^M} \circ h$.

$$
\begin{CD}
\AA^1 @<{\pi}<< \AA^1 @>{\rho}>> \AA^1 \\
@V{f^{\lozenge M}}VV   @VV{h}V   @VV{g^{\lozenge m}}V \\
\AA^1 @<{\pi^{\sigma^M}}<< \AA^1 @>{\rho^{\sigma^M}}>> \AA^1
\end{CD}
$$

\end{corollary}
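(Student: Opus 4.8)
The plan is to reduce the statement to Proposition~\ref{curvetocomposition} after passing to a suitable power of $\sigma$, using Remark~\ref{orthocheat} to control how nonorthogonality manifests as an honest skew-invariant curve. First I would dispense with the easy direction: if nonconstant polynomials $\pi$, $\rho$, $h$ satisfy $f^{\lozenge M} \circ \pi = \pi^{\sigma^M} \circ h$ and $g^{\lozenge M} \circ \rho = \rho^{\sigma^M} \circ h$, then the image of the map $t \mapsto (\pi(t), \rho(t))$ is an irreducible curve $C \subseteq \AA^2$. Regarding $(\AA^2, f^{\lozenge M} \times g^{\lozenge M})$ as a $\sigma^M$-variety, the commuting squares show that $C$ is $(f^{\lozenge M}, g^{\lozenge M})$-skew-invariant (for $\sigma^M$), and since both projections are nonconstant, $C$ is neither a horizontal nor a vertical line. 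Hence $(\AA^1, f^{\lozenge M}) \not\perp^a_K (\AA^1, g^{\lozenge M})$ as $\sigma^M$-varieties; by the last clause of Remark~\ref{orthocheat} (eventual nonorthogonality read backwards), this forces $(\AA^1,f) \not\perp (\AA^1,g)$.

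For the forward direction, suppose $(\AA^1,f) \not\perp (\AA^1,g)$. By definition of nonorthogonality there is some difference field extension $(L,\sigma)$ over which $(\AA^1,f)$ and $(\AA^1,g)$ are not almost orthogonal, i.e.\ there is an $(f,g)$-skew-invariant curve $C \subseteq \AA^2_L$ which is not a finite union of products of $f$-skew-invariant and $g$-skew-invariant subvarieties. Since $f$ and $g$ have degree at least two (as disintegrated polynomials — a linear polynomial gives a $\sigma$-variety isomorphic after base change to $(\AA^1,\mathrm{id})$, which is not disintegrated), the only $0$-dimensional skew-invariant subvarieties are finite sets of periodic-type points, so the offending $C$ must have a component projecting dominantly in both directions. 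Replacing $C$ by such a component and replacing $L$ by the algebraically closed field it forces, we are in the situation of Proposition~\ref{curvetocomposition}: there are polynomials $h_0$, $\pi_0$, $\rho_0$ over $L$ with $f \circ \pi_0 = \pi_0^\sigma \circ h_0$ and $g \circ \rho_0 = \rho_0^\sigma \circ h_0$. This already proves the statement with $M = 1$ — \emph{except} that $\pi_0$, $\rho_0$, $h_0$ live over the extension $L$, not over $K$.

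The main obstacle, then, is descent of the compositional data from $L$ to $K$, and this is exactly where passing to a power $M$ becomes necessary (the remark about $(\AA^1,\mathrm{id})$ vs.\ $(\AA^1,x \mapsto x+1)$ shows that one genuinely cannot take $M=1$ in general). I would handle this by a Galois/specialization argument: the coefficients of $\pi_0$, $\rho_0$, $h_0$ lie in a finitely generated difference subring of $L$, and one can specialize the non-difference parameters back into $K$ while preserving the compositional identities, since those identities are given by polynomial equations in the coefficients. What cannot be specialized away are the parameters that the difference structure forces to satisfy nontrivial difference equations over $K$; the standard device is to absorb these by replacing $f$, $g$ by $f^{\lozenge M}$, $g^{\lozenge M}$ and $\sigma$ by $\sigma^M$ for $M$ a suitable multiple, after which the relevant difference-algebraic extension becomes (a component of) one defined over $\mathrm{Fix}(\sigma^M)$ and the identities descend. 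Concretely, iterating the square $f \circ \pi_0 = \pi_0^\sigma \circ h_0$ gives $f^{\lozenge M} \circ \pi_0 = \pi_0^{\sigma^M} \circ h_0^{\lozenge M}$, and similarly for $g$; choosing $M$ so that the minimal difference-field of definition of $(\pi_0,\rho_0,h_0)$ over $K$ stabilizes under $\sigma^M$, one can then conjugate by a linear polynomial to move the data over $K$, yielding $\pi$, $\rho$, $h$ over $K$ with $f^{\lozenge M} \circ \pi = \pi^{\sigma^M} \circ h$ and $g^{\lozenge M} \circ \rho = \rho^{\sigma^M} \circ h$ as required. I expect the bookkeeping in this last step — making the choice of $M$ explicit and checking that the linear conjugation is compatible with both the $f$-side and the $g$-side simultaneously — to be the only real work; everything else is a direct appeal to Proposition~\ref{curvetocomposition} and Remark~\ref{orthocheat}.
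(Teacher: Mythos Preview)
Your forward direction has a genuine gap, and it is not the one you think you are patching. When you write ``Replacing $C$ by such a component \ldots\ we are in the situation of Proposition~\ref{curvetocomposition}'', you are assuming that an irreducible component of the $(f,g)$-skew-invariant curve $C$ is itself $(f,g)$-skew-invariant. That is not automatic: the map $(f,g)$ followed by $\sigma^{-1}$ permutes the components of $C$, and a given component need only be \emph{skew-periodic}, not skew-invariant. Proposition~\ref{curvetocomposition} requires an irreducible skew-invariant curve, so it does not apply to your $C'$ as stated.

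This is exactly why the paper passes to $M$: choose $M$ divisible enough that the permutation of components becomes trivial, so that each component $C'$ is genuinely $(f^{\lozenge M}, g^{\lozenge M})$-skew-invariant with respect to $\sigma^M$; then Proposition~\ref{curvetocomposition} applies to $C'$ over whatever extension $L$ you are working in and produces $\pi,\rho,h$ directly. There is no descent issue: the corollary does not assert that $\pi,\rho,h$ live over the original base, and nothing downstream requires it. Your entire second paragraph about specializing parameters and stabilizing the difference-field of definition is attacking a non-problem, while the actual obstruction (irreducibility versus skew-invariance of a component) goes unaddressed.

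For the converse, your appeal to Remark~\ref{orthocheat} ``read backwards'' is not quite what that remark says; it only records the implication from $\sigma$-nonorthogonality to eventual $\sigma^m$-nonorthogonality. The paper instead gives the explicit curve $C' := \bigcup_{j=0}^{M-1} (f^{\lozenge j},g^{\lozenge j})(C)^{\sigma^{-j}}$, which is $(f,g)$-skew-invariant for $\sigma$ by construction and witnesses $(\AA^1,f)\not\perp(\AA^1,g)$ directly.
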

\begin{proof}
If $(\AA^1,f) \not \perp (\AA^1,g)$, then, possibly after base change, we find an $(f,g)$-skew-invariant curve $C \subseteq \AA^2$ none of whose
components is horizontal or vertical.  Taking $M$ sufficiently divisible, we find a component $C'$ of $C$ which is
$(f^{\lozenge M},g^{\lozenge M})$-skew-invariant (with respect to $\sigma^m$).  The existence of $\pi$, $\rho$ and $h$ now follows from
Proposition~\ref{curvetocomposition}.

In the other direction, the curve $C := (\pi,\rho)(\AA^1) \subseteq \AA^2$ witnesses that $(\AA^1,f^{\lozenge M}) \not \perp (\AA^1,g^{\lozenge M})$
as $\sigma^M$-varieties.   The curve $C' := \bigcup_{j=0}^{M-1}  (f^{\lozenge j},g^{\lozenge j})(C)^{\sigma^{-j}}$  witnesses that
$(\AA^1,f) \not \perp (\AA^1,g)$.
\end{proof}

\subsection{Decompositions and actions}
\label{rittmonoid}

We analyze the identities of Corollary~\ref{nonorthotopoly} through the combinatorics of decompositions of polynomials.

\begin{Def}
A polynomial $f$ is \emph{indecomposable} if $\deg(f) \geq 2$ and it cannot be written as a composition $f = g \circ h$ of two non-linear polynomials $g$ and $h$.

A finite sequence $\vec{f} := (f_k,\ldots,f_1)$ of polynomials $f_i$ is a \emph{decomposition} of a polynomial $f$ if
 $f = f_k \circ \cdots \circ f_1$ and each $f_i$ is indecomposable.
\end{Def}

\begin{Rk}
What we call decompositions are called ``complete decompositions'' in the literature (see, for example,~\cite{MZ}).  Note that in
our convention on the indexing of the factors of a decomposition is decreasing from left to right since composition is performed
from right to left.
\end{Rk}

\begin{Rk}
Induction on degree shows that every non-linear polynomial has a decomposition.  Linear polynomials are
 compositional units.  As such, if $L$ if a linear polynomial, then we write $L^{-1}$ for its
 compositional inverse.  More concretely, if $L(x) = ax + b$, then $L^{-1}(x) = \frac{1}{a} x - \frac{b}{a}$.
\end{Rk}

\begin{Def}
The decompositions $(f_k,\ldots,f_1)$ and $(g_k,\ldots,g_1)$ are \emph{linearly equivalent} if there are linear polynomials $L_{k-1},\ldots,L_1$ for which
$g_k = f_k \circ L_{k-1}$, $g_i = L_i^{-1} \circ f_i \circ L_{i-1}$ for $k > i > 1$,
 and $g_1 = L_1^{-1} \circ f_1$.

Polynomials $a$ and $b$ are \emph{linearly related} if there are linear $L$ and $M$ such that $L \circ a \circ M = b$.
\end{Def}

If $\vec{f}$ and $\vec{g}$ are linearly equivalent, then they are decompositions of the same polynomial. Linear equivalence, as the name suggests, is an equivalence relation. Corresponding factors of linearly equivalent decompositions are linearly related.

\begin{Def}
 The linear-equivalence class of a decomposition $\vec{f}$ is denoted by $[\vec{f}]$.
 For a polynomial $f$, $\linequi_f$ is the set of linear-equivalence classes of decompositions of $f$.
\end{Def}

Not all decompositions of a polynomial are linearly equivalent; for example, $(x^2, x^3+x)$ and $(x^3+2x^2+x, x^2)$ are both decompositions of $( x \cdot (x^2 +1))^2$.  Ritt's theorem~\cite{Ritt} gives a precise sense in which all decompositions of a polynomial may be
 obtained from one given decomposition.

\begin{Def} \label{specialpoly}
A \emph{Ritt polynomial} is an indecomposable polynomial of one of the following kinds:
\begin{itemize}
\item Monomial: $P_p(x) := x^p$, $p$ a prime
\item Chebyshev: $C_p(x)$, $p$ an odd prime
\item $x^k \cdot u(x^\ell)^n$ where $k \neq 0$, $\gcd(k,\ell) = 1$, $\gcd(k,n) = 1$, $u(0) \neq 0$, $u$ is monic non-constant, and at least one of $\ell$ and $n$ is greater than one.
\end{itemize}

The following identities involving Ritt polynomials are the \emph{basic Ritt identities.}
\begin{itemize}
\item $P_p \circ P_q = P_q \circ P_p$ for prime $p \neq q$
\item $C_p \circ C_q = C_q \circ C_p$ for odd prime $p \neq q$
\item $P_p \circ ( x^k \cdot u(x^{\ell p})^n ) = ( x^k \cdot u(x^\ell)^{pn}) \circ P_p$ for prime $p$
\end{itemize}
\end{Def}

\begin{Rk} These notions are closely related but not identical to ``Ritt moves'' and ``Ritt neighbors'' in~\cite{MZ}.
\end{Rk}

\begin{Def}
If $\vec{g}$ and $\vec{f}$ are two decompositions of the same polynomial, we say that $\vec{g}$ is \emph{obtained from $\vec{f}$ by a Ritt swap at $i$} if there are linear polynomials $L$, $M$, and $N$ such that
$$g_i := S \circ N^{-1} \mbox{ and } g_{i+1} = L \circ R \mbox{ and } g_j := f_j \mbox{ for } j \neq i, i+1$$
and
$(L^{-1} \circ f_{i+1} \circ M) \circ (M^{-1} \circ f_i \circ N) = R \circ S$ is a basic Ritt identity.

An indecomposable polynomial $f$ is \emph{swappable} if it is linearly related to a Ritt polynomial.
\end{Def}

\begin{Rk}
The compositional identity $C_2 \circ C_p = C_p \circ C_2$ is not a basic Ritt identity, but $(C_p, C_2)$ can be obtained by a Ritt swap at $1$ from $(C_2, C_p)$ as follows. As $C_p$ is an odd polynomial, it is of the form $x \cdot u(x^2)$, and $C_2(x) = x^2 -2 = L \circ P_2$ where $L(x) = x-2$. Now taking $M = N = \id$ makes
$(L^{-1} \circ C_2 \circ M) \circ (M^{-1} \circ C_p \circ N)$ look like the left side of a basic Ritt identity. This is pursued in great detail in Section~\ref{section41}.
\end{Rk}

\begin{Rk}
\label{crucrk}
While it may be possible to obtain many different decompositions from the same $\vec{f}$ by a Ritt swap at the same $i$ by choosing different linear witnesses $L$, $M$, and $N$, we show (see page \pageref{proofofcrucial}) that all decompositions so obtained are linearly equivalent.  This
 invariance result is also proved in \cite{MZ}, and is already implicit in Ritt's work. \end{Rk}

\begin{Rk}
\label{costumes}
The term ``swap'' should suggest that when a decomposition is obtained from another via a Ritt swap, then the factors involved swap
places.  However, a Ritt swap arising from a basic Ritt identity of the third kind is not really a swap,
in that one of the factors, linearly related to $x^k \cdot u(x^p)$, not only switches places with the monomial,
but also ``becomes'' a different polynomial, linearly related to $x^k \cdot u(x)^p$.
\end{Rk}

\begin{Rk}
\label{monicu}
 We depart from~\cite{Ritt} in requiring Ritt polynomials to be monic.
 An easy computation verifies that this has no effect on the meaning of ``Ritt swap''
 and the truth of Ritt's Theorem below.\end{Rk}

\begin{fact}[Ritt, \cite{Ritt}]
\label{Rittthm}
Over $\CC$, any two decompositions of the same polynomial have the same number of factors.  Indeed, if $\vec{f}$ and $\vec{g}$ are decompositions of the same polynomial, then $\vec{g}$ is linearly equivalent to a decomposition obtained from $\vec{f}$ by a finite sequence of Ritt swaps.
\end{fact}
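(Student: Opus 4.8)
The plan is to prove the stronger second assertion --- that any two decompositions of a given polynomial lie in a single orbit under the two operations of passing to a linearly equivalent decomposition and of performing a Ritt swap --- from which the first follows at once, since each of these operations replaces a decomposition by one with the same number of factors. I would argue by induction on $\deg(f)$ after translating into the language of intermediate fields. By L\"uroth's theorem every field $L$ with $\CC(f) \subseteq L \subseteq \CC(x)$ has the form $L = \CC(h)$; because $f$ is a polynomial, inspecting the fibre over $\infty$ together with the multiplicativity of ramification indices shows that $h$ may be taken to be a polynomial after post-composition with a suitable affine map, and that the inclusions among such fields are induced by honest polynomial compositions. Hence a decomposition $(f_k,\ldots,f_1)$ of $f$ corresponds, up to linear equivalence, to a maximal chain $\CC(f) \subsetneq \CC(f_k\circ\cdots\circ f_2) \subsetneq \cdots \subsetneq \CC(f_1) \subsetneq \CC(x)$ of such fields, with $\CC(f_1)$ the coatom adjacent to $\CC(x)$.

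Given decompositions $\vec{f}$ and $\vec{g}$ of $f$, compare the innermost factors. If $\CC(f_1) = \CC(g_1)$, then $f_1$ and $g_1$ are linearly related, so after absorbing a linear polynomial we may take $f_1 = g_1$; then $(f_k,\ldots,f_2)$ and a linearly equivalent copy of $(g_k,\ldots,g_2)$ are decompositions of a common polynomial of degree $\deg(f)/\deg(f_1) < \deg(f)$, and the moves connecting them furnished by the inductive hypothesis lift to moves on $\vec{f}$ acting above the fixed bottom factor $f_1$. If $\CC(f_1) \ne \CC(g_1)$, then $\CC(f_1)$ is a coatom, so $\CC(f_1) \vee \CC(g_1) = \CC(x)$; set $E := \CC(f_1) \cap \CC(g_1)$ and write $E = \CC(e)$ with $e = a \circ f_1 = c \circ g_1$ for polynomials $a$ and $c$. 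When $E = \CC(f)$ the classification lemma below forces $a$ (equivalently $c$) to be linearly related to a Ritt polynomial, hence indecomposable, hence $f$ to have exactly two indecomposable factors, and the same lemma exhibits $\vec{g}$ as the result of a single Ritt swap at position $1$ applied to $\vec{f}$. When $E \supsetneq \CC(f)$, write $f = t \circ e$ with $t$ nonlinear, so that $\deg(e)$ and $\deg(t)$ are both less than $\deg(f)$; using the inductive hypothesis we may move $\vec{f}$ and $\vec{g}$ to decompositions sharing a common decomposition of $t$ on top and carrying beneath it decompositions of $e$ that end in $f_1$, respectively $g_1$, and a further application of the hypothesis in degree $\deg(e)$ connects these two decompositions of $e$; propagating the ambient linear polynomials through the juncture --- which is legitimate by the invariance stated in Remark~\ref{crucrk} --- connects $\vec{f}$ to $\vec{g}$.

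The engine is the classification lemma, which is Ritt's second theorem proper: if $a \circ b = c \circ d$ with $b$ and $d$ indecomposable and the fields $\CC(b)$ and $\CC(d)$ of $\CC(x)$ over $\CC(a\circ b)$ in ``general position'', meaning that their intersection is $\CC(a \circ b)$ and their compositum is $\CC(x)$, then $\deg(a) = \deg(d)$ and $\deg(b) = \deg(c)$ are relatively prime and, after composing all four polynomials with appropriate affine maps, $a \circ b = c \circ d$ becomes one of the basic Ritt identities (monomial, Chebyshev, or third kind). I would prove this by a Riemann--Hurwitz analysis of the smooth rational curve parametrised by $x \mapsto (b(x), d(x))$ --- the component of $\{(u,v) : a(u) = c(v)\}$ through $(b(x),d(x))$ --- whose projections to the $u$- and $v$-lines are the maps $b$ and $d$ of coprime degrees, and on which $\infty$ is totally ramified under each projection because $a \circ b$ is a polynomial. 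The Riemann--Hurwitz formula together with coprimality rigidifies the ramification of $b$ so severely that, up to affine equivalence, $b$ can only be a power map, a Chebyshev polynomial, or a third-kind Ritt polynomial; identifying the remaining data against the list in Definition~\ref{specialpoly} finishes the lemma.

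I expect the main obstacle to be this classification lemma: the exhaustive Riemann--Hurwitz bookkeeping, and in particular pinning down how the special Ritt polynomials interact with linear factors and matching every surviving ramification pattern with exactly one basic Ritt identity. A secondary, purely combinatorial difficulty is controlling the behaviour of Ritt swaps and linear equivalences under appending and prepending decomposition factors, so that the reductions in the induction above genuinely assemble into a finite sequence of Ritt swaps; this is precisely the ``linear algebra of compositions'' that the technical heart of the paper (Sections~\ref{section41}--\ref{puttingtogether}) is designed to carry out.
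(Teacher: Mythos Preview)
The paper does not prove this statement; it is recorded as a \textbf{Fact} attributed to Ritt~\cite{Ritt} and taken as input for everything that follows. In particular, Sections~\ref{section41}--\ref{puttingtogether} do not re-derive Ritt's theorem but \emph{refine} it: they prove that Ritt swaps are well-defined on linear-equivalence classes (Theorem~\ref{cruciallemma}), that they satisfy the braid relations (Theorem~\ref{fundamentallemma}), and that sequences of swaps admit canonical forms --- all of this presupposing Fact~\ref{Rittthm}. Your closing sentence therefore misreads the role of those sections.

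As an independent sketch of the classical proof your outline is sound: translate decompositions into maximal chains of intermediate fields via L\"uroth, induct on $\deg f$ by comparing innermost factors, and in the divergent-coatom case invoke Ritt's second theorem (your ``classification lemma''), proved by the Riemann--Hurwitz count you describe. One point worth making explicit: in the case $E = \CC(f)$ you deduce that $a$ is indecomposable from the lemma, but as stated the lemma assumes only $b$ and $d$ indecomposable; indecomposability of $a$ and $c$ must then be read off the explicit list of basic identities in Definition~\ref{specialpoly}, each of which indeed consists of four indecomposable polynomials. The ``propagation of linear polynomials through the juncture'' that you flag as a difficulty is part of the classical argument itself, not something this paper supplies.
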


 Ritt's Theorem may be stated loosely as ``decompositions of polynomials are unique up to permutations'',
  and indeed it is tempting to look for an action by the symmetric group, identifying the Ritt swap at $i$ with
   the transposition $\tau_i := (i \,\,\, i+1) \in \operatorname{Sym}_k$ in the symmetric group on $k$ elements.

 Since often nothing can be obtained from $\vec{f}$ by a Ritt swap at $i$, for example,
 when one of the factors $f_i$ and $f_{i+1}$ is not swappable,  at best this is a partial action.
 In light of Remark~\ref{crucrk}, Ritt swaps can only act on decompositions up
  to linear equivalence, that is on $\linequi_f$.   The next two results show that this action is well-defined when defined.

 \begin{lemma} \label{EasyLinEquivRittSwapLemma}
 If $\vec{f}$, $\vec{g}$, and $\vec{h}$ are decompositions of the same polynomial,
 $\vec{g}$ is obtained from $\vec{f}$ by a Ritt swap at $i$,
 and $\vec{h}$ is linearly equivalent to $\vec{f}$,
then there is a decomposition obtained from $\vec{h}$ by a Ritt swap at $i$ and linearly related to $\vec{g}$.
\end{lemma}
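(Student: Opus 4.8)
The plan is to follow the single Ritt swap at $i$ through the given linear equivalence between $\vec{h}$ and $\vec{f}$, exploiting the fact that a linear equivalence only moves linear factors between adjacent slots. First I would unwind the data. Write $\vec{f} = (f_k,\dots,f_1)$. By definition the Ritt swap producing $\vec{g}$ is witnessed by linear polynomials $L, M, N$ with $g_{i+1} = L \circ R$, $g_i = S \circ N^{-1}$, $g_j = f_j$ for $j \neq i, i+1$, and $(L^{-1}\circ f_{i+1}\circ M)\circ(M^{-1}\circ f_i\circ N) = R\circ S$ a basic Ritt identity. Likewise the linear equivalence of $\vec{h}$ and $\vec{f}$ is witnessed by linear polynomials $T_{k-1},\dots,T_1$; using the convention $T_0 = T_k = \id$ this says $h_j = T_j^{-1}\circ f_j\circ T_{j-1}$ for every $j$.

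Next I would transport the swap to $\vec{h}$. Put $L' := T_{i+1}^{-1}\circ L$, $M' := T_i^{-1}\circ M$, and $N' := T_{i-1}^{-1}\circ N$. A one-line substitution, using $h_{i+1} = T_{i+1}^{-1}\circ f_{i+1}\circ T_i$ and $h_i = T_i^{-1}\circ f_i\circ T_{i-1}$, shows $L'^{-1}\circ h_{i+1}\circ M' = L^{-1}\circ f_{i+1}\circ M$ and $M'^{-1}\circ h_i\circ N' = M^{-1}\circ f_i\circ N$, so $(L'^{-1}\circ h_{i+1}\circ M')\circ(M'^{-1}\circ h_i\circ N')$ is literally the same basic Ritt identity, with the same $R$ and $S$. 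Hence the Ritt swap at $i$ applied to $\vec{h}$ with witnesses $L', M', N'$ is legitimate; call its output $\vec{g}'$. Unwinding, $g'_{i+1} = T_{i+1}^{-1}\circ L\circ R$, $g'_i = S\circ N^{-1}\circ T_{i-1}$, and $g'_j = T_j^{-1}\circ f_j\circ T_{j-1}$ for $j \neq i, i+1$. (That the $g'_j$ are indecomposable, so that $\vec{g}'$ really is a decomposition, is automatic, since each is obtained from an indecomposable polynomial by composing with linear polynomials.)

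Finally I would verify that $\vec{g}'$ is linearly equivalent to $\vec{g}$, which gives in particular that corresponding factors are linearly related. The conjugators to use are $U_j := T_j$ for $j \neq i$ and $U_i := \id$, again with $U_0 = U_k = \id$. One then checks $g'_j = U_j^{-1}\circ g_j\circ U_{j-1}$ in three regimes: for $j \notin \{i, i+1\}$ this is the defining relation of the linear equivalence of $\vec h$ and $\vec f$ together with $g_j = f_j$ and $j - 1 \neq i$; for $j = i+1$ it reads $g'_{i+1} = T_{i+1}^{-1}\circ g_{i+1} = U_{i+1}^{-1}\circ g_{i+1}\circ U_i$, which holds because $U_i = \id$; and for $j = i$ it reads $g'_i = S\circ N^{-1}\circ T_{i-1} = \id\circ g_i\circ U_{i-1}$. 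The boundary indices $i = 1$ and $i + 1 = k$ are absorbed by the $T_0 = T_k = \id$ convention, which forces $N' = N$ (resp.\ $L' = L$) and makes the corresponding factor of $\vec{g}'$ equal to that of $\vec{g}$. The only point needing care --- and essentially the whole content of the lemma --- is this asymmetry: a Ritt swap at $i$ redistributes the linear ``slack'' between slots $i$ and $i+1$, so the conjugator at slot $i$ must be the identity rather than $T_i$, and one must check the bookkeeping remains consistent, particularly at the endpoints.
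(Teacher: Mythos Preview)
Your proof is correct and follows essentially the same approach as the paper: both transport the witnesses $L,M,N$ of the Ritt swap through the linear-equivalence witnesses to obtain new witnesses $L',M',N'$ for a Ritt swap on $\vec{h}$, and then verify that the resulting decomposition is linearly equivalent to $\vec{g}$. The paper simply declares the final verification ``routine to check,'' whereas you carry it out explicitly with the conjugators $U_j = T_j$ for $j\neq i$ and $U_i=\id$; in fact your formula $N' = T_{i-1}^{-1}\circ N$ is the correct one (the paper's $\widetilde{N} := R_{i-1}\circ N$ appears to be a typo for $R_{i-1}^{-1}\circ N$).
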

\begin{proof}
Let $R_{k-1},\ldots,R_1$, $L$, $M$, and $N$ be linear polynomials
witnessing our hypotheses.
That is, the $R$s witness that $\vec{h}$ is linearly related to $\vec{f}$:\\
$h_k = f_k \circ R_{k-1}$, $h_j = R_j^{-1} \circ f_j \circ
R_{j-1}$ for $1 < j < k$, $h_1 = R_1^{-1} \circ f_1$ and the other
linear polynomials witness the Ritt swap: $(L^{-1} \circ f_{i+1}
\circ M) \circ (M^{-1} \circ f_i \circ N) = T \circ S$ is a basic
Ritt identity, $g_i := S \circ N^{-1}$, $g_{i+1} = L \circ T$, and
$g_j := f_j$ for the other $j \leq k$.  To simplify the notation,
we define $R_k(x) = R_0(x) = x$.

Define $\widetilde{L} := R_{i+1}^{-1} \circ L$,
$\widetilde{M} := R_i^{-1} \circ M$, and
$\widetilde{N} := R_{i-1} \circ  N$.
It is now routine to check that this
choice of $\widetilde{L}$, $\widetilde{M}$, and $\widetilde{N}$
witnesses that $\vec{h}$ admits a
Ritt swap at $i$ and that the resulting decomposition is linearly
equivalent to $\vec{g}$.
\end{proof}

With the following theorem, whose proof is delayed to page~\pageref{proofofcrucial}, we show that the action
of Ritt swaps on linear equivalence classes of decompositions is well-defined. Stronger versions of this result are obtained in \cite{MZ} and \cite{Ritt}.

\begin{forefthm}
\label{cruciallemma} If two decompositions $\vec{h}$ and $\vec{g}$ are both obtained from $\vec{f}$ by a Ritt swap at $i$, then $\vec{h}$ is linearly equivalent to $\vec{g}$.
\end{forefthm}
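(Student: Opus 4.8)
The plan is to reduce the statement to an entirely local question about the pair of indecomposable factors in positions $i$ and $i+1$, and then to dispose of that question using the classification of linear relations among Ritt polynomials carried out in Section~\ref{section41}.

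First I would observe that a Ritt swap at $i$ alters only positions $i$ and $i+1$: if $\vec{g}$ is obtained from $\vec{f}$ by a Ritt swap at $i$ with linear witnesses $L,M,N$ and basic Ritt identity $(L^{-1}\circ f_{i+1}\circ M)\circ(M^{-1}\circ f_i\circ N)=R\circ S$, then $g_j=f_j$ for $j\neq i,i+1$, while $g_{i+1}=L\circ R$ and $g_i=S\circ N^{-1}$ are indecomposable (being linearly related to the indecomposable factors $R$, $S$ of the right-hand side of a basic Ritt identity) and satisfy $g_{i+1}\circ g_i=f_{i+1}\circ f_i$. Consequently, to prove that $\vec{h}$ is linearly equivalent to $\vec{g}$ it suffices to produce a single linear polynomial $E$ with $g_{i+1}=h_{i+1}\circ E$ and $h_i=E\circ g_i$: all other linear polynomials in the linear equivalence may be taken to be the identity, and one checks directly that the defining relations of linear equivalence then hold at every position, the boundary positions $i-1$, $i+2$ and the edge cases $i=1$ and $i+1=k$ being handled by the same bookkeeping as in the proof of Lemma~\ref{EasyLinEquivRittSwapLemma}. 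This reduces the theorem to the case of a fixed indecomposable pair $(a,b):=(f_{i+1},f_i)$ admitting two Ritt swaps, producing $(g_{i+1},g_i)$ and $(h_{i+1},h_i)$.

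Next I would unwind what a Ritt swap of $(a,b)$ consists of: a choice of one of the three basic Ritt identities of Definition~\ref{specialpoly}, read in either direction and written as $\alpha\circ\beta=R\circ S$, together with linear polynomials $L,M,N$ for which $L^{-1}\circ a\circ M=\alpha$ and $M^{-1}\circ b\circ N=\beta$; the output is then $(L\circ R,\,S\circ N^{-1})$. Since $\deg a=\deg\alpha$ and $\deg b=\deg\beta$ are intrinsic to $a$ and $b$, and $\alpha$, $\beta$ are Ritt polynomials to which $a$, $b$ are respectively linearly related, the results of Section~\ref{section41} pin down the admissible data. The point extracted from Section~\ref{section41} is that the kind of the identity (monomial--monomial, Chebyshev--Chebyshev, or monomial--twist) and all of its discrete parameters (the prime $p$, and in the third case $k$, $\ell$, $n$ and $u$ up to rescaling) are determined by $(a,b)$, up to the small group of linear symmetries of a Ritt polynomial (scalars for $P_p$; $\{\pm\id\}$ for $C_p$ with $p$ odd; a bounded group in the third case); moreover the apparent ambiguities --- the coincidence $C_2=(x-2)\circ P_2$ and the passage from $u(x^{\ell p})^n$ to $u(x^{\ell})^{pn}$ --- do not yield genuinely different kinds, because $P_p$ (and $C_p$ for $p$ odd) is never linearly related to a Ritt polynomial of the third kind, and a Ritt polynomial of the third kind has degree at least three.

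Finally, for a fixed kind and fixed parameters I would measure the residual freedom. Two choices $(L,M,N)$ and $(L',M',N')$ of swap data differ by $L'=L\circ\lambda$, $M'=M\circ\mu$, $N'=N\circ\nu$ where $(\lambda,\mu)$ stabilizes $\alpha$ and $(\mu,\nu)$ stabilizes $\beta$, in the sense that $\lambda^{-1}\circ\alpha\circ\mu=\alpha$ and $\mu^{-1}\circ\beta\circ\nu=\beta$. Computing these stabilizers explicitly for each Ritt polynomial, and checking for each of the three basic Ritt identities that the induced modification of the output pair $(R,S)$ to $(\lambda\circ R,\,S\circ\nu^{-1})$ is absorbed by a linear equivalence --- this is where one invokes a matching symmetry of the right-hand side of the identity itself --- one concludes that the output of any Ritt swap of $(a,b)$ lies in a single linear-equivalence class. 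Hence $(g_{i+1},g_i)$ and $(h_{i+1},h_i)$ are linearly equivalent via a single linear $E$, and the reduction of the second paragraph completes the proof. The hard part is the third step together with the input it draws from Section~\ref{section41}: the exhaustive analysis of how a third-kind Ritt polynomial transforms under linear changes of variables and interacts with $P_2$. Once that analysis is available, the present theorem amounts to matching up symmetries.
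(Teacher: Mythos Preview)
Your plan is essentially the paper's: reduce to the pair $(f_{i+1},f_i)$, then use the classification of linear relations among Ritt polynomials from Section~\ref{section41} to show that any two outputs of a Ritt swap at $i$ differ by a single intermediate linear factor. The paper organizes the endgame slightly differently: rather than fixing one $\alpha,\beta$ and measuring stabilizers, it sets $L:=L_2^{-1}L_1$, $M:=M_2^{-1}M_1$, $N:=N_2^{-1}N_1$, proves directly that the linking factor $R=M^{-1}$ works, and splits into three cases according to how many of $G_1,H_1$ are monomials (both, one, none), invoking Lemma~\ref{1mono} for the middle case.

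One point in your sketch deserves more care. Your third paragraph asserts that ``$u$ up to rescaling'' is determined by $(a,b)$, but for type~\textsf{A} polynomials this is false if you try to read $\alpha$ off from $a=f_{i+1}$ alone: translation-related type~\textsf{A} Ritt polynomials can have genuinely different $u$'s. The resolution is not a symmetry of $\alpha$ in isolation but the coupling through the \emph{shared} middle linear $M$: since $H_1$ and $H_2$ are both linearly related to $f_i$ and one of them is the monomial $P_p$, the relation $H_2=M\circ H_1\circ N^{-1}$ forces $M,N$ to be scalings, and then $G_2=L\circ G_1\circ M^{-1}$ Ritt with $M$ a scaling forces $L$ to be a scaling too (there are no solutions to Problem~\ref{problemtrans} with $A=0\neq B$). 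This is precisely Lemma~\ref{1mono}, and it is what rules out the type~\textsf{A} translation ambiguity; your stabilizer framework should invoke it rather than claim the parameters are determined by $a$ alone. With that adjustment, your argument and the paper's coincide.
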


 In the symmetric group, the adjacent transpositions $\tau_i$ have order $2$ and satisfy the braid relations
 $\tau_i \tau_{i+1} \tau_i =  \tau_{i+1} \tau_i  \tau_{i+1}$ for all $i$ and $\tau_i \tau_j = \tau_j \tau_i$ for $j \neq i \pm 1$.
 While Ritt swaps do satisfy the braid relations, they do not quite have order two.  We formalize this
 symmetric group-like action via Ritt swaps as an action of a certain monoid.

\begin{Def}
Let $\rittmonoid_k$ be the free monoid on the $(k-1)$ generators $t_1, \ldots, t_{k-1}$. \label{defpermrep}
 The \emph{permutation represented by} a word $t_{a_r} \ldots t_{a_2} t_{a_1}$ in $\rittmonoid_k$ is the product
 $\tau_{a_r} \ldots \tau_{a_2} \tau_{a_1} \in \operatorname{Sym}_k$.

 The action $\star$ of $\rittmonoid_k$ on $\linequi_f^\ast := \linequi_f \cup \{ \infty \}$ is defined by \begin{itemize}
 \item $t_i \star [\vec{f}]$ is the linear equivalence class of a decomposition obtained from $\vec{f}$ by a Ritt swap at $i$, if one exists;
 \item  otherwise, $t_i \star [\vec{f}] := \infty$;
 \item $t_i \star \infty = \infty$ for all $i$. \end{itemize}

For $w \in \rittmonoid_k$ and $[\vect{f}] \in \linequi_f$ we say that $w \star
[\vect{f}]$ is \emph{defined} if $w \star [\vec{f}] \neq \infty$.

We often abuse notation writing $w \star \vect{f} = \vect{g}$ for
$w \star [\vect{f}] = [\vect{g}]$.
\end{Def}

With the following theorem, whose proof is completed on page~\pageref{proofoffunda},
 we show that Ritt swaps satisfy the braid relations, and that $t_i$ has order $2$ except when $t_i \star w = \infty$. The first two parts are immediate given Theorem~\ref{cruciallemma}, but the last is not so easy.

\begin{forefthm}
\label{fundamentallemma} For any $[\vect{f}] \in \linequi_f$ and $i < k$
\begin{itemize}
\item If $t_i \star [\vect{f}]$ is defined, then $t_i^2 \star [\vect{f}] = [\vect{f}]$.
\item For $j \neq i \pm 1$, $t_it_j \star [\vect{f}] = t_j t_i \star [\vect{f}]$.  In particular, one is defined if and only if the other is.
\item $t_i t_{i+1} t_i \star [\vect{f}] = t_{i+1} t_i t_{i+1} \star [\vect{f}]$.  In particular, one is defined if and only if the other is.
\end{itemize}
\end{forefthm}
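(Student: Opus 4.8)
The plan is to reduce everything to Theorem~\ref{cruciallemma} (well-definedness of a single Ritt swap up to linear equivalence) plus a case analysis that mirrors the corresponding facts for adjacent transpositions in $\operatorname{Sym}_k$. For the first item, suppose $t_i \star [\vec{f}]$ is defined and let $\vec{g}$ be a decomposition obtained from $\vec{f}$ by a Ritt swap at $i$, with linear witnesses coming from one of the three basic Ritt identities. Each basic Ritt identity is ``reversible'': reading $P_p \circ P_q = P_q \circ P_p$, $C_p \circ C_q = C_q \circ C_p$, or $P_p \circ (x^k u(x^{\ell p})^n) = (x^k u(x^\ell)^{pn}) \circ P_p$ from right to left is again a basic Ritt identity (after the obvious relabeling; in the third case one uses that the target $x^k u(x^\ell)^{pn}$ is again of Ritt-polynomial shape, with $\ell$ there playing a role consistent with the requirement $\gcd(k,\ell)=1$). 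Hence $\vec{f}$ is obtained from $\vec{g}$ by a Ritt swap at $i$, so $t_i \star [\vec{g}]$ is defined and, by Theorem~\ref{cruciallemma}, equals $[\vec{f}]$. Thus $t_i^2 \star [\vec{f}] = [\vec{f}]$. (When $t_i \star [\vec{f}] = \infty$ this can fail, which is why the hypothesis is needed.)

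For the second item, with $j \neq i \pm 1$ the swaps at $i$ and at $j$ involve disjoint blocks of factors, namely $\{f_{i+1},f_i\}$ and $\{f_{j+1},f_j\}$, so the linear witnesses for one swap can be applied verbatim after performing the other: concretely, if $t_i \star [\vec{f}]$ is defined, realized by $\vec{g}$, then the factors $g_{j+1},g_j$ are literally $f_{j+1},f_j$ up to the inherited linear conjugations, and any basic Ritt identity witnessing a swap at $j$ for $\vec{f}$ transfers (using Lemma~\ref{EasyLinEquivRittSwapLemma} to absorb the conjugating linears) to one witnessing a swap at $j$ for $\vec{g}$, and vice versa. This shows $t_i t_j \star [\vec{f}]$ is defined iff $t_j t_i \star [\vec{f}]$ is, and Theorem~\ref{cruciallemma} applied twice (plus Lemma~\ref{EasyLinEquivRittSwapLemma}) gives equality of the two linear-equivalence classes.

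The third (braid) relation is the main obstacle. Here the swaps at $i$ and at $i+1$ overlap in the middle factor $f_{i+1}$, so one must analyze a length-three window $(f_{i+2},f_{i+1},f_i)$ and check that $t_i t_{i+1} t_i$ and $t_{i+1} t_i t_{i+1}$ either are both undefined or produce linearly equivalent decompositions. My approach is: first observe that on the underlying $\operatorname{Sym}_k$-permutations both sides induce $\tau_i\tau_{i+1}\tau_i = \tau_{i+1}\tau_i\tau_{i+1}$, which constrains the final ordered tuple of linear-equivalence classes of factors; then, to get ``defined iff defined'' and actual equality, run through the Ritt identities that can appear in a three-step reordering of a length-three decomposition. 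By Ritt's Theorem~\ref{Rittthm} the relevant configurations are limited — the window must be (up to linear relation) one of: three monomials $P_p, P_q, P_r$; three odd Chebyshev polynomials; a mixed block where a monomial $P_p$ migrates past two ``$x^k u(x^\ell)^n$''-type factors; or a $C_2$-versus-$C_p$ situation as in the remark after Definition~\ref{specialpoly}. In each configuration both sides of the braid relation are computed directly (the monomial and Chebyshev cases are the commuting subgroups $P_* $ and $C_*$, where the three-fold products are immediate; the mixed cases use associativity of composition and the third basic Ritt identity twice), and one checks the two resulting decompositions are linearly equivalent, invoking Theorem~\ref{cruciallemma} and Lemma~\ref{EasyLinEquivRittSwapLemma} to handle the linear bookkeeping. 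I expect this exhaustive-but-elementary case check — especially pinning down exactly which windows admit a swap at both $i$ and $i+1$, so that the ``in particular, one is defined iff the other is'' clause holds — to be the real work; the authors signal as much by deferring the proof to page~\pageref{proofoffunda} and by the remark that ``the last is not so easy.''
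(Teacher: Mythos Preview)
Your treatment of the first two bullets is fine and matches the paper's remark that they are ``immediate given Theorem~\ref{cruciallemma}.''

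For the braid relation, however, your proposed case analysis is not the paper's route and, as written, has a real gap. The paper does \emph{not} enumerate configurations of the window $(f_{i+2},f_{i+1},f_i)$ and verify the braid relation in each. Instead it first develops the clustering machinery of Section~\ref{clustersec}, proves (Lemma~\ref{forfunda}) that if either $t_i t_{i+1} t_i \star \vec{f}$ or $t_{i+1} t_i t_{i+1} \star \vec{f}$ is defined then the whole window $(f_{i+2},f_{i+1},f_i)$ is a single cluster, and then invokes Proposition~\ref{swap-inside} to say that inside a cluster all Ritt swaps may be witnessed by \emph{identity} linear factors. Once the linear factors are gone, the braid relation reduces to a trivial check on basic Ritt identities themselves. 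Lemma~\ref{forfunda} in turn rests on Lemma~\ref{double-jump} and the overlap analysis Lemma~\ref{cluster-overlap}, all of which depend on the full characterization of linear relations among Ritt polynomials (Theorem~\ref{transrelate}, the type~\textsf{A}/type~\textsf{C} dichotomy).

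The gap in your approach is precisely the linear bookkeeping you wave off. Your list of configurations is neither complete (you omit, e.g., two monomials flanking a non-monomial Ritt polynomial, or a quadratic interacting with type~\textsf{C} factors as in Lemma~\ref{seeswaplem}) nor robust against the linear factors $L,M,N$ that witness each swap. The delicate point---and the reason the authors build Section~\ref{section41} and then Section~\ref{clustersec}---is that after one swap the middle factor has been replaced by something only \emph{linearly related} to a Ritt polynomial, and whether the next swap is defined depends on exactly which linear relations are possible (e.g., type~\textsf{A} polynomials block swaps to the left, Remark~\ref{Jnoswapleft}). Showing ``defined iff defined'' directly would force you to reconstruct most of Theorem~\ref{transrelate} and the cluster analysis anyway; the paper's clustering argument is the systematic way to carry out your case check.
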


With these identities, a purely combinatorial analysis yields (see Section~\ref{canformsec}) normal forms for words in the Ritt monoid, roughly corresponding to insert-sort and to merge-sort. That is, for each $w \in \rittmonoid_k$, we find another word $w'$ of a special form, representing the same permutation and such that $w' \star \vec{f}$ is defined and equal to $w \star \vec{f}$ whenever $w \star \vec{f}$ is defined. For example, $w'$ is the empty word when $w = t_i t_i$. This implies that if two words $w$ and $w'$ represent the same permutation and both $w \star [\vec{f}]$ and $w' \star [\vec{f}]$ are defined, then $w \star [\vec{f}] = w' \star [\vec{f}]$ (see Corollary~\ref{nearaction}), and also provides an invaluable explicit computational tool for the rest of the paper.

\begin{Rk}
A stronger version of Corollary~\ref{nearaction}, that the polynomial $f$ and the sequence of degrees of the factors $f_i$ already determine the linear equivalence class of the decomposition $\vec{f}$ is shown in~\cite{MZ}.  It seems that our stronger Theorem~\ref{fundamentallemma}
is not a simple consequence of the work in~\cite{MZ}. The canonical forms in the present paper are substantially different from
those in~\cite{MZ}, and are better suited to our purposes.
\end{Rk}

\subsection{Skew-twists} \label{sktwsec}
	Recall that the purpose of studying decompositions is to characterize polynomial identities in  Proposition~\ref{curvetocomposition}:
 \begin{equation}\begin{CD} \label{sketwdiag}
\AA^1 @>{f}>> \AA^1 \\
@A{\pi}AA @AA{\pi^\sigma}A\\
\AA^1 @>{g}>> \AA^1 \\
\end{CD}\end{equation}
for disintegrated polynomials $f$ and $g$. We eventually show that all such identities come from those where $\pi$ is
indecomposable or linear. For indecomposable $\pi$, these identities are either rare exceptions characterized in Proposition~\ref{notskewtwist}, or \emph{single skew-twists} where $\pi$ is an initial compositional factor of $f$, and also a terminal compositional factor of $g$ twisted by $\sigma$.
The rest of this section is devoted to the study of sequences of single skew-twists.

\begin{Def} \label{single-twist-def}
 The decomposition $(f_1^\sigma, f_k, \ldots, f_2)$ is called \emph{the single-skew-twist} of the decomposition $\vec{f} := (f_k, \ldots, f_2, f_1)$ and denoted $\phi \star \vec{f}$. (Here, $\phi$ stands for ``forward''.)

 If $\vec{f}$ is a decomposition of a polynomial $f$, then $\phi \star \vec{f}$ is a decomposition of a (probably different) polynomial $h$; we call $h$ \emph{a single-skew-twist} of $f$.

 For polynomials $f$ and $g$, the relation ``$f$ is a \emph{skew-twist} of $g$'' is the symmetric-transitive closure of the relation ``$f$ is a single-skew-twist of $g$''. That is, $f$ is a skew-twist of $g$ if there are $f = f_0, f_1, \ldots, f_n = g$ such that each $f_i$ is a single-skew-twists of $f_{i+1}$, or vice versa.

 To undo what $\phi$ does, we define $\beta \star \vec{f} := (f_{k-1}, \ldots, f_1, f_k^{(\sigma^{-1})})$. (Here, $\beta$ stands for ``back''.)

 When $n < k$ and $\vec{g} = \phi^n \star \vec{f}$, we call $g$ a \emph{plain skew-twist of $f$}.
\end{Def}

\begin{Rk}
A polynomial may have several single-skew-twists, coming from different decompositions.
In composing a correspondence from $f$ to $g$ with one from $g$ to $h$, both of which come from single skew-twists,
the decompositions of $g$ used to represent the skew-twists may differ.  Thus, to describe correspondences obtained from sequences of skew-twists we need to keep track of decompositions of intermediate polynomials.
\end{Rk}

\begin{Def} \label{correncdef}
For a given positive integer $k \in \ZZ_+$, the \emph{skew-twist monoid}, $\skewmonoid_k$, is the free monoid generated by
the symbols $\phi$, $\beta$, $t_1, \ldots, t_{k-1}$.  If $\vec{f} = (f_k,\ldots,f_1)$ is a decomposition of a polynomial $f$
and $w := w_n \ldots w_2 w_1 \in \skewmonoid_k$ where each $w_i$ is a generator, then
 a sequence of decompositions $\vec{f} = \vec{f}^0, \vec{f}^1, \ldots, \vec{f}^n$ is \emph{a witnessing sequence for $w \star \vec{f}$} if for each $j$,
 \begin{itemize}
 \item if $w_j = t_i$, then $[\vec{f}^{j+1}] = w_j \star [\vec{f}^{j}]$; and
 \item if $w_j$ is $\phi$ or $\beta$, then $\vec{f}^{j+1} = w_j \star \vec{f}^{j}$ in the sense of Definition~\ref{single-twist-def}.
 \end{itemize}

\emph{The correspondence $\kriva$ encoded by this witnessing sequence} is the composite of the curves
$\krivb_n \circ \cdots \circ \krivb_1$ where \begin{itemize}
\item if $w_j =t_i$ for some $i$, then $\krivb_j = \Delta_{\AA^1}$ is the graph of the identity map on $\AA^1$,
\item if $w_j = \phi$, then $\krivb_j$ is the graph of $f^j_1$, and
\item if $w_j = \beta$, then $\krivb_j$ is the converse relation of the graph of $f^{j+1}_1$.
\end{itemize}

We also say that $\kriva$ is \emph{a correspondence encoded by $w \star \vec{f}$}.
\end{Def}

While the witnessing sequence uniquely determines the correspondence, $w$ and $\vec{f}$ do not uniquely determine the witnessing sequence because Ritt swaps are only defined up to linear equivalence, and even linearly equivalent decompositions may produce different single-skew-twists. We define skew-linear-equivalence and then formalize an action of the skew-twists monoid.

\begin{Rk} \label{concat-compose}
 Suppose that $w = vu \in \skewmonoid_k$ and $w \star \vec{f} = \vec{h}$ is defined. Let $\{ \vec{f}^j \}$ be a witnessing sequence for this, let $\krivd$ be the correspondence from $f$ to $h$ encoded thereby, and let $\vec{g}$ be the element of this sequence coming from $u \star \vec{f}$.
 Then $\krivd = \krivb \circ \kriva$ where $\kriva$ and $\krivb$ are the curves encoded by the two witnessing sequences $(\vec{f}, \ldots, \vec{g})$ and $(\vec{g}, \ldots, \vec{h})$, respectively.
\end{Rk}

\begin{Def}
 Two decompositions $\vec{f}$ and $\vec{h}$ are \emph{skew-linearly-equivalent} if there is a linear $L$ such that $\vec{h}$ is linearly equivalent to $( L^\sigma \circ f_k, f_{k-1}, \ldots, f_2, f_1 \circ L^{-1})$.
\end{Def}

 \begin{Rk}
 Skew-linear-equivalence is an equivalence relation. Skew-linearly-equivalent decompositions
  may be decompositions of different, but always skew-conjugate, polynomials. Indeed,
$$
 \vec{f} \mapsto \vec{g} := ( L^\sigma \circ f_k, f_{k-1}, \ldots, f_2, f_1 \circ L^{-1})
$$
is a bijection between decompositions of $f$ and decompositions
 of $g:= L^\sigma \circ f \circ L^{-1}$, and this bijection respects linear equivalence.\end{Rk}

\begin{Def} \label{twist-monoid-def}
Let $\skequi_f$ be the set of skew-linear-equivalence classes of decompositions of skew-twists of $f$. We write $[[\vec{f}]]$ for the skew-linear equivalence class of $\vec{f}$.

The action $\star$ of $\skewmonoid_k$ on $\skequi_f^\ast := \skequi_f \cup \{ \infty \}$ is given by
 \begin{itemize}
 \item $t_i$  still acts by the Ritt swap at $i$ as in Definition~\ref{defpermrep};
 \item $\phi \star [[f_k, \ldots, f_1]] := [[f_1^\sigma, f_k, \ldots, f_2]]$ and $\phi \star \infty = \infty$;
 \item $\beta \star [[\vec{f}]] := [[f_{k-1}, \ldots, f_1, f_k^{(\sigma^{-1})}]]$ and $\beta \star \infty = \infty$.
 \end{itemize}

For $w \in \skewmonoid_k$ and $[[\vect{f}]] \in \skequi_f$ we say that $w \star [\vect{f}]$ is \emph{defined} if $w \star [\vec{f}] \neq \infty$.
\end{Def}

\begin{lemma} \label{skewcruciallemma}
\label{newhapppycorrs} \label{welldefcorr}
\begin{enumerate}
\item  Ritt swaps are well-defined up to skew-linear-equivalence.
\item  Single skew-twists are well-defined up to skew-linear-equivalence.
\item Suppose that $w \in \skewmonoid_k$ and $\vec{f}$ is a decomposition of a polynomial $f$,
$\vec{g}$ and $\vec{h}$ are witnessing sequences for $w \star \vec{f}$ with corresponding
encoded correspondences $\kriva$ and $\krivb$ between $(\AA^1,f)$ and $(\AA^1,g)$ (and $(\AA^1,f)$
and $(\AA^1,h)$, respectively).  Then there is a linear $L$
with $h = L^\sigma \circ g \circ L^{-1}$ and $\krivb = L \circ \kriva$. That is,
$h$ is skew-conjugate to $g$ and the correspondence is off by the same linear factor.
\end{enumerate}
\end{lemma}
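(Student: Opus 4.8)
The three statements are nested, so the plan is to prove (1), then (2), and finally (3) using \emph{refined, witness-tracking} forms of the first two; the substance is an induction on word length for (3). For (1): Theorem~\ref{cruciallemma} already gives that a Ritt swap at $i$ is well defined up to \emph{linear} equivalence, and Lemma~\ref{EasyLinEquivRittSwapLemma} gives that linear equivalence is respected by Ritt swaps, so the only new point is how the two boundary linear polynomials in the definition of skew-linear equivalence interact with a Ritt swap. Given $\vec{f} = (f_k,\dots,f_1)$ and a linear $L$, I would check directly that $(L^\sigma\circ f_k, f_{k-1},\dots, f_1\circ L^{-1})$ admits a Ritt swap at $i$ exactly when $\vec{f}$ does, and that it produces, up to linear equivalence, $(L^\sigma\circ g_k, g_{k-1},\dots, g_1\circ L^{-1})$ where $\vec{g} = t_i\star\vec{f}$: one absorbs $L^\sigma$ into the outermost linear witness of the swap when $i < k-1$, or into the outermost factor itself when $i = k-1$, and absorbs $L^{-1}$ into the innermost witness when $i > 1$, or into the innermost factor when $i = 1$. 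This proves (1) and records the sharper fact needed below: a Ritt swap at $i$ sends a decomposition skew-linearly equivalent to $\vec{f}$ via $L$ to one skew-linearly equivalent to $t_i\star\vec{f}$ via the \emph{same} $L$.

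For (2): since $\phi$ and $\beta$ are mutually inverse on decompositions of skew-twists of $f$, it suffices to treat $\phi$, and I would compute directly with $\vec{f} = (f_k,\dots,f_1)$. First, if $\vec{f}\,'$ is linearly equivalent to $\vec{f}$ via linear polynomials $M_{k-1},\dots,M_1$, then $\phi\star\vec{f}\,' = ((f_1')^\sigma,f_k',\dots,f_2')$ is skew-linearly equivalent to $\phi\star\vec{f}$, and the witness works out to be exactly $M_1^{-1}$ (the analogous $\beta$ computation yields witness $(M_{k-1}^{-1})^{(\sigma^{-1})}$). Second, $\phi\star(L^\sigma\circ f_k, f_{k-1},\dots,f_1\circ L^{-1})$ is in fact \emph{linearly} equivalent to $\phi\star\vec{f}$, since moving $f_1\circ L^{-1}$ to the front, after applying $\sigma$, puts $(L^\sigma)^{-1}$ next to $L^\sigma$, where they cancel. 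Composing these gives (2) together with the bookkeeping needed below: $\phi$ sends a decomposition skew-linearly equivalent to $\vec{f}$ — say linearly equivalent to the $L$-modification of $\vec{f}$ with innermost linear witness $M_1$ — to one skew-linearly equivalent to $\phi\star\vec{f}$ via $M_1^{-1}$, and likewise $\beta$ produces the witness $(M_{k-1}^{-1})^{(\sigma^{-1})}$.

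For (3): I would prove by induction on the length $n$ of $w$ the following strengthening: for any two witnessing sequences $\{\vec{f}^{\,j}\}_{j\le n}$ and $\{\vec{e}^{\,j}\}_{j\le n}$ for $w\star\vec{f}$, with encoded correspondences $\kriva$ and $\krivb$, there is a \emph{single} linear $L$ which simultaneously witnesses the skew-linear equivalence of the final decomposition $\vec{e}^{\,n}$ with $\vec{f}^{\,n}$ and satisfies $\krivb = L\circ\kriva$; this forces $h = L^\sigma\circ g\circ L^{-1}$, which is the rest of the lemma. The base case $n=0$ is trivial with $L=\id$. For the step, write $w = w_n w'$ and apply the hypothesis to the length-$(n-1)$ prefixes to get $L'$ with $\krivb' = L'\circ\kriva'$ and $\vec{e}^{\,n-1}$ skew-linearly equivalent to $\vec{f}^{\,n-1}$ via $L'$; fix the linear equivalence of $\vec{e}^{\,n-1}$ with the $L'$-modification of $\vec{f}^{\,n-1}$, with innermost/outermost witnesses $M_1,M_{k-1}$. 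If $w_n = t_i$, the new step contributes the curve $\Delta_{\AA^1}$, so $\kriva = \kriva'$ and $\krivb = \krivb'$, and by the refined (1) the same $L = L'$ works. If $w_n$ is $\phi$ (resp.\ $\beta$), set $L := M_1^{-1}$ (resp.\ $L := (M_{k-1}^{-1})^{(\sigma^{-1})}$); the refined (2) gives the skew-linear-equivalence assertion, and by Remark~\ref{concat-compose} the equality $\krivb = L\circ\kriva$ reduces to one compositional identity between the innermost factors of $\vec{e}^{\,n-1}$ and $\vec{f}^{\,n-1}$ (for $\phi$) or of $\vec{e}^{\,n}$ and $\vec{f}^{\,n}$ (for $\beta$), namely the one forced by the linear-equivalence formula for that factor once the $\sigma$- or $\sigma^{-1}$-twist is taken into account — for instance, in the $\phi$ case $(\vec{e}^{\,n-1})_1 = M_1^{-1}\circ(\vec{f}^{\,n-1})_1\circ(L')^{-1}$ yields $(\vec{e}^{\,n-1})_1\circ L' = M_1^{-1}\circ(\vec{f}^{\,n-1})_1 = L\circ(\vec{f}^{\,n-1})_1$, which is exactly what is needed. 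This closes the induction, and the $n$-fold composite is the lemma.

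The hard part is this last coincidence. One must have arranged the definitions of skew-linear equivalence, of the correspondences encoded by witnessing sequences, and of the monoid action so that the linear polynomial produced by the pure combinatorics of Ritt swaps and single skew-twists is \emph{literally} the one governing the correspondences at every stage; this is precisely why (1) and (2) must be proved in the witness-tracking forms above rather than merely ``up to skew-linear equivalence.'' Each individual computation is a routine manipulation of linear polynomials together with applications of $\sigma$, but keeping all the inversions and $\sigma$-twists consistent across the three cases of the inductive step is the delicate point.
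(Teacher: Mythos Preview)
Your proposal is correct and follows essentially the same approach as the paper: prove (1) and (2) in the sharper witness-tracking form, then prove (3) by induction on the length of $w$ with the strengthened hypothesis that a single linear $L$ simultaneously witnesses the skew-linear equivalence of the terminal decompositions and the relation $\krivb = L\circ\kriva$. Your treatment is somewhat more explicit than the paper's---you name the new witness as $M_1^{-1}$ (resp.\ $(M_{k-1}^{-1})^{(\sigma^{-1})}$) after $\phi$ (resp.\ $\beta$) and spell out the final compositional identity---whereas the paper simply says the old $L$ ``cancels'' and ``introduces a new linear factor, one of the witnesses of the linear equivalence,'' but the content is the same.
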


\begin{proof}
 The proof of the first two parts serves as the induction (on the length of $w$) step for the proof of the last part.

 For the first part, note that if a decomposition $(h_k, h_{k-1}, \ldots, h_2, h_1)$ of $f$
 is obtained from $\vec{f}$ by a Ritt swap at $i$,
 then the decomposition $(L^\sigma \circ h_k, h_{k-1}, \ldots, h_2, h_1 \circ L^{-1})$
 of $g := L^\sigma \circ f \circ L^{-1}$  is obtained from
 $\vec{g}:= ( L^\sigma \circ f_k, f_{k-1}, \ldots, f_2, f_1 \circ L^{-1})$ by a Ritt swap at $i$.

 For the second, take $\vec{g} := ( L^{\sigma} \circ f_k \circ L_{k-1}^{-1}, L_{k-1} \circ f_{k-1} \circ L_{k-2}^{-1}, \ldots,  L_2^{-1} \circ f_2 \circ L_1, L_1^{-1} \circ f_1 L^{-1})$ skew-linearly equivalent to $\vec{f}$. The same linear factors, reindexed, witness that single skew-twists of $\vec{g}$ are skew-linearly equivalent to the corresponding single skew-twists of $\vec{f}$.

 For the third part, let $n$ be the length of $w$ and take witnessing sequences $\vec{g}^j$ and $\vec{h}^j$ for $w \star \vec{f} = \vec{g}$ and for $w \star \vec{f} = \vec{h}$; so $\vec{g}^0 = \vec{f} = \vec{h}^0$ and $\vec{g}^n = \vec{g}$ and $\vec{h}^n = \vec{h}$. We induct on $n$, strengthening the induction hypothesis from $h = L^\sigma \circ g \circ L^{-1}$ to $\vec{h} = L^\sigma \circ \vec{g} \circ L^{-1}$.

 Let $v := w_{n-1} \ldots w_1$ and let $\kriva_0$ and $\krivb_0$ be the curves encoded by $v \star \vec{f}$ via these witnessing sequences. By induction hypothesis and the first two parts, there is a linear factor $L$ such that $\vec{h}^{n-1}$ is linearly equivalent to $L^{\sigma} \circ \vec{g} \circ L^{-1}$ and $\krivb = L \circ \kriva$. If $w_n$ is a Ritt swap, the same $L$ works: look at the proof of the first part of this lemma to prove the first part of the induction hypothesis, and note that the curve encoded is the same for $v$ and $w$ to prove the second part of the induction hypothesis. If $w_n$ is a single skew-twist, composing the graph of the first or last factor of $\vec{h}$ with $L \circ \kriva$ cancels $L$, and introduces a new linear factor, one of the witnesses of the linear equivalence of $\vec{h}^{n-1}$ and $L^{\sigma} \circ \vec{g} \circ L^{-1}$.
\end{proof}

\begin{Rk} \label{reintroduce-linear}
The definition of witnessing sequences and encoded correspondences allows linear equivalence and skew-conjugacy in some cases but not in others. Because of this inconsistency, it is safest to artificially reintroduce the linear factor $L$ at the end, as we do in Theorem~\ref{realchar}, Theorem~\ref{hhprop}, and Theorem~\ref{centralthm}.\end{Rk}

\begin{corollary}
\label{coruniqueencode}
If two correspondences $\kriva$ and $\krivb$ between the polynomials $f$ and $g$ are both encoded by $w \in \skewmonoid_k$, then they are off by a (skew)-symmetry $L$ of $g$, that is, $\krivb = L \circ \kriva$ and $L^{\sigma} \circ g \circ L^{-1} = g$.
\end{corollary}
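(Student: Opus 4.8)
The plan is to obtain this as an immediate corollary of part (3) of Lemma~\ref{skewcruciallemma}, the only new input being that the two correspondences are now assumed to have the \emph{same} target polynomial $g$.

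First I would unwind the hypothesis. To say that a correspondence between $f$ and $g$ is encoded by $w \in \skewmonoid_k$ means (in the sense of Definition~\ref{correncdef}) that there is a decomposition $\vec{f}$ of $f$ and a witnessing sequence for $w \star \vec{f}$ terminating in a decomposition of $g$ whose encoded correspondence is the one at hand. Thus we may fix a single decomposition $\vec{f}$ of $f$ together with two witnessing sequences for $w \star \vec{f}$ whose encoded correspondences are $\kriva$ and $\krivb$, and by assumption both $\kriva$ and $\krivb$ run between $(\AA^1,f)$ and $(\AA^1,g)$.

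Now I would apply Lemma~\ref{skewcruciallemma}(3) verbatim. It furnishes a linear polynomial $L$ with $\krivb = L \circ \kriva$ and with the polynomial at the end of the $\krivb$-sequence equal to $L^\sigma$ composed with the polynomial at the end of the $\kriva$-sequence composed with $L^{-1}$. Since both of these end polynomials are $g$, this relation reads $g = L^\sigma \circ g \circ L^{-1}$; that is, $L$ is a skew-symmetry of $g$. Together with $\krivb = L \circ \kriva$ this is exactly the assertion of the corollary. The only point requiring a word of care is the implicit choice of starting decomposition: were one to allow $\kriva$ and $\krivb$ to arise from two different decompositions of $f$, one would first transport one witnessing sequence along the connecting sequence of Ritt swaps, using parts (1) and (2) of the lemma, and absorb the resulting discrepancy — a linear skew-symmetry of $f$ — into $L$. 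In practice the corollary is applied with $\vec{f}$ fixed, so there is no genuine obstacle here; all of the substantive work has already been packaged into Lemma~\ref{skewcruciallemma}, and the corollary is merely its specialization to the case $h = g$.
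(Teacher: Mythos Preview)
Your proposal is correct and matches the paper's approach: the corollary is stated immediately after Lemma~\ref{skewcruciallemma} with no separate proof, precisely because it is the specialization of part~(3) to the case $h = g$. Your additional remark about allowing different starting decompositions of $f$ is a reasonable caveat, though in the paper's setup both witnessing sequences are for $w \star \vec{f}$ with a fixed $\vec{f}$, so this subtlety does not arise.
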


Our characterization of correspondences encoded by words in $\skewmonoid_k$ comes from the canonical form for such words, obtained in Proposition~\ref{border_guard_prop} and Lemmata~\ref{goodplaintwist} through~\ref{w1w2lemma}. Here we state an imprecise nontechnical version as motivation.

\begin{Rk}
\label{newsecondslogan} This is a motivational imprecise nontechnical version of Proposition~\ref{border_guard_prop} and Lemmata~\ref{goodplaintwist} through~\ref{w1w2lemma}.

Any word $ w \in \skewmonoid_k$ such that $w \star \vec{f}$ is defined is \emph{equivalent} to a word of the form $\phi^{Nk} w_0$ or $\beta^{Nk} w_0$ where the length of $w_0$ is bounded
by a constant depending only on the degree of $f$.

Any $(f,g)$-skew-invariant curve coming from skew-twists is a composition of the graph of $f^{\lozenge N}$ for some $N \in \mathbb{N}$ with a correspondence both of whose degrees are bounded by $2 \cdot \deg{f}$; or a composition of a correspondence both of whose degrees are bounded by $2 \cdot \deg{f}$ with the converse relation to the graph of $g^{\lozenge N}$ for some $N \in \mathbb{N}$.
\end{Rk}

We define equivalence for words in the skew-twist monoid so as to make the second part of Remark~\ref{newsecondslogan}
a consequence of the first. Thus, it must take into account the curves encoded by the words in the monoid, but need
not keep track of their strictly skew-pre-periodic components.

\begin{Def} \label{equicorrdef}
Given $v, w \in \skewmonoid_k$  and a decomposition $\vec{f} = (f_k,\ldots,f_1)$.
We say that $v$ and $w$ are \emph{equivalent with respect to $\vec{f}$} and write $v \approx_{\vec{f}} w$ if
$v \star [[\vec{f}]] = w \star [[\vec{f}]]$ and there are  witnessing sequences
$(\vec{g}^j)$ and $(\vec{h}^j)$ for $v \star \vec{f}$ and $w \star \vec{f}$, respectively so
that the final $\vec{g}^n$ and $\vec{h}^n$ are decompositions of the same polynomial $g$, and
$(\kriva_v)_\per =(\kriva_{w})_\per$ for the curves $\kriva_v$ and $\kriva_w$ encoded by $v$ (respectively, $w$) via  $(\vec{g}^j)$
 (respectively, $(\vec{h}^j)$).

 When $v \approx_{\vec{f}} w$ for all $\vec{f}$, we write $v \approx w$ and say that the two words are \emph{equivalent}.
\end{Def}

This notion is weaker than the purely syntactic one in Definition~\ref{defapporx} of $v \simeq w$ for $v, w \in \rittmonoid_k$.

\begin{lemma}\label{biggercorr}
\begin{enumerate}
\item $\phi \beta \approx \id \approx \beta \phi$
\item Suppose $u_1 \approx_{\vec{f}} v_1$,
 and so let $\vec{g} := u_1 \star \vec{f} = v_1 \star \vec{f}$,
 and suppose $u_2 \approx_{\vec{g}} v_2$;
 then $u_2 u_1 \approx_{\vec{f}} v_2 v_1$.
\item For any word $w$ in $\skewmonoid_k$,  $w \phi^k \approx \phi^k w$ and
$w \beta^k \approx \beta^k w$.

\item  $t_i \phi \approx \phi t_{i+1}$ for $i < k-1$ while  $t_i \beta \approx \beta t_{i-1}$ for $i > 1$
\end{enumerate}
\end{lemma}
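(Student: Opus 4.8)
The plan is to verify each of the four claims in turn, in each case by exhibiting explicit witnessing sequences for the two words being compared and checking that they encode the same correspondence up to the skew-preperiodic part (and, for the equivalences with no $\phi$/$\beta$ cancellation, that the encoded curves are literally equal after composing with the appropriate identity graphs). The starting point is Remark~\ref{concat-compose} together with the compatibility established in Lemma~\ref{skewcruciallemma}(3): once I know that witnessing sequences for $w\star\vec f$ are unique up to skew-conjugacy of the target and the matching linear twist of the encoded curve, it suffices to produce \emph{one} convenient witnessing sequence on each side.

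First I would handle part (1). Starting from $\vec f=(f_k,\dots,f_1)$, applying $\phi$ gives $\vec f^{\,1}=(f_1^\sigma,f_k,\dots,f_2)$, and then applying $\beta$ to this length-$k$ decomposition returns $(f_k,\dots,f_2,(f_1^\sigma)^{(\sigma^{-1})})=(f_k,\dots,f_2,f_1)=\vec f$; so $\phi\beta$ and $\beta\phi$ both act as the identity on $\skequi_f$. It remains to check the encoded correspondences: the curve encoded by $\phi$ is the graph of $f_1$, the curve encoded by the subsequent $\beta$ is the converse relation of the graph of the last factor of $\vec f^{\,1}$, which is again $f_1$, and the composite of the graph of $f_1$ with the converse of the graph of $f_1$ has the diagonal $\Delta_{\AA^1}$ as its unique skew-invariant component (the other components are strictly preperiodic, being killed by $(\cdot)_\per$), which is exactly the curve encoded by the empty word. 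The order $\beta$-then-$\phi$ is symmetric. Thus $\phi\beta\approx\id\approx\beta\phi$.

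Part (2) is the functoriality/concatenation statement: it follows directly from Remark~\ref{concat-compose} (which expresses the correspondence encoded by a concatenation as the composite of the two pieces) together with the well-definedness in Lemma~\ref{skewcruciallemma} and the fact that $(\cdot)_\per$ commutes with composition of correspondences up to $(\cdot)_\per$ by Lemma~\ref{compper}. Concretely, pick a witnessing sequence for $u_1\star\vec f$ ending at $\vec g$ and a witnessing sequence for $u_2\star\vec g$; concatenating gives a witnessing sequence for $u_2u_1\star\vec f$ whose encoded curve is $\kriva_{u_2}\circ\kriva_{u_1}$, and similarly on the $v$ side; since $(\kriva_{u_1})_\per=(\kriva_{v_1})_\per$ and $(\kriva_{u_2})_\per=(\kriva_{v_2})_\per$, Lemma~\ref{compper} gives $(\kriva_{u_2}\circ\kriva_{u_1})_\per=(\kriva_{u_2}{}_\per\circ\kriva_{u_1}{}_\per)_\per=(\kriva_{v_2}\circ\kriva_{v_1})_\per$, and the targets agree by hypothesis.

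For parts (3) and (4) the key observation is that $\phi$ cyclically rotates the decomposition one step and $t_i$ is local to positions $i,i+1$, so conjugating $t_i$ past a single $\phi$ shifts the index by one, and past $k$ copies of $\phi$ returns to the original index — which is why $\phi^k$ (a full rotation) commutes with everything. For (4), $t_i\phi\star\vec f$: first $\phi$ rotates to $(f_1^\sigma,f_k,\dots,f_2)$, and a Ritt swap at position $i$ there (for $i<k-1$, so that positions $i,i+1$ both lie among the old factors $f_k,\dots,f_2$, i.e. positions originally indexed $i+1,i+2$) produces the same local identity as first doing a Ritt swap at $i+1$ on $\vec f$ and then rotating; both encode the graph of the (unchanged) first factor $f_1$, so the encoded curves match. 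The symmetric argument with $\beta$ and the index shift $i\mapsto i-1$ (for $i>1$) gives $t_i\beta\approx\beta t_{i-1}$. For (3), write $\phi^k$ as a full cyclic rotation: $\phi^k\star\vec f$ is linearly equivalent to $\vec f$ itself up to replacing each factor by a $\sigma$-power, and it commutes with Ritt swaps by iterating (4) $k$ times (the index shift wraps around modulo $k$), and it commutes with $\phi$ and $\beta$ trivially; the encoded curve of $\phi^k$ is the graph of $f^{\lozenge?}$-type composite which, being a genuine graph, composes transparently. The same holds for $\beta^k$.

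The main obstacle I anticipate is bookkeeping in part (4): making precise exactly which Ritt swap on the rotated decomposition corresponds to $t_{i+1}$ on the original, and verifying at the boundary cases (the constraints $i<k-1$ for $\phi$ and $i>1$ for $\beta$ are there precisely because a Ritt swap straddling the position where rotation cut the decomposition is genuinely different). One must also be careful that the Ritt swap on the rotated decomposition is performed on the factor $f_1^\sigma$ rather than $f_1$ when $i$ is near the end, but since $\sigma$ is an automorphism commuting with the whole theory of Ritt identities, any basic Ritt identity among the $f_j$ has its $\sigma$-transform as a basic Ritt identity among the $f_j^\sigma$; this is the content already used in Lemma~\ref{skewcruciallemma}(1). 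With that in hand the index-shift identities reduce to the unambiguous combinatorics of Theorem~\ref{fundamentallemma}.
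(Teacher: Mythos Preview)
Your proof of parts (1), (2), and (4) is correct and matches the paper's approach (the paper is terser, but the content is the same). There is one genuine issue in part (3).

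You propose to show that $\phi^k$ commutes with each $t_i$ ``by iterating (4) $k$ times (the index shift wraps around modulo $k$)''. This does not work: part (4) only gives $t_i\phi\approx\phi t_{i+1}$ for $i<k-1$, so the iteration $t_i\phi^j\approx\phi^j t_{i+j}$ stalls as soon as $i+j$ reaches $k-1$. There is no statement in (4) handling $t_{k-1}\phi$, and indeed that is precisely the case the border-guard generator $\psi=t_{k-1}\phi$ is introduced to deal with later. So the ``wraparound modulo $k$'' step is unjustified.

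The fix is already present in your own sentence: you note that $\phi^k\star\vec f$ is $\vec f^\sigma$. That observation alone is the whole argument --- since applying $\sigma$ to every factor commutes with performing a Ritt swap (basic Ritt identities are preserved by the field automorphism $\sigma$), one has $t_i\star\vec f^\sigma=(t_i\star\vec f)^\sigma$ on the level of decompositions, and the encoded curve on either side is the graph of $f$ composed with a diagonal. This is exactly how the paper argues (3): it says only ``Since $\phi^k\star\vec f=\vec f^\sigma$ and $\beta^k\star\vec f=\vec f^{(\sigma^{-1})}$, it is clear that $\phi^k$ and $\beta^k$ commute with Ritt swaps,'' and then invokes (1), not (4), for commutation with $\phi$ and $\beta$. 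Drop the appeal to iterated (4) and you are done.
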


\begin{proof}
\begin{enumerate}
\item   The $(f,f)$-skew-invariant correspondence $\kriva_{\beta\phi}$ encoded by $\beta \phi$ is defined by $f_1(x) = f_1(y)$.
The diagonal is one of its irreducible components, is $(f,f)$-skew-invariant and is equal to the image $(f, f) ( \kriva_{\beta \phi})$
of the whole curve. Thus,  $\beta\phi  \approx_{\vec{f}} \id$ for any decomposition $\vec{f}$.
\item  This is an immediate consequence of Lemma~\ref{compper}, Lemma~\ref{welldefcorr} and
Corollary~\ref{coruniqueencode}.
\item Since $\phi^k \star \vec{f} = \vec{f}^\sigma$ and $\beta^k \star \vec{f} = \vec{f}^{(\sigma^{-1})}$, it is clear that $\phi^k$ and $\beta^k$ commute with Ritt swaps. Part (1) ensures that they commute with $\phi$ and $\beta$.
\item  After a shift, the same two factors participate in the Ritt swap on the two sides of each equation.
\end{enumerate}

\end{proof}

\begin{lemma}
\label{stdformBk}
For all $w \in \skewmonoid_k$, there is some $u \in \skewmonoid_k$ that does not contain $\beta$ nor $\phi^k$ as a substring, and such that $w \approx  \phi^{mk} u$  or $w \approx  \beta^{nk} u$.
\end{lemma}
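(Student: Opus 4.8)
The plan is to put $w$ into normal form using only three things recorded in Lemma~\ref{biggercorr}: the cancellation $\phi\beta \approx \id \approx \beta\phi$, the centrality of $\phi^{k}$ and $\beta^{k}$ (that is, $v\phi^{k}\approx\phi^{k}v$ and $v\beta^{k}\approx\beta^{k}v$ for every $v\in\skewmonoid_{k}$), and the fact that $\approx$ is a congruence. Note that working entirely with $\approx$-identities automatically discharges the condition on the strictly skew-preperiodic parts of the encoded correspondences built into Definition~\ref{equicorrdef}, so no separate tracking of curves is needed.

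First I would split $w$ at its $t$-letters, writing $w = \delta_{r}\tau_{r}\delta_{r-1}\tau_{r-1}\cdots\tau_{1}\delta_{0}$, where $\tau_{1},\dots,\tau_{r}$ are the occurrences of $t$-generators in $w$ in order and each $\delta_{j}$ is the (possibly empty) word in the letters $\phi,\beta$ lying between them. Applying $\phi\beta\approx\id\approx\beta\phi$ together with the congruence property (one $\delta_{j}$ at a time), each $\delta_{j}$ collapses to a single power, so $w\approx\gamma_{r}\tau_{r}\cdots\tau_{1}\gamma_{0}$ with each $\gamma_{j}$ equal to $\phi^{c_{j}}$ or $\beta^{c_{j}}$ for some integer $c_{j}\ge 0$. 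Next I would extract the multiple-of-$k$ part of each $\gamma_{j}$ while simultaneously making the remainder a power of $\phi$: for $\gamma_{j}=\phi^{c_{j}}$ write $c_{j}=a_{j}k+e_{j}$ with $0\le e_{j}<k$, so $\gamma_{j}=\phi^{a_{j}k}\phi^{e_{j}}$; for $\gamma_{j}=\beta^{c_{j}}$ write $c_{j}=b_{j}k+d_{j}$ with $0\le d_{j}<k$ and, when $d_{j}>0$, use $\beta^{d_{j}}\approx\beta^{k}\phi^{\,k-d_{j}}$ (immediate from $\beta\phi\approx\id$, Lemma~\ref{biggercorr}) to rewrite $\gamma_{j}\approx\zeta_{j}\phi^{e_{j}}$ with $\zeta_{j}$ a power of $\beta^{k}$ and $0\le e_{j}<k$ (set $e_{j}=0$ when $d_{j}=0$). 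In every case $\zeta_{j}$ is a power of $\phi^{k}$ or of $\beta^{k}$, hence central by Lemma~\ref{biggercorr}, so I can slide each $\zeta_{j}$ past everything to its left, obtaining $w\approx Z\cdot u$ with $Z:=\zeta_{r}\zeta_{r-1}\cdots\zeta_{0}$ and $u:=\phi^{e_{r}}\tau_{r}\phi^{e_{r-1}}\tau_{r-1}\cdots\tau_{1}\phi^{e_{0}}$. Finally $Z$ is a word in $\phi^{k}$ and $\beta^{k}$, so $\phi\beta\approx\id$ reduces it to $\phi^{mk}$ (if the net exponent is nonnegative) or to $\beta^{nk}$ (otherwise) for some $m,n\in\NN$; and $u$ is a word in $\phi$ and the $t$-generators, so it contains no $\beta$, and every maximal block of $\phi$'s in $u$ has length $e_{j}\le k-1$, so $u$ contains no $\phi^{k}$ as a substring. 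Since $u\in\skewmonoid_{k}$, this is exactly the asserted form.

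There is no substantial obstacle here; the proof is essentially careful bookkeeping with the relations already in hand. The points that deserve attention are: checking that the congruence statement in Lemma~\ref{biggercorr}(2) really licenses the one-letter-at-a-time substitutions used above, which it does because each relation invoked is an $\approx$-identity holding for every $\vec{f}$; observing that the whole procedure is visibly finite, since each $\gamma_{j}$ is rewritten once and only finitely many central factors $\zeta_{j}$ are produced and slid to the front; and confirming that the degenerate cases $k=1$ and $k=2$, where the only $t$-generator that can occur is $t_{1}=t_{k-1}$ (or none at all), are handled by the identical computation. The one place where a careless reader could go wrong is the treatment of $\gamma_{j}=\beta^{c_{j}}$ with $c_{j}$ a multiple of $k$, where one must simply take $e_{j}=0$ rather than introduce a spurious $\phi$-power.
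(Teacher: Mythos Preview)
Your proof is correct and follows essentially the same strategy as the paper's: both use $\beta^{d}\approx\beta^{k}\phi^{k-d}$ (equivalently, the insertion of $\beta^{i}\phi^{i}$ pairs) to make all $\beta$-blocks have length divisible by $k$, then invoke the centrality of $\phi^{k}$ and $\beta^{k}$ from Lemma~\ref{biggercorr}(3) to pull them to the front, and finally cancel. Your version is more explicit---splitting at the $t$-letters first so that the bound $e_{j}<k$ on each residual $\phi$-block is transparently witnessed by the separating $\tau_{j}$'s---whereas the paper's proof compresses this into a few sentences, but the underlying mechanism is identical.
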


\begin{proof}
 We may introduce extra $\beta^i \phi^i$ pairs into the word $w$. We introduce enough of them to obtain $w' \approx w$ so that $\beta$ only occurs in multiples of $k$ in $w'$.
 Then we pull all $\beta^k$ to the left, and obtain $\beta^{Nk} w'' \approx w'$ where $w''$ contains no instances of $\beta$.
 Then we can also pull all $\phi^k$ to the left and obtain
 $\beta^{Nk} \phi^{Mk} u \approx \beta^{Nk} w''$ where $u$ contains no instances of $\beta$, and no instances of $\phi^r$ for $r \geq k$. Then we cancel $\beta \phi$ pairs in the beginning.
\end{proof}

\begin{Rk}
 Here is the geometry behind this bit of combinatorics. When $w \star \vec{f}$ is defined, the correspondence $\kriva$ encoded is (an irreducible component of) the fiber product of a diagram
 $$ (\AA^1, f)  \leftrightarrow \ldots \leftrightarrow (\AA^1, g)$$
 where each arrow corresponds to an occurrence of $\phi$ or $\beta$ in $w$. What we just proved is that, for correspondences coming from skew-twists, we may instead look at irreducible components of the fiber product of the diagram
\begin{equation*} \label{neweqcorr}
 (\AA^1, f) \xleftarrow{F} (\AA^1, g^{\sigma^N}) \xrightarrow{g^{\lozenge N}} (\AA^1, g)
\end{equation*}

or

\begin{equation*}
(\AA^1,f) \xleftarrow{F} (\AA^1,g^{\sigma^N}) \xleftarrow{g^{\lozenge N}} (\AA^1,g)
\end{equation*}

where we know one arrow, $g^{\lozenge N}$, exactly, and the other arrow is a sequence of plain skew twists.
\end{Rk}

In most cases, it is also possible to bring together all the $\phi$s in $u$ in Lemma~\ref{stdformBk}, and then $F$ must be skew-compositional power of $g$ composed with (a not necessarily indecomposable) factor of $g$.   However, it is not always possible to do this.  Consider the following
example.

$$
\phi t_1 \phi \star (x \cdot (x^5 +1), x^5) = \phi t_1 \star (x^5, x \cdot (x^5 +1)) = \phi \star (x \cdot (x +1)^5, x^5) = (x^5, x \cdot (x +1)^5)
$$

The encoded correspondence, defined by $y = x^{25}$, is not a compositional power of $x^5 \cdot (x^5)^4$ in any sense. The trouble is that
Lemma~\ref{biggercorr} does not give a way to simplify $t_{k-1} \phi$ and $\beta t_{k-1}$.  We deal with this issue by introducing what we call the \emph{border guard monoid} whose action on decompositions leaves the leftmost factor fixed, though possibly altering
it via Ritt swaps in the sense of Remark~\ref{costumes}.

\begin{Def}
For a fixed positive integer $k \geq 2$, $\borgarmonoid_k$ is the free monoid on the symbols $\psi$, $\gamma$ and
$t_1, \ldots, t_{k-2}$ (where there are no generators of the form $t_i$ if $k=2$)
and $\psi$ and $\gamma$.  Regard $\borgarmonoid_k$ as a submonoid of $\skewmonoid_k$
by mapping $t_i$ to $t_i$, $\psi$ to $(t_{k-1} \phi)$ and $\gamma$ to $(\beta t_{k-1})$.
\end{Def}

 The action of $\borgarmonoid_k$ on $\skequi_f$ is the restriction of the action of $\skewmonoid_k$.  More concretely,
$$
\psi \star (f_k, \ldots, f_1) = t_{k-1} \phi \star (f_k, \ldots, f_1) = t_{k-1} \star (f_1^\sigma, f_k, \ldots, f_2) = (\widehat{f_k}, \widehat{f_1^\sigma}, \ldots, f_2)
$$
$$
\gamma \star (f_k, f_{k-1}, \ldots, f_1) = \beta t_{k-1} \star (f_k, \ldots, f_1) = \beta \star (\widehat{f_{k-1}}, \widehat{f_k}, \ldots, f_1) = (\widehat{f_k}, \ldots, f_1, \widehat{f_{k-1}}^{\sigma^{-1}})
$$

We use $\borgarmonoid_k$ to establish the bounds in Remark~\ref{newsecondslogan}.  Indeed, finding the word $w'$ of the following proposition
goes a long way towards producing the short word $w_0$ of Remark~\ref{newsecondslogan}.

\begin{prop}
\label{border_guard_prop}
Any word $w$ in $\skewmonoid_k$ is equivalent to $\phi^N w'$ or to $\beta^N w'$ for some $N \in \mathbb{N}$ and some word $w' \in \borgarmonoid_k$.
\end{prop}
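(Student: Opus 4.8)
The plan is to bootstrap from Lemma~\ref{stdformBk} and then run an explicit normalization, over the alphabet $\{\phi,t_1,\dots,t_{k-1},\psi,\gamma\}$ with $\psi$ and $\gamma$ treated as atomic letters, reducing everything to a combinatorial termination argument. First, by Lemma~\ref{stdformBk} we may replace $w$ by an equivalent word of the form $\phi^{mk}u$ or $\beta^{nk}u$, where in the first case $u$ contains no $\beta$ and no substring $\phi^k$, and in the second case $u$ contains no $\phi$ and no substring $\beta^k$. The two cases are interchanged by reading words backwards and exchanging $\phi$ with $\beta$ and $\psi$ with $\gamma$ (this swaps an encoded correspondence with its converse relation and the skew-invariant part of a converse is the converse of the skew-invariant part, so it respects $\approx$); hence it is enough to treat the first. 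Since $\phi^{mk}$ is itself a power of $\phi$ and $\approx$ is a congruence (Lemma~\ref{biggercorr}(2)), the proposition reduces to the claim that every $\beta$-free word $u$ with no $\phi^k$ substring satisfies $u \approx \phi^{N'}u'$ for some $N' \in \mathbb{N}$ and some $u' \in \borgarmonoid_k$.

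To prove this claim I normalize $u$ using the following moves, all of them instances of $\approx$ obtained from Lemma~\ref{biggercorr}: \emph{(i)} slide a ``free'' (i.e.\ not yet fused) $\phi$ one step to the left past a letter $t_i$ with $i \le k-2$ via $t_i\phi \approx \phi t_{i+1}$, part~(4); \emph{(ii)} when a free $\phi$ sits immediately to the right of a letter $t_{k-1}$, fuse the pair into the border-guard letter $\psi$; \emph{(iii)} replace a bare $t_{k-1}$ by $\phi\gamma$, which is legitimate because $\phi\gamma = \phi\beta t_{k-1} \approx t_{k-1}$ by part~(1); \emph{(iv)} when a block $\phi^k$ has formed, evacuate it to the extreme left using centrality of $\phi^k$, part~(3). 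The procedure repeatedly takes the leftmost free $\phi$ and slides it left by~(i) until it either reaches the left end, where it is absorbed into the growing prefix $\phi^{N'}$, or jams immediately behind a $t_{k-1}$, where~(ii) turns it into a $\psi$, or accumulates into a run that fills out a $\phi^k$ removed by~(iv); whenever no free $\phi$ remains but a bare $t_{k-1}$ is still present, one applies~(iii) and resumes. When the procedure halts, the word is $\phi^{N'}$ followed by a word whose letters are all $t_i$ with $i \le k-2$, $\psi$, or $\gamma$ — precisely an element of $\borgarmonoid_k$ read inside $\skewmonoid_k$ — and transitivity of $\approx$ gives $w \approx \phi^{mk+N'}u'$ as required.

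The main obstacle is termination of this normalization, which is genuinely delicate: move~(i) can turn a $t_{k-2}$ into a fresh bare $t_{k-1}$, and move~(iii) turns a bare $t_{k-1}$ into a fresh $\phi$, so neither word length, nor the number of bare $t_{k-1}$'s, nor the number of free $\phi$'s decreases monotonically. One must produce a genuine well-founded measure — for instance a lexicographic combination built from the positions (counted from the left) of the free $\phi$'s and of the bare $t_{k-1}$'s — and verify that each move strictly decreases it, using that~(i) always pushes a $\phi$ strictly leftward and that~(iii) is only invoked when no free $\phi$ lies to the right of the chosen $t_{k-1}$. One must also dispose of the degenerate small-$k$ cases separately, most notably $k=2$, where there are \emph{no} $t_i$-sliding relations available and the normalization instead proceeds through the derived identities $t_{k-1}^2 \approx \psi\gamma$ and $\psi\gamma\phi \approx \phi\gamma\psi$. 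Once the measure is pinned down and the small cases are checked, the proposition follows.
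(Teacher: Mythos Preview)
Your approach diverges from the paper's, and as written it has a genuine gap. The paper does \emph{not} first invoke Lemma~\ref{stdformBk}; it runs a single right-to-left induction maintaining the invariant form $w_{\text{bad}}\,\beta^{a}\phi^{b}\,w_{\text{good}}$ with $w_{\text{good}}\in\borgarmonoid_k$, and shows that for each generator $s$ one has $s\,\beta^{a}\phi^{b}\approx\beta^{a'}\phi^{b'}u$ with $u\in\borgarmonoid_k$. The case $s=t_i$ is handled by an inner induction on $a+b$, using $t_i\phi\approx\phi t_{i+1}$, $t_i\beta\approx\beta t_{i-1}$, $t_{k-1}\phi^2\approx\phi^2 t_1$, $t_1\beta\approx\beta^2 t_{k-1}\phi$, $t_{k-1}\approx\phi\gamma$, and $t_{k-1}\phi=\psi$. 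Because the $\beta^a\phi^b$ block is kept as a single buffer and $\psi,\gamma$ only ever appear inside $w_{\text{good}}$ on the right, there is never any interaction between a loose $\phi$ and an already-fused $\psi$ or $\gamma$; termination is immediate from the two nested well-founded inductions.

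Your rewriting procedure, by contrast, creates $\psi$'s and $\gamma$'s in the middle of the word and then tries to slide later $\phi$'s leftward past them, but moves (i)--(iv) say nothing about $\gamma\phi$ or $\psi\phi$. This situation does arise: with $k=3$ and $u=t_1t_2t_1t_2$, your procedure reaches $\gamma\gamma\phi t_2\gamma$, where the free $\phi$ is blocked by $\gamma$ and none of (i)--(iv) applies. One can of course expand $\gamma\phi=\beta t_{k-1}\phi$, but that reintroduces $\beta$ and undoes the reduction you bought with Lemma~\ref{stdformBk}. So the procedure is incomplete, and the termination argument you defer (``once the measure is pinned down\dots'') is precisely the content of the proposition. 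The paper's buffer-based induction sidesteps both problems.
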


\begin{proof}
We take $w \in \skewmonoid_k$, start from the right, and move to the left. At every step, we have a word $w_{\text{bad}} \beta^{a} \phi^b w_{\text{good}}$ with $w_{\text{bad}} \in \skewmonoid_k$ and $w_{\text{good}} \in \borgarmonoid_k$. Working by induction on
  the length of $w_{\text{bad}}$, Thus, it is clearly sufficient to prove that if $s$ is a generator of $\skewmonoid_k$, then there are
  natural numbers $a'$ and $b'$ and some $u \in \borgarmonoid_k$ with $s \phi^a \beta^b = \phi^{a'} \beta^{b'} u$.   If $s = \beta$, then
  we may take $a' := a + 1$, $b' = b$ and $u$ the empty word.  If $s = \phi$ and $a = 0$, then we take $a' = 0$, $b' = b+1$ and $u$ the empty
  word while if $a > 0$, then we take $a' = a -1$, $b' = b$ and $u$ the empty word.

  We work by induction on $(a+b)$ for the case that
  $s = t_i$ for some $ i < k$.  In the base case of $a = b = 0$, if $i < k-1$, then we may take $a' = b' = 0$ and $u = t_i$. For $i = k-1$, we
  note $t_{k-1} \approx \phi \beta t_{k-1} = \phi \gamma$, so that we may take $a'=0$, $b' =1$, and $u = \gamma$.
 If $a=0$ and $i \neq k-1$, then $t_i \phi \approx \phi t_{i+1}$ and we can apply the inductive hypothesis to $t_{i+1} \phi^{b-1}$.
 If $a=0$ and $i = k-1$, then $b \neq 0$. If $b=1$, then we are looking at $(t_{k-1} \phi)$, so we  let $a'=b'=0$ and $u = \psi$. If $b \geq 2$, note that $t_{k-1} \phi^2 \approx \phi^{2} t_1$ so we can apply the inductive hypothesis to $t_1 \phi^{b-2}$.
 If $a \neq 0$ and $i \neq 1$, then $t_i \beta \approx \beta t_{i-1}$ and we can apply the inductive hypothesis to $t_{i-1} \beta^{a-1} \phi^b$.
 If $a \neq 0$ and $i=1$, note that $t_1 \beta \approx \beta^{2} t_{k-1} \phi$. If $a=1$, then we get $t_1 \beta \phi^b \approx \beta^2 t_{k-1} \phi^{b+1}$ and we can apply the second inductive step to $t_{k-1} \phi^{b+1}$. If $a \geq 2$, we get $t_1 \beta^a \phi^b \approx \beta^2 t_{k-1} \phi \beta^{a-1} \phi^{b} \approx \beta^2 t_{k-1} \beta^{a-2} \phi^b$, and we can apply the inductive hypothesis to $t_{k-1} \beta^{a-2} \phi^b$.
\end{proof}

\begin{Rk} \label{circledance}
It is sometimes helpful to think of $\skequi_f$ as a bunch of indecomposable factors arranged in a circle, rather than a line, with $f_k$ standing next to $f_1$. In that spirit, both $\gamma$ and $\psi$ act by a Ritt swap between these two. To be more precise, for any decomposition $\vec{f}$, the following three are equivalent:\begin{itemize}
\item $\gamma \star \vec{f}$ is defined
\item $\psi \star \vec{f}$ is defined
\item $t_1 \star (f_1^\sigma, f_k)$ is defined
\end{itemize}
\end{Rk}

\begin{corollary} \label{unswcrack}
If $w \in \skewmonoid_k$ and $w \star \vec{f}$ is defined and $f_k$ is not swappable, then the correspondence encoded by $w \star \vec{f}$ is already encoded by $\phi^N \star \vec{g}$ or $\beta^N \star \vec{g}$ for some $N \in \mathbb{N}$ and some decomposition $\vec{g}$ of $f$.
 \end{corollary}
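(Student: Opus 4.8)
The plan is to route the argument through Proposition~\ref{border_guard_prop}. Apply it to $w$: there are $N \in \NN$ and a word $w' \in \borgarmonoid_k$ with $w \approx \phi^{N} w'$ or $w \approx \beta^{N} w'$. I will carry out the $\phi^{N} w'$ case; the $\beta^{N} w'$ case is identical with $\beta$ in place of $\phi$. Since $\approx$ preserves definedness and $w \star \vec{f}$ is defined, $\phi^{N} w' \star \vec{f}$ is defined, and since $\phi$ is always defined this forces $w' \star \vec{f}$ to be defined. The whole content of the proof will be that, because $f_k$ is not swappable, the word $w'$ cannot actually invoke the generators $\psi = t_{k-1}\phi$ or $\gamma = \beta t_{k-1}$ of $\borgarmonoid_k$, so that $w'$ is really a word in $t_1, \ldots, t_{k-2}$ alone.

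The one elementary input is that in each of the three basic Ritt identities $a \circ b = c \circ d$ of Definition~\ref{specialpoly} all four of $a, b, c, d$ are themselves Ritt polynomials — this is immediate by inspection (monomials compose to monomials, Chebyshev to Chebyshev, and in the third identity both $x^k u(x^{\ell p})^n$ and $x^k u(x^{\ell})^{pn}$ satisfy the Ritt conditions since $p \geq 2$). Consequently, whenever a Ritt swap at position $i$ is available for a decomposition $\vec{h}$, the factors $h_i$ and $h_{i+1}$ are both linearly related to Ritt polynomials, i.e.\ both swappable. Combining this with the ``circle'' description in Remark~\ref{circledance} — that $\psi \star \vec{h}$ and $\gamma \star \vec{h}$ are defined exactly when $t_1 \star (h_1^{\sigma}, h_k)$ is — one gets: if $\psi \star \vec{h}$ or $\gamma \star \vec{h}$ is defined, then $h_k$ is swappable.

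Now process $w'$ against $\vec{f}$ from right to left, choosing in the witnessing sequence the representative of each Ritt swap at $i \le k-2$ that leaves the $k$-th factor literally equal to $f_k$ (legitimate, since a Ritt swap at $i$ alters only positions $i$ and $i+1 \le k-1$). Suppose $w'$ contained an occurrence of $\psi$ or $\gamma$; let $\vec{h}$ be the decomposition reached just before the rightmost such occurrence. Every generator used so far is among $t_1, \ldots, t_{k-2}$, so $h_k = f_k$, which is not swappable — but then the next generator, a $\psi$ or a $\gamma$, is undefined on $\vec{h}$ by the previous paragraph, contradicting $w' \star \vec{f} \ne \infty$. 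Hence $w'$ lies in the submonoid of $\rittmonoid_k$ generated by $t_1, \ldots, t_{k-2}$; in particular $\vec{g} := w' \star \vec{f}$ is a genuine decomposition of $f$ (Ritt swaps preserve the polynomial and the number of factors), and every curve in the corresponding witnessing sequence is a copy of the diagonal (Definition~\ref{correncdef}), whose composite is again the diagonal.

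Finally, assemble the pieces. By Remark~\ref{concat-compose}, the correspondence encoded by $\phi^{N} w' \star \vec{f}$ is the composite of the correspondence encoded by $w' \star \vec{f}$ (the diagonal) with a correspondence encoded by $\phi^{N} \star \vec{g}$; composing with the diagonal changes nothing, so this is a correspondence encoded by $\phi^{N} \star \vec{g}$. Since $w \approx \phi^{N} w'$, the correspondence encoded by $w \star \vec{f}$ has the same skew-invariant part, and that is exactly the assertion. I expect the main obstacle to be not any single hard step but the care needed in the right-to-left pass — checking that the generators $t_1,\ldots,t_{k-2}$ genuinely fix the leftmost factor and contribute nothing to the encoded correspondence, so that the ``border guard'' never gets a chance to swap anything past the non-swappable $f_k$; everything else is either the Ritt-identity inspection above or a direct appeal to Proposition~\ref{border_guard_prop}, Remark~\ref{circledance}, and Remark~\ref{concat-compose}.
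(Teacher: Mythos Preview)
Your proof is correct and follows exactly the same route as the paper: invoke Proposition~\ref{border_guard_prop}, observe that the non-swappability of $f_k$ forces $w'$ to avoid $\psi$ and $\gamma$ and hence lie in $\rittmonoid_k$, and set $\vec{g}:=w'\star\vec{f}$. The paper's version compresses the middle step to a single sentence, while you spell out the right-to-left pass and the appeal to Remark~\ref{circledance}; both arrive at the same $\vec{g}$ in the same way.
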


\begin{proof}
 Get the $\phi^N w' \approx w$ or $\beta^N w' \approx w$ from Proposition~\ref{border_guard_prop}, with $w' \in \borgarmonoid_k$. Because $f_k$ is not swappable, $w' \star \vec{f}$ is only defined if $w' \in \rittmonoid_k$. Let $\vec{g} := w' \star \vec{f}$. Then the correspondence encoded by $w \star \vec{f}$ is the same as the one encoded by $\phi^N \star \vec{g}$, or $\beta^N \star \vec{g}$, as the case may be.\end{proof}

Of course, the hypothesis that $f_k$ is the special unswappable factor is purely artificial.

\begin{corollary} \label{unswcrack2}
If $w \in \skewmonoid_k$ and $w \star \vec{f}$ is defined and $f_i$ is not swappable for some $i$, then the correspondence encoded by $w \star \vec{f}$ is already encoded by $\phi^N u \phi^i \star \vec{f}$ or $\beta^N u \phi^i \star \vec{f}$ for some $N \in \mathbb{N}$ and some $u \in \rittmonoid_k$.
 \end{corollary}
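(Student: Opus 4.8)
The plan is to reduce to Corollary~\ref{unswcrack} by a cyclic rotation: push the unswappable factor $f_i$ to the leftmost position with $\phi^i$, apply Corollary~\ref{unswcrack} there, and then rotate back, keeping careful track of what these rotations do to the encoded correspondence.

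First I would set $\vec{g} := \phi^i \star \vec{f}$ and write $f = B \circ A$ with $A := f_i \circ \cdots \circ f_1$ and $B := f_k \circ \cdots \circ f_{i+1}$. Then $\vec{g}$ is a decomposition of $g := A^\sigma \circ B$, the map $A$ is a morphism of $\sigma$-varieties $(\AA^1,f) \to (\AA^1,g)$, the correspondence encoded by $\phi^i \star \vec{f}$ is exactly $\Gamma(A)$, and the correspondence encoded by $\beta^i \star \vec{g}$ is exactly the converse relation of $\Gamma(A)$ (each $\phi$-step contributes the graph of the factor being cycled to the front, and graphs compose to the graph of the composite; likewise for $\beta$ and converses). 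The leftmost factor of $\vec{g}$ is $f_i^\sigma$, which is again unswappable: swappability means being linearly related to a Ritt polynomial, and the class of linear polynomials together with each of the three families of Ritt polynomials is stable under applying $\sigma$ to coefficients (monomials and Chebyshev polynomials have rational coefficients, and $x^k u(x^\ell)^n$ goes to $x^k u^\sigma(x^\ell)^n$).

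Next, since $\beta^i \phi^i \star \vec{f} = \vec{f}$ we have $\vec{f} = \beta^i \star \vec{g}$, so the word $w\beta^i$ acts on $\vec{g}$ with $(w\beta^i) \star \vec{g} = w \star \vec{f}$ defined. Applying Corollary~\ref{unswcrack} to $\vec{g}$, whose leftmost factor is unswappable, the correspondence $\cE$ it encodes is, up to a skew-symmetry and modulo skew-invariant parts, encoded by $\phi^M \star \vec{h}$ or $\beta^M \star \vec{h}$ for a decomposition $\vec{h}$ of $g$; inspecting the proof of Corollary~\ref{unswcrack}, moreover $\vec{h} = w'' \star \vec{g}$ for some $w'' \in \rittmonoid_k$. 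On the other hand, concatenating the witnessing sequence $\vec{g}\to\cdots\to\vec{f}$ for $\beta^i \star \vec{g}$ with a witnessing sequence for $w \star \vec{f}$ and invoking Remark~\ref{concat-compose} identifies $\cE$ with $\cC \circ \cD$, where $\cC$ is the correspondence encoded by $w \star \vec{f}$ (the one we want) and $\cD$ is the converse of $\Gamma(A)$. Since $w'' \in \rittmonoid_k$ contributes only diagonals, $\Gamma(A)$ is also encoded by $w''\phi^i \star \vec{f}$, so composing $\cE$ on the right with $\Gamma(A)$ and using Remark~\ref{concat-compose} once more shows that the correspondence encoded by $\phi^M w''\phi^i \star \vec{f}$ (respectively $\beta^M w''\phi^i \star \vec{f}$) equals $\cE \circ \Gamma(A) = \cC \circ \cD \circ \Gamma(A) = \cC \circ F$, where $F := \{(x,x') : A(x) = A(x')\}$.

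Finally I would observe that composing with $F$ does not disturb the skew-invariant part. The variety $F$ is weakly $(f,f)$-skew-invariant and $(f,f)(F)$ is the diagonal $\Delta$ (since $A(x) = A(x')$ forces $f(x) = B(A(x)) = B(A(x')) = f(x')$), so $F_\per = \Delta$ by Proposition~\ref{maxinv}, whence Lemma~\ref{compper} gives $(\cC \circ F)_\per = (\cC_\per \circ F_\per)_\per = (\cC_\per \circ \Delta)_\per = \cC_\per$. Thus, with $N := M$ and $u := w'' \in \rittmonoid_k$, the correspondence encoded by $\phi^N u \phi^i \star \vec{f}$ (respectively $\beta^N u \phi^i \star \vec{f}$) has the same skew-invariant part as the correspondence encoded by $w \star \vec{f}$, up to a linear (skew-)symmetry as in Remark~\ref{reintroduce-linear}. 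The main obstacle is exactly this last point: rotating out and back in genuinely introduces the off-diagonal components of $F$, and the argument works only because those components die after a single application of $(f,f)$; so one must consistently take ``the correspondence encoded by a word'' modulo skew-pre-periodic components (and modulo skew-symmetries) throughout, exactly as built into the relation $\approx_{\vec{f}}$.
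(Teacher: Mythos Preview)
Your proposal is correct and follows essentially the same strategy as the paper: rotate by $\phi^i$ to put the unswappable factor in the top slot, apply Corollary~\ref{unswcrack}, and undo the rotation. The only difference is presentation: the paper dispatches the rotate-and-return step in one line by invoking $w \approx w\beta^i\phi^i$ (Lemma~\ref{biggercorr}(1)) together with the compatibility of $\approx$ with concatenation (Lemma~\ref{biggercorr}(2)), whereas you unpack that machinery by hand---your computation with $F = \{(x,x') : A(x)=A(x')\}$ and $(\cC\circ F)_\per = \cC_\per$ is exactly a re-derivation of $\beta^i\phi^i \approx \id$ in this instance. So your argument is right but longer than necessary; citing Lemma~\ref{biggercorr} would let you drop the last paragraph entirely.
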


\begin{proof}
Recall that $w \beta^i \phi^i \approx w$.
Since the $k$th factor $f_i^\sigma$ of $\vec{h} := \phi^i \star \vec{f}$ is unswappable, Corollary~\ref{unswcrack} applies to $(w \beta^i) \star \vec{h}$, with the sequence $u$ of Ritt swaps giving the potentially necessary new decomposition $\vec{g}$ in the statement of that Corollary. \end{proof}

\begin{Rk} \label{crackrk} The hypothesis that $f_k$ is not swappable is unnecessarily strong.  Requiring merely
that $t_1 \star (g_1^\sigma, g_k)$ is not defined where
$\vec{g} := u \star \vec{f}$ for some $u \in \rittmonoid$ would suffice.  Many explicit examples satisfying this requirement appear in a previous draft of this paper~\cite{selfarXiv} related to the concept of a  ``crack''. \end{Rk}

\section*{Outline of the technical Sections~\ref{section41} --~\ref{puttingtogether}}

The next four Sections~\ref{section41} --~\ref{puttingtogether} constitute technical proofs of the results described in Section~\ref{coarsestructure}. Three of the four sections are devoted to refinements of Ritt's Fact~\ref{Rittthm}, and the last one uses these refinements to obtain the desired characterization of skew-invariant curves.

The characterization of linear relatedness between Ritt polynomials in our Section~\ref{section41} is also carried out in \cite{MZ}, and is implicit in \cite{Ritt}. We include our analysis because we use many of the intermediate results in the two following Sections~\ref{clustersec} and~\ref{canformsec}.

In Section~\ref{clustersec}, we describe a nearly unique way to write a polynomial as a composition of \emph{clusters}. One of our two kinds of clusters is the same as one of the two kinds of blocks in \cite{MZ}, but our \textsf{C}-free clusters are nothing like their monomial blocks. Again, similar technical issues come up for us and for them, such as the fact that no more than one quadratic factor may cross a boundary between clusters in the same direction. Our first use of these clusters is to prove our fundamental Theorem~\ref{fundamentallemma} for the Ritt monoid action, that $t_i t_{i+1} t_i \star \vec{f}$ is defined if and only of $t_{i+1} t_i t_{i+1} \star \vec{f}$ is defined. While it follows immediately from \cite{MZ} that the two are equal when defined, it is not clear to us whether our stronger result follows from their work.

In Section~\ref{canformsec}, we use the fact that the action of the Ritt monoid on linear-equivalence classes of decompositions factors through the ``braid monoid'' to find canonical forms for sequences of Ritt swaps, roughly corresponding to insert-sort and to merge-sort.
 To the best of our understanding, our results on canonical forms do not follow easily from~\cite{MZ}, where different canonical forms are used to obtain tighter bounds on the number of Ritt swaps necessary to obtain one decomposition from another.  Applying the second canonical form to a clustering produces particularly strong results.
We end that section with a characterization (see Proposition~\ref{notskewtwist}) of those rare polynomial identities $\pi^\sigma \circ f = g \circ \pi$ which have nothing to do with skew-twists. A slight weakening of it follows immediately from~\cite{MZ}, and the full version can be deduced with a little more work.

Section~\ref{puttingtogether} combines all of our technical tools and finally characterizes skew-invariant curves.
In Section~\ref{more-gen-sec}, we introduce more generators into our monoids in order to encode correspondences coming from Proposition~\ref{notskewtwist} rather than from skew-twists. Within this formalism, we describe precisely how the correspondences arising from Theorem~\ref{notskewtwist} interact (commute) with those arising from skew-twists.
In Section~\ref{skew-clust-sec}, we then combine our work on clusterings with our understanding of skew-twists in order to obtain a characterization of correspondences encoded by $w \star \vec{f}$ for $w \in \skewmonoid_k$ for those rare $\vec{f}$ that are not subject to Corollary~\ref{unswcrack2}.
In Section~\ref{triumph-sec}, Theorem~\ref{realchar} is a complete, precise, and technical characterization of $(f,g)$-invariant curves for disintegrated polynomials $f$ and $g$. The technical conclusion of Theorem~\ref{realchar} becomes much more readable in the special case of $(h,h)$-invariant curves. It is stated in Theorem~\ref{hhprop}, and then used to obtain a more readable but less tight characterization for the general case in Theorem~\ref{centralthm}.

\begin{nota}
Throughout the next four technical Sections~\ref{section41} --~\ref{puttingtogether}, we work over a fixed difference-closed field of characteristic zero with automorphism $\sigma$.
We reserve the symbol ``$x$'' for the variable in the
polynomial ring.
When we speak of a polynomial, linear polynomial, scalar, \emph{et cetera}, we mean a polynomial over this field,
linear polynomial over this field, element of this field, \emph{et cetera}.
Occasionally, and especially towards the end, we explicitly note how our results specialize to the category of algebraic dynamical systems defined over the fixed field of $\sigma$.
\end{nota}

\section{Linear relations between Ritt polynomials}
\label{section41} \label{newsectionlinrel}

In this section we identify the possible linear relations between Ritt polynomials and identify certain
classes of Ritt polynomials admitting extra linear relations.  Using these results on linear relations
we complete the proof of Theorem~\ref{cruciallemma} showing that the action of a Ritt swap at $i$ is well-defined
on the linear equivalence classes of decompositions of a polynomial.  Much of the basic work on
linear relations appears also in~\cite{MZ} (see Lemmas 3.20 -- 3.22) and is implicit in~\cite{Ritt}.

\subsection{Definitions and examples}
\begin{Def}
A \emph{scaling} is a linear polynomial of the form $(\cdot \lambda) := \lambda x$ for some
nonzero scalar $\lambda$.
A \emph{translation} is a linear polynomial of the form $(+A) := x + A$ for some scalar $A$.
Two linearly related polynomials $f$ and $g$ are \emph{translation related} (respectively, \emph{scaling related}) if
 $g = L \circ f \circ m$ for some translations (respectively, scalings) $L$ and $M$.
\end{Def}

\begin{Rk}
The group of automorphisms of $\AA^1_K$ may be identified with the semidirect product of the group of
translations by the group of scalings.
\end{Rk}

\begin{Def}
Given a polynomial $f$ and a nonzero scalar $\lambda$
 we define $\lambda \ast f := (\cdot \lambda^{-\deg(f)}) \circ f \circ (\cdot \lambda)$.
\end{Def}

\begin{Rk}
\label{scaleast}
If $f$ is monic, then so is $\lambda \ast f$.  On the other hand, if $f$ and $g$ are monic polynomials and
$(\cdot \mu) \circ f \circ (\cdot \lambda) = g$, then $\mu = \lambda^{-\deg(f)}$.  That is, $g = \lambda \ast f$.
\end{Rk}

\begin{Rk}
\label{astRitt}
For any $n \in \NN$ and scalar $\lambda$, we have $\lambda \ast P_n = P_n$.  More generally, if $f = x^k \cdot U(x^\ell)$ for some polynomial $U$ and $\lambda$ a scalar, then
$\lambda \ast f = x^k \cdot (\lambda^\ell \ast U)(x^\ell)$. Thus if $f$ is a Ritt polynomial, then so is $\lambda \ast f$ for any nonzero $\lambda$.  In particular, if
$\ell$ is maximal for which $f$ takes this form, then $\lambda \ast f = f$ if and only
if $\lambda$ is an $\ell^\text{th}$ root of unity.
\end{Rk}

The above observations imply that to describe all instances of linear relatedness between Ritt polynomials, it suffices to separately describe those witnessed by translations and those witnessed by scalings.

\begin{lem}
\label{rittscaltran}
If $f$ and $g$ are linearly related Ritt polynomials, then there is a third Ritt polynomial $h$ which is
translation related to $f$ and scaling related to $g$.
\end{lem}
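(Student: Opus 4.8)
The plan is to use the semidirect product structure of the affine group: any linear polynomial $L$ factors uniquely as $L = T \circ S$ where $T$ is a translation and $S$ is a scaling (namely, if $L(x) = ax+b$ then $S(x) = ax$ and $T(x) = x+b$). Suppose $g = L \circ f \circ M$ with $L$, $M$ linear and $f$, $g$ Ritt polynomials. First I would write $M = T_1 \circ S_1$ and $L = T_2 \circ S_2$ with $T_1, T_2$ translations and $S_1, S_2$ scalings. The idea is to absorb the scaling part of $M$ into $f$ using the $\ast$ operation: by Remark~\ref{astRitt}, $h_0 := (\deg f$-adjusted scaling$) \ast f$ applied appropriately is again a Ritt polynomial, and it differs from $f$ only by scalings on both sides. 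More precisely, $S_1$ contributes a scaling on the input side of $f$; write $\lambda$ for its scalar, so that $f \circ S_1 = (\cdot \lambda^{\deg f}) \circ (\lambda \ast f)$ by the definition of $\lambda \ast f$. Set $h := \lambda \ast f$, which is a Ritt polynomial translation-unrelated-wise identical structure to $f$; in fact $h$ is \emph{scaling related} to $f$ (it differs from $f$ by scalings), hence also $f$ is scaling related to $h$, and we should double check $h$ is translation related to $f$ only in trivial cases — but that is fine, scaling relatedness is all we need on the $f$ side.

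Wait — re-reading the statement, we want $h$ translation related to $f$ and scaling related to $g$. So I should push the scalings toward $g$, not toward $f$. Revised plan: starting from $g = L \circ f \circ M = T_2 \circ S_2 \circ f \circ T_1 \circ S_1$, I want to commute the scaling $S_2$ on the left past nothing (it stays), and move $S_1$ on the right. The cleanest route: first handle the translation $T_1$ on the far right. A Ritt polynomial of the third kind has the form $x^k u(x^\ell)^n$ with $u(0) \neq 0$; composing on the right with a nonzero translation $x \mapsto x + A$ destroys the property that the lowest-order behaviour is $x^k$ unless $A = 0$ — except for monomials and Chebyshev polynomials, which require separate handling. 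So the main case analysis is by the three types of Ritt polynomial. For monomials $P_p$: $P_p \circ (x+A)$ is a Ritt polynomial only if $A = 0$, so $T_1 = \id$ forced (similarly one analyzes $T_2$). For Chebyshev $C_p$: the only translations $T$ with $T \circ C_p \circ T'$ again a (possibly negative) Chebyshev-type Ritt polynomial are constrained — here one uses the known normalization that $C_p$ is determined up to the sign $\pm C_p$ and the scalings $\zeta \ast C_p$ for roots of unity $\zeta$. For the third kind, the leading/trailing monomial structure pins down the translations entirely.

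Concretely the key steps, in order: (1) factor $L = T_2 S_2$, $M = T_1 S_1$ into translation and scaling parts; (2) by case analysis on the type of $f$ (monomial / Chebyshev / third kind), show that the translation $T_1$ on the input side must be trivial, so that $M = S_1$ is a pure scaling — this is where the condition $u(0) \neq 0$ in the definition of Ritt polynomial does the work, forcing no constant term can be introduced; (3) now $g = T_2 \circ S_2 \circ f \circ S_1$; set $h := S_2^{-1} \circ g \circ S_1^{-1} = S_2^{-1} \circ T_2 \circ S_2 \circ f$. Since $S_2^{-1} T_2 S_2$ is again a translation (scalings normalize translations), $h$ is translation related to $f$; and $h = S_2 \circ \cdots$ rearranged shows $g = S_2 \circ h \circ S_1$ wait — we need $g = (\text{scaling}) \circ h \circ (\text{scaling})$, which is exactly $g = S_2 \circ (S_2^{-1} T_2 S_2 \circ f) \circ S_1$ after noting $h := S_2^{-1} T_2 S_2 \circ f$ gives $S_2 \circ h \circ S_1 = T_2 S_2 f S_1 = g$. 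So $h$ is scaling related to $g$. Finally verify $h$ is a Ritt polynomial: it is a translate-on-the-left composition of $f$, and by Remark~\ref{monicu} and the basic structure, applying a translation on the left to a Ritt polynomial of any of the three kinds yields a polynomial linearly related to a Ritt polynomial — but we may need to re-monicize; since $h$ is monic iff we chose the scalings correctly, and the defining conditions of ``Ritt polynomial'' are preserved under left-translation (they only constrain the input side and the multiplicities), $h$ is again a Ritt polynomial.

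The main obstacle I anticipate is step (2), the claim that $T_1 = \id$: for the third kind of Ritt polynomial this is a clean argument about the $x$-adic valuation of $f(x+A)$ versus $f(x)$, but for Chebyshev polynomials the interaction between translations, the sign ambiguity $\pm C_p$, and the root-of-unity scalings is delicate, and one must be careful that ``Ritt polynomial'' as defined in Definition~\ref{specialpoly} only allows $C_p$ itself (not $-C_p$, not $\zeta \ast C_p$) — so in fact for $f = C_p$ one must show directly that $C_p \circ M$ is never another Ritt polynomial unless $M$ is already a root-of-unity scaling composed with $\pm 1$, and then absorb the sign. Handling that bookkeeping cleanly, rather than the valuation computation, is the real work here.
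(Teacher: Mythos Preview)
Your step (2) is false, and this breaks the argument. The claim that the input-side translation $T_1$ must be trivial fails already for the examples the paper itself produces: for odd prime $p$, both $C_p$ and $\widehat{C}_p := (+2)\circ C_p \circ (-2)$ are Ritt polynomials, so with $f = C_p$, $g = \widehat{C}_p$, $L = (+2)$, $M = (-2)$ one has $T_1 = (-2) \neq \id$. Type \textsf{A} Ritt polynomials give further counterexamples with $B = 0$ and $A \neq 0$. Your own anticipation that the Chebyshev case is ``delicate'' is an understatement: it is not provable, and worse, the lemma you are trying to prove is exactly the reduction step used \emph{before} the classification of translation relations (Theorem~\ref{transrelate}), so you cannot appeal to that classification here.

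The actual proof avoids all case analysis by choosing the semidirect-product factorizations asymmetrically: write $L = (\cdot \lambda)\circ(+B)$ with the scaling on the \emph{outside}, and $M = (+A)\circ(\cdot \mu)$ also with the scaling on the outside (which for $M$ means on the right). Then
\[
g = (\cdot \lambda)\circ\underbrace{(+B)\circ f\circ(+A)}_{=:~h}\circ(\cdot \mu),
\]
so $h$ is translation related to $f$ and scaling related to $g$ by construction. Since translations preserve monicity, $h$ is monic; since $h$ is scaling related to the monic Ritt polynomial $g$, Remark~\ref{scaleast} gives $h = \mu^{-1}\ast g$, and Remark~\ref{astRitt} then shows $h$ is Ritt. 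No translations need to vanish, and no case analysis on the type of $f$ is required.
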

\begin{proof}
Let $L$ and $M$ be linear polynomials with $L \circ f \circ M = g$.   Write $L = (\cdot \lambda) \circ (+B)$ and $M = (+A) \circ (\cdot \mu)$
for appropriate scalars $A$, $B$, $\lambda$ and $\mu$.  Set $h := (+B) \circ f \circ (+A)$.  Since translations preserve the highest degree term, $h$
is still monic and translation related to $f$.   As $h  = (\cdot \lambda^{-1}) \circ g (\cdot \mu^{-1})$ and both $g$ and $h$ are monic, we conclude
by Remark~\ref{scaleast} that $h = \mu^{-1} \ast g$.  From Remark~\ref{astRitt} we see that $h$ is a Ritt polynomial.
\end{proof}

By similar reasoning, the class of basic Ritt identities other than $C_p \circ C_q = C_q \circ C_p$ is closed under scalings.

\begin{prop}
\label{scaledbasic}
If $b \circ a = d \circ c$ is a basic Ritt identity, at least one of $a$ or $b$ is not a Chebyshev polynomial, and $\lambda$ and $\mu$
are nonzero scalars, then there are scalars $\eta$ and $\nu$ for which $(\mu \ast b) \circ (\lambda \ast a) = (\eta \ast d) \circ (\nu \ast c)$
is a basic Ritt identity.
\end{prop}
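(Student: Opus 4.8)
The plan is to do a case analysis according to the three types of basic Ritt identities, using Remark~\ref{astRitt} to track how the operation $\lambda\ast(-)$ interacts with the monomial and with the general form $x^k\cdot U(x^\ell)$. The hypothesis that at least one of $a,b$ is not a Chebyshev polynomial rules out the second basic identity $C_p\circ C_q=C_q\circ C_p$, so we are left with the first and third types, both of which involve monomials, and it is precisely on monomials that scaling acts trivially ($\lambda\ast P_n=P_n$ by Remark~\ref{astRitt}).

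First I would dispose of the first type, $P_p\circ P_q=P_q\circ P_p$. Here $a=P_q$, $b=P_p$, $c=P_p$, $d=P_q$. Since $\lambda\ast P_n=P_n$ for every $n$ and every nonzero $\lambda$, we have $\mu\ast b=P_p$ and $\lambda\ast a=P_q$ regardless of $\lambda,\mu$; so we simply take $\eta=\nu=1$ (or anything, since $\eta\ast P_q = P_q$ and $\nu\ast P_p=P_p$), and $(\mu\ast b)\circ(\lambda\ast a)=P_p\circ P_q=P_q\circ P_p=(\eta\ast d)\circ(\nu\ast c)$ is again the same basic Ritt identity.

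The substantive case is the third type, $P_p\circ(x^k\cdot u(x^{\ell p})^n)=(x^k\cdot u(x^\ell)^{pn})\circ P_p$. Write $a:=x^k\cdot u(x^{\ell p})^n$, $b:=P_p$, $c:=P_p$, $d:=x^k\cdot u(x^\ell)^{pn}$. Again $\mu\ast b=P_p$, so the left side of the scaled identity is $P_p\circ(\lambda\ast a)$. Now I compute $\lambda\ast a$ using Remark~\ref{astRitt}: writing $a=x^k\cdot U(x^{\ell p})$ with $U:=u^n$ (so the relevant period is $\ell p$, or rather a multiple of it — one must be mildly careful, but the formula $\lambda\ast(x^k\cdot V(x^m))=x^k\cdot(\lambda^m\ast V)(x^m)$ from Remark~\ref{astRitt} applies with $m=\ell p$), we get $\lambda\ast a=x^k\cdot(\lambda^{\ell p}\ast U)(x^{\ell p})$. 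A short direct computation shows $\lambda^{\ell p}\ast U=(\lambda^{\ell p}\ast u)^n$, and setting $\tilde u:=\lambda^p\ast u$ one checks that $\lambda\ast a=x^k\cdot\tilde u(x^{\ell p})^n$ after absorbing the scaling appropriately — the point being that $\lambda\ast a$ is again a Ritt polynomial of the third type with $u$ replaced by a scaled copy $\tilde u$. Then the required scaled identity is exactly $P_p\circ(x^k\cdot\tilde u(x^{\ell p})^n)=(x^k\cdot\tilde u(x^\ell)^{pn})\circ P_p$, which is a basic Ritt identity of the third kind; matching this against $(\eta\ast d)\circ(\nu\ast c)$ forces $\nu$ to be arbitrary (since $\nu\ast P_p=P_p$) and $\eta$ to be the scalar making $\eta\ast d=x^k\cdot\tilde u(x^\ell)^{pn}$, which exists because $d=x^k\cdot u(x^\ell)^{pn}$ and $\eta\ast d=x^k\cdot(\eta^\ell\ast u)^{pn}(x^\ell)$, so one solves $\eta^\ell\ast u=\lambda^p\ast u$, e.g. by taking $\eta=\lambda^{p/\ell}$ if $\ell\mid p$, and otherwise noting $\gcd(k,\ell)=1$ together with the freedom in choosing a root gives a valid $\eta$. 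The main obstacle, and the only place real care is needed, is bookkeeping the exponents: making sure that when one writes $a=x^k\cdot U(x^m)$ one uses the correct $m$ so that Remark~\ref{astRitt} applies verbatim, and then checking that the resulting scaled polynomial still satisfies all the Ritt-polynomial side conditions ($\gcd(k,\ell)=1$, $\gcd(k,n)=1$, $\tilde u(0)\neq 0$, $\tilde u$ monic nonconstant, one of $\ell,n$ exceeding one), all of which are unaffected by a scaling because scaling a monic polynomial by $\lambda\ast(-)$ preserves monicity, degree, and the constant-term-nonvanishing condition. Finally one observes that the identity $C_2\circ C_p=C_p\circ C_2$, which is genuinely not closed under scalings, is excluded precisely by the hypothesis that not both $a$ and $b$ are Chebyshev, so no further case arises.
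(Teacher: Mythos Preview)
Your approach is the same as the paper's: case on the type of basic Ritt identity, exclude the Chebyshev--Chebyshev case by hypothesis, and use Remark~\ref{astRitt} together with $\lambda\ast P_p=P_p$. There are two fixable slips in the execution.

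First, you handle only one orientation of the third identity. A basic Ritt identity $b\circ a=d\circ c$ of the third kind may have the monomial on either side: either $b=P_p$ (which you treat) or $a=P_p$ with $b=x^k\cdot u(x^\ell)^n$, $d=P_p$, $c=x^k\cdot u(x^{p\ell})^{n/p}$. The paper handles both; in the second orientation one finds $\eta=1$, $\nu=\sqrt[p]{\mu}$.

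Second, the bookkeeping slips exactly where you warn care is needed. You correctly compute $\lambda^{\ell p}\ast(u^n)=(\lambda^{\ell p}\ast u)^n$, so $\lambda\ast a=x^k\cdot(\lambda^{\ell p}\ast u)(x^{\ell p})^n$; the correct $\tilde u$ is therefore $\lambda^{\ell p}\ast u$, not $\lambda^p\ast u$. The equation for $\eta$ is then $\eta^\ell\ast u=\lambda^{\ell p}\ast u$, solved immediately by $\eta=\lambda^p$ --- no divisibility condition $\ell\mid p$ (which for prime $p$ is nearly vacuous anyway), no extraction of roots, and $\gcd(k,\ell)=1$ plays no role. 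The paper's answer in this orientation is precisely $\eta=\lambda^p$, $\nu=1$.
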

\begin{proof}
At least one of $a$ or $b$ must be a monomial $P_p$ for some prime $p$.  If they are both monomials, then the result is immediate as
$\lambda \ast P_p = P_p$.    Suppose now that $a = P_p$ and $b$ takes the form $x^k \cdot u(x^\ell)^n$ for some monic $u$ with nonzero constant
term.  Then $d = P_p$ and $c = x^k \cdot u(x^{p \ell})^{\frac{n}{p}}$.  We saw in Remark~\ref{astRitt} that $\mu \ast b = x^k \cdot (\mu^\ell \ast u)(x^\ell)^n$ and $\lambda \ast a = a$.  Thus, $(\mu \ast b) \circ (\lambda \ast a) =  (1 \ast d) \circ (\sqrt[p]{\mu} \ast c)$.  Likewise, if
$b = P_p$, we may take $\eta = \lambda^p$ and $\nu = 1$.
\end{proof}

Since Chebyshev polynomials of odd degree are odd functions, every Ritt polynomial is of the form $x^k \cdot u(x^\ell)^n$ with $k \ell n > 1$, and therefore is involved in a nontrivial scaling relation to a Ritt polynomial.
  We focus on translation relations amongst Ritt polynomials which appear in only two special classes, what we call
types \textsf{A} (for ``adaptable'') and \textsf{C} (for ``Chebyshev-like'').

\begin{Def}
\label{Jdef}
A type \textsf{A} Ritt polynomial is a Ritt polynomial of the form $f(x) = x^\ell \cdot (x - A)^m u(x)^n$ where $u$ is a monic polynomial with nonzero constant term, $A$ is some nonzero scalar and both $\gcd(\ell,n) > 1$ and $\gcd(m,n) > 1$.
A type \textsf{A} swappable polynomial is a polynomial which is linearly related to a type \textsf{A} Ritt polynomial.
\end{Def}

\begin{Rk}
Since a Ritt polynomial must be indecomposable, in Definition~\ref{Jdef} we must have $\gcd(\ell,m,n) = 1$.
\end{Rk}

\begin{Rk}
Lemma~\ref{rittscaltran}, the observation that for any $\lambda$ and $f$, either both $f$ and $\lambda \ast f$ are type \textsf{A} Ritt polynomials, or neither one is, and Theorem~\ref{transrelate} together imply that a Ritt polynomial which happens to be a type \textsf{A} swappable polynomial is, in fact, a type \textsf{A} Ritt polynomial.
\end{Rk}

\begin{Def}
A type \textsf{C} swappable polynomial is a polynomial of odd prime degree which is linearly related to a Chebyshev polynomial.
\end{Def}

\begin{Def} \label{see-hat-def}
For a natural number $n$ and scalar $\lambda$ we define $C_{n,\lambda} := \lambda \ast C_n$ and
$\widehat{C}_{n,\lambda} := \lambda \ast ( (+2) \circ C_n \circ (-2) )$. For odd prime $n$, these are the \emph{type \textsf{C} Ritt polynomials}.
\end{Def}

It follows from Remark~\ref{astRitt} that $C_{n,\lambda}$ is a Ritt polynomial for odd prime $n$ and non-zero $\lambda$. For odd $n$, we show (Proposition~\ref{Cprop}) that both $\widehat{C}_{n,1} = (+2) \circ C_n \circ (-2)$ and
$\widehat{C}_{n,-1} =  (-2) \circ C_n \circ (+2)$ are of the form $x \cdot u(x)^2$ as a consequence
of the fact $C_n$ commutes with $C_2(x) = x^2 -2$. It then follows from Remark~\ref{astRitt} that $\widehat{C}_{n,\lambda}$ are Ritt polynomials for all odd prime $n$ and nonzero $\lambda$. It follows from Theorem~\ref{rittscaltran} that these are the only Ritt polynomials amongst type \textsf{C} swappable polynomials.

\begin{prop}
\label{Cprop}
For every odd prime $p$ and scalar $\lambda$, the
polynomial $\widehat{C}_{p,\lambda}$ is a  Ritt polynomials of the form $x \cdot u(x)^2$. Moreover,
for any number $n$, we have $\widehat{C}_{n,-1} =  (-4) \circ \widehat{C}_{n,1} \circ (+4)$.
\end{prop}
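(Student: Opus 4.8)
The plan is to reduce everything to one identity, the commutation $C_n \circ C_2 = C_2 \circ C_n$, together with the fact that $C_n$ is an odd function when $n$ is odd; both follow from the functional equation $C_m \circ \pi = \pi \circ P_m$ by composing on the right with $\pi$ and using that $\pi$ is dominant. Since $C_2(x) = x^2-2$, we have $C_2 = (-2) \circ P_2$, so $C_n \circ C_2 = C_2 \circ C_n$ rearranges (compose $(+2)$ on the left) to $\bigl((+2)\circ C_n \circ (-2)\bigr) \circ P_2 = P_2 \circ C_n$, that is,
\[
\widehat{C}_{n,1}(x^2) = C_n(x)^2 .
\]
For odd $n$ we have $C_n(-x) = -C_n(x)$, so $C_n(x) = x\cdot v(x^2)$ for a unique polynomial $v$; substituting gives $\widehat{C}_{n,1}(x^2) = x^2\cdot v(x^2)^2$ and hence $\widehat{C}_{n,1}(x) = x\cdot v(x)^2$. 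As $C_n$ is monic of degree $n$, $v$ is monic of degree $(n-1)/2$, which is $\geq 1$ once $n \geq 3$; and as $C_n$ is separable, the root $0$ of the odd polynomial $C_n$ is simple, so $v(0)\neq 0$. Thus for an odd prime $p$ the polynomial $\widehat{C}_{p,1} = x\cdot v(x)^2$ has exactly the shape $x^k\cdot u(x^\ell)^n$ of a Ritt polynomial of the third kind (with $k=\ell=1$, $n=2$, $u=v$), and it is indecomposable, being linearly related to $C_p$, whose degree is prime. Finally $\widehat{C}_{p,\lambda} = \lambda \ast \widehat{C}_{p,1}$ (since $1\ast f = f$), so by Remark~\ref{astRitt} it is again a Ritt polynomial, and the formula there, with Remark~\ref{scaleast}, shows it has the form $x\cdot (\lambda\ast v)(x)^2$.

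For the second assertion I would compute directly. By definition $\widehat{C}_{n,-1} = (-1)\ast \widehat{C}_{n,1} = (\cdot(-1)^{-n})\circ \widehat{C}_{n,1}\circ(\cdot(-1))$, and $\widehat{C}_{n,1}(x) = C_n(x-2)+2$. For odd $n$, $(-1)^{-n} = -1$ and $C_n(-x-2) = C_n(-(x+2)) = -C_n(x+2)$, so
\[
\widehat{C}_{n,-1}(x) = -\bigl(C_n(-x-2)+2\bigr) = C_n(x+2) - 2 .
\]
On the other hand $\bigl((-4)\circ \widehat{C}_{n,1}\circ(+4)\bigr)(x) = \widehat{C}_{n,1}(x+4) - 4 = C_n\bigl((x+4)-2\bigr) + 2 - 4 = C_n(x+2) - 2$, which matches. (This is precisely where oddness of $n$ enters; it is the case in which $\widehat{C}_{n,\lambda}$ figures as a type \textsf{C} Ritt polynomial, all of odd prime degree.)

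The only genuine work is the bookkeeping needed to confirm that $\widehat{C}_{p,1} = x\cdot v(x)^2$ meets every clause in the definition of a type-three Ritt polynomial: that $v$ is monic (inherited from $C_p$), nonconstant (needs $p\geq 3$, hence $\deg v \geq 1$), and has nonzero constant term (needs the standard separability of $C_p$, so that $0$ is a simple root of the odd polynomial $C_p$). The one mildly clever move is rewriting $C_n\circ C_2 = C_2\circ C_n$ as $\widehat{C}_{n,1}\circ P_2 = P_2\circ C_n$; after that both displayed identities drop out, and the rest is routine manipulation of $(+A)$, $(\cdot\lambda)$, and $\ast$, together with the already-established Remarks~\ref{scaleast} and~\ref{astRitt}.
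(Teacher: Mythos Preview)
Your proof is correct and, for the first assertion, follows essentially the same route as the paper: both rewrite $C_n\circ C_2 = C_2\circ C_n$ as $\widehat{C}_{n,1}\circ P_2 = P_2\circ C_n$ and read off the shape $x\cdot u(x)^2$ from this identity; you then carefully verify the clauses of the Ritt-polynomial definition, which the paper leaves implicit.

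For the ``moreover'' clause your argument is cleaner than the paper's. The paper establishes $\widehat{C}_{n,-1} = (-2)\circ C_n\circ(+2)$ via a detour through $i\ast C_2 = x^2+2$ and the identity $(-2)\circ C_n\circ(+2)\circ P_2 = P_2\circ(i\ast C_n)$; the relation to $\widehat{C}_{n,1}$ then follows from $(-4)\circ(+2)=(-2)$ and $(-2)\circ(+4)=(+2)$. Your direct computation using $C_n(-y)=-C_n(y)$ for odd $n$ gets there in one line. You are also right to flag that the clause requires $n$ odd: the paper's own computation uses $C_n\circ(\cdot\,{-1})=(\cdot\,{-1})\circ C_n$, which fails for even $n$, and indeed one checks (e.g.\ $n=2$) that the stated identity is false in that case. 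Since the proposition is only ever applied for odd primes, this is harmless, but your parenthetical remark is well placed.
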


\begin{proof}
For odd $n$, we show that both $\widehat{C}_{n,1} = (+2) \circ C_n \circ (-2)$ and
$\widehat{C}_{n,-1} =  (-2) \circ C_n \circ (+2)$ are of the form $x \cdot u(x)^2$ as a consequence
of the fact $C_n$ commutes with $C_2(x) = x^2 -2$.
 For the first observation, we compute:
$$C_n \circ C_2 = C_2 \circ C_n$$
$$C_n \circ (-2) \circ P_2 = (-2) \circ P_2 \circ C_n$$
$$(+2) \circ C_n \circ (-2) \circ P_2 =  P_2 \circ C_n$$

Thus, since $\widehat{C}_n = (+2) \circ C_n \circ (-2)$ appears
in a basic Ritt identity with $P_2$,
it must be of the form $x \cdot u(x)^2$ for some polynomial $u$.

By Remark~\ref{astRitt}, it follows that the same holds of
$\widehat{C}_{n, \lambda}$ for  all nonzero $\lambda$.

For the second, first observe that
$$i \ast C_2 = \frac{1}{i^2} ((ix)^2 - 2) = -(-x^2-2) = x^2+2$$
Now $C_n \circ C_2 = C_n \circ ( \cdot -1)\circ ( \cdot -1) \circ
C_2 \circ (\cdot i) \circ (\cdot -i) = $
$$ = (\cdot -1) \circ C_n \circ (x^2+2) \circ (\cdot -i) = C_2
\circ C_n$$
 Bringing all outside linear factors to the right
and introducing $(-2)$ on the left,
$$ (-2) \circ C_n \circ (+2) \circ P_2 = (-2) \circ (\cdot -1) \circ C_2
\circ C_n \circ (\cdot i)$$
 Now, $[(-2) \circ (\cdot -1) \circ C_2] (x)
= -(x^2-2) -2 = -x^2 = [P_2 \circ (\cdot \pm i)] (x)$, so
$$ (-2) \circ C_n \circ (+2) \circ P_2 = P_2 \circ (\cdot \pm i) \circ C_n \circ (\cdot i)
 = P_2 \circ (i \ast C_n)$$
\end{proof}

Although $C_2$ is not a Ritt polynomial, how it might be linearly related to itself or to the monomial $P_2$ is important in Section~\ref{clustersec}
and is summarized with the following remark.

\begin{Rk} \label{linrelquadrk}
Since the only way $P_2$ is linearly related to itself is by scalings $\lambda \ast P_2 = P_2$, the only way $C_2(x) = x^2 -2$ is linearly related to itself is
by $A_\lambda \circ C_2 \circ (\cdot \lambda) = C_2$ for
$A_\lambda(x) := \frac{1}{\lambda^2} x + \frac{2}{\lambda^2} - 2$. Note the immediate consequence
that if $L \circ P_2 \circ M = C_2$, then $M = \cdot \lambda$ is a scaling,
and $L = B_\lambda$ where
$B_\lambda(x) := \frac{1}{\lambda^2} x -2 = (-2) \circ (\cdot \frac{1}{\lambda^2}) (x)$.
Note that $A_\lambda(x)$ is never a scaling unless $\lambda = \pm 1$
and $A_\lambda = \id$, and $B_\lambda$ is never a scaling.
\end{Rk}

\subsection{Characterization of translation related Ritt polynomials} \label{sectio32}
In the next theorem, whose proof occupies the rest of this section \ref{sectio32}, we collect all instances of
linear relatedness amongst Ritt polynomials via translations.  Using Lemma~\ref{rittscaltran},
a general description follows.

\begin{theorem}
\label{transrelate}
If $f$ and $g$ are Ritt polynomials and $A$ and $B$ are scalars, not both zero, for which
$(+B) \circ f \circ (+A) = g$, then either
\begin{itemize}
\item $B = 0$, $f$ and $g$ are type \textsf{A} Ritt polynomials, or
\item $B \neq 0$, $f$ and $g$ are type \textsf{C} Ritt polynomials.
\end{itemize}
In fact, if $B \neq 0$, then either $f = C_{p,\lambda}$ and $g = \widehat{C}_{p,\lambda}$ where
$\lambda = \frac{-2}{A} = \sqrt[p]{\frac{2}{B}}$ and $p$ is an odd prime  or
$f = \widehat{C}_{p,\mu}$ and $g = \widehat{C}_{p,-\mu}$ where $\mu = \frac{4}{A} = \sqrt[p]{\frac{-4}{B}}$ and
$p$ is an odd prime.
\end{theorem}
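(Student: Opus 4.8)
The plan is to reduce to a question about ramification. Since Ritt polynomials are monic (Remark~\ref{monicu}), the hypothesis is the honestly translational identity $g(x) = f(x+A) + B$, and such an identity preserves the ramification data of a polynomial: for a polynomial $h$ of degree $d$, record, for each finite value $v$, the partition of $d$ given by the multiplicities of $h^{-1}(v)$ (together with the total ramification of $h$ at $\infty$); then a source translation just relabels the critical points and a target translation just relabels the critical values, so $f$ and $g$ carry the same unordered collection of ramification partitions. I would record these data for the three kinds of Ritt polynomial: $P_p$ has one nontrivial finite fibre, totally ramified; $C_p$ (with $p$ an odd prime) has exactly two, each of type $(2,\dots,2,1)$, over $\pm 2$; and a third-kind polynomial $x^k u(x^\ell)^n$ has the origin as a root of multiplicity $k$. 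Note that, because $C_p$ is an odd function, $C_p(x)/x$ is a polynomial in $x^2$, so $C_p$ is itself a third-kind Ritt polynomial. Throughout I would also use the symmetry of the hypothesis, $f = (-B)\circ g\circ(-A)$.

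First I would dispose of the case that $f$ (equivalently, $g$) is a monomial. If $f = P_p$ then $g(x) = (x+A)^p + B$ has $B$ as its only finite critical value, with totally ramified fibre; a short root-multiplicity check against each of the three defining forms — using that $g$ has $p$ distinct simple roots once $B\neq 0$, and that $((x+A)^p - A^p)/x$ has a nonzero coefficient of $x^1$ when $A\neq 0$ and $p\geq 3$ — shows $g$ is a Ritt polynomial only when $A=B=0$, which is excluded. Hence from now on $f$ and $g$ are third-kind Ritt polynomials. In particular each has at least two finite critical values: a third-kind polynomial has at least two distinct roots, so it cannot be linearly related to $x^d$, and a polynomial with a single finite critical value is linearly related to $x^d$ (its only critical fibre must be totally ramified, by Riemann--Hurwitz).

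Next, split on whether $B=0$. If $B=0$ then $g = f\circ(+A)$ with $A\neq 0$, and $0$, being a root of $g$, must be a root of $f$ other than $0$ itself, i.e. $A$ is a root of $f$. Writing $f$ in the form $x^a(x-A)^b w(x)^c$ adapted to its roots at $0$ and $A$ (with $w$ monic, $w(0)w(A)\neq 0$), one finds $g = f(x+A) = x^b(x+A)^a w(x+A)^c$, and comparing multiplicities at $0$ and at $\pm A$ shows that the constraints imposed by $f$ and by $g$ both being third-kind Ritt polynomials are exactly $\gcd(a,c)>1$ and $\gcd(b,c)>1$; that is, $f$ and $g$ are type \textsf{A}. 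The Chebyshev polynomial $C_p$ is excluded in this subcase: a nonzero source translate $C_p(x+A)$ is not Chebyshev (wrong centre of symmetry), and is not a nontrivial third-kind polynomial $x^{k'}v(x^{\ell'})^{n'}$ either, since such a polynomial is $\zeta$-rotation invariant up to a scalar while $C_p(x+A)$ is not — one equates the $x^{p-1}$-coefficients in $C_p(\zeta x + A) = \zeta\, C_p(x+A)$ to get $A=0$.

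Finally, if $B\neq 0$: then $g^{-1}(0) = f^{-1}(-B) - A$ carries the ramification forced on the root fibre of a third-kind Ritt polynomial, so $-B$ is a critical value of $f$ different from $0$; using that $f$ has at least two critical values and that the total finite ramification budget is $\deg f - 1$, one forces the fibres of $f$ over $0$ and over $-B$ both to be of type $(2,\dots,2,1)$, the fingerprint of a Chebyshev polynomial of odd prime degree (the classical characterization of Dickson polynomials), so $f$ and $g$ are type \textsf{C}. The explicit list then comes from Proposition~\ref{Cprop}, which exhibits $\widehat C_{p,1} = (+2)\circ C_p\circ(-2)$ in the third-kind shape $x\cdot u(x)^2$, together with the scaling bookkeeping of Remarks~\ref{scaleast} and~\ref{astRitt}: matching leading and subleading coefficients shows that $(+A)$ carries the ``centre'' of $f$ onto that of $g$, and extracting the forced scaling yields precisely $(f,g) = (C_{p,\lambda}, \widehat C_{p,\lambda})$ with $\lambda = -2/A = \sqrt[p]{2/B}$, or $(f,g) = (\widehat C_{p,\mu}, \widehat C_{p,-\mu})$ with $\mu = 4/A = \sqrt[p]{-4/B}$, or these with $f$ and $g$ interchanged. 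I expect the main obstacle to be this last case — genuinely forcing the Chebyshev shape from the ramification count, rather than merely checking consistency, and then carrying out the linear-coefficient bookkeeping that recovers $\lambda,\mu$ and their relations to $A$ and $B$ without sign or root-of-unity slips — with, secondarily, keeping the multiplicity bookkeeping straight in the $B=0$ case across the overlapping normal forms $x^k u(x^\ell)^n$ and $x^a(x-A)^b w(x)^c$.
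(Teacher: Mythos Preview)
Your overall strategy---preserve ramification data under translation and squeeze the fibre types---is the right one, and is essentially what the paper does. But there is a genuine gap in your $B\neq 0$ argument.

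You claim that ``$g^{-1}(0)$ carries the ramification forced on the root fibre of a third-kind Ritt polynomial'' and then that ``one forces the fibres of $f$ over $0$ and over $-B$ both to be of type $(2,\dots,2,1)$.'' This is false exactly in the case you are trying to reach. A third-kind Ritt polynomial $x^k u(x^\ell)^n$ with $k=n=1$ and $\ell\geq 2$---for instance $C_{p,\lambda}$ itself---has a completely \emph{unramified} root fibre: all $p$ roots are simple. So for $f=C_{p,\lambda}$ the fibre over $0$ is $(1,\dots,1)$, not $(2,\dots,2,1)$, and your Riemann--Hurwitz budget argument never gets started on that side. The problem is that ``being a Ritt polynomial'' can be witnessed by in-degree rather than out-degree, and in-degree imposes no ramification over $0$.

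The paper fixes precisely this by first proving (Proposition~\ref{ellis1}, a short sum-of-roots argument) that at least one of $f,g$ has in-degree $1$; only then does the ramification count go through (Lemma~\ref{finalreduction}), yielding $k_1=k_2=1$, $n_1=n_1\ell_1=n_2\ell_2=2$, and simple $u_i$. This leaves two shapes for $g$---either $x\cdot v(x)^2$ or $x\cdot v(x^2)$---and the paper finishes not by invoking the classical Chebyshev characterization but by proving uniqueness of each shape via an explicit linear ODE (Propositions~\ref{uniqueCprop} and~\ref{typeCprop}). Your proposed endpoint (ramification characterization of Chebyshev plus coefficient bookkeeping) would also work once the count is correctly established, but you need the in-degree reduction first. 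A similar, though less severe, issue affects your $B=0$ case: your form $x^a(x-A)^b w(x)^c$ and the claimed constraints $\gcd(a,c)>1$, $\gcd(b,c)>1$ tacitly assume the out-degree is the operative invariant, and you only explicitly excluded $C_p$ among the $\ell\geq 2$ polynomials; the paper again handles this cleanly by first showing $\ell_1=\ell_2=1$.
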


We turn to the task of proving Theorem~\ref{transrelate} reformulating its statement as the solution of the following problem.

\begin{problem}
\label{problemtrans}
For which Ritt polynomials $f$ and $g$ and scalars $A$ and $B$ can we have
$$
(+B) \circ f \circ (+A) = g \text{ ?}
$$
\end{problem}

In the solution of Problem~\ref{problemtrans} and in the course of the analysis of the monoid actions introduced in
Section~\ref{rittmonoid}, we make use of some refined degrees of Ritt polynomials.

\begin{Def} \label{inoutdegdef}
If $f$ is any polynomial which is not a monomial, then $f$ may be expressed as $x^k \cdot u(x^\ell)^n$ where
$u$ is a polynomial with a nonzero constant term and $n$ and $\ell$ are maximal.  The number $k$ is the order of
vanishing of $f$ at $0$.   The number $n$, which we call the \emph{out-degree} of $f$, is the greatest common divisor of the
orders of vanishing of $f$ at points other than $0$.  The number $\ell$, which we call the \emph{in-degree} of $f$, is the
size of the multiplicative stabilizer of the set of roots of $f$.
\end{Def}

\begin{Rk}
Of course, it is true that a monomial may be expressed in the above form, taking $u = 1$, but then no maximal $n$ nor $\ell$ would exist.
If $f$ is a non-monomial Ritt polynomial, then either its in-degree or its out-degree must be at least two.
\end{Rk}

\begin{Rk} \label{scalrel-inout}
By considering type \textsf{A} Ritt polynomials, one sees that even for Ritt polynomials, the out-degree and in-degree are \emph{not} invariants of the linear relatedness class of a polynomial. However, two scaling related Ritt polynomials $f$ and $\lambda \ast f$ clearly have the same in-degrees and out-degrees.

\end{Rk}

\begin{lemma} \label{seehatinout}
All $C_{p, \lambda}$ have in-degree $2$ and out-degree $1$. All $\widehat{C}_{p, \lambda}$ have in-degree $1$ and out-degree $2$. \end{lemma}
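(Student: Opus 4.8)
The plan is to read the in-degree and out-degree off explicit factorizations of $C_p$ and of $\widehat C_{p,1} = (+2)\circ C_p\circ(-2)$, and then to transfer these to $C_{p,\lambda} = \lambda\ast C_p$ and $\widehat C_{p,\lambda} = \lambda\ast\widehat C_{p,1}$ via Remark~\ref{scalrel-inout}, since scaling-related Ritt polynomials share in-degree and out-degree (recall that $C_p$ is a Ritt polynomial by definition and $\widehat C_{p,1}$ is one by Proposition~\ref{Cprop}). As the in-degree and out-degree are determined by the multiset of roots of the polynomial, they are insensitive to the choice of algebraically closed base field, so it is harmless to compute over $\CC$, where $C_p(x) = 2T_p(x/2)$ has $p$ distinct real roots and $C_p(2\cos\theta) = 2\cos p\theta$.

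First I would handle $C_p$ ($p$ an odd prime). Since $C_p$ is an odd function, $0$ is a root and the remaining roots occur in pairs $\pm r$; as they are distinct, $C_p(x) = x\cdot w(x^2)$ with $w(0)\ne 0$ and $w$ squarefree, so every nonzero root is simple and the out-degree is $1$. The multiplicative stabilizer of the root set is a finite, hence cyclic, group of roots of unity: it contains $-1$ because $C_p$ is odd, and it contains no other nontrivial root of unity, since multiplying the nonzero real root $2\cos(\pi/2p)$ by a non-real root of unity lands outside the (all real) root set. Hence the in-degree of $C_p$ is exactly $2$.

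Next I would treat $\widehat C_{p,1}(x) = C_p(x-2) + 2$. From $C_p(2\cos\theta)=2\cos p\theta$ one gets that $C_p'$ has the $p-1$ distinct roots $2\cos(k\pi/p)$ ($1\le k\le p-1$), with $C_p(2\cos(k\pi/p)) = 2(-1)^k$, and that $C_p(-2)=-2$. Therefore each $c_j := 2\cos(k_j\pi/p)$ with $k_j$ odd, $1\le k_j \le p-2$, is a root of the degree-$p$ polynomial $C_p(y)+2$ of multiplicity at least $2$, and $y=-2$ is also a root; comparing degrees forces $C_p(y)+2 = (y+2)\prod_j (y-c_j)^2$ with the $(p-1)/2$ numbers $c_j$ distinct and lying in $(-2,2)$. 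Substituting $y=x-2$ gives $\widehat C_{p,1}(x) = x\prod_j\big(x-(c_j+2)\big)^2$, whose roots are $0$ (simple) together with the distinct numbers $c_j+2\in(0,4)$, each of multiplicity $2$. Hence the out-degree of $\widehat C_{p,1}$ is $2$; and the only root of unity fixing the root set $\{0\}\cup\{c_j+2 : j\}$, which contains a positive real, is $1$, so the in-degree of $\widehat C_{p,1}$ is $1$.

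Finally, Remark~\ref{scalrel-inout} applied to $C_{p,\lambda}=\lambda\ast C_p$ and $\widehat C_{p,\lambda}=\lambda\ast\widehat C_{p,1}$ yields the stated values for all nonzero $\lambda$. The only step that needs genuine care is pinning down the multiplicities of the roots of $C_p(y)+2$ (that the interior solutions are exactly the local minima, each double, and $y=-2$ is simple); this is handled cleanly through the substitution $y=2\cos\theta$, and everything else is routine bookkeeping.
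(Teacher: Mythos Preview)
Your proof is correct and complete. It takes a genuinely different route from the paper's, however.

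The paper does not compute any roots explicitly. Instead, after observing (as you do) that oddness of $C_p$ gives in-degree divisible by $2$, and that Proposition~\ref{Cprop} gives $\widehat{C}_p$ the form $x\cdot u(x)^2$ so out-degree divisible by $2$, it feeds the translation relation $\widehat{C}_{p,1} = (+2)\circ C_p\circ(-2)$ into the general analysis of Problem~\ref{problemtrans}: Proposition~\ref{ellis1} forces one of the two polynomials to have in-degree $1$, and since $C_p$ does not, $\widehat{C}_p$ must; then Lemma~\ref{finalreduction} gives $n_1\ell_1 = n_2\ell_2 = 2$, which pins down all four numbers exactly. Remark~\ref{scalrel-inout} finishes as in your argument.

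Your approach is self-contained and elementary, exploiting the special feature that Chebyshev polynomials have real roots with explicitly computable multiplicities via $x=2\cos\theta$; this avoids the ramification bookkeeping of Lemma~\ref{finalreduction}. The paper's approach instead recycles machinery that is needed anyway for Theorem~\ref{transrelate}, making the lemma a quick corollary once that infrastructure is in place. Both are perfectly valid; yours would stand on its own even without Section~\ref{sectio32}, while the paper's is more economical given the surrounding context.
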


\begin{proof}
Since $C_p$ is an odd function, its in-degree is divisible by $2$. From the computations in the proof of Proposition~\ref{Cprop}, it follows that the out-degree of $\widehat{C}_p$ is divisible by $2$. The rest of the result for $C_p$ and $\widehat{C}_p$ follows by Proposition~\ref{ellis1} and Lemma~\ref{finalreduction}, and Remark~\ref{scalrel-inout} finishes the proof.\end{proof}

Returning to Problem~\ref{problemtrans} we observe that $A = B = 0$ and $f = g$ always gives a trivial solution.  On the other hand, evaluating both sides at $0$ we see that there are no solutions with $A = 0 \neq B$.  Thus, we may and do assume that $A \neq 0$ examining the cases where $B = 0$ and where $B \neq 0$ separately. We have already found some solutions of these problems: type \textsf{A} Ritt polynomials for the case when $B=0$, and type \textsf{C} Ritt polynomials for the case $B \neq 0$. Our task is to prove that there are no others. Using an appropriate scaling, we reduce to the case that $A = 1$.

\begin{lem}
If $A$, $B$, $f$, and $g$ give a solution to Problem~\ref{problemtrans}, then $1$, $\frac{B}{A^{\deg(f)}}$,
$A \ast f$, $A \ast g$ is also a solution
to Problem~\ref{problemtrans}.
\end{lem}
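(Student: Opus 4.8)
The plan is to reduce the statement to two elementary identities among affine maps: the commutation rule $(+A) \circ (\cdot A) = (\cdot A) \circ (+1)$, and the rule for conjugating a translation by a scaling. First I would record the bookkeeping: since $f$ and $g$ are linearly related we have $d := \deg(f) = \deg(g)$, so that by definition of the operation $\ast$ we may write $A \ast f = (\cdot A^{-d}) \circ f \circ (\cdot A)$ and $A \ast g = (\cdot A^{-d}) \circ g \circ (\cdot A)$; here $A \neq 0$, as we have already arranged. By Remark~\ref{astRitt}, both $A \ast f$ and $A \ast g$ are again Ritt polynomials, so the proposed tuple is at least of the right shape to be a solution of Problem~\ref{problemtrans}.

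Next I would substitute the hypothesis $g = (+B) \circ f \circ (+A)$ into the expression for $A \ast g$, obtaining
\[
A \ast g = (\cdot A^{-d}) \circ (+B) \circ f \circ (+A) \circ (\cdot A).
\]
On the right I would use $(+A) \circ (\cdot A) = (\cdot A) \circ (+1)$, and on the left I would use the identity $(+B') \circ (\cdot A^{-d}) = (\cdot A^{-d}) \circ (+B)$ with $B' := \frac{B}{A^{d}}$, which one verifies by evaluating both sides at $x$ (one gets $A^{-d}x + B'$ versus $A^{-d}(x+B)$, forcing $B' = A^{-d}B$). This rewrites the display as
\[
A \ast g = (+B') \circ (\cdot A^{-d}) \circ f \circ (\cdot A) \circ (+1) = (+B') \circ (A \ast f) \circ (+1),
\]
which is precisely the assertion that $\big(1,\ \tfrac{B}{A^{\deg(f)}},\ A \ast f,\ A \ast g\big)$ is a solution of Problem~\ref{problemtrans}.

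There is no genuine obstacle here; the only things demanding care are the degree bookkeeping (using $\deg f = \deg g$ so that the conjugating scalings $(\cdot A^{-d})$ on $f$ and on $g$ carry the same exponent) and the order of composition, since the group of affine maps is non-abelian. In writing this up I would simply display the two one-line linear identities and the short chain of equalities above, with no further computation needed.
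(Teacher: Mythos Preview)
Your proof is correct and is essentially the same as the paper's: both verify the identity $(+\tfrac{B}{A^d})\circ(A\ast f)\circ(+1)=A\ast g$ by unwinding definitions, the paper by direct evaluation at $x$ and you by two short affine-map commutation identities. The content is identical; your compositional presentation is perhaps a bit cleaner.
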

\begin{proof}
$(+\frac{B}{A^{\deg(f)}}) \circ (A \ast f) \circ (+1) =  A^{-\deg(f)} f(A (x+1)) + A^{-\deg(f)} B = A^{-\deg(f)} ( f(Ax + A) + B) =
A \ast ( (+B) \circ f \circ (+A) ) = A \ast g$
\end{proof}

\begin{reduction} \label{reductoa1}
For the remainder of this section, we assume that $A = 1$.  Thus, we seek solutions to
$$
(+B) \circ f \circ (+1) = g
$$
where $f$ and $g$ are Ritt polynomials.  By way of notation, we write $f = f_1 = x^{k_1} u_1(x^{\ell_1})^{n_1}$ and
$g = f_2 = x^{k_2} u_2(x^{\ell_2})^{n_2}$ where $\ell_i$ is the in-degree of $f_i$ and $n_i$ is the out-degree of $f_i$.
We write $s_i := \deg(u_i)$ and $t_i$ for the number of zeros of $u_i$, not counted with multiplicity.
\end{reduction}

Let us record a simple ramification calculation.

\begin{lem}
\label{ramificationcalc}
Let $k$, $\ell$, $n$ be natural numbers with $\gcd(k,\ell) = \gcd(k,n) = 1$ and $u$ a polynomial with $u(0) \neq 0$.
Set $f := x^k \cdot u(x^\ell)^n$.   Let $t$ be the number of zeros of $u$ \emph{not} counted with multiplicity and let $s := \deg(u)$.
Then the following holds.
\begin{itemize}
\item The number of points (counted with multiplicity) at which both $f$ and $f'$ vanish, that is, the number of
ramification points above zero, is $(k-1) + \ell(ns - t)$.
\item The number of points at which $f'$ vanishes but $f$ does not, that is, the number of ramification points lying above
points other than zero, is $\ell t$.  Moreover, this set of points is closed under multiplication by the group of $\ell^\text{th}$ roots
of unity.
\end{itemize}
\end{lem}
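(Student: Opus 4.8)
The plan is to compute the divisor of $f'$ by a direct local analysis. Writing $v(x) := u(x^\ell)$, one has $f = x^k v^n$ and therefore
$$f' = x^{k-1} v^{n-1}\,\bigl(k v + n x v'\bigr).$$
Since $u(0) \neq 0$ and we work in characteristic zero, each nonzero root $\beta$ of $u$ contributes $\ell$ distinct $\ell^{\text{th}}$ roots as zeros of $v$, each with multiplicity $\ord_\beta(u)$; hence $v$ has degree $\ell s$, has exactly $\ell t$ distinct zeros (all nonzero), and $\deg f' = \deg f - 1 = k + n\ell s - 1$. I would also record the elementary functional equation $f(\zeta x) = \zeta^k f(x)$ for every $\ell^{\text{th}}$ root of unity $\zeta$, whence $f'(\zeta x) = \zeta^{k-1} f'(x)$.

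Next I would read off the order of $f'$ at the points where $f$ vanishes, namely $0$ and the zeros of $v$. At $0$ the factor $x^{k-1}$ contributes $k-1$, $v^{n-1}$ contributes $0$, and $kv + nxv'$ takes the nonzero value $k\,u(0)$ there, so $\ord_0(f') = k-1$. At a zero $\gamma \neq 0$ of $v$ of multiplicity $m$ the factor $v^{n-1}$ contributes $m(n-1)$, and since $\ord_\gamma(xv') = m-1$ while $\ord_\gamma(kv) = m$, the sum $kv + nxv'$ vanishes to order exactly $m-1$; thus $\ord_\gamma(f') = m(n-1) + (m-1) = mn-1$, and $f(\gamma)=0$ confirms this is a ramification point over zero. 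Summing over $0$ and over the distinct zeros of $v$, and using $\sum_\gamma \ord_\gamma(v) = \deg v = \ell s$ together with the count $\ell t$ of distinct $\gamma$, the number of ramification points over zero is $(k-1) + \sum_\gamma\bigl(n\,\ord_\gamma(v) - 1\bigr) = (k-1) + n\ell s - \ell t = (k-1) + \ell(ns-t)$, which is the first bullet.

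For the second bullet I would subtract: the ramification points not over zero number $\deg f' - \bigl((k-1) + \ell(ns-t)\bigr) = (k + n\ell s - 1) - (k-1+n\ell s-\ell t) = \ell t$. (Equivalently, these are the remaining zeros of $kv + nxv'$, which has degree $\ell s$ because its leading coefficient is $(k+n\ell s)$ times that of $v$, after removing the $\sum_\gamma(\ord_\gamma(v)-1) = \ell s - \ell t$ of them absorbed into the zeros of $v$.) Closure of this set under multiplication by the group of $\ell^{\text{th}}$ roots of unity is then immediate from $f'(\zeta x) = \zeta^{k-1}f'(x)$: this action permutes the zeros of $f'$, fixes $0$, and preserves the zero set of $v$ (itself stable under these multiplications), hence permutes the complementary set of zeros of $f'$. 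The only delicate point in the argument is the exact identity $\ord_\gamma(kv+nxv') = m-1$, i.e. checking that the summands $kv$ and $nxv'$ do not cancel beyond order $m-1$; this is precisely where $\gamma \neq 0$ and the vanishing of no integer coefficient (characteristic zero) are used, and it is the step I would write out most carefully.
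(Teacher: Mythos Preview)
Your proof is correct and follows essentially the same route as the paper: factor $f' = x^{k-1}\,u(x^\ell)^{n-1}\bigl(k u(x^\ell) + n\ell x^\ell u'(x^\ell)\bigr)$ (your $kv + nxv'$), read off the local orders at $0$ and at the zeros of $u(x^\ell)$, and sum. Your introduction of $v = u(x^\ell)$ is a mild notational streamlining, and for the second bullet you subtract from $\deg f'$ rather than isolate the remaining factor via $\tilde u = \gcd(u,u')$ as the paper does, but these are equivalent counts; the closure under $\mu_\ell$ via the functional equation $f'(\zeta x) = \zeta^{k-1} f'(x)$ is likewise just a repackaging of the paper's observation that the residual factor is a polynomial in $x^\ell$.
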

\begin{proof}
This is a straightforward computation which we include for completeness.

$$
f'(x) = k x^{k-1} u(x^\ell)^n + x^k n u(x^\ell)^{n-1} u'(x^\ell) \ell x^{\ell - 1} =
 x^{k-1} u(x^\ell)^{n-1} (k u(x^\ell) + \ell n u'(x^\ell) x^{\ell - 1})
$$

Since $u(0) \neq 0$, we see that $\ord_0 f' =  k-1$.    On the other hand, if $u(a^\ell) = 0$ and $f'(a) = 0$, then we must have
$u'(a^\ell) = 0$, and $\ord_a f' = (n-1) \ord_{a^\ell} u + \ord_{a^\ell} u' = n \ord_{a^\ell}(u) - 1$. Summing over the distinct
roots of $u$, we finish the calculation of the total ramification over zero.    If we let $\tilde{u} := \gcd(u,u')$, by which we mean
the monic polynomial which generates the ideal
generated by $u$ and $u'$, then
the other zeros of $f'$ come from the zeros of $k \frac{u}{\tilde{u}}(x^\ell) + \ell n \frac{u'}{\tilde{u}}(x^\ell) x^{\ell - 1}$ which has degree exactly $\ell t$.
\end{proof}

Differentiating the equation $(+B) \circ f_1 \circ (+1) = f_2$, we see that $f_1' \circ (+1) = f_2'$.   Hence, for any point $a$ we have
$\ord_a f_2' = \ord_{a+1} f_1'$.  That is, $(+1)$ translates the zeros of $f_2'$ to the zeros of $f_1'$ respecting multiplicities.  If $B = 0$,
then the ramification above zero is matched.  If $B \neq 0$, then there is one nonzero point for which the ramification of $f_2$ above zero is
matched with the ramification of $f_1$ above that point and vice versa.  It is this consequence which makes these seemingly trivial observations
useful.

\begin{prop} \label{ellis1}
In the notation from Reduction \ref{reductoa1}, either $\ell_1 = 1$ or $\ell_2 = 1$.
\end{prop}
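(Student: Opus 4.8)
The plan is to argue by contradiction: suppose $\ell_1 \geq 2$ and $\ell_2 \geq 2$, and derive a contradiction by computing the coefficient of the next-to-leading term of $f_2$ in two ways, where $d := \deg f_1 = \deg f_2 \geq 2$ (the two degrees agree because $A = 1$ and $B$ are degree-preserving). The one ingredient I would record first is the elementary fact that if a finite multiset $S$ of scalars is stable under multiplication by the group $\mu_m$ of $m$-th roots of unity with $m \geq 2$ and the multiplicities are constant along $\mu_m$-orbits, then its elements sum to zero: each $\mu_m$-orbit of a nonzero element consists of $m$ points summing to $0$, and the orbit $\{0\}$ contributes nothing. Since $f_i = x^{k_i} u_i(x^{\ell_i})^{n_i}$ with $\ell_i$ its in-degree, the root set of $f_i$ (with multiplicities) is visibly stable under $\mu_{\ell_i}$ --- indeed this is exactly how the in-degree is defined in Definition~\ref{inoutdegdef} --- so from $\ell_i \geq 2$ I get that the roots of $f_i$ sum to $0$; as $f_i$ is monic of degree $d$, this says that $f_i$ has no $x^{d-1}$ term.

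Now I would use $f_2(x) = (+B) \circ f_1 \circ (+1)(x) = f_1(x+1) + B$. Writing $f_1(x) = x^d + \sum_{j \leq d-2} c_j x^j$ (no $x^{d-1}$ term, by the previous paragraph with $i = 1$), the coefficient of $x^{d-1}$ in $f_2(x) = (x+1)^d + B + \sum_{j \leq d-2} c_j (x+1)^j$ is $\binom{d}{1} = d$, which is nonzero since we are in characteristic zero and $d \geq 2$. Hence the roots of $f_2$ sum to $-d \neq 0$. This contradicts the conclusion of the first paragraph with $i = 2$, and so $\ell_1 = 1$ or $\ell_2 = 1$, as claimed.

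I do not anticipate a serious obstacle: this is essentially a one-line coefficient computation once the root-of-unity symmetry is in hand. The only points needing a word of care are that $\ell_i \geq 2$ genuinely supplies a nontrivial root-of-unity symmetry of the roots of $f_i$ (nothing to prove --- it is the definition of the in-degree), and that the two occurrences of $d$ coincide, which rests on $A = 1 \neq 0$ from Reduction~\ref{reductoa1}; the argument would indeed fail for $A = 0$ or in characteristic dividing $d$, neither of which arises here. One should also note the statement tacitly concerns non-monomial Ritt polynomials, since the in-degree is only defined for non-monomials. A cosmetic alternative, staying closer to the differentiation observation preceding the proposition, is to run the same argument on the zeros of $f_i'$ instead of the roots of $f_i$: by Lemma~\ref{ramificationcalc} the zeros of $f_i'$ lying over points other than $0$ form a $\mu_{\ell_i}$-stable set, and the multiple roots of $f_i$ are $\mu_{\ell_i}$-stable as well, so the zeros of $f_i'$ sum to $0$ when $\ell_i \geq 2$, again forcing the $x^{d-1}$-coefficient of $f_i$ to vanish; the root-set version above is marginally cleaner.
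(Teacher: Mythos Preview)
Your proof is correct and takes essentially the same approach as the paper: both exploit the $\mu_{\ell_i}$-symmetry of the roots to force a vanishing second-highest coefficient, then observe that the shift $x \mapsto x+1$ produces a nonzero such coefficient. The only cosmetic difference is that the paper runs the argument on $f_i'$ (using $f_2' = f_1' \circ (+1)$ to get sum of roots $1 - \deg f_1 \neq 0$) while you work directly with $f_i$; you already note this alternative yourself, and your version is if anything slightly cleaner.
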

\begin{proof}
If $\ell_1 > 1$, then the sum of the roots of $f_1$ is zero as is the sum of the roots of $f'_1$. Indeed,
zero contributes nothing to the sum.  The other roots both of $f_1$ and of $f_1'$ are partitioned into cosets of the $\ell_1^\text{th}$
roots of unity over which the sum is zero.  Because $f_2' = f_1' \circ (+1)$, we see that the sum of the roots of
$f_2'$ is $(1 - \deg(f_1)) \neq 0$ (as $\deg(f_1) \geq 3$).
\end{proof}

\begin{reduction}
For the remainder of this section, we take $\ell_1 = 1$.
\end{reduction}

\begin{lem}
If $B = 0$, then $\ell_2 = 1$.
\end{lem}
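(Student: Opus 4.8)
The plan is to argue by contradiction: assume $\ell_2 > 1$ and squeeze out enough information about root multiplicities to force $f_1$ to be a monomial, which will turn out to be impossible. Since $B = 0$, the equation of Reduction~\ref{reductoa1} reads $f_2 = f_1 \circ (+1)$, so orders of vanishing transport along the translation: $\ord_a f_2 = \ord_{a+1} f_1$ for every point $a$.

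First I would record that every Ritt polynomial vanishes to positive order at $0$ (immediate from the three cases of Definition~\ref{specialpoly}), so $k_1 = \ord_0 f_1 \geq 1$ and hence $\ord_{-1} f_2 = k_1$, which exhibits $-1$ as a \emph{nonzero} root of $f_2$. Next, writing $f_2 = x^{k_2} u_2(x^{\ell_2})^{n_2}$ with $\ell_2$ its in-degree, for every $\ell_2$-th root of unity $\zeta$ one has $f_2(\zeta x) = \zeta^{k_2} f_2(x)$, hence $\ord_{\zeta a} f_2 = \ord_a f_2$ for all $a$; taking $a = -1$ gives $\ord_{-\zeta} f_2 = k_1$ for every such $\zeta$. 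Transporting back along $(+1)$, I get $\ord_{1-\zeta} f_1 = \ord_{-\zeta} f_2 = k_1$. Since $\ell_2 > 1$ there is some $\zeta \neq 1$, and then $1 - \zeta$ is a nonzero root of $f_1$ of multiplicity $k_1$. On the other hand, every nonzero root of $f_1 = x^{k_1} u_1(x^{\ell_1})^{n_1}$ has multiplicity divisible by $n_1$ (being a root of $u_1(x^{\ell_1})^{n_1}$ away from $0$, where $x \mapsto x^{\ell_1}$ is unramified), so $n_1 \mid k_1$; as $\gcd(k_1, n_1) = 1$, this forces $n_1 = 1$. But now $f_1$ is a Ritt polynomial of in-degree $\ell_1 = 1$ and out-degree $n_1 = 1$, hence (since a non-monomial Ritt polynomial has in-degree or out-degree at least two, by the remarks after Definition~\ref{inoutdegdef}) a monomial; then $f_2 = f_1 \circ (+1) = (x+1)^{\deg f_1}$, which has order of vanishing $0$ at the origin and is patently neither a monomial nor a Chebyshev polynomial, so it is not a Ritt polynomial, contradicting the standing hypothesis that $f_2$ is one. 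Therefore $\ell_2 = 1$.

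I do not anticipate a real obstacle. The two points deserving care are the invariance $\ord_{\zeta a} f_2 = \ord_a f_2$, which is simply the identity $f_2(\zeta x) = \zeta^{k_2} f_2(x)$ read off the shape $x^{k_2} u_2(x^{\ell_2})^{n_2}$, together with the attendant fact that the multiset of roots of $f_2$ is stable under multiplication by $\ell_2$-th roots of unity; and the final exclusion of the monomial case, where the point is that $(x+1)^{\deg f_1}$ violates the requirement $k \neq 0$ in the third clause of Definition~\ref{specialpoly} and is visibly not of the first two kinds.
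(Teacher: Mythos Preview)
Your argument is correct and follows essentially the same route as the paper: transport $\ord_0 f_1 = k_1$ to $\ord_{-1} f_2$, propagate to $\ord_{-\zeta} f_2 = k_1$ for all $\ell_2$-th roots of unity $\zeta$ via the symmetry of $f_2$, transport back to find a nonzero root of $f_1$ of multiplicity $k_1$, deduce $n_1 \mid k_1$, and conclude $n_1 = 1$, which together with $\ell_1 = 1$ is incompatible with $f_1$ being Ritt. The only cosmetic differences are that the paper phrases the step ``$n_1 \mid k_1$ forces $n_1 = 1$'' directly via indecomposability (your coprimality $\gcd(k_1,n_1)=1$ is exactly the same fact), and the paper ends one line earlier by noting that a non-monomial Ritt polynomial cannot have $\ell_1 = n_1 = 1$, whereas you push through to the monomial case and kill it with $(x+1)^{\deg f_1}$.
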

\begin{proof}
As $f_2(x+1) = f_1(x)$, we see that $k_1 = \ord_0 f_1 = \ord_{-1} f_2$.  That is, $-1$ is a $k_1$-fold zero of
$u_2(x^{\ell_2})^{n_2}$.  We thus have $\ord_{-\zeta} f_2 = k_1$ for any other $\ell_2^\text{th}$ root of unity $\zeta$.  Unless,
$\ell_2 = 1$, we can choose $\zeta$ so that $-\zeta + 1 \neq 0$, but then $k_1 = \ord_{-\zeta + 1} f_1 = \ord_{-\zeta + 1} u_1(x)^{n_1}$, so $n_1$ divides $k_1$. If $n_1 > 1$, this contradicts the indecomposability of $f_1$. Otherwise, $n_1 = \ell_1 = 1$, so $f_1$ is not a Ritt polynomials, again a contradiction. 
\end{proof}

We first complete the solution for the case where $B = 0$.

\begin{prop}
In Problem~\ref{problemtrans}, if $B = 0$ and $\ell_1 = \ell_2 = 1$, then there are positive integers $m_1$, $m_2$ and a monic
polynomial $U$ for which $u_1(x) = (x-1)^{m_1} U(x)^{n_2}$ and $u_2(x) = (x+1)^{m_2} U(x+1)^{n_1}$.  In particular,
$f_1$ and $f_2$ are type \textsf{A} Ritt polynomials.
\end{prop}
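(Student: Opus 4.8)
The plan is to extract both factorizations directly from the polynomial identity the hypothesis $B=0$ produces, namely $f_2(x)=f_1(x+1)$ (recall that Ritt polynomials are monic, so this is an equality of monic polynomials), by bookkeeping with orders of vanishing. Keeping the notation of Reduction~\ref{reductoa1}, so that $f_i=x^{k_i}u_i(x)^{n_i}$ with $\ell_1=\ell_2=1$, I would first compare orders at the two ``special'' points $0$ and $-1$. Since $u_i(0)\neq 0$, evaluating at $x=0$ gives $k_2=\ord_0 f_2=\ord_0\bigl(f_1\circ(+1)\bigr)=n_1\ord_1 u_1$, and evaluating at $x=-1$ gives $n_2\ord_{-1}u_2=\ord_{-1}f_2=\ord_{-1}\bigl(f_1\circ(+1)\bigr)=k_1$. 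As $k_1$ and $k_2$ are nonzero, this shows $m_1:=\ord_1 u_1$ and $m_2:=\ord_{-1}u_2$ are positive integers with $k_2=n_1 m_1$ and $k_1=n_2 m_2$; in particular $n_1\mid k_2$, so the condition $\gcd(k_2,n_2)=1$ built into the definition of a Ritt polynomial (Definition~\ref{specialpoly}) forces $\gcd(n_1,n_2)=1$. I would also record that since $f_1$ and $f_2$ are non-monomial Ritt polynomials of in-degree $1$, their out-degrees satisfy $n_1,n_2\ge 2$, whence $k_1=n_2m_2\ge 2$ and $k_2=n_1m_1\ge 2$.

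Next I would handle the remaining roots. For a root $s\neq 1$ of $u_1$ (automatically $s\neq 0$, so $s-1\neq -1$ and $s-1\neq 0$), evaluating $f_2(x)=f_1(x+1)$ at $x=s-1$ gives $n_2\ord_{s-1}u_2=n_1\ord_s u_1$; running the same computation at a root $r\neq -1$ of $u_2$ shows $r+1$ is a root of $u_1$ different from $1$. Thus $s\mapsto s-1$ is a bijection from the roots of $u_1$ different from $1$ onto the roots of $u_2$ different from $-1$, and along it $n_1\ord_s u_1=n_2\ord_{s-1}u_2$. Using $\gcd(n_1,n_2)=1$, this equality forces $n_2\mid\ord_s u_1$; writing $\ord_s u_1=n_2 c_s$ I get $\ord_{s-1}u_2=n_1 c_s$. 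Setting $U(x):=\prod_s (x-s)^{c_s}$, with the product over the roots $s\neq 1$ of $u_1$ — a monic polynomial with $U(0)\neq 0$ and $U(1)\neq 0$ — reading off multiplicities gives $u_1(x)=(x-1)^{m_1}U(x)^{n_2}$, and the change of variable $x\mapsto x+1$ together with the bijection gives $u_2(x)=(x+1)^{m_2}U(x+1)^{n_1}$, which is exactly the asserted form.

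For the final sentence, substituting back yields $f_1(x)=x^{k_1}(x-1)^{m_1 n_1}U(x)^{n_1 n_2}$ and $f_2(x)=x^{k_2}(x+1)^{m_2 n_2}U(x+1)^{n_1 n_2}$. To exhibit $f_1$ as a type~\textsf{A} Ritt polynomial in the sense of Definition~\ref{Jdef} I would read it as $x^{\ell}(x-A)^{m}v(x)^{n}$ with $A=1$, $\ell=k_1$, $m=m_1 n_1$, $v=U$ and $n=n_1 n_2$: here $v$ is monic with nonzero constant term, and since $n_2\mid k_1$ while $n_1,n_2\ge 2$ one has $\gcd(\ell,n)\ge n_2>1$ and $\gcd(m,n)\ge n_1>1$. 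The symmetric reading of $f_2$, with $A=-1$, $m=m_2 n_2$, $v(x)=U(x+1)$ (still monic with $v(0)=U(1)\neq 0$), and using $n_1\mid k_2$, shows $f_2$ is type~\textsf{A} as well.

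I do not expect a genuine obstacle here: the whole argument is elementary bookkeeping with orders of vanishing of $f_1$ and $f_2$. The one step that needs a little care is establishing the bijection between the non-special roots of $u_1$ and $u_2$ with the precise multiplicity matching, and then invoking $\gcd(n_1,n_2)=1$ to pull out the common $n_2$-th (resp. $n_1$-th) power that defines $U$; it is precisely there that the coprimality conditions $\gcd(k_i,n_i)=1$ in the definition of a Ritt polynomial get used.
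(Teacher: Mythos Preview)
Your argument is correct and follows essentially the same approach as the paper: both compute $m_1,m_2$ from the orders of $f_1,f_2$ at the special points, deduce $\gcd(n_1,n_2)=1$ from $n_1\mid k_2$ and $\gcd(k_2,n_2)=1$, and then use unique factorization to exhibit the remaining factor as an $n_2$-th (resp.\ $n_1$-th) power. The only cosmetic difference is that the paper cancels to the global identity $V_1(x+1)^{n_1}=V_2(x)^{n_2}$ and invokes unique factorization once, whereas you unpack this into a root-by-root bijection and multiplicity count; the content is identical.
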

\begin{proof}
As $k_2 = \ord_0 f_2 = \ord_1 f_1$, we see that $k_2 \mid n_1$.  Set $m_1 := \frac{k_2}{n_1}$.  Observe that $\ord_1 u_1 = m_1$.  Likewise,
since $k_1 = \ord_0 f_1 = \ord_{-1} f_2$, $n_2$ divides $k_1$.  Write $m_2 := \frac{k_1}{n_2}$ and observe that $\ord_{-1} u_2 = m_2$.
Express $u_1(x) = (x-1)^{m_1} V_1(x)$ and $u_2(x) = (x+1)^{m_2} V_2(x)$.    Specializing Problem~\ref{problemtrans}, we have the
following equation.
$$
(x+1)^{m_2 n_2} \cdot [x^{m_1} V_1(x+1)]^{n_1} = x^{m_1 n_1} \cdot [ (x+1)^{m_2} V_2(x) ]^{n_2}
$$
Canceling $(x+1)^{m_2 n_2} x^{m_1 n_1}$ we obtain $V_1(x+1)^{n_1} = V_2(x)^{n_2}$.  Recalling that $n_2 m_2 = k_1$ and $n_1$ are
relatively prime, so that $\gcd(n_1,n_2) = 1$, it must be that $V_1$ is an $n_2^\text{th}$ power and $V_2$ an $n_1^\text{th}$ power.
Write $V_1 = U_1^{n_2}$ and $V_2 = U_2^{n_1}$.  As $f_1$ is monic, we may take each of $U_1$ and $U_2$ to be monic.
As $U_1(x+1)^{n_1 n_2} = U_2(x)^{n_1 n_2}$, we have $U_1(x+1) = U_2(x)$, as required.
\end{proof}

\begin{reduction}
In what follows, we assume that $B \neq 0$.
\end{reduction}

\begin{lem}
\label{finalreduction}
Given our reductions, $k_1 = k_2 = 1$, all roots of $u_1$ and $u_2$ are simple, and $n_1 = n_1 \ell_1 = n_2 \ell_2 = 2$.
\end{lem}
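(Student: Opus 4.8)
The plan is to read off numerical constraints on $k_i$, $n_i$, $\ell_i$ and the roots of $u_i$ from the single equation $(+B)\circ f_1\circ(+1)=f_2$ by comparing ramification data of $f_1$ and $f_2$. Throughout I keep the notation of Reduction~\ref{reductoa1}: $f_1=x^{k_1}u_1(x)^{n_1}$ (recall $\ell_1=1$), $f_2=x^{k_2}u_2(x^{\ell_2})^{n_2}$, I write $s_i=\deg u_i\ge 1$ and $t_i$ for the number of distinct roots of $u_i$, and I set $d=\deg f_1=\deg f_2$. If $f_1$ or $f_2$ were a monomial then, since $B\ne 0$, the other would be a translate of a monomial, which is easily checked not to be a Ritt polynomial; so both are non-monomial and the above notation applies. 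By the remark following Definition~\ref{inoutdegdef}, a non-monomial Ritt polynomial has in-degree or out-degree at least two, so $\ell_i n_i\ge 2$; since $\ell_1=1$ this forces $n_1\ge 2$. Reading off degrees gives $d=k_1+n_1s_1\ge 1+2s_1$ and $d=k_2+\ell_2 n_2 s_2\ge 1+2s_2$, hence $s_1,s_2\le \frac{d-1}{2}$.

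First I would set up a fibre dictionary. From $f_2(x)=f_1(x+1)+B$ one sees that $f_2^{-1}(B)$ is the translate by $-1$ of $f_1^{-1}(0)$, with $\ord_{x_0}(f_2-B)=\ord_{x_0+1}f_1$, so the ramification of $f_2$ over the value $B$ equals the ramification of $f_1$ over $0$; by Lemma~\ref{ramificationcalc} (with $\ell_1=1$) this common value is $R:=(k_1-1)+(n_1s_1-t_1)$, which, since $d=k_1+n_1s_1$, equals $d-1-t_1$. The crucial step is then to spread this ramification around. Since $f_2(\zeta x)=\zeta^{k_2}f_2(x)$ for $\zeta$ in the group $\mu_{\ell_2}$ of $\ell_2$-th roots of unity, and $\gcd(k_2,\ell_2)=1$ makes $\zeta\mapsto\zeta^{k_2}$ a permutation of $\mu_{\ell_2}$, the automorphisms $x\mapsto\zeta x$ carry $f_2^{-1}(B)$ bijectively, preserving multiplicities, onto $f_2^{-1}(\omega B)$ for every $\omega\in\mu_{\ell_2}$. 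As $B\ne 0$, the $\ell_2$ values $\omega B$ are distinct and nonzero, so $f_2$ has exactly $R$ ramification points, counted with multiplicity, over each $\omega B$. Since by Lemma~\ref{ramificationcalc} there are exactly $\ell_2 t_2$ ramification points of $f_2$ lying over nonzero values, we get $\ell_2 t_2\ge \ell_2 R$, that is $t_2\ge R=d-1-t_1$, whence $t_1+t_2\ge d-1$.

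The proof then closes by a collapse. Combining $t_1+t_2\ge d-1$ with $t_i\le s_i\le \frac{d-1}{2}$ forces $d-1\le t_1+t_2\le s_1+s_2\le d-1$, so every inequality above is an equality. Equality in $t_i\le s_i$ says that $u_1$ and $u_2$ are squarefree; equality $s_1=\frac{d-1}{2}$ fed into $d=k_1+n_1s_1$ gives $(n_1-2)s_1=1-k_1$, and as $n_1\ge 2$, $s_1\ge 1$, $k_1\ge 1$ this forces $k_1=1$ and $n_1=2$; symmetrically $s_2=\frac{d-1}{2}$ and $d=k_2+\ell_2 n_2 s_2$ give $(\ell_2 n_2-2)s_2=1-k_2$, forcing $k_2=1$ and $\ell_2 n_2=2$. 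Since $\ell_1=1$ we also get $n_1\ell_1=2$, which is precisely the assertion.

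The step I expect to be the main obstacle --- and the only essentially new ingredient beyond the ramification count of Lemma~\ref{ramificationcalc} --- is the spreading argument: without using the $\mu_{\ell_2}$-symmetry of $f_2$ one gets only the far weaker $R\le \ell_2 t_2$, which does not collapse, so one really must exploit that $f_2$ is forced to ramify over the entire $\mu_{\ell_2}$-orbit of the single value $B$. A secondary point to be careful about is matching the ``number of points counted with multiplicity'' in Lemma~\ref{ramificationcalc} with ramification over the relevant values, together with the (routine) verification that the monomial cases can be dispatched before Reduction~\ref{reductoa1} is in force.
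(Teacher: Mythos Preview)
Your proof is correct and follows essentially the same strategy as the paper: both exploit the $\mu_{\ell_2}$-symmetry of $f_2$ together with the ramification count of Lemma~\ref{ramificationcalc} to obtain the single inequality $t_1+t_2\ge d-1$, which, combined with $t_i\le s_i\le\frac{d-1}{2}$, forces every inequality to be an equality. The paper arrives at $t_1+t_2\ge d-1$ by writing down two inequalities (bounding the critical points of $f_2$ over $B$ by $t_2$, and the remaining critical points by $t_1$), but each of these simplifies to $d-1-t_i\le t_{3-i}$, i.e.\ the same inequality twice; your spreading argument reaches it in one step and is a cleaner packaging of the identical idea.
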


\begin{proof}
Concretely, we are considering the equation $f_1(x+1) + B = f_2(x)$.
Since $\gcd(k_2,\ell_2) = 1$, if $f_2(a) \neq 0$ and $\zeta \neq 1$ is an $\ell_2^\text{th}$ root of unity, then
$f_2(\zeta a) \neq f_2(a)$.   Thus, in each of the cosets of the $\ell_2^\text{th}$ roots of unity contained in the
critical points of $f_2$ there can be at most one point which maps to $B$ under $f_2$.  As translation by $1$ takes the
critical points of $f_2$ over $B$ to the critical points of $f_1$ over $0$, we conclude from Lemma~\ref{ramificationcalc}
(taking into account that $\ell_1 = 1$) that
$$(k_1 - 1) + (n_1 s_1 - t_1) \leq t_2 \text{ .}$$
On the other hand, since translation by $1$ induces a (multiplicity preserving) bijection between the critical points of
$f_2$ with those of $f_1$, we see that the other critical points of $f_2$ must be mapped to critical points of $f_1$ not above $0$.
From Lemma~\ref{ramificationcalc} again we see that
$$
(k_2 - 1) + \ell_2 (n_2 s_2 - t_2) + (\ell_2 - 1)t_2 \leq t_1 \text{ .}
$$

Combining these two inequalities we obtain

$$
(k_1 - 1) + (k_2 - 1) + n_1 s_1 + \ell_2 n_2 s_2 \leq 2 t_1 + 2 t_2 \text{ .}
$$

Bearing in mind that $t_i \leq s_i$, $2 \leq n_1 \ell_1 = n_1$, $2 \leq n_2 \ell_2$, and $1 \leq k_i$ we see that
all of these inequalities must be equalities.
\end{proof}

Thus, we are left with describing those solutions where $\ell_2 = 2$ and $n_2 = 1$ and where $\ell_2 = 1$ and $n_2 = 2$. We already have examples of these in Definition~\ref{see-hat-def} and Proposition~\ref{Cprop}; the next two propositions say that there are no others.

\begin{prop}
\label{uniqueCprop}
For each positive integer $s$, there is a unique monic polynomial $u$ for which there is some nonzero scalar $B$ and
polynomial $v$ satisfying
\begin{equation}
\label{Weq}
(+B) \circ (x \cdot u(x)^2) \circ (+1) = (x \cdot v(x)^2) \text{ .}
\end{equation}
\end{prop}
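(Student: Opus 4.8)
The plan is to prove uniqueness by showing that any $u$ satisfying~\eqref{Weq} with some $B\neq0$ and some $v$ must satisfy a fixed second order linear differential equation that has a unique monic polynomial solution, and to prove existence by rescaling the Chebyshev identities in Proposition~\ref{Cprop}. After the substitution $y=x+1$, equation~\eqref{Weq} becomes
$$y\,u(y)^2+B=(y-1)\,v(y-1)^2,$$
where $u$ and $v$ are monic of degree $s$ (comparing degrees and leading coefficients in~\eqref{Weq} forces $\deg v=\deg u=s$ and, after absorbing a sign into $v$, that $v$ is monic) and $B\neq0$.

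The one non-mechanical step in the uniqueness argument is the observation that $u(y)$ and $v(y-1)$ are coprime: a common root $a$ would make both sides of the displayed identity vanish at $y=a$, forcing $B=0$. Differentiating the identity gives
$$u(y)\bigl(u(y)+2y\,u'(y)\bigr)=v(y-1)\bigl(v(y-1)+2(y-1)\,v'(y-1)\bigr),$$
and both ``outer'' factors $u+2y\,u'$ and $v(\cdot-1)+2(\cdot-1)\,v'(\cdot-1)$ have degree $s$ with leading coefficient $2s+1$. Hence coprimality of $u(y)$ and $v(y-1)$ promotes the divisibilities visible in the last display to the equalities
$$u(y)+2y\,u'(y)=(2s+1)\,v(y-1),\qquad v(y-1)+2(y-1)\,v'(y-1)=(2s+1)\,u(y).$$
Substituting the first equality, and its $y$-derivative, into the second eliminates $v$ and produces
$$4y(y-1)\,u''(y)+(8y-6)\,u'(y)+\bigl(1-(2s+1)^2\bigr)\,u(y)=0.$$
Writing $u=\sum_{j=0}^{s}c_j y^j$ with $c_s=1$ and comparing coefficients of $y^j$ yields the recursion $4(j-s)(j+s+1)\,c_j=2(j+1)(2j+3)\,c_{j+1}$; running it downward from $c_s=1$, each of $c_{s-1},\dots,c_0$ is uniquely determined because $(j-s)(j+s+1)\neq0$ for $0\le j\le s-1$. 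So there is at most one monic $u$ satisfying~\eqref{Weq}.

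For existence, set $n:=2s+1$. By Proposition~\ref{Cprop} (whose proof treats every odd $n$, not merely odd primes), the polynomials $\widehat C_{n,1}(x)=C_n(x-2)+2$ and $\widehat C_{n,-1}(x)=C_n(x+2)-2$ are monic and of the form $x\cdot(\text{monic})^2$, and $\widehat C_{n,-1}=(-4)\circ\widehat C_{n,1}\circ(+4)$, i.e. $\widehat C_{n,1}(x+4)=\widehat C_{n,-1}(x)+4$. Applying the scaling operation $4\ast(-)$, which preserves the shape $x\cdot(\text{monic})^2$ by Remark~\ref{astRitt}, write $4\ast\widehat C_{n,1}=x\,u(x)^2$ and $4\ast\widehat C_{n,-1}=x\,v(x)^2$ with $u,v$ monic of degree $s$. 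Then
$$(x+1)\,u(x+1)^2=4^{-n}\,\widehat C_{n,1}(4x+4)=4^{-n}\bigl(\widehat C_{n,-1}(4x)+4\bigr)=x\,v(x)^2+4^{1-n},$$
so $u$, $v$, and $B:=-4^{1-n}\neq0$ satisfy~\eqref{Weq}. Together with the uniqueness bound, this proves the proposition.

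I expect no real obstacle; the only places demanding care are the leading-coefficient bookkeeping that upgrades the two divisibilities into the first-order identities, the verification that the coefficient recursion never degenerates (it does not, since $j+1$ and $2j+3$ are invertible in characteristic zero), and the routine check that $4\ast(-)$ acts as claimed on $\widehat C_{n,\pm1}$.
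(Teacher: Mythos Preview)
Your proof is correct. The uniqueness half is essentially the paper's argument: differentiate, use coprimality of $u(y)$ and $v(y-1)$ to split the factored identity into two first-order relations, eliminate $v$ to obtain a second-order linear ODE for $u$, and observe that this ODE has a one-dimensional space of polynomial solutions of degree $s$. Your coefficient recursion is just a hands-on version of the paper's rank computation for the operator.

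The existence half differs. The paper argues abstractly: having found the ODE, it checks that any ODE solution, together with $v$ defined by the first-order relation and $B:=-u(1)^2$, really does satisfy~\eqref{Weq}; since the ODE has a monic solution, existence follows. You instead exhibit the solution concretely by rescaling the Chebyshev identity $\widehat C_{n,-1}=(-4)\circ\widehat C_{n,1}\circ(+4)$ from Proposition~\ref{Cprop} via $4\ast(-)$. Your route is slightly more explicit and ties the proposition directly to its motivating example, at the cost of invoking Proposition~\ref{Cprop} (and noting, as you correctly do, that its proof covers all odd $n$, not only odd primes). The paper's route is more self-contained but leaves a ``routine calculation'' to the reader. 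Both are valid.
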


\begin{proof}
The polynomials $u$ and $v$ have only simple roots by Lemma~\ref{finalreduction}.  Since
$u$ is monic, we may assume that $v$ is monic as well.

Differentiating we obtain

\begin{equation}
u(x+1) ( u(x+1) + 2 (x+1) u'(x+1) ) = v(x) (v(x) + 2 x v'(x))
\end{equation}

Since $B \neq 0$, it follows that $u(x+1)$ and $v(x)$ are coprime.
Hence, $ u(x+1) + 2 (x+1) u'(x+1)$ is a scalar multiple of $v(x)$ and
$v(x) + 2x v'(x)$ is a scalar multiple of $u(x+1)$.  Taking
into account the leading coefficients, we deduce the following equations.

\begin{equation}
\label{odecons1}
(2s + 1) v(x) = u(x+1) + 2(x+1) u'(x+1)
\end{equation}

\begin{equation}
\label{odecons2}
 (2s + 1) u(x+1) = v(x) + 2x v'(x)
\end{equation}

Differentiating Equation~\ref{odecons1} we obtain

\begin{equation}
\label{odecons3}
(2s + 1) v'(x) = 3 u'(x+1)  + 2(x+1) u''(x+1) \text{ .}
\end{equation}

Multiplying Equation~\ref{odecons2} by $(2s+1)$, and then using
Equations~\ref{odecons1} and~\ref{odecons3} to eliminate $v$ and
$v'$, we obtain

\begin{equation}
\label{odedef}
(2s + 1)^2 u(x+1) = u(x+1) + 2(x+1) u'(x+1) +
2x(3u'(x+1) + 2(x+1) u''(x+1))
\end{equation}

Collecting terms, we see that $u(x+1)$ must satisfy the following
differential equation.

\begin{equation}
\label{odedef2}
(2s^2 + 2s) Y + (3 - 4x) Y' + 2(x - x^2) Y'' = 0
\end{equation}

A routine calculation shows that if $u(x+1)$ is a solution to
Equation~\ref{odedef2} and we define $v(x)$ via
Equation~\ref{odecons1} and set $B := -u(1)^2$, then these data
satisfy Equation~\ref{Weq}.

The linear differential operator $L = 2(x - x^2) \frac{d^2}{dx^2} + (3 -
4x) \frac{d}{dx} + (2s^2 + 2s)$ defines a linear operator on the
$(s+1)$-dimensional space of polynomials of degree $s$.  With
respect to the standard monomial basis of this space, the matrix $M =
(M_{i,j})$ of $L$ is upper triangular.  On the main diagonal, we
have $M_{j,j} = 2(1-j)j -4j + (2s^2 + 2s) = (2s^2 + 2s) - (2j^2 +
2j)$ and just above the diagonal we have $M_{j,j+1} = (j+1)(3 +
2j)$.  In particular, $M_{s,s} = 0$ so that
$\operatorname{rank}(L) \leq s$ while the $(s,s)$-minor is
invertible.  Thus, the rank of $L$ is $s$ and the dimension of the
space of solutions to Equation~\ref{odedef2} is exactly one.
As we require $u$ to be monic, there is exactly one solution of
degree $s$.
\end{proof}

\begin{prop}
\label{typeCprop}
For each positive integer $s$, there is a unique
monic polynomial $u$ of degree $s$ and nonzero parameter $B$ for
which there is another monic polynomial $v$ satisfying

\begin{equation}
\label{Ceq} (+B) \circ (x \cdot u(x)^2) \circ (+1) = (x \cdot
v(x^2))
\end{equation}
\end{prop}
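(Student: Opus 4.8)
The plan is to mirror the proof of Proposition~\ref{uniqueCprop}: convert Equation~\ref{Ceq} into a second-order linear ordinary differential equation for $u$ and then read off existence and uniqueness from a rank computation for the associated operator. By Lemma~\ref{finalreduction} we may work with $u$ and $v$ monic with simple roots. Evaluating Equation~\ref{Ceq} at $x=0$ gives $B=-u(1)^2$, so the hypothesis $B\neq 0$ is equivalent to $u(1)\neq 0$, and Equation~\ref{Ceq} becomes $(x+1)u(x+1)^2-u(1)^2=x\,v(x^2)$. The substantive content of this identity is that its right-hand side, divided by $x$, is an even polynomial in $x$; writing $f:=x\,u(x)^2$, this says precisely that $f(1+t)+f(1-t)=2f(1)$, i.e.\ the graph of $f$ has a centre of symmetry at $(1,f(1))$. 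Differentiating gives $f'(1+t)=f'(1-t)$, and since $f'(y)=u(y)\bigl(u(y)+2y\,u'(y)\bigr)$, setting $A(t):=u(1+t)$ and $C(t):=u(1+t)+2(1+t)u'(1+t)$, this reads $A(t)\,C(t)=A(-t)\,C(-t)$.

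The next step is to pass from this symmetry to a reflection identity. I claim $\gcd\bigl(A(t),A(-t)\bigr)=1$: a common root $\rho$ of $A(t)$ and $A(-t)$ is either $\rho=0$, giving $u(1)=0$, or $\rho\neq 0$, exhibiting a pair of roots $1\pm\rho$ of $u$ symmetric about $1$ — in which case evaluating the symmetry identity at $t=\rho$ forces $f(1)=0$, hence $u(1)=0$; both contradict $B\neq 0$. Hence $A(t)\mid A(-t)C(-t)$ forces $A(t)\mid C(-t)$, and comparing degrees ($\deg A=\deg C=s$) and leading coefficients ($A$ monic, $C$ of leading coefficient $2s+1$) yields $C(-t)=(-1)^s(2s+1)\,A(t)$. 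After the substitution $t\mapsto x-1$, this is the reflection identity
\[
u(x)+2x\,u'(x)=(-1)^s(2s+1)\,u(2-x).
\]
Differentiating it, and then using the same identity with $x$ replaced by $2-x$ to solve for $u(2-x)$ and $u'(2-x)$ in terms of $u(x)$ and $u'(x)$, and clearing denominators, one arrives at
\[
x(2-x)\,u''+(3-2x)\,u'+s(s+1)\,u=0 .
\]

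For uniqueness, note the operator $L:=x(2-x)\frac{d^2}{dx^2}+(3-2x)\frac{d}{dx}+s(s+1)$ preserves the $(s+1)$-dimensional space of polynomials of degree $\leq s$, and in the monomial basis it is upper triangular with $k$-th diagonal entry $(s^2+s)-(k^2+k)=(s-k)(s+k+1)$, which vanishes only for $k=s$. Hence $L$ has rank $s$, its kernel on this space is one-dimensional, and — since $L$ is invertible on polynomials of degree $\leq s-1$ — that kernel is spanned by a polynomial of degree exactly $s$; normalising it to be monic gives a unique $u$, after which $B=-u(1)^2$ and $v$ are forced by Equation~\ref{Ceq} itself. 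For existence, set $n:=2s+1$: the computation in the proof of Proposition~\ref{Cprop} (which uses only that $n$ is odd) shows $\widehat{C}_n$, hence $\widehat{C}_{n,2}=2\ast\widehat{C}_n$, has the form $x\,u(x)^2$ with $u$ monic, while $C_n$ is an odd function, so $C_{n,2}$ has the form $x\,v(x^2)$ with $v$ monic; and the identity $\widehat{C}_n(y)=C_n(y-2)+2$ rescales to $(+B)\circ\bigl(x\,u(x)^2\bigr)\circ(+1)=x\,v(x^2)$ with $B=-2^{-2s}\neq0$.

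The delicate point is the middle step: extracting the reflection identity from Equation~\ref{Ceq} rests on the coprimality of $A(t)$ and $A(-t)$, which in turn hinges on the observation that $B\neq 0$ excludes roots of $u$ symmetric about $1$. After that, converting the reflection identity into the linear ODE is a short but error-prone elimination, and — exactly as in Proposition~\ref{uniqueCprop} — one must check the diagonal entries $(s-k)(s+k+1)$ of $L$ carefully in order to pin down the dimension of the solution space.
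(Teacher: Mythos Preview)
Your proof is correct. Up through the reflection identity
\[
u(x)+2x\,u'(x)=(-1)^s(2s+1)\,u(2-x)
\]
your argument is essentially identical to the paper's: both differentiate Equation~\ref{Ceq}, observe that the right-hand side is even in $x$, use $B\neq 0$ to show $u(1+t)$ and $u(1-t)$ are coprime, and match leading coefficients to extract the reflection identity.

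The divergence comes at the next step. The paper stops at this difference-differential equation and analyzes the operator $u\mapsto u(z)+2z\,u'(z)-(2s+1)(-1)^s u(2-z)$ directly on polynomials of degree~$\leq s$: in the monomial basis it is upper triangular with diagonal entries $1+2j-(-1)^{j+s}(2s+1)$, vanishing only at $j=s$, hence of rank~$s$. You instead differentiate the reflection identity and eliminate $u(2-x)$ and $u'(2-x)$ to obtain the pure ODE $x(2-x)u''+(3-2x)u'+s(s+1)u=0$, and then carry out the rank computation there. Your route is one step longer but has the merit of being exactly parallel to the proof of Proposition~\ref{uniqueCprop}; the paper's route is more direct but requires handling the reflection term in the operator. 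You also make existence explicit via $\widehat{C}_{2s+1,2}$, whereas the paper is content with the one-dimensional kernel, existence being clear from the surrounding context.
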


\begin{proof}

As before, since $B \neq 0$, $u(x+1)$ and
$v(x^2)$
 are coprime.
Differentiating, we obtain

\begin{equation}
\label{Cdeq}
u(x+1) \cdot (u(x+1) + 2(x+1) u'(x+1)) = v(x^2) + 2
x^2 v'(x^2) = (v + 2 x \cdot v') \circ P_2
\end{equation}

The zeros of the righthand side of Equation~\ref{Cdeq} come in
$\pm$-pairs.  We claim that for each such pair one is a root of
$u(x+1)$ and the other is a root of $(u(x+1) + 2(x+1) u(x+1))$.
Indeed, it cannot happen that $u(c+1) = 0$ and $u(-c+1) =0$ for
Equation~\ref{Ceq} would yield $c u_2(c^2) = B = -c u_2((-c)^2) =
- c u_2(c^2)$ contrary to the fact that $B \neq 0$.  Thus, at most
one of each pair of roots of the righthand side is also a root of
$u(x+1)$. As the degree of the righthand side of
Equation~\ref{Cdeq} is twice that of $u$, it follows that at least
one root from each pair must be a root of $u(x+1)$.   Matching
leading coefficients, we conclude:

\begin{equation}
\label{Cdeq2} (-1)^s (2s + 1) u(x+1) = (u(-x+1) + 2(-x+1)
u'(-x+1))
\end{equation}

Substituting $z := -x + 1$, we see that $u$ satisfies the
following difference-differential equation:

\begin{equation}
\label{Cddeq} 0 = u(z) + 2z u'(z) - (2s+1)(-1)^su(2-z)
\end{equation}

The difference-differential operator in Equation~\ref{Cddeq} is a
linear operator on the space of degree $s$ polynomials and it is given by an upper triangular matrix relative to the standard monomial basis.
The entries along the main diagonal are $$1 + 2j -
(-1)^{j+s}(2s +1)$$  Hence, the rank of this operator is exactly
$s$ implying that there is a unique monic solution.
\end{proof}

This concludes the proof of Theorem~\ref{rittscaltran}.

\subsection{Proof of Theorem~\ref{cruciallemma} and related results}
 We collect some observations about Ritt swaps towards and around the proof of Theorem~\ref{cruciallemma}.

\begin{Rk}
\label{inoutdegForSwap}
 It is clear from the definitions that if some decomposition may be obtained from $\vec{f}$ by a Ritt
swap at $i$, then one of the following must happen: \begin{itemize}
\item both $f_i$ and $f_{i+1}$ are linearly related to monomials;
\item both $f_i$ and $f_{i+1}$ are linearly related to odd-degree Chebyshev polynomials;
\item $f_i$ is linearly related to a monomial $P_p$ and $f_{i+1}$ is linearly related to a Ritt polynomial whose out-degree is a multiple of $p$; or
\item $f_{i+1}$ is linearly related to a monomial $P_p$ and $f_{i}$ is linearly related to a Ritt polynomial whose in-degree is a multiple of $p$.
\end{itemize}
\end{Rk}

\begin{Rk}
\label{Jnoswapleft}
The in-degree of a type \textsf{A} swappable $f$ is $1$, in the sense that any Ritt polynomial linearly related to $f$ has
in-degree $1$. Remark~\ref{inoutdegForSwap} then implies that if $f_i$ is type \textsf{A}, no decomposition may be obtained from $\vec{f}$ by a Ritt swap
at $i$.
\end{Rk}

We now prove some useful consequences of Theorem~\ref{transrelate}, including Theorem~\ref{cruciallemma}.
We begin with a few slightly more comprehensive results about Chebyshev polynomials.

\begin{corollary}\label{oneCRk}
If $L$ and $M$ are linear, $p \geq 3$ is prime, and $L \circ C_p \circ M = C_p$, then both $M$ and $L$ are $(\cdot \pm 1)$.\end{corollary}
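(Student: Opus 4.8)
The plan is to reduce the statement to Theorem~\ref{transrelate} by splitting $L$ and $M$ into their scaling and translation parts. Write $L = (\cdot \alpha) \circ (+\beta/\alpha)$ and $M = (+\delta) \circ (\cdot \gamma)$ for nonzero scalars $\alpha, \gamma$ and scalars $\beta, \delta$, and set $h := (+\beta/\alpha) \circ C_p \circ (+\delta)$. Since translations fix the leading term, $h$ is monic and translation related to $C_p$, and moreover $h = (\cdot \alpha^{-1}) \circ C_p \circ (\cdot \gamma^{-1})$, so by Remark~\ref{scaleast} we get $h = \gamma^{-1} \ast C_p = C_{p,\gamma^{-1}}$. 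In particular $h$ has in-degree $2$ by Lemma~\ref{seehatinout}, exactly like $C_p = C_{p,1}$. (This step is just the content of the proof of Lemma~\ref{rittscaltran}, specialized to $f = g = C_p$.)

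First I would apply Theorem~\ref{transrelate} to the translation relation $(+\beta/\alpha) \circ C_p \circ (+\delta) = h$ and eliminate every possibility except $\beta = \delta = 0$. If $\beta \neq 0$, the theorem forces either $C_p = \widehat{C}_{p',\mu}$ for some odd prime $p'$ and scalar $\mu$, or $h = \widehat{C}_{p',\lambda}$ for some odd prime $p'$ and scalar $\lambda$; both are impossible because $\widehat{C}_{p',\cdot}$ has in-degree $1$ by Lemma~\ref{seehatinout}, whereas $C_p$ and $h$ both have in-degree $2$. Hence $\beta = 0$. If then $\delta \neq 0$, Theorem~\ref{transrelate} (the $B = 0$ branch) forces $C_p$ to be a type \textsf{A} Ritt polynomial; but a type \textsf{A} Ritt polynomial has in-degree $1$ (Remark~\ref{Jnoswapleft}), a contradiction. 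So $\delta = 0$ as well, and both $L = (\cdot \alpha)$ and $M = (\cdot \gamma)$ are scalings.

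It then remains to treat the purely multiplicative case $(\cdot \alpha) \circ C_p \circ (\cdot \gamma) = C_p$. Since $C_p$ is monic, Remark~\ref{scaleast} gives $\alpha = \gamma^{-p}$ and $C_p = \gamma \ast C_p$; since $C_p$ has in-degree $2$, Remark~\ref{astRitt} forces $\gamma^2 = 1$, so $\gamma = \pm 1$ and $\alpha = \gamma^{-p} = \pm 1$ (using that $p$ is odd). Thus $L$ and $M$ are each $(\cdot \pm 1)$, as claimed. The only point requiring care is the bookkeeping of which polynomial occupies the "$f$" and which the "$g$" slot of Theorem~\ref{transrelate}, together with the repeated use of the fact that scaling relations preserve in-degree; once those are fixed, the argument is a short case split driven entirely by the in-degree invariant.
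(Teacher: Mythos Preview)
Your proof is correct and follows essentially the same route as the paper's: split $L$ and $M$ into scaling and translation parts, observe that the intermediate polynomial $h$ is a $C_{p,\lambda}$ with in-degree $2$, and use Theorem~\ref{transrelate} together with in-degree considerations to kill the translation parts. The only notable difference is the final step: the paper concludes $\mu=\pm 1$ from the fact that all complex roots of $C_p$ are real (so $C_{p,\mu}=C_p$ forces $\mu$ real, hence $\pm 1$), whereas you invoke Remark~\ref{astRitt} directly via the in-degree of $C_p$; both are valid and equally short.
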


\begin{proof}
As in the proof of Lemma~\ref{rittscaltran}, there are scalars $A$, $B$, $\lambda$ and $\mu$ such that
$L = (\cdot \lambda) \circ (+B)$ and $M = (+A) \circ (\cdot \mu)$.
Let $h := (+B) \circ C_p \circ (+A) = (\cdot \frac{1}{\lambda}) \circ C_p \circ (\cdot \frac{1}{\mu})$.
By the first equality, $h$ is monic, so $h = \frac{1}{\mu} \ast C_p = C_{p,\frac{1}{\mu}} $ is a Ritt polynomial. Since $C_p$ has in-degree at least $2$, so does $h$. By Theorem~\ref{transrelate}, $h$ cannot be non-trivially translation related to another Ritt polynomial $C_p$ with in-degree $2$, so $A=B=0$ and $h = C_p$. Since all complex roots of $C_p$ are real, $C_p \neq C_{p,\frac{1}{\mu}} $ unless $\mu = \pm 1$.
\end{proof}

\begin{lemma} \label{seeswaplem}
For any Ritt swap involving a type \textsf{C} swappable, the underlying basic Ritt identity is either of the form $C_p \circ C_q = C_q \circ C_p$ for odd prime $p$ and $q$, or of the form $P_2 \circ C_p = \widehat{C}_p \circ P_2$ for some odd prime $p$. In particular, if a type \textsf{C} swappable $f_i$ ``becomes'' $g_j$ through Ritt swaps, in the sense of Remark~\ref{costumes}, then $g_j$ is also a type \textsf{C} swappable. \end{lemma}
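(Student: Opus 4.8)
The plan is to run through the three kinds of basic Ritt identity of Definition~\ref{specialpoly} and eliminate all but the two asserted shapes. The main tool will be the classification of the Ritt polynomials that are type \textsf{C} swappable: by Theorem~\ref{transrelate}, Lemma~\ref{rittscaltran} and Lemma~\ref{seehatinout}, these are exactly the $C_{p,\lambda}$, which have in-degree $2$ and out-degree $1$, and the $\widehat{C}_{p,\lambda}$, which have in-degree $1$ and out-degree $2$ (for $p$ an odd prime). Since type \textsf{C} swappability is by definition an invariant of linear relatedness, and a monomial $P_r$ is a Ritt polynomial which is manifestly neither a $C_{p,\lambda}$ nor a $\widehat{C}_{p,\lambda}$, no polynomial linearly related to a monomial is type \textsf{C} swappable; this remark will be used repeatedly.

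First I would fix the data of a Ritt swap at $i$: linear $L,M,N$ so that $b := L^{-1}\circ f_{i+1}\circ M$ and $a := M^{-1}\circ f_i\circ N$ are the two factors of a basic Ritt identity (read in one direction or the other), with $a$ and $b$ Ritt polynomials. If $f_i$ — say — is type \textsf{C} swappable then so is $a$, hence $a$ equals $C_{p,\lambda}$ or $\widehat{C}_{p,\lambda}$ for some odd prime $p$; symmetrically if $f_{i+1}$ is the type \textsf{C} swappable one. I would then split on the kind of identity, also reading off the possibilities from Remark~\ref{inoutdegForSwap}. A first-kind identity $P_r\circ P_s = P_s\circ P_r$ has both factors monomials, so neither $f_i$ nor $f_{i+1}$ is type \textsf{C} swappable: this case does not occur. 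A second-kind identity $C_p\circ C_q = C_q\circ C_p$ with $p\neq q$ odd primes already has the first asserted shape. For a third-kind identity $P_r\circ c = c'\circ P_r$, with $c = x^k u(x^{\ell r})^n$ and $c' = x^k u(x^\ell)^{rn}$, the monomial $P_r$ is not type \textsf{C} swappable, so the type \textsf{C} swappable factor must equal $c$ or $c'$; but the in-degree of $c$ is divisible by $r$ and the out-degree of $c'$ is divisible by $rn$, and matching these against the in/out-degrees of $C_{p,\lambda}$ and $\widehat{C}_{p,\lambda}$ forces $r=2$ together with either $c = C_{p,\lambda}$ (whence $c' = \widehat{C}_{p,\mu}$) or $c' = \widehat{C}_{p,\lambda}$ (whence $c = C_{p,\mu}$), where the partner factor is identified using Proposition~\ref{Cprop} and the scaling calculus of Proposition~\ref{scaledbasic}. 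Thus the identity has the second asserted shape $P_2\circ C_p = \widehat{C}_p\circ P_2$. This in/out-degree bookkeeping — in particular handling both directions in which a third-kind identity may be applied and invoking the scaling lemmas at the right moment — is the step I expect to be the crux.

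For the last assertion I would induct on the length of the sequence of Ritt swaps, tracking a single factor through it. A factor not involved in a given swap is unchanged. If a type \textsf{C} swappable factor is involved in a swap, it is not a monomial, so by the first part the underlying identity has one of the two shapes just established: in a second-kind swap the factor merely changes place while remaining linearly related to an odd-prime-degree Chebyshev polynomial; in the $r=2$ third-kind swap it passes, in the sense of Remark~\ref{costumes}, between a polynomial linearly related to $C_{p,\lambda}$ and one linearly related to $\widehat{C}_{p,\mu}$, and each of these is type \textsf{C} swappable since $\widehat{C}_{p,\mu}$ is linearly related to $C_p$ by Definition~\ref{see-hat-def}; the monomial partner stays a monomial, consistently with its never having been type \textsf{C} swappable. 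Hence each swap preserves the type \textsf{C} swappability of the tracked factor, so if a type \textsf{C} swappable $f_i$ becomes $g_j$ through a sequence of Ritt swaps, then $g_j$ is type \textsf{C} swappable.
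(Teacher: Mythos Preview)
Your proposal is correct and follows essentially the same approach as the paper: classify the type \textsf{C} Ritt polynomials as $C_{p,\lambda}$ and $\widehat{C}_{p,\lambda}$ via Theorem~\ref{transrelate}, read off their in/out-degrees from Lemma~\ref{seehatinout}, and use these to force the prime in any third-kind identity to be $2$. The paper's proof is terser, leaving the ``in particular'' clause entirely implicit and not bothering to separately dismiss the first-kind identity, whereas you spell out the case-split over the three identity types, handle both orientations of the third-kind identity explicitly, and give the tracking induction for the second sentence; but the underlying argument is the same.
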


\begin{proof}
By Theorem~\ref{transrelate}, $C_p$ (for odd prime $p$) is not linearly related to any Ritt polynomials except $C_{p, \lambda}$ and $\widehat{C}_{p, \lambda}$. By Lemma~\ref{seehatinout}, all these two have in- and out-degrees $1$ and $2$, so they can only participate in basic Ritt identities of the third kind with the quadratic $P_2$. It is easy to obtain the identity $P_2 \circ C_p = \widehat{C}_p \circ P_2$ from $C_2 \circ C_p = C_p \circ C_2$ and the definition of $\widehat{C}_{p}$ (see the proof of Proposition~\ref{Cprop}).\end{proof}

\begin{lemma} \label{ccluster-unique}
If $A$ and $B$ are linear, $n \neq 2$, and $B \circ C_n \circ A = C_n$, then each of $A$ and $B$ are scalings by $\pm 1$. \end{lemma}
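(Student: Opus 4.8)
The plan is to argue intrinsically, using the ramification (critical point/critical value) data of $C_n$ rather than trying to reduce to linear relatedness of Ritt polynomials: for composite $n$ the polynomial $C_n$ is \emph{decomposable}, so Theorem~\ref{transrelate} (and hence Corollary~\ref{oneCRk}, which is the prime case) does not apply directly. Note first that by the convention following Definition~\ref{defCheb} the index $n$ is at least two, so the hypothesis $n\neq 2$ forces $n\geq 3$; this is needed, since for $n=1$ (were it allowed) $B\circ C_1\circ A=C_1$ would only say $B\circ A=\id$.

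The first step is to read off the critical data of $C_n$ from the defining functional equation $C_n\circ\pi=\pi\circ P_n$ with $\pi(x)=x+\tfrac1x$. Differentiating, one sees that for every $2n$-th root of unity $\zeta$ with $\zeta^2\neq 1$ the point $\pi(\zeta)$ is a critical point of $C_n$, with critical value $C_n(\pi(\zeta))=\pi(\zeta^n)\in\{2,-2\}$ (because $\zeta^n=\pm1$). Since the fibres of $\pi$ are exactly the pairs $\{t,1/t\}$, these $\pi(\zeta)$ give $n-1$ distinct points, which — $C_n'$ having degree $n-1$ — must be all of the (necessarily simple) critical points of $C_n$; and because $n\geq 3$ one finds $\zeta$ with $\zeta^n=1$ and $\zeta$ with $\zeta^n=-1$, so both values $2$ and $-2$ actually occur. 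Hence the set of finite critical values of $C_n$ is precisely $\{2,-2\}$.

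The second step determines $B$, then $A$. Rewrite the hypothesis as $C_n\circ A=B^{-1}\circ C_n$. Since $A$ and $B$ are affine automorphisms of $\AA^1$, the critical points of $C_n\circ A$ are $A^{-1}$ of those of $C_n$, so the critical \emph{values} of $C_n\circ A$ coincide with those of $C_n$, namely $\{2,-2\}$; likewise the critical points of $B^{-1}\circ C_n$ are exactly those of $C_n$, so its critical values form $B^{-1}(\{2,-2\})$. Therefore $B^{-1}$ permutes $\{2,-2\}$, so $B^{-1}$ is either $\id$ or $x\mapsto -x$; in particular $B$ is a scaling by $\pm1$, and $C_n\circ A=\varepsilon\,C_n$ with $\varepsilon\in\{1,-1\}$. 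Now write $A(x)=ax+b$ and use $C_n(x)=x^n+0\cdot x^{n-1}-n\,x^{n-2}+\cdots$ (the vanishing of the $x^{n-1}$-coefficient and the value $-n$ of the $x^{n-2}$-coefficient are immediate from the standard Chebyshev expansion, or from the functional equation). Comparing the coefficient of $x^{n-1}$ in $C_n(ax+b)=\varepsilon\,C_n(x)$ gives $n a^{n-1}b=0$, hence $b=0$; comparing then the coefficients of $x^n$ and $x^{n-2}$ gives $a^n=\varepsilon=a^{n-2}$, so $a^2=1$ and $a=\pm1$. Thus $A$ is also a scaling by $\pm1$, completing the proof.

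I do not expect a genuine obstacle; the only points that require a little care are (i) the degree count showing that the $\pi(\zeta)$ exhaust the critical points, so that $\{2,-2\}$ really is the \emph{entire} critical-value set (which is where $n\geq 3$ enters, to guarantee both values are attained), and (ii) remembering to recover $A$ by a second coefficient comparison after $B$ has been pinned down, rather than stopping once $B$ is known.
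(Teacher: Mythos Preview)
Your argument is correct and genuinely different from the paper's.  The paper factors $C_n$ as $C_{p_k}\circ\cdots\circ C_{p_1}$ over the prime divisors of $n$, observes that $(B\circ C_{p_k},\ldots,C_{p_1}\circ A)$ and $(C_{p_k},\ldots,C_{p_1})$ are linearly equivalent decompositions, and then inducts right-to-left on the linear witnesses: for an odd prime factor it invokes Corollary~\ref{oneCRk} (the indecomposable case), and when all prime factors are $2$ it uses the explicit description of linear self-relations of $C_2$ from Remark~\ref{linrelquadrk}.  Your route is more self-contained and elementary: you read off the critical-value set $\{2,-2\}$ of $C_n$ directly from the functional equation, use it to force $B=(\cdot\,\pm1)$, and then finish $A$ by two coefficient comparisons.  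Your proof avoids the decomposition machinery entirely and would work in isolation; the paper's proof, on the other hand, exercises exactly the linear-equivalence framework that the surrounding section is building, which is presumably why it was chosen there.
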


\begin{proof}
Let $p_k, \ldots, p_1$ be the prime factors of $n$, with repetitions, with $p_k, \ldots, p_m$ equal to $2$ and the rest odd. Now $( B \circ C_{p_k}, C_{p_{k-1}}, \ldots, C_{p_{2}}, C_{p_1} \circ A)$ must be linearly equivalent to $( C_{p_k}, C_{p_{k-1}}, \ldots, C_{p_{2}}, C_{p_1})$. Let $L_{k-1}, \ldots, , L_1$ witness this. Induct right-to-left.

If any $p_i$ are odd, then $p_1$ is odd, so in $L_1^{-1} \circ C_{p_1} \circ A = C_{p_1}$ we must have $L_1 = A = ( \cdot (\pm 1))$ by Corollary~\ref{oneCRk}. Then at each step, $L_i^{-1} \circ C_p \circ ( \cdot (\pm 1))$ forces $L_i = ( \cdot (\pm 1))$ (even for $p=2$), and finally at the last step, $B = ( \cdot (\pm 1))$.

If all $p_i = 2$, then $k \geq 2$. From $L_1^{-1} \circ C_2 \circ A = C_{2}$ we get (using Remark~\ref{linrelquadrk}) that $A = (\cdot \lambda)$ is a scaling and $L_1^{-1}(x) := \frac{1}{\lambda^2} x + \frac{2}{\lambda^2} - 2$.
From the next step (since $k \geq 2$, there is a next step), we see that $L_1$ must also be a scaling, so $\lambda = \pm 1$ and $A$ is as desired, and $L_1 = \id$. Now inducting, at each step $L_i^{-1} \circ C_2 = C_2$ makes all $L_i = \id$, and at the last step $B= \id$.
\end{proof}

The next lemma is something of a converse to Proposition~\ref{scaledbasic}.

\begin{lemma}
\label{1mono} \label{scalingbasic2}
 If $a$ and $b$ are Ritt polynomial and not both type \textsf{C};
$L$, $M$, and $N$ are linear; and $( L \circ b \circ M^{-1})
\circ (M \circ a \circ N^{-1}) = \tilde{d} \circ \tilde{c}$ is a
basic Ritt identity, then $L$, $M$, and $N$ are scalings.

 Furthermore, there are Ritt polynomial $c$ and $d$ such that $b \circ a = d \circ c$ is
another basic Ritt identity, which is
 linearly equivalent to the first one,
 and in particular $(\tilde{d}, \tilde{c})$ is linearly equivalent to $(d,c)$.
\end{lemma}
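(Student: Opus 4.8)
The plan is to name the factors of the basic Ritt identity: set $b' := L \circ b \circ M^{-1}$ and $a' := M \circ a \circ N^{-1}$, so that $b' \circ a' = \tilde d \circ \tilde c$ is by hypothesis one of the basic Ritt identities; in particular $a',b',\tilde c,\tilde d$ are Ritt polynomials, $a$ is linearly related to $a'$, and $b$ is linearly related to $b'$. The first step is to eliminate the Chebyshev identity $C_p\circ C_q = C_q\circ C_p$: if $b'\circ a' = \tilde d\circ\tilde c$ were of this shape, then $a$ would be a Ritt polynomial linearly related to a Chebyshev polynomial, hence by Theorem~\ref{transrelate} together with Lemma~\ref{rittscaltran} one of the type \textsf{C} Ritt polynomials $C_{q,\lambda}$ or $\widehat C_{q,\lambda}$, so $a$ would be type \textsf{C}; the same reasoning applies to $b$, contradicting the assumption that $a$ and $b$ are not both type \textsf{C}. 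Hence the basic Ritt identity is $P_p\circ P_q = P_q\circ P_p$ or one of the third kind, and in every remaining configuration at least one of $a'$, $b'$ is a monomial $P_p$ with $p$ prime (so $p\ge 2$).

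Second, I would show that a monomial factor pins down its two neighbouring linear maps as scalings. Suppose $b' = P_p$ (the cases $a' = P_p$ and ``both monomials'' are symmetric). Since $b$ is a Ritt polynomial linearly related to $P_p$, and $P_p$ is neither a type \textsf{A} nor a type \textsf{C} Ritt polynomial, Lemma~\ref{rittscaltran}, Theorem~\ref{transrelate}, and Remark~\ref{astRitt} (which gives $\lambda\ast P_p = P_p$ for every scalar $\lambda$) force $b = P_p$. Writing $M^{-1}(x) = \mu x + \tau$ and $L(x) = \alpha x + \beta$ and expanding $L\circ P_p\circ M^{-1} = P_p$, the vanishing of the coefficient of $x^{p-1}$ gives $\tau = 0$, so $M^{-1}$ and hence $M$ is a scaling; then the constant term gives $\beta = 0$, so $L$ is a scaling. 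This already finishes the identity $P_p\circ P_q = P_q\circ P_p$, where both factors are monomials, and produces two of the three scalings in the third-kind cases.

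The remaining point is the ``outer'' linear map on the non-monomial side, and here I would feed a one-sided translation relation into Theorem~\ref{transrelate}. In the configuration $b' = P_p$, $a' = x^k\cdot u(x^{\ell p})^n$ (with $M$ already known to be a scaling), matching leading coefficients in $a' = M\circ a\circ N^{-1}$ — using that $a$ and $a'$ are monic — rewrites this as $a' = (\mu\ast a)\circ(+A)$ for a suitable scalar $\mu$ and a scalar $A$ proportional to the translation part of $N^{-1}$. If $A\neq 0$, then Theorem~\ref{transrelate} (the $B = 0$ alternative) forces $a'$ to be a type \textsf{A} Ritt polynomial; but $a'$ has in-degree divisible by $p\ge 2$, whereas every type \textsf{A} swappable has in-degree $1$ by Remark~\ref{Jnoswapleft} — a contradiction, so $N$ is a scaling. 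In the symmetric configuration $a' = P_p$, $b' = x^k\cdot u(x^\ell)^{pn}$ (with $M$ and $N$ already scalings), the analogous bookkeeping rewrites $b' = L\circ b\circ M^{-1}$ as $b' = (+B)\circ(\mu'\ast b)$; evaluating at $0$ and using that every Ritt polynomial vanishes at the origin gives $B = 0$, so $L$ is a scaling. Thus $L$, $M$, $N$ are all scalings.

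For the ``furthermore'', once $L$, $M$, $N$ are scalings the monic normalisation forces $b$ and $a$ to be scalings $\mu\ast b'$ and $\mu'\ast a'$ of $b'$ and $a'$ with compatible scalars, so $b\circ a = L^{-1}\circ(\tilde d\circ\tilde c)\circ N$ is a scaling of the basic Ritt identity $b'\circ a' = \tilde d\circ\tilde c$. Since that identity is not $C_p\circ C_q = C_q\circ C_p$, Proposition~\ref{scaledbasic} produces Ritt polynomials $d$ and $c$ (themselves scalings of $\tilde d$ and $\tilde c$) with $b\circ a = d\circ c$ a basic Ritt identity, which is the original one up to the evident scaling identification of decompositions; in particular $(\tilde d,\tilde c)$ and $(d,c)$ are linearly equivalent. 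I expect the main obstacle to be the organisation of the case analysis in the third step: reliably identifying, in each configuration of the basic Ritt identity, which factor is the monomial, which of $L,M,N$ it settles directly, and which one-sided translation relation must be extracted — and in particular correctly dispatching the ``in-degree versus type \textsf{A}'' obstruction on one side and the ``vanishing at $0$ versus type \textsf{C}'' obstruction on the other.
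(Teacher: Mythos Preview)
Your proof is correct and follows essentially the same route as the paper's. Both arguments first rule out the Chebyshev identity (forcing one side to be a monomial), then use the rigidity of monomials under linear relations to pin down two of $L,M,N$ as scalings, and finally handle the third linear map on the non-monomial side via the type~\textsf{A} obstruction on one side and the ``no solutions with $B\neq 0 = A$'' observation (your vanishing-at-$0$ computation) on the other; the ``furthermore'' is dispatched in both cases by Proposition~\ref{scaledbasic}. Your write-up is simply more explicit where the paper compresses (e.g.\ the paper writes ``monomials are not translation related to any other Ritt polynomial'' where you expand the coefficient computation), and you index the case split by which of $a',b'$ is the monomial rather than which of $a,b$ is, but these are presentational differences only.
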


\begin{proof}
Since $a$ and $b$ are not both type \textsf{C}, one of them must be
(linearly related to, and therefore equal to) a monomial.

If $a$ is a monomial, then $M$ and $N$ must be scalings, since
monomials are not translation related to any other Ritt polynomial.
Since both $b$ and $L \circ b \circ M^{-1}$ are Ritt polynomial and $M$
is a scaling, $L$ must also be a scaling because the equation in
Problem~\ref{problemtrans} has no solutions with $B \neq 0 = A$.

If $b$ is a monomial, then $L$ and $M$ must be scalings. Since
both $a$ and $(M \circ a \circ N^{-1})$ must be Ritt polynomial, either $N$
is a scaling or both $a$ and $(M \circ a \circ N^{-1})$ must be
type \textsf{A}. However $( L \circ b \circ M^{-1} ,  M \circ a \circ N^{-1})$
is swappable, contradicting Remark~\ref{Jnoswapleft}.

The ``furthermore'' clause follows immediately from Proposition~\ref{scaledbasic}.
\end{proof}

We complete the proof of Theorem~\ref{cruciallemma}: If two decompositions $\vec{h}$ and $\vec{g}$
are both obtained from $\vec{f}$ by a Ritt swap at $i$, then $\vec{h}$ is linearly equivalent to $\vec{g}$.
\label{proofofcrucial}

\begin{proof} \textbf{This is the proof of Theorem~\ref{cruciallemma}.}
Let us collect and name the witnesses for the two Ritt swaps at $i$.

That is, for $j = 1$ or $2$ we have linear polynomials
$L_j$, $M_j$, and $N_j$ and Ritt polynomial polynomials $G_j$, $H_j$,
$\widehat{G}_j$ and $\widehat{H}_j$ such that
\begin{itemize}
\item $G_j = L_j^{-1} \circ f_{i+1} \circ M_j$
\item $H_j = M_j^{-1} \circ f_i \circ N_j$
\item $G_j \circ H_j = \widehat{H}_j \circ \widehat{G}_j$ is a basic Ritt identity
\item $g_{i+1} = L_1 \circ \widehat{H}_1$
\item $g_i = \widehat{G}_1 \circ N_1^{-1}$
\item $h_{i+1} = L_2 \circ \widehat{H}_2$, and
\item $h_{i} = \widehat{G}_2 \circ N_2^{-1}$.
\end{itemize}

We seek a linear $R$ for which
$$(L_1 \circ \widehat{H}_1) \circ R = (L_2 \circ \widehat{H}_2) \mbox{ and }
R^{-1} \circ (\widehat{G}_1 \circ N_1^{-1}) = \widehat{G}_2 \circ N_2^{-1}$$
Let
 $$L :=  L_2^{-1} \circ L_1\ \mbox{ and } M := M_2^{-1} \circ M_1\mbox{ and }N := N_2^{-1} \circ N_2$$
Then
 $$L \circ G_1 \circ M^{-1} = G_2 \mbox{ and } M \circ H_1 \circ N^{-1} = H_2$$
Applying $L_2$ to the left of the first equation below and $N_2^{-1}$ to the right of the second one shows that it is sufficient to find $R$ such that
$$(L \circ \widehat{H}_1) \circ R = \widehat{H}_2 \mbox{ and }
 R^{-1} \circ (\widehat{G}_1 \circ N^{-1}) = \widehat{G}_2$$
Recall that $G_j \circ H_j = \widehat{H}_j \circ \widehat{G}_j$ are basic Ritt identities, so the above equations are linear relations between Ritt polynomial polynomials.

We claim that $R = M^{-1}$ always works and  is always a scaling.
We consider separately the three cases that none, one, or both of $G_1$ and $H_1$
are monomials. Since $G_2$ is linearly related to $G_1$, $G_2$ is a monomial if and
only if $G_1$ is, and if both are monomials, then $G_1 = G_2$, and similarly for $H_i$.

\begin{itemize}
 \item[(none)] In this case, $\widehat{G}_i = G_i$ and $\widehat{H}_i = H_i$ are
Chebyshev polynomials of odd degree, since commuting Chebyshevs are the only
basic Ritt identity not involving any monomials. Then $R = M^{-1}$
works. (In fact, $L=M=N=(\cdot \pm 1)$ in this case, as Chebyshev polynomials are
not non-trivially linearly related to themselves except via $(-1) \ast C_p = C_p$.)

\item[(one)] This is done in Lemma~\ref{1mono}, with $b := G_1$, $a :=
H_1$ with one less assumption.

\item[(two)] In this case, $\widehat{G}_i = G_i$ and $\widehat{H}_i = H_i$
are monomials, since this is the only basic Ritt identity with two
monomials on one side. Then $R = M^{-1}$ works. (In fact, $L$,
$M$, and $N$ are scalings in this case, as monomials
are not non-trivially translation related to themselves.)
\end{itemize}
\end{proof}

The above proof does not use the hardest part of our analysis: it
suffices to know that Problem~\ref{problemtrans} has no solutions with
$A =0 \neq B$, and to have a characterization of solutions with
$B= 0 \neq A$, the type \textsf{A} Ritt polynomial polynomials. The full strength
of Theorem~\ref{transrelate} is used in the proof of
the fundamental Theorem~\ref{fundamentallemma}.

We end this section with a lemma closely resembling Lemma~\ref{scalingbasic2}.

\begin{lemma}
\label{neitherC-notranslation} \label{scalingbasic3}
 Suppose that $a$ and $b$ are Ritt polynomial and neither is type \textsf{C};
  $L$, $M$, $R$, and $S$ are linear; and
 $\tilde{b} := S \circ b \circ R$ and
 $\tilde{a} := M \circ a \circ L$ are Ritt polynomials; and
 $\tilde{b} \circ \tilde{a} = \tilde{d} \circ \tilde{c}$
 is a basic Ritt identity.

Then $L$, $M$, and $S$ are scalings; and there are scalars $A$ and $\lambda$,
 and a Ritt polynomial $\hat{b} := b \circ (+A)$
such that $b \circ R = \lambda \ast \hat{b}$; and for some $d$ and $c$,
$\hat{b} \circ a = d \circ c$ is a basic Ritt identity. Unless $b$ is
type \textsf{A}, $A = 0$ and $\hat{b} = b$.
\end{lemma}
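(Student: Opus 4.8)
The plan is to run the same case analysis as in the proof of Lemma~\ref{1mono} (the ``one monomial'' case of Lemma~\ref{scalingbasic2}), now allowing the inner twist $R$ of $b$ to carry a translation, and to feed the conclusion into Proposition~\ref{scaledbasic} at the end. First I would record the structural input. Being type \textsf{C} swappable is invariant under linear relatedness, so neither $\tilde a$ nor $\tilde b$ is type \textsf{C}. Scanning the three families of basic Ritt identities in Definition~\ref{specialpoly}, the only ones whose left side $\tilde b \circ \tilde a$ has neither factor type \textsf{C} are $P_p \circ P_q$ and the two orientations of the third kind, $P_p \circ (x^k u(x^{\ell p})^n)$ and $(x^k u(x^{\ell})^{pn}) \circ P_p$, in which the non-monomial factor is not a Chebyshev polynomial; the identities $C_p \circ C_q$, $P_2 \circ C_p$ and $\widehat{C}_p \circ P_2$ are all excluded. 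Consequently at least one of $\tilde a$, $\tilde b$ is a monomial. Moreover, by Remark~\ref{inoutdegForSwap} the input-side non-monomial factor of a third-kind identity has in-degree divisible by $p > 1$, so by Remark~\ref{Jnoswapleft} it cannot be type \textsf{A}; hence the only place a type \textsf{A} polynomial can occur among $\tilde a, \tilde b$ is as $\tilde b$ in the case $\tilde a = P_p$.

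Next I would dispose of the ``monomial'' positions. If $\tilde a$ is a monomial, then $a$ is linearly related to a monomial; by Lemma~\ref{rittscaltran} there is a Ritt polynomial translation related to $a$ and scaling related to $\tilde a$, and since a monomial is neither type \textsf{A} nor type \textsf{C}, Theorem~\ref{transrelate} forces that translation to be trivial, so $a$ is itself that monomial. An elementary degree computation then shows $M$ and $L$ are scalings. The same argument applied to $b$ shows that whenever $\tilde b$ is a monomial, $b$ equals that monomial and $S$, $R$ are scalings, and in that subcase we take $A = 0$ and $\hat b = b$; and it shows that in the subcase $\tilde b = P_p$, $\tilde a$ a non-monomial, the polynomial $\tilde a$ (being neither type \textsf{A}, by the in-degree remark above, nor type \textsf{C}) is scaling related to $a$, so again $M, L$ are scalings.

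The remaining case is $\tilde a = P_p$ with $\tilde b$ a non-monomial Ritt polynomial of out-degree divisible by $p$. Then $a = P_p$ and $L, M$ are scalings by the previous paragraph. For $b$, apply Lemma~\ref{rittscaltran} to obtain a Ritt polynomial translation related to $b$ and scaling related to $\tilde b$. If $b$ is not type \textsf{A} (it is not type \textsf{C} by hypothesis), Theorem~\ref{transrelate} forces that translation to be trivial, so $b$ is scaling related to $\tilde b$ and we take $A = 0$, $\hat b = b$; if $b$ is type \textsf{A}, then the $B = 0$ clause of Theorem~\ref{transrelate} says the translation relating $b$ to $\tilde b$ acts only on the input, and we set $\hat b := b \circ (+A)$ with that translation, so $\hat b$ is scaling related to $\tilde b$. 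In all subcases $S$ is a scaling and, reading off the scaling relation and normalizing via Remarks~\ref{scaleast} and~\ref{astRitt}, $b \circ R = \lambda \ast \hat b$ for a suitable scalar $\lambda$. Finally, since $\hat b$ is scaling related to $\tilde b$ and $a$ is scaling related to $\tilde a$, and neither $\tilde a$ nor $\tilde b$ is a Chebyshev polynomial, Proposition~\ref{scaledbasic} scales the basic Ritt identity $\tilde b \circ \tilde a = \tilde d \circ \tilde c$ to another basic Ritt identity; choosing the scalings (via Remark~\ref{scaleast}) so that the scaled first two factors are exactly $\hat b$ and $a$, this has the form $\hat b \circ a = d \circ c$, as required.

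The main obstacle I expect is the bookkeeping rather than any new idea: keeping straight which of $L, M, S$ and which part of $R$ is forced to be a scaling as opposed to a translation — this is governed by the monomial / type \textsf{A} / type \textsf{C} trichotomy together with the in-degree restriction on input-side factors of third-kind identities — and verifying that all the monic normalizations introduced by $\ast$ line up so that the displayed equation $b \circ R = \lambda \ast \hat b$ holds on the nose. This is an intricate but routine elaboration of the proof of Lemma~\ref{1mono}, using the full strength of Theorem~\ref{transrelate} only to pin down the type \textsf{A} case.
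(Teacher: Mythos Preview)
Your argument is correct, but it takes a longer route than the paper's. You mimic Lemma~\ref{1mono} and split into cases according to which of $\tilde a$, $\tilde b$ is a monomial, using the in-degree observation from Remark~\ref{Jnoswapleft} only to rule out type~\textsf{A} on the input side of a third-kind identity. The paper dispenses with the monomial case split entirely: from the single hypothesis that neither $a$ nor $b$ is type~\textsf{C}, the $B\neq 0$ clause of Theorem~\ref{transrelate} forces $M$ and $S$ to be scalings in one stroke; then, if $L$ carried a translation, $a$ and $\tilde a$ would be type~\textsf{A}, contradicting Remark~\ref{Jnoswapleft} since $\tilde a$ sits on the right of a basic Ritt identity, so $L$ is a scaling too. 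After that the paper simply writes $R=(+A)\circ(\cdot\mu)$, sets $\hat b=b\circ(+A)$, observes $\tilde b=\mu\ast\hat b$, and invokes Proposition~\ref{scaledbasic} exactly as you do.

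What your approach buys is an explicit identification of which factor is the monomial at each step, which some readers may find clarifying; what the paper's approach buys is brevity and a uniform treatment that never needs to name the monomial. Either way the bookkeeping you flag as the main obstacle is handled by the same two facts: Theorem~\ref{transrelate} pins down which linear pieces must be scalings, and Remark~\ref{Jnoswapleft} excludes type~\textsf{A} on the right.
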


\begin{proof}
Since neither $a$ nor $b$ is type \textsf{C}, $M$ and $S$ must be scalings.
If $L$ is not a scaling, then $a$ and $\tilde{a}$ must be type \textsf{A},
but this contradicts Remark~\ref{Jnoswapleft}. Thus, $L$ is a scaling, say, by $\lambda$,
and $\tilde{a} = \lambda \ast a$.

Write $C = (+A) \circ (\cdot \mu)$
for scalars $A$ and $\mu$.
Then $\hat{b} := b \circ (+A)$ is a monic polynomial
 scaling related to the Ritt polynomial $\tilde{b}$,
so it is itself Ritt polynomial. Thus if $A \neq 0$, then $b$ is type \textsf{A}, and in any case
$\tilde{b} = \mu \ast \hat{b}$.

Thus $(\mu \ast \hat{b}) \circ (\lambda \ast a) = \tilde{d} \circ
\tilde{c}$ is a basic Ritt identity. By Proposition~\ref{scaledbasic},
there are $\eta$ and $\nu$ such that
$ \hat{b} \circ a = (\eta \ast \tilde{d}) \circ (\nu \ast \tilde{c})$
is a basic Ritt identity.
\end{proof}

\section{Clusters}
\label{clustersec} \label{sect42}

In this section, we describe a natural and nearly canonical way to break a decomposition of a disintegrated polynomial into \emph{clusters} in a way that controls the linear factors floating amongst Ritt polynomials and makes it easy to see what other decompositions can be obtained via sequences of Ritt swaps.

One of our two kinds of clusters is the same as one of the two kinds of blocks in \cite{MZ}, but our \textsf{C}-free clusters are nothing like their monomial blocks.

Our first use of these clusters is to prove the fundamental Theorem~\ref{fundamentallemma} for the Ritt monoid action, that $t_i t_{i+1} t_i \star \vec{f}$ is defined if and only of $t_{i+1} t_i t_{i+1} \star \vec{f}$ is defined. While it follows immediately from \cite{MZ} that the two are equal when defined, it is not clear to us whether our stronger result follows from their work.

 We end up showing that any polynomial all of whose indecomposable factors are swappable can be written almost uniquely (not up to permutations!) as a composition of lower-degree polynomials (compositions of ``clusters'') each of which is linearly related to a (possibly decomposable) Chebyshev polynomial, or to a composition of several Ritt polynomials, none of them type \textsf{C}.

 We do not work out the straightforward generalization that includes decompositions with unswappable factors in this analysis by allowing a third kind of cluster, a single unswappable indecomposable: our case-outs are unwieldy enough as it is, and Corollary~\ref{unswcrack2} already takes complete care characterizing $(f,g)$-skew-invariant curves when one (and, therefore, both) polynomials has at least one unswappable factor.

 The polynomial then admits a decomposition where linear factors are collected outside these clusters, Ritt swaps can only take place inside these clusters, and can always be witnessed (with respect to this decomposition) by identity linear factors - almost. Factors of degree two are the only source of ambiguity in choosing these clusters, and one of them is always involved in any Ritt swap between two clusters.

 Further refining our analysis of the linear factors between clusters, we show that, when the clusters are maximal enough, only one quadratic may cross the boundary between two clusters, and then only in one direction.

 Throughout this section we constantly use the results of the Section~\ref{newsectionlinrel} without explicit reference.

 \begin{Def} \label{intervalindex}
 Given a sequence $\vec{f} := (f_k, \ldots, f_1)$ of polynomials and integers $k \geq b \gneq a \geq 0$, we use the following notations
 $$\vec{f}_{[b,a)} := (f_b, f_{b-1}, \ldots, f_{a+1}) \hspace{1cm}
 \vec{f}_{[b,a)}^\circ := f_b \circ f_{b-1} \circ \ldots \circ f_{a+1} \hspace{1cm}
 \vec{f}^\circ := f_k \circ f_{k-1} \circ \ldots \circ f_1$$
 \end{Def}

\begin{Def} \label{one-cluster-def}
 Let $\vec{f}$ be a decomposition of a polynomial $f$. \begin{itemize}
 \item If $ B \circ \vec{f}_{[b,a)}^\circ \circ A =  C_n $ is a Chebyshev polynomial for some integer $n$ that is not a power of $2$, and some linear $A$ and $B$, then $\vec{f}_{[b,a)}$ is a \emph{\textsf{C} cluster}.
 \item If $\vec{f}_{[b,a)}$ is linearly equivalent to $(B \circ h_b, h_{b-1}, \ldots,  h_{a+2}, h_{a+1} \circ A)$ for some linear $A$ and $B$ and some Ritt polynomials $h_i$ none of which are type \textsf{C}, then $\vec{f}_{[b,a)}$ is an \emph{\textsf{C}-free cluster}.
 \end{itemize}
In either case, $\vec{f}_{[b,a)}$ is a \emph{cluster} of $\vec{f}$.

  A \emph{preclustering} of a decomposition $\vec{f}$ is a sequence $k = a_r > a_{r-1} > \ldots > a_1 > a_0 =0$
   such that $\vec{f}_{[a_j, a_{j-1})}$ is a cluster for each $j$. We say that $i$ is \emph{a cluster boundary of $\vec{a}$} if $i = a_j$ for some $j$.
\end{Def}

\begin{Def}
If $k = a_r > a_{r-1} > \ldots > a_1 > a_0 =0$ is a preclustering of a decomposition $\vec{f}$, the data  $(h_k, \ldots, h_1; L_{k}, L_{k-1}, \ldots, L_0)$ is a \emph{cleanup} of this preclustering if \begin{enumerate}
 \item $(L_{k} \circ h_k \circ L_{k-1}, h_{k-1}\circ L_{k-2}, \ldots, h_1 \circ L_0)$ is linearly equivalent to $\vec{f}$;
 \item all $L_i$ are linear, and $L_i = \id$ except when $i$ is a cluster boundary of $\vec{a}$;
 \item inside \textsf{C} clusters (that is, whenever $\vec{f}_{[a_j, a_{j-1})}$ is a \textsf{C} cluster and $a_j \geq i > a_{j-1}$) $h_i = C_{\deg(f_i)}$ are Chebyshev polynomials;
 \item inside \textsf{C}-free clusters (that is, whenever $\vec{f}_{[a_j, a_{j-1})}$ is a \textsf{C}-free cluster and $a_j \geq i > a_{j-1}$) $h_i$ are Ritt polynomials;
 \item the linear factor $L_{a_{j}}$ to the right of any \textsf{C}-free cluster $\vec{f}_{[a_{j+1}, a_j)}$ is a translation, and if $h_{a_j+1} \circ L_{a_j}$ is a Ritt polynomial, then $L_{a_j} = \id$.
\end{enumerate}
\end{Def}

\begin{Rk} Swappable factors of a decomposition $\vec{f}$ are linearly related to Ritt polynomials, and the linear factors witnessing this can be gathered outside clusters in the following somewhat canonical fashion.
 Applying the definition of ``cluster'' to all clusters of a preclustering puts a linear factor on each side of each cluster. Composing pairs of linear factors that sit between clusters, we may assume that only the leftmost cluster has a linear factor on the left of it. To obtain a cleanup, push all scalings through \textsf{C}-free clusters as far left as possible (Lemma~\ref{babycleanuplem}). Generalizing the results of Section~\ref{section41} from single indecomposable factors to clusters (Lemma~\ref{nccluster-unique} and Lemma~\ref{ccluster-unique}) shows that these cleanups are essentially unique up to scalings by $\pm 1$ (Proposition~\ref{unique-cleanup}). \end{Rk}

The decomposition with $k=0$ factors, whose clustering has $r=0$ clusters and whose cleanup has no $h_i$ and $L_0 = \id$, provides a conveniently trivial base case for proofs by induction on the number of clusters.

\begin{Rk} \label{cluster-rk}
\begin{enumerate}
\item A decomposition with an unswappable factor does not admit a clustering. Dealing with such decompositions is much easier, and does not require the machinery of this section.
\item If every factor $f_i$ of $\vec{f}$ is swappable, then $r:=k$ and $a_j := j$ is a preclustering of $\vec{f}$.
\item All these notions (cluster, preclustering, cleanup) only depend on the linear equivalence class of the decomposition $\vec{f}$.
\item A non-empty subsequence of a cluster is a cluster, unless a \textsf{C} cluster loses all of its odd-degree factors. That is, if $\vec{f}_{[b,a)}$ is a cluster of $\vec{f}$ and $b \geq b' > a' \geq a$, and $\vec{f}_{[b',a')}$ is not a cluster, then $\vec{f}_{[b',a')}^\circ$ is linearly related to $C_{2^M}$ for some $M \geq 1$ and the original cluster $\vec{f}_{[b,a)}$ was a \textsf{C} cluster.
\item If $t_i \star \vec{f}$ is defined, then $(f_{i+1}, f_{i})$ is a cluster.
\end{enumerate} \end{Rk}

The next lemma is used to achieve the last part of the definition of ``cleanup''.

\begin{lemma}
\label{babycleanuplem}
 Any \textsf{C}-free cluster $\vec{f}_{[b, a)}$ is linearly equivalent to $(B \circ \tilde{h}_b, \tilde{h}_{b-1},$ $\ldots,$ $\tilde{h}_{a+2}, \tilde{h}_{a+1} \circ A)$ for some Ritt polynomials $\tilde{h}_i$, some linear $B$, and some translation $A$, such that $\tilde{h}_{a+1} \circ A$ is a Ritt polynomial only if $A = \id$.
\end{lemma}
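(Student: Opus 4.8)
\textbf{Setup.} By the definition of a \textsf{C}-free cluster, $\vec{f}_{[b,a)}$ is linearly equivalent to a decomposition of the form $(B_0 \circ h_b, h_{b-1}, \ldots, h_{a+2}, h_{a+1} \circ A_0)$ where the $h_i$ are Ritt polynomials, none of type \textsf{C}, and $A_0$, $B_0$ are linear. Write $A_0 = (+A') \circ (\cdot \mu)$ as a translation after a scaling. Since $h_{a+1}$ is a Ritt polynomial, $h_{a+1} \circ (\cdot \mu) = \mu \ast' h_{a+1}$ up to a harmless scaling on the left; more precisely, by Remark~\ref{scaleast} and Remark~\ref{astRitt}, pushing the scaling $(\cdot \mu)$ leftward through $h_{a+1}$ turns it into a scaling on the output of $h_{a+1}$, and $\mu \ast h_{a+1}$ is again a (non-type-\textsf{C}) Ritt polynomial. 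Iterating this, by Remark~\ref{astRitt} scalings propagate through each Ritt factor leftward, picking up the appropriate exponent each time, until the scaling is absorbed into the linear factor $B_0$ on the far left. This is exactly the content that will be isolated in Lemma~\ref{babycleanuplem}'s use inside the cleanup construction; here the upshot is that we may assume $A_0 = (+A')$ is already a translation. So rename: we have $\vec{f}_{[b,a)}$ linearly equivalent to $(B \circ \tilde h_b, \tilde h_{b-1}, \ldots, \tilde h_{a+2}, \tilde h_{a+1} \circ (+A'))$ with all $\tilde h_i$ non-type-\textsf{C} Ritt polynomials.

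\textbf{The remaining point.} It remains to show we can arrange that $\tilde h_{a+1} \circ (+A')$ is a Ritt polynomial \emph{only if} $A' = 0$. Suppose $A' \neq 0$ but $\tilde h_{a+1} \circ (+A')$ happens to be a Ritt polynomial. Then $\tilde h_{a+1}$ is a Ritt polynomial translation related (on the input side) to the Ritt polynomial $\tilde h_{a+1} \circ (+A')$; actually we have a relation $(+0) \circ \tilde h_{a+1} \circ (+A') = \tilde h_{a+1} \circ (+A')$, which by Theorem~\ref{transrelate} (in the form of Problem~\ref{problemtrans} with the translation on the input and $B = 0$) forces $\tilde h_{a+1}$ and $\tilde h_{a+1} \circ (+A')$ to be type \textsf{A} Ritt polynomials. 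But a type \textsf{A} Ritt polynomial has in-degree $1$, and by Remark~\ref{Jnoswapleft} no Ritt swap at that position is possible; more importantly for us, the key observation from Reduction~\ref{reductoa1} and the structure of type \textsf{A} polynomials is that if $\tilde h_{a+1}(x) = x^\ell (x-A')^m U(x)^n$ is type \textsf{A} with root $A'$, then $\tilde h_{a+1} \circ (+A')(x) = x^m \cdot (x + A')^\ell \cdot U(x + A')^n$ is again type \textsf{A} — so we have genuinely two Ritt presentations. In this situation we simply \emph{replace} $\tilde h_{a+1}$ by $\tilde h_{a+1} \circ (+A')$ and set the new translation to $A' = 0$: the composite $\tilde h_{a+1} \circ (+A')$ is unchanged, the decomposition stays linearly equivalent to $\vec f_{[b,a)}$, and the new rightmost Ritt factor now has trivial translation attached. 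This resolves the one bad case, since type \textsf{A} is the only way, by Theorem~\ref{transrelate}, that a nontrivial translation on the input of a Ritt polynomial can land on another Ritt polynomial.

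\textbf{Main obstacle.} The genuinely delicate step is the scaling-propagation in the Setup: one must check that pushing a scaling leftward through a chain of Ritt polynomials keeps each factor a non-type-\textsf{C} Ritt polynomial and does not accidentally create a type \textsf{C} factor or break indecomposability. This is handled by Remark~\ref{astRitt} (each $\lambda \ast h_i$ is again a Ritt polynomial of the same type, since the in-degree $\ell$ and out-degree are preserved under $\ast$ by Remark~\ref{scalrel-inout}) and by Remark~\ref{scalrel-inout}'s observation that scaling-related Ritt polynomials have equal in- and out-degrees, so type \textsf{C}-ness (degree an odd prime, linearly related to a Chebyshev) is preserved, hence non-type-\textsf{C}-ness is preserved as well. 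The translation/type-\textsf{A} dichotomy in the last paragraph is then a direct invocation of Theorem~\ref{transrelate}, which is exactly the main theorem of Section~\ref{section41}, so no further work is needed there. Everything else is bookkeeping with linear equivalence, which by Remark~\ref{cluster-rk}(3) respects the notion of cluster throughout.
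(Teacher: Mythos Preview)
Your proof is correct and follows essentially the same approach as the paper's: factor the scaling out of $A_0$, propagate it leftward through the Ritt polynomials using $\lambda \ast h_i$ (Remark~\ref{astRitt}), absorb it into $B$, and then deal with the residual translation by absorbing it into $\tilde h_{a+1}$ if the composite happens to be a Ritt polynomial. One small slip: with your decomposition $A_0 = (+A') \circ (\cdot \mu)$ the scaling is \emph{not} adjacent to $h_{a+1}$ (the translation $(+A')$ sits between them), so your sentence about ``pushing the scaling $(\cdot\mu)$ leftward through $h_{a+1}$'' does not literally parse; the paper writes $A_0 = (\cdot\lambda)\circ A_1$ instead, which puts the scaling directly next to $h_{a+1}$. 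This is harmless---you can first commute $(\cdot\mu)$ past $(+A')$ at the cost of changing the translation---but it is worth writing cleanly. Your extra paragraph invoking Theorem~\ref{transrelate} to identify the type~\textsf{A} case is correct and a nice elaboration, though the paper simply says ``if $\tilde h_{a+1}\circ A_1$ is a Ritt polynomial, absorb it'' without analyzing when this occurs.
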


\begin{proof}

Take linear $A_0$ and $B_0$ and Ritt polynomials $h_i$ from the definition of \textsf{C}-free cluster.
Write $A_0 := S_0 \circ A_1$ for a scaling $S_0 =: (\cdot \lambda)$ and a translation $A_1$.

Let $S_1 := (\cdot \lambda^{\deg(h)} )$; by Remark~\ref{astRitt}, $\tilde{h}_{a+1} := \lambda \ast h_{a+1} =S_1^{-1} \circ h_{a+1} \circ S_0$ is a Ritt polynomial. Similarly, there is a scaling $S_2$ such that $\tilde{h}_{a+2} := S_2^{-1} \circ h_{a+2} \circ S_1$ is Ritt polynomial; and so on until we get $\vec{f}_{[b, a)}$ linearly equivalent to $(B_0 \circ S_{b-a} \circ \tilde{h}_{b}, \tilde{h}_{b-1}, \ldots, \tilde{h}_{a+2}, \tilde{h}_{a+1} \circ A_1)$. Set $B := B_0 \circ S_{b-a}$.

 Finally, if $\tilde{h}_{a+1} \circ A_1$ is  Ritt polynomial, replace $\tilde{h}_{a+1}$ by $\tilde{h}_{a+1} \circ A_1$, and let $A := \id$. Otherwise, leave $\tilde{h}_{a+1}$ as is, and let $A := A_1$.
\end{proof}

The next lemma in some sense generalizes the results of Section~\ref{section41} from single indecomposable factors to whole \textsf{C}-free clusters; for \textsf{C} clusters, this is already done in Lemma~\ref{ccluster-unique}. The two are induction steps of the proof of uniqueness of cleanups in Lemma~\ref{unique-cleanup}.

\begin{lemma}
\label{nccluster-unique}
Suppose that all $h_i$ and $\tilde{h}_i$ are Ritt polynomials, not type \textsf{C}; that $B$ and $\tilde{B}$ are linear, and $A$ and $\tilde{A}$ are translations; and that $(B \circ h_b, h_{b-1}, \ldots,  h_{a+2}, h_{a+1} \circ A)$ is linearly equivalent to $(\tilde{B} \circ \tilde{h}_b, \tilde{h}_{b-1}, \ldots,  \tilde{h}_{a+2}, \tilde{h}_{a+1} \circ \tilde{A})$. Then $B = \tilde{B}$, and $h_i = \tilde{h}_i$ for all $i > a+1$, and  $\tilde{h}_{a+1} \circ \tilde{A} =  h_{a+1} \circ A$; unless $h_{a+1}$ is type \textsf{A}, $\tilde{h}_{a+1} = h_{a+1}$ and $A = \tilde{A}$. \end{lemma}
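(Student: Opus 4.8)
The plan is to peel the two decompositions apart from the right-hand (low-index) end toward the left, keeping track of the linear polynomials that witness the asserted linear equivalence. Unwinding Definition~\ref{intervalindex}'s notion of linear equivalence, fix linear $L_{b-1},\ldots,L_{a+1}$ with
$\tilde{B}\circ\tilde{h}_b = B\circ h_b\circ L_{b-1}$, with $\tilde{h}_i = L_i^{-1}\circ h_i\circ L_{i-1}$ for $a+1<i<b$, and with $\tilde{h}_{a+1}\circ\tilde{A} = L_{a+1}^{-1}\circ h_{a+1}\circ A$. The single recurring device is this: whenever a \emph{monic} Ritt polynomial is written as $L\circ p$ with $L$ linear and $p$ monic of degree $\geq 2$, comparing leading coefficients forces $L$ to be a translation. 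Consequently, at every stage the residual linear factor will turn out to be a pure translation, which is exactly what lets us invoke Theorem~\ref{transrelate}.

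First I would dispatch the right boundary. Since $\tilde{A}$, $\tilde{A}^{-1}$ are translations, set $A' := A\circ\tilde{A}^{-1}$ (a translation); then $\tilde{h}_{a+1} = L_{a+1}^{-1}\circ h_{a+1}\circ A'$. As $h_{a+1}\circ A'$ is monic (translations preserve the leading term, and $h_{a+1}$ is monic by Remark~\ref{monicu}) and $\tilde{h}_{a+1}$ is monic, $L_{a+1}$ must be a translation. Now $h_{a+1}$ and $\tilde{h}_{a+1}$ are Ritt polynomials, neither type \textsf{C}, related by the two translations $L_{a+1}^{-1}$ and $A'$; Theorem~\ref{transrelate} then rules out its ``$B\neq 0$'' alternative (which would make both type \textsf{C}), so the relation has ``$B=0$''. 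Hence $L_{a+1}=\id$, and either $A'=\id$ (so $A=\tilde{A}$ and $h_{a+1}=\tilde{h}_{a+1}$) or $h_{a+1}$, $\tilde{h}_{a+1}$ are type \textsf{A} with $\tilde{h}_{a+1}=h_{a+1}\circ A'$. In all cases $L_{a+1}=\id$ and $\tilde{h}_{a+1}\circ\tilde{A}=h_{a+1}\circ A$, and if $h_{a+1}$ is not type \textsf{A} the second alternative is excluded, giving $h_{a+1}=\tilde{h}_{a+1}$ and $A=\tilde{A}$. This proves the claim at index $a+1$ and seeds the induction on $i$.

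For the inductive step at an interior index $a+1<i<b$, I would use that the previous step supplies $L_{i-1}=\id$, so $\tilde{h}_i = L_i^{-1}\circ h_i$. Monicity makes $L_i$ a translation, and then $\tilde{h}_i = L_i^{-1}\circ h_i$ is a translation relation between Ritt polynomials, neither type \textsf{C}, with trivial right factor; Theorem~\ref{transrelate} again forbids the type \textsf{C} alternative, and the ``$B=0$'' alternative with trivial right factor collapses to $L_i=\id$, $h_i=\tilde{h}_i$. Finally, at the left boundary we now have $L_{b-1}=\id$, so $\tilde{B}\circ\tilde{h}_b = B\circ h_b$, i.e.\ $\tilde{h}_b = (\tilde{B}^{-1}\circ B)\circ h_b$; monicity makes $\tilde{B}^{-1}\circ B$ a translation, and the same application of Theorem~\ref{transrelate} forces it to be $\id$, so $B=\tilde{B}$ and $h_b=\tilde{h}_b$. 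The degenerate case $b=a+1$ (no $L_i$'s at all) is handled by running exactly this argument on $\tilde{B}\circ\tilde{h}_{a+1}\circ\tilde{A}=B\circ h_{a+1}\circ A$ directly.

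The main obstacle I expect is the type \textsf{A} alternative at the right boundary: there the right-hand linear factors $A$, $\tilde{A}$ are genuinely allowed to be nontrivial translations, so one cannot simply conclude $h_{a+1}=\tilde{h}_{a+1}$, and it takes the full strength of Theorem~\ref{transrelate}—not merely the easy remark that a Ritt polynomial vanishes at the origin—to show that the only remaining freedom is the type \textsf{A} twist $\tilde{h}_{a+1}=h_{a+1}\circ(A\circ\tilde{A}^{-1})$ with $L_{a+1}=\id$. Everything else is routine bookkeeping with leading coefficients and compositions of translations.
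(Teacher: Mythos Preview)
Your proposal is correct and follows essentially the same right-to-left induction as the paper's proof: name the linear witnesses $L_i$ of the equivalence, use monicity to force each $L_i$ to be a translation, and then invoke Theorem~\ref{transrelate} (together with the elementary observation that a Ritt polynomial vanishes at $0$, so $A=0$ forces $B=0$) to conclude $L_i=\id$ at every step. The only differences are cosmetic: your indexing is shifted by one relative to the paper's, and you spell out the type~\textsf{A} dichotomy at the right boundary and the degenerate single-factor case explicitly, whereas the paper leaves these implicit.
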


\begin{proof}
Let $L_b, \ldots, L_{a+2}$ witness linear equivalence:
$$ (B \circ h_b \circ L_b, L_b^{-1} \circ h_{b-1} \circ L_{b-1}, \ldots, L_{a+2} \circ h_{a+1} \circ A) =
(\tilde{B} \circ \tilde{h}_b, \tilde{h}_{b-1}, \ldots, , \tilde{h}_{a+1} \circ \tilde{A})$$

We induct right-to-left from $i = a+2$ to show that all $L_i = \id$, and for $i > a+2$, all $\tilde{h}_{i-1} = h_{i-1}$.

For the base case $i = a+2$, we have   $$L_{a+2}^{-1} \circ h_{a+1} \circ A = \tilde{h}_{a+1} \circ \tilde{A}$$
 Since the right-hand side of the equation is monic, $L_{a+2}$ is a translation. Since $h_{a+1}$ is not type \textsf{C}, $L_{a+2} = \id$, so $\tilde{h}_{a+1} \circ \tilde{A} =  h_{a+1} \circ A$.

For the induction step $a+1 < i < b$, we have $L_{i-1} = \id$ and $ L_{i}^{-1} \circ h_{i-1} \circ L_{i-1} = \tilde{h}_{i-1}$ with both $h_{i-1}$ and $\tilde{h}_{i-1}$ Ritt polynomials not type \textsf{C}, so $L_i = \id$ and $h_i = \tilde{h}_i$.

Finally, we have $L_b = \id$ and $ B \circ h_b \circ L_b^{-1} = \tilde{B} \circ \tilde{h}_b$, which forces $B = \tilde{B}$ and $h_b = \tilde{h}_b$.
\end{proof}

\begin{lemma} Every preclustering admits a cleanup. \label{clustercleanlem}
 \end{lemma}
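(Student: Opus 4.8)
The plan is to prove this by induction on the number $r$ of clusters in the preclustering, building the cleanup data $(h_k,\ldots,h_1;L_k,\ldots,L_0)$ one cluster at a time, from right to left. The base case $r=0$ (the empty decomposition) is trivial, with no $h_i$ and $L_0=\id$. For the inductive step, suppose $k=a_r>a_{r-1}>\cdots>a_0=0$ is a preclustering of $\vec f$ and we have already produced a cleanup of the preclustering $a_{r-1}>\cdots>a_0=0$ of $\vec f_{[a_{r-1},0)}$; we must extend it by handling the leftmost cluster $\vec f_{[a_r,a_{r-1})}=\vec f_{[k,a_{r-1})}$.

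First I would split into the two cases for the type of this leftmost cluster. If $\vec f_{[k,a_{r-1})}$ is a \textsf{C}-free cluster, Lemma~\ref{babycleanuplem} applies: it is linearly equivalent to $(B\circ\tilde h_k,\tilde h_{k-1},\ldots,\tilde h_{a_{r-1}+1}\circ A)$ where the $\tilde h_i$ are Ritt polynomials, $B$ is linear, $A$ is a translation, and $A=\id$ whenever $\tilde h_{a_{r-1}+1}\circ A$ is a Ritt polynomial. This is exactly conditions (4) and (5) of the definition of cleanup for the factors inside this cluster, so I set $h_i:=\tilde h_i$ for $a_{r-1}<i\le k$, put the linear factor $L_{a_{r-1}}:=A$ at the right boundary of this cluster, and put $L_k:=B$ on the far left. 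If instead $\vec f_{[k,a_{r-1})}$ is a \textsf{C} cluster, then by definition $\vec f_{[k,a_{r-1})}^\circ$ is linearly related to $C_n$ for some $n$ not a power of $2$; writing $n=p_k\cdots p_{a_{r-1}+1}$ as a product of the primes $\deg(f_i)$ in the appropriate order and using $C_n=C_{p_k}\circ\cdots\circ C_{p_{a_{r-1}+1}}$, I obtain via the usual linear-equivalence bookkeeping a linear equivalence of $\vec f_{[k,a_{r-1})}$ to $(B\circ C_{\deg f_k},C_{\deg f_{k-1}},\ldots,C_{\deg f_{a_{r-1}+1}}\circ A)$ for suitable linear $A,B$; this supplies condition (3), and again I record $h_i:=C_{\deg f_i}$, $L_{a_{r-1}}:=A$, $L_k:=B$.

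The one point that needs care — and which I expect to be the main obstacle — is the bookkeeping at the cluster boundary $a_{r-1}$: the inductive cleanup of $\vec f_{[a_{r-1},0)}$ already places a (possibly nontrivial, and a priori arbitrary linear) factor to the \emph{left} of its leftmost cluster $\vec f_{[a_{r-1},a_{r-2})}$, and now I am also placing a factor $A$ to the \emph{right} of the new cluster $\vec f_{[k,a_{r-1})}$ at the same index $a_{r-1}$. These must be composed into a single linear factor $L_{a_{r-1}}$, after which one must re-verify conditions (2)–(5) — in particular that if the new cluster is \textsf{C}-free then $L_{a_{r-1}}$ is still a translation, and that if the factor immediately to its right becomes a Ritt polynomial after absorbing $L_{a_{r-1}}$ then we may take $L_{a_{r-1}}=\id$. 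Here I would first arrange, as in Remark preceding the lemma, that the induction hypothesis is stated with \emph{no} linear factor on the left of the leftmost cluster of $\vec f_{[a_{r-1},0)}$ (by simply composing it onto the $B$-factor of that cluster at index $a_{r-1}$ — harmless, since the far-left factor is unconstrained), so that the only factor meeting the new one at index $a_{r-1}$ is the $A$ produced in the current step; then Lemma~\ref{babycleanuplem}'s guarantee about $A$ being a translation with the stated property is exactly what is needed, and condition (2) (that $L_i=\id$ off cluster boundaries) is preserved because the new cluster contributes only $L_k$ and $L_{a_{r-1}}$, both at boundaries. Finally, I would remark that the whole construction depends only on the linear equivalence class of $\vec f$ (Remark~\ref{cluster-rk}(3)), so the statement is well-posed as an assertion about preclusterings.

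\begin{proof}
We argue by induction on the number $r$ of clusters of the preclustering $k=a_r>a_{r-1}>\cdots>a_1>a_0=0$. When $r=0$ the decomposition has $k=0$ factors and the data with no $h_i$ and $L_0=\id$ is vacuously a cleanup. Assume $r\ge 1$ and that every preclustering with fewer than $r$ clusters admits a cleanup; in particular the preclustering $a_{r-1}>\cdots>a_0=0$ of $\vec f_{[a_{r-1},0)}$ admits a cleanup $(h_{a_{r-1}},\ldots,h_1;L'_{a_{r-1}},\ldots,L_0)$. Composing the leftmost linear factor $L'_{a_{r-1}}$ onto the (otherwise unconstrained) far-left factor $h_{a_{r-1}}$, we may assume $L'_{a_{r-1}}=\id$; that is, $\vec f_{[a_{r-1},0)}$ is linearly equivalent to $(h_{a_{r-1}}\circ L_{a_{r-1}-1},h_{a_{r-1}-1}\circ L_{a_{r-1}-2},\ldots,h_1\circ L_0)$ with all the listed conditions satisfied and with no linear factor to the left.

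Now consider the leftmost cluster $\vec f_{[k,a_{r-1})}$. If it is a \textsf{C}-free cluster, apply Lemma~\ref{babycleanuplem} to obtain Ritt polynomials $h_k,\ldots,h_{a_{r-1}+1}$, a linear $B$, and a translation $A$ with $\vec f_{[k,a_{r-1})}$ linearly equivalent to $(B\circ h_k,h_{k-1},\ldots,h_{a_{r-1}+2},h_{a_{r-1}+1}\circ A)$ and with $A=\id$ whenever $h_{a_{r-1}+1}\circ A$ is a Ritt polynomial. If instead $\vec f_{[k,a_{r-1})}$ is a \textsf{C} cluster, then $\vec f_{[k,a_{r-1})}^\circ$ is linearly related to $C_n$ for some $n$ not a power of $2$; writing $n=\deg(f_k)\cdots\deg(f_{a_{r-1}+1})$ and using $C_n=C_{\deg f_k}\circ\cdots\circ C_{\deg f_{a_{r-1}+1}}$ together with the standard linear-equivalence bookkeeping, we get linear $A$ and $B$ with $\vec f_{[k,a_{r-1})}$ linearly equivalent to $(B\circ C_{\deg f_k},C_{\deg f_{k-1}},\ldots,C_{\deg f_{a_{r-1}+1}}\circ A)$; set $h_i:=C_{\deg f_i}$ for $a_{r-1}<i\le k$. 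In either case set $L_k:=B$ and $L_{a_{r-1}}:=A$.

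Splicing these linear equivalences at the index $a_{r-1}$, where on the right side sits $h_{a_{r-1}}$ with no linear factor to its left and on the left side sits $h_{a_{r-1}+1}$ with the factor $A$ to its right, we obtain that $\vec f$ is linearly equivalent to $(L_k\circ h_k\circ L_{k-1},h_{k-1}\circ L_{k-2},\ldots,h_1\circ L_0)$ where $L_i=\id$ for every $i$ that is not a cluster boundary of $\vec a$, verifying conditions (1) and (2). Condition (3) holds inside the new \textsf{C} cluster by construction, and by the induction hypothesis inside all other \textsf{C} clusters. Condition (4) holds inside the new \textsf{C}-free cluster by construction, and inside all other \textsf{C}-free clusters by the induction hypothesis. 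Finally, condition (5): for every \textsf{C}-free cluster other than the new one this is part of the induction hypothesis; for the new \textsf{C}-free cluster (if it is one), $L_{a_{r-1}}=A$ is a translation, and if $h_{a_{r-1}+1}\circ L_{a_{r-1}}=h_{a_{r-1}+1}\circ A$ is a Ritt polynomial then by Lemma~\ref{babycleanuplem} we arranged $A=\id$. Thus $(h_k,\ldots,h_1;L_k,\ldots,L_0)$ is a cleanup of the preclustering, completing the induction.
\end{proof}
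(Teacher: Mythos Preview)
Your induction on the number of clusters is exactly the paper's strategy, and your treatment of the leftmost cluster via Lemma~\ref{babycleanuplem} (in the \textsf{C}-free case) or via the Chebyshev decomposition (in the \textsf{C} case) is fine. The gap is in the splice at the boundary $a_{r-1}$.

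You write: ``Composing the leftmost linear factor $L'_{a_{r-1}}$ onto the (otherwise unconstrained) far-left factor $h_{a_{r-1}}$, we may assume $L'_{a_{r-1}}=\id$.'' But $h_{a_{r-1}}$ is \emph{not} unconstrained: it is the leftmost factor of the cluster $\vec f_{[a_{r-1},a_{r-2})}$ in the inductive cleanup, so conditions (3)/(4) force it to be a Chebyshev polynomial or a Ritt polynomial. Replacing $h_{a_{r-1}}$ by $L'_{a_{r-1}}\circ h_{a_{r-1}}$ destroys this in general (take $h_{a_{r-1}}$ a monomial and $L'_{a_{r-1}}$ a nontrivial translation). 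So when you later assert that conditions (3) and (4) for the old clusters ``hold by the induction hypothesis'', that is no longer true. What \emph{is} unconstrained is the linear factor $L'_{a_{r-1}}$ itself, not the $h$ it sits next to; you conflated the two.

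The fix is what the paper does: do not absorb $L'_{a_{r-1}}$ at all. First extract from the leftmost cluster linear factors $A,B$ and Ritt/Chebyshev $h_k,\ldots,h_{a_{r-1}+1}$ directly from the definition of ``cluster'' (not yet invoking Lemma~\ref{babycleanuplem}); set $L_k:=B$ and replace $L_{a_{r-1}}$ by the \emph{composite} $A\circ L'_{a_{r-1}}$. This keeps all $h_i$ for $i\le a_{r-1}$ untouched, so (3)/(4) survive from the induction. Now condition (5) for the new \textsf{C}-free leftmost cluster (if it is one) may fail, since $A\circ L'_{a_{r-1}}$ need not be a translation; \emph{this} is the moment to apply Lemma~\ref{babycleanuplem}, to $(B\circ h_k,\ldots,h_{a_{r-1}+1}\circ (A\circ L'_{a_{r-1}}))$, which pushes the scaling part of $A\circ L'_{a_{r-1}}$ leftward into $B$ and leaves a translation at $a_{r-1}$ with the required property. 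Applying Lemma~\ref{babycleanuplem} to the bare $\vec f_{[k,a_{r-1})}$, as you did, comes too early and leaves $L'_{a_{r-1}}$ with nowhere legitimate to go.
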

\begin{proof}
Let $k=a_r > \ldots > a_1 > a_0=0$ be a preclustering of a decomposition $\vec{f}$, and induct on $r$, the number of clusters. For the trivial base case, $L_0 := \id$ is a cleanup of the preclustering with zero clusters of the decomposition with no factors.

The preclustering $k' := a_{r-1} > \ldots > a_1 > a_0=0$ of $\vec{f}_{[k',0)}$ has one less cluster, so by induction it admits a cleanup $(\vec{h}_{[k', 0)}; \vec{L}_{[k',0]}))$. Let $\vec{h}_{[k, k')}$ and linear $A_0$ and $B_0$ witness that $\vec{f}_{[k, k')}$ is a cluster.

Now $\vec{f}$ is linearly equivalent to $(B \circ h_k, h_{k-1}, \ldots, h_{k'+1} \circ A \circ L_{k'}, h_{k'} \circ L_{k'-1}, \ldots, h_1 \circ L_0)$. (Here, $A$ and $B$ are $A_0$ and $B_0$ or their inverses, depending on kind of cluster.) Set $L_k := B$; replace $L_{k'}$ by $A \circ L_{k'}$; and, if $\vec{f}_{[k, k')}$ is a \textsf{C}-free cluster, apply Lemma~\ref{babycleanuplem} to $(B \circ h_k, h_{k-1}, \ldots, h_{k'+1} \circ L_{k'})$ to obtain the desired cleanup of $\vec{f}$.
\end{proof}

The next lemma says that cleanups are unique, up to two minor variations arising from Lemmas~\ref{ccluster-unique} and~\ref{nccluster-unique}: a translation to the right of a type \textsf{A} factor is not well-defined, and scalings by $\pm 1$ can appear and disappear as in the following remark.

\begin{Rk} \label{neg1wild}
Here is what scalings by $-1$ can do. Suppose that $(\vec{h}, \vec{L})$ is a cleanup of a preclustering $\vec{a}$ of a decomposition $\vec{f}$, and that $\vec{f}_{[a_{j+1}, a_j)}$ is a \textsf{C} cluster. A new cleanup $(\vec{g}, \vec{M})$ of $\vec{a}$ can be obtained by introducing a scaling by $-1$ into $L_{a_j}$ and then pushing it left through the cleanup until it is swallowed by a factor of even degree, or absorbed into the last linear factor of the cleanup.
To state this precisely, let $b := a_j$ to lighten notation.\\
 For $i < b$, let $g_i := h_i$ and $M_i := L_i$.\\
 Let $g_b := h_b$ and $M_b := (\cdot -1) \circ L_b$.\\
 For $i > b$, let $s_i := \deg (\vec{f}_{[i,b)}^\circ))$ (we only care about its parity).\\
 For $k> i > b$, let $M_i := (\cdot -1)^{\circ s_i} \circ L_i \circ (\cdot -1)^{\circ s_i}$ and let
 $g_i := (-1)^{s_i} \ast h_i$. Finally, let $M_k := L_k \circ (\cdot -1)^{\circ s_k}$.\\
Of course, this can happen several times with different starting points $b$.
\end{Rk}

\begin{lemma} \label{unique-cleanup}
Suppose that $(\vec{h}, \vec{L})$ and $(\vec{g}, \vec{M})$ are two different cleanups of the same preclustering $\vec{a}$ of the same decomposition $\vec{f}$. Then \begin{enumerate}
    \item $g_i = (\pm 1) \ast h_i$ for all $i$ except as in (3) below;
    \item $M_k = L_k \circ ( \cdot (\pm 1))$ and $M_i = ( \cdot (\pm 1)) \circ L_i \circ ( \cdot (\pm 1))$ for all $i$ except as in (3) below;
    \item if $f_{a_j +1}$ is type \textsf{A}, then there is a translation $T$ such that
    $$\tilde{g}_{a_j +1} := g_{a_j +1} \circ T = (\pm 1) \ast h_{a_j +1} \mbox{ and }
    \tilde{M}_{a_j } := T^{-1} \circ M_{a_j} = ( \cdot (\pm 1)) \circ L_{a_j } \circ ( \cdot (\pm 1)).$$
    \end{enumerate}
Remark~\ref{neg1wild} gives more detail about the scalings $( \cdot (\pm 1))$.
\end{lemma}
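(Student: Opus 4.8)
The plan is to induct on the number $r$ of clusters of the preclustering $\vec a$, written $k = a_r > a_{r-1} > \cdots > a_1 > a_0 = 0$. The case $r = 0$ is vacuous: then $k = 0$, there are no $h_i$ or $g_i$, and $L_0 = M_0 = \id$. For $r > 0$, start from the observation that both cleanups yield decompositions of $f$ linearly equivalent to $\vec f$, hence to each other; fix linear polynomials $R_{k-1}, \ldots, R_1$ witnessing this equivalence. Since $L_i = M_i = \id$ at every position interior to a cluster (those positions are not cluster boundaries, by condition~(2)), restricting the $R_i$ to the factors of any single cluster produces a relation of exactly the form fed to Lemma~\ref{nccluster-unique} (for a \textsf{C}-free cluster) or Lemma~\ref{ccluster-unique} (for a \textsf{C} cluster): a genuine linear modification only at the two cluster boundaries.

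I would first pin down the rightmost cluster $\vec f_{[a_1,0)}$. If it is a \textsf{C} cluster, condition~(3) forces $h_1, \ldots, h_{a_1}$ and $g_1, \ldots, g_{a_1}$ all to equal the Chebyshev polynomials $C_{\deg f_1}, \ldots, C_{\deg f_{a_1}}$, so here the discrepancy lives entirely in the linear factors; running the prime-by-prime, right-to-left argument from the proof of Lemma~\ref{ccluster-unique} on the induced linear equivalence shows that this discrepancy is a single scaling by $-1$ introduced at a boundary and threaded through, as in Remark~\ref{neg1wild}. If $\vec f_{[a_1,0)}$ is \textsf{C}-free, then the $h_i, g_i$ are Ritt polynomials, none of type \textsf{C}, and Lemma~\ref{nccluster-unique} gives $g_i = h_i$ for $i > 1$, identifies $L_{a_1}$ with $M_{a_1}$ up to the permitted $(\cdot(\pm 1))$ factors, and leaves for the last factor either $g_1 = h_1$ with $L_0 = M_0$, or, when $f_1$ is type~\textsf{A}, exactly the translation ambiguity $T$ of part~(3); condition~(5) then forces $L_0, M_0$ to be translations and rules out a scaling there.

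Having matched the rightmost cluster, I would delete it and apply the inductive hypothesis to $\vec f_{[k, a_1)}$ equipped with the induced data $(h_k, \ldots, h_{a_1+1};\, L_k, \ldots, L_{a_1})$, again a cleanup of the truncated preclustering. The subtlety is that a scaling by $-1$ detected at the boundary of the deleted cluster reappears as a right-end scaling of these induced cleanups, and if the next cluster $\vec f_{[a_2,a_1)}$ is \textsf{C}-free then condition~(5) forbids a scaling there, so the $(\cdot(\pm1))$ must be pushed further left — once more the leftward propagation of Remark~\ref{neg1wild}, which passes through factors of even degree and is absorbed at the far left. Accordingly I would state the inductive hypothesis so as to compare cleanups only up to the ambiguities in the statement, feed in the renormalized data, and close the induction.

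The step I expect to be the main obstacle is exactly this bookkeeping of the scalings by $-1$: verifying that the local rigidity supplied by Lemmas~\ref{ccluster-unique} and~\ref{nccluster-unique} at each cluster boundary produces only the controlled, one-scaling-at-a-time propagation of Remark~\ref{neg1wild}, rather than an uncontrolled accumulation, and checking that the translation attached to a type~\textsf{A} leftmost factor of a cluster coexists with, rather than interferes with, this propagation. A related point of care is that at every junction between a \textsf{C}-free cluster and a neighbouring cluster, condition~(5) of the definition of cleanup is precisely what prevents a would-be scaling from surviving there.
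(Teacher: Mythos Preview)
Your approach is essentially the paper's: induct on the number of clusters, peel off one cluster, handle it with Lemma~\ref{ccluster-unique} or Lemma~\ref{nccluster-unique}, and track the stray $(\cdot(\pm 1))$ via Remark~\ref{neg1wild}. The one substantive difference is that the paper peels off the \emph{leftmost} cluster $\vec f_{[k,a_{r-1})}$ rather than the rightmost. This is not merely cosmetic: because condition~(5) constrains only the linear factor to the \emph{right} of a \textsf{C}-free cluster while $L_k$ is unconstrained, chopping from the left yields truncated data $(\vec h_{[a_{r-1},0)};\,D\circ L_{a_{r-1}},L_{a_{r-1}-1},\ldots,L_0)$ that is automatically a cleanup, so the inductive hypothesis applies directly and the $(\cdot(\pm 1))$ pushing is confined to the single final cluster. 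Your right-to-left peeling works, but the complication you flag --- a $(\cdot(-1))$ appearing at $a_1$ when the rightmost cluster is \textsf{C} and the next is \textsf{C}-free --- must be resolved \emph{before} you can invoke the inductive hypothesis, which makes the bookkeeping recur at every step rather than once at the end.

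One small slip: in your \textsf{C}-free case you say Lemma~\ref{nccluster-unique} ``identifies $L_{a_1}$ with $M_{a_1}$''. It does not. The linear factors $L_{a_1},M_{a_1}$ sit on $h_{a_1+1},g_{a_1+1}$, which belong to the \emph{next} cluster; what the lemma gives you is that the linear-equivalence witness $R_{a_1}$ between the two restricted decompositions is the identity (or $(\cdot\pm 1)$ in the \textsf{C} case). That is precisely the input needed to make the truncated data into cleanups of a common decomposition for the inductive step, but the relation between $L_{a_1}$ and $M_{a_1}$ only emerges from the analysis of the second cluster, i.e.\ from the induction.
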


\begin{proof}
By definition of cleanup, both $(L_{k} \circ h_k \circ L_{k-1}, h_{k-1} \circ L_{k-2}, \ldots, h_1 \circ L_0)$ and $(M_{k} \circ g_k \circ M_{k-1}, g_{k-1} \circ M_{k-2}, \ldots, g_1 \circ M_0)$ are linearly equivalent to $\vec{f}$, so they are linearly equivalent to each other. Name the linear factors witnessing this, and then start from the right and induct leftward, exactly as in the proof of Lemma~\ref{clustercleanlem}.

More formally, induct again on the number of clusters, with the same trivial base case of the unique cleanup $( ; \id )$ of the clusterless preclustering of a decomposition with zero factors. Once again, from the preclustering
$k=a_r > \ldots a_1 > a_0=0$ of the decomposition $\vec{f}$, we obtain a preclustering
$k' := a_{r-1} > \ldots a_1 > a_0=0$ of $\vec{f}_{[k',0)}$, with one less cluster.

Now $(\vec{g}; (E \circ M_{k'}, M_{k' -1}, \ldots, M_0))$ and  $(\vec{h}; (D \circ L_{k'}, L_{k' -1}, \ldots, L_0))$ are both cleanups of this preclustering of $\vec{f}_{[k',0)}$, for some linear $D$ and $E$. Applying the inductive hypothesis, we get the desired conclusion for $h_i$ with $i \leq k'$ and $L_i$ with $i < k'$, and we also get that
\begin{equation} \label{eqqqq}
E \circ M_{k'} = D \circ L_{k'} \circ ( \cdot (\pm 1)).
\end{equation}

Finally, the last cluster $f_{[k, k')}$ is now linearly equivalent to both
 $(L_k \circ h_k, h_{k-1}, \ldots, h_{k'+1} \circ D^{-1})$ and $(M_k \circ g_k, g_{k-1}, \ldots, g_{k'+1} \circ {E}^{-1})$.

 If this last cluster is a \textsf{C} cluster $g_i = C_{\deg(f_i)} = h_i$ and Lemma~\ref{ccluster-unique} gives $M_k = L_k \circ ( \cdot (\pm 1))$ and ${E}^{-1} = D^{-1} \circ ( \cdot (\pm 1))$. This together with Equation~\ref{eqqqq} gets us the desired conclusion for $M_{k'}$ and $L_{k'}$.

 If this last cluster is a \textsf{C}-free cluster, then $M_{k'}$ and $L_{k'}$ are both translations, so in Equation~\ref{eqqqq} the scaling parts of $D$ and $E$ are off by $\pm 1$. That is, for some scaling $S$ and some translations $T$ and $U$, we have ${E}^{-1} = (\cdot \pm 1) \circ U \circ S$ and $D^{-1} = T \circ S$. So now we have
 $(L_k \circ h_k, h_{k-1}, \ldots, h_{k'+1} \circ T)$ linearly equivalent to $(M_k \circ g_k, g_{k-1}, \ldots, g_{k'+1} \circ (\cdot \pm 1) \circ U)$. Once we push the scaling by $(\pm 1)$ left through the factors $g_i$, Lemma~\ref{nccluster-unique} applies, giving us the desired conclusion for $L_k$ and $M_k$, and for $h_i$ and $g_i$ for $i \neq k'+1$. If $h_{k'+1}$ is not type \textsf{A}, that lemma also gives the desired conclusion for $h_{k'+1}$ as well as $T = U$, which gives $E = (\cdot \pm 1) D$, and then the desired conclusion for $L_{k'}$ and $M_{k'}$. If $h_{k'+1}$ is type \textsf{A}, we fall into case (3) of the conclusion.
\end{proof}

The next proposition says that, for a given preclustering and cleanup, any Ritt swap inside a cluster can be witnessed by identity linear factors relative to the factors of that cleanup, and so can be accomplished without changing the linear factors of the cleanup.

\begin{prop} \label{swap-inside}
Suppose that $(\vec{h}, \vec{L})$ is a cleanup of a preclustering $\vec{a}$ of a decomposition $\vec{f}$; that $t_i \star \vec{f}$ is defined; and that $i$ is not a cluster boundary of $\vec{a}$.

Then $\vec{a}$ is also a preclustering of $t_i \star \vec{f}$, and it admits a cleanup $(\vec{g}, \vec{L})$ where $g_{i'} = h_{i'}$ for all $i' \neq i, i+1$, and where $g_{i+1} \circ g_i = h_{i+1} \circ h_i$ is a basic Ritt identity.
\end{prop}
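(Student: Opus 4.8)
The plan is to reduce the statement to a claim about the two-factor decomposition $(h_{i+1},h_i)$, settle that claim with the results of Section~\ref{section41}, and then verify that the five conditions defining a cleanup persist.

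First I would observe that, since $i$ is not a cluster boundary of $\vec{a}$, the factors $f_i$ and $f_{i+1}$ lie in a single cluster $\vec{f}_{[a_j,a_{j-1})}$ and $L_i=\id$ by condition (2) of a cleanup; hence by conditions (3) and (4) the factors $h_i,h_{i+1}$ are either both Chebyshev polynomials (if this cluster is a \textsf{C} cluster) or both Ritt polynomials, neither of type \textsf{C} (if it is a \textsf{C}-free cluster). Next, directly from the definition of a Ritt swap, ``$\vec{f}'$ admits a Ritt swap at $i$'' is stable under pre- and post-composing the outermost factors of $\vec{f}'$ with linear polynomials, the resulting decomposition transforming accordingly. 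Combining this with the linear equivalence between $\vec{f}$ and the cleanup decomposition $(L_k\circ h_k\circ L_{k-1},\ldots,h_1\circ L_0)$ (in which $L_i=\id$), the hypothesis that $t_i\star\vec{f}$ is defined descends to the statement that $(h_{i+1},h_i)$ admits a Ritt swap at $1$, witnessed by some basic Ritt identity $(L^{-1}\circ h_{i+1}\circ M)\circ(M^{-1}\circ h_i\circ N)=R\circ S$.

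In the \textsf{C}-free case I would feed this identity into Lemma~\ref{scalingbasic2}: since $h_{i+1}$ and $h_i$ are Ritt polynomials, neither of type \textsf{C}, the linear witnesses $L,M,N$ are scalings and there are Ritt polynomials $d,c$ with $h_{i+1}\circ h_i=d\circ c$ again a basic Ritt identity. I set $g_{i+1}:=d$, $g_i:=c$, $g_{i'}:=h_{i'}$ for $i'\ne i,i+1$, and keep $\vec{L}$. By Lemma~\ref{seeswaplem} neither $d$ nor $c$ is of type \textsf{C} (otherwise the underlying identity would force $h_i$ or $h_{i+1}$ to be of type \textsf{C}), and a basic Ritt identity interchanges the degrees of its two factors, matching what a cleanup of $t_i\star\vec{f}$ requires. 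In the \textsf{C} case, $h_i=C_p$ and $h_{i+1}=C_q$ with $pq\mid n$, and a swap is defined only when $p\ne q$ (there is no basic Ritt identity for $C_2\circ C_2$), so at most one of $p,q$ is $2$: if both are odd primes take $g_{i+1}:=C_p$, $g_i:=C_q$ from $C_q\circ C_p=C_p\circ C_q$; if $p=2$ take $g_{i+1}:=C_q$, $g_i:=C_2$, the swap being the one of the remark following Definition~\ref{specialpoly}, realized by $P_2\circ C_q=\widehat{C}_q\circ P_2$ together with $C_2=(-2)\circ P_2$. In all cases $g_{i+1}\circ g_i=h_{i+1}\circ h_i$ and the relation between $(g_{i+1},g_i)$ and $(h_{i+1},h_i)$ is a basic Ritt identity.

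Finally I would check that $(\vec{g},\vec{L})$ is a cleanup of $\vec{a}$ as a preclustering of $t_i\star\vec{f}$. Since a Ritt swap changes no composite $\vec{f}_{[b,a)}^{\circ}$, every \textsf{C} cluster remains a \textsf{C} cluster and, via the $g$'s just produced, every \textsf{C}-free cluster remains one (clusters other than $\vec{f}_{[a_j,a_{j-1})}$ are literally unchanged), so $\vec{a}$ is a preclustering of $t_i\star\vec{f}$. For condition (1), the cleanup decomposition of $(\vec{g},\vec{L})$ differs from that of $(\vec{h},\vec{L})$ only at positions $i,i+1$, where $(h_{i+1},h_i\circ L_{i-1})$ is replaced by $(g_{i+1},g_i\circ L_{i-1})$; by the stability of Ritt swaps under outer linear composition this replacement is a Ritt swap at $i$, and since the old decomposition is linearly equivalent to $\vec{f}$, Theorem~\ref{cruciallemma} gives that the new one is linearly equivalent to $t_i\star\vec{f}$. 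Conditions (2), (3), (4) follow from the construction and the type and degree information above, and condition (5) concerns only the linear factors to the right of \textsf{C}-free clusters, which are unchanged; the one delicate point is when $i=a_{j-1}+1$, so the translation $L_{i-1}$ sits to the right of $g_i$, and there one uses that $g_i$ is either a monomial or the image of $h_i$ under a scaling followed by the costume change of a third-kind identity, so $g_i\circ L_{i-1}$ is a Ritt polynomial precisely when $L_{i-1}=\id$. I expect this last bookkeeping, together with the reduction to the two-factor statement, to be the main obstacle; the key enabling input, a basic Ritt identity with scaling-only witnesses that leaves $\vec{L}$ untouched, is already supplied by Lemma~\ref{scalingbasic2}.
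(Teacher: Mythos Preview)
Your approach is exactly the paper's, only written out in full: the paper's proof is two sentences, citing Lemma~\ref{seeswaplem} for the \textsf{C}-cluster case and Lemma~\ref{1mono} for the \textsf{C}-free case and leaving the verification of the cleanup axioms implicit. One slip: in your \textsf{C} case with $p=2$ you set $g_{i+1}:=C_q$, $g_i:=C_2$, which does not swap the factors; you mean $g_{i+1}:=C_2$, $g_i:=C_q$, and the symmetric case $q=2$ goes the same way. (Both you and the paper are tacitly reading $C_2\circ C_q=C_q\circ C_2$ as the relevant identity here, even though strictly speaking it is not among the listed basic Ritt identities.)
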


\begin{proof}
 Inside a \textsf{C} cluster, all factors are type \textsf{C} or degree two, and we have shown (Lemma~\ref{seeswaplem})
 that the only Ritt swaps amongst these come from $C_m \circ C_n = C_n \circ C_m$, which clearly satisfy the conclusion of this proposition. Inside a \textsf{C}-free cluster, this follows immediately from Lemma~\ref{1mono}.\end{proof}

If each decomposition admitted a unique preclustering with maximal clusters, all possible Ritt swaps would be completely described by Proposition~\ref{swap-inside}, because of the observation  (Remark~\ref{cluster-rk}) that $(f_{i+1}, f_{i})$ is a cluster whenever $t_i \star \vec{f}$ is defined. The following example demonstrates how this can fail.

\begin{Rk}
Consider $\vec{f} := (C_3, C_2, x^{17}\cdot (x^2 +1) )$. Clearly, $3 > 1 >0$ is a preclustering of $\vec{f}$, with a cleanup given by $h_i = f_i$ and $L_i = \id$ for all $i$. But $3 > 2 >0$ is also a preclustering, with a
cleanup $( C_3, x^2, x^{17}\cdot (x^2 +1); ( \id, + (-2), \id) )$. Both Ritt swaps are defined, but for each preclustering only one of them is inside a cluster. It is also clear that $\vec{f}$ is not a single cluster.\end{Rk}

How clusters might fuse and overlap can be read off easily from the linear factors in a cleanup.
It is not hard to see that two adjacent clusters of the same kind (both \textsf{C} or both \textsf{C}-free) can be fused into one cluster if and only if the linear factor between them is identity in some cleanup. It is a good deal harder to show that a \textsf{C} cluster and a \textsf{C}-free cluster can only fuse when the \textsf{C}-free cluster is made up of a single factor of degree two. Along the way we show that two overlapping clusters fuse, unless at least one is a \textsf{C} cluster and the overlap is a single factor of degree two.

\begin{lemma} \label{land-cluster-fuse}
Suppose that $(\vec{h}, \vec{L})$ is a cleanup of a preclustering $k > \ldots > c > b > a > \ldots >0$ of a decomposition $\vec{f}$; that $\vec{f}_{[c,b)}$ and $\vec{f}_{[b,a)}$ are both \textsf{C}-free clusters; and their concatenation $\vec{f}_{[c,a)}$ is also a cluster. Then $L_b = \id$ and $(\vec{h}, \vec{L})$ is also a cleanup of the preclustering $k > \ldots > c > a > \ldots >0$ with the two clusters fused.
\end{lemma}

\begin{proof}
 Since $\vec{f}_{[a,c)}$ is a cluster, $k > \ldots > c > a > \ldots >0$ is indeed a preclustering of $\vec{f}$, which admits a cleanup $(\vec{g}, \vec{M})$ with $M_b = \id$. Clearly, $(\vec{g}, \vec{M})$ is also a cleanup of the original preclustering $k > \ldots > c > b > a > \ldots >0$. Apply Lemma~\ref{unique-cleanup} to compare the translations $M_b$ and $L_b$ sitting to the right of the \textsf{C}-free cluster $\vec{f}_{[a,b)}$ in these two cleanups of the original preclustering. If $f_{b+1}$ is not type \textsf{A}, part (2) of that Lemma immediately gives $L_b = M_b$.
 If $f_{b+1}$ is type \textsf{A}, part (3) of that Lemma gives a translation $T$ such that
  $$ g_{b+1} \circ T = (\pm 1) \ast h_{b +1} \mbox{ and }
 T^{-1} \circ M_{b} = ( \cdot (\pm 1)) \circ L_{b} \circ ( \cdot (\pm 1))$$
 So $g_{b+1} = ( (\pm 1) \ast h_{b +1}) \circ T^{-1} = ( (\pm 1) \ast h_{b +1}) \circ ( ( \cdot (\pm 1)) \circ L_{b} \circ ( \cdot (\pm 1)) )$ is a Ritt polynomial. With the more detailed analysis of $( \cdot (\pm 1)) )$ in Remark~\ref{neg1wild}, it follows that $h_{b +1} \circ L_b$ is a Ritt polynomial, and $L_b = id$.
 \end{proof}

 \begin{lemma} \label{C-cluster-fuse}
Suppose that $(\vec{h}, \vec{L})$ is a cleanup of a preclustering $k > \ldots > c > b > a > \ldots >0$ of a decomposition $\vec{f}$; that $\vec{f}_{[c,b)}$ and $\vec{f}_{[b,a)}$ are both \textsf{C} clusters; and their concatenation $\vec{f}_{[c,a)}$ is also a cluster. Then $L_{b} = (\cdot \pm 1)$.

 If $L_b = \id$, then $(\vec{h}, \vec{L})$ is also a cleanup of the preclustering $k > \ldots > c > a > \ldots >0$ with the two clusters fused. Otherwise, $L_b = (\cdot -1)$ and a cleanup of this preclustering may be obtained by pushing the scaling $(\cdot -1)$ left as in Remark~\ref{neg1wild}.
\end{lemma}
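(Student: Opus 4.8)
The plan is to exploit the uniqueness of cleanups (Lemma~\ref{unique-cleanup}) together with the classification of linear self-relations of Chebyshev polynomials. First I would note that since $\vec{f}_{[c,a)}$ is assumed to be a cluster and both of its subsegments $\vec{f}_{[c,b)}$, $\vec{f}_{[b,a)}$ are \textsf{C} clusters, the concatenation must itself be a \textsf{C} cluster (it cannot be \textsf{C}-free, since it contains odd-degree Chebyshev factors, and by Remark~\ref{cluster-rk}(4) a \textsf{C} cluster shrinks to a \textsf{C} cluster or to something linearly related to $C_{2^M}$). Hence $k > \ldots > c > a > \ldots > 0$ is a genuine preclustering of $\vec{f}$, and by Lemma~\ref{clustercleanlem} it admits a cleanup $(\vec{g},\vec{M})$. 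Inside the fused \textsf{C} cluster the definition of cleanup forces $g_i = C_{\deg(f_i)}$ for all $c \geq i > a$, and in particular we may take $M_b = \id$ (the linear factor at the internal index $b$ need not be recorded once $b$ is no longer a cluster boundary). Clearly $(\vec{g},\vec{M})$ is also a cleanup of the original, finer preclustering.

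Next I would compare the two cleanups $(\vec{h},\vec{L})$ and $(\vec{g},\vec{M})$ of the \emph{finer} preclustering at the cluster boundary $b$, which sits to the right of the \textsf{C} cluster $\vec{f}_{[b,a)}$. Apply Lemma~\ref{unique-cleanup}: since the factor $h_{a+1}$ (or $h_{b+1}$, depending on which side one reads) inside a \textsf{C} cluster is a Chebyshev polynomial and in particular not type~\textsf{A}, case (3) of that lemma does not arise, and part (2) gives $M_b = (\cdot(\pm 1)) \circ L_b \circ (\cdot(\pm 1))$. Since $M_b = \id$, this yields $L_b = (\cdot(\pm 1))$, which is the first assertion.

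For the second assertion, if $L_b = \id$ then $(\vec{h},\vec{L})$ already satisfies all the defining conditions of a cleanup of the coarser preclustering: conditions (1)--(4) are unchanged, and condition (5) is vacuous because the fused cluster is a \textsf{C} cluster, not \textsf{C}-free; so nothing needs to be done. If instead $L_b = (\cdot -1)$, then $b$ being an interior index of a \textsf{C} cluster of the coarse preclustering, the recipe of Remark~\ref{neg1wild} (with starting point $b$) pushes the scaling by $-1$ leftward through the Chebyshev factors $h_i$ ($i > b$), replacing each $h_i$ by $(-1)^{s_i} \ast h_i$ and adjusting the surrounding $L_i$ accordingly, until it is swallowed by the first factor of even degree or absorbed into $L_k$; by Remark~\ref{astRitt} each $(-1)^{s_i} \ast h_i$ is still a Chebyshev polynomial of the correct degree, so the result is a legitimate cleanup of the coarser preclustering.

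I expect the only real point requiring care is the verification that the concatenated cluster is necessarily a \textsf{C} cluster (so that the coarse preclustering is valid and its cleanup has Chebyshev factors at the relevant indices) — this is where one invokes Remark~\ref{cluster-rk}(4) and the fact that odd-degree Chebyshev factors persist. Everything else is a bookkeeping application of the uniqueness statement Lemma~\ref{unique-cleanup} and the $-1$-scaling manipulation of Remark~\ref{neg1wild}, both already available.
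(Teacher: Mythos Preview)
Your proposal is correct and follows essentially the same route as the paper: take a cleanup $(\vec{g},\vec{M})$ of the coarser preclustering (which automatically has $M_b=\id$ since $b$ is no longer a boundary), observe it is also a cleanup of the finer preclustering, and compare via Lemma~\ref{unique-cleanup} to force $L_b=(\cdot\pm 1)$; the second paragraph is handled exactly as you describe. Your write-up is somewhat more explicit than the paper's (which simply says ``as in Lemma~\ref{land-cluster-fuse}''), particularly in verifying that the fused cluster is a \textsf{C} cluster and in spelling out why case~(3) of Lemma~\ref{unique-cleanup} is irrelevant; note that the latter is automatic here because the cluster to the left of $b$ is a \textsf{C} cluster, and case~(3) only arises for \textsf{C}-free clusters.
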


\begin{proof}
As in Lemma \ref{land-cluster-fuse}, any cleanup of the new preclustering $k > \ldots > c > a > \ldots >0$ is also a cleanup of the old preclustering, and applying Lemma~\ref{unique-cleanup} to compare the two cleanups of the old preclustering immediately gives $L_{b} = (\cdot \pm 1)$.
\end{proof}

 It is clear that a \textsf{C} cluster cannot merge with a \textsf{C}-free cluster unless all factors inside the \textsf{C}-free cluster are quadratic. The issue of quadratics is somewhat delicate. The intent of the next definition is that a quadratic needs a gate in the correct direction to get from one cluster to another.

\begin{Def} \label{gate-def}
Suppose that $(\vec{h}, \vec{L})$ is a cleanup of a preclustering $\vec{a}$ of a decomposition $\vec{f}$. Whether or not there are gates between two of the clusters depends on the kinds (\textsf{C} or \textsf{C}-free) of the two clusters and the linear factor between them. Fix $a_j \neq 0, k$ and call $\vec{f}_{[a_{j+1}, a_j)}$ the \emph{left cluster}, and $\vec{f}_{[a_{j}, a_{j-1})}$ the \emph{right cluster}, and $L_{a_j} =: L$

\begin{itemize}
\item If the left cluster and the right cluster are both \textsf{C} clusters, this preclustering has \begin{itemize}
 \item a \emph{left-to-right gate} at $j$ if $L$ is a scaling; and
 \item a \emph{right-to-left gate} at $j$ if $L =  A_\lambda$ or $L = (\cdot -1) \circ A_\lambda$ for some $\lambda$ (see Remark~\ref{linrelquadrk} for the definition of $A_\lambda$). \end{itemize}

\item If the left cluster is a \textsf{C}-free cluster and the right cluster is a \textsf{C} cluster, this preclustering has\begin{itemize}
 \item a \emph{left-to-right gate} at $j$ if $L = \id $ and $h_{a_j +1}$ is a monomial or a Ritt polynomial with in-degree greater than $1$; and
 \item a \emph{right-to-left gate} at $j$ if $h_{a_j +1} \circ L \circ (-2)$ is a Ritt polynomial. \end{itemize}
\item If the left cluster is a \textsf{C} cluster and the right cluster is a \textsf{C}-free cluster, this preclustering has \begin{itemize}
 \item a \emph{left-to-right gate} at $j$ if $L$ is a scaling; and
 \item a \emph{right-to-left gate} at $j$ if $L =  B_\lambda$ or $L = (\cdot -1) \circ B_\lambda$ for some $\lambda$ (see Remark~\ref{linrelquadrk} for the definition of $B_\lambda$).
   \end{itemize}
\item If both are \textsf{C}-free clusters, then this preclustering has \emph{ a two-way gate} at $j$ if $L = \id$.
\end{itemize}

In general, there is a \emph{two-way gate} whenever there are both a right-to-left gate and a left-to-right gate. Otherwise, there is a \emph{one-way gate}.
\end{Def}

Gates are a property of a preclustering and decomposition together, but when $\vec{f}$ is understood, we often say ``$\vec{a}$ has a such-and-such gate at $j$'', and vice versa.

\begin{prop} \label{twodoork}
If a cleanup of a preclustering has a two-way gate at $j$, then the $j$th and the $(j+1)$st cluster are of the same kind (both \textsf{C} or both \textsf{C}-free), and the concatenation $\vec{f}_{[a_{j+1}, a_{j-1})}$ of the two is itself a cluster.\end{prop}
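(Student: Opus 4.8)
The plan is to prove Proposition~\ref{twodoork} by a case analysis on the kinds of the $(j{+}1)$st (left) cluster $\vec{f}_{[a_{j+1},a_j)}$ and the $j$th (right) cluster $\vec{f}_{[a_j,a_{j-1})}$, writing $L:=L_{a_j}$ for the linear factor between them and using the fact (Definition~\ref{gate-def}) that a two-way gate is the simultaneous presence of a left-to-right and a right-to-left gate.

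First I would dispose of the two mixed cases, showing no two-way gate can occur there; this is where the real work is, and it immediately yields the ``same kind'' half of the conclusion. Suppose the left cluster is \textsf{C}-free and the right cluster is a \textsf{C} cluster. A left-to-right gate forces $L=\id$ and forces $h_{a_j+1}$ to be a monomial or a Ritt polynomial of in-degree $>1$; a right-to-left gate then says $h_{a_j+1}\circ(-2)$ is a Ritt polynomial. If $h_{a_j+1}=P_n$, then $h_{a_j+1}\circ(-2)=(x-2)^n$, which is not a Ritt polynomial (it is monic but does not vanish at $0$, so it is none of $P_p$, $C_p$, or $x^k u(x^\ell)^m$ with $k\neq 0$), a contradiction. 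Otherwise $h_{a_j+1}$ is a Ritt polynomial of in-degree $>1$; since $a_j+1$ lies inside the \textsf{C}-free cluster, $h_{a_j+1}$ is not type \textsf{C} (the cleanup factors inside a \textsf{C}-free cluster are Ritt polynomials linearly related to those in the definition of that cluster, none of which is type \textsf{C}, by Lemma~\ref{unique-cleanup}), and it is not type \textsf{A} because type \textsf{A} Ritt polynomials have in-degree $1$ (Remark~\ref{Jnoswapleft}). But $h_{a_j+1}\circ(-2)$ being a Ritt polynomial exhibits $h_{a_j+1}$ as nontrivially translation related to a Ritt polynomial, so by Theorem~\ref{transrelate} it would have to be type \textsf{A} or type \textsf{C}, a contradiction. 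In the other mixed case (left \textsf{C} cluster, right \textsf{C}-free cluster) a left-to-right gate requires $L$ to be a scaling, while a right-to-left gate requires $L=B_\lambda$ or $L=(\cdot -1)\circ B_\lambda$ for some $\lambda$, and by Remark~\ref{linrelquadrk} neither of these is ever a scaling. Hence a two-way gate forces the two clusters to be of the same kind.

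Next, the \textsf{C}-\textsf{C} case. A left-to-right gate says $L$ is a scaling and a right-to-left gate says $L=A_\lambda$ or $L=(\cdot -1)\circ A_\lambda$; since $A_\lambda$ is a scaling only when $\lambda=\pm 1$, in which case $A_\lambda=\id$ (Remark~\ref{linrelquadrk}), we get $L\in\{\id,(\cdot -1)\}$. Using the cleanup, $\vec{f}_{[a_{j+1},a_{j-1})}^\circ$ equals, up to linear polynomials on the left and on the right, $(h_{a_{j+1}}\circ\cdots\circ h_{a_j+1})\circ L\circ(h_{a_j}\circ\cdots\circ h_{a_{j-1}+1})$, and inside a \textsf{C} cluster each $h_i$ is a Chebyshev polynomial $C_{\deg(f_i)}$, so the two composites are $C_{m'}$ and $C_{n'}$ with $m'=\deg(\vec{f}_{[a_{j+1},a_j)}^\circ)$ and $n'=\deg(\vec{f}_{[a_j,a_{j-1})}^\circ)$, neither a power of $2$. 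If $L=\id$ this is $C_{m'n'}$ up to linear factors; if $L=(\cdot -1)$, then since $C_{m'}$ is an even polynomial when $m'$ is even and an odd polynomial when $m'$ is odd, we may absorb $(\cdot -1)$ into $C_{m'}$ or past it to the left and again obtain $C_{m'n'}$ up to linear factors. As $m'n'$ is not a power of $2$, the concatenation is a \textsf{C} cluster.

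Finally, the \textsf{C}-free--\textsf{C}-free case, where a two-way gate is exactly $L=\id$. The cleanup factors lying in the index range of $\vec{f}_{[a_{j+1},a_{j-1})}$ are all Ritt polynomials, none type \textsf{C} (being inside \textsf{C}-free clusters). Restricting the linear equivalence between $(L_k\circ h_k\circ L_{k-1},\ldots,h_1\circ L_0)$ and $\vec{f}$ to this range, and using $L_{a_j}=\id$ together with $L_i=\id$ for all non-boundary $i$, one sees that $\vec{f}_{[a_{j+1},a_{j-1})}$ is linearly equivalent to $(B\circ h_{a_{j+1}},h_{a_{j+1}-1},\ldots,h_{a_{j-1}+2},h_{a_{j-1}+1}\circ A)$ with $A:=L_{a_{j-1}}$ and $B:=L_k$ if $a_{j+1}=k$ and $B:=\id$ otherwise, which is precisely the shape of a \textsf{C}-free cluster. (Note we do not invoke Lemmas~\ref{land-cluster-fuse} and~\ref{C-cluster-fuse}, which run in the opposite direction.) The main obstacle is the mixed-case analysis, specifically recognizing that a two-way gate in a mixed configuration would force the boundary factor $h_{a_j+1}$ to be simultaneously of in-degree $>1$ and --- via translation relatedness and Theorem~\ref{transrelate} --- of type \textsf{A}, which is impossible; the remaining cases are bookkeeping with the cleanup together with elementary parity properties of Chebyshev polynomials.
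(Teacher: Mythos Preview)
Your proof is correct and follows essentially the same approach as the paper's: a case analysis on the kinds of the two clusters, ruling out the mixed cases by showing the gate conditions are incompatible, and in the same-kind cases reading off that the concatenation is a cluster from the form of the cleanup. Your treatment is more detailed than the paper's, which dispatches each case in a single sentence; in particular, you spell out the verification that the concatenation is a cluster in the \textsf{C}--\textsf{C} and \textsf{C}-free--\textsf{C}-free cases, which the paper leaves implicit.

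One minor bookkeeping point: in the \textsf{C}-free--\textsf{C}-free case, your identification of $A$ and $B$ is not quite right, since restricting the linear equivalence between the cleanup presentation and $\vec{f}$ to a subrange introduces the linear-equivalence witnesses at the endpoints, not just the $L_i$'s. This does not affect the argument at all --- the definition of \textsf{C}-free cluster only requires \emph{some} linear $A,B$, and the interior factors are still the Ritt polynomials $h_i$ (none type \textsf{C}) because all the intermediate $L_i$'s vanish --- but you should phrase it as ``for some linear $A,B$'' rather than naming them explicitly.
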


\begin{proof}
 Between clusters of different kinds, two-way gates are not possible. If the \textsf{C} cluster is on the left, note that $B_\lambda$ is never a scaling. If the \textsf{C} cluster is on the right, note that in order for both $h_{a_j +1} \circ L$ and $h_{a_j +1} \circ L \circ (-2)$ to be Ritt polynomials, they must be type \textsf{A}, but that is incompatible with the ``monomial or non-trivial in-degree'' part of the definition.

 Between two \textsf{C}-free clusters, a two-way gate means that the linear factor $L_{a_j}$ is identity by definition. Between two \textsf{C} clusters, a two-way gate means that $L_{a_j}$ is both a scaling and $ (\cdot (\pm 1)) \circ A_\lambda$ for some $\lambda$, which is only possible for $L_{a_j} = (\cdot (\pm 1))$, which can then be absorbed into or passed through the left \textsf{C} cluster.
\end{proof}

Lemmas~\ref{land-cluster-fuse} and~\ref{C-cluster-fuse} give the converse of Proposition~\ref{twodoork}.

\begin{lemma}\label{ezdoork}
\begin{enumerate}
\item Gates are properties of the preclustering, independent of the cleanup.

\item In Proposition~\ref{swap-inside}, the preclustering of the pre-swap decomposition and the preclustering of the post-swap decomposition obtained there have gates in the same places, in the same directions.
\end{enumerate}\end{lemma}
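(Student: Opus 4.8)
The plan is to reduce the statement to Lemma~\ref{unique-cleanup}. First, whether an interval $\vec f_{[b,a)}$ is a \textsf{C} cluster or a \textsf{C}-free cluster is a property of the linear-equivalence class of $\vec f$ alone (Remark~\ref{cluster-rk}), so the kinds of the left and right clusters at a boundary $a_j$ are determined by $\vec a$ and $\vec f$, not by a choice of cleanup; it therefore remains to check that the conditions Definition~\ref{gate-def} imposes on the linear factor $L:=L_{a_j}$ (and, in the two cases with a \textsf{C}-free left cluster, on $h_{a_j+1}$) are cleanup-independent. By Lemma~\ref{unique-cleanup}, any two cleanups of a fixed preclustering differ only through (i) the $\pm 1$-scaling moves of Remark~\ref{neg1wild}, and (ii) at each type \textsf{A} factor $f_{a_m+1}$, a substitution $h_{a_m+1}\mapsto h_{a_m+1}\circ T$, $L_{a_m}\mapsto T^{-1}\circ L_{a_m}$ by a translation $T$; both (i) and (ii) preserve clusterhood and cluster kinds, since a $\pm 1$-scaling fixes each Chebyshev polynomial and carries a Ritt polynomial not of type \textsf{C} to another such (Remark~\ref{astRitt}).

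I would dispose of ambiguity (ii) first. It alters the cleanup only at the boundary $a_m$ and leaves the composite $h_{a_m+1}\circ L_{a_m}$ fixed. Since a type \textsf{A} polynomial is neither a Chebyshev polynomial nor of type \textsf{C}, whenever $f_{a_j+1}$ is type \textsf{A} the left cluster at $a_j$ is \textsf{C}-free, and in that situation every gate condition of Definition~\ref{gate-def} at $a_j$ is a function of $h_{a_j+1}\circ L_{a_j}$: the right-to-left conditions directly, the "two-way gate" condition "$L=\id$" because item (5) of the definition of cleanup makes "$L_{a_j}=\id$" equivalent to "$h_{a_j+1}\circ L_{a_j}$ is a Ritt polynomial", and the left-to-right condition "$L=\id$ and $h_{a_j+1}$ a monomial or of in-degree $>1$" because it can never hold for a type \textsf{A} factor, which is not a monomial and has in-degree $1$ (Remark~\ref{Jnoswapleft}). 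For ambiguity (i) I would walk through the four configurations of Definition~\ref{gate-def}. Using Remark~\ref{neg1wild} one identifies, for $a_j\neq 0,k$, the only $\pm 1$-moves that touch $L_{a_j}$: left-composition by $(\cdot -1)$, arising from a move begun at $a_j$ itself (possible only when the left cluster is a \textsf{C} cluster); and conjugation by $(\cdot -1)$, arising from a move begun further to the right that survives to $a_j$. The conditions "$L$ is a scaling" and the two-\textsf{C}-free condition "$L=\id$" (up to absorbing a $(\cdot\pm 1)$ into an adjoining \textsf{C} cluster) are manifestly stable; "$h_{a_j+1}\circ L\circ(-2)$ is a Ritt polynomial" is stable since $(-1)\ast(\text{Ritt polynomial})$ is again a Ritt polynomial; and the conditions "$L=A_\lambda$ or $(\cdot -1)\circ A_\lambda$" and "$L=B_\lambda$ or $(\cdot -1)\circ B_\lambda$" are closed under left-composition by $(\cdot -1)$, and for $\lambda=\pm 1$ (so $A_\lambda=B_\lambda=\id$) they absorb a conjugating $(\cdot -1)$ into the neighbouring \textsf{C} cluster. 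The one genuinely delicate case — the step I expect to be the main obstacle — is $\lambda\neq\pm 1$: there the presence of $A_\lambda$ or $B_\lambda$ forces the factor across $L_{a_j}$ to be a quadratic and $L_{a_j}$ itself to encode the self-linear-relation of $C_2$ (Remark~\ref{linrelquadrk}), and that quadratic blocks the conjugating $(\cdot -1)$-move, so $L_{a_j}$ is in fact rigid and the condition survives.

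\textbf{Part (2).} Here I would use that the two cleanups $(\vec h,\vec L)$ and $(\vec g,\vec L)$ produced by Proposition~\ref{swap-inside} for $\vec f$ and $t_i\star\vec f$ share every linear factor $L_m$, and differ only in that $g_{i+1}\circ g_i = h_{i+1}\circ h_i$ is a basic Ritt identity while $g_m=h_m$ for $m\neq i,i+1$. Fix $a_j\neq 0,k$: then $L_{a_j}$ is literally unchanged, and since $i$ lies interior to a cluster, $\vec a$ remains a preclustering of $t_i\star\vec f$ with clusters of the same kinds (Proposition~\ref{swap-inside}, Remark~\ref{cluster-rk}). If $a_j+1\notin\{i,i+1\}$ then $g_{a_j+1}=h_{a_j+1}$ and the gate conditions at $a_j$ are evaluated on identical data, so part (1) finishes it. If $a_j+1\in\{i,i+1\}$ the left cluster at $a_j$ is the one in which the swap takes place; when it is a \textsf{C} cluster Definition~\ref{gate-def} does not refer to $h_{a_j+1}$, and when it is \textsf{C}-free, $L=L_{a_j}$ is a translation (item (5) of the definition of cleanup) and a short case analysis over the basic Ritt identities that $h_{a_j+1}$ can participate in shows that "$h_{a_j+1}$ is a monomial or has in-degree $>1$" and "$h_{a_j+1}\circ L\circ(-2)$ is a Ritt polynomial" (the latter recast via Theorem~\ref{transrelate} as a translation-relatedness statement) hold for $h_{a_j+1}$ exactly when they hold for $g_{a_j+1}$. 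Thus the preclusterings of $\vec f$ and $t_i\star\vec f$ have gates in the same positions and directions, and by part (1) this conclusion does not depend on the cleanups used.
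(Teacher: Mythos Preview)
Your overall strategy matches the paper's: for Part~(1) reduce to Lemma~\ref{unique-cleanup}, and for Part~(2) use that Proposition~\ref{swap-inside} leaves the linear factors $L_i$ and cluster kinds unchanged, so that only the case where the gate condition mentions $h_{a_j+1}$ explicitly requires work. The paper's proof is much terser than yours; for Part~(1) it literally says ``it is easy to see,'' and for Part~(2) it isolates the single configuration (left cluster \textsf{C}-free, right cluster \textsf{C}, swap at $a_j+1$) and disposes of it by noting via Remark~\ref{Jnoswapleft} that neither $h_{a_j+1}$ nor $g_{a_j+1}$ can be type~\textsf{A}, so ``$h_{a_j+1}\circ M$ is a Ritt polynomial'' is equivalent to ``$M=\id$'' for both. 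Your case analysis over the basic Ritt identities reaches the same conclusion, and in fact handles the left-to-right condition ``monomial or in-degree $>1$'' more explicitly than the paper does.

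The one place where your argument has a gap is the ``delicate case'' in Part~(1), the $A_\lambda$ condition at a \textsf{C}--\textsf{C} boundary under conjugation by $(\cdot\,-1)$. Your resolution asserts that the presence of $A_\lambda$ or $B_\lambda$ in $L_{a_j}$ \emph{forces} an adjacent factor to be quadratic, which then absorbs the propagating $(\cdot\,-1)$; but the gate condition ``$L_{a_j}=A_\lambda$'' is purely a constraint on the linear factor and does not constrain $h_{a_j}$ or $h_{a_j+1}$ at all --- a gate signals the \emph{potential} for a quadratic to cross, not the presence of one. So the $(\cdot\,-1)$-move starting at a lower \textsf{C}-cluster boundary can in principle reach $L_{a_j}$ through a chain of odd-degree Chebyshev factors, and your blocking argument does not go through as stated. (For the $B_\lambda$ family the issue does not arise: a direct computation gives $B_\lambda\circ(\cdot\,-1)=B_{i\lambda}$ and $(\cdot\,-1)\circ B_\lambda\circ(\cdot\,-1)=(\cdot\,-1)\circ B_{i\lambda}$, so that family is genuinely closed under $(\cdot\,\pm 1)$-conjugation.) The paper does not spell this out either, so you have correctly identified the only nontrivial point in the verification; but your proposed fix needs to be replaced by a closer reading of which $(\cdot\,\pm 1)$-moves can actually occur at a \textsf{C}--\textsf{C} boundary.
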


\begin{proof}
 For the first part, it is easy to see that the definition of gates is invariant under the few ways listed in Lemma~\ref{unique-cleanup} for two cleanups of the same decomposition to differ from each other.

 For the second part, recall that the kinds (\textsf{C} or \textsf{C}-free) of clusters, and the linear factors of the cleanup do not change in Proposition~\ref{swap-inside}. Thus, continuing to use the notation from the definition of gates, the only case that needs any work is when the left cluster is a \textsf{C}-free cluster and the right cluster is a \textsf{C} cluster, and $(f_{a_j +2}, f_{a_j +1})$ are the factors involved in the Ritt-swap.

 By Remark~\ref{Jnoswapleft}, the $(a_j+1)$st factors $h_{a_j +1}$ of the pre-swap decomposition and $g_{a_j +1}$ of the post-swap decomposition are not type \textsf{A}. So for fixed linear $M$, $h_{a_j +1} \circ M$ is a Ritt polynomial if and only if $M = \id$, if and only if $g_{a_j +1} \circ M$ is a Ritt polynomial.
\end{proof}

\begin{lemma} \label{precl-linrel}
If $\vec{a}$ is preclustering of a decomposition $\vec{f}$, then for any linear $M$ and $N$, $\vec{a}$ is also a preclustering of the decomposition $\vec{g} := (N \circ f_k, f_{k-1}, \ldots, , f_2, f_1 \circ M)$, with the same kinds of gates in the same places. \end{lemma}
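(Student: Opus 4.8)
The plan is to exploit the fact that $\vec g$ differs from $\vec f$ only in its two outermost factors: $g_k = N \circ f_k$, $g_1 = f_1 \circ M$, and $g_i = f_i$ for $1 < i < k$. Hence for every interval $[b,a)$ with $b < k$ and $a > 0$ one has $\vec g_{[b,a)} = \vec f_{[b,a)}$ verbatim, so among the clusters of the preclustering $\vec a$ only the leftmost, $\vec f_{[a_r,a_{r-1})}$, and the rightmost, $\vec f_{[a_1,a_0)}$, are affected. First I would check that these two remain clusters of the same kind after composition with an outer linear factor: if a \textsf{C} cluster satisfies $B\circ(\,\cdot\,)^\circ\circ A = C_n$, then post-composing (resp.\ pre-composing) the composite by a linear polynomial is absorbed by replacing $B$ by $B\circ N^{-1}$ (resp.\ $A$ by $M^{-1}\circ A$); and if a \textsf{C}-free cluster is linearly equivalent to $(B\circ h_b,\ldots,h_{a+1}\circ A)$ with the $h_i$ Ritt polynomials none of type \textsf{C}, then its $\vec g$-version is linearly equivalent to $((N\circ B)\circ h_b,\ldots,h_{a+1}\circ(A\circ M))$, again a \textsf{C}-free cluster. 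This already shows $\vec a$ is a preclustering of $\vec g$; the case $r\le 1$ has no interior boundaries and is then complete.

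For the claim about gates I would use Lemma~\ref{ezdoork}(1): gates are an invariant of the preclustering, not of a chosen cleanup. So it suffices to start from a cleanup $(\vec h,\vec L)$ of $\vec a$ relative to $\vec f$ and produce a cleanup of $\vec a$ relative to $\vec g$ whose interior boundary linear factors carry the same gates. On the left I would set $L'_k := N\circ L_k$; since no cleanup condition and no gate condition constrains $L_k$, and $\deg(N\circ f_k)=\deg f_k$, this is legitimate and changes nothing interior. On the right, if $\vec f_{[a_1,a_0)}$ is a \textsf{C} cluster I would set $L'_0 := L_0\circ M$, which is again a legitimate cleanup (nothing constrains $L_0$ for a \textsf{C} cluster) and leaves every interior factor, hence every gate, untouched. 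If $\vec f_{[a_1,a_0)}$ is \textsf{C}-free, I would write $L_0\circ M = S\circ T$ with $S$ a scaling and $T$ a translation, and push $S$ leftward through this \textsf{C}-free cluster, and on through any further consecutive \textsf{C}-free clusters, exactly as in the proof of Lemma~\ref{babycleanuplem} (using Remark~\ref{astRitt} to keep the migrating factors Ritt), until $S$ is absorbed either into $L_k$ or into the right-boundary linear factor of the first \textsf{C} cluster it meets. This is precisely the freedom already available in cleanups.

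The main obstacle, and the only part requiring genuine checking, is that this migration of a scaling does not change the gate types at the interior boundaries it crosses. Here I would argue case by case from Definition~\ref{gate-def}. A boundary between two \textsf{C}-free clusters carries a translation $L_{a_j}=(+c)$, which gets replaced by $(+\,c/\mu)$ for a nonzero $\mu$ (a product of degrees of the factors already traversed), so it equals $\id$ exactly when the original does and a two-way gate is neither created nor destroyed. The unique boundary at which the scaling finally comes to rest is necessarily a \textsf{C}-on-the-left, \textsf{C}-free-on-the-right boundary, because Lemma~\ref{ccluster-unique} forbids pushing a nontrivial scaling through a \textsf{C} cluster, so a scaling can never reach an $A_\lambda$-type boundary between two \textsf{C} clusters; there the linear factor $L_{a_j}$ gets post-composed by a scaling $(\cdot\nu)$, and one checks that a scaling composed with a scaling is a scaling, while $B_\lambda\circ(\cdot\nu)=B_{\lambda'}$ and $(\cdot{-}1)\circ B_\lambda\circ(\cdot\nu)=(\cdot{-}1)\circ B_{\lambda'}$, so the trichotomy ``scaling / $B_\lambda$-type / neither'' that determines the gate is preserved. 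Combining these observations with Lemma~\ref{unique-cleanup} (which bounds the ambiguity in cleanups), the cleanup produced for $\vec g$ has the same gates in the same directions as $(\vec h,\vec L)$, which finishes the proof.
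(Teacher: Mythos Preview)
Your proof is correct and follows essentially the same approach as the paper's own proof: show that only the outer clusters are affected (so $\vec a$ is a preclustering of $\vec g$), then build a cleanup for $\vec g$ from one for $\vec f$ by absorbing $N$ into $L_k$ and pushing the scaling part of $M$ leftward through consecutive \textsf{C}-free clusters until it hits the first \textsf{C} cluster (or $L_k$), and finally verify case by case from Definition~\ref{gate-def} that the modified interior linear factors carry the same gates.

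The only organizational difference is that the paper first reduces to the two special cases ``$M,N$ both translations'' and ``$M,N$ both scalings'', handling each separately, whereas you treat $N$ directly via $L_k$ and split on the type of the rightmost cluster for $M$; the substance of the scaling-migration argument and the gate checks (that $\mu\ast(+c)=(+c/\mu)$ preserves being $\id$, and that $L\circ(\cdot\nu)$ preserves the scaling/$B_\lambda$-type trichotomy at the \textsf{C}-on-left boundary) is identical in both.
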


\begin{proof}
For any $b$ and $c$, it follows immediately from the definition of ``cluster'' that $\vec{g}_{[c,b)}$ is a cluster if and only if $\vec{f}_{[c,b)}$ is a cluster, so $\vec{a}$ is also a preclustering of $\vec{g}$. It clearly suffices to prove that the gates remain the same in two special cases, when both $M$ and $N$ are scalings, and when both $M$ and $N$ are translations. Let $(\vec{h}, \vec{L})$ be a cleanup of $\vec{f}$.

If both $M$ and $N$ are translations, then $(\vec{h}; (N \circ L_k, L_{k-1}, \ldots, , L_1, L_0 \circ M) )$ is a cleanup of $\vec{g}$. Since the two outside linear factors $L_k$ and $L_0$ do not contribute to gates in any way, $\vec{g}$ obviously has the same gates as $\vec{f}$.

If both $M := (\cdot \lambda)$ and $N$ are scalings, a cleanup of $\vec{g}$ is obtained by pushing $M$ left through $(\vec{h}, \vec{L})$ until it hits a \textsf{C} cluster or the leftmost linear factor $L_k$, as in the proof of the existence of cleanups (Lemma~\ref{clustercleanlem}). That is, the linear factors $L'_i$ of the new cleanup of $\vec{g}$ will be given by
$L'_i := \mu_i \ast L_i$ for $i < j_0$, $L'_{j_0} := L_{j_0} \circ (\cdot \mu_{j_0} )$, and $L'_i := L_i$ for $i > j_0$, where $j_0$ is the index of the rightmost \textsf{C} cluster of $\vec{f}$, or $j_0 = k$ if $\vec{f}$ has no \textsf{C} clusters; and $\mu_i$ are integer powers of $\lambda$.

The cluster boundaries with $j > j_0$ are clearly unaffected. The cluster boundaries $j < j_0$ lie between two \textsf{C}-free clusters, so $\vec{f}$ has a gate at $j$ if and only if $L_{a_j} = \id$, if and only if $\mu_{a_j} \ast L_{a_j} = \id$, if and only if $\vec{g}$ also has a gate at $j$.
If $j_0$ is a cluster boundary, then it has a \textsf{C} cluster on the left and a \textsf{C}-free cluster on the right. Thus $\vec{f}$ has a left-to-right gate at $j_0$ if and only if $L_{a_{j_0}}$ is a scaling, if and only if $L_{a_{j_0}} \circ (\cdot \mu_{a_{j_0}})$ is a scaling, if and only if $\vec{g}$ also has a left-to-right gate at $j$.
Similarly, $\vec{f}$ has a right-to-left gate at $j_0$ if and only if $L_{a_{j_0}} = B_\nu = (-2) \circ (\cdot \frac{1}{\nu^2})$ for some $\nu$, if and only if $L_{a_{j_0}} \circ (\cdot \mu_{a_{j_0}}) = (-2) \circ (\cdot \frac{1}{\nu^2})  \circ (\cdot \mu_{a_{j_0}}) = B_{\nu'}$ for $\nu' := \frac{\nu}{\sqrt{\mu_{a_{j_0}}}}$, if and only if $\vec{g}$ also has a right-to-left gate at $j$.
\end{proof}

\begin{Def}
If there is a left-to-right gate at $j$ and $f_{a_j +1}$ is quadratic, or if there is a right-to-left gate at $j$ and $f_{a_j}$ is quadratic, we say that this quadratic is a \emph{wandering quadratic}.

If the wandering quadratic $f_i$ is a whole cluster (that is, $a_j = i$ and $a_{j-1} = i-1$ for some $j$ ), then $f_i$ is a \emph{fake wandering quadratic} of this preclustering. Otherwise, it is a \emph{semi-persistent wandering quadratic}.

A one-way gate with no fake wandering quadratics next to it is a \emph{semi-persistent one-way gate}.
\end{Def}

\begin{Rk} \label{waquark}
Like gates, fake and semi-persistent wandering quadratics and semi-persistent one-way gates are properties of the preclustering, independent of the cleanup.

A straightforward exercise in matching Remark~\ref{linrelquadrk} with the definition of gates shows that
 the concatenation of a cluster $f_{[b,a)}$ and an adjacent quadratic factor $f_a$ (respectively, $f_{b+1}$) is a cluster
if and only if
 any preclustering of $\vec{f}$ with $a_{j+1} = b$ and $a_j = a$ has a right-to-left gate at $j$ (respectively, a left-to-right gate at $j+1$).
This implies that the adjacent factor is a wandering quadratic. Almost conversely, if the adjacent factor is a semi-persistent wandering quadratic, then the concatenation is a cluster.
\end{Rk}

\begin{lemma} \label{move-one-per-wa-qua}
If $f_{a_j}$ (respectively, $f_{a_j +1}$) is a semi-persistent wandering quadratic of a preclustering $\vec{a}$ of a decomposition $\vec{f}$, then $\vec{b}$ given by $b_j := a_j-1$ (respectively, $b_j := a_j+1$) and $b_i = a_i$ for all $i \neq j$ is another preclustering of $\vec{f}$. For any cleanup $(\vec{h}, \vec{L})$ of $\vec{a}$, there is a cleanup $(\vec{g}, \vec{M})$ of $\vec{b}$ with $g_i = h_i$ for $i \neq j$ (respectively, $i \neq j+1$) and with $M_i = L_i$ for $i \neq j-1, j$ (respectively, for $i \neq j, j+1$). In particular, $\vec{a}$ and $\vec{b}$ have the same gates at all $j' \neq j$. \end{lemma}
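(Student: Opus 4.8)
The plan is to reduce at once to the case that $f_{a_j}$ is a semi-persistent wandering quadratic coming from a right-to-left gate at $j$; the ``respectively'' case is the same argument with the roles of the left and right clusters interchanged. First I would check that $\vec b$ is a preclustering. The block $\vec f_{[b_{j+1},b_j)}=\vec f_{[a_{j+1},a_j-1)}$ is the concatenation of the old left cluster $\vec f_{[a_{j+1},a_j)}$ with the adjacent quadratic $f_{a_j}$, and because $f_{a_j}$ is a \emph{semi-persistent} (not fake) wandering quadratic, Remark~\ref{waquark} says this concatenation is a cluster, of the same kind (\textsf{C} or \textsf{C}-free) as the old left cluster. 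The block $\vec f_{[b_j,b_{j-1})}=\vec f_{[a_j-1,a_{j-1})}$ is the old right cluster with its leftmost factor $f_{a_j}$ deleted; it is nonempty since $f_{a_j}$ is not fake, and by Remark~\ref{cluster-rk}(4) a nonempty subsequence of a cluster is again a cluster unless a \textsf{C} cluster loses all its odd-degree factors, which cannot occur here since deleting the degree-two factor $f_{a_j}$ only halves the degree of a \textsf{C} cluster's composition, and halving a number that is not a power of $2$ again yields a number that is not a power of $2$. The remaining blocks of $\vec b$ are literally clusters of $\vec a$.

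Next I would construct the cleanup. Fix a cleanup $(\vec h,\vec L)$ of $\vec a$; since $f_{a_j}$ is not fake, $a_j-1$ is not a cluster boundary of $\vec a$, so $L_{a_j-1}=\id$. Insisting that $g_i=h_i$ off $i=a_j$ and $M_i=L_i$ off $i\in\{a_j-1,a_j\}$, and matching the composition $\vec f^\circ$ computed from the two cleanups, everything cancels except the single identity
\[
M_{a_j}\circ g_{a_j}\circ M_{a_j-1}\;=\;L_{a_j}\circ h_{a_j},
\]
in which $M_{a_j}=\id$ by cleanup condition (2) (as $a_j$ is not a boundary of $\vec b$), $g_{a_j}$ must be a legal factor of its new cluster ($C_2$ if that cluster is a \textsf{C} cluster, otherwise a Ritt polynomial --- necessarily $P_2$, the unique Ritt polynomial of degree $2$), and $M_{a_j-1}$ obeys cleanup condition (5) at the new boundary. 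Solving this is a case analysis over the four configurations of Definition~\ref{gate-def}, in each of which the relevant right-to-left gate clause is exactly the input one needs: if both clusters are \textsf{C} then $h_{a_j}=C_2$ and $L_{a_j}$ is $A_\lambda$ or $(\cdot -1)\circ A_\lambda$, and $A_\lambda\circ C_2=C_2\circ(\cdot\lambda^{-1})$ (Remark~\ref{linrelquadrk}) lets one take $g_{a_j}=C_2$, $M_{a_j-1}=(\cdot\lambda^{-1})$, a stray $(\cdot -1)$ being pushed left and absorbed as in Remark~\ref{neg1wild}; if the left cluster is \textsf{C}-free and the right is \textsf{C} then $h_{a_j}=C_2=(-2)\circ P_2$ and the gate hypothesis that $h_{a_j+1}\circ L_{a_j}\circ(-2)$ is a Ritt polynomial, together with condition (5), forces $L_{a_j}\circ(-2)=\id$, hence $L_{a_j}\circ C_2=P_2$ and one takes $g_{a_j}=P_2$, $M_{a_j-1}=\id$; the remaining two configurations use the $B_\lambda$ and the scaling clauses in the same manner. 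The cleanup conditions for $(\vec g,\vec M)$ are then immediate, since only $g_{a_j}$, $M_{a_j}$, $M_{a_j-1}$ were altered and each was chosen to satisfy the relevant condition.

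The hard part will be the interaction with type \textsf{A} factors (and, secondarily, with scalings by $\pm 1$): when the rightmost factor $h_{a_j+1}$ of a \textsf{C}-free left cluster is type \textsf{A}, the translation $L_{a_j}$ to its right is not determined by $(\vec h,\vec L)$ alone --- different cleanups of $\vec a$ differ there precisely by a type \textsf{A} translation, as in case (3) of Lemma~\ref{unique-cleanup} --- so the equalities $g_i=h_i$ and $M_i=L_i$ asserted in the statement are to be understood up to exactly that ambiguity. To handle this I would first normalize $L_{a_j}$ using Lemma~\ref{unique-cleanup}, which alters only $h_{a_j+1}$ and only in the permitted way, and then run the computation of the preceding paragraph. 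This is the same bookkeeping that already appears in the proofs of Lemmata~\ref{land-cluster-fuse} and~\ref{C-cluster-fuse}, and is the only genuinely delicate point; everything else is routine re-association.

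Finally, for the clause about gates: by Lemma~\ref{ezdoork}(1), gates are a property of the preclustering and not of the chosen cleanup, and whether there is a gate of a given direction at a boundary depends only on the kinds of the two clusters meeting there and on the single linear factor between them (Definition~\ref{gate-def}). For $j'\notin\{j,j+1\}$ these data are literally unchanged in passing from $\vec a$ to $\vec b$; for $j'=j+1$ the left cluster is replaced by $\vec f_{[a_{j+1},a_j-1)}$, which has the same kind as $\vec f_{[a_{j+1},a_j)}$, and the intervening linear factor $M_{a_{j+1}}=L_{a_{j+1}}$ is unchanged. Hence $\vec a$ and $\vec b$ have the same gates at every $j'\neq j$.
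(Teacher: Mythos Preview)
Your proposal is correct and takes essentially the same approach as the paper: verify that $\vec b$ is a preclustering via Remark~\ref{waquark} and Remark~\ref{cluster-rk}(4), then construct the new cleanup by case analysis on the gate types, matching Remark~\ref{linrelquadrk} against Definition~\ref{gate-def}. The paper's own proof compresses this second step to a single sentence (``the same straightforward exercise in matching Remark~\ref{linrelquadrk} with the definition of gates''), whereas you actually carry out the four-case computation; your identification of the type~\textsf{A} subtlety is a genuine point the paper glosses over, and your resolution via the normalization freedom of Lemma~\ref{unique-cleanup}(3) is the right one.
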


\begin{proof}
 First, we show that all $\vec{f}_{[b_{i}, b_{i-1})}$ are clusters.
 One of these is the concatenation of a cluster of $\vec{a}$ and an adjacent semi-persistent wandering quadratic of $\vec{a}$, so it is a cluster by Remark~\ref{waquark}.
 Another is a cluster of $\vec{a}$ that lost a semi-persistent wandering quadratic, so it is non-empty by semi-persistence and a cluster by Remark~\ref{cluster-rk}.
 The rest are clusters of $\vec{a}$.

 The rest of the proof is the same straightforward exercise in matching Remark~\ref{linrelquadrk} with the definition of gates as in the second part of Remark~\ref{waquark}.\end{proof}

The next lemma somewhat justifies the terminology ``semi-persistent''.

\begin{lemma} \label{one-way-gates-persist}
Suppose that two preclusterings $\vec{a}$ and $\vec{b}$ of the same decomposition $\vec{f}$ only differ at one place $j$ and only by $1$, that is,
 $$k = a_r = b_r >  \ldots > a_{j+1} = b_{j+1} > b_j = a_j + 1 > a_j > a_{j-1} = b_{j-1} > \ldots > a_0=b_0=0.$$
Then $\vec{a}$ and $\vec{b}$ agree on whether the contested factor $f_{b_j}$ is a wandering quadratic; and if it is, they also agree on whether the gate at $j$ is one-way or two-way. Of course, if the gate is one-way, it goes in different directions for the two preclusterings. \end{lemma}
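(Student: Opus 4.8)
The plan is to unwind the two preclusterings simultaneously and reduce everything to the behavior at the single contested factor $f_{b_j}$, using the detailed setup in Definition~\ref{gate-def} together with Lemma~\ref{ezdoork} and Lemma~\ref{precl-linrel}. First I would fix a cleanup $(\vec{h},\vec{L})$ of $\vec{a}$; by Lemma~\ref{move-one-per-wa-qua} (applied in the case where the contested factor is a \emph{semi-persistent} wandering quadratic) one gets a cleanup $(\vec{g},\vec{M})$ of $\vec{b}$ differing only in the factors and linear data right around index $j$. The two preclusterings share the cluster boundaries at all $j' \neq j$, and since gates are properties of the preclustering alone (Lemma~\ref{ezdoork}(1)), it suffices to analyze the single boundary $j$ for $\vec{a}$ and for $\vec{b}$.

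The core of the argument is a case analysis on the kinds (\textsf{C} or \textsf{C}-free) of the left cluster $\vec{f}_{[a_{j+1},a_j)}$ and the right cluster $\vec{f}_{[a_j,a_{j-1})}$ of $\vec{a}$, and on which side the contested factor $f_{b_j}$ sits. Say $f_{b_j}$ is, in $\vec{a}$, the leftmost factor of the right cluster; then in $\vec{b}$ it has migrated to become the rightmost factor of the left cluster. In each of the four kind-combinations I would use Remark~\ref{linrelquadrk} and Remark~\ref{waquark}: the statement ``$f_{b_j}$ is a wandering quadratic for $\vec{a}$'' unwinds (via Definition~\ref{gate-def}) to a concrete linear-relatedness condition on the linear factor $L_{a_j}$ between the clusters and on $h_{a_j+1}$, and the very same condition, read from the other side after moving the quadratic, is exactly ``$f_{b_j}$ is a wandering quadratic for $\vec{b}$''. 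Concretely: if both clusters are \textsf{C}, then $f_{b_j}$ being quadratic and the gate at $j$ being left-to-right or right-to-left for $\vec{a}$ both translate into $L_{a_j}$ lying in one of the two families $(\cdot\lambda)$ or $(\cdot{\pm}1)\circ A_\lambda$; after sliding the quadratic across, the new linear factor $M_{b_j}$ is obtained from $L_{a_j}$ by absorbing a copy of $C_2$ (equivalently composing with $P_2$ on one side and $B_\lambda$ on the other, as in Remark~\ref{linrelquadrk}), and one checks that $M_{b_j}$ lands in the \emph{other} of these two families — hence the gate at $j$ for $\vec{b}$ is again one of these two kinds, going the opposite direction, and is two-way for $\vec{b}$ precisely when it was two-way for $\vec{a}$ (that being the degenerate case $L_{a_j}=(\cdot{\pm}1)$). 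The mixed cases (\textsf{C}-free next to \textsf{C}) are the genuinely fiddly ones: there ``wandering quadratic'' on the \textsf{C}-free side means $h_{a_j+1}$ has in-degree $>1$ or is a monomial and $L_{a_j}=\id$, which via Remark~\ref{Jnoswapleft} and Lemma~\ref{1mono} forces the migrated quadratic to create a Ritt identity $P_2\circ h = \widehat h\circ P_2$ on the other side, so that the gate at $j$ for $\vec{b}$ between a \textsf{C}-free and a \textsf{C} cluster (now with the quadratic on the \textsf{C}-cluster end of the \textsf{C}-free cluster absorbed) is exactly the corresponding right-to-left/left-to-right gate; and two-wayness is ruled out on both sides by Proposition~\ref{twodoork}.

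I expect the main obstacle to be the bookkeeping of linear factors across the migration in the mixed \textsf{C}/\textsf{C}-free case: one must track precisely how $L_{a_j}$ transforms into $M_{b_j}$ when a $C_2$-factor crosses the cluster boundary, using the rigid descriptions $A_\lambda(x)=\frac1{\lambda^2}x+\frac2{\lambda^2}-2$ and $B_\lambda(x)=\frac1{\lambda^2}x-2$ from Remark~\ref{linrelquadrk}, and verify that the defining predicate of a gate is preserved under exactly this transformation. The rest — that \textsf{C}-free/\textsf{C}-free boundaries only have two-way gates (the $L=\id$ case, which is symmetric in the obvious way) and that two preclusterings differing by $1$ can only differ in gate-direction, never in gate-existence — follows once the case analysis is organized, by appealing to Lemma~\ref{precl-linrel} to reduce to $L_{a_j}$ being either a scaling or a translation and handling those two subcases. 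So the proof is: reduce to the single boundary $j$ via Lemma~\ref{ezdoork}(1) and Lemma~\ref{move-one-per-wa-qua}; then do the four-way kind-case analysis, in each case unwinding the gate/wandering-quadratic predicates through Definition~\ref{gate-def} and Remark~\ref{waquark} and checking the explicit linear transformation of Remark~\ref{linrelquadrk} preserves them; conclude two-wayness is preserved and one-way direction flips.
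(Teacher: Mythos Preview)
Your approach is correct and is essentially the same as the paper's own proof, which simply reads: ``Since $a_{j+1} = b_{j+1} > b_j = a_j + 1 > a_j$, the contested factor $f_{b_j}$ is semi-persistent if it is a wandering quadratic. This keeps other non-trivial linear factors of the cleanup from interfering. This proof is another straightforward exercise in matching Remark~\ref{linrelquadrk} with the definition of gates.'' You have spelled out that exercise in detail --- the four-way case analysis on cluster kinds, tracking how $L_{a_j}$ transforms into $M_{b_j}$ via the explicit $A_\lambda$, $B_\lambda$ of Remark~\ref{linrelquadrk} --- which is exactly what the paper leaves to the reader.

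One small organizational point: your invocation of Lemma~\ref{move-one-per-wa-qua} at the outset is slightly premature, since that lemma \emph{assumes} the contested factor is a semi-persistent wandering quadratic, which is part of what you are proving. You correctly flag this with your parenthetical, but for the first half of the statement (that $\vec a$ and $\vec b$ agree on \emph{whether} $f_{b_j}$ is a wandering quadratic) you should simply take independent cleanups of $\vec a$ and $\vec b$ via Lemma~\ref{clustercleanlem} and compare their linear factors at $j$ directly; the semi-persistence observation (which the paper makes explicit and you use implicitly) guarantees neighboring cluster boundaries do not interfere. Also, your final appeal to Lemma~\ref{precl-linrel} is not quite the right tool --- that lemma concerns outer linear factors, not the internal $L_{a_j}$ --- but this does not affect the argument, since the reduction you want (separating the scaling and translation parts of $L_{a_j}$) is just elementary.
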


\begin{proof}
Since $a_{j+1} = b_{j+1} > b_j = a_j + 1 > a_j$, the contested factor $f_{b_j}$ is semi-persistent if it is a wandering quadratic. This keeps other non-trivial linear factors of the cleanup from interfering.
This proof is another straightforward exercise in matching Remark~\ref{linrelquadrk} with the definition of gates. \end{proof}

We now return to the question of fusing and overlapping clusters. The next Lemma~\ref{eat-quadratic} serves two purposes. First, it describes a way for the concatenation of a \textsf{C} cluster and a \textsf{C}-free cluster to be itself a cluster. The following Lemma~\ref{no-eat-two-quadratic} asserts that this is the only way. Second, this Lemma~\ref{eat-quadratic} states that if the concatenation of two clusters is not a cluster, but a quadratic factor can enter one cluster from the other, a gate in the correct direction must be present at this cluster boundary in the original preclustering, and a gate in the other direction is present in the new preclustering; thus, no other quadratic cannot follow this one. A similar result is mentioned
on page~4 of~\cite{MZ} but is not explicitly stated as a theorem in the text.

\begin{lemma} \label{eat-quadratic}
 Fix a preclustering $\vec{a}$ of a decomposition $\vec{f}$, and suppose that the concatenation of a \textsf{C} cluster and an adjacent factor $f_i$ is a cluster. Then either $f_i$ is a wandering quadratic, or it comes from another \textsf{C} cluster, and the concatenation of these two clusters is itself a cluster. \end{lemma}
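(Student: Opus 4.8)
The plan is to read the hypothesis through a cleanup and then reduce to the two configurations allowed by Definition~\ref{gate-def}. Let $\vec{f}_{[c,b)}$ be the \textsf{C} cluster and $f_i$ the adjacent factor; by symmetry treat the case $i = b-1$ (the factor sits to the right of the \textsf{C} cluster, so $a_{j+1} = c$, $a_j = b$, $a_{j-1} = b-1$; the left case is symmetric using the other two bullets of Definition~\ref{gate-def}). Choose a cleanup $(\vec{h},\vec{L})$ of a preclustering $\vec{a}$ of $\vec{f}$ refining this situation, so that $h_b,\ldots,h_{c-1}$ are Chebyshev polynomials (of degrees $\deg f_c,\ldots,\deg f_{b-1}$), $L := L_{b} = L_{a_j}$ is the linear factor between the two clusters, and $h_{i}$ is the first factor of whatever cluster $f_i$ belongs to. The hypothesis says $\vec{f}_{[c,b-1)}$, the concatenation of the \textsf{C} cluster with $f_i$, is a cluster.

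First I would dispose of the case that $\vec{f}_{[c,b-1)}$ is a \textsf{C} cluster. Then $B' \circ \vec{f}_{[c,b-1)}^{\circ} \circ A' = C_n$ for some $n$ not a power of $2$, and since a subsequence of a \textsf{C} cluster is again a \textsf{C} cluster or is linearly related to $C_{2^M}$ (Remark~\ref{cluster-rk}(4)), the single factor $f_i$ must itself be linearly related to $C_{q}$ for $q = \deg f_i$; if $q$ is odd then $f_i$ is itself in a \textsf{C} cluster and the concatenation of two \textsf{C} clusters being a cluster is exactly the second alternative in the conclusion (this is Lemma~\ref{C-cluster-fuse}). If $q = 2$ then $f_i$ is a quadratic; I would then check, using the defining property $B' \circ \vec{f}_{[c,b-1)}^{\circ} \circ A' = C_n$ together with the cleanup relating $\vec{f}_{[c,b-1)}$ to $(h_b,\ldots,h_{c})$ on one side and to $(h_b,\ldots,h_c,h_i\circ L')$ on the other, that the only linear factor that can sit between $C_2$ and the Chebyshev tail while keeping everything a \textsf{C} cluster is exactly one making $L = A_\lambda$ or $(\cdot -1)\circ A_\lambda$ (Remark~\ref{linrelquadrk}); in the case $q$ odd with $f_i$ a lone factor one similarly gets $L$ a scaling. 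Either way $\vec{a}$ has a right-to-left gate at $j$ by Definition~\ref{gate-def} (the ``left cluster is \textsf{C}, right cluster is \textsf{C}-free'' bullet — or rather here, since $f_i$ sits on the right, the mirror situation), so $f_i$ is a wandering quadratic.

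The remaining case is that $\vec{f}_{[c,b-1)}$ is a \textsf{C}-free cluster: it is linearly equivalent to $(B''\circ g_{b-1}, g_{b-2},\ldots, g_{c+1}, g_c \circ A'')$ with all $g_m$ Ritt polynomials, none type \textsf{C}. Comparing this decomposition of $\vec{f}_{[c,b-1)}$ with the one coming from the cleanup, namely $(h_b \circ (\text{linear}), h_{b-1}\circ(\text{linear}),\ldots, h_{c+1}, h_i \circ L')$ where $h_b,\ldots,h_c$ are Chebyshev, I would argue that the Ritt polynomial $g_{b-1}$ is linearly related to the Chebyshev $h_b$ of some odd degree $> 1$ (unless $h_b$ is $C_2$, i.e.\ quadratic, which is the escape hatch), hence $g_{b-1}$ is type \textsf{C} — contradicting the definition of a \textsf{C}-free cluster — unless $\deg h_b = 2$. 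Running this comparison left to right down the Chebyshev tail forces every $h_b,\ldots,h_{c+1}$ to have degree $2$; but a \textsf{C} cluster all of whose factors are quadratic is impossible unless it has a factor of odd degree, so this forces the \textsf{C} cluster to have been $(f_b)$ alone with $f_b$ linearly related to an odd Chebyshev — wait, that reintroduces an odd factor. I would push this bookkeeping carefully: the net outcome is that the \textsf{C} cluster contributes its (unique possible) single quadratic boundary factor and $f_i$ together with it must again be read off as a wandering quadratic via a right-to-left gate, using Remark~\ref{waquark} to convert ``concatenation with an adjacent quadratic is a cluster'' into ``gate in the correct direction, hence wandering quadratic.''

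The main obstacle I anticipate is precisely this last case analysis: ruling out that more than the one allowed quadratic crosses, and correctly propagating the linear factors through the Chebyshev tail of the \textsf{C} cluster so that the only surviving configuration is $L \in \{A_\lambda,(\cdot -1)A_\lambda\}$ (or a scaling, depending on side and kinds), which is exactly what Definition~\ref{gate-def} needs to declare a right-to-left (resp.\ left-to-right) gate. All the arithmetic here is of the type already carried out in Remark~\ref{linrelquadrk}, Corollary~\ref{oneCRk}, and Lemma~\ref{ccluster-unique}, so once the configurations are pinned down the verifications are routine; the work is in the case bookkeeping, not in any single computation. Once a gate in the correct direction is established, the definition of ``wandering quadratic'' gives the first alternative of the conclusion, and the \textsf{C}-cluster-concatenation case gives the second, completing the proof.
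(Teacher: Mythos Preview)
Your approach has a genuine gap and is also much more tangled than necessary.

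\textbf{The gap.} Your Case 2, where the concatenation $\vec{f}_{[c,b-1)}$ is a \textsf{C}-free cluster, is in fact vacuous, and your attempt to analyze it is where you get lost (``wait, that reintroduces an odd factor''). The reason it is vacuous: the original \textsf{C} cluster $\vec{f}_{[c,b)}$ is a subsequence of the concatenation, and by definition a \textsf{C} cluster has degree not a power of $2$, hence contains at least one factor of odd prime degree, which is linearly related only to type \textsf{C} Ritt polynomials (Theorem~\ref{transrelate}). A \textsf{C}-free cluster cannot contain such a factor. So the concatenation is necessarily a \textsf{C} cluster, and Case 2 never occurs. You should kill this case in one sentence rather than trying to push linear factors through a Chebyshev tail.

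\textbf{The overcomplication.} Once you know the concatenation is a \textsf{C} cluster, the new factor $f_b$ sits inside it and is therefore type \textsf{C} or quadratic. This is exactly the dichotomy the paper uses. For the type \textsf{C} case you are fine (cite Lemma~\ref{C-cluster-fuse} and Proposition~\ref{twodoork}). For the quadratic case, you propose to verify by hand that the linear factor $L_{a_j}$ lands in one of the forms $A_\lambda$, $(\cdot -1)\circ A_\lambda$, etc., so as to read off a gate from Definition~\ref{gate-def}. But you already cite Remark~\ref{waquark} later, and that remark does this for you: ``the concatenation of a cluster and an adjacent quadratic is a cluster if and only if the preclustering has a gate in the right direction,'' which immediately makes $f_b$ a wandering quadratic. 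The paper's proof then just disposes of the two degenerate sub-cases (two-way gate, fake wandering quadratic) in one line each.

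\textbf{Minor.} Your indexing is off: the factor adjacent on the right of $\vec{f}_{[c,b)}$ is $f_b$, not $f_{b-1}$, and there is no reason $a_{j-1}$ should equal $b-1$.
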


\begin{proof}
To state the lemma more precisely and less readably, let $c := a_{j+1}$ and $b:= a_j$ and $a := a_{j-1}$ to lighten notation. The lemma then says:

 If $\vec{[c,b)}$ is a \textsf{C} cluster and the concatenation $\vec{f}_{[c, b-1)}$ of it and the next factor $f_b$ is a cluster, then one of the following happens.\begin{enumerate}
 \item The whole concatenation $\vec{f}_{[c, a)}$ of the two clusters if itself a cluster; the other cluster $\vec{[b,a)}$ either is a \textsf{C} cluster, or has only one (quadratic) factor so $b = a+1$.
 \item The factor $f_b$ is a semi-persistent wandering quadratic, and Lemma~\ref{one-way-gates-persist} applies to
   the original preclustering and the preclustering
 $k > \ldots > c > b-1 > a > \ldots >0$.
 \end{enumerate}

 Similarly, if $\vec{f}_{[b,a)}$ is a \textsf{C} cluster and the concatenation $\vec{f}_{[b+1, a)}$ of it and the next factor $f_{b+1}$ is a cluster, then either the whole $\vec{f}_{[c, a)}$ is a cluster and the other cluster $\vec{f}_{[c, b)}$ was a \textsf{C} cluster or a single quadratic factor; or $f_{b+1}$ is a wandering quadratic next to a one-way gate in both preclusterings.

 The new factor $f_b$ or $f_{b+1}$ joining the \textsf{C} cluster must be type \textsf{C} or quadratic. A type \textsf{C} factor must come from a \textsf{C} cluster, and then the whole $\vec{f}_{[c, a)}$ is a cluster by Lemma~\ref{C-cluster-fuse} and Proposition~\ref{twodoork}. For the rest of the proof, we assume that the new factor is quadratic, and a wandering quadratic by the first part of Remark~\ref{waquark}.

 If the gate next to this wandering quadratic is two-way, by Proposition~\ref{twodoork} the other cluster must be a \textsf{C} cluster and the whole $\vec{f}_{[c, a)}$ is a cluster.

 If the new quadratic factor is a fake wandering quadratic (that is, a whole cluster), the whole $\vec{f}_{[c, a)}$ is precisely the thing assumed to be a cluster in the first place.
 \end{proof}

\begin{lemma} \label{no-eat-two-quadratic}
Fix a preclustering $\vec{a}$ of a decomposition $\vec{f}$, and suppose that the concatenation of a \textsf{C} cluster and more than one factor of a neighboring cluster is itself a cluster. Then the concatenation of these two clusters is itself a cluster, and the other cluster is a \textsf{C} cluster.\end{lemma}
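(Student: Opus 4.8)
The plan is to reduce everything to the already-proved Lemma~\ref{eat-quadratic} by an induction on the number of factors of the neighboring cluster that get absorbed into the \textsf{C} cluster. Set up notation as in Lemma~\ref{eat-quadratic}: write $c := a_{j+1}$, $b := a_j$, $a := a_{j-1}$, so that $\vec{f}_{[c,b)}$ is the \textsf{C} cluster and $\vec{f}_{[b,a)}$ is the neighboring cluster (the argument for the \textsf{C} cluster on the right is symmetric, so I will only write one side). The hypothesis is that for some $m \geq 2$ the concatenation $\vec{f}_{[c, b-m)}$ of the \textsf{C} cluster with the first $m$ factors $f_b, f_{b-1}, \ldots, f_{b-m+1}$ of the neighboring cluster is a cluster; since the \textsf{C} cluster is \textsf{C} and a cluster containing a genuinely composite \textsf{C}-free part cannot be linearly related to a Chebyshev polynomial, $\vec{f}_{[c, b-m)}$ is again a \textsf{C} cluster (here I use Remark~\ref{cluster-rk}(4): a subsequence of a \textsf{C}-free cluster is never \textsf{C}, so the only way the concatenation is a cluster at all is to be a \textsf{C} cluster).

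First I would run Lemma~\ref{eat-quadratic} on the single factor $f_b$: since $\vec{f}_{[c,b)}$ is a \textsf{C} cluster and $\vec{f}_{[c,b-1)}$ is a cluster (it is a sub-concatenation of $\vec{f}_{[c,b-m)}$, hence a cluster by Remark~\ref{cluster-rk}(4) once one checks it is not the degenerate case of losing all odd-degree factors — which it cannot be, as it properly contains the \textsf{C} cluster), one of the two conclusions of Lemma~\ref{eat-quadratic} holds. If we are in conclusion (1), then already the whole $\vec{f}_{[c,a)}$ is a cluster and $\vec{f}_{[b,a)}$ is a \textsf{C} cluster (or a single quadratic, in which case $m \geq 2$ is impossible), and we are done. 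So the substantive case is conclusion (2): $f_b$ is a \emph{semi-persistent} wandering quadratic sitting next to a one-way gate at $j$, and by Lemma~\ref{move-one-per-wa-qua} the preclustering $\vec{b}$ with $b_j := b-1$ is another preclustering of $\vec{f}$ with the same gates away from $j$, and by Lemma~\ref{one-way-gates-persist} the gate at $j$ in $\vec{b}$ is again one-way but pointing \emph{the other way} — i.e.\ from the (now larger) \textsf{C} cluster $\vec{f}_{[c,b-1)}$ there is no longer a left-to-right gate into $\vec{f}_{[b-1,a)}$; rather the only gate at $b-1$ goes right-to-left.

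The key step is now to observe that this blocks the absorption of a second quadratic. In the preclustering $\vec{b}$, the concatenation of the \textsf{C} cluster $\vec{f}_{[c,b-1)}$ with its next factor $f_{b-1}$ is, by hypothesis, still a cluster (it equals $\vec{f}_{[c,b-2)}$, a sub-concatenation of $\vec{f}_{[c,b-m)}$, provided $m \geq 2$). Apply Lemma~\ref{eat-quadratic} again, now inside $\vec{b}$, to the factor $f_{b-1}$. Conclusion (2) of that lemma would force $f_{b-1}$ to be a wandering quadratic next to a one-way gate at $j$ in $\vec{b}$ whose direction is left-to-right (a quadratic entering the \textsf{C} cluster from the right needs a left-to-right gate at that boundary, by the gate definitions for a \textsf{C}-free/\textsf{C} adjacency — or a \textsf{C}/\textsf{C} adjacency — read off Definition~\ref{gate-def}). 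But we just argued the gate at $j$ in $\vec{b}$ is one-way in the opposite direction. Hence we must be in conclusion (1) of Lemma~\ref{eat-quadratic} for $\vec{b}$: the whole $\vec{f}_{[c,a)}$ is a cluster and the neighboring cluster $\vec{f}_{[b-1,a)}$ (equivalently, after re-fusing the wandering quadratic $f_b$, the original $\vec{f}_{[b,a)}$) is a \textsf{C} cluster — which is exactly the desired conclusion.

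I expect the main obstacle to be the careful bookkeeping of gate \emph{directions} across the two preclusterings $\vec{a}$ and $\vec{b}$: one has to be precise that a quadratic can cross a cluster boundary only along a gate pointing in the direction of travel, and that flipping which side the quadratic sits on flips the gate direction (Lemma~\ref{one-way-gates-persist}), so that the \textsf{C} cluster cannot keep eating quadratics one after another — the ``no more than one quadratic may cross a boundary in the same direction'' principle. A secondary nuisance is handling the genuinely symmetric cases (\textsf{C} cluster on the left vs.\ on the right, and absorbing exactly two vs.\ more than two factors — but an easy induction on $m$ using the $m=2$ case reduces the general statement to $m=2$, since once a second quadratic has been absorbed the two-way-gate/Proposition~\ref{twodoork} machinery shows the enlarged object is a \textsf{C} cluster fused to whatever remains, and one iterates). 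Also worth a sentence: ruling out the degenerate ``a \textsf{C} cluster loses all its odd-degree factors'' exception of Remark~\ref{cluster-rk}(4) at each step, which is immediate here because every sub-concatenation we consider still properly contains the original \textsf{C} cluster.
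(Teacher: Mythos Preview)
Your overall strategy is the same as the paper's: apply Lemma~\ref{eat-quadratic} to the first absorbed factor, and if that factor is a semi-persistent wandering quadratic, use the fact that moving it across the boundary flips the one-way gate to block the second factor. There are, however, two concrete problems.

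First, your gate directions are reversed. With the \textsf{C} cluster $\vec f_{[c,b)}$ on the left and the neighbor $\vec f_{[b,a)}$ on the right, the factor $f_b$ lies in the \emph{right} cluster; for the left cluster to absorb it one needs a \emph{right-to-left} gate at $j$ (this is exactly Remark~\ref{waquark}: absorbing $f_{a_j}$ into the left cluster requires a right-to-left gate at $j$). After the move, Lemma~\ref{one-way-gates-persist} flips this to a one-way \emph{left-to-right} gate in the new preclustering $\vec b$, and it is the absence of a right-to-left gate that blocks $f_{b-1}$. You have both of these backwards; the errors cancel in the contradiction, but your citation of Definition~\ref{gate-def} for the wrong direction is incorrect.

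Second, and more substantively, your endgame is wrong. You argue that conclusion (2) of the second application of Lemma~\ref{eat-quadratic} is impossible, hence conclusion (1) holds and gives the lemma. But conclusion (1) here says $\vec f_{[c,a)}$ is a cluster and $\vec f_{[b-1,a)}$ is a \textsf{C} cluster \emph{or a single quadratic}. In the single-quadratic sub-case, $\vec f_{[b,a)}$ consists of two quadratics, has degree $4=2^2$, and is therefore \emph{not} a \textsf{C} cluster by Definition~\ref{one-cluster-def}; your ``re-fusing'' step does not give the claimed conclusion. The paper avoids this by showing that \emph{both} outcomes contradict the one-way left-to-right gate in $\vec b$: if $f_{b-1}$ is quadratic, Remark~\ref{waquark} forces a right-to-left gate; if $f_{b-1}$ is not quadratic (hence type \textsf{C}), then $\vec f_{[b-1,a)}$ is a \textsf{C} cluster and Lemma~\ref{C-cluster-fuse} forces a two-way gate. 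Either way one contradicts the flipped gate, so the branch ``$f_b$ is a semi-persistent wandering quadratic'' cannot occur at all, and the first application of Lemma~\ref{eat-quadratic} must already have given conclusion (1). Your proposed induction on $m$ is then unnecessary: the argument for $m\ge 2$ only needs that $\vec f_{[c,b-2)}$ is a cluster, which follows from Remark~\ref{cluster-rk}(4).
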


\begin{proof}
Again, let $c := a_{j+1}$, $b:= a_j$, and $a := a_{j-1}$ to lighten notation.

Suppose the new factors $f_{b}, \ldots, f_{b'+1}$ are to the right of the \textsf{C} cluster $\vec{f}_{[c,b)}$, all inside the next cluster $\vec{f}_{[b,a)}$. The other case, when the new factors are to the left of the \textsf{C} cluster, is essentially identical.

Note that $\vec{f}_{[c,d)}$ is a \textsf{C} cluster for any $d$ with $b \geq d \geq b'$, because it sits inside a \textsf{C} cluster, and has a \textsf{C} cluster sitting inside it.

In particular, the concatenation $\vec{f}_{[c,b-1)}$ of the \textsf{C} cluster $\vec{f}_{[c,b)}$ and one factor $f_b$ is a cluster, so Lemma~\ref{eat-quadratic} applies. If the other cluster $\vec{f}_{[b,a)}$ is a \textsf{C} cluster and the concatenation $\vec{f}_{[c,a)}$ is a cluster, we are done.

 Otherwise, $f_b$ is a wandering quadratic and we work toward a contradiction.
Since more than one factor from $\vec{f}_{[b,a)}$ does something in the hypothesis of this lemma, $f_b$ cannot be fake and must be a semi-persistent quadratic.
 Thus the new preclustering $k = a_r > \ldots > a_{j+1} > a_j -1 > a_{j-1} > \ldots >a_0=0$ from Lemma~\ref{one-way-gates-persist} has a one-way left-to-right gate at $j$.
More than one factor from $\vec{f}_{[b,a)}$ joins $\vec{f}_{[c,b)}$, so $\vec{f}_{[c, b-2)}$ must also be a cluster.
If $f_{b-1}$ is quadratic, then by Remark~\ref{waquark} there must be a right-to-left gate at $j$, a contradiction.
 If $f_{b-1}$ is not quadratic, then Lemma~\ref{C-cluster-fuse} implies that there is a two-way gate at $j$, also a contradiction.
\end{proof}

\begin{lemma} \label{cluster-overlap}
Fix a decomposition $\vec{f}$.
If two clusters overlap, that is, $\vec{f}_{[d,b)}$ and $\vec{f}_{[c,a)}$ are both clusters with $d > c > b > a$, then either the whole $\vec{f}_{[d,a)}$ is a cluster, or one of the clusters is a \textsf{C} cluster and
$c=b+1$ and $f_{b+1} = f_c$ is a semi-persistent wandering quadratic of every preclustering $\ldots d > c > a > \ldots$ of $\vec{f}$. \end{lemma}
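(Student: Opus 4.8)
The plan is to argue by cases on the common sub-block $O := \vec{f}_{[c,b)}$ of the two clusters, which is nonempty since $c>b$. Note that $\vec{f}_{[d,a)}$ is the concatenation of $\vec{f}_{[d,c)}$, then $O$, then $\vec{f}_{[b,a)}$, where $\vec{f}_{[d,c)}$ (nonempty, as $d>c$) is the part of $\vec{f}_{[d,b)}$ to the left of $O$ and $\vec{f}_{[b,a)}$ (nonempty, as $b>a$) is the part of $\vec{f}_{[c,a)}$ to its right; also the concatenation of $\vec{f}_{[d,c)}$ and $O$ equals $\vec{f}_{[d,b)}$, and that of $O$ and $\vec{f}_{[b,a)}$ equals $\vec{f}_{[c,a)}$. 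Throughout I would use freely: every factor of $\vec{f}_{[d,a)}$ is swappable (it lies in one of the given clusters), a single swappable indecomposable is a cluster, a sub-block of a \textsf{C}-free cluster is a \textsf{C}-free cluster, and a sub-block of a \textsf{C} cluster that retains an odd-degree factor is a \textsf{C} cluster --- all from Remark~\ref{cluster-rk} and the description of Ritt polynomials in Section~\ref{section41}. By Remark~\ref{cluster-rk}(4), either $O$ is a cluster, or $\vec{f}_{[d,b)}$ is a \textsf{C} cluster and $O^\circ$ is linearly related to $C_{2^M}$; in the latter case $O$ is a composition of $M\ge 2$ quadratics (one quadratic would make $O$ a cluster), and applying the remark again to $\vec{f}_{[c,a)}$ shows it is a \textsf{C} cluster as well.

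If $O$ is not a cluster, then both given clusters are \textsf{C} clusters, $O$ consists of $M\ge 2$ quadratics, and $\vec{f}_{[d,c)}$ retains all odd-degree factors of $\vec{f}_{[d,b)}$, so it is a \textsf{C} cluster. In the preclustering $(\vec{f}_{[d,c)}, \vec{f}_{[c,a)})$ of $\vec{f}_{[d,a)}$, the block $O$ is the concatenation of the \textsf{C} cluster $\vec{f}_{[d,c)}$ with more than one factor of its neighbour $\vec{f}_{[c,a)}$, and that concatenation is the cluster $\vec{f}_{[d,b)}$; so Lemma~\ref{no-eat-two-quadratic} gives that $\vec{f}_{[d,a)}$ is a cluster, with no exceptional alternative. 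If instead $O$ is a \textsf{C}-free cluster and neither given cluster is a \textsf{C} cluster, then both are \textsf{C}-free, so $\vec{f}_{[d,c)}$, $O$, $\vec{f}_{[b,a)}$ are \textsf{C}-free clusters; in a cleanup of the preclustering of $\vec{f}_{[d,a)}$ with these three blocks, Lemma~\ref{land-cluster-fuse} applied to the adjacent pair $(\vec{f}_{[d,c)}, O)$ forces the linear factor at the boundary $c$ to be the identity (their concatenation $\vec{f}_{[d,b)}$ being a cluster), and applied to $(O, \vec{f}_{[b,a)})$ forces the one at $b$ to be the identity, and reading off that cleanup then exhibits $\vec{f}_{[d,a)}$ in \textsf{C}-free cluster normal form.

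The remaining possibility for a \textsf{C}-free $O$ is that one of the given clusters, say $\vec{f}_{[d,b)}$, is a \textsf{C} cluster (the case of $\vec{f}_{[c,a)}$ is symmetric, with left and right exchanged). Since $O$ is \textsf{C}-free and lies inside the \textsf{C} cluster $\vec{f}_{[d,b)}$, every factor of $O$ has degree $2$ (a non-\textsf{C} Ritt polynomial of degree $\ge 3$ cannot occur in a \textsf{C} cluster), so $O^\circ$ is linearly related both to $P_{2^{\lvert O\rvert}}$ and, being a sub-composition of $\vec{f}_{[d,b)}^\circ\sim C_n$, to $C_{2^{\lvert O\rvert}}$; since $C_{2^M}\not\sim P_{2^M}$ for $M\ge 2$, this forces $\lvert O\rvert = 1$, i.e.\ $c=b+1$ and $O=(f_c)$ a single quadratic --- exactly the exceptional configuration. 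To finish I would verify $f_c$ is a semi-persistent wandering quadratic: $\vec{f}_{[d,c)}$ is a \textsf{C} cluster (it keeps the odd-degree factors of $\vec{f}_{[d,b)}$), and in any preclustering of $\vec{f}$ in which $d,c$ and $c,a$ are consecutive boundaries the concatenation of the cluster $\vec{f}_{[d,c)}$ with the adjacent quadratic $f_c$ is the cluster $\vec{f}_{[d,b)}$, so by Remark~\ref{waquark} there is a right-to-left gate at that boundary and $f_c$ is a wandering quadratic; since $a<b<c$ the block $\vec{f}_{[c,a)}$ has at least two factors, so $f_c$ is not a whole cluster and the gate is semi-persistent.

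Finally, if $O$ is a \textsf{C} cluster it contributes an odd-degree factor, forcing both given clusters to be \textsf{C} clusters; writing $\vec{f}_{[d,b)}^\circ = \beta_1\circ C_n\circ\alpha_1$, $\vec{f}_{[c,a)}^\circ = \beta_2\circ C_m\circ\alpha_2$ and $O^\circ = \beta\circ C_{n_O}\circ\alpha$ with $n_O\mid n$, $n_O\mid m$ and $n$ not a power of $2$, I would use the Chebyshev rigidity of Corollary~\ref{oneCRk} and Lemma~\ref{ccluster-unique} (the only linear $L$ with $C_{n_O}\circ L = C_{n_O}$ is $L=(\cdot\pm 1)$, as $n_O\ne 2$) to pin down the linear interfaces in $\vec{f}_{[d,c)}^\circ\circ O^\circ = \vec{f}_{[d,b)}^\circ$ and $O^\circ\circ\vec{f}_{[b,a)}^\circ = \vec{f}_{[c,a)}^\circ$, so that $\vec{f}_{[d,c)}^\circ = (\text{lin})\circ C_{n/n_O}\circ(\cdot\pm 1)\circ\beta^{-1}$ and $\vec{f}_{[b,a)}^\circ = \alpha^{-1}\circ(\cdot\pm 1)\circ C_{m/n_O}\circ(\text{lin})$; composing the three blocks, the $\alpha$'s and $\beta$'s cancel and the identities $C_a\circ C_b=C_{ab}$ and $C_a(-x)=(-1)^aC_a(x)$ collapse the middle to $\pm C_N$ with $N=nm/n_O$, so $\vec{f}_{[d,a)}^\circ\sim C_N$, and since $n\mid N$ the integer $N$ is not a power of $2$, so $\vec{f}_{[d,a)}$ is a \textsf{C} cluster. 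The step I expect to be the main obstacle is keeping the linear factors under control at the seams between the three blocks --- in the \textsf{C}-free case, checking that after the two applications of Lemma~\ref{land-cluster-fuse} the only surviving non-identity linears lie outside $\vec{f}_{[d,a)}$, and in the \textsf{C} case carrying the $(\cdot\pm 1)$-ambiguities produced by the rigidity lemmas cleanly through the triple composition --- together with isolating the genuinely exceptional single-quadratic overlap from those that merge, which rests on the elementary facts that $C_{2^M}\not\sim P_{2^M}$ for $M\ge 2$ and that a single quadratic is a \textsf{C}-free cluster which can sit inside any \textsf{C} cluster.
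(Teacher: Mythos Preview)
Your proof is correct and follows essentially the same strategy as the paper's: split into cases on the overlap $O$ and the types of the ambient clusters, then invoke Lemma~\ref{land-cluster-fuse} for the all-\textsf{C}-free case, Lemma~\ref{no-eat-two-quadratic} when $O$ consists of $\geq 2$ quadratics inside a \textsf{C} cluster, and isolate the single-quadratic overlap as the exceptional case via Remark~\ref{waquark}.

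The only notable difference is in your Case~D (both ambient clusters and $O$ are \textsf{C} clusters): you carry out the Chebyshev rigidity calculation by hand, tracking the $(\cdot\pm 1)$ ambiguities through the triple composition, whereas the paper simply cites Lemma~\ref{C-cluster-fuse} twice (once for each adjacent pair in the preclustering $d>c>b>a$) to force the boundary linears to be $(\cdot\pm 1)$ and conclude directly. The paper's route is shorter and avoids the bookkeeping you flagged as ``the main obstacle,'' but your direct argument is fine and in fact unpacks exactly what Lemma~\ref{C-cluster-fuse} encodes. Your organization (case-split on whether $O$ is a cluster, and if so of which type) is slightly different from the paper's (case-split on the types of the two ambient clusters), but the two partitions cover the same ground. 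One small remark: in the symmetric sub-case of your Case~C (where $\vec{f}_{[c,a)}$ rather than $\vec{f}_{[d,b)}$ is the \textsf{C} cluster), your sentence ``$\vec{f}_{[d,c)}$ is a \textsf{C} cluster'' need not hold, but the wandering-quadratic verification only needs $\vec{f}_{[d,c)}$ to be \emph{a} cluster, which it is as a sub-block of $\vec{f}_{[d,b)}$ minus one factor.
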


\begin{proof}
Consider the three pieces $\vec{f}_{[d,c)}$, $\vec{f}_{[c,b)}$, and $\vec{f}_{[b,a)}$. If both original clusters $\vec{f}_{[d,b)}$ and $\vec{f}_{[c,a)}$ are \textsf{C}-free clusters, then all three of those pieces are \textsf{C}-free clusters, and Lemma~\ref{land-cluster-fuse} forces the whole $\vec{f}_{[d,a)}$ to be a \textsf{C}-free cluster.

If both $\vec{f}_{[d,b)}$ and $\vec{f}_{[c,a)}$ are \textsf{C} clusters and the middle piece $\vec{f}_{[c,b)}$ is a \textsf{C} cluster, then the whole $\vec{f}_{[d,a)}$ is a cluster by Lemma~\ref{C-cluster-fuse}.

The only possibility left is that at least one of $\vec{f}_{[d,b)}$ and $\vec{f}_{[c,a)}$ is a \textsf{C} cluster, but the middle piece $\vec{f}_{[c,b)}$ is not a \textsf{C} cluster. We handle the case when $\vec{f}_{[d,b)}$ is a \textsf{C} cluster; the other case is identical.

Since $\vec{f}_{[c,b)}$ sits inside a \textsf{C} cluster but is not a \textsf{C}-cluster itself, it must be linearly related to $C_{2^M}$ for some $M$. Thus the leftover $f_{[d,c)}$ of the \textsf{C} cluster $f_{[d,b)}$ is still a \textsf{C} cluster. If $M \geq 2$, this \textsf{C} cluster $f_{[d,c)}$ absorbs more than one factor from the adjacent cluster $f_{[c,a)}$, so by Lemma~\ref{no-eat-two-quadratic} $f_{[c,a)}$ is again a \textsf{C} cluster and the whole $\vec{f}_{[d,a)}$ is a cluster. The case $M=1$ is precisely the last option in the conclusion of this lemma.
\end{proof}

The next definition characterizes preclusterings that have as few clusters as possible. The next few results build up to show that this minimal number of clusters, as well as the presence or absence of a gate at each cluster boundary, are properties of a polynomial, independent of decomposition; and that cluster boundaries can only change by $1$ between two clusterings, and then only because of quadratic factors.

\begin{Def}\label{cluster-def}
A preclustering $\vec{a}$ of $\vec{f}$ is a \emph{clustering} if \begin{itemize}
\item the concatenation of any two adjacent clusters is not a cluster; and
\item no cluster consists of only wandering quadratics.
\end{itemize}
(The second part is only relevant for clusters with exactly two factors.)
\end{Def}

\begin{Rk} \label{devour}
The second part of the definition of ``clustering'' can be replaced by \begin{itemize}
\item No cluster with exactly two factors can be devoured by adjacent clusters: if $a_i = a_{i-1}+2$, then $\vec{f}_{[a_{i+1}, a_i -1)}$ and $\vec{f}_{[a_i -1, a_{i-2})}$ are not both clusters.\end{itemize}
It follows from the first part of the definition of ``clustering'' that if this condition is violated, both factors of the violating cluster must be wandering quadratics. The advantage of this formulation is that it is entirely in terms of which $\vec{f}_{[c,b)}$ are clusters.
\end{Rk}

\begin{lemma} \label{cl-linrel}
If $\vec{a}$ is clustering of a decomposition $\vec{f}$, then for any linear $M$ and $N$, $\vec{a}$ is also a clustering of the decomposition $\vec{g} := (N \circ f_k, f_{k-1}, \ldots, , f_2, f_1 \circ M)$. \end{lemma}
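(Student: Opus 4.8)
The plan is to reduce both extra clauses in the definition of a clustering (Definition~\ref{cluster-def}) to assertions about \emph{which} subsequences $\vec{f}_{[c,b)}$ are clusters, and then quote the invariance already proved inside Lemma~\ref{precl-linrel}. Recall that Lemma~\ref{precl-linrel} hands us, for free, that $\vec{a}$ is a preclustering of $\vec{g}$, that $\vec{g}_{[c,b)}$ is a cluster if and only if $\vec{f}_{[c,b)}$ is a cluster for every pair $b>c$, and moreover that $\vec{g}$ has the same kinds of gates in the same places as $\vec{f}$. So the only thing to check is that the two conditions separating a clustering from a mere preclustering survive the passage from $\vec{f}$ to $\vec{g}$.

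First I would treat condition~(1). The concatenation of two adjacent $\vec{a}$-clusters of $\vec{g}$ is $\vec{g}_{[a_{j+1}, a_{j-1})}$, and by the cluster-invariance above this is a cluster of $\vec{g}$ if and only if $\vec{f}_{[a_{j+1}, a_{j-1})}$ is a cluster of $\vec{f}$. Since $\vec{a}$ is a clustering of $\vec{f}$, that concatenation is not a cluster, so neither is the corresponding concatenation for $\vec{g}$.

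For condition~(2), rather than working with the literal phrase ``no cluster consists only of wandering quadratics''---which refers to gates, and would force me to carry the gate data through the coordinate change---I would invoke the reformulation of Remark~\ref{devour}: a preclustering satisfies the second clause exactly when no two-factor cluster is devoured by its neighbours, i.e.\ whenever $a_i = a_{i-1}+2$ the subsequences $\vec{f}_{[a_{i+1}, a_i -1)}$ and $\vec{f}_{[a_i -1, a_{i-2})}$ are not both clusters. This is phrased entirely in terms of which subsequences are clusters, hence transfers to $\vec{g}$ verbatim by the same cluster-invariance. (Alternatively, the literal formulation also transfers: Lemma~\ref{precl-linrel} preserves gates, hence preserves the property of a factor being a wandering quadratic, and Remark~\ref{waquark} already records that these notions depend only on the preclustering.)

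There is essentially no serious obstacle here; the single point needing a moment's care is not to try to verify the literal ``wandering quadratics'' clause directly, but to pass through Remark~\ref{devour} (or, equivalently, the gate-preservation in Lemma~\ref{precl-linrel}), after which everything is bookkeeping.
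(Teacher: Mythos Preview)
Your proposal is correct and follows exactly the paper's approach: quote Lemma~\ref{precl-linrel} for the preclustering and the invariance of which $\vec{f}_{[c,b)}$ are clusters, then use Remark~\ref{devour} to reduce both extra clustering conditions to statements about which subsequences are clusters. The paper's proof is just a terser version of what you wrote.
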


\begin{proof}
By Lemma~\ref{precl-linrel}, $\vec{a}$ is a preclustering of $\vec{g}$. As noted in the proof of that lemma, for any $b$ and $c$, $\vec{g}_{[c,b)}$ is a cluster if and only if $\vec{f}_{[c,b)}$ is a cluster. As noted in Remark~\ref{devour}, this suffices to show that $\vec{a}$ is a clustering of $\vec{g}$.\end{proof}

\begin{Rk} \label{wandering-in-clustering}
By Proposition~\ref{twodoork}, there are no two-way gates in a clustering.
Thus, between two \textsf{C}-free clusters of a clustering, there are neither gates nor wandering quadratics.
There are no fake wandering quadratics in a clustering.
Thus, all gates in a clustering are semi-persistent one-way gates.
\end{Rk}

\begin{lemma} \label{clusteringsexist} If all factors of $\vec{f}$ are swappable, then $\vec{f}$ admits a clustering. \end{lemma}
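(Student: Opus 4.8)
The plan is to produce a clustering by choosing a preclustering of $\vec{f}$ with the fewest clusters and checking that such a minimal preclustering automatically satisfies both requirements in Definition~\ref{cluster-def}. First I would observe that preclusterings of $\vec{f}$ exist: if $\vec{f}$ has no factors at all, the empty preclustering ($r=0$) is already, vacuously, a clustering, and if $\vec{f}$ has $k\ge 1$ factors, then since every $f_i$ is swappable, Remark~\ref{cluster-rk}(2) tells us that $a_j:=j$ for $0\le j\le k$ is a preclustering of $\vec{f}$ — each one-factor subsequence $(f_j)=\vec{f}_{[j,j-1)}$ is a \textsf{C} cluster when $f_j$ is type \textsf{C} and a \textsf{C}-free cluster otherwise. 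Every preclustering of $\vec{f}$ has between $1$ and $k$ clusters, so among them we may fix one, call it $\vec{a}$, with the minimal number $r$ of clusters.

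Next I would verify that $\vec{a}$ satisfies the first condition in Definition~\ref{cluster-def}. Suppose not: the concatenation $\vec{f}_{[a_{j+1},a_{j-1})}$ of two adjacent clusters of $\vec{a}$ is itself a cluster. Here $1\le j\le r-1$, so $a_j$ is an interior boundary, and deleting $a_j$ from the sequence $\vec{a}$ produces a preclustering of $\vec{f}$ (the span $\vec{f}_{[a_{j+1},a_{j-1})}$ is a cluster by hypothesis, all other spans are unchanged) with $r-1$ clusters, contradicting the choice of $\vec{a}$.

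For the second condition I would invoke the reformulation in Remark~\ref{devour}: since the first condition has just been established for $\vec{a}$, the second condition is equivalent to the statement that no two-factor cluster of $\vec{a}$ is devoured by its neighbors. If that statement failed, there would be an index $i$ with $a_i=a_{i-1}+2$ for which both $\vec{f}_{[a_{i+1},a_i-1)}$ and $\vec{f}_{[a_i-1,a_{i-2})}$ are clusters; the mere existence of $a_{i+1}$ and $a_{i-2}$ forces $a_i\ne k$ and $a_{i-1}\ne 0$, so $a_i-1$ is a legitimate interior value satisfying $a_{i+1}>a_i-1>a_{i-2}$. Replacing the two consecutive boundaries $a_i>a_{i-1}$ of $\vec{a}$ by the single boundary $a_i-1$ then yields a preclustering of $\vec{f}$ (both new spans are clusters, all others unchanged) with $r-1$ clusters, again contradicting minimality. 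Hence $\vec{a}$ is a clustering. There is no substantive obstacle in this argument; the only points requiring (routine) care are that each of the two modifications — erasing an interior boundary, and collapsing two consecutive boundaries into one — genuinely produces a preclustering with strictly fewer clusters, and that Remark~\ref{devour} is precisely what translates the ``no cluster of only wandering quadratics'' condition into the combinatorial ``devouring'' condition used in the second reduction.
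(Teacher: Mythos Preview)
Your proof is correct and follows essentially the same approach as the paper's: start from a preclustering (whose existence is guaranteed by Remark~\ref{cluster-rk}(2)) and observe that any violation of either clustering condition allows one to strictly decrease the number of clusters, so a preclustering with the fewest clusters is a clustering. The paper phrases this as an induction on the number of clusters, you phrase it as choosing a minimal preclustering and deriving a contradiction; these are the same argument, and your write-up simply supplies the details that the paper leaves implicit.
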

\begin{proof} We already know that $\vec{f}$ admits a preclustering, so take one, and induct on the number of clusters in it: with Remark~\ref{devour}, it is clear how to rectify a violation of either of the two extra requirements of a clustering, and both decrease the number of clusters.\end{proof}

\begin{lemma}\label{imp-swap-inside}
Suppose that $\vec{a}$ is a clustering of $\vec{f}$, that $i$ is not a cluster boundary of $\vec{a}$, and that $\vec{g} := t_i \star \vec{f}$ is defined. Then $\vec{a}$ is also a clustering of $\vec{g}$. \end{lemma}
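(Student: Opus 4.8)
The plan is to verify, directly and by hand, that the Ritt swap $t_i$ — being interior to a cluster of $\vec a$ — creates no new opportunity to coarsen $\vec a$, i.e. that both defining conditions of Definition~\ref{cluster-def} persist from $\vec f$ to $\vec g$. First I apply Proposition~\ref{swap-inside} with any cleanup $(\vec h,\vec L)$ of $\vec a$ as a preclustering of $\vec f$: this already gives that $\vec a$ is a preclustering of $\vec g$, and it supplies a cleanup $(\vec h',\vec L)$ of it with $h'_{i'}=h_{i'}$ for $i'\neq i,i+1$ and $h'_{i+1}\circ h'_i=h_{i+1}\circ h_i$ a basic Ritt identity. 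By Lemma~\ref{ezdoork}(2) the gates of $\vec a$ relative to $\vec g$ sit in exactly the same places, and point in the same directions, as those relative to $\vec f$. Since $t_i\star\vec f$ is defined, Theorem~\ref{fundamentallemma} gives $t_i\star\vec g=t_i^{\,2}\star\vec f=\vec f$, so $\vec f$ and $\vec g$ play symmetric roles, and Proposition~\ref{swap-inside} used in both directions yields the \emph{key fact}: a sequence $\vec b$ none of whose terms equals $i$ is a preclustering of $\vec f$ if and only if it is a preclustering of $\vec g$.

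Condition (i) of Definition~\ref{cluster-def} now transfers at once. Suppose toward a contradiction that for some boundary $a_m\neq 0,k$ of $\vec a$ the concatenation $\vec g_{[a_{m+1},a_{m-1})}$ is a cluster of $\vec g$. Since $\vec a$ is already known to be a preclustering of $\vec g$, deleting $a_m$ produces a preclustering $\vec b$ of $\vec g$, and $i$ is not a term of $\vec b$ because it is not a boundary of $\vec a$. By the key fact $\vec b$ is a preclustering of $\vec f$, so $\vec f_{[a_{m+1},a_{m-1})}$ is a cluster of $\vec f$, contradicting that $\vec a$ is a clustering of $\vec f$. Hence no two adjacent clusters of $\vec a$ concatenate to a cluster of $\vec g$.

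For condition (ii) — that no cluster of $\vec a$, read inside $\vec g$, consists entirely of wandering quadratics — only a two-factor cluster $\vec f_{[a_m,a_{m-1})}=(f_{a_m},f_{a_{m-1}+1})$ with $a_m=a_{m-1}+2$ is at issue. If $\{i,i+1\}$ is disjoint from $\{a_{m-1}+1,a_m\}$, then neither the two factors of this cluster nor (as $i$ is not a boundary) the factors straddling its endpoints are altered, and the gates at $m-1$ and $m$ are unchanged by Lemma~\ref{ezdoork}(2), so the status is the same for $\vec g$. Because $i$ is not a boundary of $\vec a$, the only remaining case is $i=a_{m-1}+1$, $i+1=a_m$, i.e. this cluster is precisely the swapped pair. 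Here the Ritt swap interchanges the degrees at positions $i$ and $i+1$, so ``$g_{a_m}$ is quadratic'' is equivalent to ``$f_i$ is quadratic'' and ``$g_{a_{m-1}+1}$ is quadratic'' is equivalent to ``$f_{i+1}$ is quadratic''. Thus $\vec g_{[a_m,a_{m-1})}$ is all wandering quadratics exactly when both $f_i$ and $f_{i+1}$ are quadratic and there are a right-to-left gate at $m$ and a left-to-right gate at $m-1$; but then both factors are quadratic, so this ``crossed'' condition coincides with the matching condition for $\vec f$, and $\vec f_{[a_m,a_{m-1})}$ would also be all wandering quadratics — contradicting that $\vec a$ is a clustering of $\vec f$. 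Combined with the previous paragraph, $\vec a$ is a clustering of $\vec g$.

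The step I expect to be the genuine obstacle is this last one: a Ritt swap coming from a basic Ritt identity of the third kind next to a cluster boundary does not merely move a factor but changes its ``costume'' and degree (Remark~\ref{costumes}), so the wandering-quadratic bookkeeping threatens to get ``crossed''. What saves the argument is the observation that an honest violation of condition (ii) forces \emph{both} factors of the contested two-factor cluster to be quadratic, which collapses the crossing and makes the $\vec f$- and $\vec g$-versions of the condition coincide. The parallel worry for condition (i) — that an interior swap could change whether a \textsf{C}-free interval is a cluster — does not arise separately, as it is absorbed into the key fact, hence into Proposition~\ref{swap-inside} and Lemma~\ref{1mono}.
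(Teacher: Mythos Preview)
Your proof is correct and follows the paper's approach --- invoke Proposition~\ref{swap-inside} for the preclustering part, then check the two extra conditions of Definition~\ref{cluster-def} --- spelling out in detail what the paper dismisses as ``trivial''. One small inaccuracy: in the disjoint case for condition~(ii) you claim that the factors straddling the endpoints of the two-factor cluster are unaltered ``as $i$ is not a boundary'', but this is false (take $i=a_m+1$ inside the $(m{+}1)$st cluster); fortunately it is also irrelevant, since the wandering-quadratic status of $\vec g_{[a_m,a_{m-1})}$ depends only on the degrees of $g_{a_m},g_{a_{m-1}+1}$ and the gates at $m{-}1$ and $m$, all of which are genuinely unchanged by Lemma~\ref{ezdoork}(2).
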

\begin{proof}
By Proposition~\ref{swap-inside}, $\vec{a}$ is a preclustering of $\vec{g}$. The rest is trivial. \end{proof}

\begin{lemma} \label{move-quad}
If $f_{a_j}$ (respectively, $f_{a_j +1}$) is a wandering quadratic of a clustering $\vec{a}$ of a decomposition $\vec{f}$, then $\vec{b}$ given by $b_j := a_j-1$ (respectively, $b_j := a_j+1$) and $b_{j'} = a_{j'}$ for all $j' \neq j$ is another clustering of $\vec{f}$. For each $j'$, either both clusterings have a gate at $j'$, or neither has a gate at $j'$.
\end{lemma}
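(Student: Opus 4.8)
The plan is to obtain Lemma~\ref{move-quad} as the ``clustering'' strengthening of Lemma~\ref{move-one-per-wa-qua}, which already proves the analogous statement at the level of preclusterings. \textbf{Transfer of the preclustering statement.} Since $\vec a$ is a clustering, Remark~\ref{wandering-in-clustering} tells us that $\vec a$ has no fake wandering quadratics and that all its gates are semi-persistent one-way gates; in particular the wandering quadratic $f_{a_j}$ (respectively $f_{a_j+1}$) is semi-persistent. Hence Lemma~\ref{move-one-per-wa-qua} applies verbatim: $\vec b$ is a preclustering of $\vec f$, every cleanup $(\vec h,\vec L)$ of $\vec a$ produces a cleanup $(\vec g,\vec M)$ of $\vec b$ agreeing with it away from the two indices touched by the move, and $\vec a$ and $\vec b$ have identical gates at every $j'\neq j$.

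\textbf{The gate at $j$.} By the definition of wandering quadratic, $\vec a$ has a gate at $j$, which is one-way by the previous paragraph. The preclusterings $\vec a$ and $\vec b$ differ only at position $j$ and only by $1$, and the contested factor is a wandering quadratic, so Lemma~\ref{one-way-gates-persist} shows $\vec b$ also has the contested factor as a wandering quadratic, hence a (one-way, oppositely directed) gate at $j$. Combined with the previous paragraph, this gives the gate clause of the lemma for every $j'$, including $j'=j$.

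\textbf{$\vec b$ is a clustering.} It remains to verify the two clauses of Definition~\ref{cluster-def} for $\vec b$. The clusters of $\vec b$ coincide with those of $\vec a$ except that the two flanking the boundary $a_j$ have the quadratic $f_{a_j}$ (respectively $f_{a_j+1}$) transferred from one to the other; consequently the only adjacent pairs of clusters of $\vec b$ whose concatenation is not already the concatenation of a pair of clusters of $\vec a$ occur at the three boundaries $b_{j-1}=a_{j-1}$, $b_j=a_j\mp1$, $b_{j+1}=a_{j+1}$. At $b_j$ the relevant concatenation is exactly $\vec f_{[a_{j+1},a_{j-1})}$, the concatenation of the pair of clusters of $\vec a$ at $a_j$, so it is not a cluster. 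At $b_{j+1}$ (taking the case $f_{a_j}$ quadratic; the other case and the boundary $b_{j-1}$ are symmetric) the relevant concatenation is $\vec f_{[a_{j+2},a_j-1)}$: were this a cluster, then dropping the even-degree factor $f_{a_j}$ from its right end via Remark~\ref{cluster-rk}(4), together with an application of Lemma~\ref{cluster-overlap} to the overlapping clusters $\vec f_{[a_{j+2},a_j-1)}$ and $\vec f_{[a_j,a_{j-1})}$, would force either $\vec f_{[a_{j+2},a_j)}$ --- the concatenation of the $(j{+}1)$st and $(j{+}2)$nd clusters of $\vec a$ --- to be a cluster, or the $(j{+}1)$st cluster of $\vec a$ to consist of two wandering quadratics; both possibilities contradict that $\vec a$ is a clustering (the first and the second clause of Definition~\ref{cluster-def}, respectively). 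The ``no cluster of only wandering quadratics'' clause for $\vec b$ is handled identically via Remark~\ref{devour}, a violating two-factor cluster of $\vec b$ again forcing a forbidden concatenation for $\vec a$.

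\textbf{Main obstacle.} The substance of the argument is this last paragraph: one must combine Remark~\ref{linrelquadrk}, the gate definitions, and the fusing and overlap lemmas (Lemmas~\ref{land-cluster-fuse}, \ref{C-cluster-fuse}, \ref{eat-quadratic}, \ref{no-eat-two-quadratic}, \ref{cluster-overlap}) to see that once the wandering quadratic has crossed the gate at $j$, no second quadratic can follow it, so the cluster it enters cannot then absorb a further factor of its neighbour and thereby create a forbidden concatenation. Everything else is routine bookkeeping layered on top of Lemmas~\ref{move-one-per-wa-qua} and~\ref{one-way-gates-persist}.
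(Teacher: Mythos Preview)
Your proposal follows essentially the same route as the paper: use Remark~\ref{wandering-in-clustering} to get semi-persistence, apply Lemma~\ref{move-one-per-wa-qua} for the preclustering statement and the gates at $j'\neq j$, invoke Lemma~\ref{one-way-gates-persist} for the gate at $j$, and then check the two clustering clauses at the three affected boundaries.

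One point where the paper is more careful: you treat the boundaries $b_{j-1}$ and $b_{j+1}$ as ``symmetric'', but they are not. On the side where a cluster \emph{gained} the quadratic (your $b_{j+1}$ in the $f_{a_j}$ case), your Remark~\ref{cluster-rk}(4) argument is correct and complete: dropping the even-degree $f_{a_j}$ from a hypothetical cluster $\vec f_{[a_{j+2},a_j-1)}$ cannot remove all odd-degree factors, so $\vec f_{[a_{j+2},a_j)}$ would be a cluster, contradicting that $\vec a$ is a clustering. On the side where a cluster \emph{lost} the quadratic (your $b_{j-1}$), this trick is unavailable; instead one must overlap the hypothetical cluster $\vec f_{[a_j-1,a_{j-2})}$ with the $j$th cluster $\vec f_{[a_j,a_{j-1})}$ of $\vec a$ and use Lemma~\ref{cluster-overlap}. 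The exceptional case there forces $f_{a_j-1}$ to be a wandering quadratic, and the paper rules this out by first explicitly establishing that $f_{a_j-1}$ is \emph{not} a wandering quadratic (else the $j$th cluster of $\vec a$, containing both $f_{a_j}$ and $f_{a_j-1}$, would consist of two wandering quadratics). Your invocation of Lemma~\ref{cluster-overlap} alongside Remark~\ref{cluster-rk}(4) suggests you see both ingredients, but the dichotomy you state does not quite match what either lemma delivers, and the two boundaries really do need the two distinct arguments.
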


\begin{proof}
We first show that $\vec{b}$ is a preclustering of $\vec{f}$, then that $\vec{b}$  has gates in the same places as $\vec{a}$, and then that $\vec{b}$ is a clustering.

By Remark~\ref{wandering-in-clustering}, the clustering $\vec{a}$ has no two-way gates and no fake wandering quadratics, so $f_{a_j}$ (respectively, $f_{a_j +1}$) is a semi-persistent wandering quadratic of $\vec{a}$. Then $\vec{b}$  is a preclustering of $\vec{f}$ by Lemma~\ref{move-one-per-wa-qua}, and has the same gates as $\vec{a}$ by Lemmas~\ref{move-one-per-wa-qua} and~\ref{one-way-gates-persist}. It remains to show that $\vec{b}$ is a clustering.

We first show that for every $i$, the factor $f_i$ is a wandering quadratic of $\vec{a}$ is and only if it is a wandering quadratic of $\vec{b}$. For $i = a_j$ (respectively, $i = a_j +1$), this is Lemma~\ref{one-way-gates-persist}. For all other cluster boundaries, this is Lemma~\ref{move-one-per-wa-qua}. It is clear that the two factors adjacent to $f_{a_j}$ (respectively, $f_{a_j +1}$) are not wandering quadratics of $\vec{b}$, so it remains to show that they are not wandering quadratics of $\vec{a}$. By Remark~\ref{wandering-in-clustering}, the clustering $\vec{a}$ has no two-way gates and no fake wandering quadratics, so $f_{a_j +1}$ (respectively, $f_{a_j }$), the factor on the other side of the cluster boundary, is not a wandering quadratic of $\vec{a}$. Since the $j$th (respectively, $(j+1)$st) cluster of $\vec{a}$ does not consist of two wandering quadratics, $f_{a_j -1}$ (respectively, $f_{a_j +2}$) is also not a wandering quadratic of $\vec{a}$. From now on we say ``wandering quadratic'' without specifying $\vec{a}$ or $\vec{b}$.

Let us now verify the two parts of the definition of clustering for the two clusters of $\vec{b}$ that differ from those of $\vec{a}$. We handle the case when $f_{a_j+1}$ is the wandering quadratic; the other case is analogous.

Two instances of the second part of the definition of clustering need to be verified.
\begin{itemize}
\item Since the $j$th cluster $\vec{f}_{[a_j, a_{j-1})}$ of $\vec{a}$ is not a single wandering quadratic, the $j$th cluster $\vec{f}_{[a_{j+1}, a_{j-1})}$  of $\vec{b}$ does not consist of two wandering quadratics.
\item We have already shown that the rightmost factor $f_{b_j+1} = f_{a_j +2}$ of the $(j+1)$st cluster of $\vec{b}$ is not a wandering quadratic.
\end{itemize}

Three instances of the first part of the definition of clustering need to be verified.
\begin{itemize}
\item The concatenation $\vec{f}_{[a_j+1, a_{j-2})}$ of the $j$th and the $(j-1)$st clusters of $\vec{b}$ is not a cluster because it is the concatenation of the quadratic $f_{b_j} = f_{a_j +1}$ and $\vec{f}_{[a_j, a_{j-2})}$, and this $\vec{f}_{[a_j, a_{j-2})}$ is not a cluster because it is the concatenation of the corresponding clusters of $\vec{a}$.

\item The concatenation of the $(j+1)^\text{st}$
 and the $j^\text{th}$ clusters of $\vec{b}$ is exactly the same as the concatenation of the $(j+1)^\text{st}$ and the $j^\text{th}$ clusters of $\vec{a}$, so it is not a cluster.
\item If the concatenation $\vec{f}_{[a_{j+2}, a_j+1)}$ of the $(j+2)^\text{nd}$ and the $(j+1)^\text{st}$
    clusters of $\vec{b}$ is a cluster, then it overlaps the $(j+1)^\text{st}$ cluster $\vec{f}_{[a_{j+1}, a_j)}$ of $\vec{a}$ in $\vec{f}_{[a_{j+1}, a_j +1)}$. We have already shown that the rightmost factor of this overlap $f_{a_j +2}$ is not a wandering quadratic, so by Lemma~\ref{cluster-overlap} the concatenation $\vec{f}_{[a_{j+2}, a_j)}$ of these overlapping clusters is itself a cluster. But that is also the concatenation of two clusters of $\vec{a}$, which cannot be a cluster.
\end{itemize}
\end{proof}

\begin{prop} \label{bridge-cluster}
Suppose that $\vec{a}$ is a clustering of $\vec{f}$.
If $(f_{a_j+1}, f_{a_j})$ is a cluster and one of $f_{a_j+1}$ and $f_{a_j}$ is not quadratic, then the other one of $f_{a_j+1}$ and $f_{a_j}$ is a semi-persistent wandering quadratic, and one of the clusters $\vec{f}_{[a_{j+1}, a_j)}$ and $\vec{f}_{[a_{j}, a_{j-1})}$ is a \textsf{C} cluster. \end{prop}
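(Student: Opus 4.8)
The plan is to derive this from Lemma~\ref{cluster-overlap} and Remark~\ref{waquark}, after clearing away a degenerate configuration. Write $b := a_j$, let $P := \vec{f}_{[a_{j+1}, b)}$ and $Q := \vec{f}_{[b, a_{j-1})}$ be the clusters of $\vec a$ meeting at the boundary $b$, and let $S := (f_{b+1}, f_b) = \vec{f}_{[b+1, b-1)}$ be the given straddling cluster. Since $\vec a$ is a clustering, the concatenation $\vec{f}_{[a_{j+1}, a_{j-1})}$ of $P$ and $Q$ is not a cluster (Definition~\ref{cluster-def}); in particular $S$ is not this concatenation, so $P$ and $Q$ are not both single factors. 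Also, being a clustering, $\vec a$ has no two-way gates and no fake wandering quadratics (Proposition~\ref{twodoork}, Remark~\ref{wandering-in-clustering}). I record the following reduction that will close every case: if one of $f_{b+1}, f_b$ is quadratic, say $f_b$, and the concatenation of the cluster $P$ with $f_b$, namely $\vec{f}_{[a_{j+1}, b-1)}$, is a cluster, then by Remark~\ref{waquark} the preclustering $\vec a$ has a right-to-left gate at $j$ and $f_b$ is a wandering quadratic of $\vec a$; this wandering quadratic is semi-persistent (no fake ones in a clustering), the other factor $f_{b+1}$ is then forced to be the non-quadratic one of the hypothesis, and the gate, being one-way (no two-way gates in a clustering), cannot sit between two \textsf{C}-free clusters by inspection of Definition~\ref{gate-def}, so one of $P$, $Q$ is a \textsf{C} cluster --- exactly the conclusion. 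The mirror statement, with $f_{b+1}$ quadratic and $Q$ extended by $f_{b+1}$ being a cluster, works the same way. So it suffices to reach one of these two configurations.

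The remaining work is to run Lemma~\ref{cluster-overlap} so as to force $f_b$ or $f_{b+1}$ to be quadratic and to produce the appropriate extended cluster. At least one of $P$, $Q$ has two or more factors; say $P$ does (the other case is symmetric, starting from the overlap of $S$ and $Q$). Then $P = \vec{f}_{[a_{j+1}, b)}$ and $S = \vec{f}_{[b+1, b-1)}$ overlap in $f_{b+1}$ and meet the index hypotheses of Lemma~\ref{cluster-overlap}. If its first alternative holds, $\vec{f}_{[a_{j+1}, b-1)}$ (that is, $P$ extended by $f_b$) is a cluster; this forces $Q$ to have two or more factors (otherwise it would equal the forbidden concatenation of $P$ and $Q$), and applying Lemma~\ref{cluster-overlap} once more to $\vec{f}_{[a_{j+1}, b-1)}$ and $Q$ --- whose first alternative ``the concatenation of $P$ and $Q$ is a cluster'' is excluded --- yields that $f_b$ is a semi-persistent wandering quadratic of every preclustering having the boundaries $a_{j+1} > b > a_{j-1}$, hence of $\vec a$; in particular $f_b$ is quadratic, and we are in the first configuration above. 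If instead the second alternative of the first application holds, then $f_{b+1}$ is a wandering quadratic of the auxiliary preclustering in which $S$ is a cluster, so $f_{b+1}$ is quadratic; the hypothesis then forces $f_b$ non-quadratic, and a further application of Lemma~\ref{cluster-overlap}, to $S$ and $Q$ (or, when $Q$ is a single factor, trivially), has its ``$f_b$ is a wandering quadratic'' alternative excluded (as $f_b$ is not quadratic) and therefore shows $\vec{f}_{[b+1, a_{j-1})} = Q$ extended by $f_{b+1}$ is a cluster --- the mirror configuration. Either way the reduction of the first paragraph applies.

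The step I expect to be the main obstacle is the bookkeeping through these nested applications: both Lemma~\ref{cluster-overlap} and Remark~\ref{waquark} state their conclusions relative to auxiliary preclusterings (one in which $S$, or $P$ extended by a single factor, is a cluster) rather than relative to the fixed clustering $\vec a$, so one must carefully track which cluster boundaries and which one-way gate directions survive as a single quadratic factor is moved across a boundary --- this is exactly where Lemma~\ref{one-way-gates-persist} and Lemmata~\ref{move-one-per-wa-qua} and~\ref{move-quad} are used, and where one has to stay alert to the exceptional behaviour of a \textsf{C} cluster that loses a single quadratic factor (Remark~\ref{cluster-rk}). Beyond that, verifying the ``one-way gate forces a \textsf{C} cluster'' claim and the invocations of Remark~\ref{waquark} is a routine match of Remark~\ref{linrelquadrk} against Definition~\ref{gate-def}.
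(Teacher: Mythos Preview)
Your proof is correct and follows essentially the same approach as the paper: both apply Lemma~\ref{cluster-overlap} first to $P$ and $S$ (overlapping in $f_{a_j+1}$), and in the ``merge'' branch apply it again to the extended cluster and $Q$, using the impossibility of $P\cup Q$ being a cluster to force $f_{a_j}$ to be the wandering quadratic. The only differences are in packaging: the paper extracts the \textsf{C}-cluster conclusion directly from the second alternative of Lemma~\ref{cluster-overlap} (observing that if the two-factor cluster $S$ is \textsf{C} then the non-quadratic one of $f_{a_j}, f_{a_j+1}$ is type~\textsf{C}, hence its ambient cluster is \textsf{C}), whereas you route through Remark~\ref{waquark} and the observation that Definition~\ref{gate-def} offers only two-way gates between \textsf{C}-free clusters; and you are more explicit than the paper about the degenerate single-factor cases and about verifying the index inequalities needed to invoke Lemma~\ref{cluster-overlap}.
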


\begin{proof}
Lemma~\ref{cluster-overlap} applies to the clusters $\vec{f}_{[a_{j+1}, a_j)}$ and $(f_{a_j+1}, f_{a_j})$ which overlap in $f_{a_j+1}$. If $f_{a_j+1}$ is a wandering quadratic and $\vec{f}_{[a_{j+1}, a_j)}$ is a \textsf{C} cluster, we are done. If $f_{a_j+1}$ is a wandering quadratic and $(f_{a_j+1}, f_{a_j})$ is a \textsf{C} cluster, then $f_{a_j}$ is type \textsf{C}, so $\vec{f}_{[a_{j}, a_{j-1})}$ is a \textsf{C} cluster, and we are done. Otherwise, the whole $\vec{f}_{[a_{j+1}, a_{j-1})}$ is a cluster.

Now Lemma~\ref{cluster-overlap} applies to this $\vec{f}_{[a_{j+1}, a_{j-1})}$ and $\vec{f}_{[a_{j}, a_{j-1})}$, which overlap in $f_{a_j}$. The whole $\vec{f}_{[a_{j+1}, a_{j-1})}$ is the concatenation of two clusters of $\vec{a}$, so it cannot be a cluster. Thus $f_{a_j}$ is a wandering quadratic and one of $\vec{f}_{[a_{j+1}, a_{j-1})}$ and $\vec{f}_{[a_{j}, a_{j-1})}$ is a \textsf{C} cluster. It is easy to see that $(f_{a_j+1}, f_{a_j})$ is a \textsf{C} cluster whenever $(f_{a_j+1}, f_{a_j-1})$ is, so we are done in either case.
\end{proof}

\begin{lemma} \label{extra-cluster-swap} \label{interclusterswap}
If $\vec{a}$ is a clustering of $\vec{f}$ and $t_{a_j} \star \vec{f} = \vec{g}$ is defined, then there is a clustering
 $\vec{b}$ of $\vec{g}$ with $b_{j'} = a_{j'}$ for all $j' \neq j$. At each $j' \neq j$, the clustering $\vec{a}$ of $\vec{f}$ and the clustering $\vec{b}$ of $\vec{g}$ have the same gate(s). As for $b_j$, either \begin{itemize}
\item  $f_{a_j+1}$ is a wandering quadratic of the clustering $\vec{a}$ of $\vec{f}$, and $b_j := a_j +1$, and $\vec{b}$ for $\vec{g}$ has a one-way right-to-left gate at $b_j$; or
\item $f_{a_j}$ is a wandering quadratic of the clustering $\vec{a}$ of $\vec{f}$, and $b_j := a_j -1$, and $\vec{b}$ for $\vec{g}$ has a one-way left-to-right gate at $b_j$.
\end{itemize}
\end{lemma}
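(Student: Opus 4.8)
The plan is to slide the relevant wandering quadratic across the boundary $a_j$, so that the Ritt swap $t_{a_j}$ becomes a swap \emph{interior} to a cluster, and then to invoke the already-established stability of clusterings and of gates under interior swaps.

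First I would record two preliminary facts. Since $t_{a_j}\star\vec{f}$ is defined, $(f_{a_j+1},f_{a_j})$ is a cluster, by Remark~\ref{cluster-rk}. Moreover no basic Ritt identity has both of its left-hand factors of degree two: in $P_p\circ P_q=P_q\circ P_p$ and $C_p\circ C_q=C_q\circ C_p$ one needs $p\ne q$, and in $P_p\circ(x^k u(x^{\ell p})^n)$ the non-monomial factor is a Ritt polynomial of the third kind, whose degree is at least $3$ (cf.\ Remark~\ref{inoutdegForSwap}); consequently, for a Ritt swap at a boundary, the two factors cannot both be quadratic. Hence at least one of $f_{a_j+1}$, $f_{a_j}$ has degree at least $3$, so Proposition~\ref{bridge-cluster} applies and tells us that the other of the two factors is a semi-persistent wandering quadratic of $\vec{a}$. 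Since wandering quadratics are quadratic, exactly one of $f_{a_j+1}$, $f_{a_j}$ is a semi-persistent wandering quadratic of $\vec{a}$, namely the quadratic one; and since, by Remark~\ref{wandering-in-clustering}, a clustering has no two-way gates, the gate of $\vec{a}$ at $j$ is one-way. Which of the two factors is the wandering quadratic then determines, by the definition of wandering quadratic, the direction of that gate: it is a left-to-right gate if $f_{a_j+1}$ is the quadratic, and a right-to-left gate if $f_{a_j}$ is.

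Now assume $f_{a_j+1}$ is the wandering quadratic (the other case being symmetric), so that $\vec{a}$ has a one-way left-to-right gate at $j$. By Lemma~\ref{move-quad} the sequence $\vec{b}$ with $b_j:=a_j+1$ and $b_{j'}:=a_{j'}$ for $j'\ne j$ is again a clustering of $\vec{f}$, having the same gate (same presence, same direction) as $\vec{a}$ at every $j'\ne j$. The crucial observation is that $a_j$ is \emph{not} a cluster boundary of $\vec{b}$ (the boundaries of $\vec{b}$ are those of $\vec{a}$ with $a_j$ replaced by $a_j+1$), so by Lemma~\ref{imp-swap-inside} the decomposition $\vec{g}=t_{a_j}\star\vec{f}$ is obtained from $\vec{f}$ by a Ritt swap interior to a cluster of $\vec{b}$; therefore $\vec{b}$ is also a clustering of $\vec{g}$, and by part~(2) of Lemma~\ref{ezdoork} it has exactly the same gates, in the same places and directions, whether regarded as a clustering of $\vec{f}$ or of $\vec{g}$. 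Combining, $\vec{b}$ is a clustering of $\vec{g}$ with $b_{j'}=a_{j'}$ for $j'\ne j$ and with the same gate as $\vec{a}$ at each such $j'$. Finally, $\vec{a}$ and $\vec{b}$ are two clusterings of $\vec{f}$ differing only at the index $j$ and only by $1$, with the moved factor a wandering quadratic of both, so Lemma~\ref{one-way-gates-persist} shows that the gate at $j$ remains one-way and reverses direction, becoming a right-to-left gate at $b_j=a_j+1$; and this persists to $\vec{g}$ by part~(2) of Lemma~\ref{ezdoork}. The case where $f_{a_j}$ is the wandering quadratic runs identically through the other branch of Lemma~\ref{move-quad}, giving $b_j:=a_j-1$ and a one-way left-to-right gate at $b_j$.

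The only step with real content is the opening application of Proposition~\ref{bridge-cluster}, which forces the ``exactly one wandering quadratic, in a determined direction'' dichotomy and thereby selects the correct branch of Lemma~\ref{move-quad}; everything after that is bookkeeping with Lemmas~\ref{move-quad}, \ref{imp-swap-inside}, \ref{one-way-gates-persist}, and \ref{ezdoork}. A minor point to handle carefully is the orientation convention needed to apply Lemma~\ref{one-way-gates-persist} in the $b_j=a_j-1$ case: one must exchange the roles of $\vec{a}$ and $\vec{b}$ so as to match that lemma's hypothesis $b_j=a_j+1$, but this is purely formal.
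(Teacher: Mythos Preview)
Your proof is correct and follows essentially the same route as the paper: establish that $(f_{a_j+1},f_{a_j})$ is a cluster, rule out both factors being quadratic, apply Proposition~\ref{bridge-cluster} to identify a semi-persistent wandering quadratic, slide it across the boundary to obtain the clustering $\vec{b}$, and then invoke the stability of clusterings and gates under interior swaps. You are somewhat more explicit than the paper in tracking the direction of the gate at $b_j$ via Lemma~\ref{one-way-gates-persist}, and you cite Lemma~\ref{move-quad} (which yields a clustering) rather than Lemma~\ref{move-one-per-wa-qua} (which only yields a preclustering), which is the cleaner choice.
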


\begin{proof}
As noted in Remark~\ref{cluster-rk}, since $t_{a_j} \star \vec{f}$ is defined, $(f_{a_i+1}, f_{a_i})$ is a cluster. As tautological Ritt swaps are not permitted, $f_{a_j+1}$ and $f_{a_j}$ are not both quadratic.
By Proposition~\ref{bridge-cluster} one of $f_{a_i+1}$ and $f_{a_i})$ is a semi-persistent wandering quadratic.
By Lemma~\ref{move-one-per-wa-qua},  $\vec{b}$ is a clustering of $\vec{f}$.
Since $a_j$ is not a cluster boundary of $\vec{b}$, by Lemma~\ref{imp-swap-inside}, $\vec{b}$ is also a clustering of $\vec{g}$.
By the second part of Lemma~\ref{ezdoork}, $\vec{f}$ and $\vec{g}$ have the same gates in the same places with respect to $\vec{b}$.
\end{proof}

With the next lemma and proposition we show that two clusterings of the same decomposition can only differ by putting wandering quadratics into different clusters.

\begin{lemma}  \label{unique-clustering-tech}
 Any two clusterings $\vec{a}$ and $\vec{b}$ of the same decomposition $\vec{f}$ have the same number of clusters. For each $j$ either $b_j = a_j - 1$ (respectively, $b_j = a_j + 1$) and $f_{a_j}$ (respectively, $f_{a_j +1}$) is a wandering quadratic of both, or $b_j = a_j$. At each $j$, either both clusterings have a gate at $j$, or neither has a gate at $j$.
\end{lemma}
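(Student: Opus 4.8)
The plan is to induct on the number $k$ of indecomposable factors of $\vec{f}$ (equivalently, once the number of clusters is known to agree, on the number of clusters). The base case $k=0$ is vacuous, and it is convenient to dispose separately of the case $r=1$, where $\vec{f}$ is itself a single cluster: then every consecutive block of $\vec{f}$ is a cluster unless the ambient cluster is a \textsf{C} cluster and the block is linearly related to some $C_{2^M}$ with $M\ge 2$ (Remark~\ref{cluster-rk}), and one checks from the two defining properties in Definition~\ref{cluster-def} that $\vec{b}$ must also be the trivial clustering with $s=1$.

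For the inductive step I would compare the two rightmost clusters $\vec{f}_{[k,a_{r-1})}$ and $\vec{f}_{[k,b_{s-1})}$ and, after interchanging $\vec{a}$ and $\vec{b}$ if necessary (the conclusion being symmetric under this interchange together with swapping the roles of $-1$ and $+1$), assume $a_{r-1}\le b_{s-1}$. If $a_{r-1}<b_{s-1}$, then $\vec{f}_{[k,a_{r-1})}$ strictly contains the rightmost cluster of $\vec{b}$; a first, easy step (using Remark~\ref{cluster-rk} to control the \textsf{C}-cluster caveat) shows $b_{s-2}<a_{r-1}$, so that $\vec{f}_{[k,a_{r-1})}$ and $\vec{f}_{[b_{s-1},b_{s-2})}$ genuinely overlap with neither containing the other. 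Lemma~\ref{cluster-overlap} (whose proof rests on Lemmas~\ref{eat-quadratic} and~\ref{no-eat-two-quadratic}) then forces $b_{s-1}=a_{r-1}+1$ and exhibits $f_{b_{s-1}}$ as a semi-persistent wandering quadratic of $\vec{b}$, sitting at the boundary $s-1$ as the top factor of the cluster $\vec{f}_{[b_{s-1},b_{s-2})}$ with a right-to-left gate there. Lemma~\ref{move-quad} now slides this quadratic into the rightmost cluster, producing a clustering $\vec{b}'$ with $b'_{s-1}=a_{r-1}$, with $b'_j=b_j$ otherwise, and with the same gates as $\vec{b}$ at every boundary; by Lemma~\ref{one-way-gates-persist} the contested factor $f_{b_{s-1}}=f_{a_{r-1}+1}$ is a wandering quadratic of $\vec{b}'$ as well, the one-way gate at $s-1$ merely reversing direction. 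It then suffices to prove the lemma for the pair $(\vec{a},\vec{b}')$, whose rightmost clusters coincide, and to transfer the conclusion back across the single move $\vec{b}\leftrightarrow\vec{b}'$, again using Lemma~\ref{move-quad} for equality of gate presence and Lemma~\ref{one-way-gates-persist} to see that $f_{a_{r-1}+1}$ is a wandering quadratic of both $\vec{a}$ and $\vec{b}$.

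So assume $a_{r-1}=b_{s-1}=:m$ with a common rightmost cluster $\vec{f}_{[k,m)}$, and restrict to $\vec{f}':=\vec{f}_{[m,0)}$. Since cluster-hood of a block is intrinsic to the block (Definition~\ref{one-cluster-def}, Remark~\ref{cluster-rk}), both defining properties of a clustering pass to $\vec{f}'$; the only point requiring attention is the topmost cluster of $\vec{f}'$, whose boundary $m$ now carries no gate, so that its boundary factor $f_m$ is not a wandering quadratic of $\vec{f}'$ and the ``no cluster of wandering quadratics'' condition survives. Thus $(a_0,\dots,a_{r-1})$ and $(b_0,\dots,b_{s-1})$ are clusterings of $\vec{f}'$, and the inductive hypothesis gives $r=s$, the desired relation $b_j\in\{a_j-1,a_j,a_j+1\}$ with the differing boundary factor a wandering quadratic for all $j\le r-1$, and agreement of gates at all $j\le r-2$. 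It remains to settle the gate at the seam $j=r-1$. Its left cluster $\vec{f}_{[k,m)}$ is common; by the $j=r-2$ output of the inductive step the two right clusters $\vec{f}_{[m,a_{r-2})}$ and $\vec{f}_{[m,b_{s-2})}$ differ, if at all, only by a single quadratic at their far end. Adjoining or deleting a quadratic there changes neither the kind of the cluster (a quadratic is linearly related to the non-type-\textsf{C} Ritt polynomial $P_2$, and cannot destroy the odd factor that every \textsf{C} cluster possesses) nor, by the precise bookkeeping in Lemma~\ref{move-quad}, the cleanup linear factor $L_m$; since by Definition~\ref{gate-def} and Lemma~\ref{ezdoork} a gate depends only on the kinds of the two clusters and on $L_m$, and is insensitive to the residual $(\cdot\pm1)$ ambiguity of cleanups, the gates at $m$ for $\vec{a}$ and $\vec{b}$ coincide, closing the induction.

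The hard part will be the wandering-quadratic bookkeeping, on two fronts. First, one must pin down \emph{exactly} when the rightmost clusters of two clusterings can differ; this is where Lemmas~\ref{cluster-overlap},~\ref{eat-quadratic} and~\ref{no-eat-two-quadratic} do the real work and must be invoked with the orientation of ``left'' versus ``right'' cluster and of the two kinds of one-way gate kept straight, and where the small degenerate cases ($r=1$, clusters of size one or two, \textsf{C} clusters that shrink to something linearly related to a power of $P_2$, and the role of Proposition~\ref{bridge-cluster}) have to be handled. Second, and more delicate, is verifying that the gate at the seam $j=r-1$ is unaffected by the adjustments made inside $\vec{f}'$: this relies essentially on the fact --- packaged in Lemmas~\ref{move-quad} and~\ref{one-way-gates-persist} --- that gates and wandering-quadratic status are properties of the preclustering that are stable under legal quadratic moves, together with the observation that such a move alters the cleanup only locally and so leaves $L_m$ untouched.
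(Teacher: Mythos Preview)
Your proof takes essentially the same approach as the paper's: induct on the number of clusters, align the outermost clusters using Lemma~\ref{cluster-overlap} and Lemma~\ref{move-quad}, then truncate and apply the inductive hypothesis. The only differences are cosmetic---you peel off the leftmost cluster $\vec{f}_{[k,a_{r-1})}$ (which you call ``rightmost'', contrary to the paper's convention that indices decrease left to right) rather than the rightmost one $\vec{f}_{[a_1,0)}$, and you are more explicit about the gate at the seam, a point the paper's proof leaves implicit.
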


\begin{proof}
 Let $\vec{a}$ of length $r$ and $\vec{b}$ of length $s$ be two clusterings of the same decomposition $\vec{f}$.

Intuitively, we start from the right and match clusters of $\vec{b}$ with those of $\vec{a}$ one at a time. More formally, we induct on the number $r$ of clusters in $\vec{a}$. For the base case of the induction, take $r=1$, i.e. the whole decomposition is a single cluster; clearly, no other clustering is possible.

For the induction step, we first match the rightmost clusters of the two clusterings; that is, we show that $k = a_r > \ldots > a_2 > b_1 > b_0 = a_0 = 0$ is another clustering of $\vec{f}$, with gates at the same places as $\vec{a}$.
 If $a_1 = b_1$, we are done.
 Suppose that $a_1 < b_1$. Since the concatenation $\vec{f}_{[a_2, a_0)}$ of the first two clusters of $\vec{a}$ is not a cluster, and $\vec{f}_{[a_2, a_1)}$ is not a single wandering quadratic, this $\vec{f}_{[a_2, a_0)}$ cannot be contained in the first cluster of $\vec{b}$. That is, $b_1 < a_2$. Applying Lemma~\ref{cluster-overlap} to $\vec{f}_{[a_2, a_1)}$ and $\vec{f}_{[b_1, b_0)}$, we see that $b_1 = a_1 +1$ and $f_{b_1}$ is a wandering quadratic of $\vec{a}$.
 Identical reasoning shows that if $a_1 > b_1$, then $a_1 = b_1 +1$ and $f_{a_1}$ is a wandering quadratic of $\vec{a}$. In any case, Lemma~\ref{move-quad} makes $k = a_r > \ldots > a_2 > b_1 > b_0 = a_0 = 0$ another clustering of $\vec{f}$, with gates at the same places as $\vec{a}$.

Now $\vec{b}' := (b_s-b_1, \ldots, b_2-b_1, 0 = b_1-b_1)$ and $\vec{a}' := (a_r-b_1, \ldots, a_2-b_1, 0 = b_1-b_1)$ are both clusterings of $\vec{f}_{[k, b_1)}$, so by induction hypothesis, $s-1 = r-1$, and $\vec{b}'$ and $\vec{a}'$ have gates at the same boundaries, which finishes the proof.
\end{proof}

\begin{prop} \label{unique-clustering-ntech}
Given a clustering $\vec{a}$ of $\vec{f}$, another tuple $\vec{b}$ of the same length as $\vec{a}$ is a clustering of $\vec{f}$ if and only if for each $j$ where $a_j \neq b_j$ there is a wandering quadratic $f_i$ such that $\{ a_j, b_j \} = \{ i, i+1 \}$.
\end{prop}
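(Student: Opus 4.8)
The plan is to prove the two directions of the equivalence separately. For the forward direction, suppose $\vec{b}$ is a clustering of $\vec{f}$ of the same length as $\vec{a}$. Then Lemma~\ref{unique-clustering-tech} gives the conclusion immediately: at every index $j$ with $a_j\neq b_j$ one has $b_j=a_j\pm1$, and the factor sitting at that boundary (which is $f_{a_j}$ when $b_j=a_j-1$ and $f_{a_j+1}$ when $b_j=a_j+1$) is a wandering quadratic of both clusterings; this factor is precisely the $f_i$ with $\{a_j,b_j\}=\{i,i+1\}$ demanded by the statement. So the real content lies in the reverse direction.

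For the reverse direction, suppose $\vec{b}$ has the same length as $\vec{a}$ and that at each $j$ with $a_j\neq b_j$ there is a wandering quadratic $f_i$ of $\vec{a}$ with $\{a_j,b_j\}=\{i,i+1\}$. I would argue by induction on the number of indices at which $\vec{a}$ and $\vec{b}$ differ; the base case of zero differences is trivial since then $\vec{b}=\vec{a}$. For the inductive step, fix one such index $j_0$. The hypothesis says the factor at the boundary $j_0$ of $\vec{a}$ is a wandering quadratic of $\vec{a}$, so Lemma~\ref{move-quad} applies and produces a clustering $\vec{a}'$ of $\vec{f}$ which agrees with $\vec{a}$ away from $j_0$, has $a'_{j_0}=b_{j_0}$, and has a gate at a given boundary if and only if $\vec{a}$ does. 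To close the induction I need the proposition's hypothesis to persist for the pair $(\vec{a}',\vec{b})$, which now differ at one fewer place; this holds because passing from $\vec{a}$ to $\vec{a}'$ preserves the gate structure (explicit in Lemma~\ref{move-quad}) and, as is established inside the proof of Lemma~\ref{move-quad}, preserves the property of each factor of $\vec{f}$ of being a wandering quadratic. Hence every factor witnessing a remaining discrepancy for $(\vec{a},\vec{b})$ still witnesses it for $(\vec{a}',\vec{b})$, and the inductive hypothesis applied to $(\vec{a}',\vec{b})$ shows $\vec{b}$ is a clustering of $\vec{f}$.

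I expect the only delicate point to be the bookkeeping of the auxiliary invariants --- the gates and, above all, the notion of ``wandering quadratic'' --- under a single boundary move, so that the inductive step can actually be iterated, together with the verification that moves at distinct boundaries neither interfere with one another nor force two boundaries to collide. The first issue is essentially already packaged in Lemma~\ref{move-quad} and its proof; the second is ruled out by the clause in Definition~\ref{cluster-def} forbidding a cluster consisting only of wandering quadratics (two adjacent wandering quadratics trying to cross a common boundary from opposite sides would sit inside a two-factor cluster made up entirely of wandering quadratics). Everything else is a matter of invoking the already-established machinery in the right order.
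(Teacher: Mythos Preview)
Your proposal is correct and follows essentially the same approach as the paper. The paper's proof is extremely terse: the ``only if'' direction is attributed directly to Lemma~\ref{unique-clustering-tech}, and the ``if'' direction is dispatched with the single phrase ``follows from applying Lemma~\ref{move-quad} at each $j$ where $a_j \neq b_j$.'' Your induction on the number of discrepancies, together with the observation (indeed established inside the proof of Lemma~\ref{move-quad}) that the set of wandering quadratics is preserved by a single boundary move, is exactly what is needed to justify that iterated application; you have simply made explicit the bookkeeping the paper leaves implicit.
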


\begin{proof}
 The ``if'' follows from applying Lemma~\ref{move-quad} at each $j$ where $a_j \neq b_j$.
The ``only if'' follows immediately from Lemma~\ref{unique-clustering-tech}.
\end{proof}

\begin{theorem}\label{swapsferries}
 The number of clusters in a clustering, the kind (\textsf{C} or \textsf{C}-free) of each cluster, and the presence of a gate between the $j$th and the $(j+1)$st clusters are properties of the polynomial, independent of decomposition, clustering, and cleanup. \end{theorem}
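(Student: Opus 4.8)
The plan is to reduce the statement to two facts: (i) for a \emph{fixed} decomposition the three quantities do not depend on the chosen clustering or cleanup, and (ii) passing from one decomposition to another by a single Ritt swap leaves all three quantities unchanged. Ritt's Theorem (Fact~\ref{Rittthm}) then finishes the argument: any two decompositions $\vec f$ and $\vec g$ of the same polynomial are connected by a finite chain $\vec f=\vec f^{(0)}\to\vec f^{(1)}\to\cdots\to\vec f^{(m)}$ of Ritt swaps followed by a linear equivalence $\vec g\sim\vec f^{(m)}$, and linear equivalence changes nothing, since clusters, preclusterings, cleanups and gates all depend only on the linear-equivalence class of the decomposition (Remark~\ref{cluster-rk}, part~(3)). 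Throughout we work, as in the rest of the section, with a polynomial all of whose indecomposable factors are swappable, so that clusterings exist (Lemma~\ref{clusteringsexist}); this hypothesis is itself decomposition-independent, as a Ritt swap replaces two factors by ones linearly related to Ritt polynomials (the factors on the right-hand side of a basic Ritt identity), hence keeps every factor swappable, so every $\vec f^{(t)}$ in the chain again admits a clustering.

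For (i), fix a decomposition $\vec f$. Proposition~\ref{unique-clustering-ntech} (equivalently Lemma~\ref{unique-clustering-tech}) says that any two clusterings of $\vec f$ have the same number of clusters and differ only by sliding wandering quadratics across cluster boundaries. Sliding a wandering quadratic changes the kind of neither adjacent cluster: the cluster it leaves is still a cluster of the same kind by Remark~\ref{cluster-rk}, part~(4) (a \textsf{C} cluster loses its \textsf{C}-ness only if it loses \emph{all} its odd-degree factors, which a clustering forbids, no cluster being made of wandering quadratics), and the cluster it joins stays \textsf{C}-free if it was (a quadratic is a non-type-\textsf{C} Ritt polynomial) or \textsf{C} if it was; moreover Lemma~\ref{unique-clustering-tech} records that both clusterings have a gate at exactly the same boundaries. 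Finally, gates depend only on the preclustering, not the cleanup (Lemma~\ref{ezdoork}, part~(1)). Hence the number of clusters, the sequence of kinds, and the gate pattern are attached to $\vec f$ itself.

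For (ii), let $\vec g$ be obtained from $\vec f$ by a Ritt swap at $i$ and pick any clustering $\vec a$ of $\vec f$. If $i$ is not a cluster boundary of $\vec a$, then $\vec a$ is also a clustering of $\vec g$ (Lemma~\ref{imp-swap-inside}); the swap takes place inside one cluster and is either $C_m\circ C_n=C_n\circ C_m$ inside a \textsf{C} cluster, which stays \textsf{C}, or a basic Ritt identity among non-type-\textsf{C} Ritt polynomials inside a \textsf{C}-free cluster, which stays \textsf{C}-free (Proposition~\ref{swap-inside}), and the gates are unchanged (Lemma~\ref{ezdoork}, part~(2)). If instead $i=a_j$ is a cluster boundary, Lemma~\ref{extra-cluster-swap} produces a clustering $\vec b$ of $\vec g$ with $b_{j'}=a_{j'}$ for all $j'\neq j$, with the same gates at every $j'\neq j$, and with a one-way gate still present between the $j$th and $(j+1)$st clusters, only its direction flipping because a wandering quadratic has crossed the boundary; the kinds are again preserved, since $b_j$ differs from $a_j$ by at most one and the migrating factor is a wandering quadratic (Lemma~\ref{move-quad}, Remark~\ref{cluster-rk}, part~(4)). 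In both cases $\vec f$ and $\vec g$ share the same number of clusters, the same list of kinds, and the same set of boundaries carrying a gate.

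The theorem is thus essentially a bookkeeping assembly of Ritt's Theorem with Proposition~\ref{unique-clustering-ntech}, Lemma~\ref{extra-cluster-swap}, Lemma~\ref{imp-swap-inside}, and Lemma~\ref{ezdoork}; the genuinely delicate content (that a wandering quadratic can migrate to an adjacent cluster only across a one-way gate, and that nothing else can follow it) is already packaged in Lemma~\ref{cluster-overlap}, Proposition~\ref{bridge-cluster}, and Lemma~\ref{extra-cluster-swap}, which I would use as black boxes. The one point needing care is the indexing: one must verify that the canonical left-to-right numbering of clusters is identical before and after a swap so that ``the gate between the $j$th and the $(j+1)$st clusters'' refers to literally the same gate; this is exactly what the identities $b_{j'}=a_{j'}$ for $j'\neq j$ in Lemma~\ref{extra-cluster-swap} guarantee. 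I expect this indexing check, rather than any substantive new geometry, to be the main thing to get right in writing out the proof.
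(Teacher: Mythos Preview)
Your proposal is correct and follows essentially the same route as the paper: reduce to invariance under a single Ritt swap via Ritt's Theorem, then split into the two cases $i\neq a_j$ (handled by Lemma~\ref{imp-swap-inside} and Lemma~\ref{ezdoork}) and $i=a_j$ (handled by Lemma~\ref{extra-cluster-swap}). You are in fact more thorough than the paper's own proof, which leaves implicit both step~(i) (independence of the clustering for a fixed decomposition, which you correctly extract from Proposition~\ref{unique-clustering-ntech}) and the preservation of the cluster kinds; the paper simply says ``it suffices to show that these are invariant under Ritt swaps, and we have in fact already done so'' and then sketches the two cases.
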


\begin{proof}
 It suffices to show that these are invariant under Ritt swaps, and we have in fact already done so.
 Suppose that $\vec{f}$ and $\vec{g} := t_i \star \vec{f}$ are two decompositions of the same polynomial, and $\vec{a}$ is a clustering of $\vec{f}$.

If $i \neq a_j$ for any $j$, Lemma~\ref{imp-swap-inside} shows that $\vec{a}$ is also a clustering of $\vec{g}$, and Lemma~\ref{ezdoork} shows it has all the same gates in the same places. Wandering quadratics are unchanged from $\vec{f}$ to $\vec{g}$ unless the swap brings a quadratic to a gate, or moves a quadratic inside a cluster away from the gate.

If $i = a_j$ for some $j$, then Lemma~\ref{interclusterswap} shows that Lemma~\ref{eat-quadratic} applies and gives a clustering of $\vec{g}$ with $a_j$ moved left or right by one; in any case, gates in this new clustering are the same as in the old clustering, except that the one at $j$ switches direction.
\end{proof}

 The next few results begin to apply the technical machinery around clusterings to the action of the Ritt monoid.

\begin{lemma} \label{double-jump}
If $t_{i+1} \star \vec{f}$ and $t_i \star \vec{f}$ are both defined, then either
$(f_{i+2}, f_{i+1}, f_i)$ is a cluster, or $f_{i+1}$ is a wandering quadratic (in any clustering of $\vec{f}$). \end{lemma}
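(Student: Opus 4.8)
The plan is to pass immediately from the hypotheses to a pair of overlapping two–factor clusters and then apply Lemma~\ref{cluster-overlap}. Since $t_i \star \vec f$ is defined, Remark~\ref{cluster-rk}(5) shows that $\vec f_{[i+1,i-1)} = (f_{i+1},f_i)$ is a cluster, and since $t_{i+1}\star\vec f$ is defined the same remark shows that $\vec f_{[i+2,i)} = (f_{i+2},f_{i+1})$ is a cluster. These two clusters overlap in the single factor $f_{i+1}$, so Lemma~\ref{cluster-overlap}, taken with $d = i+2$, $c = i+1$, $b = i$, $a = i-1$, applies and yields one of two outcomes: either $\vec f_{[i+2,i-1)} = (f_{i+2},f_{i+1},f_i)$ is a cluster, in which case we are done, or $f_{i+1}$ is quadratic, one of $(f_{i+2},f_{i+1})$ and $(f_{i+1},f_i)$ is a \textsf{C} cluster, and $f_{i+1}$ is a semi-persistent wandering quadratic of every preclustering of $\vec f$ in which $i+2>i+1>i-1$ occur as consecutive cluster boundaries.

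Assume the second outcome; it remains to show that $f_{i+1}$ is then a wandering quadratic of every clustering of $\vec f$ (if $\vec f$ admits no clustering there is nothing more to prove, so assume all factors of $\vec f$ are swappable). Note first that a length-two \textsf{C} cluster has at most one quadratic factor --- two quadratics compose to degree $4$, which is a power of $2$ and hence excluded --- and that its non-quadratic factor, if any, is of type \textsf{C} and so of odd prime degree; applied to whichever of the two overlapping pairs is the \textsf{C} cluster, this shows that the partner of $f_{i+1}$ in that pair (namely $f_i$ or $f_{i+2}$) has odd degree. Consequently $f_{i+1}$ cannot be interior to a cluster $\cC$ of any clustering $\vec a$: were it interior, $\cC$ would also contain $f_{i+2}$ and $f_i$, so $(f_{i+2},f_{i+1},f_i)$ would be a consecutive subsequence of $\cC$ containing a factor of odd degree, hence a cluster by Remark~\ref{cluster-rk}(4), contradicting the second outcome. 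Therefore in any clustering $\vec a$ the factor $f_{i+1}$ is the leftmost or the rightmost factor of its cluster, so one of $i$, $i+1$ is a cluster boundary $a_j$ of $\vec a$ and the straddling pair $(f_{a_j+1},f_{a_j})$ is either $(f_{i+1},f_i)$ or $(f_{i+2},f_{i+1})$, which is a cluster; since $t_i$ (respectively $t_{i+1}$) is not a tautological Ritt swap, the two straddling factors are not both quadratic, so Proposition~\ref{bridge-cluster} exhibits one of them as a semi-persistent wandering quadratic of $\vec a$. Finally, once $f_{i+1}$ is known to be a semi-persistent wandering quadratic of one clustering, Proposition~\ref{unique-clustering-ntech} together with Lemma~\ref{move-quad} propagates this to all clusterings.

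The main obstacle is the step where Proposition~\ref{bridge-cluster} is invoked: one must match the two possibilities for which overlapping pair is the \textsf{C} cluster against the two possibilities for the position of $f_{i+1}$ in an arbitrary clustering, and in the degenerate configurations --- where the straddling pair one obtains is the \textsf{C}-free pair and its other factor ($f_i$ or $f_{i+2}$), or a whole adjacent cluster, is itself quadratic --- Proposition~\ref{bridge-cluster} does not apply verbatim and must be supplemented. There one instead argues from the location of $f_{i+1}$ inside the (necessarily \textsf{C}) cluster $\cC$ containing it, using that $\cC$ cannot be the singleton $(f_{i+1})$ by Remark~\ref{wandering-in-clustering}, and reads off the direction of the gate at the relevant boundary from the gate classification of Definition~\ref{gate-def} and Proposition~\ref{twodoork}, possibly after sliding a wandering quadratic into a convenient position with Lemma~\ref{move-quad}; this case-by-case bookkeeping, rather than any conceptual difficulty, is the part of the argument that requires care.
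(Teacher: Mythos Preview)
Your opening move is exactly the paper's: invoke Remark~\ref{cluster-rk}(5) to see that $(f_{i+2},f_{i+1})$ and $(f_{i+1},f_i)$ are clusters, then apply Lemma~\ref{cluster-overlap}. The paper's entire proof is those two sentences; it regards the passage from ``semi-persistent wandering quadratic of every preclustering $\ldots i+2>i+1>i-1\ldots$'' to ``wandering quadratic in any clustering'' as immediate.

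Your second and third paragraphs fill in that bridge carefully, and the argument is correct. However, the worry you raise in the final paragraph is unnecessary. Once you are in the second outcome of Lemma~\ref{cluster-overlap}, you already know $f_{i+1}$ is quadratic. In any clustering with boundary $a_j\in\{i,i+1\}$, the straddling pair $(f_{a_j+1},f_{a_j})$ is one of the two overlapping clusters, and since the corresponding Ritt swap $t_{a_j}$ is defined, the two factors are not both quadratic; hence the \emph{other} factor in the pair is non-quadratic, and Proposition~\ref{bridge-cluster} applies verbatim to name $f_{i+1}$ (the quadratic one) as the semi-persistent wandering quadratic. No matching of \textsf{C}-cluster side against boundary side, and no degenerate-configuration bookkeeping, is needed.
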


\begin{proof}
As remarked in Remark~\ref{cluster-rk}, both $(f_{i+2}, f_{i+1})$ and $( f_{i+1}, f_i)$ must be clusters, and Lemma~\ref{cluster-overlap} finishes the proof. \end{proof}

\begin{lemma} \label{forfunda}
If $t_i t_{i+1} t_i \star \vec{f}$ (respectively, $t_{i+1} t_i t_{i+1} \star \vec{f}$) is defined, then $(f_{i+2}, f_{i+1}, f_i)$ is a cluster.
\end{lemma}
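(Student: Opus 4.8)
The plan is to reduce to a three-factor decomposition and then play Lemma~\ref{double-jump} off against itself, using that a Ritt swap interchanges the degrees of the two factors it moves.

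First I would reduce to the case $k=3$, $i=1$: since $t_i t_{i+1} t_i$ only ever touches the factors $f_{i+2}, f_{i+1}, f_i$, and whether a length-three block is a cluster is a property of that block alone, it suffices to prove that if $\vec{f} = (f_3, f_2, f_1)$ and $t_1 t_2 t_1 \star \vec{f}$ is defined, then $\vec{f}$ is a single cluster (the case of $t_2 t_1 t_2 \star \vec{f}$ being entirely symmetric). All three factors are then swappable, since each participates in a Ritt swap, so $\vec{f}$ (and also the decompositions obtained from it) admits a clustering by Lemma~\ref{clusteringsexist}. Write $\vec{g} := t_1 \star \vec{f}$ and $\vec{h} := t_2 \star \vec{g}$; by hypothesis $t_1 \star \vec{h}$ is defined. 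Using the first clause of Theorem~\ref{fundamentallemma} (which is immediate from Theorem~\ref{cruciallemma}), $t_1^2 \star \vec{g} = \vec{g}$ and $t_2^2 \star \vec{h} = \vec{h}$, so both $t_1$ and $t_2$ are defined on each of $\vec{g}$ and $\vec{h}$.

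Next I would apply Lemma~\ref{double-jump} to $\vec{g}$ and to $\vec{h}$. This gives: either $(g_3, g_2, g_1)$ is a cluster, or $g_2$ is a wandering quadratic; and either $(h_3, h_2, g_1)$ is a cluster, or $h_2$ is a wandering quadratic. In the first alternative of the first dichotomy, $(g_3, g_2, g_1)$ is the one-cluster preclustering of $\vec{g}$, while $t_1 \star \vec{g} = \vec{f}$ is defined and position $1$ is interior to this preclustering, so Proposition~\ref{swap-inside} shows that $\vec{f}$ is itself a cluster; similarly, in the first alternative of the second dichotomy, Proposition~\ref{swap-inside} applied to $t_2 \star \vec{h} = \vec{g}$ first yields that $(g_3, g_2, g_1)$ is a cluster, whence again $\vec{f}$ is a cluster. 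So it remains only to rule out the case where $g_2$ and $h_2$ are both wandering quadratics.

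That last case I would dispose of by a degree count. A wandering quadratic is by definition a quadratic factor, and a Ritt swap at position $j$ interchanges the degrees of the factors in slots $j$ and $j+1$ (a one-line check of the three basic Ritt identities, using that the ``big'' factor of a third-kind identity has degree at least $3$). Hence $g_2$ quadratic forces $\deg f_1 = \deg g_2 = 2$, and $h_2$ quadratic forces $\deg f_3 = \deg g_3 = \deg h_2 = 2$. But then $g_3 = f_3$ and $g_2$ are both quadratic, while $\vec{h} = t_2 \star \vec{g}$ being defined demands a Ritt swap between $g_3$ and $g_2$ --- and no basic Ritt identity is a composition of two degree-two polynomials (again because a third-kind ``big'' factor has degree at least $3$, and two distinct primes cannot both equal $2$). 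This contradiction finishes the proof. I expect the only delicate points to be the bookkeeping in the reduction to three factors and checking that Proposition~\ref{swap-inside} genuinely applies in the two ``good'' sub-cases; the degree argument that kills the remaining case is short and is really the crux of the lemma.
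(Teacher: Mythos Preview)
Your proof is correct and follows essentially the same approach as the paper: both apply Lemma~\ref{double-jump} to the intermediate decomposition $\vec{g}$ and exploit the fact that no Ritt swap exists between two quadratic factors. The only organizational difference is that the paper observes ``at most one of $f_i,f_{i+1},f_{i+2}$ is quadratic'' up front and then tracks cluster boundaries through the second swap, whereas you apply Lemma~\ref{double-jump} a second time (to $\vec{h}$) and derive the two-quadratics contradiction directly by a degree count; this is a minor variation rather than a different route.
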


\begin{proof}
Note that in either sequence, each factor swaps with every other factor. So as long as at least one of the two is defined, all of $f_i$, $f_{i+1}$, and $f_{i+2}$ are swappable, and no more than one is quadratic.

Lemma~\ref{double-jump} applies to the first intermediate decomposition $\vec{g} := t_i \star \vec{f}$ (respectively, $\vec{g} := t_{i+1} \star \vec{f}$). If $\vec{g}$ is a cluster, we are done. Otherwise, the second intermediate decomposition $\vec{h} := t_{i+1} t_i \star \vec{f}$ (respectively, $\vec{h} := t_i t_{i+1} \star \vec{f}$) has a cluster boundary between $h_{i+1}$ and $h_i$ (respectively, $h_{i+2}$ and $h_{i+1}$), and neither one of these is quadratic, so the last swap $t_i \star \vec{h}$ (respectively, $t_{i+1} \star \vec{h}$) is not defined.
\end{proof}

 Our fundamental Theorem~\ref{fundamentallemma} is now an easy corollary. \label{proofoffunda}

\begin{proof} \textbf{This is the proof of Theorem~\ref{fundamentallemma}}.
We need to show that $t_{i+1} t_i t_{i+1} \star \vec{f}$ is defined if and only if $t_i t_{i+1} t_i \star \vec{f}$ is defined, and they are equal when defined.

By Lemma~\ref{forfunda}, the whole $(f_{i+2}, f_{i+1}, f_i)$ must be a cluster, so Ritt swaps can be witnessed by identity linear factors, and the result is immediate. \end{proof}

Let us prove two more statements of this flavour, with a view towards normal forms.

\begin{prop} \label{one-traverse}
 Suppose that $\vec{h} := (t_{k-1} t_{k-2} \ldots t_1) \star \vec{f}$ is defined, and let $\vec{a}$ be a clustering of $\vec{f}$ with $r >1$ clusters.
 Then $f_1$ is quadratic and there are (one-way) right-to-left gate at every $j \neq 0, r$.
 Furthermore, $\vec{b}$ given by $b_j = a_j -1$ for all $j \neq 0, r$ is a clustering of $\vec{h}$, which has (one-way) left-to-right gates at every $j \neq 0, r$.
\end{prop}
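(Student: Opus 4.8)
The plan is to follow the ``traveling factor'', i.e. the successive images of $f_1$, as the generators $t_1,t_2,\dots,t_{k-1}$ are applied one at a time, and to match each step against the clustering machinery of Section~\ref{sect42}. Since $\vec h$ is defined, every initial segment $(t_m\cdots t_1)\star\vec f$ is defined. The permutation underlying $t_{k-1}\cdots t_1$ sends $1\mapsto k$ and $m\mapsto m-1$ for $m\ge 2$, so in $\vec h$ the last factor $h_k$ is a costume of $f_1$ in the sense of Remark~\ref{costumes}, and each Ritt swap in the sequence moves this traveling factor exactly one step to the left; in particular its degree is unchanged throughout (a type-$3$ swap only replaces $x^k u(x^{\ell p})^n$ by $x^k u(x^\ell)^{pn}$, which has the same degree). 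The first $a_1-1$ swaps $t_1,\dots,t_{a_1-1}$ take place strictly inside the first cluster $\vec f_{[a_1,0)}$; by Lemma~\ref{imp-swap-inside} the clustering $\vec a$, and by Lemma~\ref{ezdoork} its gates, survive these swaps, and the traveling factor arrives at position $a_1$, the boundary between clusters $1$ and $2$.

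Now $t_{a_1}$ is a boundary swap, so Lemma~\ref{interclusterswap} (via Proposition~\ref{bridge-cluster} and Lemma~\ref{eat-quadratic}) applies: either the traveling factor or $f_{a_1+1}$ is a (semi-persistent) wandering quadratic of $\vec a$ at this boundary. I claim it is the traveling factor and that $\vec a$ has a one-way right-to-left gate at $j=1$. Granting this, $\deg f_1=2$, and the last clause of Lemma~\ref{interclusterswap} gives a clustering of $t_{a_1}\star\vec f$ whose boundary $\#1$ has moved to $a_1-1$ and now carries a left-to-right gate, with the traveling quadratic lying just above the bottom of the (enlarged) second cluster. One then peels off the first cluster: the tail $t_{a_1+1}\cdots t_{k-1}$ performs exactly the ``one-traverse'' of a decomposition with $r-1$ clusters, and an induction on $r$ (the base case $r=1$ being vacuous since then no gate exists to cross) delivers the clustering $\vec b$ of $\vec h$ with $b_j=a_j-1$ and left-to-right gates at every $j\neq 0,r$, and simultaneously shows $\vec a$ has right-to-left gates at every such $j$. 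The one technical wrinkle in the induction is that the traveling quadratic enters the next cluster one step above its bottom rather than at the bottom, but since the intervening factor is the costume of $f_{a_1+2}$ and the first swap of the tail is interior to that cluster, Lemma~\ref{imp-swap-inside} lets the induction proceed unchanged.

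The crux is the claim that it is the traveling factor, and not $f_{a_1+1}$, that is the wandering quadratic at the crossing, equivalently that the gate of $\vec a$ at $j=1$ is right-to-left. First, there \emph{is} a gate there: if not, Lemma~\ref{interclusterswap} would fail and $t_{a_1}$ could not be defined, contradicting that $\vec h$ is defined. Suppose this gate were left-to-right, so $t_{a_1}$ is the ``incoming'' case of Lemma~\ref{interclusterswap}: $f_{a_1+1}$ crosses into the first cluster, the boundary moves to $a_1+1$, and the resulting clustering has a one-way right-to-left gate there, with the traveling factor now sitting at that shifted boundary. The next swap $t_{a_1+1}$ is again a boundary swap at $\#1$, and as the gate is one-way right-to-left it must be the ``outgoing'' case of Lemma~\ref{interclusterswap}, forcing the traveling factor to be a wandering quadratic; if $\deg f_1\neq 2$ this is already impossible, so the traverse is undefined --- a contradiction. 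If $\deg f_1=2$, I rule out the detour using Lemma~\ref{move-quad}: since $f_{a_1+1}$ is then a wandering quadratic of $\vec a$, one may absorb it into the first cluster to obtain a clustering of $\vec f$ in which the gate at $\#1$ is right-to-left, so it suffices to treat clusterings in which no wandering quadratic can be pushed into an earlier cluster --- and for these the fusion Lemmas~\ref{land-cluster-fuse} and~\ref{C-cluster-fuse}, together with Lemma~\ref{eat-quadratic}, show that the factor met by the traveling quadratic at the next crossing is the costume of a non-quadratic factor, so the forced outgoing crossing is undefined, again contradicting that $\vec h$ is defined. I expect this ``no left-to-right gate'' step, and carrying the slightly displaced position of the traveling quadratic through the induction on $r$, to be the only real obstacles; everything else is bookkeeping with Lemmas~\ref{imp-swap-inside}, \ref{interclusterswap}, and~\ref{swapsferries}.
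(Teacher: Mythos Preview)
Your approach is essentially the paper's: follow the traveling factor (the costume of $f_1$) step by step, keep the clustering intact during interior swaps via Lemma~\ref{imp-swap-inside}, and invoke Lemma~\ref{interclusterswap} at each boundary. The paper organizes this as an induction on the step index $i$ rather than on the number $r$ of clusters: one simply defines $\vec a^{\,i}$ from $\vec a^{\,i-1}$ and observes that at every boundary crossing the second option of Lemma~\ref{interclusterswap} must fire, so the boundary drops by one and the gate flips; at the end $\vec b:=\vec a^{\,k-1}$. This avoids your ``offset'' bookkeeping entirely --- there is no need to peel off a cluster and re-index.

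Your explicit dichotomy at the boundary is the right thing to spell out (the paper merely asserts the second case), but your treatment of the case $\deg f_1=2$ is both unnecessary and wrong. If the first case of Lemma~\ref{interclusterswap} held, then $f^{i-1}_{i+1}$ is quadratic; since the traveling factor $f^{i-1}_i$ has degree $\deg f_1=2$ as well, the swap $t_i$ would be a Ritt swap of two quadratics --- but no basic Ritt identity has two quadratic factors on one side (inspect Definition~\ref{specialpoly}), so $t_i\star\vec f^{\,i-1}$ is undefined, contradicting the hypothesis. Your attempted reduction via Lemma~\ref{move-quad} is illegitimate: the proposition asserts something about the \emph{given} clustering $\vec a$, and moving a wandering quadratic can flip gate directions, so establishing the conclusion for a modified clustering does not yield it for $\vec a$. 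Once $f_1$ is known to be quadratic after the first crossing, the same two-quadratics observation rules out the first case at every later boundary, and the induction (on $i$) runs cleanly.
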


\begin{proof}
For $0 \leq i <k$, let $\vec{f}^i := (t_i t_{i-1} \ldots t_1) \star \vec{f}$, and define clustering $\vec{a}^i$ of $\vec{f}^i$ by setting $\vec{a}^0 := \vec{a}$ and continuing inductively as follows.
If $i$ is not a cluster boundary of $\vec{a}^{i-1}$, then Lemma~\ref{imp-swap-inside} applies to $t_i \star \vec{f}^{i-1} = \vec{f}^i$, so $\vec{a}^i := \vec{a}^{i-1}$ works.
If $i = a^{i-1}_j$ is a cluster boundary of $\vec{a}^{i-1}$, then Lemma~\ref{extra-cluster-swap} applies to $t_i \star \vec{f}^{i-1} = \vec{f}^i$, making $f^{i-1}_i$ quadratic and making $\vec{a}^i$ with $a^i_j = a^{i-1}_j -1$ and $a^i_{j'} = a^{i-1}_{j'}$ for all $j' \neq j$ a clustering of $\vec{f}^i$.
Since $\vec{f}$ has more than one cluster, the second possibility must occur at least once: $f^{i-1}_i$ quadratic is quadratic for some $i$. Since $f_1$ becomes (in the sense of Remark~\ref{costumes}) this $f^{i-1}_i$ via the product $t_{i-1} t_{i-2} \ldots t_1$ of Ritt swaps, this makes $f_1$ quadratic.
 It is now clear that $\vec{b} := \vec{a}^{k-1}$ works.\end{proof}

\begin{lemma} \label{two-traverses}
 Suppose that $ k \geq 3$, let $v_1 := t_{k-1} t_{k-2} \ldots t_2$ and $v_2 :=t_{k-2} t_{k-3} \ldots t_1$,
and suppose that $v_2 v_1 \star \vec{f}$ is defined.
Then the whole $\vec{f}$ is a cluster,
unless $k=3$ and
$f_3$ is quadratic.
\end{lemma}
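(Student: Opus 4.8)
The plan is to induct on $k\ge 3$, peeling the top factor $f_k$ off the front of $v_2v_1$ by commutation, and to reduce the core of the argument to Proposition~\ref{one-traverse} applied to the top $k-1$ and bottom $k-1$ factors together with the cluster–overlap Lemma~\ref{cluster-overlap}. First note that since $v_2v_1\star\vec f$ is defined, every factor of $\vec f$ participates in some Ritt swap ($f_2$ is carried past $f_3,\dots,f_k$ by $v_1$, and $f_1$ is carried past the $v_1$–images of $f_3,\dots,f_k$ by $v_2$), so every $f_i$ is swappable and $\vec f$ (and each of its contiguous sub‑decompositions) admits a clustering by Lemma~\ref{clusteringsexist}. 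I shall also use the elementary fact that a Ritt swap at $i$ applies the transposition $\tau_i$ to the \emph{sequence of degrees} of the factors, i.e.\ $\deg g_i=\deg f_{i+1}$ and $\deg g_{i+1}=\deg f_i$; this is immediate from inspecting the three kinds of basic Ritt identities.

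For the reduction, write $v_1=t_{k-1}v_1'$ and $v_2=t_{k-2}v_2'$ with $v_1':=t_{k-2}\cdots t_2$ and $v_2':=t_{k-3}\cdots t_1$, which are precisely the two‑traverse words for $k-1$. Since $t_{k-1}$ commutes (in the sense of the second part of Theorem~\ref{fundamentallemma}) with each of $t_1,\dots,t_{k-3}$, we get that $v_2v_1\star\vec f$ is defined and equals $t_{k-2}t_{k-1}(v_2'v_1')\star\vec f$. In particular $v_2'v_1'\star\vec f$ is defined; as $v_2'v_1'$ only involves the swaps $t_1,\dots,t_{k-2}$, its action on the bottom $k-1$ factors is exactly $v_2'v_1'\star\vec f_{[k-1,0)}$, so the two‑traverse for $k-1$ is defined on $\vec f_{[k-1,0)}$. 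Likewise $v_1\star\vec f$ is defined, and restricting to the top $k-1$ factors $\vec f_{[k,1)}$ this is the one‑traverse $(t_{k-2}\cdots t_1)\star\vec f_{[k,1)}$, so Proposition~\ref{one-traverse} applies to $\vec f_{[k,1)}$: either $\vec f_{[k,1)}$ is a cluster, or $f_2$ is a wandering quadratic and every interior cluster boundary of $\vec f_{[k,1)}$ carries a one‑way right‑to‑left gate.

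For the base case $k=3$, $v_2v_1\star\vec f=t_1t_2\star\vec f$; from $t_2\star\vec f$ defined, $(f_3,f_2)$ is a cluster, and writing $\vec g:=t_2\star\vec f=(g_3,g_2,f_1)$, from $t_1\star\vec g$ defined, $(g_2,f_1)$ is a cluster. The clusters $(g_3,g_2)$ and $(g_2,f_1)$ of $\vec g$ overlap in the single factor $g_2$, so Lemma~\ref{cluster-overlap} gives that either $\vec g$ (hence $\vec f$) is a cluster, or one of the two is a \textsf C cluster and $g_2$ is a wandering quadratic; in the latter case $g_2$ is quadratic and, by the degree‑transposition remark, $\deg f_3=\deg g_2=2$. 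This is the asserted exception. For the inductive step with $k\ge4$, the induction hypothesis applied to $\vec f_{[k-1,0)}$ yields that $\vec f_{[k-1,0)}=(f_{k-1},\dots,f_1)$ is a cluster (or $k=4$ and $f_3$ is quadratic). In the clean subcase — $\vec f_{[k-1,0)}$ a cluster and $\vec f_{[k,1)}$ a cluster — these two clusters overlap in the $k-2\ge2$ factors with indices $2,\dots,k-1$; since the overlap has more than one factor, Lemma~\ref{cluster-overlap} forces $\vec f=(f_k,\dots,f_1)$ to be a cluster, as desired.

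The remaining subcases, in which a wandering quadratic shows up, are where the real work lies, and I expect this to be the main obstacle. When $\vec f_{[k,1)}$ is not a cluster, so that $f_2$ is a wandering quadratic with all interior gates of $\vec f_{[k,1)}$ right‑to‑left, one must push this quadratic across the boundary between the cluster $\vec f_{[k-1,0)}$ and whatever lies above it — using Lemma~\ref{move-quad}, Proposition~\ref{bridge-cluster} and the gate bookkeeping of Section~\ref{clustersec} — in order to produce two overlapping clusters whose overlap is at least two factors, and then to conclude via Lemma~\ref{cluster-overlap} that the whole $\vec f$ is a cluster. Separately, for $k=4$ the induction hypothesis on the bottom three factors is weaker (it allows ``$f_3$ quadratic''), and one must check by hand — again via Lemma~\ref{cluster-overlap} and the degree‑transposition remark, now also using the final swaps $t_{k-1}t_{k-2}$ of the word — that this exception does not propagate to $\vec f$. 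The clean subcase is routine; it is the wandering‑quadratic bookkeeping and the $k=4$ verification that require care.
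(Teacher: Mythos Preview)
Your commutation identity $v_2v_1 \simeq t_{k-2}t_{k-1}(v_2'v_1')$ is correct, and the base case $k=3$ together with the ``clean subcase'' of the inductive step are fine. But the remaining subcases are not an afterthought: they are essentially the whole lemma, and your sketch for them does not clearly go through. When $f_2$ is quadratic and $\vec f_{[k,1)}$ fails to be a cluster, your inductive hypothesis gives you that $\vec f_{[k-1,0)}$ is a cluster, and the defined swap $t_{k-1}\star\vec h$ tells you that $(h_k,h_{k-1})$ is a cluster with $h_{k-1}$ quadratic; but the overlap of these two is the \emph{single} quadratic $h_{k-1}$, so Lemma~\ref{cluster-overlap} does not close the case. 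Your proposed fix --- ``push the quadratic across the boundary to produce two overlapping clusters whose overlap is at least two factors'' --- would need to manufacture a second non-quadratic factor in the overlap, and nothing in the induction hypothesis supplies one. The induction hypothesis discards exactly the gate-direction information (from Proposition~\ref{one-traverse}) that one needs to rule out a second quadratic crossing, which is precisely what governs this case.

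The paper's proof avoids this difficulty by not inducting at all. It sets $\vec g := v_1\star\vec f$ and applies Proposition~\ref{one-traverse} \emph{twice}: once to $v_1$ acting on $\vec f_{[k,1)}$ and once to $v_2$ acting on $\vec g_{[k-1,0)}$. This yields a four-way case split on whether $f_1$ and $f_2$ are quadratic. The point is that the second application of Proposition~\ref{one-traverse} (to $\vec g_{[k-1,0)}$) carries gate-direction information: after one traverse, every interior gate becomes one-way left-to-right, which blocks a second quadratic from crossing. This is used explicitly in the case where both $f_1$ and $f_2$ are quadratic, and it is exactly what your induction hypothesis cannot see. If you want to salvage the inductive approach, you will at minimum need to apply Proposition~\ref{one-traverse} to the bottom $k-1$ factors of $\vec g$ as well, at which point the induction becomes redundant and you are essentially reproducing the paper's direct argument.
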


\begin{proof}
Let $\vec{g} := v_1 \star \vec{f}$, and note that Proposition~\ref{one-traverse} applies to both
(1) $v_i \star \vec{f}_{[k, 1)} = \vec{g}_{[k, 1)}$ and (2) $v_2 \star \vec{g}_{[k-1, 0)}$.
If $f_1 = g_1$ is not quadratic, (2) makes $\vec{g}_{[k-1, 0)}$ a cluster.
If $f_2$ is not quadratic, (1) makes both $\vec{g}_{[k, 1)}$ and $\vec{h}_{[k, 1)}$ a cluster.
If either $f_1$ or $f_2$ is quadratic, then none of the other factors $f_i$ with $i \geq 3$ are quadratic, since we do not allow tautological Ritt identities.

We now treat the four cases separately.

\textit{Case 1:} If neither $f_1$ nor $f_2$ is quadratic, then Lemma~\ref{cluster-overlap} applies to the clusters
$\vec{g}_{[k-1, 0)}$ and $\vec{g}_{[k, 1)}$. If $\vec{g}$ is a cluster, then so is $\vec{f}$. Otherwise, the overlap $\vec{g}_{[k-1, 1)}$ is a single quadratic $g_2$, so $k = 3$, $v_1 = t_2$, and $f_3$ is quadratic.

\textit{Case 2:} Suppose $f_1=g_1$ is not quadratic but $f_2$, and therefore $g_k$, is. Now $(g_{k-1}, \ldots, g_2, g_1=f_1)$ is a cluster. On the other hand, $t_{k-1} \vec{g} =  t_{k-2} \ldots t_2 \star \vec{f}$ is defined, so $(g_k, g_{k-1})$ is also cluster, and $g_{k-1}$ is not quadratic. Then by Lemma~\ref{cluster-overlap} the whole $\vec{g}$ (and, therefore, $\vec{f}$) is a cluster as desired.

\textit{Case 3:} Suppose $f_1=g_1$ is quadratic but $f_2$, and therefore $g_k$, is not. Now $(g_k, \ldots, g_2)$ is a cluster. But $v_2 \star \vec{g}$ is defined, so $t_1 \star \vec{g}$ is defined, so $(g_2, g_1)$ is a cluster. Since $g_2$ is not quadratic, by Lemma~\ref{cluster-overlap} the whole $\vec{g}$ (and, therefore, $\vec{f}$) is a cluster as desired.

\textit{Case 4:} Finally, suppose that both $f_1$ and $f_2$ are quadratic. If $(f_k, \ldots, f_3)$ is not a cluster, according to Proposition \ref{one-traverse} there must be a right-to-left gate at every boundary between clusters inside there, which becomes a one-way left-to-right in the corresponding place in $(g_{k-1}, \ldots, g_2)$. That, according to the same proposition, makes it impossible for $v_2 \star \vec{g}$ to be defined. So $(f_k, \ldots, f_3)$ is a cluster. Since $t_2 \star \vec{f}$ is defined, $(f_3, f_2)$ is also a cluster. Since $f_3$ is not quadratic, this makes $(f_k, \ldots, f_2)$ a cluster. Then $(g_k, \ldots, g_2)$ is also a cluster. Since $t_1 \star \vec{g}$ is defined, $(g_2, g_1))$ is a cluster. Since $g_2$ is not quadratic, this means the whole $\vec{g}$ is a cluster. So $\vec{f}$ is a cluster.\end{proof}

\section{Canonical forms}
\label{canformsec} \label{sect43}

Much of this section is devoted to using syntactic operations on words in the Ritt monoid $\rittmonoid_k$ that appear in Theorem~\ref{fundamentallemma} to show that any decomposition of a polynomial may be obtained from any other by a sequence of Ritt swaps \emph{in a particular canonical order}. The words in the Ritt monoid $\rittmonoid_k$ corresponding to such sequences are said to be in a canonical form.

If one thinks of permuting factors as putting them  in a particular order, then our first canonical form roughly corresponds to an insert-sort, and the second one to a merge-sort.
While it is well known that every permutation is represented by a sequence of transpositions of each of these forms, we could not find a reference in literature for the more refined results taking into account the irreversibility of operation (1) in Remark~\ref{synoprk} below.

For each of our two canonical forms, we show (see Proposition~\ref{firstcanprop} and Proposition~\ref{2ndcanprop}) that for any word $w \in \rittmonoid_k$ there is a word $\hat{w}$ of this canonical form such that whenever $w \star \vec{f}$ is defined, $\hat{w} \star \vec{f} = w \star \vec{f}$. For example, for $w = t_i t_i$, we set $\hat{w}$ to be the empty word.

While it is convenient to speak of the factors of $\vec{f}$ in the statements and proofs of intermediate results, the canonical word $\hat{w}$ ultimately only depends on $w$ and works for all $\vec{f}$.

\begin{Rk} \label{synoprk}
Recall the three syntactic operations on words in the Ritt monoid $\rittmonoid_k$ from Theorem~\ref{fundamentallemma}.
\begin{enumerate}
\item Delete subword $t_i t_i$.
\item Replace subword $t_i t_j$ by $t_j t_i$ for non-consecutive $i$ and $j$.
\item Replace subword $t_{i+1} t_i t_{i+1}$ by $t_i t_{i+1} t_i$, or vice versa.
\end{enumerate}
 Operations (2) and (3) are reversible, while (1) is not.
 Operation (1) decreases the length of the word, while (2) and (3) leave it the same.
\end{Rk}

\begin{Rk}
 If a word $v$ is obtained from a word $w$ by operations (1), (2) and (3) above, then they represent the same permutation (see Definition~\ref{defpermrep}), the length of $v$ is less than or equal to the length of $w$, and for any decomposition $\vec{f}$, if $w \star \vec{f}$ is defined, the $v \star \vec{f} = w \star \vec{f}$. It may be that $v \star \vec{f}$ is defined while $w \star \vec{f}$ is not.

 If $v$ and $w$ also have the same length, or, equivalently, if one was obtained from the other by operations (2) and (3) alone, then $v \star \vec{f} = w \star \vec{f}$ for all $\vec{f}$.
\end{Rk}

These observations motivate the following definitions.

\begin{Def} \label{defapporx}
 If two words $v, w \in \rittmonoid_k$ can be obtained from each other by operations (2) and (3) above, we write $w \simeq v$.

 A word $w \in \rittmonoid_k$ \emph{is length-minimal} if no strictly shorter word $v$ may be obtained from $w$ by operations (1), (2) and (3) above.
\end{Def}

\begin{Rk}
This notion of equivalent words, only used in this section, is stronger than $\approx$ in Definition~\ref{equicorrdef}. For example, they disagree on the pair $t_i t_j t_j t_j$ and $t_i t_i t_i t_j$ for $j \neq i-1, i, i+1$. By Theorem~\ref{fundamentallemma}, $v \simeq w$ implies $v \approx w$. \end{Rk}

Instead of inducting on the length of $w$, we begin most proofs in this section with replacing $w$ by some length-minimal word that can be obtained from it by operations (1) - (3), and then reach a contradiction every time we get a chance to cancel $t_i t_i$.

\begin{Rk} Let $t, u, v$ be words in the Ritt monoid. If $w' = tuv$ is length-minimal, then $u$ is length-minimal. \end{Rk}

We use the same interval-subscript notation for long sequences of $t_i$ as we did for long sequences of $f_i$.
These intervals may be increasing or decreasing, and open or closed on either end.

\begin{Def}
If $a < b$, then $t_{(a,b]} = t_{[a+1,b]} = t_{(a,b+1)} = t_{[a+1,b+1)} := t_{a+1} t_{a+2} \ldots t_b$. A word of this form is a \emph{left-to-right transit}.\\
If $a > b$, then $t_{(a,b]} = t_{[a-1,b]} = t_{(a,b-1)} = t_{[a-1,b-1)} := t_{a-1} t_{a-2} \ldots t_b$. A word of this form is a \emph{right-to-left transit}.\\
If $a=b$, then $t_{[a,b]} := t_a = t_b$, while $t_{(a,b]} = t_{[a,b)} = t_{(a,b)}$ is the empty word.\\
\end{Def}

Transits are so named because, for example, in $t_{(a,b]} \star \vec{f} = \vec{g}$ one factor $f_b$ ``travels'' left from its original $b$th position to become (in the sense of Remark~\ref{costumes}) the factor $g_a$ in $a$th position in $\vec{g}$.

The following generalizations of operation (3) are useful. The last one says that if two adjacent factors $f_{a-1}$ and $f_{a-2}$ travel some number of steps to the left and then switch places, they could just as well have switched places first, and traveled later.

\begin{lemma} \label{teenylem}
\begin{itemize}
\item  if $r+1 > r \geq s$, then
$t_r t_{[r+1,s]} \simeq t_{[r+1,s]} t_{r+1}$
\item if $p > r \geq s$, then
 $t_r t_{[p,s]} \simeq t_{[p,s]} t_{r+1}$
\item $t_{[b, a-1]} t_{[b,a]} \simeq t_{[b-1, a_1]} t_{[b,a]} t_{a-1}$
\end{itemize} \end{lemma}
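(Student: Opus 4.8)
The plan is to prove each of the three identities in Lemma~\ref{teenylem} by induction, reducing everything to the braid relation (operation (3)) and commutation of non-consecutive generators (operation (2)), since $\simeq$ is by definition the equivalence generated by these two operations.

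\textbf{First identity: $t_r t_{[r+1,s]} \simeq t_{[r+1,s]} t_{r+1}$ for $r+1 > r \geq s$.} Here $t_{[r+1,s]} = t_{r+1} t_r t_{r-1} \cdots t_s$ is a right-to-left transit (note $r+1 > s$, so this is nonempty). I would prove this by induction on $r - s$. In the base case $r = s$, the claim is $t_r t_{r+1} t_r \simeq t_{r+1} t_{r+1}$?? --- no; rather $t_{[r+1,s]} = t_{r+1} t_r$ and the claim reads $t_r t_{r+1} t_r \simeq t_{r+1} t_r t_{r+1}$, which is exactly operation (3). For the inductive step, write $t_{[r+1,s]} = (t_{r+1} t_r) t_{[r-1,s]}$, so $t_r t_{[r+1,s]} = (t_r t_{r+1} t_r) t_{[r-1,s]} \simeq (t_{r+1} t_r t_{r+1}) t_{[r-1,s]}$ by operation (3). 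Now $t_{r+1}$ commutes with each generator in $t_{[r-1,s]}$ (all indices are $\leq r-1 < r+1-1$), so $t_{r+1} t_{[r-1,s]} \simeq t_{[r-1,s]} t_{r+1}$; hence the expression is $\simeq t_{r+1} t_r t_{[r-1,s]} t_{r+1} = t_{[r+1,s]} t_{r+1}$.

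\textbf{Second identity: $t_r t_{[p,s]} \simeq t_{[p,s]} t_{r+1}$ for $p > r \geq s$.} Again $t_{[p,s]} = t_{p-1} t_{p-2} \cdots t_s$ is a right-to-left transit. I would split off the leading part of the transit with index $> r+1$: these generators $t_{p-1}, \ldots, t_{r+2}$ all commute with $t_r$ (since their indices exceed $r+1$), so $t_r t_{[p,s]} \simeq t_{[p,r+2]} \, (t_r t_{[r+1,s]})$. Now apply the first identity to the factor $t_r t_{[r+1,s]} \simeq t_{[r+1,s]} t_{r+1}$, obtaining $t_{[p,r+2]} t_{[r+1,s]} t_{r+1} = t_{[p,s]} t_{r+1}$. (If $p = r+1$ the leading part is empty and this reduces directly to the first identity; I should note that case.)

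\textbf{Third identity: $t_{[b,a-1]} t_{[b,a]} \simeq t_{[b-1,a-1]} t_{[b,a]} t_{a-1}$.} These are left-to-right transits: $t_{[b,a]} = t_{b+1} t_{b+2} \cdots t_a$ and $t_{[b,a-1]} = t_{b+1} \cdots t_{a-1}$, with $t_{[b-1,a-1]} = t_b t_{b+1} \cdots t_{a-1}$ (I read the displayed ``$t_{a_1}$'' as a typo for $t_{a-1}$). The idea is to ``pull'' the $t_b$ from the front of $t_{[b-1,a-1]}$ leftward through the product, trading it for the trailing $t_{a-1}$. Concretely, I would prove by induction (on $a - b$, or by repeatedly applying the two reversed identities above) the auxiliary claim $t_b \, t_{[b,a-1]} \, t_{[b,a]} \simeq t_{[b,a-1]}\, t_{[b,a]}\, t_{a-1}$; once that is in hand, prepending $t_b$ to the left side of the desired identity and using that $t_{[b-1,a-1]} = t_b t_{[b,a-1]}$ gives the result after observing the right sides match. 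The auxiliary claim itself is handled by noting $t_b$ commutes with all of $t_{[b+2,a-1]}$ and $t_{[b+2,a]}$ except the initial $t_{b+1}$ of each transit, reducing to a short braid computation on $t_b, t_{b+1}$ that is essentially operation (3) again. I expect this third part to be the main obstacle: keeping the interval endpoints and the empty-transit edge cases ($a = b$, $a = b+1$) straight requires care, and one must verify at each step that no cancellation $t_i t_i$ is ever forced (so that $\simeq$, not merely $\approx$, holds). The clean way to organize it is to reduce the third identity to finitely many applications of the (already proved) first and second identities plus operation (2), rather than a fresh induction.
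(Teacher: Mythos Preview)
Your arguments for the first two identities are correct and match the paper's proof essentially word for word. There is a harmless slip when you expand $t_{[p,s]}$ as $t_{p-1}\cdots t_s$ rather than $t_p t_{p-1}\cdots t_s$, but your subsequent manipulation uses the correct expansion.

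Part (3), however, has a real error: you have the direction of the transits backwards. In this lemma $b\geq a$, so $t_{[b,a]}$, $t_{[b,a-1]}$, and $t_{[b-1,a-1]}$ are \emph{right-to-left} transits (decreasing indices), not left-to-right. Concretely, $t_{[b,a]}=t_b t_{b-1}\cdots t_a$, and the base case $b=a$ reads $t_a t_{a-1} t_a \simeq t_{a-1} t_a t_{a-1}$, which is exactly operation~(3). With your left-to-right reading the two sides of the identity do not even have the same length (yours differ by~$2$), so no combination of operations~(2) and~(3) could relate them; your auxiliary claim and the ``prepending $t_b$'' step are therefore built on a misreading and cannot be salvaged as written.

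The paper's proof of (3) proceeds by induction on $b-a$: peel off the leftmost two letters $t_b t_{b-1}$ of $t_{[b,a-1]}$, commute the lone $t_b$ at the front of $t_{[b,a]}$ leftward past $t_{[b-2,a-1]}$, apply the braid relation to $t_b t_{b-1} t_b$, and then invoke the inductive hypothesis on $t_{[b-1,a-1]} t_{[b-1,a]}$. Once you reinterpret the transits correctly, this is the straightforward route; your suggestion to ``reduce to finitely many applications of parts (1) and (2)'' is plausible in spirit but would need to be redone from scratch with the right orientation.
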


\begin{proof}
 For (1), $t_r t_{r+1} t_r \simeq t_{r+1} t_r t_{r+1}$, and then $t_{r+1}$ commutes with $t_{r-1}$ through $t_s$.

 For (2), note that $t_r$ commutes with $t_p$ through $t_{r+2}$ and then (1) applies.

 We prove (3) by induction on $b-a$. The base case $b=a$ is exactly operation (3) above. For the induction step,
 $$t_{[b, a-1]} t_{[b,a]} = t_b t_{b-1} t_{[b-2, a-1]} t_b t_{[b-1, a]}
 \simeq t_b t_{b-1} t_b t_{[b-2, a-1]} t_{[b-1, a]} \simeq$$
$$ \simeq t_{b-1} t_b t_{b-1} t_{[b-2, a-1]} t_{[b-1, a]} = t_{b-1} t_b t_{[b-1, a-1]} t_{[b-1, a]} =: u$$
 Applying the inductive hypothesis to $t_{[b-1, a-1]} t_{[b-1, a]}$, we get
$$ u \simeq t_{b-1} t_b t_{[b-2, a-1]} t_{[b-1, a]} t_{a-1} \simeq t_{b-1}  t_{[b-2, a-1]} t_b t_{[b-1, a]} t_{a-1}.$$
\end{proof}

A sequence of Ritt swaps in the first canonical form is a sequence of right-to-left transits whose action resembles an insert-sort: having arranged $f_k$ through $f_{i+1}$ in the right order, this sequence \emph{inserts} $f_i$ in the required $a_i$th place among $f_k$ through $f_{i+1}$, and then proceeds to deal with $f_{i-1}$, and so on, until all factors are arranged as wanted.

\begin{Def}
A word $w \in \rittmonoid_k$ is in \emph{first canonical form} if it has the form
$w = t_{(a_1, 1]} t_{(a_2, 2]} \ldots t_{(a_{k-1}, k-1]}$ for some
 $a_1, a_2, \ldots, a_{k-1}$ such that $i \leq a_i \leq k$ for each $i$.

A word $w \in \rittmonoid_k$ is in \emph{reverse first canonical form} if it has the form
$w = t_{[a_k , k)}  t_{[a_3 , 3)} t_{[a_2 , 2)}$ for some
 $a_1, a_2, \ldots, a_{k-1}$ such that $1 \leq a_i \leq i$ for each $i$.
\end{Def}

\begin{Rk} \label{alt-1stcan}
Omitting those transits which are empty words gives an alternative formulation of first canonical for as
$ t_{[a_m, b_m]} t_{[a_{m-1}, b_{m-1}]} \ldots t_{[a_1, b_1]}$ with $a_i \geq b_i$ for all $i$, and $b_m < b_{m-1} < \ldots < b_1$. \end{Rk}

 The three syntactic operations in Remark~\ref{synoprk} are invariant under switching left and right, so anything that holds for the first canonical form also holds, mutatis mutandis, for the reverse first canonical form.

The following lemma straightens out two right-to-left transits that occur in the wrong order.

\begin{lemma} \label{lem-for-1stcan}
 If $a \geq b$ and $c \geq d$ and $w' := t_{[a,b]} t_{[c,d]}$ is length-minimal, then it is equivalent to a single right-to-left transit, or to $t_{[a',b']} t_{[c',d']}$ for some $a',b',c',d'$ such that $a' \geq b'$ and $c' \geq d'$ and $b' < d'$.
  \end{lemma}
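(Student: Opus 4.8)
The statement concerns two consecutive right-to-left transits $t_{[a,b]}t_{[c,d]}$ written in the ``wrong'' order, meaning $b\ge d$ rather than $b<d$ (and neither transit is empty, so $a\ge b$ and $c\ge d$). The goal is to show that, up to $\simeq$-equivalence (operations (2) and (3) only), a length-minimal such word either collapses into one right-to-left transit or can be rewritten as $t_{[a',b']}t_{[c',d']}$ with the transits now in the ``right'' order, i.e. $b'<d'$. The plan is to do a case analysis based on how the two index intervals $[b,a]$ and $[d,c]$ sit relative to each other on the integer line, using the three parts of Lemma~\ref{teenylem} as the basic rewriting moves, and invoking length-minimality whenever a $t_it_i$ cancellation would otherwise appear.

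First I would dispose of the easy overlap cases. If $d>a+1$, then every generator in $t_{[a,b]}$ commutes with every generator in $t_{[c,d]}$ (their indices differ by at least $2$), so by operation (2) the word is $\simeq t_{[c,d]}t_{[a,b]}$, which is already in the desired form with the transits in the right order since $b<d$. If the two transits are ``adjacent'' in the sense that $d=a+1$, then $t_{[a,b]}t_{[a+1,d']}$-type concatenations can be merged: more precisely, when $d = a+1$ one checks that $t_{[a,b]}t_{[c,a+1]}$ rearranges — this is essentially part (1) of Lemma~\ref{teenylem} applied repeatedly — into either a single transit $t_{[c,b]}$ (if $b\le a$, which holds) or into a word where the tail reorganizes. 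The genuinely interesting regime is when the intervals $[b,a]$ and $[d,c]$ genuinely overlap, i.e. $d\le a$ and also $b\le c$ (recall $b\ge d$). Here I would peel the leftmost generator $t_a$ off $t_{[a,b]}$ and push it rightward through $t_{[c,d]}$ using part (2) of Lemma~\ref{teenylem} (with the roles of the indices matched so that $t_a$ becomes $t_{a+1}$ after crossing a transit whose top index exceeds $a$), repeating this to move the whole ``protruding'' part of the first transit past the second. The key structural tool is part (3), $t_{[b,a-1]}t_{[b,a]}\simeq t_{[b-1,a-1]}t_{[b,a]}t_{a-1}$, which lets two transits with the same left endpoint be disentangled; I would use it (and its mirror) to reduce any configuration of two overlapping transits to a normal form.

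The main obstacle I expect is bookkeeping: keeping track of exactly which of the finitely many interval-position cases can occur given the hypotheses $a\ge b$, $c\ge d$, $b\ge d$, and tracking how the endpoints shift under each application of Lemma~\ref{teenylem}. In particular, the delicate point is ensuring that after the rearrangement the resulting word is length-minimal — which forces us to argue that no $t_it_i$ pair is created, for otherwise we could cancel and the original word was not length-minimal, contradicting the hypothesis. So the real content of the proof is: (i) enumerate the relative positions of $[b,a]$ and $[d,c]$; (ii) in each case apply Lemma~\ref{teenylem} a bounded number of times to either fuse the transits or swap their order; (iii) observe that if length-minimality held for the input, the output has the same length and hence inherits the no-cancellation property, so the output transits $t_{[a',b']}$ and $t_{[c',d']}$ satisfy $b'<d'$ (an equality $b'=d'$ would again allow a fusion and shorten the word). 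I would organize the write-up so that the single-transit-output cases and the swap-output cases are handled in parallel, since the moves are the same and only the final endpoint inequalities differ.
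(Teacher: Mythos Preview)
Your overall strategy---case analysis on the relative position of the two index intervals, use Lemma~\ref{teenylem} to push generators through, and invoke length-minimality to rule out $t_it_i$ collisions---is exactly right and matches the paper's approach. But your case analysis is set up on the wrong pair of endpoints, and as written the plan does not go through.

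The index set of $t_{[a,b]}$ is $[b,a]$ and that of $t_{[c,d]}$ is $[d,c]$. You assume $b\ge d$ (the ``wrong order'' case), and since $a\ge b$ this forces $d\le a$. So your ``easy'' cases $d>a+1$ and $d=a+1$ are vacuous: they simply cannot occur. The relevant disjointness comparison is between $b$ and $c$, not between $d$ and $a$. When $b>c+1$ the two transits commute past each other; when $b=c+1$ they concatenate into a single transit; when $b=c$ a $t_bt_b$ appears and length-minimality is violated. You have not handled any of these, and your ``genuinely interesting regime'' $b\le c$ is only one of the four subcases.

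In that remaining subcase $c>b\ge d$, you propose to ``peel the leftmost generator $t_a$ off $t_{[a,b]}$ and push it rightward through $t_{[c,d]}$.'' But $t_a$ is separated from $t_{[c,d]}$ by all of $t_{a-1}\cdots t_b$, and you cannot move it past those without disturbing them. The correct move is to peel from the right: push $t_b$ through $t_{[c,d]}$ first (part (2) of Lemma~\ref{teenylem} with $r=b$, $p=c$, $s=d$), then $t_{b+1}$, and so on. This works cleanly when $a<c$, yielding $t_{[c,d]}t_{[a+1,b+1]}$ with $b+1>d$ as required; when $a\ge c$ the process eventually produces a $t_ct_c$, contradicting length-minimality. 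Part (3) of Lemma~\ref{teenylem} is not needed here.
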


\begin{proof}
If $b<d$, then $w'$ is already of the desired form. So assume $b \geq d$.

Now compare $b$ and $c$:\begin{itemize}
 \item If $b > c+1$, then $\hat{w} =  t_{[c,d]} t_{[a,b]}$ works, because in this case each $t_i$ in the first transit of $w'$ commutes with each $t_j$ in the second transit, and $b > c+1 >d$.
 \item If $b=c+1$, $w'$ is already a single transit as wanted.
 \item If $b=c$, operation (1) shortens the word $w'$ contradicting length-minimality.
 \item  This leaves the case where $c > b \geq d$ for which we use Lemma~\ref{teenylem} and another case-out.
\end{itemize}

 So $c > b \geq d$; compare $a$ and $c$: \begin{itemize}
\item If $a < c$, then Lemma~\ref{teenylem} can be applied to each $t_i$ for $a \geq i \geq b$ giving
  $w' \simeq t_{[c,d]} t_{[a+1, b+1]} =: \hat{w}$, of the desired form because $d \leq b$ implies $d < b+1$.
 \item If $a \geq c$, Lemma~\ref{teenylem} can still be applied to each $t_i$ for $c-1 \geq i \geq b$ giving
$$w' = t_{[a,b]} t_{[c,d]}= $$
$$= t_{[a,c]} t_{[c-1, b]} t_{[c,d]} \simeq $$
$$\simeq t_{[a,c]} t_{[c,d]} t_{[c,b=1]}$$
contradicting length-minimality, as $t_c t_c$ sits in the middle of $t_{[a,c]} t_{[c,d]}$.
\end{itemize}
\end{proof}

\begin{prop}
\label{firstcanprop}
 For every $w \in \rittmonoid_k$ there exists a unique $\hat{w} \in \rittmonoid_k$ in first canonical form which represents the same permutation as $w$.

 This $\hat{w}$ can be obtained from $w$ by operations (1), (2), and (3) above, so for any decomposition $\vec{f}$ such that $w \star \vec{f}$ is defined, $\hat{w}\star \vec{f} = w \star \vec{f}$.
\end{prop}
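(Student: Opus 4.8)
\textbf{Proof plan for Proposition~\ref{firstcanprop}.}

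The plan is to prove existence and uniqueness separately, with uniqueness being the easy half and existence the combinatorial heart of the matter. For uniqueness, I would observe that the first canonical form $w = t_{(a_1,1]}t_{(a_2,2]}\cdots t_{(a_{k-1},k-1]}$ is completely determined by the permutation $\pi$ it represents: by analyzing where each index is sent, one recovers $a_1,\ldots,a_{k-1}$ explicitly. Concretely, read $\pi = \tau_{a_1-1}\cdots\tau_1\cdot\tau_{a_2-1}\cdots\tau_2\cdots$ from right to left; the last transit $t_{(a_{k-1},k-1]}$ is the only one that moves index $k$ (or rather position $k-1$ across), so $a_{k-1}$ is forced, and then one peels off transits inductively. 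Since the transits in first canonical form never reintroduce a smaller index after a larger one has been settled, each $a_i$ is recovered as, say, $a_i = \pi'(i)$ for the appropriate partial composite $\pi'$, so two first-canonical words representing the same permutation coincide. I would phrase this cleanly using the alternative formulation in Remark~\ref{alt-1stcan}.

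For existence, I would argue by induction on the length of $w$, or more precisely, I would first replace $w$ by a length-minimal word obtained from it via operations (1), (2), (3) — this is harmless since length-minimal words representing the same permutation still act the same on any $\vec{f}$ for which $w\star\vec{f}$ is defined, by the remarks following Definition~\ref{defapporx}. Now I want to massage a length-minimal $w$ into first canonical form using only operations (2) and (3) (operation (1) can no longer apply, by length-minimality — and crucially, every time a calculation would force $t_it_i$ to appear as a subword, we get a contradiction with length-minimality). The engine is Lemma~\ref{lem-for-1stcan}: any length-minimal product of two right-to-left transits $t_{[a,b]}t_{[c,d]}$ is equivalent either to a single transit or to a product $t_{[a',b']}t_{[c',d']}$ with $b' < d'$, i.e., with the transits now in "sorted" order by their left endpoints. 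Starting from $w$, greedily break $w$ into maximal right-to-left transits (operation (2) lets us split or merge adjacent commuting generators; a length-minimal word is a concatenation of transits), then apply Lemma~\ref{lem-for-1stcan} repeatedly, bubble-sort style, to reorder these transits so their left endpoints strictly decrease from right to left — which, after reindexing, is exactly the form in Remark~\ref{alt-1stcan}. Termination of the bubble-sort follows because each application of Lemma~\ref{lem-for-1stcan} either strictly decreases the number of transits (merge case) or decreases a suitable inversion count among transit left-endpoints without ever increasing the total length.

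\textbf{Main obstacle.} The delicate point is controlling the interaction between the three syntactic operations during the sorting procedure: Lemma~\ref{lem-for-1stcan} is stated for exactly two transits, but in a general word there are many, and applying it to one adjacent pair may temporarily disrupt the transit structure of a neighboring pair (for instance, turning two transits into one changes which generators are now adjacent to the next transit over). I expect the bookkeeping here — showing that a suitable potential function (lexicographically: word length, then a count of "out-of-order" transit boundaries) strictly decreases under each step and that length-minimality is preserved so that operation (1) never secretly fires — to be the part requiring genuine care, even though each individual move is elementary. A clean way to organize this is to prove by induction on the number $m$ of transits in a length-minimal decomposition of $w$ that $w$ is $\simeq$-equivalent to a first-canonical word: peel off the transit with the smallest left endpoint, push it all the way to the left using Lemma~\ref{lem-for-1stcan} against each transit in turn (handling the merge case by restarting with fewer transits), and apply the inductive hypothesis to what remains.
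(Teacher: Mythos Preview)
Your proposal is correct and follows essentially the same route as the paper: reduce to a length-minimal word, write it as a concatenation of right-to-left transits, and bubble-sort adjacent transits using Lemma~\ref{lem-for-1stcan} until the $b$-endpoints are strictly decreasing as in Remark~\ref{alt-1stcan}. Your ``main obstacle'' is not really one---the output of Lemma~\ref{lem-for-1stcan} is again a sequence of transits (one or two), so the global transit decomposition is preserved at every step, and length-minimality of subwords is inherited from the ambient word; the paper accordingly dispatches termination with ``Clearly, this process terminates'' and leaves uniqueness (a standard fact about this normal form for $\operatorname{Sym}_k$) implicit.
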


\begin{proof}
 First, replace $w$ by some length-minimal $w'$ obtained from $w$ by operations (1), (2), and (3) in Remark~\ref{synoprk}.
 Any word including $w'$ can be written as a sequence of right-to-left transits
 $ t_{[a_m, b_m]} t_{[a_{m-1}, b_{m-1}]} \ldots t_{[a_1, b_1]}$ with $a_i \geq b_i$ for all $i$.
 To achieve the additional requirement that $b_m < b_{m-1} < \ldots < b_1$ in Remark~\ref{alt-1stcan}, use Lemma~\ref{lem-for-1stcan} repeatedly to straighten out pairs of adjacent out-of-order $b_i$'s. Clearly, this process terminates.
\end{proof}

\begin{corollary}
\label{nearaction}
 If two words $w$ and $w'$ in the Ritt monoid $\rittmonoid_k$ represent the same permutation and both $w \star \vec{f}$ and $w' \star \vec{f}$ are defined, then $w \star \vec{f} = w' \star \vec{f}$.
\end{corollary}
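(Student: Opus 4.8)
The plan is to derive Corollary~\ref{nearaction} as an immediate consequence of the existence and uniqueness of the first canonical form established in Proposition~\ref{firstcanprop}. The key observation is that the first canonical form of a word depends only on the permutation it represents, so two words representing the same permutation have the same first canonical form.

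First I would invoke Proposition~\ref{firstcanprop} twice: applied to $w$ it produces a word $\hat{w} \in \rittmonoid_k$ in first canonical form representing the same permutation as $w$, and applied to $w'$ it produces a word $\hat{w}' \in \rittmonoid_k$ in first canonical form representing the same permutation as $w'$. Since, by hypothesis, $w$ and $w'$ represent the same permutation, both $\hat{w}$ and $\hat{w}'$ are words in first canonical form representing that one common permutation; by the uniqueness clause of Proposition~\ref{firstcanprop} we conclude $\hat{w} = \hat{w}'$ as words in $\rittmonoid_k$.

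Next I would use the action statement in Proposition~\ref{firstcanprop}: because $w \star \vec{f}$ is defined, we have $\hat{w} \star \vec{f} = w \star \vec{f}$, and because $w' \star \vec{f}$ is defined, we have $\hat{w}' \star \vec{f} = w' \star \vec{f}$. Combining these with $\hat{w} = \hat{w}'$ gives
$$
w \star \vec{f} = \hat{w} \star \vec{f} = \hat{w}' \star \vec{f} = w' \star \vec{f},
$$
which is exactly the desired conclusion. I do not expect any genuine obstacle here: all the work is already packed into Proposition~\ref{firstcanprop}, and in particular into the underlying Theorem~\ref{fundamentallemma} (the braid relations and the fact that $t_i$ is an involution where defined), which is what guarantees that the syntactic reductions (1)--(3) of Remark~\ref{synoprk} do not change the value $w \star \vec{f}$ when it is defined. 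The only point worth stating carefully is that one must apply the canonical-form reduction to \emph{each} of $w$ and $w'$ separately and then compare the two normal forms, rather than attempting to reduce $w$ to $w'$ directly (which is impossible in general, since operation (1) is irreversible).
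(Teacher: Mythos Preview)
Your proof is correct and takes essentially the same approach as the paper, which simply notes in one line that ``for every permutation there is a unique word in the first canonical form representing it.'' You have just spelled out the details more carefully, including the useful observation that one must reduce $w$ and $w'$ to canonical form separately rather than attempt to transform one into the other.
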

\begin{proof}
 For every permutation there is a unique word in the first canonical form representing it.
\end{proof}

Another immediate consequence is a bound on the length of words and the number of (linear equivalence classes of) decompositions of a given polynomial; better bounds are obtained in~\cite{MZ}.

 \begin{corollary}
\label{1stcanbound}
 For any given polynomial $f$ and decomposition $(f_k, \ldots, f_1)$ of $f$, there are at most
$k!$ other decompositions $\vec{g}$ of $f$ (up to linear
equivalence, of course), and any one of them can be reached by a
sequence of at most $\frac{k(k-1)}{2}$ Ritt swaps.
\end{corollary}

Our main use of the first canonical form is to define and obtain our second canonical form.

 It is sometimes natural and often useful to break a decomposition into chunks before analyzing it. For example, in analyzing the commutative diagram $\pi^\sigma \circ f = g \circ \pi$, it is natural to start with a decomposition of $g \circ \pi$ which is a decomposition of $\pi$ followed by a decomposition of $g$. Clusterings in Section \ref{clustersec} are another example. Words in second canonical form (with respect to such a break-up into chunks) first shuffle factors within each chunk as much as necessary, and only then move factors between chunks. That is, these words perform a merge-sort.

\begin{Def}
Given integers $k=c_r > c_{r-1} > \ldots >c_1 >c_0=0$, a word $w \in \rittmonoid_k$ is in \emph{second canonical form with respect to $\vec{c}$} if it is of the form $w = v w_r w_{r-1} \ldots w_1$ and all of the following hold. \begin{itemize}
\item All $w_i$ and $v$ are in first canonical form.
\item For each $j$, only $t_i$ with $c_{j+1} > i > c_j$ appear in $w_j$; that is, $w_j$ only permutes factors in
$\vec{f}_{[c_{j+1}, c_j)}$.
\item For any two transits $t_{[a, b]}$ and $t_{[a', b']}$ in $v$ with both $b,b' \in [c_{j+1}, c_j)$ for some $j$, $a < a'$ if and only if $b <b'$. That is, $v$ does not change the order of two factors originating inside the same $\vec{f}_{[c_{j+1}, c_j)}$.
\end{itemize} \end{Def}

Note that $w_i$ in the definition above act on disjoint sets of factors, and therefore commute with each other.

\begin{prop}
\label{2ndcanprop}
For every word $w \in \rittmonoid_k$ and every tuple $\vec{c}$ of integers with $k=c_r > c_{r-1} > \ldots >c_1 >c_0=0$, there is a word $\hat{w} \in \rittmonoid_k$ in second canonical form with respect to $\vec{c}$ which represents the same permutation as $w$.

 This $\hat{w}$ can be obtained from $w$ by operations (1), (2), and (3) in Remark~\ref{synoprk}, so for any decomposition $\vec{f}$ such that $w \star \vec{f}$ is defined, $\hat{w}\star \vec{f} = w \star \vec{f}$.
\end{prop}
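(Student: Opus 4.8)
The plan is to mimic the proof of Proposition~\ref{firstcanprop}, but now using the first canonical form as a black box at two levels. Start with any $w \in \rittmonoid_k$ and, using Proposition~\ref{firstcanprop}, replace it by a length-minimal word $w'$ obtained by operations (1), (2), (3); since these operations preserve the represented permutation and only shrink (or preserve) length, it suffices to produce the desired $\hat w$ from $w'$. The strategy is a two-phase sort: first sort each ``chunk'' $\vec{f}_{[c_{j+1},c_j)}$ internally, then merge. Concretely, I would track the underlying permutation $\tau \in \operatorname{Sym}_k$ represented by $w'$ and build $\hat w$ directly from $\tau$ rather than from $w'$ itself, since Corollary~\ref{nearaction} (a consequence of Proposition~\ref{firstcanprop}) guarantees that any two words in $\rittmonoid_k$ representing the same permutation act identically on any $\vec f$ where both are defined, and the final sentence of the proposition only asserts equality of actions when $w \star \vec f$ is defined.

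Here is the construction of $\hat w$ from $\tau$. For each $j$, let $S_j := \{ i : c_j < i \le c_{j+1}\}$ be the $j$-th chunk of positions. Define an intermediate permutation $\rho$ as follows: within each chunk, $\rho$ arranges the elements of $\tau^{-1}(S_j)$ (the positions whose final destination lies in chunk $j$) into increasing order inside the block of positions they will occupy after the merge phase is undone; equivalently, $\rho$ is the unique permutation such that $\tau = \mu \circ \rho$ where $\mu$ maps each chunk-block to its target chunk monotonically on the subsequence originating in that chunk. Then $\rho$ factors as a commuting product $\rho = \rho_r \cdots \rho_1$ with each $\rho_j$ supported on $S_j$, so by Proposition~\ref{firstcanprop} applied within $\rittmonoid$ on the letters $\{t_i : c_j < i < c_{j+1}\}$ we get words $w_j$ in first canonical form representing $\rho_j$; these commute since their letter-supports are disjoint. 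For $\mu$, apply Proposition~\ref{firstcanprop} to get a word $v$ in first canonical form, and then observe that the ``monotone on each originating chunk'' property of $\mu$ translates, via the uniqueness of the first canonical form, into exactly the third bullet of the definition of second canonical form: if $t_{[a,b]}$ and $t_{[a',b']}$ are transits in $v$ with $b,b' \in [c_{j+1},c_j)$, then $a<a'$ iff $b<b'$. Setting $\hat w := v\, w_r w_{r-1}\cdots w_1$ gives a word in second canonical form representing $\tau$.

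The main obstacle, and the step I would spend the most care on, is verifying that last bullet --- that the ``merge'' word $v$ for $\mu$ genuinely does not reorder two factors that originate in the same chunk, and that the first canonical form of $\mu$ has transits organized so that this is visible as stated. The clean way to see this: $\mu$ is, by construction, a ``shuffle'' permutation --- it interleaves $r$ increasing sequences (the images of the chunks) --- and for such permutations the insert-sort (first canonical form) never has to transpose two elements that were in the same increasing run, because they are already in relative order when encountered. One has to check that the specific transit structure $t_{(a_1,1]}t_{(a_2,2]}\cdots$ of Proposition~\ref{firstcanprop} records precisely this: the factor starting at position $i$ travels to position $a_i$, and monotonicity of the run containing $i$ forces $a_i$ monotone along that run, which is the stated condition. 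This is essentially a bookkeeping argument about how the first-canonical-form transits encode the permutation, combined with the observation (already used repeatedly in Section~\ref{canformsec}) that two transits with disjoint letter sets, or nested in the right way, commute.

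Finally, I would close with the routine remarks: $\hat w$ was obtained from $w'$ (hence from $w$) purely by operations (1)--(3) --- since $\hat w$ and $w'$ represent the same permutation $\tau$ and $w'$ was already length-minimal of first-canonical shape, one can explicitly exhibit the passage using Proposition~\ref{firstcanprop} and Lemma~\ref{teenylem}; but in fact the cleaner justification is that any two words representing $\tau$ for which the $\star$-actions are both defined agree by Corollary~\ref{nearaction}, and $\hat w \star \vec f$ is defined whenever $w \star \vec f$ is (because operations (2),(3) preserve definedness and (1) only enlarges the domain of definedness). Hence $\hat w \star \vec f = w' \star \vec f = w \star \vec f$ for every $\vec f$ on which $w \star \vec f$ is defined, completing the proof.
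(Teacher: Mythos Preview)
Your construction has a genuine gap in the final step. You build $\hat w = v\,w_r\cdots w_1$ \emph{semantically} from the permutation $\tau$ --- factoring $\tau = \mu\circ\rho$ and then invoking Proposition~\ref{firstcanprop} separately on $\mu$ and on each $\rho_j$ --- rather than \emph{syntactically} from $w'$ via operations (1)--(3). Your closing paragraph then offers two justifications, both of which fail. The first (``one can explicitly exhibit the passage using Proposition~\ref{firstcanprop} and Lemma~\ref{teenylem}'') is precisely the hard work you have skipped: Proposition~\ref{firstcanprop} brings a word \emph{into} first canonical form, not out of it into second canonical form, and exhibiting that passage is the actual content of the proposition. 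The second justification is circular: you invoke Corollary~\ref{nearaction}, which requires \emph{both} $w\star\vec f$ and $\hat w\star\vec f$ to be defined, and then justify definedness of $\hat w\star\vec f$ by the parenthetical about operations (1)--(3) preserving or enlarging the domain --- but that parenthetical only applies if $\hat w$ was in fact obtained from $w$ by those operations, which is what you have not shown. One could try to rescue this via Matsumoto's theorem (any two reduced words for the same permutation are connected by braid moves alone), but then you would also need to verify that your $\hat w$ is length-minimal, i.e.\ that the factorization $\tau=\mu\circ\rho$ is length-additive; you have not addressed this either.

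The paper avoids this entirely by working syntactically throughout. It first proves the $r=2$ case as a separate lemma (Lemma~\ref{llama2111}), explicitly manipulating a word already in first canonical form into the shape $v\,w_G\,w_H$ using only operations (1)--(3) and Lemma~\ref{teenylem}, via an induction on the number of transits. The general case then follows by a short induction on $r$, peeling off one chunk boundary at a time with the $r=2$ lemma. Because every step is an explicit application of (1)--(3), the conclusion ``$\hat w$ is obtained from $w$ by operations (1)--(3)'' --- and hence definedness of $\hat w\star\vec f$ whenever $w\star\vec f$ is defined --- comes for free.
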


We first prove a special case $r=2$ of this proposition in the following lemma, and then prove the full proposition.

\begin{lemma} (Proposition~\ref{2ndcanprop} for $r=2$)
\label{llama2111}
 Fix $k > e > 0$. For every $w \in \rittmonoid_k$, there are $v, w_G, w_H \in \rittmonoid_k$ such that \begin{itemize}
\item  $v w_G w_H$ is obtained from $w$ by operations (1), (2), and (3) in Remark~\ref{synoprk};
\item  only $t_i$ with $i > e$ occur in $w_H$;
\item  only $t_i$ with $i < e$ occur in $w_G$;
\item   $v = t_{[a_m, b_m]} t_{[a_{m-1}, b_{m-1}]} \ldots t_{[a_1, b_1]}$ is in first canonical form, and
$$b_1 = e, b_2 = e-1, \ldots, b_m = e-m+1 \mbox{ and } a_1 > a_2 > \ldots > a_m.$$
\end{itemize} \end{lemma}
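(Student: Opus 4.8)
\textbf{Proof plan for Lemma~\ref{llama2111}.}

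The plan is to reduce to the already-established first canonical form (Proposition~\ref{firstcanprop}) and then do a careful surgery on the resulting sequence of right-to-left transits, separating those which live entirely inside $\{1,\dots,e-1\}$, those which live entirely inside $\{e,\dots,k\}$, and those which genuinely ``cross the boundary'' $e$. First I would replace $w$ by a length-minimal word $w'$ obtained from it via operations (1)--(3), and then invoke Proposition~\ref{firstcanprop} to write (an equivalent word, still called $w'$) in first canonical form as a product of right-to-left transits $t_{[a_m,b_m]}\cdots t_{[a_1,b_1]}$ with $a_i\ge b_i$ and $b_m<\cdots<b_1$ (using the formulation in Remark~\ref{alt-1stcan}).

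The key observation is that for each transit $t_{[a,b]}$ exactly one of three things happens: $a<e$ (it stays strictly to the left of the boundary, and belongs in $w_G$); $b\ge e$ (it stays weakly to the right, and belongs in $w_H$); or $b<e\le a$ (it ``crosses'', and must go into $v$). I would first push the ``purely left'' transits ($a<e$) as far right as possible: a transit $t_{[a,b]}$ with $a<e$ commutes, letter by letter, with any transit $t_{[a',b']}$ having $b'\ge e$ (since all indices in the first are $<e$, hence non-consecutive to all indices $\ge e$, noting that a transit's letters range over indices in $[\min,\max-1]$ roughly), so these can be collected at the right end to form $w_G$. Next I would push the ``purely right'' transits ($b\ge e$) as far right as possible relative to the crossing transits: here I expect to need Lemma~\ref{teenylem} — a purely-right transit $t_{[a,b]}$ (so $b\ge e$) passing a crossing transit $t_{[a',b']}$ (so $b'<e\le a'$) must be handled by the shift identities $t_r t_{[p,s]}\simeq t_{[p,s]} t_{r+1}$, possibly incrementing some indices, but never creating a cancellable $t_it_i$ pair (length-minimality rules that out, which is exactly where the length-minimality reduction pays off). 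After these moves, the crossing transits are isolated in the middle; call their product $v'$. What remains is to show $v'$ itself can be brought into the stated shape $v=t_{[a_m,b_m]}\cdots t_{[a_1,b_1]}$ with $b_i=e-i+1$ consecutive and $a_1>a_2>\cdots>a_m$. Since every crossing transit has $b<e\le a$, and since in a first-canonical word the $b$'s are strictly decreasing, the $b$-values of the crossing transits are automatically distinct and $<e$; I would then argue that length-minimality forces them to be \emph{consecutive} starting from $e$ (a gap $b_i-b_{i+1}>1$ together with $a_i\ge b_i$ would let me commute or, after a Lemma~\ref{teenylem} shuffle, produce a shortening), and correspondingly forces the $a$'s to be strictly decreasing (out-of-order $a$'s get straightened by Lemma~\ref{lem-for-1stcan}, and equality $a_i=a_{i+1}$ with $b_i\ne b_{i+1}$ again manufactures a $t_it_i$ via Lemma~\ref{teenylem}(3), contradicting minimality).

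The main obstacle I anticipate is the bookkeeping in the middle step: verifying that sliding a purely-right transit past a crossing transit via Lemma~\ref{teenylem} never destroys the first-canonical structure of the crossing part and never (after taking a length-minimal representative at each stage) creates a cancellation that would change the permutation in an uncontrolled way. One has to be careful that the index shifts $r\mapsto r+1$ introduced by Lemma~\ref{teenylem} keep the crossing transits crossing (i.e.\ still $b<e\le a$) and keep the purely-right ones purely right; since shifts only increase indices and we are moving transits with small top index past transits with large indices, this should be monotone and safe, but spelling it out cleanly is the delicate part. Once that is done, the final normalization of $v'$ into the form with consecutive $b_i$'s and decreasing $a_i$'s is a direct application of the length-minimality trick already used throughout Section~\ref{canformsec}, namely: any deviation from the claimed shape exposes a $t_it_i$ after a bounded sequence of operations (2)--(3), contradicting that we started from a length-minimal word.
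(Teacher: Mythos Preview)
Your three-way split into ``purely left'' ($a<e$), ``crossing'' ($b<e\le a$), and ``purely right'' ($b\ge e$) transits is natural, but it does \emph{not} line up with the three pieces $v$, $w_G$, $w_H$ in the conclusion, and this mismatch breaks the argument in two places.

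First, $v$ need not consist of crossing transits. Take $k=6$, $e=3$, and $w=t_{[2,1]}\,t_{[5,2]}$, which is already in first canonical form and length-minimal. Here there is one crossing transit $t_{[5,2]}$ and one purely-left transit $t_{[2,1]}$. The correct output is $v=t_{[2,2]}\,t_{[5,3]}$, $w_G=t_1 t_2$, $w_H=\emptyset$; so $v$ contains the transit $t_{[2,2]}=t_2$, which lives entirely below $e$, and $w_G$ has absorbed the letter $t_2$ that was originally the tail of the crossing transit. In other words, crossing transits must \emph{shed} their tails into $w_G$ to force $b_1=e$, and purely-left transits may end up contributing to $v$. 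Your assertion that ``length-minimality forces the $b$-values to be consecutive starting from $e$'' is false: a single crossing transit $t_{[5,2]}$ with $e=3$ is length-minimal yet has $b=2\neq e$. No amount of shuffling produces a shortening; instead one must split $t_{[5,2]}=t_{[5,3]}\,t_2$ and move $t_2$ into $w_G$.

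Second, your commutation step only moves purely-left transits past purely-right ones. To get $w_G$ into its slot between $v$ and $w_H$, you would also need to move purely-left transits past crossing transits, and that is not a simple commutation: $t_{[2,1]}$ does not commute with $t_{[5,2]}$, since both contain $t_2$. The paper avoids this problem entirely by \emph{not} attempting to separate the three kinds of transits. Instead, after peeling off the purely-right transits into $\widetilde{w_H}$ (which \emph{is} trivial, as you note), it inducts on the number $n$ of remaining transits: assuming $w_{n-1}\cdots w_1\simeq \widetilde{v}\,\widetilde{w_G}$ with $\widetilde{v}$ already in the special shape, it shows via repeated use of Lemma~\ref{teenylem} that $w_n\,\widetilde{v}$ can be rewritten as $v\,u_G$, where $v$ again has the special shape (possibly one transit longer) and $u_G$ collects the shed letters. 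The heart of the proof is this absorption step, not a global sorting of transit types.
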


\begin{proof}
\textbf{First reductions:}
 Without loss of generality, we may assume that $w$ is already in first canonical form; that is,
  $$w = w_n w_{n-1} \ldots w_1 \mbox{ where each } w_i = t_{[c_i, d_i]}$$
 with $d_1 > d_2 > \ldots > d_n$ and $c_i \geq d_i$ for each $i$.

 We may further assume that $ e \leq d_1$. Indeed, otherwise let $j \leq n$ be the greatest such that $d_j >e$, and let
 $$\widetilde{w_H} := w_j w_{j-1} \ldots w_1 \mbox{ and } w' := w_n \ldots w_{j+1} \mbox{ so that } w = w' \widetilde{w_H}.$$
 Since $\widetilde{w_H}$ only involves $t_i$ with $i > e$, it clearly suffices to prove the proposition for $w'$.

So, we have
 $$w = w_n w_{n-1} \ldots w_1 \mbox{ where each } w_i = t_{[c_i,d_i]} $$
  in first canonical form with
 $$ e \geq d_1 > d_2 > \ldots > d_n \mbox{ and } c_i \geq d_i \mbox{ for each } i.$$
The first inequality above makes $d_n \leq e-n+1$.

\textbf{Claim:}
 There are $v$ and $w_G$ satisfying the last two requirements in the lemma with $m \leq n$, such that $v w_G \simeq w$.

 We obtain these $v$ and $w_G$ by induction on $n$.

 \textbf{Base case} When $n = 1$, consider the one and only chunk $w_n = t_{[c_n, d_n]}$ of  $w$. We know that $d_n \leq e$.\\
\textit{Case 1} If $c_n < e$, then $w_n \in \rittmonoid_e$, so $v := \emptyset$ and $w_G := w_n$ work.\\
\textit{Case 2} If $c_n \geq e$, then $w_n  = t_{[c_n, e]} t_{[e-1, d_n]}$, and $v := t_{[c_n, e]}$ and $w_G := t_{[e-1, d_n]}$ work.

\textbf{Induction step} We apply the induction hypothesis to the initial $(n-1)$ chunks of $w$ to get
$$ w_{n-1} \ldots w_{1} \simeq \widetilde{v} \widetilde{w_G}$$
 for some $$\widetilde{v} = t_{[a_m, e-m+1]} t_{[a_{m-1}, e-m+2]} \ldots t_{[a_1, e]}$$
  with $m \leq n-1$ and $a_1 > a_2 \ldots > a_m$,
  and some $\widetilde{w_G} \in \rittmonoid_e$ (so $\widetilde{w_G}$ only involves $t_i$ with $i < e$). So
$$ w = w_n w_{n-1} \ldots w_{1} \simeq w_n \widetilde{v} \widetilde{w_G}.$$
Let $w' := w_n \widetilde{v}$, so that $w = w' \widetilde{w_G}$.

\textbf{Subclaim:} There are words $v$ and $u_G$ such that $w' \simeq v u_G$, and $u_G \in \rittmonoid_e$ (so $u_G$ only involves $t_i$ with $i < e$), and $v$ has the required shape.

 Once we prove this Subclaim, $v$ and $w_G := u_G \widetilde{w_G}$ will satisfy the Claim.

 Proof of Subclaim: We have
  $$w' = t_{[c_n, d_n]} t_{[a_m, e-m+1]} t_{[a_{m-1}, e-m+2]} \ldots t_{[a_1, e]}$$
and $m \leq n-1$ and $a_1 > a_2 \ldots > a_m$ and $d_n \leq e-n+1$. So $d_n \leq e-m$, and so
 $$t_{[c_n, d_n]} = t_{[c_n, e-m]} t_{[e-m-1, d_n]},$$
 and, since the least index appearing in $\widetilde{v}$ is $e-m+1$ and the greatest in $t_{[e-m-1, d_n]}$ is $e-m-1$,
 $$w' = t_{[c_n, e-m]} t_{[e-m-1, d_n]} \widetilde{v} \simeq t_{[c_n, e-m]} \widetilde{v}  t_{[e-m-1, d_n]}.$$

Since $ t_{[e-m-1, d_n]}$ can be absorbed into $u_G$, it suffices to prove the subclaim for the special case where $d_n = e-m$. If now $c_n < a_m$, we may set $a_{m+1} := c_n$ and set $v := t_{[c_n, e-m]} \widetilde{v}$ and be done.

So, it suffices to prove the Subclaim for
 $$w' = t_{[c_n, e-m]} t_{[a_m, e-m+1]} t_{[a_{m-1}, e-m+2]} \ldots t_{[a_1, e]}$$
where $a_1 > a_2 \ldots > a_m$ but $c_n \geq a_m$. So
 $$w' = t_{[c_n, a_m+1]} t_{[a_m, e-m]} t_{[a_m, e-m+1]} t_{[a_{m-1}, e-m+2]} \ldots t_{[a_1, e]}$$
By Lemma~\ref{teenylem}, $t_{[a_m, e-m]} t_{[a_m, e-m+1]} \simeq t_{[a_m-1, e-m]} t_{[a_m, e-m+1]} t_{e-m}$, so
$$w'
\simeq t_{[c_n, a_m+1]} t_{[a_m-1, e-m]} t_{[a_m, e-m+1]} \mathbf{t_{e-m}} t_{[a_{m-1}, e-m+2]} \ldots t_{[a_1, e]}
\simeq$$
$$\simeq t_{[c_n, a_m+1]} t_{[a_m-1, e-m]} t_{[a_m, e-m+1]}  t_{[a_{m-1}, e-m+2]} \ldots t_{[a_1, e]} \mathbf{t_{e-m}} \simeq $$
$$\simeq  t_{[a_m-1, e-m]} \mathbf{t_{[c_n, e_m+1]}} t_{[a_m, e-m+1]}  t_{[a_{m-1}, e-m+2]} \ldots t_{[a_1, e]} t_{e-m}$$
Now if $c_n < a_{m-1}$, we are done, because $t_{m-e}$ may be absorbed into $u_G$, and the rest is already of the right form. Otherwise, we move $t_{[c_n, a_{m-1}+1]}$ one more step to the right in exactly the same way, and then compare $c_n$ to $a_{m-2}$. Since there is no requirement on $a_1$, this process ends in success after at most $m$ steps.
\end{proof}

The full version of Proposition~\ref{2ndcanprop} now follows by an easy induction.

\begin{proof} (This is the proof of Proposition~\ref{2ndcanprop}.)

 We induct on the number of chunks $r$. For $r=1$, this is just first canonical form. The case $r=2$ is Lemma \ref{llama2111}. For the induction step, we suppose that the proposition holds for $r=s$, and prove that it holds for $r = s+1$. Fix $w$ and $k= c_{s+1} > c_s > \ldots > c_1 > c_0 =0$.

 First, apply Lemma \ref{llama2111} to $\vec{d}$ where $k = d_2 > c_1 = d_1 > d_0 =0$. That is, replace $w$ by $v_Q \circ u_1 \circ u_2$, where $u_1$ only involves $t_i$ with $i < c_1$, $u_2$ only involves $t_i$ with $i > c_1$, and $v_Q$ does what it's supposed to.

 Then, apply the inductive hypothesis to $u_2$ and $k = c_{s+1} > c_s > \ldots > c_2 > c_0 =0$ to get
 $u_2
  \simeq v' w'_2 w'_3 \ldots w'_{s+1}$.
  So $w \simeq v_Q u_1 v' w'_2 w'_3 \ldots w'_{s+1}
 \simeq v_Q  v' u_1 w'_2 w'_3 \ldots w'_{s+1}$, the second equivalence because $u_1$ and $v'$ act on disjoint sets of factors.
 Finally, let $v :=v_Q v'$, $w_1 := u_1$, and $w_i := w'_i$ for $i \geq 2$.
 \end{proof}

 When the second canonical form is applied to a clustering, the characterization of $v$ can be substantially strengthened.

\begin{Rk} \label{canoclust}
 Suppose that $\vec{a}$ is a clustering of a decomposition $\vec{f}$, and fix $w \in \rittmonoid_k$ for which $w \star \vec{f}$ is defined. If $w = v w_1 w_2 \ldots w_t$ is in second canonical form with respect to $\vec{a}$, then each $w_j$ only permutes factors within the $j$th cluster, and $v$ only moves factors left from cluster to cluster but does not change the order of those originating in the same cluster.
 Since only quadratics can move between clusters, the Ritt swaps in $v$ can only move quadratics.
 These quadratics can go left or right, but cannot overtake each other because of gates, and cannot collide with each other because tautological Ritt swaps are not allowed.

By Proposition~\ref{swap-inside}, all Ritt swaps in $w_j$ may be witnessed by identity linear factors.
Now $\vec{a}$ is still a clustering of $\vec{g} := w_1 w_2 \ldots w_t \star \vec{f}$.
To illustrate what can happen in $v \star \vec{g}$, we describe in detail an initial chunk of $v$.

The rightmost symbol in $v$ is $t_{a_j}$ for some $j$, since $v$ does not swap factors inside a cluster.
By Lemma~\ref{extra-cluster-swap}, exactly one of $g_{a_j}$ and $g_{a_j +1}$ must be a wandering quadratic of $\vec{a}$ and $\vec{g}$. \begin{itemize}
\item If $g_{a_j}$ is a wandering quadratic, $v$ moves it to the left, leaves a one-way left-to-right gate at $j$, and then only permutes factors further right. That is, $v = v' t_{[b, a_j]}$ for some $b \geq a_j$ and $t_{[b, a_j]} \star \vec{g}$ has a one-way left-to-right gate between the $j$th and the $(j+1)$st clusters, and $v'$ is a word in $\{ t_i \sthat j < a_j -1\}$.
\item If $g_{a_j +1}$ is a wandering quadratic, $v$ moves it right to a new position, and the rest of $v$ cannot move other quadratics left past that new position. That is, $v = v' t_b t_{b+1} \ldots t_{a_j}$ and $v'$ is a word in $\{ t_i \sthat i < b -1\}$.
\end{itemize}

Applying the same analysis to the remaining $v'$ gives an inductive characterization of $v$ as a concatenation of (left-to-right or right-to-left) transits, each of which moves a wandering quadratic of $\vec{g}$ some number of steps (right or left).
\end{Rk}

 Our first use of canonical forms is to characterize those $(f,g)$-skew-invariant curves which have nothing to do with skew-twists. More precisely, we consider triples of polynomials $(f, g, \pi)$ satisfying $\pi^\sigma \circ f = g \circ \pi$, where $f$ and $\pi$ share no initial compositional factors, and $\pi^\sigma$ and $g$ share no terminal compositional factors. We continue to only consider disintegrated polynomials $f$ and $g$, that is $f$ and $g$ that are not skew-conjugate to monomials, Chebyshev polynomials, and negative Chebyshev polynomials.

 We begin by saying something about the conclusion of Proposition~\ref{notskewtwist}.

\begin{Def} \label{decomp-inout-def}
A decomposition $\vec{f}$ has \emph{in-degree (respectively, out-degree) divisible by $n$} if no $f_i$ is linearly related to $P_p$ for any $p$ that divides $n$, and every non-monomial factor of the decomposition is monic and has in-degree (respectively, out-degree) divisible by $n$. In particular, this forces all $f_i$ to be Ritt polynomials.\end{Def}

We first show that a skew-conjugacy class of disintegrated polynomials has at most one decomposition with non-trivial in- or out-degree, up to skew-conjugating by scalings.

\begin{Rk}
In- and out-degrees make sense for decomposable polynomials (see Definition~\ref{inoutdegdef}), and if $\vec{f}$ has in-degree divisible by $p$, then indeed the in-degree of $\vec{f}^\circ$ is divisible by $p$. A converse requires additional hypotheses and is proved in \cite{MZ}. \end{Rk}

\begin{Rk} \label{uni-inout-sca}
It is clear that if $\vec{h}$ is skew-linearly equivalent to $\vec{f}$ via scalings,
 then $\vec{f}$ and $\vec{h}$ have the same in- and out-degrees.
\end{Rk}

\begin{lemma} \label{uni-inout-tra}
If $\vec{f}$ is a decomposition of a disintegrated polynomial, $\vec{g}$ is skew-linearly equivalent to $\vec{f}$ via translations, and each of the two has non-trivial in- or out-degree, then $\vec{g} = \vec{f}$. \end{lemma}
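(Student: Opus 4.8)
The statement concerns two decompositions $\vec{f}$ and $\vec{g}$ of disintegrated polynomials which are skew-linearly equivalent via \emph{translations} and which both have non-trivial in- or out-degree (in the sense of Definition~\ref{decomp-inout-def}). I want to show they are literally equal, factor by factor. The skew-linear equivalence via translations means there is a translation $L$ and linear equivalence witnesses $M_{k-1},\dots,M_1$ with $g_k = L^\sigma \circ f_k \circ M_{k-1}$, $g_i = M_i^{-1} \circ f_i \circ M_{i-1}$ for $1<i<k$, and $g_1 = M_1^{-1}\circ f_1 \circ L^{-1}$; and by definition of skew-linear equivalence I may first absorb the outer linear factors so that in fact I am comparing $\vec{g}$ with $(L^\sigma\circ f_k, f_{k-1},\dots,f_2,f_1\circ L^{-1})$ up to ordinary linear equivalence. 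The core work is a right-to-left induction on the index $i$, exactly as in the proof of Lemma~\ref{nccluster-unique} and Lemma~\ref{unique-cleanup}: I will show each intermediate linear witness is the identity (or at worst a scaling by $\pm 1$ that I can rule out using non-triviality of the degrees), forcing $f_i = g_i$ one factor at a time.

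The key mechanism is that non-trivial in- or out-degree severely restricts how a factor can be linearly related to another factor with the same property. Concretely: all the $f_i$ and $g_i$ are Ritt polynomials (this is forced by Definition~\ref{decomp-inout-def}), none is a type \textsf{C} polynomial (a type \textsf{C} polynomial has in-degree $2$ and out-degree $1$, or vice versa after the $\widehat{C}$ construction, so it cannot have both in- and out-degree divisible by $n>1$ — I should state this carefully: the hypothesis is that \emph{either} the in-degree \emph{or} the out-degree is divisible by $n$, uniformly along the decomposition, so I should split into the in-degree case and the out-degree case and in each run the same argument). Since all factors are Ritt polynomials and none is type \textsf{C}, Theorem~\ref{transrelate} tells me that any translation relating one of them to another forces the type-\textsf{A} situation, but a type-\textsf{A} Ritt polynomial has in-degree $1$ (Remark~\ref{Jnoswapleft}), contradicting divisibility by $n>1$; so no non-trivial translation relation between these factors is possible. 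Likewise, by Remark~\ref{scalrel-inout} a scaling preserves in- and out-degree, but I still need to kill scalings — here the monicity built into Definition~\ref{decomp-inout-def} plus Remark~\ref{scaleast} forces any scaling $\lambda$ relating two monic Ritt polynomials to satisfy $\lambda$ an $\ell^{\text{th}}$ root of unity (Remark~\ref{astRitt}), and then the induction step propagates the scaling outward until it collides with the translation $L$ at the rightmost end, where a nontrivial scaling on a monic polynomial contradicts monicity of $g_1\circ L^{-1}$ unless it is trivial.

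So the induction runs as follows. \textbf{Base case} ($i=1$): from $M_1^{-1}\circ f_1 \circ L^{-1} = g_1$ with $f_1, g_1$ monic Ritt polynomials and $L$ a translation, conclude first that $M_1$ is a translation (comparing leading coefficients, since $L^{-1}$ preserves the degree and monicity); then $f_1 \circ L^{-1}$ is monic and scaling-related to nothing but itself, and Theorem~\ref{transrelate} together with $f_1$ not type \textsf{C} forces the translation part to be trivial — i.e. $L = \id$ on the one side and $M_1 = \id$, hence $f_1 = g_1$; if $f_1$ happens to be type \textsf{A} one uses Remark~\ref{Jnoswapleft} (in-degree $1$) against the divisibility hypothesis to exclude that case outright (or handle it: a type \textsf{A} factor would already violate non-triviality of the decomposition's in-degree in the in-degree case, and one dualizes for the out-degree case). \textbf{Induction step} ($1<i<k$): given $M_{i-1} = \id$ and $f_{i-1} = g_{i-1}$, the relation $M_i^{-1}\circ f_i = g_i$ with $f_i,g_i$ monic Ritt polynomials, neither type \textsf{C}, forces $M_i = \id$ and $f_i = g_i$ by the same Theorem~\ref{transrelate}/monicity argument. \textbf{Final step} ($i=k$): $L^\sigma \circ f_k = g_k$ with $f_k, g_k$ monic and $L$ (hence $L^\sigma$) a translation forces $L^\sigma = \id$, so $L = \id$, and $f_k = g_k$. \textbf{Main obstacle.} The delicate point is the interaction with scalings of $\pm 1$ and, in the in-degree case, with the maximality of $\ell$ in the representation $x^k u(x^\ell)^n$: a Ritt polynomial with in-degree $\ell>1$ is invariant under scaling by $\ell^{\text{th}}$ roots of unity (Remark~\ref{astRitt}), so ``$f_i = g_i$'' could a priori only hold up to such a scaling, which would then have to be threaded through the whole decomposition. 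Resolving this requires tracking that the skew-linear equivalence is via \emph{translations only} at each stage (no scaling witnesses are introduced), which is exactly what the hypothesis grants; this is why the statement is false if one allows scaling witnesses (that would be Remark~\ref{uni-inout-sca}'s observation), and keeping the bookkeeping honest — that every $M_i$ is forced to be a translation before it is forced to be the identity — is the part that needs care rather than cleverness.
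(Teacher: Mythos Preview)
Your factor-by-factor induction is essentially the paper's Case~1 (\textsf{C}-free rightmost cluster) unpacked, and for that case it works. The gap is your claim that type~\textsf{C} factors cannot appear. In the out-degree case with $n=2$, the Ritt polynomials $\widehat{C}_{p,\mu}$ have out-degree~$2$ (Lemma~\ref{seehatinout}) and so are perfectly compatible with Definition~\ref{decomp-inout-def}. For such an $f_1$, Theorem~\ref{transrelate} explicitly permits a nontrivial translation relation $(+B)\circ \widehat{C}_{p,\mu}\circ(+A)=\widehat{C}_{p,-\mu}$ with $B\neq 0$, and $\widehat{C}_{p,-\mu}$ still has out-degree~$2$, so $g_1$ does \emph{not} violate the hypothesis on~$\vec{g}$. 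Your base case therefore does not force $M_1=\id$, and the induction stalls. (Your ``dualize'' remark for type~\textsf{A} in the out-degree case is also wrong: type~\textsf{A} polynomials have in-degree~$1$ but can have nontrivial out-degree; however, that case happens to work out anyway since $B=0$ is still forced.)

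The paper handles exactly this obstruction in its Case~2: when the rightmost cluster is a \textsf{C}~cluster, one cannot argue one factor at a time but must pass the translation through the entire cluster using Corollary~\ref{oneCRk} applied to the composite $C_N$, relating the outer linear factors via the explicit identities for $C_N$ and $\widehat{C}_N$, and then either reach a \textsf{C}-free cluster (where Case~1 applies to the resulting translation) or force two adjacent \textsf{C}~clusters to fuse, contradicting the clustering. Disintegratedness enters precisely to rule out $\vec{f}$ being a single \textsf{C}~cluster. Your approach could be salvaged by adding this propagation-through-\textsf{C}-cluster analysis, but as written it omits the hardest part of the proof.
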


\begin{proof}

\textit{Setup: } There is a translation $M$ such that $\vec{g}$ is linearly equivalent to $\vec{e} := (M^\sigma \circ f_k, f_{k-1}, \ldots, , f_2, f_1 \circ M^{-1})$. We show that $M = \id$, after which repeated applications of Lemmas~\ref{ccluster-unique} and~\ref{nccluster-unique} finish the proof.

We now use Lemmas~\ref{precl-linrel} and~\ref{cl-linrel} and the ideas in their proofs.

Let $(\vec{h}, \vec{L})$ be a cleanup of a clustering $\vec{a}$  of $\vec{f}$;
then $\vec{a}$ is also a clustering of $\vec{e}$ and $\vec{g}$ and
 $(\vec{h}; (M^\sigma \circ L_k, L_{k-1}, \ldots, , L_1, L_0 \circ M^{-1}) )$
is a cleanup of $\vec{e}$ and $\vec{g}$ (with respect to $\vec{a}$).

For each $j$, the $j$th clusters of $\vec{f}$ and $\vec{g}$ are of the same kind, both \textsf{C} or both \textsf{C}-free.

Since $\vec{f}$ and $\vec{g}$ have non-trivial in- or out-degree, all factors $f_i$ and $g_i$ are already Ritt polynomials.

\textit{Case 1:} Suppose that the rightmost cluster $\vec{f}_{[a_1, 0)}$ of $\vec{f}$ is a \textsf{C}-free cluster. Since $\vec{f}$ and $\vec{g}$ have non-trivial in- or out-degree, all factors $f_i$ and $g_i$ are already Ritt polynomials, and then by the uniqueness of cleanups and the proof of the existence of cleanups $L_0 = \id$. But then the rightmost cluster $\vec{g}_{[a_1, 0)}$ of $\vec{g}$ is a \textsf{C}-free cluster, so for the same reason $L_0 \circ M^{-1} = \id$. So $M = \id$ as desired.

\textit{Case 2:} Suppose that the rightmost cluster $\vec{f}_{[a_1, 0)}$ of $\vec{f}$ is a \textsf{C} cluster, so the rightmost cluster $\vec{g}_{[a_1, 0)}$ of $\vec{g}$ is also a \textsf{C} cluster. Since all factors of $f$ and $g$ are already Ritt polynomials, $\vec{f}$ cannot be a single \textsf{C} cluster.
Intuitively, our strategy is to pull the linear $M$ through the rightmost cluster, and then obtain a contradiction, as in Case 1 if the second rightmost cluster is \textsf{C}-free, and otherwise by forcing the two clusters to fuse.

Since type \textsf{C} Ritt polynomials have in- and out-degrees $1$ and $2$, in this case the non-trivial in- or out-degrees of $f$ and $g$ must be $2$, so there are no quadratic factors, so the degree $N$ of this \textsf{C} cluster $\vec{f}_{[a_1, 0)}$ is odd. Now
$$\vec{f}_{[a_1, 0)}^\circ = (\cdot \lambda_f^{-N}) \circ T_f^{-1} \circ C_N \circ T_f \circ (\cdot \lambda_f)$$
for some non-zero scalar $\lambda_f$ and $T_f = \id$ or $T_f = (+2)$, and
$$\vec{g}_{[a_1, 0)}^\circ = (\cdot \lambda_g^{-N}) \circ T_g^{-1} \circ C_N \circ T_g \circ (\cdot \lambda_g).$$ In order for $\vec{g}$ to be linearly equivalent to $\vec{e}$, there must be a linear $L$ such that
 $L \circ \vec{f}_{[a_1, 0)}^\circ \circ M^{-1} = \vec{g}_{[a_1, 0)}^\circ$.
 That is,
 \begin{equation} \label{uni-inout-eq}
 L \circ (\cdot \lambda_f^{-N}) \circ T_f^{-1} \circ C_N \circ T_f \circ (\cdot \lambda_f) \circ M^{-1} =
 (\cdot \lambda_g^{-N}) \circ T_g^{-1} \circ C_N \circ T_g \circ (\cdot \lambda_g).
 \end{equation}

Thus by Corollary~\ref{oneCRk},
$$T_g \circ (\cdot \lambda_g^N) \circ L \circ (\cdot \lambda_f^{-N}) \circ T_f^{-1} = (\cdot \pm 1)
\mbox{  and  }
T_f \circ (\cdot \lambda_f) \circ M^{-1} \circ (\cdot \lambda_g^{-1}) \circ T_g^{-1} = (\cdot \pm 1).$$
Since $T_f$, $T_g$, and $M$ are translations, the second equation implies that $\lambda_f = \pm \lambda_g$.
Since $N$ is odd, $\frac{\lambda_f^N}{\lambda_g^N} = \frac{\lambda_f}{\lambda_g}$, so the first equation makes $L$ a translation.

In equation~\ref{uni-inout-eq}, bring the scalings by $\lambda_f$ and $\lambda_f^{-N}$ to the outside of the left-hand side conjugating $L$ and $M^{-1}$ to $\tilde{L}$ and $\tilde{M}$; then bring them to the other side to cancel with the scalings by $\lambda_g$ and $\lambda_g^{-N}$, leaving a scaling $S$ by $\pm 1$; and finally bring this $S$ inside the translations on the right-hand side, conjugating $T_g$ to $\tilde{T}_g$; note that now in the middle of the right-hand side $S \circ C_n \circ S = C_N$, so we get
$$\tilde{L} \circ T_f^{-1} \circ C_N \circ T_f \circ \tilde{M} = \tilde{T}_g^{-1} \circ C_N \circ \tilde{T}_g.$$
 Applying Corollary~\ref{oneCRk} to this new equation makes $\tilde{L} = \tilde{M}^{-1} = \tilde{T}_g^{-1} \circ T_f$.
 Thus, if $M \neq \id$, then $L \neq \id$ also. This is what I meant by ``pulling $M$ through the rightmost cluster''.

Recall that $L$ was a witness to the linear equivalence of $\vec{g}$ and $\vec{e}$, so chopping off the rightmost cluster, we have linearly equivalent decompositions
$(M^\sigma \circ f_k, f_{k-1}, \ldots, , f_2, f_{a_1+1} \circ L^{-1})$ and $\vec{g}_{[k, a_1)}$, non-empty since we noted above that $\vec{f}$ has at least one more cluster. The same reasoning, for both cases, applies to these new decomposition. In Case 1, we immediately obtain $L = \id$. In Case 2, the same analysis relates $L$ to the translations $\hat{T}_f$, $\hat{T}_g$ and scaling by $\hat{\lambda_f}$, $\hat{\lambda_g}$ that make
$$\vec{f}_{[a_2, a_1)}^\circ = (\cdot \hat{\lambda}_f^{-N}) \circ \hat{T}_f^{-1} \circ C_N \circ \hat{T}_f \circ (\cdot \hat{\lambda}_f)$$
and the same for $g$. This relation among $\lambda_f$, $T_f$, $\hat{T}_f$, and $\hat{\lambda}_f$ and the corresponding data for $g$ forces the first two clusters to fuse into a single cluster, contradicting the fact that $\vec{a}$ is a clustering.

\end{proof}

\begin{prop} \label{uni-inout}
Suppose that $\vec{f}$ is a decomposition of a disintegrated polynomial, $\vec{g}$ is skew-linearly equivalent to $\vec{f}$, and each has non-trivial in- or out-degree. Then there is some $\lambda$ such that $g_i = \lambda^{m_i} \ast f_i$, where $m_i := \deg( \vec{f}_{(i, 1]}^\circ )$. In particular, their in- and out-degrees are the same.\end{prop}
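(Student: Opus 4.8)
\textbf{Proof proposal for Proposition~\ref{uni-inout}.}
The plan is to reduce the skew-linear equivalence to a combination of a scaling skew-linear equivalence and a translation skew-linear equivalence, handle each using the two lemmas just proved, and then track the scaling factor through the decomposition. First I would unwind the definition: $\vec{g}$ being skew-linearly equivalent to $\vec{f}$ means $\vec{g}$ is linearly equivalent to $(L^\sigma \circ f_k, f_{k-1}, \ldots, f_2, f_1 \circ L^{-1})$ for some linear $L$. Write $L = S \circ M$ where $S = (\cdot\lambda)$ is a scaling and $M$ is a translation. Then $L^\sigma = S^\sigma \circ M^\sigma$ with $S^\sigma = (\cdot \sigma(\lambda))$ also a scaling, and $L^{-1} = M^{-1} \circ S^{-1}$.

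The key step is to factor the passage $\vec{f} \rightsquigarrow \vec{g}$ through an intermediate decomposition. Let $\vec{f}'$ be the decomposition obtained from $\vec{f}$ by skew-linear equivalence via the scaling $S$: concretely, $\vec{f}' := (S^\sigma \circ f_k \circ S_{k-1}^{-1}, S_{k-1}\circ f_{k-1}\circ S_{k-2}^{-1},\ldots, S_1^{-1}\circ f_1 \circ S^{-1})$ where the $S_i$ are the scalings that, by Remark~\ref{astRitt} and the computation in Lemma~\ref{babycleanuplem}, make each factor into $\lambda^{m_i}\ast f_i$ (here $m_i = \deg(\vec f^\circ_{(i,1]})$, and the outer factor $S^\sigma\circ f_k\circ S_{k-1}^{-1}$ works out correctly because $\deg(\vec f^\circ) \cdot$ stuff matches up — this is exactly the bookkeeping already done in Lemma~\ref{babycleanuplem}). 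By Remark~\ref{uni-inout-sca}, $\vec f'$ has the same in- and out-degrees as $\vec f$, hence non-trivial ones, and its factors $\lambda^{m_i}\ast f_i$ are still Ritt polynomials. Now $\vec{g}$ is skew-linearly equivalent to $\vec{f}'$ via the \emph{translation} $M$ (after absorbing the $S_i$'s into the linear equivalence witnesses, which is routine). Since $\vec g$ also has non-trivial in- or out-degree, Lemma~\ref{uni-inout-tra} applies and gives $\vec{g} = \vec{f}'$, i.e. $g_i = \lambda^{m_i}\ast f_i$ for every $i$, which is the claim. The final sentence of the proposition then follows because $\lambda^{m_i}\ast f_i$ and $f_i$ are scaling related, so by Remark~\ref{scalrel-inout} they have identical in-degrees and out-degrees.

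The main obstacle I anticipate is the careful bookkeeping in defining $\vec{f}'$ and verifying that the scaling skew-linear equivalence via $S$ genuinely produces the sequence $(\ldots, \lambda^{m_i}\ast f_i, \ldots)$ with the correct intermediate scalings $S_i$ and with the leftmost factor coming out right once $S^\sigma$ is applied; one must check that $m_i = \deg(\vec f^\circ_{(i,1]})$ is precisely the exponent of $\lambda$ forced at position $i$, which is the same telescoping degree computation underlying Lemma~\ref{babycleanuplem}. A secondary point requiring a line of care is confirming that, after this, the residual equivalence between $\vec g$ and $\vec f'$ is witnessed purely by a translation (namely $M$), so that Lemma~\ref{uni-inout-tra} is applicable — this uses that scalings and translations generate the affine group as a semidirect product, so $L = S\circ M$ can be split off cleanly, with the scaling part fully accounted for in the step $\vec f \rightsquigarrow \vec f'$. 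Everything else is a direct invocation of the two preceding lemmas and the two remarks on in/out-degrees.
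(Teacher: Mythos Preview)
Your approach is correct and essentially the same as the paper's: both split the skew-linear equivalence into a scaling part and a translation part, produce an intermediate decomposition $\vec h$ (your $\vec f'$) related to $\vec f$ purely by scalings, invoke Remark~\ref{uni-inout-sca} to preserve the in/out-degrees, and then apply Lemma~\ref{uni-inout-tra} to conclude $\vec g=\vec h$. The one point you flag as needing care---that the leftmost factor $S^\sigma\circ f_k\circ S_{k-1}^{-1}$ really equals $\lambda^{m_k}\ast f_k$---is indeed not automatic from the telescoping alone: it follows because $g_k$ (and hence $h_k$) is monic, which forces the compatibility $\sigma(\lambda)=\lambda^{\deg f}$ and makes the top factor come out right.
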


\begin{proof}
As usual, we can deal with scalings and translations separately. Separating scalings from translations in the linear factors witnessing skew-linear equivalence, we find an intermediate decomposition $\vec{h}$ which is skew-linearly equivalent to $\vec{f}$ via scalings, and skew-linearly equivalent to $\vec{g}$ via translations. By Remark~\ref{uni-inout-sca}, $\vec{h}$ has the same in- and out-degrees as $\vec{f}$. By Lemma~\ref{uni-inout-tra}, $\vec{h} = \vec{g}$.\end{proof}

The last proposition essentially says that for decompositions of disintegrated polynomials, in- and out-degrees are invariant under skew-linear equivalence.

\begin{prop}
\label{notskewtwist} \label{notskewtwrk}
  If two disintegrated polynomials $f$ and $g$ satisfy $g \circ \pi = \pi^\sigma \circ f$, and
  $f$ and $\pi$ share no initial compositional factors, and $\pi^\sigma$ and $g$ share no terminal compositional factors, then there are linear $L$ and $M$ such that $M \circ \pi \circ L$ is a monomial whose degree divides the
  in-degree of some decomposition of $ (M^\sigma)^{-1} \circ  g \circ M$ and the out-degree of some decomposition of $L^\sigma \circ f \circ L^{-1}$. \end{prop}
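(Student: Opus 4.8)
The plan is to translate the compositional identity $g\circ\pi=\pi^\sigma\circ f$ into a statement about decompositions and then apply the machinery of Ritt swaps, clusterings, and canonical forms already developed. First I would reduce to the case that all the relevant polynomials are swappable: if some indecomposable factor of $\pi$ (equivalently of $f$ or $g$, via the diagram) is unswappable, then by Corollary~\ref{unswcrack2} all $(f,g)$-skew-invariant curves come from iteration (the $\phi^N,\beta^N$ forms), so in particular the only way $\pi$ can satisfy the hypotheses (no shared initial factor with $f$, no shared terminal factor with $g^\sigma$) is the degenerate one — this case should either be vacuous or handed to us by the earlier corollaries. So assume every factor is swappable, fix a decomposition $\vec{g\circ\pi}$ that refines into a decomposition $\vec\pi$ of $\pi$ followed by a decomposition of $g$, and fix a decomposition $\vec{\pi^\sigma\circ f}$ refining into a decomposition of $f$ followed by $\vec{\pi}^\sigma$. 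These are two decompositions of the same polynomial, so by Ritt's Theorem~\ref{Rittthm} one is obtained from the other by a sequence of Ritt swaps, i.e. by a word $w\in\rittmonoid_k$.

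Next I would put $w$ into second canonical form (Proposition~\ref{2ndcanprop}) with respect to the two-chunk break-up $\{$factors of $\pi$, factors of $g\}$ on one side, so that $w$ first permutes within each chunk and then moves factors across the boundary. The hypothesis that $f$ and $\pi$ share no initial compositional factor, and $\pi^\sigma$ and $g$ share no terminal compositional factor, says precisely that the "merge" part of the merge-sort must move \emph{every} factor of $\pi$ across into the $f$-side and every factor originally on the $f$-side into the $\pi^\sigma$-block — there is nothing that can stay put, because a factor that stayed would exhibit a shared initial/terminal factor. So the crossing word $v$ is a full transposition of the two blocks. Now invoke Remark~\ref{canoclust} / the analysis after Proposition~\ref{swap-inside}: the only factors that can cross a cluster boundary are wandering quadratics, and gates force a very rigid structure. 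Pushing a full block of $k$ factors across via transits that only move quadratics is impossible unless the cluster structure is essentially trivial — concretely, after conjugating by suitable linear $L,M$ so that the clusters are in cleanup form, $\pi$ must be linearly related to a single cluster which, because it has to slide wholesale past another cluster, must be a monomial block (a $\textsf C$-free cluster consisting of monomials, or a power of a single monomial): this is where I would land on "$M\circ\pi\circ L$ is a monomial."

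Once $M\circ\pi\circ L=P_d$ is forced, the divisibility conclusions come from the basic Ritt identities of the third kind together with Proposition~\ref{uni-inout} on invariance of in- and out-degrees under skew-linear equivalence. The identity $g\circ P_d=P_d^{\,\sigma}\circ f$ (after absorbing $L,M$, so replacing $f$ by $L^\sigma\circ f\circ L^{-1}$ and $g$ by $(M^\sigma)^{-1}\circ g\circ M$) means that in any decomposition refining $\vec f$ followed by $P_d$, the monomial $P_d$ must be swapped rightward past all of $\vec f$; by Remark~\ref{inoutdegForSwap} and the basic Ritt identity $P_p\circ(x^k u(x^{\ell p})^n)=(x^k u(x^\ell)^{pn})\circ P_p$, for each prime $p\mid d$ the factor of $\vec f$ that $P_p$ passes must have in-degree divisible by $p$; iterating over the prime factorization of $d$ and using that these in-degrees multiply (Definition~\ref{decomp-inout-def}, and the fact that a decomposition obtained by such swaps has in-degree divisible by $d$) gives that some decomposition of $L^\sigma\circ f\circ L^{-1}$ has out-degree divisible by $d$ — wait, here I must be careful about which side is "in" and which is "out": $P_d$ sitting on the right of $f$ and moving left-to-right through $f$ converts in-degree to out-degree, so it is the \emph{out}-degree of a decomposition of $L^\sigma\circ f\circ L^{-1}$ that is divisible by $\deg(P_d)$, and symmetrically, $P_d^\sigma$ moving through $g$ from the left yields divisibility of the \emph{in}-degree of a decomposition of $(M^\sigma)^{-1}\circ g\circ M$ — matching the statement.

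The main obstacle I anticipate is the middle step: rigorously proving that "$v$ transposes the two blocks, and only quadratics may cross cluster boundaries" forces $\pi$ to be linearly related to a monomial. The gate combinatorics (Definitions~\ref{gate-def}, the one-way-gate persistence Lemma~\ref{one-way-gates-persist}, and the cluster-overlap Lemma~\ref{cluster-overlap}) are exactly what is needed, but assembling them into "you cannot slide an entire nontrivial cluster across another cluster" requires care: one has to rule out the possibility that the cluster structure of the $(g\circ\pi)$-side reorganizes so that whole clusters, rather than individual quadratics, appear to move — which they do not, because cluster count, cluster kinds and gate positions are polynomial invariants by Theorem~\ref{swapsferries}. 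So the real content is: invariance of the clustering data (Theorem~\ref{swapsferries}) plus the constraint that $\pi$'s factors entirely leave their original block forces $\pi$'s clusters to be degenerate, i.e. fake wandering quadratics or monomial $\textsf C$-free clusters, and a $\textsf C$-free cluster that is a composition of monomials is itself (linearly related to) a single monomial $P_d$. I would also need the easy observation that if $\pi$ itself had in-degree or out-degree constraints incompatible with being all-monomial the hypotheses would already fail, but this should fall out of the same analysis. Everything after that reduction is a bookkeeping exercise with the third basic Ritt identity and Proposition~\ref{uni-inout}.
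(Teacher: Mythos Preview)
Your overall strategy matches the paper's: translate to decompositions, use second canonical form with the two-block partition $\{\vec\pi,\vec g\}$, and deduce from the no-shared-factor hypotheses that the merge word $v$ is the full block transposition $t_{[\ell,1]}\cdots t_{[\ell+m-1,m]}$ (this is Proposition~\ref{notsktw-translate}). The final divisibility step via the third basic Ritt identity is also right.

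The gap is in your middle step. You argue that ``only quadratics cross cluster boundaries'' forces $\pi$'s clusters to be degenerate, hence $\pi$ monomial. But clusterings are attached to the whole decomposition $\vec g\vec\pi$, not to $\vec\pi$ alone, and the paper's case analysis shows your argument cannot work uniformly. The paper proceeds in three genuinely different cases:
\begin{itemize}
\item If $\vec g\vec\pi$ has more than one cluster, then applying Proposition~\ref{one-traverse} to the transits in $v$ shows that the factor traversing the whole decomposition must be quadratic, and gate-direction flipping prevents a second factor from following --- so one of $\pi$, $g$ is quadratic.
\item If $\pi$ or $g$ is quadratic, a separate argument (Lemma~\ref{quadpig}) is needed; notably when $g$ is quadratic the conclusion comes from showing $\pi$ is a single cluster whose non-\textsf{C} factors must be monomial because their in/out-degrees are irreversibly altered by the traversing quadratic, and the Chebyshev alternative contradicts disintegratedness.
\item If $\vec g\vec\pi$ is a \emph{single} cluster, your ``only quadratics cross boundaries'' argument is vacuous --- there are no boundaries. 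Here the paper splits again: a \textsf{C} cluster forces $g$ to be $\pm C_{\deg g}$, contradicting disintegratedness; a \textsf{C}-free cluster forces $\pi$ monomial not by gate combinatorics but by the observation that a non-monomial factor $\pi_i$ acquires in-degree multiplied by $\deg f$ and out-degree divided by $\deg f$ after the swaps, and since the degrees of monomial factors of $\pi$ are coprime to $\deg f$, no further Ritt swaps within $\vec\rho$ can restore the original in/out-degrees of $\pi^\sigma$.
\end{itemize}
So the step you flagged as ``the main obstacle'' really does need this trichotomy, and the single-cluster case requires an argument orthogonal to the gate machinery you invoke.
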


The following slight weakening of this proposition, which does not refer to decompositions, is an immediate consequence of Lemma 2.8 of \cite{MZ}, and the full version follows from other results in that paper.

\begin{corollary}
If two non-linear polynomials $f$ and $g$ satisfy $g \circ \pi = \pi^\sigma \circ f$,
  and $f$ and $\pi$ share no initial compositional factors,
  and $\pi^\sigma$ and $g$ share no terminal compositional factors,
  then there are linear $L$ and $M$ such that
   either $L^\sigma \circ f \circ L^{-1}$ and $(M^\sigma)^{-1} \circ  g \circ M$ are both monomials, Chebyshev polynomials, or negative Chebyshev polynomials (and then we say nothing about $\pi$);
   or $M \circ \pi \circ L (x) = x^n $ is a monomial, $L^\sigma \circ f \circ L^{-1} (x) = x^k \cdot u(x^n)$, and $(M^\sigma)^{-1} \circ  g \circ M (x) = x^k \cdot u(x)^n$ for some polynomial $u$.\end{corollary}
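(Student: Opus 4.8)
The plan is to reduce the corollary to Proposition~\ref{notskewtwist}, the extra content being the case in which $f$ or $g$ is \emph{not} disintegrated, which is exactly where the monomial/Chebyshev alternative enters. I would begin with the elementary observation that the graph $\Gamma$ of $\pi$ (or its closure in $\PP^1 \times \PP^1$) is $(f,g)$-skew-invariant, since $(f,g)(t,\pi(t)) = (f(t),\pi^\sigma(f(t)))$ lies on $\Gamma^\sigma$. We may assume $\pi$ is nonconstant, as is implicit in the statement (otherwise the hypothesis about shared compositional factors is vacuous and the conclusion $M\circ\pi\circ L = x^n$ makes no sense). Then $\Gamma$ projects dominantly to both factors, is not a finite union of products of $f$-skew-invariant and $g$-skew-invariant subvarieties, and remains so after any base change, so it witnesses $(\AA^1,f) \not\perp (\AA^1,g)$. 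Now I would invoke the dichotomy between disintegrated and group-like $\sigma$-varieties: a disintegrated $\sigma$-variety is orthogonal to every group-like one, and by Fact~\ref{medpoly} a non-disintegrated polynomial of degree at least two is (skew-conjugate to) a monomial, Chebyshev, or negative Chebyshev polynomial, hence group-like. Consequently $(\AA^1,f)$ is disintegrated if and only if $(\AA^1,g)$ is.

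\emph{Case 1: $f$, equivalently $g$, is not disintegrated.} Since $f$ and $g$ are non-linear, Fact~\ref{medpoly} directly gives linear polynomials $L$ and $M$ with $L^\sigma\circ f\circ L^{-1}$ and $(M^\sigma)^{-1}\circ g\circ M$ each a monomial, a Chebyshev polynomial, or a negative Chebyshev polynomial; this is the first alternative, and nothing need be said about $\pi$.

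\emph{Case 2: $f$ and $g$ are both disintegrated.} Here Proposition~\ref{notskewtwist} applies (its hypotheses on shared initial and terminal compositional factors are part of the corollary's hypotheses), furnishing linear $L$, $M$ with $\tilde\pi := M\circ\pi\circ L$ a monomial; absorbing its leading coefficient into $M$, take $\tilde\pi(x)=x^n$ with $n\geq 1$. Put $\tilde f := L^\sigma\circ f\circ L^{-1}$ and $\tilde g := (M^\sigma)^{-1}\circ g\circ M$, substitute $\pi = M^{-1}\circ\tilde\pi\circ L^{-1}$ into $g\circ\pi=\pi^\sigma\circ f$, and absorb the remaining linear factors (replacing $L,M$ by their inverses if the telescoping requires it — a harmless relabeling consistent with the normalization in the statement); the identity becomes $\tilde g(x^n)=\tilde f(x)^n$. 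Writing $\tilde f(x)=x^k w(x)$ with $k=\ord_0\tilde f$ and $w(0)\neq 0$, the right side being a polynomial in $x^n$ forces $w(x)^n$ to be a polynomial in $x^n$; comparing the coefficient of the least exponent of $w$ not divisible by $n$ (which survives in $w^n$ precisely because $w(0)\neq 0$, as the linear term $n\,w(0)^{n-1}w$ of the multinomial expansion is not cancelled by higher terms) shows there is no such exponent, i.e. $w(x)=u(x^n)$ for a polynomial $u$. Thus $\tilde f(x)=x^k u(x^n)$, and substituting back, $\tilde g(y)=y^k u(y)^n$ — the second alternative. The refined-degree divisibilities that Proposition~\ref{notskewtwist} also records (via the Remark after Definition~\ref{decomp-inout-def}) are consistent with this but not needed.

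The substance of the argument sits entirely inside Proposition~\ref{notskewtwist}, which is assumed; the remaining work is the reduction of the non-disintegrated possibility to Fact~\ref{medpoly} and the elementary coefficient comparison of Case~2. The only point demanding care is the bookkeeping with $L$ and $M$ so that the commutative square has exactly the normalized shape quoted, and I expect reconciling the conjugation conventions of the corollary with those of Proposition~\ref{notskewtwist} (and soaking up stray scalars and translations into $M$) to be the fiddliest step. As remarked after the statement, one may alternatively obtain the whole corollary directly from Lemma~2.8 of~\cite{MZ} without passing through Proposition~\ref{notskewtwist}.
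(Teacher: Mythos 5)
Your proof is correct. The paper itself does not really prove this corollary: it labels it a ``slight weakening'' of Proposition~\ref{notskewtwist} and outsources it to Lemma~2.8 of~\cite{MZ}, so you have supplied a derivation the paper only gestures at. Two of your steps are genuinely additional content rather than bookkeeping. First, the non-disintegrated case is not covered by Proposition~\ref{notskewtwist} at all; your route --- the graph of $\pi$ is an irreducible $(f,g)$-skew-invariant curve projecting dominantly both ways, hence witnesses $(\AA^1,f)\not\perp(\AA^1,g)$, non-orthogonality forces $f$ and $g$ to be disintegrated or group-like together, and Fact~\ref{medpoly} then gives the monomial/Chebyshev alternative --- is exactly the mechanism the paper invokes elsewhere for the trichotomy, and it is the right way to get the first alternative. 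Second, the Proposition's conclusion only records divisibility of in- and out-degrees of some decomposition, not the closed forms $x^k u(x^n)$ and $x^k u(x)^n$; your extraction of these from the functional identity $\tilde g(x^n)=\tilde f(x)^n$ via the minimal exponent of $w$ not divisible by $n$ is correct (characteristic zero is used when you conclude $n\,w(0)^{n-1}c_{j_0}\neq 0$) and is arguably cleaner than chasing the decomposition data. Finally, the $L$/$M$ inversion you flag is a real inconsistency in the paper's own statement --- the displayed conjugations do not telescope against $M\circ\pi\circ L$ as literally written --- and your relabeling is the correct fix, not a fudge.
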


Our slightly stronger statement is the one we use to characterize skew-invariant curves.
The rest of this section constitutes the proof of Proposition~\ref{notskewtwist}. The next proposition translates it into the language of decompositions and canonical forms.

\begin{prop} \label{notsktw-translate}
(Translating Proposition~\ref{notskewtwist})\\
 Suppose that polynomials $f$, $g$, and $\pi$ satisfy $g \circ \pi = \pi^\sigma \circ f$, and
  that $f$ and $\pi$ share no initial compositional factors, and $\pi^\sigma$ and $g$ share no terminal compositional factors.
 Let $m$ be the number of factors in (any) decomposition of $\pi$, and let $l$ be the number of factors in (any) decomposition of $f$ (or $g$).
 Then there are decompositions $\vec{\pi}$ of $\pi$, $\vec{f}$ is $f$, $\vec{g}$ of $g$, and $\vec{\rho}$ of $\pi^\sigma$ (which $\vec{\rho}$ need not be $(\vec{\pi})^\sigma$) such that
$$ ( t_{[\ell, 1]} \ldots t_{[\ell+m-2, m-1]} t_{[l+m-1, m]} ) \star \vec{g}\vec{\pi} = \vec{\rho}\vec{f}$$
\end{prop}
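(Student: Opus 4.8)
The plan is to start from the single compositional identity $g \circ \pi = \pi^\sigma \circ f$ and read it as an equality of two decompositions of the \emph{same} polynomial, namely $g \circ \pi = \pi^\sigma \circ f$ (call this polynomial $P$), then invoke Ritt's Fact~\ref{Rittthm} to connect the two decompositions by a sequence of Ritt swaps, and finally use the canonical forms of Section~\ref{canformsec} (specifically Proposition~\ref{2ndcanprop}, the ``merge-sort'' second canonical form) to rearrange that sequence of swaps into the prescribed shape. First I would fix a decomposition $\vec{\pi}^0$ of $\pi$ and a decomposition $\vec{g}^0$ of $g$; concatenating gives a decomposition $\vec{g}^0 \vec{\pi}^0$ of $P$ with $\ell + m$ factors, the rightmost $m$ forming a decomposition of $\pi$ and the leftmost $\ell$ a decomposition of $g$. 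Likewise, fix a decomposition $\vec{f}^0$ of $f$ and a decomposition $\vec{\rho}^0 = (\vec{\pi}^1)^\sigma$ of $\pi^\sigma$ (obtained by applying $\sigma$ to some decomposition of $\pi$), and concatenate to get a decomposition $\vec{\rho}^0 \vec{f}^0$ of $P$ whose rightmost $\ell$ factors decompose $f$ and leftmost $m$ factors decompose $\pi^\sigma$. By Fact~\ref{Rittthm} these two decompositions of $P$ have the same number of factors (so $\ell + m$ is well-defined, recovering the count) and $\vec{\rho}^0 \vec{f}^0$ is linearly equivalent to $w \star (\vec{g}^0 \vec{\pi}^0)$ for some word $w \in \rittmonoid_{\ell+m}$.

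Next I would compute the permutation that $w$ must represent. The factors of $\vec{g}^0\vec{\pi}^0$ in positions $1,\dots,m$ decompose $\pi$ and in positions $m{+}1,\dots,m{+}\ell$ decompose $g$; in the target $\vec{\rho}^0\vec{f}^0$ the first $\ell$ positions decompose $f$ and the last $m$ positions decompose $\pi^\sigma$. Because $f$ and $\pi$ share no initial compositional factor, and $\pi^\sigma$ and $g$ share no terminal compositional factor — here I use Ritt's theorem together with the uniqueness-up-to-permutation philosophy, exactly as in the discussion of ``crucrk'' and Corollary~\ref{nearaction} — the only way the factors can be shuffled is that the entire block of $\ell$ factors coming from $g$ must migrate, as a block, all the way to the bottom (to become the decomposition of $f$), while the block of $m$ factors coming from $\pi$ migrates to the top (to become the decomposition of $\pi^\sigma$). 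This is precisely the permutation of $\operatorname{Sym}_{\ell+m}$ that cyclically slides a contiguous block of length $\ell$ past a contiguous block of length $m$, which is represented by the word $t_{[\ell,1]} t_{[\ell+1,2]} \cdots t_{[\ell+m-1,m]}$ appearing in the statement (each transit $t_{[\ell+j-1,j]}$ carrying one $\pi$-factor from position $j$ up past the $\ell$ descending factors). Thus the target word $w_0 := t_{[\ell,1]} \cdots t_{[\ell+m-1,m]}$ and our $w$ represent the same permutation.

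Now I would apply Corollary~\ref{nearaction}: since $w \star (\vec{g}^0\vec{\pi}^0)$ is defined (we just produced it) and $w$ and $w_0$ represent the same permutation, $w_0 \star (\vec{g}^0\vec{\pi}^0)$ is \emph{also} defined and equals $w \star (\vec{g}^0\vec{\pi}^0)$, hence is linearly equivalent to $\vec{\rho}^0\vec{f}^0$. Finally, absorb the linear discrepancies: the witnessing linear equivalence gives linear polynomials that can be pushed into the factors, producing genuine decompositions $\vec{\pi}$ of $\pi$, $\vec{g}$ of $g$, $\vec{\rho}$ of $\pi^\sigma$, $\vec{f}$ of $f$ with $w_0 \star (\vec{g}\vec{\pi}) = \vec{\rho}\vec{f}$ on the nose (note $\vec\rho$ need not be $\vec\pi^\sigma$ because the swaps and linear adjustments change the factors). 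The main obstacle I anticipate is the rigorous justification that the permutation is forced to be exactly $w_0$ — i.e.\ that the ``no shared initial/terminal factors'' hypotheses really do pin down which factors end up where, rather than merely constraining them up to further permutation within the blocks. This requires a careful bookkeeping argument: track, factor by factor, what the first canonical form of $w$ (Proposition~\ref{firstcanprop}) must look like given that the first $\ell$ output positions collectively reconstitute $f$ (which shares no initial factor with $\pi$, so no $\pi$-factor may be left at the bottom) and the last $m$ output positions reconstitute $\pi^\sigma$ (which shares no terminal factor with $g$, so no $g$-factor may remain at the top); then argue the block structure is preserved, so the induced map on blocks is the transposition of the two blocks and the induced word is $w_0$ in second canonical form with respect to the two chunks $\vec{c} = (\ell+m, m, 0)$.
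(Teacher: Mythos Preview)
Your overall strategy is sound and close to the paper's, but there is a real gap in the middle, and one incorrect citation.

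First, the minor point: Corollary~\ref{nearaction} does \emph{not} say that if $w \star \vec{g}^0\vec{\pi}^0$ is defined then $w_0 \star \vec{g}^0\vec{\pi}^0$ is defined; it only says that if \emph{both} are defined they agree. What you need instead is Proposition~\ref{firstcanprop}: since $w_0$ is visibly in first canonical form, if $w$ and $w_0$ represented the same permutation then $w_0$ would be \emph{the} first canonical form of $w$, and Proposition~\ref{firstcanprop} would give you definedness of $w_0 \star \vec{g}^0\vec{\pi}^0$ for free.

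The serious gap is the claim that $w$ and $w_0$ represent the same permutation. This is not forced by the hypotheses, and in general it is false for the $w$ you produce. The ``no shared initial/terminal factor'' hypotheses constrain which \emph{block} each factor ends up in, but say nothing about the order \emph{within} a block: the permutation underlying your $w$ can perfectly well be $w_0$'s permutation composed with an arbitrary permutation of the $\pi$-factors among themselves and of the $g$-factors among themselves. (Remember too that Ritt swaps change factors, not merely reorder them, so ``which factor ended up where'' is already delicate.) With such a $w$, its first canonical form is not $w_0$, and your appeal to Proposition~\ref{firstcanprop} collapses.

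The paper's proof fixes exactly this by using the second canonical form the other way around. Rather than fixing $\vec\pi^0,\vec g^0$ at the outset and trying to pin down the permutation, it takes the second canonical form $w = v\, w_1 w_2$ with respect to $\vec c = (\ell+m,m,0)$ and then \emph{absorbs} $w_1$ and $w_2$ into the choice of decompositions: replace $\vec\pi^0$ by $w_1 \star \vec\pi^0$ and $\vec g^0$ by $w_2 \star \vec g^0$. Now only $v$ remains, and by the defining property of the second canonical form $v$ is already in first canonical form and does not permute within blocks. At this point the ``no shared factor'' hypotheses bite cleanly: if any right endpoint $b_i$ of a transit in $v$ were missing, the corresponding $\pi$-factor would be untouched and land in $\vec f$, giving a shared initial factor; if any left endpoint $a_i$ fell short, a $g$-factor would survive into $\vec\rho$, giving a shared terminal factor. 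This forces $v = w_0$ on the nose. Your final paragraph gestures at the second canonical form but misses this absorption step, which is the whole point: you must spend the freedom in the \emph{choice} of $\vec\pi$ and $\vec g$ that the statement explicitly grants you.
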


\begin{proof}

 Let $(\pi_m, \ldots, ,\pi_1)$ be a decomposition of $\pi$, and $(g_l, \ldots, g_1)$ be a decomposition of $g$. Let $w = v w_1 w_2$ be the word in the second canonical form that yields a decomposition of $f$ followed by a decomposition of $\pi^\sigma$. Since we were free to choose the decompositions of $\pi$ and $g$, we may assume, losing this freedom, that $w_i$ are empty. So we get decompositions as above and
 $$v = t_{[a_k, b_k]} t_{[a_{k-1}, b_{k-1}]} \ldots t_{[a_1, b_1]}$$
 with $a_i \geq b_i -1$ for all $i$ ( $a_i = b_i-1$ means that the word $(t_{a_i}, \dots t_{b_i})$ is empty);  $b_i = \operatorname{length}(\vec{\pi})+1-i$,
  and $a_k < \ldots < a_2 < a_1$; and
 $$v \star \vec{g}\vec{\pi} = \vec{\rho}\vec{f}$$
  Now it follows immediately that $k = \operatorname{length}(\vec{\pi})$, for otherwise $t_1$ does not occur in $v$, so
the rightmost factor $\pi_1$ in $\vec{g}\vec{\pi}$ is untouched by
the action of $v$, so it is a shared initial factor of $\pi$ and
$f$, contradicting a hypothesis of the proposition.

For exactly the same reasons, unless
 $a_i = \operatorname{length}(\vec{g}) + \operatorname{length}(\vec{\pi}) - i$ for all $i$,
  $\rho$ and $g$ will share a terminal factor, which is also not supposed to happen.

 So $v =  t_{[\ell, t_1]} \ldots t_{[\ell+m-2, m-1]} t_{[\ell+m-1, m]} $ as wanted.
\end{proof}

 We have $v := v_1 v_2 \ldots v_m$ where $v_i := t_{[\ell+1-i, i]}$, and we have $v \star \vec{g}\vec{\pi}$ defined.

The next lemma shows that $\vec{g}\vec{\pi}$ is a cluster or one of $\pi$ and $g$ has degree 2. The following two lemmas handle these two cases.

\begin{lemma}
 Suppose that the conclusion of Proposition~\ref{notsktw-translate} holds, but $\vec{g}\vec{\pi}$ is not a cluster.
 Then one of $\pi$ and $g$ is quadratic. \end{lemma}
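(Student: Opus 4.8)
The plan is to induct on $m$, the number of indecomposable factors of $\pi$. Write $v = v_1 v_2 \cdots v_m$ for the factorization of the word $v$ into the successive right-to-left transits of the displayed equation of Proposition~\ref{notsktw-translate}, so that $v_j$ is the transit carrying $\pi_j$ up past all of $\vec{g}$, with $v_m$ acting first and $v_1$ last. Since $v \star \vec{g}\vec{\pi}$ is defined and every generator $t_i$ (for $1 \le i \le \ell+m-1$) occurs in $v$, every factor of $\vec{g}\vec{\pi}$ is involved in some Ritt swap and hence is swappable; so $\vec{g}\vec{\pi}$, and likewise every sub-decomposition considered below, admits a clustering by Lemma~\ref{clusteringsexist}. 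Throughout I would use that, by Theorem~\ref{swapsferries}, being a single cluster is a property of the underlying polynomial and so is preserved under Ritt swaps.

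The base cases $m=1$ and $m=2$ are immediate. For $m=1$, $v = t_{[\ell,1]}$ is the single transit carrying $\pi = \pi_1$ past $\vec{g}$, so Proposition~\ref{one-traverse} applies to $\vec{g}\vec{\pi}$: if it is not a single cluster, its bottom factor $\pi_1 = \pi$ is quadratic. For $m=2$, $v = v_1 v_2$ has exactly the shape of the word in Lemma~\ref{two-traverses} acting on the $\ell+2$ factors of $\vec{g}\vec{\pi}$, which gives that $\vec{g}\vec{\pi}$ is a single cluster unless $\ell=1$ and the top factor $g_1 = g$ is quadratic.

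For the inductive step, let $m \ge 3$ and assume $\vec{g}\vec{\pi}$ is not a cluster. I would exploit two overlapping sub-computations. First, $v' := v_2 v_3 \cdots v_m$ (the suffix of $v$ after $v_1$) never touches the bottom factor $\pi_1$, and after re-indexing it is precisely the $(m-1)$-case word acting on $(g_\ell,\ldots,g_1,\pi_m,\ldots,\pi_2)$, a decomposition of $g \circ \pi_m \circ \cdots \circ \pi_2$. By the inductive hypothesis, either $g$ is quadratic (and we are done) or this sub-decomposition is a single cluster, the alternative that $\pi_m \circ \cdots \circ \pi_2$ be quadratic being vacuous since $m-1 \ge 2$; transporting along the swaps in $v'$, this yields a cluster on positions $2,\ldots,\ell+m$ of the decomposition $v' \star \vec{g}\vec{\pi}$. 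Second, set $\vec{h} := (v_3 \cdots v_m) \star \vec{g}\vec{\pi}$; inspecting index ranges (using $m \ge 3$ to keep $v_3,\ldots,v_m$ off positions $1$ and $2$), the word $v_1 v_2$ acts only on the bottom $\ell+2$ factors of $\vec{h}$ and in the shape of Lemma~\ref{two-traverses}, so those factors form a single cluster $D$ unless $\ell=1$ and the topmost of them, an image of $g_1 = g$, is quadratic, again finishing the argument. Since $v_2$ moves factors only strictly inside $D$ (again by the index ranges), $D$ remains a cluster under $v_2$, so positions $1,\ldots,\ell+2$ of $v' \star \vec{g}\vec{\pi} = v_2 \star \vec{h}$ also form a cluster. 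Now in the single decomposition $v' \star \vec{g}\vec{\pi}$ the two clusters on positions $1,\ldots,\ell+2$ and on positions $2,\ldots,\ell+m$ overlap in the $\ell+1 \ge 2$ factors at positions $2,\ldots,\ell+2$; since the second alternative of Lemma~\ref{cluster-overlap} would require the overlap to be a single factor, their union, all of $v' \star \vec{g}\vec{\pi}$, is a single cluster, hence so is $\vec{g}\vec{\pi}$, contradicting the assumption. Therefore one of the quadratic alternatives fired, so $g$ is quadratic.

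The hard work is entirely in the base cases: Proposition~\ref{one-traverse} and Lemma~\ref{two-traverses} already do the delicate tracking of gates and wandering quadratics, and the induction only needs to bookkeep which factors occupy which positions after each block of transits and then invoke the cluster-overlap lemma. The step requiring the most care is verifying that $v_1 v_2$ restricts to the bottom $\ell+2$ factors of $\vec{h}$ and that $v_2$ stays in the interior of $D$, which is a routine check on the index ranges of the transits $v_1 = t_{[\ell,1]}$ and $v_2 = t_{[\ell+1,2]}$; that every exit from the argument delivers ``$g$ is quadratic'' rather than ``$\pi$ is quadratic'' is consistent with $\pi$ having $m \ge 2$ indecomposable factors and hence degree at least $4$.
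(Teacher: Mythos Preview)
Your argument is correct and takes a genuinely different route from the paper's. The paper fixes a clustering of $\vec{g}\vec{\pi}$ and argues directly from Proposition~\ref{one-traverse} and the gate machinery: if the leftmost cluster fails to contain all of $\vec g$, the first transit forces $\pi_m$ to be a quadratic that leaves a one-way left-to-right gate, blocking $\pi_{m-1}$ and hence forcing $m=1$; symmetrically for the rightmost cluster; and if the clustering is exactly $\vec g$ against $\vec\pi$, one of $g_1,\pi_m$ must be a wandering quadratic, again forcing $\ell=1$ or $m=1$. Your approach instead packages the gate-tracking into the base cases (Proposition~\ref{one-traverse} for $m=1$ and Lemma~\ref{two-traverses} for $m=2$) and then, for $m\ge 3$, produces two clusters in $v'\star\vec{g}\vec{\pi}$ whose overlap has $\ell+1\ge 2$ factors, so Lemma~\ref{cluster-overlap} rules out the single-factor exception and forces the whole decomposition to be a cluster. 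Both routes work; the paper's is more hands-on with gates and gives slightly finer information (it pinpoints which of $m=1$ or $\ell=1$ occurs), while yours is cleaner once the two base lemmas are available and avoids repeating the gate bookkeeping.

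One small imprecision worth tightening: your inductive hypothesis, as phrased, is the lemma itself, whose hypothesis is ``the conclusion of Proposition~\ref{notsktw-translate} holds.'' For the subproblem $(g,\pi')$ with $\pi'=\pi_m\circ\cdots\circ\pi_2$ you have not verified (and would have trouble verifying) that the output of the shortened word is literally of the form $\vec{\rho'}\vec{f'}$ with $\vec{\rho'}$ a decomposition of $(\pi')^\sigma$. Since neither your proof nor the paper's ever uses that part of the hypothesis, simply strengthen what you induct on to ``if the word $v^{(m)}$ of the displayed form is defined on $\vec{g}\vec{\pi}$ and $\vec{g}\vec{\pi}$ is not a cluster, then $g$ or $\pi$ is quadratic''; this implies the lemma and your argument proves it verbatim.
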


\begin{proof}
 Suppose that $\vec{g}\vec{\pi}$ is not a cluster, and fix a clustering $\vec{a}$ of it.

 If the leftmost cluster $(g_k, \ldots, g_e)$ of $\vec{a}$ does not contain all of $\vec{g}$, then $(g_k, \ldots, g_1, \pi_m)$ is not a cluster, so Proposition~\ref{one-traverse} applied to $ v_m \star \vec{g}\vec{\pi} =: (\pi'_m, \vec{g}', \pi_{m-1}, \pi_{m-2}, \ldots, \pi_1) $ makes $\pi_m$ quadratic and leaves one-way left-to-right gate
  between $g'_e$ and $g'_{e-1}$. This prevents $\pi_{m-1}$ from crossing into the leftmost cluster, contradicting the conclusion of Proposition~\ref{notsktw-translate}, so there must be no $\pi_{m-1}$, so the whole of $\pi = \pi_m$ is quadratic.

 Symmetrically, if the rightmost cluster of $\vec{a}$ does not contain all of $\pi$, then $g$ must be quadratic.

 So suppose that the two clusters of $\vec{a}$ are exactly $\vec{g}$ and $\vec{\pi}$. Since $(g_1, \pi_m)$ is swappable, one of them must be a wandering quadratic of this clustering. If $g_1$ is the wandering quadratic, then $\pi_m$ cannot move further left, so there must no more left for it to go, i.e. $g = g_1$ is quadratic. If $\pi_m$ is the wandering quadratic, then it leaves a one-way left-to-right gate between the two clusters, so $\pi_{m-1}$ cannot enter the left cluster, so there must be no  $\pi_{m-1}$, so the whole of $\pi = \pi_m$ is quadratic.
\end{proof}

\begin{lemma} \label{quadpig}
Proposition~\ref{notskewtwist} holds when one of $\pi$ and $g$ has degree $2$. \end{lemma}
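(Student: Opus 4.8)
The plan is to treat the two elementary cases directly and exhibit the monomial by hand. First observe that if $\deg g=2$, then comparing degrees in $g\circ\pi=\pi^\sigma\circ f$ gives $2\deg\pi=\deg\pi\cdot\deg f$, hence $\deg f=2$ as well; and if moreover $\pi$ is linear, then $g=\pi^\sigma\circ f\circ\pi^{-1}$ is merely skew-conjugate to $f$ and the conclusion holds trivially with $M\circ\pi\circ L=x=P_1$. Similarly, if $\deg\pi=2$ but $\deg f=2$, then $\deg g=2$ too and we are back in the previous situation. So it suffices to handle two cases: $\deg\pi=2$ with $\deg f=\deg g\ge 3$; and $\deg f=\deg g=2$ with $\pi$ nonlinear.

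Case $\deg\pi=2$, $\deg f\ge 3$. Choose linear $L,M$ with $M\circ\pi\circ L=x^2$; absorbing $L,M$ replaces $f,g$ by skew-conjugates, which are still disintegrated and still share no initial (resp.\ terminal) factor with $\pi$ (resp.\ $\pi^\sigma$), so we may assume $\pi=x^2$ and hence $g(x^2)=f(x)^2$. The right-hand side is thus a polynomial in $x^2$, so $f(x)^2$ is even and $f$ is even or odd. An even $f$ of degree $\ge 3$ (hence $\ge 4$) would be $h\circ x^2$ with $\deg h\ge 2$, so $x^2$ would be a common initial compositional factor of $f$ and $\pi$, contradicting a hypothesis; hence $f$ is odd, $f(x)=x\,H(x^2)$ with $H$ nonconstant (else $\deg f=1$), and substituting gives $g(y)=y\,H(y)^2$. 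Thus, up to the linear changes, $f$ has the shape $x\cdot H(x^2)$ and $g$ the shape $y\cdot H(y)^2$, from which the required divisibility by $2=\deg(M\circ\pi\circ L)$ is immediate. To phrase this in terms of a decomposition, run Proposition~\ref{notsktw-translate}: with $m=1$ the word is the single transit $t_{[\ell,1]}$ carrying the quadratic factor $\pi_1$ leftward through $\vec{g}$, and tracking it through the basic Ritt identities shows that every factor of $\vec{g}$ (resp.\ $\vec{f}$) is linearly related either to a monomial $P_q$ with $q$ an odd prime — a quadratic monomial factor is impossible, as it would produce the tautological identity $P_2\circ P_2$ — or to a third-kind Ritt polynomial whose out-degree (resp.\ in-degree) is even; a cleanup as in Section~\ref{clustersec}, which preserves in- and out-degrees by Proposition~\ref{uni-inout}, then yields decompositions of suitable skew-conjugates of $g$ and $f$ with the degrees divisible by $2$ in the sense of Definition~\ref{decomp-inout-def}.

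Case $\deg f=\deg g=2$, $\pi$ nonlinear. The decomposition of a disintegrated quadratic consists of a single factor, which is linearly related to $P_2$ and has in-degree (and out-degree) equal to $2$; so, by Definition~\ref{decomp-inout-def}, the only admissible degree for the monomial $M\circ\pi\circ L$ is $n=1$, and it therefore suffices to prove that $\pi$ is linear. Suppose not, and apply Proposition~\ref{notsktw-translate} with $\ell=1$, so that the word $v$ carries the quadratic $g=g_1$ rightward through $\vec{\pi}$. As in the first case, every Ritt identity it uses puts the quadratic in the role of $P_2$, so each indecomposable factor of $\pi$ is linearly related to an odd-degree monomial $P_q$ or to a third-kind Ritt polynomial with even in-degree; in particular none of these factors is quadratic. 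Matching the extreme factors of this sweep against the hypotheses that $f$ and $\pi$ share no initial factor and that $\pi^\sigma$ and $g$ share no terminal factor — conditions which bite precisely on the end factors, since all quadratics are linearly related to one another — produces the contradiction. Hence $\pi$ is linear and we are in the trivial situation handled at the outset.

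The main obstacle is the second case: squeezing the contradiction out of the chain of Ritt swaps when $f,g$ are quadratic and $\pi$ is nonlinear, i.e.\ showing that no such $\pi$ can simultaneously satisfy the intertwining equation and be free of shared initial/terminal factors. The first case is, by contrast, essentially the one-line manipulation $g(x^2)=f(x)^2\Rightarrow f=x\,H(x^2),\ g=y\,H(y)^2$, followed by the routine passage to decompositions via Proposition~\ref{notsktw-translate} and a cleanup.
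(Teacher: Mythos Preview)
Your Case~1 ($\deg\pi=2$) is pleasant and largely correct at the polynomial level: normalizing $\pi=P_2$ and reading off $f(x)=xH(x^2)$, $g(y)=yH(y)^2$ from $g(x^2)=f(x)^2$ is a clean shortcut the paper does not take. The paper instead argues entirely through clusterings and gates, invoking Proposition~\ref{one-traverse} to force right-to-left gates at every cluster boundary of $\vec{g}$ (including the outer boundaries at $0$ and $k$, where the quadratic must enter as $P_2$ and exit as $P_2$), and then reads off the divisibility from the gate structure. Your passage from the polynomial identity to the \emph{decomposition}-level statement (Definition~\ref{decomp-inout-def}) is sketchier than the paper's: your claim that every factor of $\vec{g}$ is ``a monomial $P_q$ with $q$ odd or a third-kind Ritt polynomial with even out-degree'' omits the type~\textsf{C} possibility (Lemma~\ref{seeswaplem} allows $P_2$ to swap with $C_p$), and the appeal to a cleanup plus Proposition~\ref{uni-inout} does not quite substitute for the gate analysis that pins down the linear factors at the two ends.

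Your Case~2 ($\deg g=2$) has a genuine gap. First, the phrase ``disintegrated quadratic'' is not an oxymoron---$x^2+c$ is typically not skew-conjugate to $P_2$ or $\pm C_2$---so the case is not vacuous and must be argued. Your strategy is to show that $\pi$ is forced to be linear, but the sentence ``matching the extreme factors \dots\ produces the contradiction'' is not an argument, and in fact the conclusion is false without using disintegratedness of $f$ and $g$: take $f=g=P_2$ and $\pi=P_3$, which satisfies $g\circ\pi=\pi\circ f$ with no shared initial or terminal factors and $\pi$ nonlinear. So any correct argument here must invoke disintegratedness, and yours does not. The paper's route is quite different: it tracks the quadratic $g$ rightward through $\vec\pi$ and observes that the in/out-degrees of the non-type-\textsf{C} factors of $\pi$ are irreversibly altered, which (since $\vec\rho$ is another decomposition of $\pi^\sigma$) forces the \textsf{C}-free clusters of $\pi$ to be purely monomial; a gate-matching argument then forces $\pi$ to be a single cluster, hence (up to skew-conjugacy) a monomial or a Chebyshev, and this in turn forces $f$ and $g$ to be monomials or Chebyshevs---contradicting disintegratedness. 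You need either to reproduce that structural argument or to find another place where disintegratedness enters.
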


\begin{proof}
If $\pi$ is quadratic, skew-conjugate $f$ and $g$ to make $\pi = P_2$. Let $\vec{a}$ be a clustering of $\vec{g}$.
Applying Proposition~\ref{one-traverse} to $v_m \star (\vec{g} P_2)$ shows that there must be a right-to-left gate
 between any two clusters of $\vec{a}$.
 In order for $\pi = P_2$ to enter, $\vec{a}$ must also have a right-to-left gate at $0$.
At the far left end, $\vec{a}$ must also have a right-to-left gate at $k$, since after all the Ritt swaps are performed, the now leftmost quadratic factor must exit as $\pi^\sigma = P_2$ with no  
additional linear factors.
It is now routine to verify that having all these gates is sufficient for the conclusion of Proposition~\ref{notskewtwist}.

If $g$ is quadratic, the in/out degrees of the factors of $\pi$ that are not type \textsf{C} are irreversably changed by the traversing quadratic factor, so \textsf{C}-free clusters in $\pi$ must be purely monomial. Similarly, if $\pi$ is not a single cluster, then the gates in $\pi$ (before the quadratic gets across to become $f$) and in $\pi^\sigma$ (after the quadratic has gotten across) do not match up. Thus we can skew-conjugate $f$ and $g$ to make $\pi$ a monomial or a Chebyshev polynomial. It is not routine to verify that this forces $f$ and $g$ to be monomials or Chebyshev polynomials, contradicting disintegratedness.
\end{proof}

\begin{lemma} Proposition~\ref{notskewtwist} holds when $\vec{g}\vec{\pi}$ is a cluster. \end{lemma}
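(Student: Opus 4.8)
The plan is to reduce to the single-cluster case, which is exactly the situation where our clustering machinery can be fully exploited, and then to distinguish the two kinds of cluster.

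First I would handle the \textsf{C} cluster case. If $\vec{g}\vec{\pi}$ is a \textsf{C} cluster, then up to skew-conjugacy the composite $g \circ \pi = \vec{g}\vec{\pi}^{\,\circ}$ is a Chebyshev polynomial $C_N$, and so $\pi$ itself is linearly related to a (possibly decomposable) Chebyshev polynomial, hence so are $f$ and $g$ since $\pi^\sigma \circ f = g \circ \pi$ forces $f$ and $g$ to be initial/terminal compositional factors of Chebyshev polynomials (after the linear changes of variable coming from the cleanup). But $f$ and $g$ are assumed disintegrated, while a polynomial linearly related to a Chebyshev polynomial of degree $\geq 2$ is not disintegrated by Fact~\ref{medpoly}. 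This contradiction disposes of the \textsf{C} case entirely — no such $\pi$ exists, which is consistent with (and a degenerate instance of) the conclusion.

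Then I would treat the \textsf{C}-free cluster case, which is the heart of the matter. Here the cleanup of the one-cluster clustering gives, up to linear equivalence, a decomposition $\vec{g}\vec{\pi}$ whose factors are all Ritt polynomials, none of type \textsf{C}. Recall from Proposition~\ref{notsktw-translate} that $v \star \vec{g}\vec{\pi} = \vec{\rho}\vec{f}$ where $v = t_{[\ell,1]}\cdots t_{[\ell+m-1,m]}$ moves each of the $m$ rightmost factors (the decomposition of $\pi$) all the way to the left past the $\ell$ factors of $g$. Since the whole thing is a single \textsf{C}-free cluster, by Proposition~\ref{swap-inside} (and Lemma~\ref{1mono}) every Ritt swap in $v$ can be witnessed by identity linear factors relative to the cleanup, so we may carry out this computation literally with basic Ritt identities and no floating linears. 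Now I would track what happens to the factors of $\pi$ as they transit leftward. Each time $\pi_j$ (which has become some $g_{?}$-like Ritt polynomial) crosses a non-monomial factor, by Remark~\ref{inoutdegForSwap} and the classification of basic Ritt identities involving \textsf{C}-free Ritt polynomials (the third kind, $P_p \circ (x^k u(x^{p\ell})^n) = (x^k u(x^\ell)^{pn}) \circ P_p$), the transiting factor must itself be a monomial $P_p$, and it forces the factor it crosses to have in-degree (going left) or out-degree divisible by $p$. Iterating: every factor of $\pi$ is linearly related to a monomial, so $\pi$ is linearly related to a monomial $x^n$; and the accumulated divisibility conditions say exactly that $n$ divides the out-degree of $L^\sigma \circ f \circ L^{-1}$ (reading the transits as they pass through $\vec{f}$) and the in-degree of $(M^\sigma)^{-1} \circ g \circ M$ (reading them as they pass through $\vec{g}$), for the linear $L,M$ recovered from the cleanup. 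I would then invoke Proposition~\ref{uni-inout} to know that these in- and out-degrees are well-defined on the skew-linear-equivalence class, so the statement of Proposition~\ref{notskewtwist} is correctly phrased in terms of ``some decomposition''.

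The main obstacle I expect is the bookkeeping in the transit argument: one must verify that as the (now provably monomial) factor $P_p$ passes through both $\vec{\pi}$, then $\vec{f}$ (on the $\rho f$ side) and through $\vec{g}$, then $\vec{\pi}$ (on the $g\pi$ side), the exponents multiply up correctly to give the single monomial $M \circ \pi \circ L(x) = x^n$ with $n = \prod p_j$, and that the divisibility of $n$ into the out-degree of $f$ and the in-degree of $g$ is an honest consequence of the chain of third-kind Ritt identities rather than merely of each individual swap. This requires combining the "only one quadratic may cross a gate" phenomenon (here trivialized since inside a single cluster there are no gates) with the multiplicativity of out-degree and in-degree under composition of factors in a \textsf{C}-free cluster, which is where Definition~\ref{decomp-inout-def} and the computations of Section~\ref{section41} do the real work. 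The remaining manipulations — reconstructing $L$ and $M$ from the cleanup's outer linear factors and checking they conjugate $f$, $g$ into the asserted normal forms — are routine.

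\medskip
\noindent\textbf{Proof.}
By Lemma~\ref{clustercleanlem} fix a cleanup $(\vec{h},\vec{L})$ of the one-cluster clustering of $\vec{g}\vec{\pi}$. If this cluster is a \textsf{C} cluster, then $\vec{g}\vec{\pi}^{\,\circ}$ is linearly related to a Chebyshev polynomial $C_N$ with $N$ not a power of $2$, hence $N\ge 3$ and $C_N$ is non-linear. Then $f$, being an initial compositional factor (after the linear change of variables recorded by $\vec{L}$) of a polynomial linearly related to $C_N$, is itself linearly related to a Chebyshev polynomial of degree $\ge 2$, and likewise for $g$. This contradicts the hypothesis that $f$ and $g$ are disintegrated, by Fact~\ref{medpoly}. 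So the cluster is \textsf{C}-free, and all factors of the cleanup are Ritt polynomials, none of type \textsf{C}.

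By Proposition~\ref{notsktw-translate} there are decompositions $\vec\pi=(\pi_m,\dots,\pi_1)$, $\vec f=(f_\ell,\dots,f_1)$, $\vec g=(g_\ell,\dots,g_1)$, $\vec\rho$ of $\pi^\sigma$, with
$$
v\star\vec g\vec\pi=\vec\rho\vec f,\qquad
v:=v_1v_2\cdots v_m,\quad v_i:=t_{[\ell+1-i,\,i]}.
$$
Since $\vec g\vec\pi$ is a single \textsf{C}-free cluster, Proposition~\ref{swap-inside} (and Lemma~\ref{1mono}) lets us witness every Ritt swap in $v$ by identity linear factors relative to $\vec h$; so we may and do assume all intermediate decompositions have Ritt-polynomial factors and each elementary swap is literally one of the basic Ritt identities. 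Consider the transit $v_m$ first: it carries $\pi_1=h_1$ (the rightmost factor) successively across $h_2,\dots,h_{\ell+m}$. At each step an elementary Ritt swap between two Ritt polynomials neither of which is type \textsf{C} occurs; by Remark~\ref{inoutdegForSwap} the only possibilities are the third basic Ritt identity, so the transiting factor is a monomial $P_{p_1}$ and the factor it crosses, call it $h_{j}$, has in-degree divisible by $p_1$ (it is linearly related to $x^k u(x^{p_1\ell})^n$, and afterwards becomes the Ritt polynomial $x^k u(x^\ell)^{p_1 n}$, whose out-degree is divisible by $p_1$). Thus $\pi_1$ is linearly related to a monomial $P_{p_1}$. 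Running $v_{m-1},\dots,v_1$ in turn and arguing identically, each $\pi_i$ is linearly related to a monomial $P_{p_i}$; hence $\pi$ is linearly related to a monomial, say $M\circ\pi\circ L(x)=x^n$ with $n=\prod_{i=1}^m p_i$, for the linear $L,M$ assembled from the outer factors $L_0,L_k$ of the cleanup together with the linear witnesses of the linear equivalences above.

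It remains to read off the divisibility statements. As the monomial $\pi_i$'s transit rightward through the part of the word acting on (the copy of) $\vec f$, each crossing forces the corresponding factor of $L^\sigma\circ f\circ L^{-1}$ to have out-degree divisible by $p_i$; composing, the out-degree of the decomposition $L^\sigma\circ f\circ L^{-1}$ obtained here is divisible by $\prod p_i=n$. Symmetrically, viewing the same transits on the $\vec g\vec\pi$ side, the factors of $(M^\sigma)^{-1}\circ g\circ M$ are crossed from the right, each forcing in-degree divisible by the relevant $p_i$, so the in-degree of this decomposition of $(M^\sigma)^{-1}\circ g\circ M$ is divisible by $n$. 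By Proposition~\ref{uni-inout} these in- and out-degrees depend only on the skew-linear-equivalence classes, so the conclusion is correctly stated for ``some decomposition'' of each of $(M^\sigma)^{-1}\circ g\circ M$ and $L^\sigma\circ f\circ L^{-1}$. This is precisely the assertion of Proposition~\ref{notskewtwist}.
\qed
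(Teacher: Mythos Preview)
Your argument has a real gap in both cases, and the two gaps share the same root cause: you never use the constraint that the output $\vec{\rho}$ of $v \star \vec{g}\vec{\pi} = \vec{\rho}\vec{f}$ must be a decomposition of $\pi^\sigma$.

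In the \textsf{C} cluster case, you assert that ``a polynomial linearly related to a Chebyshev polynomial of degree $\geq 2$ is not disintegrated by Fact~\ref{medpoly}''. This is false: Fact~\ref{medpoly} concerns skew-conjugacy ($\alpha^\sigma \circ f \circ \alpha^{-1}$), not arbitrary linear relatedness ($L \circ f \circ M$). For instance $C_n + 1 = (+1)\circ C_n$ is linearly related to $C_n$ but is not skew-conjugate to any $\pm C_n$ or $P_n$, hence is disintegrated. The paper's proof does the first reductions so that $g = A \circ C_{\deg g}$, then uses that $\vec{\rho}$ must be a decomposition of the Chebyshev polynomial $\pi^\sigma$; this forces $A \circ C_{\deg\pi}\circ D = C_{\deg\pi}$ for some linear $D$, and Lemma~\ref{ccluster-unique} then gives $A = (\cdot\,\pm 1)$, which is what actually contradicts disintegratedness.

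In the \textsf{C}-free case, from Remark~\ref{inoutdegForSwap} you conclude ``the transiting factor is a monomial $P_{p_1}$''. That remark only says that in a swap between two non-type-\textsf{C} factors \emph{one} of them is a monomial; it could perfectly well be the $g_j$ being crossed rather than the transiting $\pi_i$. A non-monomial $\pi_i$ transiting past a monomial $g_j$ is a perfectly good Ritt swap (and $\pi_i$ remains non-monomial throughout). The paper instead supposes some $\pi_i$ is non-monomial, tracks how its in- and out-degrees change during the transit, and then uses that $\vec{\rho}$ and $\sigma(\vec{\pi})$ are both decompositions of $\pi^\sigma$: the degree changes cannot be undone by Ritt swaps among the factors of $\pi^\sigma$ (whose monomial degrees are coprime to the primes effecting the change), yielding the contradiction. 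Only after $\pi$ is known to be a monomial does the paper read off the divisibility of in- and out-degrees from the identity-witnessed swaps. Your direct approach does not establish that $\pi$ is monomial, so the subsequent divisibility bookkeeping never gets off the ground.
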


\begin{proof}
 We may and do assume that neither $g$ nor $\pi$ are quadratic, since Lemma \ref{quadpig} takes care of those cases.
 The statement of Proposition~\ref{notskewtwist} is invariant under skew-conjugating $f$ and $g$ (and changing $\pi$ accordingly), so we may do so during the proof without loss of generality.

\textit{First reductions:}
Since $\vec{g}\vec{\pi}$ is a single cluster, there are linear $A$ and $B$ and Ritt polynomials $\tilde{g_i}$ and $\tilde{\pi}_i$ such that $\vec{g}\vec{\pi}$ is linearly equivalent to
 $(A \circ \tilde{g}_\ell, \tilde{g}_{\ell -1}, \ldots, \tilde{g}_1, \tilde{\pi}_m, \ldots, \tilde{\pi}_2, \tilde{\pi}_1 \circ B)$. Skew-conjugating $f$ by $B$, we may assume that $B = \id$.

Now there is a linear $C$ such that $( C^{-1} \circ \tilde{\pi}_m, \ldots, \tilde{\pi}_2, \tilde{\pi}_1 )$ is linearly equivalent to $\vec{\pi}$ and $(A \circ \tilde{g}_\ell, \tilde{g}_{\ell -1}, \ldots, \tilde{g}_1 \circ C)$ is linearly equivalent to $\vec{g}$. Skew-conjugating $g$ by $C$, that is replacing $\pi$ by $C \circ \pi$ and replacing $g$ by
$ C^\sigma \circ g \circ C^{-1}$, and replacing $A$ by $C^\sigma \circ A$, we may assume that $C = \id$.

Thus, replacing $\vec{\pi}$ and $\vec{g}$ by linearly equivalent decompositions, we may assume that all $\pi_i$ are Ritt polynomials $\tilde{\pi}$, that $g_i$ for $i \neq \ell$ are Ritt polynomials $\tilde{g_i}$, and that $g_\ell =
A \circ \tilde{g}_\ell$ for the linear $A$ and Ritt polynomial $\tilde{g_\ell}$.

\textit{Case 1:} If this is a \textsf{C} cluster, then all $\tilde{g}_i$ and $\pi_i$ are Chebyshev polynomials, so $\pi = \pi^\sigma$ is a Chebyshev polynomial and $g = A \circ C_{\deg(g)}$. Now
$v \star \vec{g}\vec{\pi} = (A \circ \pi_m, \pi_{m-1}, \ldots, \pi_1, g_{l}, \ldots, g_2, g_1)$ is linearly equivalent to $\vec{\rho}\vec{f}$ for some decompositions $\vec{f}$ of $f$ and some decomposition $\rho$ of the Chebyshev polynomial $\pi^\sigma$ of degree greater than $2$. In particular, there is a linear $D$ such that $A \circ C_{\deg(\pi)} \circ D = C_{\deg(\pi)}$, so by Lemma~\ref{ccluster-unique} $A = (\cdot \pm 1)$, contradicting the hypothesis that $g$ is disintegrated.

\textit{Case 2:} If $\vec{g}\vec{\pi}$ is a \textsf{C}-free cluster, at least one of $g$ and $\pi$ must be linearly related to a monomial.

Suppose towards contradiction that $\pi$ is not monomial, so at least one factor $\pi_i$ is not monomial, nor type \textsf{C}. In $v \star \vec{g}\vec{\pi} = \vec{\rho} \vec{f}$, each such factor $\pi_i(x) = x^{k_i} \cdot u(x^{\ell_i})^{n_i}$ becomes, in the sense of Remark~\ref{costumes}, $\tilde{\rho}_i$ with new in-degree $\ell_i \cdot \deg(f)$ and new out-degree $\frac{n_i}{\deg(f)}$. For some linear $D$, the decomposition $\vec{\rho}$ of $\pi^\sigma$ is linearly equivalent to $(\tilde{\rho}_m, \ldots \tilde{\rho}_2, \tilde{\rho}_1 \circ D)$. (Recall that the action $\star$ is only defined up to linear equivalence.) Since $\sigma(\vec{\pi})$ is another decompositions of $\pi^\sigma$, it must be possible to obtain $\sigma(\vec{\pi})$ from $(\tilde{\rho}_m, \ldots \tilde{\rho}_2, \tilde{\rho}_1 \circ D)$ by a sequence of Ritt swaps. Recall that in order for $v \star \vec{g}\vec{\pi}$ to be defined, the degrees of the monomial factors of $\pi$ must be relatively prime to $\deg(f)$, so Ritt swaps within $\vec{\rho}$ cannot undo the changes to in- and out-degrees. None of $\rho_i$ are type \textsf{C}, and all have non-trivial in-degree, so inserting linear factors also cannot undo those changes. This is a contradiction.

Thus, all $\pi_i$ and the whole $\pi$ are monomials, so $v \star \vec{g}\vec{\pi} = (A \circ \pi_m, \pi_{m-1}, \ldots, \pi_1, f_{l}, \ldots, f_2, f_1)$. Thus, there is a linear $D$ such that $A \circ P_{\deg(\pi)} \circ D = P_{\deg(\pi)}$, so $A$ must be a scaling. As we are working over a difference-closed field, we can get rid of $A$ by skew-conjugating $g$ by an appropriate scaling. All Ritt swaps within a cluster can be witnessed by identity linear factors, so $f_i$ have the requisite in- and out-degrees.
\end{proof}

Having finished the proof of Proposition~\ref{notskewtwist}, we note two  
consequences of it.

\begin{corollary} \label{breaknotsktw}
If $f$, $g$, and $\pi$ are as in Proposition~\ref{notskewtwist}, then there are indecomposable $\pi_i$ for $i \leq m$,
and polynomials $f = f_0, f_1, \ldots, f_m = g$ such that $\vec{\pi}^\circ = \pi$, and for each $i$, $f_{i-1}$, $\pi_i$, and $f_i$ are also as in Proposition~\ref{notskewtwist}.\end{corollary}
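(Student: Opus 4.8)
The plan is to deduce Corollary~\ref{breaknotsktw} by ``unwinding'' the monoid word exhibited in Proposition~\ref{notsktw-translate}. Recall that by that proposition, after choosing appropriate decompositions $\vec{\pi} = (\pi_m,\ldots,\pi_1)$ of $\pi$, $\vec{g}$ of $g$, $\vec{f}$ of $f$, and $\vec\rho$ of $\pi^\sigma$, the transit word
$$
v = v_1 v_2 \cdots v_m, \qquad v_i := t_{[\ell+1-i,\,i]},
$$
satisfies $v \star \vec{g}\vec{\pi} = \vec\rho\vec{f}$, where each $v_i$ is the transit that pulls the single indecomposable factor $\pi_i$ all the way to the left across the $\ell$ factors currently playing the role of $g$. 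The idea is to stop this process after each $v_i$: set $\vec{h}^{\,0} := \vec{g}\vec{\pi}$ and $\vec{h}^{\,i} := (v_i v_{i+1} \cdots v_m) \star \vec{g}\vec{\pi}$ for $1 \le i \le m$, so $\vec{h}^{\,m} = \vec{g}$ with no $\pi$-factors left, while $\vec{h}^{\,0}$ has all of them. (Order matters: it is $v_m$ that moves $\pi_m$, the \emph{leftmost} $\pi$-factor in $\vec{g}\vec{\pi}$; but in fact in the word $v$ as written the rightmost transit $v_m$ acts first, so one should index the partial products so that the innermost factor processed corresponds to the terminal factor of $\pi$ — I will spell this out so that $\vec h^{\,i}$ is a decomposition of $g\circ (\pi_m\circ\cdots\circ\pi_{i+1})$.)

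First I would argue that each $\vec{h}^{\,i}$ is a decomposition of $g \circ (\pi_m \circ \cdots \circ \pi_{i+1})$, which amounts to observing that a transit preserves the composite polynomial and merely permutes/adjusts factors (Remark~\ref{costumes}), together with the fact that $v_i$ only moves the single factor in position $i$ (in the sense of Remark~\ref{costumes}, it ``becomes'' some indecomposable in the leftmost position). Concretely, write $\vec{h}^{\,i} = (\rho_m, \ldots, \rho_{i+1}, f^{(i)}_\ell, \ldots, f^{(i)}_1)$ where the first $m-i$ factors form a decomposition of $\pi_m^\sigma\circ\cdots\circ\pi_{i+1}^\sigma$ (the already-twisted factors that have crossed over) and the last $\ell$ factors form a decomposition of a polynomial $f_i$ with $\pi_i^{\sigma}\circ\cdots\circ\pi_{i+1}^{\sigma}\circ f_i = g\circ\pi_m\circ\cdots\circ\pi_{i+1}$; set $f_0 := f$ (from $\vec{h}^{\,0}=\vec g\vec\pi$ after the trivial identification) and $f_m := g$. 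The compositional identity $f_{i-1}\circ\pi_i = \pi_i^\sigma\circ f_i$ is exactly the content of the single transit $v_i$ moving $\pi_i$ past the block of $\ell$ factors (this is the same ``plain skew-twist'' mechanism as in Definition~\ref{single-twist-def}, but with possibly non-initial factors — I need to check that the transit really does realize a single-skew-twist-type identity on the composites, which follows because at stage $i$ the factor in position $i$ is indecomposable and $f_i,\pi_i$ share no initial factor since that is inherited from $f,\pi$).

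Next I would verify the hypotheses of Proposition~\ref{notskewtwist} for each triple $(f_{i-1},\pi_i,f_i)$: namely that $f_{i-1}$ and $\pi_i$ share no initial compositional factor, and that $\pi_i^\sigma$ and $f_i$ share no terminal compositional factor. The ``no shared initial factor'' claim is immediate because if $f_{i-1}$ and $\pi_i$ shared an initial factor, then, composing with $\pi_{i-1}\circ\cdots\circ\pi_1$ on the right, $f = f_0$ and $\pi$ would share an initial factor (using that $f_{i-1} = \pi_{i-1}^\sigma\circ\cdots\circ\pi_1^\sigma\circ f$ and unwinding — more carefully, one tracks that the transit word $v$ being \emph{defined} and of exactly the form $v_1\cdots v_m$, with all indices maximal, is precisely what was forced in the proof of Proposition~\ref{notsktw-translate} by the ``no shared factor'' hypotheses, so the same forcing applies at every intermediate stage). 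Dually for the terminal-factor condition on $\pi_i^\sigma$ and $f_i$. Finally, $f_{i-1}$ and $f_i$ are disintegrated: $f_0 = f$ is disintegrated by hypothesis, and each $f_i$ is skew-conjugate to $f$ in the sense of the $\sigma$-variety structure (it is a plain skew-twist of $f$, obtained by moving initial factors around and twisting by $\sigma$), hence $(\AA^1, f_i)$ is isomorphic as a $\sigma$-variety to $(\AA^1, f^{\lozenge \text{something}})$ or rather is skew-conjugate to $f$, and disintegratedness is preserved under skew-conjugacy (and under the $\lozenge$-iteration reindexing, by Definition~\ref{deftrivial}). I would state this using Lemma~\ref{skewcruciallemma}/Corollary~\ref{coruniqueencode} to see that the intermediate $f_i$ are well-defined up to skew-conjugacy.

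The main obstacle I expect is the bookkeeping in the previous paragraph: making precise that the transit word produced by Proposition~\ref{notsktw-translate} decomposes as $v_1\cdots v_m$ with each $v_i$ a \emph{single} maximal transit, and that therefore each intermediate stage again falls under the ``no shared factors'' regime — i.e. that one cannot have a shared initial factor appear spuriously at an intermediate stage even though none is present at the ends. This is really a statement that the hypotheses of Proposition~\ref{notskewtwist} propagate along the factorization of the word, and it should follow from Corollary~\ref{nearaction} (uniqueness of the action given the permutation) together with the explicit form of $v$, but getting the indices and the direction of the transits consistent with the $\sigma$-twisting convention requires care. Everything else — that composites are preserved, that $f_i$ is disintegrated, that the compositional identities hold — is routine once the indexing is fixed. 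I would therefore organize the write-up as: (1) fix the decompositions and the word $v = v_1\cdots v_m$ from Proposition~\ref{notsktw-translate}; (2) define $\vec{h}^{\,i}$ and extract $\pi_i$, $f_i$ from it; (3) check $f_{i-1}\circ\pi_i = \pi_i^\sigma\circ f_i$; (4) check the no-shared-factor conditions at each stage by pushing forward/pulling back the shared factor to stage $0$; (5) conclude disintegratedness of each $f_i$ by skew-conjugacy to $f$.
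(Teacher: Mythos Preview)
Your overall architecture --- extracting the intermediate $f_i$ from partial products of the transit word $v = v_1 \cdots v_m$ of Proposition~\ref{notsktw-translate} --- is workable and will produce the correct $f_i$, but it is considerably heavier than what the paper intends. The paper gives no proof because the corollary is meant to be read off directly from the \emph{conclusion} of Proposition~\ref{notskewtwist}: after absorbing the linear factors, $\pi = P_n$ is a monomial; factor $n = p_1 \cdots p_m$ into primes and set $\pi_i := P_{p_i}$; the intermediate $f_i$ are produced by applying the basic Ritt identity $P_p \circ (x^k u(x^{p\ell})^n) = (x^k u(x^\ell)^{pn}) \circ P_p$ factor by factor to the decomposition with the prescribed in/out-degree. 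The ``no shared factor'' conditions are then automatic, since by Definition~\ref{decomp-inout-def} a decomposition with nontrivial in- or out-degree divisible by $p_i$ cannot have $P_{p_i}$ among its factors. Your transit-word computation amounts to rediscovering these same $f_i$ from inside the proof of Proposition~\ref{notskewtwist} rather than using its output.

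There is, however, a genuine error in step~(5). You assert that each $f_i$ is ``a plain skew-twist of $f$, obtained by moving initial factors around and twisting by $\sigma$'' and hence skew-conjugate to $f$. This is false: the transits $v_i$ are words in the $t_j$, not in $\phi,\beta$; they do not change the ambient polynomial $g\circ\pi$, and the middle block you extract is \emph{not} a skew-twist of $f$. Concretely, with $\sigma=\id$, $f(x)=x(x^2+1)$, $\pi = P_2$, one gets $g(x) = x(x+1)^2$; these are not linearly conjugate (they have different critical-value configurations), so $f_1 = g$ is not skew-conjugate to $f_0 = f$. Your argument for disintegratedness of the $f_i$ therefore collapses. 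The correct justification is that $(\AA^1,f_i)\not\perp(\AA^1,f)$ via the monomial $P_{p_1\cdots p_i}$, and a $\sigma$-variety nonorthogonal to a disintegrated one is itself disintegrated (by Fact~\ref{medpoly} and Theorem~\ref{coarsestructurethm}, the group-type $\sigma$-varieties are orthogonal to the disintegrated ones).
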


\begin{corollary} \label{notsktw-degbound}
If $f$, $g$, and $\pi$ are as in Proposition~\ref{notskewtwist}, then the degree of $\pi$ is bounded by the degree of any indecomposable factor of $f$, so a fortiori bounded by the degree of $f$. \end{corollary}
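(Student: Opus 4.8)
\textbf{Proof proposal for Corollary~\ref{notsktw-degbound}.}

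The plan is to deduce the degree bound from the detailed conclusion of Proposition~\ref{notskewtwist} together with the sharpened information recorded in Corollary~\ref{breaknotsktw} and in Corollary~\ref{notsktw-degbound}'s hypotheses being inherited along the factorization of $\pi$. First I would apply Corollary~\ref{breaknotsktw} to break $\pi$ into a composition of indecomposable factors $\pi = \pi_m \circ \cdots \circ \pi_1$ with accompanying polynomials $f = f_0, f_1, \ldots, f_m = g$, so that at each stage the triple $(f_{i-1},\pi_i,f_i)$ satisfies the hypotheses of Proposition~\ref{notskewtwist}. It is enough to bound $\deg(\pi_i)$ by the degree of some indecomposable factor of $f_{i-1}$, because then, multiplying all these bounds, $\deg(\pi) = \prod_i \deg(\pi_i) \leq \prod_i \deg(f_{i-1}) = \deg(f)$; and more to the point we already get the stated bound that $\deg(\pi)$ divides (hence is at most) $\deg$ of an indecomposable factor of $f$ directly from the single-step analysis, as I explain below. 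So the real content is the case $m=1$, i.e.\ $\pi$ itself indecomposable.

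Next, for $\pi$ indecomposable I would invoke Proposition~\ref{notskewtwist}: after composing with linear $L$ and $M$, $M\circ\pi\circ L$ is a monomial, say $x^n$ with $n = \deg(\pi)$, and $n$ divides the out-degree of some decomposition $\vec{f}_0$ of $L^\sigma\circ f_{0}\circ L^{-1}$. By Definition~\ref{decomp-inout-def}, ``out-degree of $\vec{f}_0$ divisible by $n$'' means that \emph{every} non-monomial factor of the decomposition $\vec{f}_0$ is monic with out-degree divisible by $n$, and that no factor is linearly related to $P_p$ for any prime $p \mid n$. Pick any indecomposable factor $h$ of $\vec{f}_0$. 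If $h$ is not a monomial, then $n$ divides its out-degree, which divides $\deg(h)$ (the out-degree is by definition a gcd of orders of vanishing, hence at most $\deg h$; more precisely it divides $\deg h$ up to the order of vanishing at $0$, which for a Ritt polynomial $x^k\cdot u(x^\ell)^n$ with $\gcd(k,n)=1$ still forces $n\mid\deg h$). If every factor of $\vec{f}_0$ were a monomial then $f_0$ would be a monomial up to linear change, contradicting disintegratedness of $f$; so there is a non-monomial factor, and for it $n = \deg\pi$ divides $\deg h$. Since degree is invariant under linear conjugation, $\deg h$ equals the degree of the corresponding indecomposable factor of $f_0 = f$, so $\deg\pi \mid \deg(\text{that factor}) \leq \deg f$.

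For the general $m$ I would just iterate: at step $i$, Proposition~\ref{notskewtwist} gives that $\deg\pi_i$ divides the out-degree of some decomposition of (a linear conjugate of) $f_{i-1}$, hence divides the degree of some indecomposable factor of $f_{i-1}$; but an indecomposable factor of $f_{i-1}$ has degree dividing $\deg f_{i-1} \le \deg f$ (all the $f_{i-1}$ have the same degree as $f$ since they are skew-twist-type rearrangements — more carefully, $\deg f_i = \deg f$ because $f_i$ is obtained from $f_{i-1}$ by moving one indecomposable compositional factor from one end to the other, which does not change the total degree). Thus $\deg\pi = \prod_i \deg\pi_i \le \prod_i \deg f = (\deg f)^m$ is too weak; instead I use that $\deg\pi_i$ divides the degree of \emph{an indecomposable factor of $f$ itself}, and multiply the divisibilities: $\deg\pi_m\cdots\deg\pi_1$ divides a product of degrees of indecomposable factors of $f$, which is exactly $\deg f$. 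Hence $\deg\pi \le \deg f$, and in fact $\deg\pi$ divides the degree of a single indecomposable factor of $f$ in the case $m=1$ as claimed.

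The main obstacle I anticipate is the bookkeeping needed to see that out-degree divisibility of a decomposition really does force a non-monomial factor whose degree is a multiple of $n$ — one must rule out the degenerate possibility that all factors are monomials (handled by disintegratedness) and check that ``out-degree divisible by $n$'' in the sense of Definition~\ref{decomp-inout-def} does give $n\mid\deg h$ for a non-monomial Ritt factor $h = x^k u(x^\ell)^n$, using $\gcd(k,n)=1$. Everything else is a direct unwinding of Proposition~\ref{notskewtwist} and Corollary~\ref{breaknotsktw}.
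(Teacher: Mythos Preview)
Your detour through Corollary~\ref{breaknotsktw} is unnecessary: Proposition~\ref{notskewtwist} already applies to $\pi$ of arbitrary degree, not only indecomposable $\pi$, so the corollary follows from a single application. Your ``$m=1$'' paragraph is essentially that application and is almost right, with one slip: for a non-monomial Ritt factor $h = x^{k} u(x^{\ell})^{m}$ with $\gcd(k,m)=1$, out-degree divisibility gives $\deg(\pi) \mid m$, hence $\deg(\pi) \le m < k + \ell m \deg(u) = \deg(h)$; it does \emph{not} give $\deg(\pi) \mid \deg(h)$ (indeed $\gcd(k,m)=1$ and $\deg(\pi)\mid m$ force $\deg(\pi) \nmid k$, so $\deg(\pi) \nmid \deg(h)$). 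Only the inequality is claimed or needed. The paper states the corollary without proof because it is meant to be exactly this one-line observation from Proposition~\ref{notskewtwist} and Definition~\ref{decomp-inout-def}.

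Your attempt to handle general $m$ by multiplying bounds is both unneeded and broken: a bound in terms of an indecomposable factor of $f_{i-1}$ for $i>1$ says nothing about the factors of $f = f_0$, and the product of such bounds does not recover a bound by a single factor of $f$. Finally, a word of caution about the literal ``any indecomposable factor'': the argument above bounds $\deg(\pi)$ by the degree of any \emph{non-monomial} factor of the decomposition; for a monomial factor $P_q$ one only gets $q \nmid \deg(\pi)$, which need not imply $q \ge \deg(\pi)$ (e.g.\ take $g = P_3 \circ x(x+1)^8$, $\pi = P_4$, $f = P_3 \circ x(x^4+1)^2$). Since $f$ is disintegrated, some factor is non-monomial, and the a fortiori bound by $\deg(f)$---which is all that is used later---follows.
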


\section{Classification of skew-invariant curves from clusterings}
\label{puttingtogether}

In this last technical Section~\ref{puttingtogether}, we bring together clusterings, the action by the skew-twist monoid $\skewmonoid_k$ on skew-linear-equivalence classes of decompositions, and the characterization in Proposition~\ref{notskewtwist} of indecomposable curves that do not come from skew-twists in order to finally state and prove our classification of irreducible plane curves which are $(f,g)$-skew-invariant for a given pair $(f,g)$ of disintegrated polynomials.

In order to describe how correspondences not coming from skew-twists interact with skew-twists, we bring them into our monoid-action formalism via new monoid generators. To characterize correspondences coming from skew-twists, we describe the interaction between clusterings and skew-twists. Finally, we put it all together to write out the final characterization, and then state a few special cases and more readable weakenings.

\subsection{Augmented skew-twist monoid} \label{more-gen-sec}
By Corollary~\ref{breaknotsktw}, skew-invariant correspondences arising from Proposition~\ref{notskewtwist} (rather than from skew-twists) can also be broken down into indecomposable factors which are graphs of monomials of prime degree $p$. For each prime $p$, the graph $P_p$ and its converse relation will be encoded by the new generators $\delta_p$ and $\epsilon_p$, respectively. Most of the time, the action of these new generators will be undefined, since Proposition~\ref{notskewtwist} forces the decomposition to have a non-trivial in- or out-degree, which is rare. Proposition~\ref{uni-inout} essentially shows that the action is well-defined on skew-linear equivalence classes.

We now do for correspondences coming from Proposition~\ref{notskewtwist} what we did for skew-twists in Section~\ref{sktwsec}: we define a new monoid with more generators, its action on decompositions, witnessing sequences, encoded correspondences, its action on skew-linear equivalence classes of decompositions, a notion of equivalence for words in the new monoid. As for skew-twists, we then show that the correspondence encoded is essentially well-defined, and prove enough equivalences for words to get enough control on degrees to characterize invariant curves.

\begin{Def}
The free monoid
 generated by the generators of $\skewmonoid_k$
 together with countably many new symbols $\epsilon_p$ and
 $\delta_p$ as $p$ ranges through the primes  is denoted by $\skewmonoid^+_k$.

If $\vec{f}$ has non-trivial in-degree divisible by $p$, then $\epsilon_p \star \vec{f} := \vec{g}$ where $g_i := f_i$ whenever $f_i$ is a monomial, and for all other $i$ there are monic non-constant polynomials $u_i$ and integers $k_i \geq 1$ such that $f_i(x) = x^{k_i} \cdot u_i(x^{p \ell_i})^{n_i}$ and $g_i := x^{k_i} \cdot u_i(x^{\ell_i})^{p n_i}$.

To undo what $\epsilon_p$ does, we define $\delta_p \star \vec{g} := \vec{f}$ as above when $\vec{g}$ has non-trivial out-degree divisible by $p$.
\end{Def}

\begin{lemma} \label{delep-welldef}
 Suppose that $\vec{f}$ and $\vec{h}$ are skew-linearly equivalent decompositions of disintegrated polynomials, both have non-trivial in-degree divisible by $p$, and $\vec{g} = \epsilon_p \star \vec{f}$ and $\vec{ \tilde{g}} = \epsilon \star \vec{h}$.
 Then there is some $\lambda$ such that $h_i = \lambda^{m_i} \ast f_i$, where $m_i := \deg( \vec{f}_{(i, 1]}^\circ )$, and $\tilde{g}_i = \mu^{m_i} \ast g_i$ for $\mu = \lambda^p$.

 The corresponding result holds for $\delta$ in place of $\epsilon$.
\end{lemma}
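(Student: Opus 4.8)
\textbf{Proof plan for Lemma~\ref{delep-welldef}.}

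The plan is to reduce the statement to Proposition~\ref{uni-inout} together with a direct computation showing how the operation $\epsilon_p$ interacts with scalings. First I would observe that $\vec{f}$ and $\vec{h}$ are skew-linearly equivalent decompositions of disintegrated polynomials, each of non-trivial in-degree divisible by $p$; in particular, by Definition~\ref{decomp-inout-def}, all factors $f_i$ and $h_i$ are Ritt polynomials, and no factor is linearly related to $P_{p'}$ for any $p' \mid p$, so in fact no factor is linearly related to $P_p$. Proposition~\ref{uni-inout} then applies verbatim and yields a scalar $\lambda$ with $h_i = \lambda^{m_i} \ast f_i$ for all $i$, where $m_i = \deg(\vec{f}_{(i,1]}^\circ)$. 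This is exactly the first assertion of the lemma, so the only remaining work is to track what $\epsilon_p$ does to these scalings, i.e.\ to prove $\tilde{g}_i = \mu^{m_i} \ast g_i$ with $\mu = \lambda^p$.

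The key computation is the following: if $f_i(x) = x^{k_i}\cdot u_i(x^{p\ell_i})^{n_i}$ is a non-monomial factor and $g_i(x) = x^{k_i}\cdot u_i(x^{\ell_i})^{pn_i}$ is its $\epsilon_p$-image, I want to show that $\lambda^{m_i}\ast f_i$ has $\epsilon_p$-image $(\lambda^p)^{m_i}\ast g_i$. By Remark~\ref{astRitt}, $\nu\ast f_i = x^{k_i}\cdot\big(\nu^{p\ell_i}\ast u_i\big)(x^{p\ell_i})^{n_i}$ for any scalar $\nu$; and $u_i$ has nonzero constant term, so $\nu^{p\ell_i}\ast u_i$ is again a monic polynomial with nonzero constant term and the same degree. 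Applying $\epsilon_p$ (which, by its definition, simply replaces the inner $x^{p\ell_i}$ by $x^{\ell_i}$ and raises the exponent $n_i$ to $pn_i$, leaving $u_i$ and $k_i$ alone) to $\nu\ast f_i$ yields $x^{k_i}\cdot\big(\nu^{p\ell_i}\ast u_i\big)(x^{\ell_i})^{pn_i}$. On the other hand, $\nu^p\ast g_i = x^{k_i}\cdot\big((\nu^p)^{\ell_i}\ast u_i\big)(x^{\ell_i})^{pn_i} = x^{k_i}\cdot\big(\nu^{p\ell_i}\ast u_i\big)(x^{\ell_i})^{pn_i}$, again by Remark~\ref{astRitt}. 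These agree, so $\epsilon_p(\nu\ast f_i) = \nu^p\ast g_i$; taking $\nu = \lambda^{m_i}$ gives $\tilde{g}_i = \epsilon_p(h_i) = \epsilon_p(\lambda^{m_i}\ast f_i) = (\lambda^{m_i})^p\ast g_i = (\lambda^p)^{m_i}\ast g_i = \mu^{m_i}\ast g_i$, as required. For the monomial factors, $f_i = g_i$ and $\epsilon_p$ fixes them, while $\nu\ast P_d = P_d$ by Remark~\ref{astRitt}, so the relation $\tilde{g}_i = \mu^{m_i}\ast g_i$ holds trivially there too. Finally, since $\mu = \lambda^p$ is again a scaling and the $\ast$-action of a scaling on a Ritt polynomial preserves in- and out-degrees (Remark~\ref{scalrel-inout}), the in- and out-degrees of $\vec{g}$ and $\vec{\tilde{g}}$ coincide.

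For the $\delta_p$ statement, I would note that $\delta_p$ is defined to be the inverse operation to $\epsilon_p$: if $\vec{g}$ has non-trivial out-degree divisible by $p$ and $\delta_p\star\vec{g} = \vec{f}$, then $\epsilon_p\star\vec{f} = \vec{g}$. So the result for $\delta_p$ follows by applying the $\epsilon_p$ case to $\vec{f} := \delta_p\star\vec{g}$ and $\vec{h}' := \delta_p\star\vec{\tilde{g}}$ — once one checks that skew-linear equivalence of $\vec{g}$ and $\vec{\tilde{g}}$ (via scalings, after separating out translations as in the proof of Proposition~\ref{uni-inout}) passes to their $\delta_p$-preimages, which is the same bookkeeping as above run backwards. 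I expect the main obstacle to be purely notational: keeping straight the three families of exponents ($k_i$, the parities/multiplicities $\ell_i, n_i$, and the cumulative degrees $m_i$) and making sure the scalar bookkeeping $\nu \mapsto \nu^p$ and $\nu \mapsto \nu^{p\ell_i}$ is applied consistently; the mathematical content is entirely contained in Remark~\ref{astRitt} and Proposition~\ref{uni-inout}.
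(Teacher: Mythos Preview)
Your proposal is correct and follows exactly the paper's approach: the paper's proof is simply ``The first conclusion is Proposition~\ref{uni-inout}, and the second follows immediately,'' and you have spelled out that immediate computation via Remark~\ref{astRitt}. One small comment: for the $\delta_p$ case it is cleaner to repeat the direct computation (showing $\delta_p(\nu\ast g_i)=\mu\ast f_i$ for any $\mu$ with $\mu^p=\nu$) rather than invoke the $\epsilon_p$ case on preimages, since the latter route requires you to first know that the $\delta_p$-preimages are skew-linearly equivalent, which is part of what you are proving.
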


\begin{proof}
The first conclusion is Proposition~\ref{uni-inout}, and the second follows immediately. \end{proof}

\begin{Def} \label{curv-enc-plus}
Suppose that $\vec{f}$ is a decomposition of a polynomial $f$ and $w := w_n \ldots w_2 w_1 \in \skewmonoid^+_k$ where each $w_j$ is a single generator: a Ritt swap $t_i$, a single skew-twist $\phi$ or $\beta$, or one of the new generators $\epsilon_{p_j}$ or $\delta_{p_j}$ for some prime $p_j$.

A sequence of decompositions $\vec{f} = \vec{f}^0, \vec{f}^1, \ldots, \vec{f}^n$ is \emph{a witnessing sequence for $w \star \vec{f}$} if for each $j$,\begin{itemize}
 \item if $w_j$ is $t_i$, $\phi$, or $\beta$, see Definition~\ref{correncdef};
 \item if $w_j = \epsilon_p$, then $\vec{f}^j$ is skew-linearly equivalent to some $\vec{h}$ which has non-trivial in-degree divisible by $p_j$, and $\vec{f}^{j+1} = \epsilon_p \star \vec{h}$;
 \item is $w_j = \delta_p$, switch the roles of $\vec{f}^j$ and $\vec{f}^{j+1}$ above.
\end{itemize}

If such a sequence exists, we write $w \star [[\vec{f}]] = [[\vec{f}^n]]$; otherwise, $w \star [[\vec{f}]] = \infty$.\\
\vspace{.1cm}

\emph{The correspondence $\kriva$ encoded by this witnessing sequence} is again the composite of curves $\krivb_j$; for $1 \leq j \leq n$ \begin{itemize}
\item if $w_j$ is $t_i$, $\phi$, or $\beta$, the curve $\krivb_j$ is exactly as in Definition~\ref{correncdef};
\item if $w_j = \epsilon_p$, $C_j$ is the graph of $P_p \circ T_j$, where $T_j$ is the outside linear factor witnessing that $\epsilon_p \star [[\vec{f}^j]]$ is defined;
\item if $w_j = \delta_p$, $C_j$ is the converse of this graph. \end{itemize}

We also say that $\kriva$ is \emph{a correspondence encoded by $w \star \vec{f}$}.
\end{Def}

\begin{Rk}  \label{256257forplus}
Note that it may well be that $\epsilon \star [[\vec{f}]]$ is defined but $\epsilon \star \vec{f}$ is not.

By Lemma~\ref{delep-welldef}, the witnessing sequence is well-defined up to skew-linear equivalence, so $w \star [[\vec{f}]]$ is well-defined, and gives an action of $\skewmonoid^+_k$ on skew-linear equivalence classes of decompositions.

Further, by Lemma~\ref{delep-welldef} and the fact that $\lambda \ast P_p = P_p$, the curve encoded by $\epsilon \star [[\vec{f}]] = [[\vec{g}]]$ does not depend on the choice of the witnessing $\vec{h}$, up to a terminal linear factor exactly as Lemma~\ref{newhapppycorrs}. The same holds for $\delta$, so Lemma~\ref{newhapppycorrs} holds for words in $\skewmonoid^+_k$, along with Remark~\ref{concat-compose} that says that concatenation of words corresponds to composition of encoded correspondences.
\end{Rk}

We define equivalence for words in $\skewmonoid^+_k$ exactly as in Definition~\ref{equicorrdef} for words in $\skewmonoid_k$.

\begin{Def}
Given $v, w \in \skewmonoid^+_k$  and a decomposition $\vec{f} = (f_k,\ldots,f_1)$.
We say that $v$ and $w$ are \emph{equivalent with respect to $\vec{f}$} and write $v \approx_{\vec{f}} w$ if
$v \star [[\vec{f}]] = w \star [[\vec{f}]]$ and there are  witnessing sequences
$(\vec{g}^j)$ and $(\vec{h}^j)$ for $v \star \vec{f}$ and $w \star \vec{f}$, respectively so
that the final $\vec{g}^n$ and $\vec{h}^n$ are decompositions of the same polynomial $g$, and
$(\kriva_v)_\per =(\kriva_{w})_\per$ for the curves $\kriva_v$ and $\kriva_w$ encoded by $v$ (respectively, $w$) via  $(\vec{g}^j)$
 (respectively, $(\vec{h}^j)$).

 When $v \approx_{\vec{f}} w$ for all $\vec{f}$, we write $v \approx w$ and say that the two words are \emph{equivalent}.
\end{Def}

\begin{Rk} \label{plus-concat-compose}
It is  clear that this equivalence again respects concatenation: if $u \approx u'$ and $v \approx v'$, then $uv \approx u'v'$.\end{Rk}

\begin{lemma} \label{new-gen-equivs}
\begin{itemize}
\item For any of the old generators $x = t_i, \phi, \psi$ and any prime $p$, $\epsilon_p x \approx x \epsilon_p$ and $\delta_p x \approx x \delta_p$.
\item For any $p \neq q$ and any $x,y \in \{ \epsilon, \delta\}$, $x_p y_q \approx y_q x_p$.
\item $\delta_p \epsilon_p \approx \id$.
\end{itemize}
\end{lemma}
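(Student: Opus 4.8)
\textbf{Proof plan for Lemma~\ref{new-gen-equivs}.}

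The plan is to prove each of the three equivalences directly from the definitions of the action $\star$ on skew-linear-equivalence classes (Definition~\ref{curv-enc-plus}) and of $\approx$, using in each case that the relevant generators act on \emph{disjoint data} of a decomposition and hence both commute at the level of decompositions and encode curves whose composites have the same skew-invariant part. Throughout we exploit Lemma~\ref{delep-welldef} (well-definedness of $\epsilon_p$ and $\delta_p$ up to skew-linear equivalence) and the fact $\lambda \ast P_p = P_p$ from Remark~\ref{astRitt}, which says the linear witness $T_j$ appearing in the encoded curve for $\epsilon_p$ (resp. $\delta_p$) can be absorbed into a terminal linear factor without changing the curve up to the slack allowed by $\approx$ (this is the content of Remark~\ref{256257forplus}).

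First I would dispose of the third bullet, $\delta_p \epsilon_p \approx \id$. Here one unwinds the definitions: if $\epsilon_p \star \vec f$ is defined then $\vec f$ (up to skew-linear equivalence) has in-degree divisible by $p$ and $\epsilon_p$ replaces each non-monomial factor $x^{k_i} u_i(x^{p\ell_i})^{n_i}$ by $x^{k_i} u_i(x^{\ell_i})^{p n_i}$, producing a decomposition of out-degree divisible by $p$; then $\delta_p$ is defined on that and inverts the operation factor by factor, so $\delta_p \epsilon_p \star \vec f = \vec f$ up to skew-linear equivalence (using Lemma~\ref{delep-welldef} to see the ambiguity is only a harmless scaling). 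For the encoded correspondences, $\epsilon_p$ contributes the graph of $P_p \circ T$ and $\delta_p$ the converse of the graph of $P_p \circ T'$; composing these gives a curve whose skew-invariant part is the diagonal, exactly as in the proof of Lemma~\ref{biggercorr}(1) for $\beta\phi \approx \id$. So $\delta_p\epsilon_p \approx_{\vec f} \id$ for every $\vec f$, giving $\delta_p\epsilon_p\approx\id$; the symmetric statement $\epsilon_p\delta_p \approx \id$ is identical with the roles swapped (and in fact follows, as it does for $\phi,\beta$, once one knows $\approx$ respects concatenation, Remark~\ref{plus-concat-compose}).

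Next I would handle the second bullet, $x_p y_q \approx y_q x_p$ for $p\neq q$ and $x,y\in\{\epsilon,\delta\}$. The operation of $\epsilon_p$ (respectively $\delta_p$) on a factor $x^{k_i}u_i(x^{m_i})^{n_i}$ only changes the exponents by a factor of $p$ in the in-direction and out-direction; since $p$ and $q$ are distinct primes, applying the $q$-version afterward commutes with it at the level of each factor, and the relevant divisibility hypotheses are preserved in either order. (One checks the four sign-combinations $\epsilon_p\epsilon_q$, $\epsilon_p\delta_q$, $\delta_p\epsilon_q$, $\delta_p\delta_q$; each is a routine exponent bookkeeping.) The encoded curves are graphs of $P_p$ and $P_q$ (with absorbable linear witnesses) composed in the two orders, and $P_p\circ P_q = P_q\circ P_p$, so the composite correspondences literally agree up to the terminal linear slack permitted by $\approx$. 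For the first bullet, $\epsilon_p x \approx x \epsilon_p$ and $\delta_p x \approx x \epsilon_p$ with $x\in\{t_i,\phi,\psi\}$ (and hence also $\gamma,\beta$ via the $\borgarmonoid_k$ presentation and concatenation), the same principle applies: a Ritt swap $t_i$ inside a $\textsf{C}$-free cluster is witnessed by identity linear factors (Proposition~\ref{swap-inside}) and its underlying basic Ritt identity is either commuting monomials/Chebyshevs or a third-kind identity, each of which commutes with multiplying all out-degrees (or in-degrees) by $p$; a single skew-twist $\phi$ merely cyclically permutes the factors and applies $\sigma$, which commutes with the purely degree-theoretic $\epsilon_p$; and $\psi = t_{k-1}\phi$ is then handled by composing the two cases. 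In each instance the two encoded correspondences are composites of the same graphs in swapped order, and since $P_p$ commutes with the polynomials attached to the other generator (or those graphs simply act on disjoint coordinates of the fiber product), the composites agree up to the slack in $\approx$.

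The main obstacle I expect is not any single equivalence but the careful verification, in the first bullet, that the \emph{domain of definedness} is genuinely symmetric: one must check that $\epsilon_p x \star [[\vec f]]$ is defined if and only if $x\epsilon_p \star [[\vec f]]$ is, i.e.\ that performing a Ritt swap or skew-twist neither creates nor destroys a non-trivial in-degree divisible by $p$ in a way that the other order would not. For Ritt swaps this is where one needs the invariance results of Section~\ref{clustersec} (clusters, gates, and Theorem~\ref{swapsferries}) together with Proposition~\ref{uni-inout}/Lemma~\ref{uni-inout-tra} to see that skew-linear-equivalent decompositions have the same in- and out-degrees, so that the condition ``$\vec f$ is skew-linearly equivalent to something with in-degree divisible by $p$'' is genuinely a property of $[[\vec f]]$ and is preserved under the relevant moves; for $\phi$ it is immediate since $\sigma$ fixes exponents. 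Once definedness is seen to be symmetric, the equality of skew-invariant parts of the encoded curves is the easy part, being a formal consequence of Lemma~\ref{compper}, Lemma~\ref{welldefcorr}, Corollary~\ref{coruniqueencode}, and the commuting relations $P_p\circ P_q=P_q\circ P_p$ together with the absorbability of the linear witnesses $T_j$.
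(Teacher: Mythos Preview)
Your treatment of the first two bullets is essentially the paper's argument spelled out in more detail: the key point is that a decomposition with in- (or out-)degree divisible by $p$ cannot contain $P_p$ as a factor, so the only basic Ritt identities available involve $P_q$ with $q\neq p$ or commuting Chebyshevs, and these preserve divisibility of in/out-degrees by $p$; single skew-twists trivially preserve them. The paper compresses this into two sentences, but the content is the same.

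Your argument for the third bullet, however, has a genuine gap. You claim the composite curve for $\delta_p\epsilon_p$ has skew-invariant part equal to the diagonal ``exactly as in the proof of Lemma~\ref{biggercorr}(1) for $\beta\phi$,'' and then assert that the symmetric statement $\epsilon_p\delta_p\approx\id$ follows by swapping roles. Both claims are wrong. The $\beta\phi$ argument works because $f_1$ is an initial compositional factor of $f$: from $f_1(x)=f_1(y)$ one gets $f(x)=f(y)$, so $(f,f)$ collapses the whole curve onto the diagonal. But $P_p$ is \emph{not} a compositional factor of $f$; the relation is $g\circ P_p = P_p\circ f$, so from $x^p=y^p$ one only deduces $f(x)^p=f(y)^p$, and the curve $\{x^p=y^p\}$ is genuinely $(f,f)$-invariant with all $p$ components surviving (each $k_i$ in $f_i=x^{k_i}u_i(x^{p\ell_i})^{n_i}$ is coprime to $p$, so $\zeta\mapsto\zeta^{K}$ permutes $\mu_p$). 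This is exactly why the paper's Remark after the lemma explicitly says the other order is \emph{not} equivalent to $\id$. The correct reason one order is ``obvious'' is not a $\per$-collapse argument at all: in that order the two graphs compose to the literal diagonal (the intermediate point $b$ determines both endpoints as $b^p$), so no invariance argument is needed.
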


\begin{proof}
It is clear that single skew-twists do not change the in- and out-degrees of a decomposition. To see that Ritt swaps also do not change them, recall that a decomposition with non-trivial in- or out-degree divisible by $p$ may not have the monomial $P_p$ among its factors; when $p=2$, this precludes wandering quadratics and makes clusterings completely rigid. This also takes care of the second part. The last part is obvious.\end{proof}

\begin{Rk}
The $(f,g)$-(skew-)invariant curve encoded by $\epsilon_p \delta_p \star f = g$, defined by $x^p = y^p$, is the union of $p$ lines whose slopes are $p$th roots of unity. Because $P_p$ is not a compositional factor of $f$, components other than the diagonal may be skew-periodic, unlike in the case of skew-twists, so it is not true that $\epsilon_p \delta_p \approx \id$. However, composing the curve defined by $x^p = y^p$ with the one defined by $y^p = z^p$ does not give anything new, so $\epsilon_p \delta_p \epsilon_p \delta_p \approx \epsilon_p \delta_p$.\end{Rk}

The next corollary together with Corollary~\ref{notsktw-degbound} bounds the degrees of the correspondence coming from Theorem~\ref{notskewtwist}.

\begin{corollary} \label{wtilde-degbound}
For any word $w$ consisting entirely of $\epsilon_p$ and $\delta_p$ for various $p$, there are words $u$ and $v$ such that $w \approx uv$, and $u$ consists entirely of $\epsilon_p$ for various $p$, and $v$ consists entirely of $\delta_p$ for various $p$.

Thus, the degrees of the two monomials encoded by $u$ and $v$ are bounded by $\deg(f)$.
\end{corollary}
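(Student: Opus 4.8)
The plan is to reduce Corollary~\ref{wtilde-degbound} to the commutation relations already established in Lemma~\ref{new-gen-equivs} together with the idempotency-up-to-$\approx$ observations for the new generators. First I would recall the setup: $w$ is a word in the free submonoid of $\skewmonoid^+_k$ generated by the symbols $\epsilon_p, \delta_p$ as $p$ ranges over primes. The goal is to find $u$ (a product of $\epsilon$'s only) and $v$ (a product of $\delta$'s only) with $w \approx uv$. The natural approach is a bubble-sort argument: repeatedly push every $\epsilon$ to the left past every $\delta$ to its left, using the relations available, until the word is partitioned into an all-$\epsilon$ prefix and an all-$\delta$ suffix.

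The key steps, in order, are as follows. (1) When a $\delta$ and an $\epsilon$ sit adjacent with the $\delta$ on the left, there are two cases. If they carry different primes, $p \ne q$, then by the second bullet of Lemma~\ref{new-gen-equivs}, $\delta_q \epsilon_p \approx \epsilon_p \delta_q$, so we may swap them freely. If they carry the same prime $p$, then by the third bullet, $\delta_p \epsilon_p \approx \id$, so the pair simply cancels, shortening the word; this is the analogue of operation~(1) in Remark~\ref{synoprk}, and it strictly decreases length. (2) Iterating these two moves, we perform a standard sorting: scan the word for the leftmost occurrence of a pattern $\delta_q \epsilon_p$ and either swap or cancel. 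Since swaps preserve length and cancellations strictly decrease it, and since each swap moves some $\epsilon$ strictly leftward in a word of fixed length, the process terminates (one can cite a simple inversion-count / well-ordering argument on pairs (length, number of $(\delta,\epsilon)$-inversions)). The terminal word has no $\delta$ immediately left of an $\epsilon$, hence is of the form $uv$ with $u$ all $\epsilon$'s and $v$ all $\delta$'s, and $u v \approx w$ because $\approx$ is transitive and respects concatenation (Remark~\ref{plus-concat-compose}). (3) For the degree bound: the word $u$, since it is a product $\epsilon_{p_1} \cdots \epsilon_{p_r}$, encodes a correspondence which is (up to a terminal linear factor, by Remark~\ref{256257forplus}) the graph of a monomial $P_{p_1} \circ \cdots \circ P_{p_r} = P_{p_1 \cdots p_r}$; by Corollary~\ref{notsktw-degbound} the degree of this monomial — namely $p_1 \cdots p_r$ — is bounded by $\deg(f)$, since each $\epsilon_p$ in a defined witnessing sequence forces the intermediate decomposition to have in-degree divisible by $p$ and the cumulative in-degree divides $\deg(f)$. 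Dually for $v$ and out-degrees. Hence both monomial degrees are $\le \deg(f)$.

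I do not expect a genuine obstacle here; the main thing to be careful about is the bookkeeping in step~(2), i.e. arguing cleanly that the sorting process terminates, and making sure that the $\approx$-relations are applied only inside words where all the intermediate actions are defined — but this is automatic because $\approx_{\vec f}$ by definition only compares words whose actions on $\vec f$ are defined, and Lemma~\ref{new-gen-equivs} is stated as an unconditional $\approx$. A secondary subtlety is that the degree bound in step~(3) uses that consecutive $\epsilon_{p}$'s compose to a single monomial and that Corollary~\ref{notsktw-degbound} applies to the composite $\pi$; this is exactly the content invoked, via Corollary~\ref{breaknotsktw}, so no new work is needed.

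Concretely, the proof would read roughly: ``By Lemma~\ref{new-gen-equivs}, any subword $\delta_q\epsilon_p$ of $w$ with $p\ne q$ may be replaced by $\epsilon_p\delta_q$, and any subword $\delta_p\epsilon_p$ may be deleted. Applying these replacements repeatedly — each deletion strictly shortens the word and each transposition strictly decreases the number of pairs $(\text{position of a }\delta,\text{ position of an }\epsilon)$ with the $\delta$ to the left — we reach after finitely many steps a word $uv\approx w$ in which no $\epsilon$ is preceded by a $\delta$; thus $u$ is a product of $\epsilon$'s and $v$ a product of $\delta$'s, and $uv\approx w$ by transitivity of $\approx$ and Remark~\ref{plus-concat-compose}. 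For the degree bound, the correspondence encoded by $u=\epsilon_{p_1}\cdots\epsilon_{p_r}$ is, up to a terminal linear factor, the graph of the monomial of degree $p_1\cdots p_r$, and by Corollary~\ref{notsktw-degbound} (applied via Corollary~\ref{breaknotsktw}) this degree divides, hence is at most, $\deg(f)$; dually for $v$.''
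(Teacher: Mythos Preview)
Your proposal is correct and is exactly the argument the paper has in mind: the corollary is stated without proof precisely because it follows immediately from the commutation and cancellation relations in Lemma~\ref{new-gen-equivs} via the bubble-sort you describe, and the degree bound is handled by the reference to Corollary~\ref{notsktw-degbound} in the sentence introducing the corollary. Your termination argument and your handling of the degree bound are both fine.
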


\subsection{Clusterings and skew-twists}
\label{skew-clust-sec}

The interaction between clusterings and skew-twists is the key to finishing the characterization of curves encoded by words in $\skewmonoid_k$.
Recall ( Remark~\ref{circledance} ) that in the context of skew-twists one should imagine the factors of a decomposition standing in a circle, rather than in a line, with only a faint marker between the ``first'' and ``last'' factors to remind one to add $\sigma$ or $\sigma^{-1}$ to factors passing the marker. In this vein, recall that the skew-twist monoid $\skewmonoid_k$ acts on skew-linear equivalence classes of decompositions, that is on decompositions up to linear equivalence and skew-conjugacy.

\begin{Def}
If $\vec{f}$ is itself a cluster, then the sequence $(k)$ is a \emph{skew-preclustering} of $\vec{f}$. In general,
  a sequence $k \geq a_r > \ldots > a_1 >0 $ with $r \geq 2$ is a \emph{skew-preclustering} of a decomposition $\vec{f}$ if $f_{[a_i, a_{i-1})}$ is a cluster for each $r \geq i > 1$, and $(f_{a_1}^\sigma, f_{a_i -1}^\sigma, \ldots, f_1^\sigma, f_k, \ldots, f_{a_r})$ is also a cluster.

 A skew-preclustering is a \emph{skew-clustering} if no cluster (including $(f_{a_1}^\sigma, f_{a_1 -1}^\sigma, \ldots, f_1^\sigma, f_k, \ldots, f_{a_r+1})$) consists of two wandering quadratics, and the concatenation of two adjacent clusters is never a cluster, including the concatenations $(f_{a_2}^\sigma, f_{a_2 -1}^\sigma, \ldots, f_1^\sigma, f_k, \ldots, f_{a_r+1})$ and $(f_{a_1}^\sigma, f_{a_i -1}^\sigma, \ldots, f_1^\sigma, f_k, \ldots, f_{a_{r-1}+1})$ that wrap around the end of the polynomial.

 A skew-clustering with $a_r = k$, that is with a cluster boundary at the edge of the polynomial, is a \emph{robust skew-clustering}. The corresponding clustering $(a_r, \ldots, a_1, 0)$ is a \emph{robust clustering}.
\end{Def}

The next lemma collects a number of immediate observations that connect skew-clusterings to clusterings, and uses the new freedom of skew-conjugacy to improve cleanups.

\begin{lemma} \label{skew-cluster-basics}
\begin{enumerate}
\item Suppose that $\vec{a}$ is a skew-clustering of a decomposition $\vec{f}$.
 Then there is a clustering $\vec{b}$ of the decomposition $\beta^m \star \vec{f}$ such that $i$ is a cluster boundary of $\vec{a}$ if and only if $(i + m) \mod k$, that is, the remainder of
  $(i+m)$ upon division by $k$,
  is a cluster boundary of $\vec{b}$.
 Similarly, there is a clustering $\vec{c}$ of the decomposition $\phi^m \star \vec{f}$ such that $i$ is a cluster boundary of $\vec{a}$ if and only if $(i - m) \mod k$ is a cluster boundary of $\vec{c}$.
\item If $\vec{a}$ is a skew-clustering of a decomposition $\vec{f}$ and $a_r =k$, then
 $(a_r, \ldots, a_1, 0)$ is a clustering of $\vec{f}$. In this case, we call both $(a_r, \ldots, a_1)$ and $(a_r, \ldots, a_1, 0)$ a \emph{robust clustering} of $\vec{f}$.
\item For any preclustering $(a_r, \ldots, a_1, a_0)$ of any decomposition, $(a_r, \ldots, a_1)$ is a skew-preclustering of the same decomposition.
\item Any skew-preclustering can be refined to a skew-clustering; in particular, every decomposition admits a skew-clustering.
\item Every decomposition admits a skew-clustering.
\item Every decomposition has a plain skew-twist which has a robust clustering.
\item Every robust clustering admits (up to skew-conjugacy!) a cleanup with $L_k = \id$.
\item If $(\vec{L}, \vec{h})$ is a cleanup of a robust clustering $\vec{a}$ of $\vec{f}$ with $L_k = \id$, then the clustering of $\phi^{a_i} \star \vec{f}$ obtained in part (1) is also robust, and reindexing $L_i$ and $h_i$ and applying $\sigma$ as necessary produces a cleanup of it.
\end{enumerate} \end{lemma}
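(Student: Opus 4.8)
The plan is to prove the eight items essentially in order, reading each as the cyclic counterpart of a statement about ordinary clusterings already established in Section~\ref{clustersec}, and to isolate the single genuinely non-formal point, item~(7), where difference-closedness of the base field enters. Two mechanisms do all the work. First, $\phi$ and $\beta$ realize the cyclic rotation of the $k$ indecomposable factors of a decomposition: unwinding the definitions, $\phi\star(f_k,\dots,f_1)=(f_1^\sigma,f_k,\dots,f_2)$ sends the factor at position $i$ to position $i-1$ (with $1$ wrapping to $k$) and $\beta$ does the reverse, the only extra decoration being that the factor crossing the cut acquires $\sigma^{\pm1}$. Since $\sigma$ is an automorphism of the base field it preserves every relevant notion --- indecomposability, being a monomial $P_p$ or a Chebyshev polynomial $C_n$ (indeed $P_p,C_n\in\ZZ[x]$, hence are fixed by $\sigma$), being a type \textsf{A} or type \textsf{C} Ritt polynomial, being a translation or a scaling, being a cluster, a gate, an in- or out-degree, and being a clustering or a cleanup --- so these decorations never affect whether a subword is a cluster (Lemmas~\ref{precl-linrel} and~\ref{cl-linrel} do the analogous thing for linear relatedness). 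Second, skew-conjugacy enlarges the linear freedom available when running the cleanup construction of Lemmas~\ref{clustercleanlem} and~\ref{babycleanuplem}, and, combined with difference-closedness, lets us normalize the outermost linear factor.

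For (1)--(3): the conditions defining a skew-(pre)clustering are exactly the cyclic reading of those defining a (pre)clustering, with the wrap-around cluster and the two wrap-around concatenations playing the roles of the outermost clusters and concatenations. Applying $\phi^m$ (resp.\ $\beta^m$) rotates the cyclic set of boundaries by $-m$ (resp.\ $+m$) modulo $k$, and the $\sigma^{\pm1}$-twists are harmless by the first paragraph; choosing $m$ so that some skew-boundary is carried to the endpoint $k\equiv 0$ turns the rotated word into a genuine preclustering, which is then a genuine clustering because the two remaining defining conditions transport verbatim. This gives (1); item~(2) is the case $m=0$; and item~(3) reads a preclustering, with $a_0=0$ deleted, as a skew-preclustering --- the one new requirement is that the wrap-around cluster be a cluster, and when $a_r=k$ that wrap-around cluster is just $\sigma$ applied to the old rightmost cluster $\vec f_{[a_1,0)}$.

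For (4)--(6): to prove (4) I would mimic Lemma~\ref{clusteringsexist} and the reformulation in Remark~\ref{devour} cyclically --- while one of the two extra defining conditions of a skew-clustering fails, fuse or shrink the offending clusters, strictly decreasing the cluster count, so the process terminates; together with (3) and Lemma~\ref{clusteringsexist} (which supplies a preclustering when the factors are swappable) this yields (5). For (6), take a skew-clustering $\vec a$ by (5); if it already has a boundary at $k$ it is robust by (2) and $\phi^0\star\vec f=\vec f$ works, and otherwise pick a boundary $a_j$ with $0<a_j<k$ and set $\vec g:=\phi^{a_j}\star\vec f$, a plain skew-twist, which by (1)--(2) carries a clustering with a boundary at $k$, i.e.\ a robust clustering.

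The real content is in (7), and then (8) is bookkeeping. For (7): by (2) a robust clustering of $\vec f$ is a clustering, so Lemma~\ref{clustercleanlem} produces a cleanup $(\vec h,\vec L)$; in it $L_k$ lies outside every cluster and in general has a nontrivial scaling part (it accumulates the scalings pushed leftward through the \textsf{C}-free clusters in Lemma~\ref{babycleanuplem}). Skew-conjugating $\vec f$ by a linear $M$ replaces $f_k$ by $M^\sigma\circ f_k$ and $f_1$ by $f_1\circ M^{-1}$, and re-running the cleanup transports the scaling part of $M^{-1}$ from the bottom upward through the \textsf{C}-free factors via $\lambda\ast(-)$ (Remark~\ref{astRitt}, which keeps every $h_i$ a Ritt polynomial and passes harmlessly through monomials), where it is either absorbed as the legitimate linear factor just below the lowest \textsf{C} cluster, or --- when there is no \textsf{C} cluster at all --- emerges again at the top. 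In that critical case I would solve, for the scaling part $(\cdot\mu)$ of $M$, the difference equation $\sigma(\mu)=\mu^{\deg f}/s^{*}$, where $s^{*}$ is the (nonzero) scaling parameter of $L_k$ and $\deg f\ge 2$; this is solvable by the defining axioms of a difference-closed field, and it makes the new outermost linear factor a translation, after which one last conjugation by a translation makes it $\id$ without disturbing the requirement that the bottom linear factor be a translation (translations compose to translations). This scaling/difference-equation step --- precisely what the emphatic ``up to skew-conjugacy'' signals --- is the main obstacle; the remainder of (7) is the scaling-versus-translation bookkeeping already rehearsed in Lemmas~\ref{babycleanuplem} and~\ref{nccluster-unique}. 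Finally, (8): once $L_k=\id$, passing to $\phi^{a_i}\star\vec f$ (which carries a robust clustering $\vec b$ by (1)--(2)) cuts the cyclic word at $a_i$ without slicing through a nontrivial linear factor at the new top, so the $h_i$ and $L_i$ transport to a cleanup of $\vec b$ by a cyclic reindexing together with an application of $\sigma$ to the $a_i$ factors that wrap around --- legitimate since $\sigma$ fixes $P_p$ and $C_n$ and preserves Ritt-ness, type \textsf{A}, translations, and cluster kinds.
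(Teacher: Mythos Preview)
Your proposal is correct and follows essentially the same approach as the paper: items (1)--(6) and (8) are treated as formal cyclic analogues of the ordinary clustering results, and item (7) is identified as the one substantive point, handled by skew-conjugating to push scalings through \textsf{C}-free clusters and---when no \textsf{C} cluster is present to absorb them---solving a difference equation over the difference-closed base. The only cosmetic difference is the order of operations in (7): the paper first absorbs the translation part of $L_k$ into $L_0$ and then disposes of the remaining scaling, whereas you dispose of the scaling first and the translation last, but both orderings work for the same reasons.
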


\begin{Rk}
The notion of robust clustering is necessary in that parts (2) and (8) become much more complicated without this extra hypothesis, because a plain skew-twist might break a \textsf{C} cluster into two pieces, and things become complicated if one of the pieces does not contain an odd-degree factor, and particularly complicated if that piece has degree two.
\end{Rk}

\begin{proof}
For part (4), induct on the number of clusters exactly as in the proof of the existence of clusterings, Lemma~\ref{clusteringsexist}.

Only part (7) merits detailed explanation. Take some cleanup $(\vec{L}, \vec{h})$ of a robust skew-clustering. Skew-conjugating, we may absorb the translation part of $L_k$ into $L_0$ and assume without loss of generality that $L_k$ is a scaling.

If there is a \textsf{C} cluster, skew-conjugate to move scaling $L_k$ into $L_1$ and then move it left as in the proof of the existence of cleanups until it sits to the right of a \textsf{C} cluster, where it may stay without violating the definition of ``cleanup''.

If there are no \textsf{C} clusters, skew-conjugating by $\cdot \frac{1}{\lambda}$ and moving the new scaling left as in the proof of the existence of cleanups replaces $L_k$ by $L_k \circ ( \cdot \frac{\lambda^\sigma}{\lambda^{ \deg(f)}})$.  Here again, because we are working over a difference closed field, there is no problem to find $\lambda$ such that $L_k \circ (\cdot \frac{\sigma(\lambda)}{\lambda^{\deg(f)}}) = \operatorname{id}$.   If one is interested only in the case of an algebraic dynamical system, then it suffices to find a $(\deg(f) - 1)^\text{st}$ root of the leading coefficient of $f$ inside the fixed field of $\sigma$.
 \end{proof}

For a robust skew-clustering and a cleanup with $L_k = \id$, it is clear what ``gate at $k$'' should mean, except maybe when $\vec{f}$ is itself a cluster. Recall that the factors are now standing in a circle, so a gate at $k$ and a gate at $0$ are intuitively the same thing.

\begin{Def}
 Let $\vec{a}$ be a robust skew-clustering of a decomposition $\vec{f}$ and let $(\vec{L}, \vec{h})$ be a cleanup of it with $L_k = \id$.

 If $\vec{a}$ has more than one cluster, then $\vec{f}$ with $\vec{a}$ has \emph{a (left-to-right or right-to-left) gate at $k$} if and only if the clustering and cleanup of $\phi^{a_i} \star \vec{f} =  ( f_{a_1}^\sigma, f_{a_1 -1}^\sigma, \ldots, f_1^\sigma, f_k, f_{k-1}, \ldots, f_{a_{1}+1})$ obtained in (8) above has a gate in that direction between $f_1^\sigma$ and $f_{k}$.

 If $\vec{a} = (k)$ has exactly one cluster, then $\vec{f}$ has \emph{a (left-to-right or right-to-left) gate at $k$} if and only if the preclustering $(2k, k, 0)$ of $\vec{f}^\sigma \vec{f}$ has a gate in that direction at $k$.
\end{Def}

Recall that two adjacent clusters with a two-way gate between them can be fused into a single cluster; this has the following convenient consequence.

\begin{lemma} \label{one-cluster-lemma}
 Suppose that $\vec{f}$  is a decomposition of a disintegrated polynomial $f$, and that a skew-clustering of it
 $(a_r, \ldots, a_1)$ has a two-way gate at some $j$.
 Then $\vec{f}$ is a single \textsf{C}-free cluster with at least one non-monomial factor, and admits a cleanup with no linear factors.
\end{lemma}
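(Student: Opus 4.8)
The plan is to exploit the ``circle'' picture of skew-clusterings together with Proposition~\ref{twodoork} (the two-way gate fusion result for ordinary preclusterings) transported through a plain skew-twist via Lemma~\ref{skew-cluster-basics}. First I would reduce to the case of a robust skew-clustering: by Lemma~\ref{skew-cluster-basics}(6), $\vec{f}$ has a plain skew-twist with a robust clustering, and since the conclusion (``$f$ is a single \textsf{C}-free cluster with a non-monomial factor'') is invariant under skew-twisting and under passing to skew-conjugates, it suffices to prove the statement for a robust skew-clustering of $\vec{f}$, or — after applying $\phi^{a_i}$ and reindexing as in Lemma~\ref{skew-cluster-basics}(1),(8) — for an ordinary clustering of a plain skew-twist of $f$ which has a two-way gate at some boundary $j$ (possibly at the ``wrap-around'' boundary $k$). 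By Lemma~\ref{skew-cluster-basics}(8) we may take a cleanup with $L_k = \id$, so that the gate at $k$ genuinely behaves like an interior gate of the unrolled word $\vec{f}^\sigma\vec{f}$.

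Next I would invoke Proposition~\ref{twodoork}: a two-way gate at $j$ in a preclustering forces the $j$th and $(j+1)$st clusters to be of the same kind and their concatenation to again be a cluster. Applied to the clustering of the appropriate plain skew-twist, this says that the two clusters adjacent to the gate fuse. But a skew-clustering, by definition, does not allow the concatenation of two adjacent clusters (including the wrap-around concatenations) to be a cluster unless there is only one cluster; so the existence of a two-way gate forces $r = 1$, i.e.\ $\vec{f}$ (or its relevant plain skew-twist, hence $\vec{f}$ itself up to skew-conjugacy) is a single cluster. Then I would rule out the \textsf{C}-cluster case: between two ``copies'' of a \textsf{C} cluster in the unrolled word $\vec{f}^\sigma\vec{f}$, the two-way gate would (exactly as in the proof of Proposition~\ref{twodoork}, using Remark~\ref{linrelquadrk} and Corollary~\ref{oneCRk}) force the linking linear factor to be $(\cdot\pm1)$ and hence force $f$ to be linearly related to a Chebyshev polynomial, contradicting disintegratedness (Fact~\ref{medpoly}). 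So $\vec{f}$ is a single \textsf{C}-free cluster. If it consisted only of monomial factors, then $f$ would be linearly related to a monomial, again contradicting disintegratedness; hence at least one factor is non-monomial.

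Finally, for the statement that this single \textsf{C}-free cluster admits a cleanup with \emph{no} linear factors: a cleanup of a one-cluster clustering has linear factors only $L_0$ and $L_k$ (all interior $L_i = \id$ by definition of cleanup), and since this is a robust skew-clustering we arranged $L_k = \id$ by Lemma~\ref{skew-cluster-basics}(7); the remaining $L_0$ on the right of the cluster is, by clause~(5) in the definition of cleanup, a translation, and by clause~(5) again it is $\id$ as soon as $h_1 \circ L_0$ is a Ritt polynomial — which, after the normalization of Lemma~\ref{babycleanuplem} (absorbing the translation into the rightmost Ritt factor), we may assume. Alternatively, one absorbs $L_0$ into the skew-conjugation itself. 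I expect the main obstacle to be bookkeeping in the wrap-around case $\vec{a} = (k)$, where the ``gate at $k$'' is defined through the preclustering $(2k,k,0)$ of $\vec{f}^\sigma\vec{f}$ and one must check that Proposition~\ref{twodoork} applied there really does say that $\vec{f}^\sigma\vec{f}$ becomes a single cluster, and then argue (via the $\widehat{C}$/$C$ and $B_\lambda$ computations of Section~\ref{section41}, together with Lemma~\ref{uni-inout-tra}) that this is impossible for a disintegrated polynomial unless already $\vec{f}$ is a \textsf{C}-free cluster with the linear linking data trivial; this last step is where the full strength of the classification of translation relations among Ritt polynomials gets used.
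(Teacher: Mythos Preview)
Your approach is essentially the same as the paper's: use Proposition~\ref{twodoork} to show that a two-way gate forces the adjacent clusters to fuse, contradict the definition of skew-clustering unless $r=1$, and then invoke disintegratedness to rule out the \textsf{C}-cluster and all-monomial cases. The paper's proof is much terser (four sentences) and does not need the reduction to a robust skew-clustering you set up first, nor the heavy machinery you anticipate for the wrap-around case: once $r=1$, the two-way gate at $k$ (via the preclustering $(2k,k,0)$ of $\vec{f}^\sigma\vec{f}$) directly gives a cleanup with $L_k=\id$ and $L_0=(\cdot\pm1)$, and no appeal to Lemma~\ref{uni-inout-tra} or the translation-relation classification is needed.
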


\begin{proof}
 Lemma~\ref{twodoork} forbids two-way gates between distinct clusters of a clustering, so $j = 1$ and $a_1 = k$.
 If $\vec{f}$ has more than one cluster, this makes $(f_{a_1}^\sigma, f_{a_i -1}^\sigma, \ldots, f_1^\sigma, f_k, \ldots, f_{a_{r-1} +1})$ into a cluster, contradicting the definition of skew-clustering. If $\vec{f}$ is a single cluster, then the two-way gate at $k$ means that (up to skew-conjugacy) $\vec{f}$ admits a cleanup with $L_k = \id$ and
 $L_0 = (\cdot \pm 1)$.  Since $f$ is disintegrated, $\vec{f}$ cannot consist of a single \textsf{C} cluster or a single \textsf{C}-free cluster with only monomial factors.
\end{proof}

The next remark gathers the results we have proved about the interaction of (skew-)clusterings, Ritt swaps, and skew-twists.

\begin{Rk}
Since robust skew-clusterings correspond precisely to robust clusterings, different robust clusterings of skew-linearly equivalent decompositions obey Proposition~\ref{unique-clustering-ntech}: they have the same number of clusters, the same cluster boundaries with gates (possibly in different directions), and cluster boundaries may only differ by one, and then only by one wandering quadratic (this is the case when the gate changes direction). It is clear that the two robust skew-clusterings have the same gates at $k$.

Skew-twists act on skew-clusterings (and their gates) via the first part of Lemma~\ref{skew-cluster-basics}: given a skew-clustering $\vec{a}$ of $\vec{f}$, the corresponding skew-clustering $\vec{b}$ of $\phi^i \star \vec{f}$ has cluster boundaries at $(a_j -i) \mod k$,
and similarly for $\beta_i$. This $\vec{b}$ is robust of and only if $i = a_j$ for some $j$. It is clear that $\vec{f}$ has a gate at the cluster boundary at $a_j$ if and only if $\phi^i \star \vec{f}$ has a gate at the corresponding cluster boundary at $(a_j -i) \mod k$.

As noted in Lemma~\ref{imp-swap-inside}, a Ritt swap inside a cluster (that is, $t_i$ for some $i \neq a_j$ for all $j$) does not affect the clustering or its gates. By skew-twisting until the boundary is inside, it is clear that the new notion of gates at $k$ for a skew-clustering is also unaffected.

Recall (Lemma~\ref{extra-cluster-swap}) that a Ritt swap across clusters (that is, $t_{a_j}$ for some $j$) always involves a wandering quadratic that leaves on cluster and enters the other, changing the direction of the gate at the boundary.

It should now be clear that the number of clusters in a robust skew-clustering, and the presence of a gate between particular clusters, are invariant under skew-linear equivalent, Ritt swaps, and skew-twists by whole clusters, though the indexing of the clusters changes in this last case.
\end{Rk}

The conclusion of the next Lemma~\ref{goodplaintwist} is used in Lemma~\ref{happyw1w2lem} to bound the number of consecutive $\psi$'s or $\gamma$s in a word from the border guard monoid acting on $\vec{g}$.

\begin{lemma} \label{goodplaintwist}
Every decomposition $\vec{f}$ of a disintegrated polynomial has a plain skew twist $\vec{g} := \phi^i \star \vec{f}$ which has a robust clustering $\vec{a}$ with one of the following properties:\begin{enumerate}
\item $\vec{a}$ has no gates at $k$
\item $\vec{a}$ has a one-way
 gate at $k$
\item $\vec{g}$ is a single \textsf{C}-free cluster, $\vec{a}$ has a two-way gate at $k$ and $g_k$ is not a monomial.
\end{enumerate} \end{lemma}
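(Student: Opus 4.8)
The plan is to reduce, via Lemma~\ref{skew-cluster-basics}, to a short case analysis on a skew-clustering of $\vec{f}$: the case of a skew-clustering with several clusters is dispatched by a single rotation, and the entire substance of the lemma lies in the single-cluster case, where disintegratedness is exactly what is needed to produce a non-monomial factor to rotate into the leftmost position. As a preliminary I would record that throughout Section~\ref{skew-clust-sec} every factor of $\vec{f}$ is assumed swappable (a decomposition with an unswappable factor admits no clustering at all, and is handled instead by Corollary~\ref{unswcrack2}), so that by Lemma~\ref{skew-cluster-basics}(5) the decomposition $\vec{f}$ admits a skew-clustering $\vec{a} = (a_r,\ldots,a_1)$. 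Now split on whether $r \geq 2$ or $r = 1$.

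\emph{Case $r \geq 2$.} Then $\vec{f}$ is not a single \textsf{C}-free cluster, so Lemma~\ref{one-cluster-lemma} rules out two-way gates in $\vec{a}$; every gate of $\vec{a}$ is therefore absent or one-way. I would then rotate $\vec{a}$ so that its largest index becomes $k$: if $a_r = k$ take $i = 0$, otherwise take $i = a_r < k$ and pass to the plain skew-twist $\vec{g} := \phi^i \star \vec{f}$. By Lemma~\ref{skew-cluster-basics}(1) and (2) this yields a robust clustering $\vec{a}'$ of $\vec{g}$, and by Lemma~\ref{skew-cluster-basics}(8) (together with the fact, noted after that lemma, that plain skew-twists carry gates to gates) the gate of $\vec{a}'$ at $k$ is the one inherited from the boundary $a_r$ of $\vec{a}$, hence absent or one-way. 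Thus $\vec{g}$ with $\vec{a}'$ already satisfies property (1) or (2).

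\emph{Case $r = 1$, i.e.\ $\vec{f}$ is a single cluster.} Since $f$ is disintegrated, $\vec{f}$ can be neither a single \textsf{C} cluster nor a single \textsf{C}-free cluster all of whose factors are monomials (as recorded in the proof of Lemma~\ref{one-cluster-lemma}); hence $\vec{f}$ is a single \textsf{C}-free cluster admitting a non-monomial factor $f_{j_0}$. If some plain skew-twist of $\vec{f}$ fails to be a single cluster, then applying the previous case to that skew-twist, and composing the two plain skew-twists (reducing the exponent mod $k$ and twisting by a power of $\sigma$ if necessary, which changes neither the clustering, nor its gates, nor monomiality of factors, because $\sigma$ is a field automorphism preserving the classes of Ritt and of type \textsf{C} polynomials), produces a plain skew-twist of $\vec{f}$ with a robust clustering of property (1) or (2). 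Otherwise every plain skew-twist of $\vec{f}$ is again a single \textsf{C}-free cluster, and I would choose $i$ with $0 \le i < k$ so that the leftmost factor $g_k$ of $\vec{g} := \phi^i \star \vec{f}$ is (a $\sigma$-twist of) $f_{j_0}$, hence not a monomial; the robust clustering $\vec{a} := (k)$ of $\vec{g}$ has a well-defined gate at $k$, and if that gate is two-way we land in property (3), while if it is absent or one-way we land in property (1) or (2).

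The main obstacle is precisely the single-cluster case, and within it the necessity of rotating a non-monomial factor into position $k$ \emph{before} inspecting the gate at $k$. Indeed, the decomposition $(x^5,\ x\cdot(x^5+1))$ is a single \textsf{C}-free cluster with a two-way gate at $k$ but with monomial leftmost factor $x^5$, a configuration satisfying none of (1)--(3); passing to its plain skew-twist $(x\cdot(x^5+1),\ x^5)$, whose leftmost factor is non-monomial, lands one in property (3). It is exactly disintegratedness that guarantees a non-monomial factor exists to be rotated into place, so that this pathology can always be avoided. The remaining points — that the composite of two plain skew-twists is again a plain skew-twist up to a power of $\sigma$, and that such twists transport robust clusterings, their gates, and the \textsf{C}/\textsf{C}-free dichotomy unchanged — are routine consequences of Lemma~\ref{skew-cluster-basics} and of $\sigma$ being an automorphism.
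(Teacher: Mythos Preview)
Your proposal is correct and follows essentially the same strategy as the paper's proof: take a skew-clustering of $\vec{f}$, and either rotate a cluster boundary to position $k$ (landing in property (1) or (2)), or recognize via Lemma~\ref{one-cluster-lemma} that the whole thing is a single \textsf{C}-free cluster and rotate a non-monomial factor to position $k$ (landing in property (3)).

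The only real difference is organizational. The paper splits first on whether \emph{some} boundary $b_j$ of the skew-clustering yields an absent or one-way gate at $k$; if not, Lemma~\ref{one-cluster-lemma} (including its ``cleanup with no linear factors'' conclusion) guarantees outright that \emph{every} plain skew-twist is a single \textsf{C}-free cluster, so one simply rotates the non-monomial factor into place. You instead split first on $r \geq 2$ versus $r = 1$, and in the latter case you introduce an extra subcase (your (a)) to handle the possibility that some plain skew-twist of the single-cluster $\vec{f}$ fails to be a single cluster, reducing back to the $r \geq 2$ case by composing two plain skew-twists and reducing the exponent modulo $k$. This extra subcase is not wrong, but it is avoidable: had you checked the gate at $k$ of $\vec{f}$ itself at the start of the $r=1$ case, an absent or one-way gate would immediately give property (1) or (2) with $i=0$, while a two-way gate would (via the ``no linear factors'' clause of Lemma~\ref{one-cluster-lemma}) force every plain skew-twist to remain a single \textsf{C}-free cluster, placing you directly in your subcase (b). Your more cautious route still works; it just does a little more bookkeeping than necessary.
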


\begin{proof}
 We know that $\vec{f}$ has a skew-clustering $\vec{b}$.
 If some $b_j$ satisfies one of the first two items in the conclusion, let $i := b_j$.
 Otherwise, Lemma~\ref{one-cluster-lemma} says that any plain skew-twist of $f$ is a single cluster, and one of the factors $f_i$ is not a monomial. In any case, the plain skew-twist $\phi^i \star \vec{f}$ or $\beta^{k-i} \star \vec{f}$ with the corresponding skew-clustering works. \end{proof}

\begin{lemma} \label{happyw1w2lem}
 If $\vec{g}$ and $\vec{a}$ satisfy one of the three conclusions of Lemma~\ref{goodplaintwist}, and $w \in \borgarmonoid_k$ such that $w \star \vec{g} =: \vec{h}$ is defined, then $\vec{h}$ satisfies the same conclusion. If furthermore $w$ contains no instances of $\beta$ (respectively, $\gamma$), then the number of instances of $\gamma$ (respectively, $\beta$) in $w$ is bounded by $0$ in the first case of Lemma~\ref{goodplaintwist}, by $1$ in the second case, and by the degree of $g_k$ in the last case. \end{lemma}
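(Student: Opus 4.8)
The plan is to prove both assertions of Lemma~\ref{happyw1w2lem} simultaneously by induction on the length of $w \in \borgarmonoid_k$, using the fact that the generators of $\borgarmonoid_k$ are $\psi = (t_{k-1}\phi)$, $\gamma = (\beta t_{k-1})$ and the interior Ritt swaps $t_1,\ldots,t_{k-2}$. The base case $w = \mathrm{id}$ is trivial. For the inductive step, write $w = w' s$ where $s$ is a generator and let $\vec{g}' := w' \star \vec{g}$. By the induction hypothesis, $\vec{g}'$ satisfies the same conclusion of Lemma~\ref{goodplaintwist} as $\vec{g}$, with the appropriate bound on the count of $\gamma$'s (respectively $\beta$'s) in $w'$; I must show that applying $s$ preserves the conclusion and, when $s$ is the "forbidden-complement" generator, that the count does not exceed the stated bound.

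First I would dispose of the interior Ritt swaps. If $s = t_i$ for $1 \le i \le k-2$, then by the border-guard formalism the leftmost factor $g'_k$ is untouched (possibly recostumed in the sense of Remark~\ref{costumes} only if $i$ is adjacent to a cluster boundary, but $t_i$ with $i<k$ acts as an ordinary Ritt swap on the interior), so by Remark~\ref{cluster-rk}, Lemma~\ref{imp-swap-inside} and Lemma~\ref{extra-cluster-swap} the robust clustering is preserved up to moving a wandering quadratic, and by Lemma~\ref{ezdoork} together with the discussion of gates-at-$k$ in the preceding remarks the presence and one-way/two-way character of the gate at $k$ is unchanged. A gate at $k$ is never "created" at a boundary that had no gate (this is the content of $\operatorname{Theorem}$~\ref{swapsferries} and the remark that interior swaps do not move quadratics to the boundary at $k$), so the case label among (1)--(3) is stable, and the counts of $\beta$ and $\gamma$ in $w$ equal those in $w'$.

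Next, the interesting case is $s = \psi$ or $s = \gamma$. By Remark~\ref{circledance}, $\psi \star \vec{g}'$ and $\gamma \star \vec{g}'$ are defined if and only if $t_1 \star ((g'_1)^\sigma, g'_k)$ is defined, that is, if and only if the two factors flanking the "seam" form a swappable pair, which is exactly the situation of a Ritt swap across the gate at $k$ in the sense of Lemma~\ref{extra-cluster-swap} applied to the skew-clustering. Thus in case (1), where there is no gate at $k$, neither $\psi$ nor $\gamma$ can act, so $w$ has no $\psi$ or $\gamma$ at all and the claimed bounds ($0$ instances of $\gamma$, $0$ of $\beta$) are vacuous. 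In case (2), a one-way gate at $k$ admits the swap in exactly one direction; by Lemma~\ref{extra-cluster-swap} (transported to the seam via Lemma~\ref{skew-cluster-basics}(1),(8)) performing it moves a wandering quadratic across the seam and reverses the gate to a one-way gate in the opposite direction, so $\vec{h}$ is still in case (2), and at most one of $\psi$, $\gamma$ can be applied before its direction forbids the next one; under the hypothesis that $w$ contains no $\beta$ (so $\gamma$'s can only arise from the $\gamma$-generators themselves, not from the $\beta t_{k-1}$ expansions reshuffled by Lemma~\ref{biggercorr}), and no cancellation $\phi\beta$ is available, one checks that consecutive $\gamma$'s are impossible after the first, giving the bound $1$. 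In case (3), $\vec{g}'$ is a single \textsf{C}-free cluster with $g'_k$ non-monomial and a two-way gate at $k$; here each $\psi$ or $\gamma$ acts by a Ritt swap of the leftmost factor with the seam-neighbor, which by the structure of \textsf{C}-free clusters (Lemma~\ref{1mono}, Lemma~\ref{nccluster-unique}) and the fact that only one quadratic may cross any boundary in a given direction (Lemma~\ref{eat-quadratic}, Lemma~\ref{no-eat-two-quadratic}) can be iterated only while there remain quadratic factors available to cross, and the total degree contributed by the leftmost factor $g_k$ bounds how many times this can happen — hence the bound $\deg(g_k)$; that $\vec{h}$ remains a single \textsf{C}-free cluster with non-monomial leftmost factor follows because the non-monomial factor of $\vec{g}$ is never destroyed, only recostumed, by these swaps (it is not type \textsf{C} and not quadratic, so by Remark~\ref{Jnoswapleft} and Lemma~\ref{seeswaplem} it stays a non-monomial Ritt polynomial).

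The main obstacle, and the step requiring the most care, will be the degree bound $\deg(g_k)$ in case (3): one must argue that a long alternation of $\psi$'s (interspersed with interior $t_i$'s) cannot recycle the same quadratic back and forth across the seam, and this is where the one-directionality of wandering quadratics through a gate — established in Lemma~\ref{eat-quadratic}, Lemma~\ref{one-way-gates-persist} and the "no-eat-two-quadratic" Lemma~\ref{no-eat-two-quadratic} — must be invoked at the seam of the circular arrangement, transported via the skew-clustering machinery of Lemma~\ref{skew-cluster-basics}. Concretely, I would set up a monovariant: the number of quadratic factors lying strictly to the right of the seam-quadratic's "home" cluster, which strictly decreases with each genuine seam-crossing, is bounded initially by the number of quadratic factors in $\vec{g}$, and since every such quadratic must have passed through $g_k$'s position, this count is at most $\deg(g_k)$ (more precisely bounded by $\deg(g_k)$ since each quadratic that crosses contributes a factor of $2$ and they compose through $g_k$). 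Once this monovariant is in place, the bound and the stability of case (3) both drop out, and the lemma is proved.
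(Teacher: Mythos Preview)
The paper does not supply a proof of this lemma; it is stated and immediately followed by ``These bounds are useful because \ldots''. So there is no paper proof to compare against, and your proposal must be assessed on its own.

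Your overall architecture (induction on the length of $w$, case split on the generator $s$ and on which of (1)--(3) holds) is the natural one, and your treatment of case~(1) and of interior $t_i$'s is fine. Your case~(2) sketch is also essentially right: a one-way gate at $k$ allows exactly one crossing, which reverses the gate, so a second crossing in the same direction is blocked.

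The genuine gap is in case~(3). There $\vec{g}$ is a \emph{single} \textsf{C}-free cluster, so there are no internal cluster boundaries, no wandering quadratics in the sense of Definition~\ref{gate-def}, and Lemmata~\ref{eat-quadratic} and~\ref{no-eat-two-quadratic} (which govern quadratics crossing between distinct clusters) simply do not apply. What actually happens when $\psi$ acts is this: $\phi$ brings $g_1^\sigma$ to position $k$ and $g_k$ to position $k-1$; for $t_{k-1}$ to be defined in a \textsf{C}-free cluster, one of these must be a monomial, and since $g_k$ is not, $g_1^\sigma = P_p$ for some prime $p$ dividing the in-degree of $g_k$ (Remark~\ref{inoutdegForSwap}). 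After the swap, the new border guard at position $k$ is the recostuming of $g_k$, with in-degree divided by $p$ and out-degree multiplied by $p$; it is still non-monomial, so the case label is preserved. Since interior $t_i$ with $i\le k-2$ never touch position $k$, the in-degree of the border guard is a strictly decreasing monovariant under $\psi$, and the product of the primes removed is at most the original in-degree of $g_k$, hence at most $\deg(g_k)$; this gives the bound on the number of $\psi$'s. The argument for $\gamma$'s is symmetric, using out-degree. Your ``count of quadratic factors to the right of the seam'' is the wrong monovariant and the lemmas you cite for it are irrelevant here; replace it with the in-/out-degree argument above and the proof goes through.
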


These bounds are useful because in any word in the border guard monoid, the $\psi$s can be separated from the $\beta$s in the following sense.

\begin{lemma}
\label{w1w2lemma}
For any word $w \in \borgarmonoid_k$, there are $w_i \in \borgarmonoid_k$ such that $w \approx w_2 w_1$
and $\gamma$ does not appear in $w_1$
and $\psi$ does not appear in $w_2$.
\end{lemma}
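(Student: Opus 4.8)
The plan is to work inside the larger monoid $\skewmonoid_k$, under the identification of $\borgarmonoid_k$ with the submonoid generated by $\psi=t_{k-1}\phi$, $\gamma=\beta t_{k-1}$, and $t_1,\dots,t_{k-2}$, and to give a rewriting argument that moves every occurrence of $\gamma$ to the left of every occurrence of $\psi$ using only $\approx$-equivalences. Since every rule invoked is valid for all decompositions and $\approx$ is compatible with concatenation (Remark~\ref{plus-concat-compose}), it suffices to perform the rewriting one local move at a time.

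First I would assemble the local rewriting rules. From Lemma~\ref{biggercorr}(4) together with the commutation $t_it_j\approx t_jt_i$ for $|i-j|\ge 2$ and the braid relation of Theorem~\ref{fundamentallemma}, one obtains ``index-shift'' rules letting a $\psi$, respectively a $\gamma$, slide past each $t_i$ at the cost of changing its index by one, with an exceptional rule governing the wrap-around index $k-1$ (derived, as in the proof of Proposition~\ref{border_guard_prop}, from $t_{k-1}\phi^2\approx\phi^2t_1$, from $t_1\beta\approx\beta^2 t_{k-1}\phi$, and from $t_{k-1}\approx\phi\gamma$). From Lemma~\ref{biggercorr}(1), which gives $\phi\beta\approx\beta\phi\approx\id$, together with the first clause of Theorem~\ref{fundamentallemma}, one gets the rule for an adjacent $\psi$ and $\gamma$: $\psi\gamma=t_{k-1}\phi\beta t_{k-1}$ collapses to an $\approx$-equivalent of $t_{k-1}t_{k-1}$, and dually for $\gamma\psi$. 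One must be careful here, since $t_{k-1}^{2}$ need not be everywhere defined, so the honest statement is an $\approx$-equivalence of $\psi\gamma$ with a strictly shorter word (under the convention that $v\approx_{\vec f}w$ whenever both $v\star\vec f$ and $w\star\vec f$ are undefined).

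With these rules, I would induct on a suitable complexity of $w$ — say, lexicographically on the pair (number of occurrences of $\psi$ having some occurrence of $\gamma$ to its right, total length). If no $\psi$ has a $\gamma$ to its right, then all $\gamma$'s precede all $\psi$'s, and cutting $w$ immediately before its first $\psi$ exhibits $w=w_2w_1$ with $\gamma\notin w_1$ and $\psi\notin w_2$, the pieces depending only on $w$. Otherwise, choose the leftmost $\gamma$ having a $\psi$ to its left and the nearest $\psi$ to its left; between them $w$ reads as a word $u$ in the $t_i$'s alone, so $w$ contains the block $\psi\,u\,\gamma$. I would slide this $\psi$ rightward through $u$ by the index-shift rules until it becomes adjacent to $\gamma$, then apply the collapse rule; the net effect replaces $\psi\,u\,\gamma$ by a word with no $\psi$ and no $\gamma$ (or, in the seam case, with strictly fewer of them), lowering the complexity, and the induction closes.

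The step I expect to be the main obstacle is controlling the seam: $\psi$ and $\gamma$ are not transpositions — each hides a rotation $\phi$, respectively $\beta$ — so sliding one past the $t_i$'s is governed by the index-shift rules with a genuinely exceptional behaviour at the wrap-around index, and one must verify that sliding a $\psi$ through a block $u$ and collapsing it against the trailing $\gamma$ terminates, i.e. does not keep generating fresh $\psi$'s and $\gamma$'s faster than they are consumed. This termination (for which the boundedness phenomena behind Lemma~\ref{happyw1w2lem}, via Lemma~\ref{goodplaintwist}, supply the geometric intuition), together with the bookkeeping of definedness built into the definition of $\approx$, is where the real content of the proof lies.
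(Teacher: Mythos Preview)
Your overall strategy matches the paper's: locate a block $\psi\,u\,\gamma$ with $u\in\rittmonoid_{k-1}$, rewrite it to reduce the number of out-of-order $(\psi,\gamma)$ pairs, and induct. But where you plan to slide $\psi$ past each $t_i$ one at a time and then collapse an adjacent $\psi\gamma$, the paper instead proves a single Claim that handles the whole block at once: for every $u\in\rittmonoid_{k-1}$ there is $v'\in\rittmonoid_{k-1}$ with either $\psi\,u\,\gamma\approx v'$ or $\psi\,u\,\gamma\approx\gamma\,t_{k-2}\,\psi\,v'$. The second alternative is exactly the missing idea --- when the pair cannot be annihilated, it can at least be \emph{commuted} at the cost of inserting one $t_{k-2}$. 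Termination is then automatic: each replacement keeps the total number of $\psi$'s and $\gamma$'s the same while strictly decreasing the number of $\psi$'s having a $\gamma$ to their right.

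Your incremental sliding does give $\psi\,t_i\approx t_{i-1}\,\psi$ for $2\le i\le k-2$, but there is no such rule at $i=1$: pushing $\psi$ across $t_1$ unavoidably produces extra $\phi$'s and $\beta$'s (hence new $\psi$'s or $\gamma$'s), which is exactly the seam problem you flag but do not resolve. The paper sidesteps this by first putting $u$ into reverse first canonical form so that the $t_1$ contribution is isolated, and then deriving the two-case Claim directly. Finally, your appeal to Lemma~\ref{happyw1w2lem} for termination intuition is circular: that lemma only becomes useful \emph{after} the separation $w\approx w_2w_1$ you are trying to prove here.
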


\begin{proof}
 Given $w \in \borgarmonoid_k$, we find an equivalent word $w'$ that has no substrings of the form $\psi u \gamma$ for some $u \in \rittmonoid_{k-1}$. Clearly, $w'$ is the desired word. To construct $w'$, we prove a

 \emph{Claim:} for any $u \in \rittmonoid_{k-1}$ there is a word $v' \in \rittmonoid_{k-1}$ such that $\psi u \gamma \approx v'$ or $\psi u \gamma \approx \gamma t_{k-2} \psi v'$.

  Then replacing a substring $\psi u \gamma$ by one of these does not increase the number of instances of $\psi$ and $\gamma$ in a word, and straightens out one $\psi$, $\gamma$ pair in the wrong order. Thus, after finitely many such operations we obtain the desired $w'$.

\emph{Proof of Claim:}
 Without loss of generality, we may assume that $u \in \rittmonoid_{k-1}$ is in reverse first canonical form, i.e. either $u =v$ or $u= t_1 v$ where $t_1$ does not appear in $v \in \rittmonoid_{k-1}$. Then $\psi u \gamma \approx v'$ in the first case, and $\psi u \gamma \approx \gamma t_{k-2} \psi v'$ in the second, for some $v' \in \rittmonoid_{k-1}$.
\end{proof}

\subsection{Characterization of skew-invariant curves}
\label{triumph-sec}

Finally, we show that every skew-invariant curve is encoded by some word in the augmented skew-twist monoid $\skewmonoid^+_k$, give a normal form for such words, and thereby obtain a normal form for the skew-invariant curves.

\begin{prop}
For any disintegrated polynomials $f$ and $g$ and any irreducible $(f,g)$-skew-invariant plane curve $\krivb$, there are a decomposition $(f_k, \ldots, f_1)$ of $f$, a word $w \in \skewmonoid^+_k$, and a curve $\kriva$ encoded by $w \star \vec{f}$ such that $\krivb \subset \kriva$.
\end{prop}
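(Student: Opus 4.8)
The strategy is to start from the curve $\krivb$ and run the analysis of Section~\ref{coarsestructure} in reverse: first reduce to the case that $\krivb$ projects dominantly to both factors, then apply Proposition~\ref{curvetocomposition} to obtain a compositional diagram, and finally \emph{read off} the encoding word from a decomposition. More precisely, I would first dispose of the degenerate cases. If $\krivb$ is a vertical line $\{\xi\}\times\AA^1$ or a horizontal line $\AA^1\times\{\zeta\}$, then $\krivb$ is already a component of an $f$-skew-invariant (resp.\ $g$-skew-invariant) variety and there is essentially nothing to encode (the empty word, or rather we take $\krivb$ itself; one should check the statement allows this trivial case, otherwise note that $\krivb\subset\AA^2 = \kriva$ for $\kriva$ encoded by the empty word composed with nothing --- in practice the proposition is applied when $\krivb$ projects dominantly both ways, so I would simply remark that if a projection is constant the claim is immediate from the definition of $Y_\per$). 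So assume both projections $\alpha_1,\alpha_2:\krivb\to\AA^1$ are nonconstant.

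\textbf{From the curve to a compositional diagram.} By Proposition~\ref{curvetocomposition}, since $f$ and $g$ are disintegrated, there are polynomials $h$, $\pi$, $\rho$ with $f\circ\pi = \pi^\sigma\circ h$ and $g\circ\rho = \rho^\sigma\circ h$, and $\krivb$ is parametrized by $t\mapsto(\pi(t),\rho(t))$. The key now is to factor this pair of identities into ``atomic'' pieces, each of which is one of the moves encoded by a generator of $\skewmonoid^+_k$. First cancel any shared initial compositional factors: write $\pi = \pi'\circ\kappa$ and $h = h'\circ\kappa$ with $\kappa$ the largest common initial factor; similarly strip shared terminal factors of $\pi^\sigma$ and $g$. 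Then analyze the two resulting identities $f\circ\pi' = (\pi')^\sigma\circ h'$ (or whatever remains) separately. An identity $f\circ\pi = \pi^\sigma\circ h$ where $\pi$ is a single common-initial-factor-free, common-terminal-factor-free piece falls, by the discussion in Section~\ref{sktwsec}, into exactly two classes: either it comes from a single skew-twist (so $\pi$ is an initial compositional factor of $f$, encoded by $\phi$ or by a sequence of Ritt swaps composed with $\phi$), or it is one of the exceptional identities classified by Proposition~\ref{notskewtwist}, in which case by Corollary~\ref{breaknotsktw} $\pi$ breaks into indecomposable monomial pieces $\pi_m,\ldots,\pi_1$ each of prime degree, each encoded by a generator $\delta_p$ (and the reverse direction by $\epsilon_p$). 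The converse relation of the graph of $h$ composed with the graph of $\rho$ is handled symmetrically via $\beta$ and $\epsilon_p$. Concatenating these atomic words --- using Remark~\ref{concat-compose} and Remark~\ref{plus-concat-compose} that concatenation of words corresponds to composition of encoded correspondences --- produces a word $w\in\skewmonoid^+_k$ for which $\kriva$, the correspondence encoded by $w\star\vec{f}$ for a suitable decomposition $\vec{f}$ of $f$, contains the composite correspondence $\Gamma(\rho)\circ\Gamma(\pi)^{\mathrm{conv}}$ built from the parametrization, and hence contains $\krivb$ as a component.

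\textbf{Carrying out the factorization and the main obstacle.} The backbone of the argument is an induction on $\deg(\pi)+\deg(\rho)$ (or on the number of indecomposable factors of $h$ relative to $f$ and $g$). At each step I peel off one indecomposable factor: either $\pi$ and $h$ share an initial indecomposable factor $\kappa$, in which case $\krivb$ is the image under $(\kappa,\kappa)$ of an $(h',\cdot)$-related curve of smaller complexity and we recurse (using Proposition~\ref{pushpullinv} to control the skew-invariant part); or they do not, and then after possibly composing with the converse graph of the leading indecomposable factor of $h$ (encoded by $\beta$) and using Lemma~\ref{biggercorr}(1) that $\phi\beta\approx\id$, we reduce to the common-factor-free situation and invoke Proposition~\ref{notskewtwist} / Corollary~\ref{breaknotsktw}. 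The delicate point --- and the step I expect to be the main obstacle --- is verifying that the ``atomic'' move one peels off really is encoded by a single generator in the precise sense of Definition~\ref{curv-enc-plus}, i.e.\ that the linear-polynomial bookkeeping matches: Ritt swaps are only well-defined up to linear equivalence, single skew-twists up to skew-linear equivalence, and the $\epsilon_p,\delta_p$ moves are only well-defined up to the scalings described in Lemma~\ref{delep-welldef}. One must check that after peeling off a factor the residual identity is still of the form $f_{i-1}\circ\pi_i = \pi_i^\sigma\circ\ldots$ with $f_{i-1}$ (skew-)linearly related to the ``correct'' polynomial, so that the recursion can continue; this is exactly what Corollary~\ref{breaknotsktw} is designed to supply on the exceptional side, and Lemma~\ref{skewcruciallemma} together with Corollary~\ref{coruniqueencode} supplies on the skew-twist side. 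The other routine-but-fiddly point is that the composite $\kriva$ may be reducible (this already happens for $\Xi\circ\Gamma$ as noted after Definition~\ref{curve-compose-def}), so one must only claim $\krivb\subset\kriva$, not equality --- which is all the statement asks. Modulo assembling these already-established lemmas in the right order, the proof is a bookkeeping argument; no genuinely new input beyond Section~\ref{coarsestructure}, Section~\ref{canformsec}, and Section~\ref{more-gen-sec} is needed.
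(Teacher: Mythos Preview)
Your approach is correct and matches the paper's: use Proposition~\ref{curvetocomposition} to write $\krivb = \rho\circ\pi^{-1}$, factor each of $\pi$ and $\rho$ into skew-twist pieces (encoded by $\phi,\beta$) bracketing a residual common-factor-free piece to which Proposition~\ref{notskewtwist} and Corollary~\ref{breaknotsktw} apply (encoded by $\epsilon_p,\delta_p$), and concatenate. The paper does this in one step per side --- writing $\pi=\pi_3\circ\pi_2\circ\pi_1$ with $\pi_1,\pi_3$ the maximal skew-twist blocks peeled from the two ends and $\pi_2$ the middle --- rather than your factor-by-factor induction, so the linear-bookkeeping you anticipate as the main obstacle is absorbed wholesale into Lemma~\ref{skewcruciallemma} and Remark~\ref{256257forplus} and does not need to be tracked incrementally.
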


\begin{proof}
By Proposition~\ref{curvetocomposition}, $\krivb = \rho \circ (\pi^{-1})$ for some polynomials $\pi$ and $\rho$; here $\pi^{-1}$ denotes the converse relation to the graph of $\pi$ and $\circ$ is curve composition in the sense of Definition~\ref{curve-compose-def}.

By successively factoring out single skew-twists from both sides of $\pi$, it is clearly possible to write $\pi := \pi_3 \circ \pi_2 \circ \pi_1$ so that $\pi_1$ is a skew-twist from $h$ to some polynomial $h_f$, and $\pi_3$ is a skew-twist from some polynomial $f_h$ to $f$, and the middle diagram $\pi_2^\sigma \circ h_f = f_h \circ \pi_2$ satisfies the hypotheses of Theorem~\ref{notskewtwist}: that is, $h_f$ and $\pi_2$ share no initial compositional factors, and $f_h$ and $\pi_2^\sigma$ share no terminal compositional factors. In like manner, we may write $\rho = \rho_3 \circ \rho_2 \circ \rho_1$.

Now the graphs of $\rho_i$ and the converses of graphs of $\pi_i$ are encoded by words in $\skewmonoid^+_k$, so the composition $\krivb = \rho_3 \circ \rho_2 \circ \rho_1 \circ (\pi_1^{-1}) \circ (\pi_2^{-1}) \circ (\pi_3^{-1})$ is encoded by the concatenation of these words.
\end{proof}

\begin{prop}
For any word $w \in \skewmonoid^+_k$, there are integers $M, N \in \mathbb{N}$ with $M < k$, words $w_2, w_1 \in \borgarmonoid_k$ with no instances of $\gamma$ in $w_1$ and no instances of $\psi$ in $w_2$, and a word $\tilde{w}$ consisting only of $\epsilon_p$ and $\delta_p$ for various $p$ such that $w \approx \phi^N \tilde{w} w_2 w_1 \phi^M$
or $w \approx \beta^N \tilde{w} w_2 w_1 \phi^M$. \end{prop}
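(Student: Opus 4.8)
The plan is to reduce any word $w \in \skewmonoid^+_k$ to the asserted normal form in three stages: first push all the new generators $\epsilon_p,\delta_p$ to the far left (just inside the outermost $\phi^N$ or $\beta^N$); then apply the border-guard normal form of Proposition~\ref{border_guard_prop} to the remaining $\skewmonoid_k$-word; and finally separate the $\psi$'s from the $\gamma$'s using Lemma~\ref{w1w2lemma} and collect a suitable trailing $\phi^M$.

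\emph{Stage 1 (pulling out $\tilde w$).} By Lemma~\ref{new-gen-equivs}, each $\epsilon_p$ and $\delta_p$ commutes (up to $\approx$) with every Ritt swap $t_i$ and every single skew-twist $\phi,\psi$, hence also with $\beta$ and $\gamma$ since $\phi\beta\approx\id\approx\beta\phi$ and $\gamma$ is defined in terms of $\beta t_{k-1}$. Distinct primes also commute, and $\delta_p\epsilon_p\approx\id$. Writing $w$ as a word in the generators and repeatedly moving every $\epsilon_p$ and $\delta_p$ leftward past all the ``old'' generators, then cancelling adjacent $\delta_p\epsilon_p$ pairs, we obtain $w \approx \tilde w\, w'$ where $\tilde w$ consists only of the new generators and $w' \in \skewmonoid_k$. (Here $\approx$ respects concatenation by Remark~\ref{plus-concat-compose}, so these local replacements are legitimate.) We do not at this point insist that $\tilde w$ be in the ``$\epsilon$'s then $\delta$'s'' form of Corollary~\ref{wtilde-degbound}; the statement to be proved only asks for a word in the $\epsilon_p$ and $\delta_p$, so Stage 1 already delivers the required $\tilde w$.

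\emph{Stage 2 (border-guard form of $w'$).} Apply Proposition~\ref{border_guard_prop} to $w' \in \skewmonoid_k$ to write $w' \approx \phi^{N'} w''$ or $w' \approx \beta^{N'} w''$ with $w'' \in \borgarmonoid_k$. Since $\tilde w$ commutes with $\phi$ and $\beta$ (again by Lemma~\ref{new-gen-equivs} and $\phi\beta\approx\id$), we may slide the leading $\phi^{N'}$ or $\beta^{N'}$ to the very front, obtaining $w \approx \phi^{N'}\tilde w\, w''$ or $w \approx \beta^{N'}\tilde w\, w''$ with $w'' \in \borgarmonoid_k$.

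\emph{Stage 3 (separating $\psi$ from $\gamma$, and extracting $\phi^M$).} Apply Lemma~\ref{w1w2lemma} to $w'' \in \borgarmonoid_k$: there are $w_2,w_1 \in \borgarmonoid_k$ with $w'' \approx w_2 w_1$, no $\gamma$ in $w_1$, no $\psi$ in $w_2$. It remains to produce the trailing $\phi^M$ with $M<k$ and to arrange that $N$ has the stated form. For this, note that $w_1 \in \borgarmonoid_k$ and, as a word in $\skewmonoid_k$, $\psi$ maps to $t_{k-1}\phi$ while $t_i$ maps to $t_i$; so $w_1$ (written in $\skewmonoid_k$) is a product of blocks each contributing exactly one $\phi$. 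Using Lemma~\ref{biggercorr}(3)--(4), namely $w\phi^k \approx \phi^k w$ and $t_i\phi \approx \phi t_{i+1}$ for $i<k-1$, together with $t_{k-1}\approx\phi\gamma$ (from the proof of Proposition~\ref{border_guard_prop}), we may commute occurrences of $\phi$ to the left in multiples of $k$, splitting off $\phi^{k\lfloor j/k\rfloor}$ and leaving a residual $\phi^{M}$ with $0 \le M < k$ on the right; the part of $w_1$ that is ``used up'' producing this residual $\phi^M$ becomes a $\gamma$-free word still in $\borgarmonoid_k$, which we re-absorb into $w_1$ (it introduces no $\gamma$). Each $\phi^k$ so produced is pulled all the way left past $w_2,w_1,\tilde w$ and merged into the leading power, giving $w \approx \phi^{N}\tilde w\, w_2 w_1 \phi^{M}$ or $w \approx \beta^{N}\tilde w\, w_2 w_1 \phi^{M}$ (in the $\beta$ case, commuting a $\phi^k$ past the leading $\beta^{N'}$ uses $\phi\beta\approx\id$ to cancel $k$ of the $\beta$'s, adjusting $N$ accordingly, and one argues $N\ge 0$ since originally $N' + (\text{number of }\phi\text{'s}) \ge 0$ is not needed — one simply keeps whichever of $\phi^N$ or $\beta^N$ survives). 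Finally, if after this the surviving leading block is $\phi^{N}$ we may further reduce $N$ modulo nothing — actually the statement imposes $M<k$ but not $N<k$, so no further reduction of $N$ is required.

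\emph{Main obstacle.} The delicate point is Stage 3: ensuring that the bookkeeping of $\phi$'s coming out of $w_1$ is consistent with the definition of $\approx$ (Definition~\ref{equicorrdef}), i.e.\ that the encoded correspondences — not just the permutations or the syntactic classes — really agree after each move. Every equivalence we invoke ($\phi\beta\approx\id$, $w\phi^k\approx\phi^k w$, $t_i\phi\approx\phi t_{i+1}$, $t_{k-1}\approx\phi\gamma$, and the $\epsilon,\delta$ commutations) is exactly one of Lemma~\ref{biggercorr}, Lemma~\ref{new-gen-equivs}, Lemma~\ref{w1w2lemma}, or the identities established inside the proof of Proposition~\ref{border_guard_prop}, and $\approx$ is transitive and respects concatenation; so the argument is a finite composition of such moves. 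The only thing to check carefully is that in the $\beta^N$ case the power $N$ does not go negative — which is handled by observing that whenever we would need to cancel more $\beta$'s than are present, we are in the $\phi^N$ regime instead, i.e.\ the net count $N$ (with sign) of $\phi$ versus $\beta$ is an invariant of $\approx$ by Lemma~\ref{biggercorr}(1),(3), and we simply record it as $\phi^N$ if nonnegative and as $\beta^{-N}$ otherwise, with $M<k$ in either case. This completes the proof.
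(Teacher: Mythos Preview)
Your Stages 1 and 2 and the first sentence of Stage 3 are correct and are exactly the paper's argument, up to reordering: pull the $\epsilon_p,\delta_p$ to one side via Lemma~\ref{new-gen-equivs}, apply Proposition~\ref{border_guard_prop} to the remaining $\skewmonoid_k$-word, apply Lemma~\ref{w1w2lemma} to split $w''\approx w_2w_1$, and commute $\tilde w$ past $\phi^{N'}$ or $\beta^{N'}$. At that point you already have $w\approx\phi^{N'}\tilde w\,w_2w_1$ (or the $\beta$ version), which is the asserted form with $M=0<k$. You are done; the proposition requires nothing more.

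The remainder of your Stage 3, where you try to manufacture a nontrivial $\phi^M$ by shuffling $\phi$'s out of $w_1$, is both unnecessary and not correct as written. The rule $t_i\phi\approx\phi t_{i+1}$ only holds for $i<k-1$, so you cannot freely move a single $\phi$ left past the $t_{k-1}$ sitting inside each $\psi$; and going the other way, pushing $\phi$ right past $t_1$ gives $\phi t_1\approx\gamma\phi^2$ (from $t_1\beta\approx\beta^2 t_{k-1}\phi$ and $\phi\beta\approx\id$), which \emph{does} introduce a $\gamma$, contrary to your parenthetical claim. Your ``main obstacle'' paragraph is therefore worrying about a step you do not need.

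The paper's proof differs only in that it introduces $\phi^M$ at the outset rather than at the end: it writes $w\approx w\beta^M\phi^M$ and absorbs $\beta^M$ into the word before applying Proposition~\ref{border_guard_prop}. The specific $M$ is chosen via Lemma~\ref{goodplaintwist} so that $\phi^M\star\vec f$ has a robust clustering of one of three nice shapes; this choice is irrelevant to the proposition as stated but is exactly what makes Lemma~\ref{happyw1w2lem} applicable and hence yields the degree bounds used in Theorem~\ref{realchar}. If you only want the proposition, take $M=0$ and delete the second half of your Stage~3.
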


\begin{proof}
By Lemma~\ref{goodplaintwist}, we find $M < k$ such that $\vec{g} := \phi^M \star \vec{f}$ satisfies one of the three possible conclusions of that lemma. By Lemma~\ref{biggercorr}, $w \approx w \beta^M \phi^M$.

By Lemma~\ref{new-gen-equivs}, there are $\hat{w} \in \skewmonoid_k$ and $\tilde{w}$ consisting only of $\epsilon_p$ and $\delta_p$ for various $p$ such that $w \beta^M \approx \tilde{w} \hat{w}$, and by Remark~\ref{plus-concat-compose}
$w \approx \tilde{w} \hat{w} \phi^M$.

By Proposition~\ref{border_guard_prop} and Lemma~\ref{w1w2lemma}, there are $N$ and $w_i$ as desired such that $\hat{w} \approx \phi^N w_2 w_1$ or $\hat{w} \approx \beta^N w_2 w_1$. Again by Remark~\ref{plus-concat-compose},
$w \approx \tilde{w} \phi^N w_2 w_1 \phi^M$ or $w \approx \tilde{w} \beta^N w_2 w_1 \phi^M$

One last application of Lemma~\ref{new-gen-equivs} finishes the proof. \end{proof}

\begin{Def} \label{mono-curve-def}
A \emph{monomial curves} is plane curve $C \subseteq \AA^2$
 defined by $x^n = y^m$ for some $m, n \in \mathbb{N}$.
 \end{Def}

Recall (Remark~\ref{reintroduce-linear}) that curves encoded by words in $\skewmonoid^+_k$ are only defined up to a linear terminal compositional factor $L$ which must be added manually.

\begin{theorem} \label{realchar}
For any disintegrated polynomials $f$ and $g$, any irreducible $(f,g)$-skew-invariant plane curve $\krivb$ is an irreducible component of $\hat{g} \circ \tilde{g} \circ \kriva \circ \krivd \circ \tilde{f}$ where
\begin{itemize}
\item[$\tilde{f}$] is (the graph of) an initial compositional factor of $f$ or linear $L$;
\item[$\krivd$] is a monomial curve encoded by a word in the border guard monoid whose degrees are bounded by Lemma~\ref{happyw1w2lem}, and in any case by the degree of $f$;
\item[$\kriva$] is a monomial curve encoded by a word in $\epsilon_p$ and $\delta_p$ whose degrees are bounded the product of in- and out-degrees of a compositional factor of $f$, and in any case by the degree of $f$;
\item[$\hat{g}$ and $\tilde{g}$] are one of the following, for some $N \in \mathbb{N}$: \begin{itemize}
 \item $\tilde{g}$ is the converse of the graph of an initial compositional factor of $g^{\sigma^N}$ or linear, and $\hat{g}$ is the converse of the graph of $g^{\lozenge N}$;
 \item $\tilde{g}$ is the graph of an initial compositional factor of $g^{\sigma^{-N}}$ or linear, and $\hat{g}$ is the graph of $( g^{(\sigma^{-N})} )^{\lozenge N}$.
\end{itemize}
\end{itemize}
\end{theorem}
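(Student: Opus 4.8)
The plan is to assemble Theorem~\ref{realchar} from the two structural results that immediately precede it: the fact that every irreducible $(f,g)$-skew-invariant plane curve $\krivb$ is contained in a correspondence $\kriva$ encoded by some word $w \in \skewmonoid^+_k$ acting on a decomposition $\vec f$ of $f$, and the normal form $w \approx \phi^N \tilde w\, w_2 w_1 \phi^M$ (or $w \approx \beta^N \tilde w\, w_2 w_1 \phi^M$) for any such word, where $M < k$, the $w_i \in \borgarmonoid_k$ have $\gamma$ (resp. $\psi$) absent, and $\tilde w$ is built only from the new generators $\epsilon_p,\delta_p$. First I would invoke the first proposition to get $\krivb$ inside $\kriva$, where $\kriva$ is a curve encoded by $w \star \vec f$; then I would replace $w$ by the equivalent normal-form word, using that $v \approx w$ means (by Definition~\ref{equicorrdef}) that the skew-invariant parts of the encoded correspondences agree, and that the $\per$-operation only removes strictly skew-pre-periodic components, so $\krivb$ — being itself skew-invariant — still sits inside the curve encoded by the normal-form word. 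Remark~\ref{reintroduce-linear} is where the manually-reintroduced linear factor $L$ enters: the encoding is defined only up to a terminal linear compositional factor, which is why $\tilde f$ and $\tilde g$ are allowed to be ``linear $L$'' and why one must track this explicitly.

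Next I would translate the syntactic normal form into the geometric composition $\hat g \circ \tilde g \circ \kriva \circ \krivd \circ \tilde f$ factor by factor, using Remark~\ref{concat-compose} / Remark~\ref{plus-concat-compose} (concatenation of words corresponds to composition of encoded correspondences) and the explicit description of the curve encoded by each generator in Definitions~\ref{correncdef} and~\ref{curv-enc-plus}. Reading right to left: the trailing $\phi^M$ produces (the graph of) an initial compositional factor of $f$, namely $\tilde f = f_M \circ \cdots \circ f_1$ (or linear if $M=0$); the block $w_2 w_1 \in \borgarmonoid_k$ encodes the curve $\krivd$ — a monomial curve because, as in the examples in the text, border-guard words move quadratics around a circle of factors and the resulting correspondences defined by identities among compositional factors are monomial; its degrees are bounded by Lemma~\ref{happyw1w2lem} (hence by $\deg f$). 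The block $\tilde w$ in $\epsilon_p,\delta_p$ encodes the curve $\kriva$, which by Corollary~\ref{wtilde-degbound} splits as $uv$ with $u$ all $\epsilon_p$'s and $v$ all $\delta_p$'s, so $\kriva$ is a monomial curve $x^n=y^m$ with $n,m$ bounded by $\deg f$; the finer bound ``product of in- and out-degrees of a compositional factor of $f$'' comes from Proposition~\ref{notskewtwist}/Corollary~\ref{notsktw-degbound}, which forces the monomial appearing in a non-skew-twist identity to have degree dividing both an in-degree and an out-degree of a decomposition of the relevant polynomial. Finally the leading $\phi^N$ (resp. $\beta^N$) encodes the converse of the graph of $g^{\lozenge N}$ (resp. the graph of $(g^{(\sigma^{-N})})^{\lozenge N}$), by the definition of $f^{\lozenge n}$ and Definition~\ref{single-twist-def} of how $\phi$ acts; and since $N$ need not be a multiple of $k$, one extra $\psi$ (encoded via the border-guard embedding $\psi \mapsto t_{k-1}\phi$) may survive, producing the initial compositional factor of $g^{\sigma^{\pm N}}$ that is $\tilde g$, so $\hat g \circ \tilde g$ together have exactly the claimed shape. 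Taking $\krivb$ to be an irreducible component of this composite (composition of correspondences can be reducible, so ``irreducible component of'' is the correct phrasing) completes the argument.

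The main obstacle — and the place I would spend the most care — is the passage from the word-level equivalence $\approx$ to the curve-level containment $\krivb \subseteq (\text{curve encoded by normal form})$. The subtlety is that $\approx$ only guarantees equality of the $\per$-parts of the two encoded correspondences (Definition~\ref{equicorrdef}), not equality of the full correspondences, and moreover the witnessing decompositions $\vec g^n$ and $\vec h^n$ need only be decompositions of the same polynomial $g$ up to that. So I must argue: $\krivb$ is $(f,g)$-skew-invariant, hence $\krivb = \krivb_\per \subseteq (\kriva_w)_\per$ whenever $\krivb \subseteq \kriva_w$ and $\kriva_w$ is the correspondence encoded by the original $w$; then $(\kriva_w)_\per = (\kriva_{\hat w})_\per \subseteq \kriva_{\hat w}$, where $\kriva_{\hat w}$ is the curve encoded by the normal form, and $\kriva_{\hat w}$ is the composite above; so $\krivb$ lies in that composite and, being irreducible, in one of its components. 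A secondary technical point is matching the degree bounds claimed for $\krivd$ and $\kriva$ against Lemma~\ref{happyw1w2lem} and Corollaries~\ref{notsktw-degbound},~\ref{wtilde-degbound}: one must check that the border-guard word $w_2 w_1$ produced by the normal-form proposition does satisfy the hypotheses of Lemma~\ref{happyw1w2lem} (it does, because $\phi^M$ was chosen via Lemma~\ref{goodplaintwist} precisely so that $\vec g := \phi^M \star \vec f$ lands in one of the three good cases), and that the ``in any case bounded by $\deg f$'' fallback follows from Corollary~\ref{notsktw-degbound} applied to the compositional factor in question. None of this requires new ideas beyond what is already proved; it is a matter of bookkeeping the reductions in the right order and being careful about $\per$.
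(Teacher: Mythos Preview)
Your proposal is correct and follows essentially the same route as the paper: Theorem~\ref{realchar} is presented there as the direct geometric translation of the two propositions immediately preceding it (every skew-invariant curve sits in a correspondence encoded by some $w \in \skewmonoid^+_k$, and every such $w$ has the normal form $\phi^N \tilde{w}\, w_2 w_1 \phi^M$ or $\beta^N \tilde{w}\, w_2 w_1 \phi^M$), with the degree bounds read off from Lemma~\ref{happyw1w2lem} and Corollaries~\ref{notsktw-degbound},~\ref{wtilde-degbound}, and the linear factor coming from Remark~\ref{reintroduce-linear}. One small bookkeeping slip: by Definition~\ref{correncdef}, $\phi$ encodes a \emph{graph} and $\beta$ its \emph{converse}, so your assignment of the two $\hat g$ cases to $\phi^N$ versus $\beta^N$ is reversed; this does not affect the argument.
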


\begin{Rk} \label{simpler-answerk}
If some factor of $\vec{f}$ is unswappable, or if some robust clustering of $\vec{f}$ has no gate at some cluster boundary, then $\krivd$ and $\kriva$ above must be diagonals. Then one of $\hat{g} \circ \tilde{g}$ and $\tilde{f}$ cancels with part of the other, and the whole $(f,g)$-skew-invariant plane curve $\krivb$ is the graph of something like $\hat{g} \circ \tilde{g}$. More generally, by Proposition~\ref{notskewtwist} $\kriva$ is the diagonal unless $\vec{f}$ is skew-conjugate to a polynomial of the form $x^k \cdot u(x^\ell)^n$ for some integers $k \geq 1$ and $\ell$, $n$ such that $\ell n \geq 2$.\end{Rk}

The characterization of $(f,g)$-\emph{invariant} curves becomes particularly simple when the two polynomials are the same.

\begin{theorem} \label{hhprop}
Fix an algebraic dynamical system $(\AA^2, (h, h))$ for a disintegrated polynomial $h$. Any irreducible $(h,h)$-invariant plane curve $\krivb$ is the graph, or the converse of the graph, of $L \circ \tilde{h}^\ell$ for some linear linear $L$ that commutes with some compositional power of $h$
and some $\tilde{h}$ such that $\tilde{h}^{\circ r} = h$ for some $r$.\end{theorem}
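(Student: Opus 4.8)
The plan is to specialize the general classification in Theorem~\ref{realchar} to the case $f = g = h$ and show that the various pieces $\tilde f$, $\krivd$, $\kriva$, $\hat g$, $\tilde g$ collapse to the asserted form. First I would invoke Theorem~\ref{realchar}: an irreducible $(h,h)$-invariant plane curve $\krivb$ is a component of $\hat g \circ \tilde g \circ \kriva \circ \krivd \circ \tilde f$, where $\tilde f$ is an initial compositional factor of $h$ (or linear), $\krivd$ and $\kriva$ are monomial curves, and $\hat g \circ \tilde g$ is (up to reversing) $(\text{something})^{\lozenge N}$ precomposed with an initial factor of a $\sigma$-twist of $h$. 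The first substantive step is to eliminate the monomial curves $\krivd$ and $\kriva$: since $h$ is disintegrated it is not skew-conjugate to a monomial, Chebyshev, or negative Chebyshev polynomial. By Remark~\ref{simpler-answerk}, $\kriva$ is forced to be the diagonal unless $h$ is skew-conjugate to $x^k\cdot u(x^\ell)^n$ with $\ell n\geq 2$; but such a polynomial, while possibly disintegrated, is exactly the type \textsf{A}-flavoured case, and one must check (using Proposition~\ref{notskewtwist} together with $f=g$) that a genuine monomial-identity component would force $h$ itself to carry a monomial structure incompatible with the conclusion — more precisely, that when $f=g$ the in-degree and out-degree data coming from Proposition~\ref{uni-inout} cannot both be nontrivial and matched unless the curve degenerates. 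Similarly $\krivd$ collapses to a diagonal because by Lemma~\ref{happyw1w2lem} and Lemma~\ref{goodplaintwist} the border-guard word can only produce a nontrivial monomial curve in the presence of a two-way gate at $k$ forcing $h$ to be a single \textsf{C}-free cluster with a monomial top factor, which again contradicts disintegratedness (via Lemma~\ref{one-cluster-lemma}).

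Once $\kriva$ and $\krivd$ are diagonals, the curve $\krivb$ is a component of $\hat g \circ \tilde g \circ \tilde f$, and now I would absorb $\tilde f$ into $\hat g\circ\tilde g$. Since $\tilde f$ is an initial compositional factor of $h$ (or linear) and $\tilde g$ is (the converse of, or the graph of) an initial compositional factor of $h^{\sigma^{\pm N}}$ (or linear), composing $\tilde f$ with the converse $\tilde g$ either cancels a common initial factor or leaves a residual graph of a factor of a twist of $h$; in the diagonal-$\kriva$, diagonal-$\krivd$ situation the skew-twist bookkeeping of Section~\ref{sktwsec} guarantees that the composite is precisely the graph (or converse graph) of $L\circ h^{\lozenge N}\circ(\text{initial factor})$ twisted appropriately. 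The key point is that $h^{\lozenge N}$, when $h$ is defined over a field where $\sigma$ acts — but in the algebraic-dynamics setting $\sigma=\mathrm{id}$ so $h^{\lozenge N}=h^{\circ N}$ — and the leftover initial factor $\tilde h$ of $h$ satisfies $\tilde h^{\circ r}=h$ for the appropriate $r$ (namely $r$ = number of indecomposable factors of $h$ divided by the length of $\tilde h$, which makes sense because $\tilde h$ is an initial segment of a decomposition of $h$). Thus $\krivb$ is a component of the graph of $L\circ \tilde h^{\circ \ell}$ for suitable $\ell$, hence (being an irreducible curve with both projections dominant, as $L\circ\tilde h^{\circ\ell}$ is a polynomial map) equals that graph or its converse.

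The remaining step is to pin down the linear factor $L$: by Remark~\ref{reintroduce-linear} the encoded correspondence is only well-defined up to a terminal linear factor $L$, and $(h,h)$-invariance of the resulting curve $y = L(\tilde h^{\circ\ell}(x))$ forces a compatibility. Writing out $\Phi(x,y)=(h(x),h(y))$ preserving the curve $y=L\circ\tilde h^{\circ\ell}(x)$ gives $h\circ L\circ\tilde h^{\circ\ell} = L\circ\tilde h^{\circ\ell}\circ h$; using $\tilde h^{\circ r}=h$ and commuting powers of $\tilde h$, this reduces to $h\circ L = L\circ h^{\sigma}$-type identity, i.e. $L$ commutes with a compositional power of $h$ (after the $\sigma$-bookkeeping, here $\sigma=\mathrm{id}$, so literally $L$ commutes with some $h^{\circ m}$, equivalently with $\tilde h^{\circ mr}$). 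The main obstacle I anticipate is the first step — rigorously showing the monomial curves $\kriva$ and $\krivd$ must degenerate to diagonals when $f=g$ is disintegrated. This requires carefully combining Proposition~\ref{notskewtwist} (the non-skew-twist correspondences force a genuine monomial $\pi$ with matched in/out-degrees) with the observation that a disintegrated $h$ cannot simultaneously be the source and target of such a matched monomial identity unless the identity is trivial; the subtlety is that $h$ being disintegrated does permit type \textsf{A} factors, so one cannot simply say "$h$ is not a monomial". One has to use the full force of Proposition~\ref{uni-inout} — that in/out-degrees of decompositions are skew-linear-equivalence invariants — to see that an honest monomial correspondence from $h$ to $h$ would require the in-degree data and out-degree data of (a decomposition of) $h$ to coincide in a way that is impossible, or else produces only the trivial $x^n=y^n$ whose only component with dominant projections is the diagonal (here absorbed, since $P_p$ is not a factor of $h$, into the graph part). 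I would organize this as a short lemma before the main argument.
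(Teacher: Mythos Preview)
Your overall strategy matches the paper's — invoke Theorem~\ref{realchar}, argue that $\kriva$ and $\krivd$ must collapse to diagonals when $f=g=h$, and then identify the residual graph — but there is a genuine gap at the final step, and a secondary error in the elimination of $\krivd$.

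The serious gap is your claim that ``the leftover initial factor $\tilde h$ of $h$ satisfies $\tilde h^{\circ r}=h$ for the appropriate $r$.'' This is false: an initial compositional factor of $h$ is almost never a compositional root of $h$. What actually comes out of the cancellation (once $\kriva$, $\krivd$ are diagonals and $\sigma=\mathrm{id}$) is a graph of $h_0 \circ h^{\circ s}$ where $h_0$ is an initial factor with the property $h \circ h_0 = h_0 \circ h$, i.e.\ $h_0$ \emph{commutes} with $h$. Getting from ``$h_0$ commutes with $h$'' to ``$h_0 = L \circ \tilde h^{\ell}$ with $\tilde h^{\circ r} = h$ and $L$ commuting with a power of $h$'' is not bookkeeping: it is Ritt's classical theorem on commuting (permutable) polynomials~\cite{Ritt-permutable}, which the paper invokes explicitly at this point. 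Your proposal never mentions this external input, and your attempted direct derivation of the commutation property of $L$ already presupposes the compositional-root structure you have not established.

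A smaller error: your elimination of $\krivd$ misreads Lemma~\ref{goodplaintwist}. Case~(3) there has $g_k$ \emph{not} a monomial, and a single \textsf{C}-free cluster is perfectly compatible with disintegratedness, so there is no contradiction of the kind you claim. The paper's actual argument is that a nontrivial $\krivd$ either flips the direction of a one-way gate (case~(2)) or irreparably changes the in/out-degrees of the non-monomial border factor $g_k$ (case~(3)); either way the resulting decomposition cannot be a decomposition of the \emph{same} $h$, which is the constraint coming from $f=g$. Your instinct that $f=g$ forces matching data is right, but the mechanism is damage to invariants (gates, in/out-degrees), not a direct contradiction with disintegratedness.
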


\begin{proof}
Let $f := g := h$, and let $\hat{g}$, $\tilde{g}$ $\kriva$ and $\krivd$ and  $\tilde{f}$ be as in the conclusion of that theorem so that $\krivb$ is an irreducible component of $\hat{g} \circ \tilde{g} \circ \kriva \circ \krivd \circ \tilde{f}$.

Nontrivial $\kriva$ irreparably damages in- and out-degrees of factors $f_i$ in a decomposition of $f=h$. This cannot be fixed by $\krivd$ or by Ritt swaps inside the decomposition because the monomials of $\kriva$ are not among the $f_i$ (see Definition~\ref{decomp-inout-def}). If the same monomial occurs in both directions in $\kriva$, then $\kriva$ is reducible and its factors are given by replacing $x^p = y^p$ in its definition by $x = \xi y$ for various $p$th roots of unity $\xi$.

Non-trivial $\krivd$ irreparably damages the gates of a clustering of $\vec{h}$ in the second case of Lemma~\ref{goodplaintwist}, and irreparably damages the in- and out-degrees of the factor $f_i$ guarding the border in the third case of Lemma~\ref{goodplaintwist}.

Now, as in Remark \ref{simpler-answerk}, one of $\hat{g} \circ \tilde{g}$ and $\tilde{f}$ cancels part of the other, leaving behind the graph of a ``fractional compositional power of $h$'', since $f = g  =h$ is defined over the fixed field of $\sigma$. That is, $\krivb$ is the graph (or the converse of the graph) of $h_0 \circ h^{\circ s}$ for some $s$, where $h_0$ is linear or the identity $h \circ h_0 = h_0 \circ h$ is a plain skew-twist. That is, for some $h_1$, both $h_0 \circ h_1 = h$ and $h_1 \circ h_0 = h$.

The theorem now follows from Ritt's theorem on commuting rational functions~\cite{Ritt-permutable}.
\end{proof}

Recall (see Corollary~\ref{nonorthotopoly}) that for every $f$ and $g$ there is some $h$ so that all $(f,g)$-invariant curves can be understood in terms of $(h,h)$-invariant curves.

\begin{lemma} \label{reducetohh}
Suppose that $f$ and $g$ are disintegrated polynomials and there is an irreducible $(f,g)$-invariant curve.
Then there are polynomials $\pi$, $\rho$, and $h$ such that $\pi \circ h = f \circ \pi$ and $\rho \circ h = g \circ \pi$, and any irreducible $(f,g)$-invariant curve $\kriva$ is of the form $(\pi, \rho) (\krivb_0)$ for some irreducible $(h,h)$-periodic curve $\krivb_0$.
\end{lemma}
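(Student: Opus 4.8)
The statement is essentially a packaging of Corollary~\ref{nonorthotopoly} together with Proposition~\ref{curvetocomposition}, plus a dévissage that descends the skew-twisting to the untwisted setting. I would proceed in three stages. First, unwind the hypothesis: since there is an irreducible $(f,g)$-invariant curve and $f$, $g$ are disintegrated, Proposition~\ref{curvetocomposition} (applied with $\sigma = \operatorname{id}$, or its skew version plus Corollary~\ref{nonorthotopoly} if one wants the $M$-fold iterate) produces polynomials $h$, $\pi$, $\rho$ with $f \circ \pi = \pi^\sigma \circ h$ and $g \circ \rho = \rho^\sigma \circ h$; because $f$ and $g$ are defined over the fixed field (they are algebraic-dynamical, not genuinely $\sigma$-twisted — this is the setting of Theorem~\ref{hhprop}), one may take $\sigma = \operatorname{id}$ throughout, so $\pi \circ h = f \circ \pi$ and $\rho \circ h = g \circ \rho$ as required. (If instead one is in the genuinely twisted case, one replaces $f$, $g$ by suitable skew-iterates $f^{\lozenge M}$, $g^{\lozenge M}$ using Corollary~\ref{nonorthotopoly}; since the lemma is stated over the fixed field this subtlety does not arise, but I would remark on it.)

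Second, I would show that $h$ can be taken with $(\mathbb{A}^1,h)$ disintegrated. This follows because $\pi$ and $\rho$ are nonconstant polynomial morphisms of $\sigma$-varieties $(\mathbb{A}^1,h) \to (\mathbb{A}^1,f)$ and $(\mathbb{A}^1,h) \to (\mathbb{A}^1,g)$; nonorthogonality and the fact that disintegratedness is preserved under such correspondences (via Lemma~\ref{polycov} and the structure of Fact~\ref{medpoly} — a polynomial covered by $h$ with $h$ non-disintegrated would force $f$ non-disintegrated) show $h$ is disintegrated. Then $(\mathbb{A}^1,h)$ is non-orthogonal to itself, so the "diagonal-lift" picture applies.

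Third, and this is the geometric heart, I would descend an arbitrary irreducible $(f,g)$-invariant curve $\kriva$ to an $(h,h)$-periodic curve. Form the map $(\pi,\rho): \mathbb{A}^1 \to \mathbb{A}^2$ and consider the correspondence $\Gamma_\pi \subseteq \mathbb{A}^1 \times \mathbb{A}^1$ (graph of $\pi$), $\Gamma_\rho$ (graph of $\rho$). The curve $\kriva$ pulls back under $(\pi,\rho)$ to a curve in $\mathbb{A}^1 \times \mathbb{A}^1$; more precisely, using Lemma~\ref{compper} and Proposition~\ref{pushpullinv}, the correspondence $\Gamma_\rho \circ (\text{some component}) \circ \Gamma_\pi^{-1}$ is $(f,g)$-skew-invariant and $\kriva$ appears as a component of $(\pi,\rho)(\krivb_0)$ where $\krivb_0$ is the relevant $(h,h)$-skew-invariant component; replacing $\krivb_0$ by its $\per$-part $(\krivb_0)_\per$ (Lemma~\ref{maxinv}, which is $(h,h)$-periodic) and noting that $(\pi,\rho)$ of the periodic part still dominates $\kriva$ (by Proposition~\ref{pushpullinv}, since $\kriva = \kriva_\per$ is itself invariant) gives the claim. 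The key computational point to verify is that $(\pi,\rho)(\krivb_0)$ really does recover all of $\kriva$ and not just a proper subvariety — this uses that $\pi$ and $\rho$ are \emph{finite} surjective maps onto $\mathbb{A}^1$ (they are polynomials) so the image of a curve is a curve, and dimension/degree bookkeeping forces equality of a component.

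\textbf{Main obstacle.} The delicate step is the third one: producing the single curve $\krivb_0$ in $\mathbb{A}^1 \times \mathbb{A}^1$ that maps onto the given $\kriva$, rather than merely asserting that $\kriva$ lies in the image of the union of all pullbacks. One must choose the right irreducible component of $(\pi \times \rho)^{-1}(\kriva)$ (equivalently, the right component of $\Gamma_\rho^{-1} \circ \kriva \circ \Gamma_\pi$), check it is $(h,h)$-invariant — which requires that $h$ genuinely covers the restricted dynamics, i.e. that the diagram $\pi \circ h = f \circ \pi$, $\rho \circ h = g \circ \rho$ is compatible with the invariance of $\kriva$ — and then pass to its periodic part without losing surjectivity onto $\kriva$. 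The compatibility is exactly where one uses that $h$ arises from Proposition~\ref{curvetocomposition} applied to $\kriva$ itself (or to a curve non-orthogonal to it, then conjugating so the same $h$ works for all components simultaneously by Proposition~\ref{nonorthoequiv}); I expect this "uniform choice of $h$" to require the most care, and it is cleanest to first fix $h$ via one curve and then argue every other invariant curve factors through the \emph{same} $h$ because all such curves are non-orthogonal and non-orthogonality to $(\mathbb{A}^1,h)$ is what having a parametrization by $h$ means.
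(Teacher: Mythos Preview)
Your approach is correct and uses the same two ingredients as the paper---Proposition~\ref{curvetocomposition} to produce $h,\pi,\rho$, and Proposition~\ref{pushpullinv} to descend an arbitrary invariant curve---but you are overcomplicating the second step and worrying about a non-issue.

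The paper's entire proof is two lines. The point you are missing is that once $h,\pi,\rho$ are fixed (from any one invariant curve via Proposition~\ref{curvetocomposition}), the map $(\pi,\rho):(\AA^2,(h,h))\to(\AA^2,(f,g))$ given by $(x,y)\mapsto(\pi(x),\rho(y))$ is a morphism of algebraic dynamical systems. Now for \emph{any} $(f,g)$-invariant curve $\kriva$ you have $\kriva=\kriva_\per$, so Proposition~\ref{pushpullinv} gives $(\pi,\rho)^{-1}(\kriva)=\bigl((\pi,\rho)^{-1}(\kriva)\bigr)_\per$; any irreducible component $\krivb_0$ of this is $(h,h)$-periodic, and since $\pi,\rho$ are finite surjective, $(\pi,\rho)(\krivb_0)$ is a curve contained in the irreducible curve $\kriva$, hence equal to it.

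Your ``main obstacle''---the uniform choice of $h$ for all invariant curves---evaporates in this formulation: you are not re-parametrizing each $\kriva$ by a new $h$, you are simply pulling back through a fixed finite morphism of dynamical systems. There is no need for the correspondence gymnastics with $\Gamma_\rho\circ(\cdots)\circ\Gamma_\pi^{-1}$, no need to invoke non-orthogonality again, and Stage~2 (disintegratedness of $h$) is not needed for this lemma at all.
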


\begin{proof}
Proposition~\ref{curvetocomposition} produces polynomials $\pi$, $\rho$, and $h$ as required. Proposition~\ref{pushpullinv} applied to the map of algebraic dynamical systems
$(\pi, \rho) : (\AA^2, (h, h)) \rightarrow (\AA^2, (f, g))$ finishes the proof: any irreducible component $\krivb_0$ of $( (\pi, \rho)^{-1} (\kriva) )_\per$ works.
\end{proof}

An $(h,h)$-periodic curve is $(h^{\circ m},h^{\circ m})$-invariant for some $m$, so Theorem~\ref{hhprop} almost applies to the conclusion of Lemma~\ref{reducetohh}.

\begin{theorem}
\label{centralthm}
Suppose that $f$ and $g$ are disintegrated polynomials and there is an irreducible $(f,g)$-invariant curve.
Then there are polynomials $\pi$, $\rho$, and $h$ such that $\pi \circ h = f \circ \pi$ and $\rho \circ h = g \circ \pi$, and any irreducible $(f,g)$-invariant curve $\kriva$ is of the form $(\pi, \rho) (\krivb_0)$ where $\krivb_0$ is the graph, or the converse of the graph, of $L \circ \tilde{h}^\ell$ for some linear $L$ that commutes with some compositional power of $h$
and some $\tilde{h}$ such that $\tilde{h}^{\circ r} = h^{\circ m}$, for some $r$ and $m$.
\end{theorem}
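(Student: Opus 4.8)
The plan is to obtain Theorem~\ref{centralthm} essentially by composing Lemma~\ref{reducetohh} with Theorem~\ref{hhprop}, the only genuine work being to bridge the gap between an $(h,h)$-\emph{periodic} curve (which is what Lemma~\ref{reducetohh} delivers) and an $(h^{\circ m},h^{\circ m})$-\emph{invariant} curve (to which Theorem~\ref{hhprop} applies), and to verify that the polynomials we feed into Theorem~\ref{hhprop} are still disintegrated. First I would invoke Lemma~\ref{reducetohh} to produce polynomials $\pi$, $\rho$, $h$ with $\pi \circ h = f \circ \pi$ and $\rho \circ h = g \circ \pi$ such that every irreducible $(f,g)$-invariant curve $\kriva$ has the form $(\pi,\rho)(\krivb_0)$ for an irreducible $(h,h)$-periodic curve $\krivb_0$; concretely, $\krivb_0$ is an irreducible component of $W_\per$, where $W := (\pi,\rho)^{-1}(\kriva)$ and $W_\per$ is the maximal $(h,h)$-invariant subvariety of $W$ furnished by Proposition~\ref{maxinv} and Proposition~\ref{pushpullinv}.

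Next I would record the two auxiliary facts. The polynomial $h$ is disintegrated: by Fact~\ref{medpoly} a non-disintegrated polynomial of degree greater than one is, after a linear change of coordinates, a monomial or a (negative) Chebyshev polynomial, hence is covered by an isogeny of a one-dimensional algebraic group in the sense recalled in the introduction, and then the identity $\pi \circ h = f \circ \pi$ would exhibit $f$ itself as covered by an isogeny, contradicting the disintegratedness of $f$. The same reasoning combined with Ritt's Fact~\ref{Rittthm} shows that $h^{\circ m}$ is disintegrated for every $m \in \ZZ_+$: if $h^{\circ m}$ were linearly conjugate to $P_N$ or $\pm C_N$, then, since every indecomposable factor appearing in a decomposition of $P_N$ or $\pm C_N$ is a prime-degree monomial or Chebyshev polynomial and decompositions are unique up to Ritt swaps, every indecomposable factor of $h$ would be of that kind, forcing $h$ itself to be linearly conjugate to a monomial or a Chebyshev polynomial, which we have just excluded. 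Finally, since $\krivb_0$ is an irreducible component of the $(h,h)$-invariant curve $W_\per$ and $(h,h)$ restricts to a finite surjection of $W_\per$ onto itself, $(h,h)$ permutes the finitely many irreducible components of $W_\per$; hence for a suitable $m \in \ZZ_+$ the component $\krivb_0$ is fixed, that is, $\krivb_0$ is $(h^{\circ m},h^{\circ m})$-invariant.

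With these in hand I would apply Theorem~\ref{hhprop} to the disintegrated polynomial $h^{\circ m}$ in place of $h$: the irreducible $(h^{\circ m},h^{\circ m})$-invariant curve $\krivb_0$ is the graph, or the converse of the graph, of $L \circ \tilde{h}^{\ell}$ for some linear $L$ commuting with some compositional power of $h^{\circ m}$ and some $\tilde{h}$ with $\tilde{h}^{\circ r} = h^{\circ m}$ for some $r$. Since every compositional power of $h^{\circ m}$ is a compositional power of $h$, $L$ commutes with a compositional power of $h$, and the asserted description of $\krivb_0$ follows; pushing forward by $(\pi,\rho)$ gives the statement for $\kriva$. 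I do not expect any single step to be difficult here: the two points requiring care are the preservation of disintegratedness under composition (handled above via Fact~\ref{medpoly} and Fact~\ref{Rittthm}) and the clean passage from ``component of an invariant curve'' to ``invariant under a compositional power'', and once these are dispatched the proof is a direct concatenation of Lemma~\ref{reducetohh} and Theorem~\ref{hhprop}.
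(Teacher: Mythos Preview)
Your approach is correct and is essentially the same as the paper's: the paper states Theorem~\ref{centralthm} immediately after Lemma~\ref{reducetohh}, with only the one-line remark that an $(h,h)$-periodic curve is $(h^{\circ m},h^{\circ m})$-invariant for some $m$, so Theorem~\ref{hhprop} applies. You have filled in the details the paper leaves implicit (disintegratedness of $h$ and of $h^{\circ m}$, and the passage from a component of an invariant curve to invariance under an iterate), and these are handled adequately.
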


\section{Applications}
\label{Applications}
In this section we use  our characterization of skew-invariant varieties to answer some open questions about the model theory of difference fields and the arithmetic of algebraic dynamical systems.

\subsection{Disintegrated minimal sets in ACFA}
\label{newsectionstrmin}

In this subsection we address some fine structural questions about minimal sets
in difference closed fields of characteristic zero.
Specifically,  we consider minimal sets of the form $(\AA^1,f)^\sharp$.
We show that if the isomorphism class of $({\AA}^1, f)$ is defined over
 the fixed field of some power of the
distinguished automorphism, then nonorthogonality to $({\AA}^1, f)^{\sharp}$
is definable.  Conversely, whenever the moduli point of $({\AA}^1,
f)$ is transcendental over the fixed field, it is undefinable.
We close out this section by showing that when $f$ is disintegrated,
$({\AA}^1, f)^{\sharp}$ has Morley rank one.

\begin{nota}
We fix a difference closed field $(\UU,\sigma)$ of characteristic zero.  All of the objects we discuss,
such as polynomials, varieties, definable sets, \emph{etc.}, are defined over $\UU$.  Sometimes, we abuse notation writing
expressions like ``$a \in (X,f)^\sharp$'' to mean that $X$ is an algebraic variety over $\UU$, $f:X \to X^\sigma$ is a dominant regular map,
and $a \in (X,f)^\sharp(\UU,\sigma)$ is a $(\UU,\sigma)$-rational point of the $\sigma$-variety $(X,f)$.
\end{nota}

\begin{nota}
When we speak of properties of polynomials being definable, we are considering the polynomial ring as an ind-definable set.  More
concretely, we say that some class ${\mathcal K}$ of $n$-tuples of polynomials is definable if there is some natural number $d$
and definable set ${\mathsf K} \subseteq M_{(d+1) \times n}(\UU)$ for which
$$
{\mathcal K} = \{ (\sum_{i=0}^d a_{i,1} x^i, \ldots, \sum_{i=0}^d a_{i,n} x^i) ~:~ (a_{i,j}) \in {\mathsf K} \} \text{ .}
$$
\end{nota}

\begin{lem}
\label{skewconjdef}
For any given natural number $d$ the relation that two polynomials of degree $d$ are skew-conjugate is  definable.
\end{lem}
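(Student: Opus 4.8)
The statement asserts that for fixed degree $d$, the set of pairs $(f,g)$ of degree-$d$ polynomials that are skew-conjugate is definable. Recall that $f$ and $g$ are skew-conjugate precisely when there is a linear polynomial $\alpha(x) = ax+b$ with $a \neq 0$ such that $g = \alpha^\sigma \circ f \circ \alpha^{-1}$. The plan is to write this condition as an existential formula over $\UU$ in the ring language augmented by $\sigma$, and then observe that it is in fact equivalent to a quantifier-free (hence definable) condition, or simply invoke that $\ACFA_0$ eliminates quantifiers down to the algebraically bounded level so that the existential formula already defines a definable set.

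First I would set up coordinates: identify a degree-$d$ polynomial $f = \sum_{i=0}^d c_i x^i$ (with $c_d \neq 0$) with its coefficient tuple $(c_0,\ldots,c_d) \in \UU^{d+1}$, exactly as in the ind-definable formalism described in the \texttt{Notation} block preceding the lemma. Then the relation ``$f$ and $g$ are skew-conjugate'' is
$$
\exists a \, \exists b \, \Big( a \neq 0 \ \wedge \ g = \alpha^\sigma \circ f \circ \alpha^{-1} \Big),
$$
where $\alpha(x) = ax + b$, $\alpha^{-1}(x) = \tfrac1a x - \tfrac ba$, and $\alpha^\sigma(x) = \sigma(a) x + \sigma(b)$. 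Expanding $\alpha^\sigma \circ f \circ \alpha^{-1}$ and collecting coefficients of $x^0,\ldots,x^d$ gives $d+1$ polynomial equations in $a, b, \sigma(a), \sigma(b)$ and the coefficients of $f$ and $g$; these equations are manifestly expressible in the language of difference rings (one clears the denominators $a, a^2, \ldots, a^d$, multiplying through by a suitable power of $a$, which is legitimate since $a \neq 0$). Thus the relation is defined by an existential difference-ring formula in the parameters $(c_0,\ldots,c_d)$ for $f$ and the corresponding tuple for $g$. That already shows it is $\emptyset$-definable in $(\UU,\sigma)$ in the sense of the theory $\ACFA_0$ — a definable (not merely type-definable) subset of $\UU^{2(d+1)}$ — because $\ACFA$ is model-complete and admits quantifier elimination after naming, for each formula, the algebraically closed difference subfield it involves; concretely, any existential formula in $\ACFA$ defines a definable set. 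Restricting to the locally closed piece $\{c_d \neq 0\} \times \{c'_d \neq 0\}$ and using the $\texttt{nota}$-block conventions yields the claimed definable set $\mathsf{K} \subseteq M_{(d+1)\times 2}(\UU)$.

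The one point requiring a little care — and the main (mild) obstacle — is to make sure the existential quantifiers over $a$ and $b$ really can be absorbed, i.e.\ that the resulting set is genuinely definable rather than just a countable union of definable sets or a projection that fails to be definable. This is where I would explicitly cite the model-theoretic structure of $\ACFA_0$: by the results of Chatzidakis--Hrushovski (as recalled in the excerpt's discussion of the model theory of difference fields, e.g.\ the reference to \cite{CH-ACFA1}), the theory $\ACFA_0$ is the model companion of the theory of difference fields and every formula is equivalent to a boolean combination of existential formulas; in particular the projection of a definable set along the coordinates $(a,b)$ is again definable. Alternatively, and more self-containedly, one can eliminate $a$ and $b$ by hand: from the coefficient of $x^d$ one reads off an equation expressing $\sigma(a)$ as a rational function of $a, c_d, c'_d$; from the coefficient of $x^{d-1}$ one similarly relates $\sigma(b)$, $b$, and $a$ to the data; substituting back, the remaining $d-1$ equations become difference-polynomial conditions on $(a,b)$ together with the coefficients of $f$ and $g$, and since $\sigma$ is an automorphism of $\UU$, quantifying $\exists a \exists b$ over a difference-polynomial system again yields a definable set by model-completeness of $\ACFA_0$. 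Either route completes the argument; the hand-computation route has the advantage of also producing, for later use, explicit bounds on which finitely many $(a,b)$ can witness the skew-conjugacy (generically there are only finitely many, a fact consonant with the earlier remark that the set of linear $L$ skew-commuting with a given $f$ of degree $\geq 2$ is finite).
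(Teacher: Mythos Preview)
Your proof is correct and its first half is essentially the paper's own argument: write the skew-conjugacy relation as an existential formula in the language of difference fields by parametrizing linear polynomials $\alpha(x)=ax+b$ and expanding $g=\alpha^\sigma\circ f\circ\alpha^{-1}$ coefficient by coefficient. The paper's proof is a single sentence to exactly this effect (the skew-conjugation action of the group of linear polynomials is definable on the coefficient space).

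However, your second half is superfluous and reflects a confusion about what ``definable'' means here. In this paper (and in model theory generally), \emph{definable} simply means \emph{first-order definable} in the structure $(\UU,\sigma)$. An existential formula \emph{is} a first-order formula, so it already defines a definable set---there is nothing to ``absorb,'' and no need to invoke model-completeness of $\ACFA$, quantifier elimination, or any hand-elimination of $a$ and $b$. Your worry that ``the projection might fail to be definable'' would make sense if ``definable'' meant ``quantifier-free definable'' or ``constructible,'' but it does not. So while nothing you wrote is wrong, the paper's proof stops after your first paragraph, and so should yours.
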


\begin{proof}
The action of the group of linear polynomials by skew-conjugation is definable with respect to our presentation of the space of degree $d$ polynomials
as a constructible subset of $\AA^{d+1}(\UU)$ .
\end{proof}

\begin{lem}
\label{modulidef}
If $f$ is a polynomial which is skew-conjugate to $f^\sigma$, then there is a polynomial $g$ which is
skew-conjugate to $f$ and satisfies $g^\sigma = g$.
\end{lem}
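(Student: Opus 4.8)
The plan is to unwind the definition of skew\nobreakdash-conjugacy and thereby reduce the statement to the solvability of a pair of difference equations over $(\UU,\sigma)$, which is available since $(\UU,\sigma)$ is difference closed.

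First I would fix a linear polynomial $L$ witnessing the hypothesis, so that $f^\sigma = L^\sigma \circ f \circ L^{-1}$. I will look for $g$ of the form $g = \beta^\sigma \circ f \circ \beta^{-1}$ for a linear polynomial $\beta$ still to be determined; any such $g$ is automatically skew-conjugate to $f$, so the only constraint to be arranged is $g^\sigma = g$. Computing
\[
g^\sigma = \beta^{\sigma^2} \circ f^\sigma \circ (\beta^\sigma)^{-1} = \beta^{\sigma^2} \circ L^\sigma \circ f \circ L^{-1} \circ (\beta^\sigma)^{-1}
\]
(using that $\sigma$ is functorial, so $(\beta^{-1})^\sigma = (\beta^\sigma)^{-1}$), I would observe that $g^\sigma = g$ holds as soon as $\beta$ satisfies the single equation $\beta = \beta^\sigma \circ L$: applying $\sigma$ to this equation gives $\beta^{\sigma^2}\circ L^\sigma = \beta^\sigma$, while rearranging it gives $L^{-1}\circ(\beta^\sigma)^{-1} = \beta^{-1}$, and substituting both into the displayed formula collapses $g^\sigma$ to $\beta^\sigma \circ f \circ \beta^{-1} = g$.

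It then remains to solve $\beta = \beta^\sigma \circ L$ for an invertible linear $\beta$. Writing $L(x) = \ell x + m$ with $\ell \neq 0$ and $\beta(x) = bx + c$, this equation is equivalent to the system $\sigma(b) = b/\ell$ and $\sigma(c) - c = -\sigma(b)\,m$. Since $(\UU,\sigma)$ is existentially closed in the class of difference fields, the first (multiplicative) equation has a nonzero solution $b$; having fixed such a $b$, the second equation is an inhomogeneous $(\sigma - \id)$-equation with fixed right-hand side $-\sigma(b)m \in \UU$, and so also has a solution $c \in \UU$. Then $\beta(x) = bx + c$ is genuinely a linear polynomial because $b \neq 0$, and $g := \beta^\sigma \circ f \circ \beta^{-1}$ is the desired polynomial.

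I do not anticipate a genuine obstacle: the only non-formal ingredient is the solvability of those two difference equations, which is a routine consequence of difference closedness (indeed it follows directly from applying the third axiom for difference closed fields to $\Gm$ and to $\AA^1$ respectively), and the remainder is just bookkeeping with the powers $\sigma$ and $\sigma^2$, which must simply be carried out with care.
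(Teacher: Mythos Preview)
Your proof is correct and follows essentially the same approach as the paper: both reduce to solving the single difference equation $\beta^\sigma \circ L = \beta$ for a linear $\beta$ (the paper writes it as $\mu^\sigma \circ \lambda = \mu$), and both invoke difference closedness to do so. The only cosmetic difference is that you unpack the equation in coordinates and solve two scalar difference equations, whereas the paper appeals once to the geometric axiom applied to the action of pre-composition by $\lambda^{-1}$ on the space of linear polynomials.
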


\begin{proof}
  By hypothesis, there is some linear $\lambda$ with $f^{\sigma} =
  \lambda^{\sigma} \circ f \circ \lambda^{- 1}$. As pre-composition with
  $\lambda^{- 1}$ defines an automorphism of the space of degree $d$
  polynomials, from the geometric axiom for difference closed fields, there is
  some linear $\mu$ satisfying $\mu^{\sigma} \circ \lambda = \mu$. Set $g : =
  \mu^{\sigma} \circ f \circ \mu^{- 1}$. Then $g^{\sigma} = \mu^{\sigma^2}
  \circ f^{\sigma} \circ (\mu^{\sigma})^{- 1} = \mu^{\sigma^2} \circ
  \lambda^{\sigma} \circ f \circ \lambda^{- 1} \circ (\mu^{\sigma})^{- 1} =
  (\mu^{\sigma} \circ \lambda)^{\sigma} \circ f \circ (\mu^{\sigma} \circ
  \lambda)^{- 1} = \mu^{\sigma} \circ f \circ \mu^{- 1} = g$.
\end{proof}

\begin{Rk}
The above lemmata hold more generally.  For example, if $X = X^\sigma$ is an algebraic variety which descends to the fixed field, $\operatorname{Aut}(X)$ is represented by a connected algebraic group, and $(X,f)$ is a $\sigma$-variety on $X$ which is isomorphic as a
$\sigma$-variety to $(X,f^\sigma)$, then there is a map $g:X \to X$ for which $g^\sigma = g$ and $(X,g)$ is isomorphic to $(X,f)$.
\end{Rk}

With these observations in place, let us prove a theorem on definability of
nonorthogonality.

\begin{theorem} \label{hruit-thm}
  For a nonconstant polynomial $f$, the set of polynomials $g$ with $({\AA}^1,
  g) \not \perp ({\AA}^1, f)$ is definable if and only if  $f$ is not skew-conjugate to $f^{\sigma^n}$
  for every positive integer $n \in \ZZ_+$.
\end{theorem}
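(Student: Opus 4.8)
The plan is to analyze the two directions separately, using Fact~\ref{medpoly}, Corollary~\ref{nonorthotopoly}, and the characterization of skew-invariant curves (Theorem~\ref{realchar}/Theorem~\ref{centralthm}) as the engine.

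\textbf{Easy cases first.} If $f$ has degree one, then $(\AA^1,f)$ is skew-conjugate to $(\AA^1,\operatorname{id})$ after base change, hence is skew-conjugate to $f^\sigma$, and the statement asserts nondefinability; here nonorthogonality to $(\AA^1,f)$ holds for all linear $g$ and fails (generically) for $g$ of higher degree, but the set of linear $g$ is certainly \emph{not} definable as an ind-definable class in the sense of Notation~\ref{skewconjdef} — wait, actually one must be careful: the claim is about definability within a fixed degree bound, so the relevant assertion is that no single formula captures $\{g : (\AA^1,g)\not\perp(\AA^1,f)\}$. I would handle $\deg f \le 1$ and the cases where $f$ is skew-conjugate to a monomial or Chebyshev polynomial by hand, noting that in all of these $f$ \emph{is} skew-conjugate to $f^{\sigma^n}$ for suitable $n$ (since the moduli point is defined over a fixed field), so these fall under the ``not definable'' conclusion; I would then reduce the main work to the case that $f$ is disintegrated of degree $\ge 2$.

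\textbf{The ``if'' direction (definability when $f \sim_{\mathrm{sk}} f^{\sigma^n}$ for no $n$).} Wait — I have the biconditional backwards; let me restate. We must show: nonorthogonality to $(\AA^1,f)$ is definable $\iff$ $f$ is \emph{not} skew-conjugate to $f^{\sigma^n}$ for any $n$. So the ``definable'' side is when the skew-conjugacy class of $f$ moves under $\sigma$. Hmm, this runs opposite to the naive intuition, so the key geometric input must be: by Corollary~\ref{nonorthotopoly}, $(\AA^1,g)\not\perp(\AA^1,f)$ iff there exist $M$ and polynomials $\pi,\rho,h$ with $f^{\lozenge M}\circ\pi = \pi^{\sigma^M}\circ h$ and $g^{\lozenge M}\circ\rho=\rho^{\sigma^M}\circ h$. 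When the skew-conjugacy class of $f$ is \emph{not} $\sigma$-periodic, I would argue (using Theorem~\ref{realchar} and the degree bounds in Corollary~\ref{notsktw-degbound}, together with the fact that $\deg f = \deg g$ is forced by limit-degree considerations as recalled in the introduction) that the relevant $\pi$, $\rho$, $h$ and $M$ can be taken from a bounded family: the ``skew-twist'' and ``monomial-identity'' mechanisms each have degrees bounded in terms of $\deg f$, and the absence of $\sigma$-periodicity prevents the infinite families of graphs-of-iterates from contributing new nonorthogonalities at unbounded $M$. Then ``$\exists$ bounded data making the diagram commute'' is a first-order condition on the coefficients of $g$, hence definable; this is where I would invoke Lemma~\ref{skewconjdef} to express ``$h$ is skew-conjugate to the $i$th cluster/factor of a decomposition of $f$'' definably.

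\textbf{The ``only if'' direction (nondefinability when $f \sim_{\mathrm{sk}} f^{\sigma^n}$).} Suppose $f$ is skew-conjugate to $f^{\sigma^n}$. By Lemma~\ref{modulidef} (applied to $\sigma^n$ in place of $\sigma$) we may replace $f$ by a skew-conjugate polynomial, still called $f$, with $f^{\sigma^n}=f$, i.e.\ $f$ is defined over $\operatorname{Fix}(\sigma^n)$. Then for \emph{every} $N$ the graph of $f^{\lozenge nN}$ is an $(f,f)$-skew-invariant curve, and more to the point, $g:=f^{\lozenge n m}$ for each $m$ is a polynomial (of degree $\deg(f)^{nm}$) with $(\AA^1,g)\not\perp(\AA^1,f)$ via the graph of the appropriate iterate. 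These witnessing $g$'s have unbounded degree, so any single formula purporting to define nonorthogonality to $(\AA^1,f)$ would have to capture polynomials of arbitrarily large degree — but a formula in the sense of the Notation only sees polynomials up to a fixed degree bound $d$. So I would make the nondefinability precise by exhibiting, for each candidate bound $d$, an explicit $g$ of degree $>d$ with $(\AA^1,g)\not\perp(\AA^1,f)$, namely a suitable skew-iterate of $f$; this shows no $d$ works. (One must double-check that these skew-iterates are genuinely nonorthogonal and not accidentally orthogonal — but the graph of $f^{\lozenge nm}$ is by construction an $(f, f^{\sigma^{nm}})=(f,f)$-skew-invariant curve projecting dominantly both ways, so it witnesses nonorthogonality directly.)

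\textbf{Main obstacle.} The hard part will be the ``if'' direction: proving that when the skew-conjugacy class of $f$ is not $\sigma$-periodic, the parameter $M$ in Corollary~\ref{nonorthotopoly} and the polynomials $\pi,\rho,h$ may be bounded. This requires carefully tracking how Theorem~\ref{realchar} decomposes a nonorthogonality witness into a bounded ``skew-twist part'', a bounded ``monomial/border-guard part'', and a ``graph-of-$g^{\lozenge N}$ part'' of potentially unbounded $N$ — and then showing that the unbounded-$N$ part can occur only when $g$ (equivalently the common $h$) is skew-conjugate to a $\sigma$-power-shift of $f$, which is exactly the situation excluded. I expect to phrase this as: non-$\sigma$-periodicity of the skew-conjugacy class of $f$ forces the $\hat g$ in Theorem~\ref{realchar} to have $N=0$ up to a bounded ambiguity, collapsing the classification to bounded-degree data. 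Once that boundedness is in hand, definability follows formally from Lemma~\ref{skewconjdef} and standard manipulation of the ind-definable polynomial ring.
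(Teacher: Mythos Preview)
You have the biconditional reversed, and this is not merely a parsing error: your concrete arguments prove it. In your ``only if'' direction you take $g := f^{\lozenge nm}$ as witnesses of unbounded degree, but $\deg(f^{\lozenge nm}) = (\deg f)^{nm}$, and polynomials of different degrees are always orthogonal (limit-degree), so these $g$ are \emph{never} nonorthogonal to $f$. Every $g$ with $(\AA^1,g)\not\perp(\AA^1,f)$ has $\deg g = \deg f$, so the question lives entirely inside the space of degree-$d$ polynomials, and ``unbounded degree of $g$'' is not an obstruction to definability. Likewise, your easy cases are inverted: for linear $f$, and for $f$ skew-conjugate to $P_n$ or $\pm C_n$, the nonorthogonality class \emph{is} definable (it is precisely the linear polynomials, respectively the polynomials skew-conjugate to $P_n$ or $\pm C_n$), and in all of these cases $f$ \emph{is} skew-conjugate to $f^{\sigma^n}$ for some $n$.

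The paper's argument runs the other way. By Theorem~\ref{centralthm}, the polynomials nonorthogonal to a disintegrated $f$ are exactly those whose skew-conjugacy class lies in the orbit of $[[\vec f\,]]$ under the (countable) augmented skew-twist monoid $\skewmonoid^+_k$; hence there are at most countably many such skew-conjugacy classes. The map $f^{\lozenge n}:(\AA^1,f)\to(\AA^1,f^{\sigma^n})$ shows $(\AA^1,f)\not\perp(\AA^1,f^{\sigma^n})$ for every $n$, so if no $f^{\sigma^n}$ is skew-conjugate to $f$ these classes are pairwise distinct and there are \emph{exactly} $\aleph_0$ of them---but in a sufficiently saturated difference closed field an infinite definable set is uncountable, so the set of such $g$ cannot be definable. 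Conversely, if $f$ is skew-conjugate to $f^{\sigma^n}$ for some $n$, then (after applying Lemma~\ref{modulidef} with $\sigma^n$ in place of $\sigma$) one may take $f=f^{\sigma^n}$; now the $\hat g$-part in Theorem~\ref{realchar} only needs $f^{\lozenge m}$ with $m<n$, and together with the bounded skew-twist and monomial pieces this leaves only finitely many skew-conjugacy classes, which is definable by Lemma~\ref{skewconjdef}. So the boundedness you sought in your ``if'' direction is available precisely when $f$ \emph{is} $\sigma$-periodic, not when it is not.
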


\begin{proof}
If $f$ is linear, then the set of polynomials orthogonal to $f$
 is precisely the
set of linear polynomials, which is clearly definable. Likewise, if $f$ is
skew-conjugate to $P_n$ or $C_n$ for $n = \deg (f)$, then $({\AA}^1, f)
\not \perp ({\AA}^1, g)$ if and only if $g$ is skew-conjugate to $P_n$ or
$C_n$. As this class is also definable, we may restrict to the study of
disintegrated $f$. By Theorem~\ref{centralthm}, $g$ is non-orthogonal to $f$ just in
case the skew-conjugacy class of some decomposition of $g$ is in the
image of some (any) decomposition of $f$ under the action of the augmented skew twist monoid $ST^+_k$.
As this monoid is countable, there are at most countably
many skew-conjugacy classes of polynomials nonorthogonal to $f$. The map
$f^{\lozenge n} : ({\AA}^1, f) \to ({\AA}^1, f^{\sigma^n})$ witnesses
nonorthogonality between $f$ and $f^{\sigma^n}$. Thus, if $f$ is not
skew-conjugate to any of its images under $\sigma^n$, we see that there are
exactly $\aleph_0$ skew-conjugacy classes of polynomials nonorthogonal to $f$.
As every infinite definable set must be uncountable, we conclude that
nonorthogonality to such an $f$ is not definable.

Finally, consider the case when $f$ is disintegrated and is skew-conjugate to
some $f^{\sigma^n}$, then by Lemma~\ref{modulidef} we my assume that $f$ itself is equal to
$f^{\sigma^n}$.  Considering Theorem~\ref{centralthm} again we see that if $g$ were
nonorthogonal to $f$, then this nonorthogonality would be witnessed by the
composition of skew-twists, monomial correspondences (with degrees bounded by
$\deg(f)$), another skew-twist, and graphs of skew-composites $f^{\lozenge m}$ with $m < n$.
As the set of such curves is finite (up to isomorphism), the set
of polynomials skew-conjugate to $f$ is definable.
\end{proof}

\begin{Rk}
Curiously, if $f$ is a polynomial defined over some small difference subfield $K$ of $\UU$, then
model theoretic algebraic closure defines a locally finite closure operator on $(\AA^1,f)^\sharp \smallsetminus \operatorname{acl}(K)$
 just in case the skew-conjugacy class of $f$ is transcendental over the fixed field.   If we further assume that $K$ is
 finitely generated as a difference field, then it is not unreasonable to guess that $(\AA^1,f)^\sharp \cap \operatorname{acl}(K)$ is
 finite. (This is plainly false when $f$ is skew-conjugate to $f^{\sigma^n}$ for some $n \in \ZZ_+$.)
\end{Rk}

Because quantifier elimination fails for $\text{ACFA}$, definable sets of $D$-rank one need not
have Morley rank one.  Indeed, it is easy to see that the fixed field is minimal, but its induced structure is unstable.  More sophisticated examples
of stable minimal sets of infinite multiplicity constructed from Hecke correspondences appear in~\cite{CH-ACFA1}.  Here we show that if $f$ is a disintegrated
polynomial, then $(\AA^1,f)^\sharp$ has Morley rank one and is ``usually'' strongly minimal.

\begin{lem}
\label{skewonto}
Let $X$ be an algebraic variety for which $X = X^\sigma$.  Let $f:X \to X$ and $g:X \to X$ be two self-maps.  Define
$\phi := f \circ g$ and $\psi:= g^\sigma \circ f$.  Then $g:(X,\phi)^\sharp \to (X,\psi)^\sharp$ is onto.

$$
\begin{CD}
X @>{g}>> X @>{f}>> X \\
@V{\phi}VV @VV{\psi}V  @VV{\phi^\sigma}V \\
X @>{g^\sigma}>>  X @>{f^\sigma}>>  X
\end{CD}
$$

\end{lem}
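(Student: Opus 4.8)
The plan is to write down an explicit preimage. Given $b\in (X,\psi)^\sharp$, I would simply set $a:=\sigma^{-1}(f(b))$ and check that $a\in(X,\phi)^\sharp$ and $g(a)=b$. This is legitimate because $(\UU,\sigma)$ is difference closed, so $\sigma$ is an automorphism of $\UU$, and $X^\sigma=X$ (hence also $X^{\sigma^{-1}}=X$), so $\sigma^{-1}(f(b))$ is again a $\UU$-point of $X$.

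First I would record the routine fact that $g$ does carry $(X,\phi)^\sharp$ into $(X,\psi)^\sharp$, i.e.\ that the map in the statement is well-defined. From $\phi^\sigma\circ f=f^\sigma\circ g^\sigma\circ f=f^\sigma\circ\psi$ and $\psi\circ g=g^\sigma\circ f\circ g=g^\sigma\circ\phi$ one sees that $g:(X,\phi)\to(X,\psi)$ and $f:(X,\psi)\to(X,\phi^\sigma)$ are morphisms of $\sigma$-varieties; this is exactly the commutativity of the two squares in the displayed diagram. Using the base-change identity $\sigma(g(a))=g^\sigma(\sigma(a))$, if $\phi(a)=\sigma(a)$ then $\psi(g(a))=g^\sigma(f(g(a)))=g^\sigma(\phi(a))=g^\sigma(\sigma(a))=\sigma(g(a))$, so $g(a)\in(X,\psi)^\sharp$.

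For surjectivity, fix $b\in(X,\psi)^\sharp$, so $g^\sigma(f(b))=\psi(b)=\sigma(b)$, and put $a:=\sigma^{-1}(f(b))$, so that $\sigma(a)=f(b)$. Then $\sigma(g(a))=g^\sigma(\sigma(a))=g^\sigma(f(b))=\sigma(b)$, and since $\sigma$ induces a bijection on $X(\UU)$ this forces $g(a)=b$. Finally $\phi(a)=f(g(a))=f(b)=\sigma(a)$, so $a\in(X,\phi)^\sharp$ with $g(a)=b$, which proves the claim.

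The argument is entirely formal, so there is no real obstacle; the only points that require care are invoking difference-closedness to make sense of $\sigma^{-1}$ applied to a point of $X(\UU)$, and keeping straight the identity $\sigma(g(a))=g^\sigma(\sigma(a))$ relating the action of $\sigma$ on points to the $\sigma$-transform of a morphism. (For rational rather than regular $f$ and $g$ one would additionally need $b$ and $f(b)$ to avoid the relevant indeterminacy loci, but in our applications $f$ and $g$ are polynomials, so this does not arise.)
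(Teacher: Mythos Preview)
Your proof is correct and follows essentially the same approach as the paper: given $b\in(X,\psi)^\sharp$, set $a:=\sigma^{-1}(f(b))$ and verify directly that $g(a)=b$ and $a\in(X,\phi)^\sharp$. The paper phrases the verification that $a\in(X,\phi)^\sharp$ slightly differently (first observing $f(b)\in(X,\phi^\sigma)^\sharp$ via the morphism $f:(X,\psi)\to(X,\phi^\sigma)$, then applying $\sigma^{-1}$), but this amounts to the same computation you give.
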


\begin{Rk}
This proof works for any inversive difference field.  It is not necessary to work with a difference closed field.
\end{Rk}

\begin{proof}
Let $P \in (X,\psi)^\sharp$.  Set $Q := \sigma^{-1} f(P)$.  Let us note that the identity $\phi^\sigma \circ f = f^\sigma \circ g^\sigma \circ f = f^\sigma \circ \psi$ shows that $f:(X,\psi) \to (X,\phi^\sigma)$ is a map of $\sigma$-varieties so that
$f(P) \in (X,\phi^\sigma)^\sharp$.  Applying $\sigma^{-1}$, we have $Q \in (X,\phi)^\sharp$.  We compute
$g (Q) = g \sigma^{-1} f (P) = \sigma^{-1} g^{\sigma} f (P) = \sigma^{-1} \psi(P) = \sigma^{-1} \sigma (P) = P$.
\end{proof}

\begin{lem}
\label{iteronto}
If $(X,f)$ is any $\sigma$-variety and $n \in \ZZ_+$, then the map $f^{\lozenge n}:(X,f)^\sharp \to (X^{\sigma^n},f^{\sigma^n})$ is
onto.
\end{lem}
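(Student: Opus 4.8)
The plan is to argue by induction on $n$, using the defining recursion $f^{\lozenge(n+1)} = f^{\sigma^n} \circ f^{\lozenge n}$ together with the observation that a composite of surjections is a surjection. First I would record the basic functoriality of the $\sigma$-transform on $\UU$-points: for any morphism $f$ and any $j \in \NN$ one has $f^{\sigma^j} \circ \sigma^j = \sigma^j \circ f$ as maps of $\UU$-points. Using this, an immediate induction shows that for $a \in (X,f)^\sharp$ (so $f(a) = \sigma(a)$) one has $f^{\lozenge n}(a) = \sigma^n(a)$, and $\sigma^n(a)$ indeed lies in $(X^{\sigma^n},f^{\sigma^n})^\sharp$ since $f^{\sigma^n}(\sigma^n a) = \sigma^n(f(a)) = \sigma^{n+1}(a) = \sigma(\sigma^n a)$. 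Granting surjectivity for $n$, surjectivity for $n+1$ follows by precomposing the surjection $f^{\lozenge n}\colon (X,f)^\sharp \to (X^{\sigma^n},f^{\sigma^n})^\sharp$ with the $n=1$ case applied to the $\sigma$-variety $(X^{\sigma^n},f^{\sigma^n})$, whose $\sigma$-transform is $(X^{\sigma^{n+1}},f^{\sigma^{n+1}})$ because $(f^{\sigma^n})^\sigma = f^{\sigma^{n+1}}$.

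Everything thus reduces to the base case $n=1$, that $f\colon (X,f)^\sharp \to (X^\sigma,f^\sigma)^\sharp$ is onto. Given $P \in (X^\sigma,f^\sigma)^\sharp$, i.e. $P \in X^\sigma(\UU)$ with $f^\sigma(P) = \sigma(P)$, I would set $Q := \sigma^{-1}(P) \in X(\UU)$, which makes sense because $\sigma$ is an automorphism of $\UU$. The relation $f^\sigma(\sigma(Q)) = \sigma(f(Q))$ gives $f(Q) = \sigma^{-1}(f^\sigma(P)) = \sigma^{-1}(\sigma(P)) = P$, and since $\sigma(Q) = P$ as well we get $f(Q) = \sigma(Q)$, so $Q \in (X,f)^\sharp$ and $f(Q) = P$. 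This is exactly the computation carried out, in slightly more generality, in the proof of Lemma~\ref{skewonto}; as noted there, it uses only that $(\UU,\sigma)$ is inversive.

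Equivalently, one can bypass the induction and argue in a single step: given $b \in (X^{\sigma^n},f^{\sigma^n})^\sharp$, set $a := \sigma^{-n}(b)$; then $f(a) = \sigma^{-n}(f^{\sigma^n}(b)) = \sigma^{-n}(\sigma(b)) = \sigma(a)$, so $a \in (X,f)^\sharp$, and $f^{\lozenge n}(a) = \sigma^n(a) = b$ by the first computation above. I expect no genuine obstacle here; the only point requiring care is bookkeeping — keeping all sharp sets taken with respect to $\sigma$ (not $\sigma^n$) and tracking the $\sigma$-transform functor correctly on $\UU$-points — so I would state the functoriality identity $f^{\sigma^j}\circ\sigma^j = \sigma^j\circ f$ explicitly once at the outset and then invoke it freely.
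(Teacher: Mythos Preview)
Your proposal is correct and follows essentially the same approach as the paper: reduce to $n=1$ by induction, and for the base case take $Q := \sigma^{-1}(P)$ and compute $f(Q) = \sigma^{-1}(f^\sigma(P)) = \sigma^{-1}(\sigma(P)) = P = \sigma(Q)$. The paper's proof is terser (it omits the verification that $f^{\lozenge n}$ lands in the right sharp set and the explicit inductive step), but the argument is the same.
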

\begin{proof}
Working by induction on $n$ it suffices to consider the case of $n = 1$.  Let $a \in (X^{\sigma},f^{\sigma})^\sharp$.  Set $b := \sigma^{-1}(a)$.
Then $f(b) = f(\sigma^{-1}(a)) = \sigma^{-1} f^{\sigma}(a) = \sigma^{-1} \sigma (a) = a$.
\end{proof}

With the next lemma we say we characterize the image of a power map.

\begin{lem}
\label{powerimage}
Given a nonconstant polynomial $u$, positive integer $k$, and a prime $\ell$, we set $f(x) := x^k u(x^\ell)$ and $g(x) := x^k u(x)^\ell$.
 In general, $(\AA^1,g)^\sharp$ is the image of $P_\ell$ on
$\bigcup_{\zeta \in \mu_\ell} (\AA^1,\zeta f)^\sharp$.
If $\sigma$ does not act on $\mu_\ell$, the group of $\ell^\text{th}$ roots of unity,
 by raising to the $k^\text{th}$ power, then $P_\ell:(\AA^1,f)^\sharp \to
(\AA^1,g)^\sharp$ is surjective.
\end{lem}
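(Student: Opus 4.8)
\textbf{Proof plan for Lemma~\ref{powerimage}.}

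The plan is to unwind the definitions and apply Lemma~\ref{skewonto} with the right choice of maps. Write $P_\ell$ for the $\ell^\text{th}$ power map and observe that $g = P_\ell \circ (x^k u(x)) = P_\ell \circ h$ where $h(x) := x^k u(x)$, while $f = x^k u(x^\ell) = h \circ P_\ell$ by direct computation: $h(P_\ell(x)) = (x^\ell)^k u(x^\ell) = x^{\ell k} u(x^\ell)$, which is not quite $f$, so in fact I must be a little more careful. The correct bookkeeping is $f = h^{[\ell]} \circ P_\ell$ in a suitable sense; concretely one checks $P_\ell \circ f = P_\ell \circ x^k u(x^\ell)$ against $(x^k u(x)^\ell)^{??}$. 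Rather than guess, I would set $\phi := P_\ell \circ h$ and $\psi := h^\sigma \circ P_\ell$ in the notation of Lemma~\ref{skewonto} (with $X = \AA^1$, the map called $f$ there being $P_\ell$ and the map called $g$ there being $h$). Since $P_\ell = P_\ell^\sigma$ and $P_\ell$ commutes with scalings only up to a root of unity, I get that $h : (\AA^1, P_\ell \circ h)^\sharp \to (\AA^1, h^\sigma \circ P_\ell)^\sharp$ is onto. The point is then to recognize $P_\ell \circ h$ as (a scaling-twist of) each $\zeta f$ and $h^\sigma \circ P_\ell$ as $g$.

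The key computational step: $P_\ell \circ h (x) = (x^k u(x))^\ell = x^{k\ell} u(x)^\ell$, and I want to compare this with $f(x) = x^k u(x^\ell)$; they are visibly different polynomials, so what really happens is that $P_\ell$ intertwines $f$ with $g$ in the following precise way, which one verifies by substitution: $g \circ P_\ell = P_\ell \circ f'$ fails, but $g(x^\ell) = x^{k\ell} u(x^\ell)^\ell = (x^k u(x^\ell))^\ell = f(x)^\ell = P_\ell(f(x))$. So $g \circ P_\ell = P_\ell \circ f$, i.e. $P_\ell : (\AA^1, f) \to (\AA^1, g)$ is a morphism of $\sigma$-varieties (no $\sigma$ intervenes since $P_\ell$ and its relationship to $f,g$ are defined over the prime field). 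Thus by Lemma~\ref{pushpullinv}/the definition of $\sharp$, $P_\ell$ maps $(\AA^1,f)^\sharp$ into $(\AA^1,g)^\sharp$. For surjectivity onto $(\AA^1,g)^\sharp$ from the union over all $\zeta \in \mu_\ell$ of $(\AA^1, \zeta f)^\sharp$: given $a$ with $g(a) = \sigma(a)$, I want a preimage. The preimages of $a$ under $P_\ell$ are the $\ell$ values $b$ with $b^\ell = a$; I need $\sigma(b) = (\zeta f)(b)$ for some root of unity $\zeta$, i.e. $\sigma(b)/f(b)$ is an $\ell^\text{th}$ root of unity — and indeed $(\sigma(b)/f(b))^\ell = \sigma(b^\ell)/f(b)^\ell = \sigma(a)/g(a) = 1$ using $g\circ P_\ell = P_\ell \circ f$ and that $b^\ell = a$. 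This uses difference-closedness only to know a solution $b$ exists in $\UU$ with $b^\ell = a$, which is automatic since $\UU$ is algebraically closed. Hence every $a \in (\AA^1,g)^\sharp$ lies in $P_\ell\big(\bigcup_{\zeta \in \mu_\ell} (\AA^1, \zeta f)^\sharp\big)$, and combined with the forward inclusion (each $(\AA^1,\zeta f)^\sharp$ maps into $(\AA^1, \zeta^\ell g)^\sharp = (\AA^1,g)^\sharp$) this gives the first assertion.

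For the second assertion, I would observe that $\zeta f$ is skew-conjugate to $f$ via a scaling: $\zeta f = \lambda^\sigma \ast (\text{something})$, or more directly, if $\mu$ is a scalar with $\mu^{\deg f - 1}$ related appropriately, then $x \mapsto \mu x$ conjugates $f$ to $\zeta f$; the precise condition is that the map $\zeta \mapsto$ (the root of unity appearing after conjugating) is governed by how $\sigma$ acts on $\mu_\ell$ relative to the $k^\text{th}$-power action. Concretely, $(\cdot\mu)^{-1} \circ f \circ (\cdot \mu)(x) = \mu^{-\deg f}(\mu x)^k u(\mu^\ell x^\ell) = \mu^{k - \deg f} x^k u(\mu^\ell x^\ell)$, and since $\mu_\ell$ acts trivially on $u(x^\ell)$ when $\mu \in \mu_\ell$... this shows that $(\AA^1, \zeta f)^\sharp$ is a $\sigma$-variety isomorphic to $(\AA^1, f)^\sharp$ precisely when the relevant twist is trivial, and a careful reading of the exponents $k, \ell$ shows this triviality is equivalent to $\sigma$ \emph{not} acting on $\mu_\ell$ by $k^\text{th}$ powers. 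When that holds, every $a \in (\AA^1,g)^\sharp$ has a preimage $b$ already in $(\AA^1,f)^\sharp$ itself (not merely in some $(\AA^1,\zeta f)^\sharp$), so $P_\ell : (\AA^1,f)^\sharp \to (\AA^1,g)^\sharp$ is surjective.

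\textbf{Main obstacle.} The genuinely delicate point is the second assertion: identifying exactly which hypothesis on the $\sigma$-action on $\mu_\ell$ forces all the twisting root-of-unity ambiguity to collapse onto $(\AA^1,f)^\sharp$ itself. This is a bookkeeping argument tracking how a scaling by an $\ell^\text{th}$ root of unity interacts with the exponent $k$ in $x^k u(x^\ell)$ and with $\sigma$; the arithmetic is elementary (it is essentially the observation that $\mu \mapsto \mu^k$ is a bijection of $\mu_\ell$ precisely when $\gcd(k,\ell)=1$, which holds for Ritt polynomials, so the real content is matching $\sigma|_{\mu_\ell}$ against this automorphism), but it must be set up cleanly to avoid sign/root-of-unity errors. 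The first assertion, by contrast, is a routine application of Lemma~\ref{skewonto} together with the identity $g \circ P_\ell = P_\ell \circ f$ and algebraic closedness of $\UU$.
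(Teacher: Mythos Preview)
Your argument for the first assertion is correct and matches the paper's: once you have the identity $g \circ P_\ell = P_\ell \circ f$, any $\ell^\text{th}$ root $b$ of $a \in (\AA^1,g)^\sharp$ satisfies $(\sigma(b)/f(b))^\ell = 1$, so $b \in (\AA^1,\xi f)^\sharp$ for some $\xi \in \mu_\ell$. The opening detour through Lemma~\ref{skewonto} is unnecessary and you rightly abandon it.

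For the second assertion your instinct (``governed by how $\sigma$ acts on $\mu_\ell$ relative to the $k^\text{th}$-power action'') is exactly right, but the execution via skew-conjugacy is muddled: you compute ordinary conjugacy $(\cdot\mu)^{-1}\circ f\circ(\cdot\mu)$ rather than skew-conjugacy $(\cdot\,\sigma(\mu))\circ f\circ(\cdot\,\mu^{-1})$, and even the line you wrote has the wrong leading factor. The paper avoids this language entirely and simply changes the choice of $\ell^\text{th}$ root. Given $b$ with $b^\ell = a$ and $\sigma(b) = \xi f(b)$, replace $b$ by $\zeta b$ for $\zeta \in \mu_\ell$; since $\zeta^\ell = 1$ one has $f(\zeta b) = (\zeta b)^k u((\zeta b)^\ell) = \zeta^k f(b)$, whence
\[
\frac{\sigma(\zeta b)}{f(\zeta b)} \;=\; \frac{\sigma(\zeta)}{\zeta^k}\cdot\xi,
\]
and one needs this to equal $1$. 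The map $\zeta \mapsto \sigma(\zeta)/\zeta^k$ is a group endomorphism of the cyclic group $\mu_\ell$ of prime order, hence surjective unless it is identically $1$, which is precisely the excluded case that $\sigma$ acts on $\mu_\ell$ by $k^\text{th}$ powers. Note that no $\gcd(k,\ell)=1$ hypothesis is needed or used; your remark about that condition is a red herring.
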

\begin{proof}
Let $a \in (\AA^1,g)^\sharp$.  Let $b \in AA^1(\UU)$ be any solution to $P_\ell(b) = a$.  From the equation
$P_\ell \circ f = g \circ P_\ell$, we see that $P_\ell (f(b)) = g(a) = \sigma(a)$ while we also know that $P_\ell(\sigma(b)) = \sigma(a)$.
Hence, there is some $\xi \in \mu_\ell$ for which $\xi f(b) = \sigma(b)$.

We assume now that $\sigma$ does not act by raising the
$k^\text{th}$ power on $\mu_\ell$. Thus, the map $\mu_\ell \to \mu_\ell$ given by $\zeta \mapsto \sigma(\zeta)/\zeta^k$ is onto. Thus, we
may choose $\zeta \in \mu_\ell$ with $\sigma(\zeta)/\zeta^k = \xi$. We compute  $f(\zeta b) = (\zeta b)^k u( (\zeta b)^\ell) =
\zeta^k b^k u(b^\ell) = \zeta^k f(b) = \zeta^k \xi \sigma(b) = \zeta^k \xi \sigma(\zeta)^{-1} \sigma(\zeta b) = \sigma(\zeta b)$.
Thus, $\zeta b \in (\AA^1,f)^\sharp$ and $P_\ell(\zeta b) = a$.
\end{proof}

Combining the above lemmata we conclude that disintegrated sets of the form $(\AA^1,f)^\sharp$
have Morley rank one.

\begin{theorem}
\label{finiteRM}
If $f$ is a disintegrated polynomial, then $(\AA^1,f)^\sharp$ has Morley rank one.
\end{theorem}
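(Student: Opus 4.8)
The plan is to show that $D:=(\AA^1,f)^\sharp$ cannot be partitioned into infinitely many pairwise disjoint infinite definable sets, i.e.\ that $RM(D)\le 1$ (while $RM(D)\ge 1$ is clear since $D$ is infinite). Two facts are used throughout. First, $D$ is a minimal set of $D$-rank $1$ which is stably embedded in $(\UU,\sigma)$: minimality and the $D$-rank computation are part of the trichotomy analysis of~\cite{CH-ACFA1} specialised to $\sigma(x)=f(x)$ (cf.\ Fact~\ref{medpoly}), and stable embeddedness is a standard feature of $\sharp$-sets in $\ACFA$. Second, Morley rank is unchanged by definable surjections with finite fibres, and more generally by definable finite-to-finite correspondences; hence $RM\bigl((\AA^1,f)^\sharp\bigr)$ depends only on the skew-conjugacy class of $f$ and is insensitive to replacing $f$ by a plain skew-twist --- Lemma~\ref{iteronto} supplies the definable bijection $(\AA^1,f)^\sharp\cong(\AA^1,f^{\sigma^n})^\sharp$ (via the formula $y=\sigma(x)$), and Lemma~\ref{skewonto}, applied with the transfer map $g$ ranging over initial compositional factors of a decomposition of $f$, supplies the finite-to-one surjections relating $D$ to $(\AA^1,h)^\sharp$ for the plain skew-twists $h$ of $f$. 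Since $D$ is stably embedded, a definable subset of any power $D^{n}$ is definable in the structure induced on $D$; since $f$ is disintegrated, that induced structure is generated by its binary part, namely by the $(f,f)$-skew-invariant plane curves $C\subseteq\AA^2$ restricted to $D\times D$.

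Suppose first that $f$ is \emph{not} skew-conjugate to $x^{k}\cdot u(x)^{n}$ with $n>1$. Then no conjugate of $f$ has a decomposition of out-degree greater than one, so by Proposition~\ref{notskewtwist} the intervening monomial in any non-skew-twist source of an invariant curve has degree dividing $1$, i.e.\ is trivial; combining this with Theorem~\ref{hhprop} and Remark~\ref{simpler-answerk}, every $(f,f)$-skew-invariant curve is a finite union of graphs and converse graphs of maps $L\circ\tilde h^{\circ\ell}$ with $\tilde h^{\circ r}=f$ for some $r$, together with loci $\{\xi\}\times\AA^1$ and $\AA^1\times\{\zeta\}$ over fixed points. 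Restricted to $D\times D$ the first family is a family of finite-to-finite correspondences and the second meets $D\times D$ in a finite set crossed with $D$; so every binary relation on $D$ is a finite union of finite-to-finite correspondences. Using disintegratedness to reduce $n$-ary definable subsets of $D$ to boolean combinations of binary ones, and using that $D$ is orthogonal to the fixed field of every power of $\sigma$ (so that no pseudofinite-field-like structure descends onto $D$ and no extra unary predicates appear in the structure induced on $D$ over $\acl(\emptyset)$), one concludes that every definable subset of $D$ in one variable is finite or cofinite. Thus $D$ is strongly minimal, and in particular $RM(D)=1$.

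It remains to treat the case $f$ skew-conjugate to $x^{k}u(x)^{n}$ with $n>1$; replacing $f$ by this representative costs nothing. Pick a prime $\ell\mid n$ and apply Lemma~\ref{powerimage} with its $g$ taken to be $f$ (so $u$ is replaced by $w(x)^{\,n/\ell}$ if $f=x^{k}w(x)^{n}$): this exhibits $D=(\AA^1,f)^\sharp$ as the image under $P_{\ell}$ of the finite union $\bigcup_{\zeta\in\mu_{\ell}}(\AA^1,\zeta f')^\sharp$, where the associated inner polynomial $f'$ has out-degree $n/\ell<n$ and each $\zeta f'$ is again disintegrated (scalings preserve disintegratedness). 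By induction on the out-degree --- the base case being the strongly minimal case of the previous paragraph --- each $(\AA^1,\zeta f')^\sharp$ has Morley rank $1$, so $D$ is a finite-to-one image of a finite union of Morley-rank-one sets, whence $RM(D)=1$; the finitely many distinct $P_{\ell}$-images of these pieces, indexed by a quotient of $\mu_{\ell}$, are the extra connected components, so that $D$ fails to be strongly minimal precisely in the case recorded in the statement. The main obstacle is the strongly minimal case of the second paragraph: because quantifier elimination fails in $\ACFA$ --- the fixed field being the cautionary example of a $D$-rank-one set of infinite Morley rank --- one must genuinely exclude any pseudofinite-field-like definable subsets of $D$, and the two ingredients that make this go through are the stable embeddedness of $D$ together with its orthogonality to all fixed fields (which pins down the structure induced on $D$ over $\acl(\emptyset)$ as that of a pure set carrying only finite-to-finite correspondences), and the elementary observation that such a disintegrated minimal set admits no infinite, co-infinite definable subset in one variable.
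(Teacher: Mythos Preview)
Your overall strategy diverges from the paper's, and the divergence hides a genuine gap. The paper's proof is short and uniform: invoking the bounded-quantifier elimination for $\ACFA$ together with the analysis of finite $\sigma$-stable extensions from~\cite{CH-ACFA1}, it asserts that every infinite definable subset of $(\AA^1,f)^\sharp$ is, up to a finite set, a finite union of images $h\bigl((\AA^1,g)^\sharp\bigr)$ for maps of $\sigma$-varieties $h:(\AA^1,g)\to(\AA^1,f)$. Theorem~\ref{centralthm} then factors each such $h$ as a composite of skew-twists, maps of the form $k^{\lozenge n}$, and power maps of degree bounded by $\deg(f)$. By Lemmata~\ref{skewonto} and~\ref{iteronto} the first two kinds are surjective, and by Lemma~\ref{powerimage} the power maps contribute at most $\deg(f)$ distinct images. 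Hence only finitely many infinite definable subsets arise, and Morley rank is one. No case split on the shape of $f$ is needed.

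Your argument instead tries to read off the induced structure on $D$ from the classification of $(f,f)$-skew-invariant curves, and this is where it breaks. The sentence ``since $f$ is disintegrated, that induced structure is generated by its binary part, namely by the $(f,f)$-skew-invariant plane curves $C\subseteq\AA^2$ restricted to $D\times D$'' conflates two different things. Disintegratedness in the sense of Definition~\ref{deftrivial} is a statement about \emph{skew-invariant varieties}, i.e.\ about quantifier-free definable sets. It does not by itself say that every \emph{definable} binary relation on $D$ arises from an invariant curve. Since $\ACFA$ lacks full quantifier elimination, definable subsets of $D\times D$ can in principle be projections from higher-dimensional $\sharp$-sets, and ruling this out is exactly the content of the bounded-QE/$\sigma$-stable-extension input the paper cites. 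Your appeal to stable embeddedness and orthogonality to fixed fields is the right intuition but does not substitute for that input: you need a concrete description of the definable subsets of $D$, and you have only described the quantifier-free ones. Once you supply that input, the case split and the out-degree induction become unnecessary --- the paper's argument already handles the $x^k u(x)^n$ case and the generic case uniformly, with Lemma~\ref{powerimage} playing the role of bounding the number of non-surjective images rather than serving as an induction step. (Incidentally, your invocation of Theorem~\ref{hhprop} in the strongly minimal case is also off: that theorem concerns invariant curves for $\sigma=\operatorname{id}$, whereas here you need the skew-invariant characterization of Theorem~\ref{realchar} or~\ref{centralthm}.)
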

\begin{proof}
The quantifier-elimination to bounded existential quantifiers for $\text{ACFA}$ together with the work around finite $\sigma$-stable extensions in \cite{CH-ACFA1} imply that every infinite definable subset of $(\AA^1,f)^\sharp$ is (up to a finite sets) is a finite union of sets of the form $h (\AA^1,g)^\sharp$ where $h:(\AA^1,g) \to (\AA^1,f)$ is a map of $\sigma$-varieties.

By Theorem~\ref{centralthm}, $h$ may be expressed as a composite of a sequence of skew-twists, power maps of degree bounded by $\deg(f)$ and maps of the
form $k^{\lozenge n}$.  By Lemmata~\ref{skewonto} and~\ref{iteronto}, the maps of the first and third type are always onto.  By Lemma~\ref{powerimage}, there are at most $\deg(f)$ many distinct sets arising from the power maps.
\end{proof}

\subsection{Density of dynamical orbits}
\label{ddo}
In this subsection we apply Theorem~\ref{hhprop} to deduce a version of a conjecture of Zhang on the density of dynamical orbits.  Let us recall Zhang's conjecture.

\begin{conj}[Conjecture 4.1.6 of~\cite{Zh}]
\label{Zhconj}
Let $f:X \to X$ be a polarizable dynamical system over a number field $k$. Then there is point $a \in X(k^{alg})$ algebraic over $k$ whose forward orbit ${\mathcal O}_f(a) := \{ f^{\circ n}(a) : n \in {\mathbb Z}_+ \}$ is Zariski dense in $X$.
\end{conj}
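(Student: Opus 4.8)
The plan is to reduce Conjecture~\ref{Zhconj} to a statement about avoiding invariant subvarieties and then to attack that by classifying them. Recall from Section~\ref{ADsect} that for a dynamical system $(X,f)$ and a point $a \in X(K)$, the forward orbit $\cO_f(a)$ is Zariski dense in $X$ if and only if for no $n \in \NN$ does $f^{\circ n}(a)$ lie on a proper (possibly reducible) $f$-invariant subvariety of $X$. Hence the conjecture is equivalent to the assertion that the proper $f$-invariant subvarieties of $X$, together with all of their iterated preimages $f^{-n}(Y)$, fail to cover $X(k^{\alg})$. The natural route has three steps: (i) show there are only countably many proper irreducible $f$-invariant subvarieties of $X$; (ii) note that each preimage $f^{-n}(Y)$ is again proper, since $f$ is finite-to-one on a dense open set; and (iii) produce a point of $X(k^{\alg})$ omitting this countable family.

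Steps (i) and (ii) are where our classification enters, for the dynamical systems within reach of the present methods: $X = \AA^n$ with $f$ given coordinatewise by univariate polynomials $f_1,\ldots,f_n$ of degree at least two. By Theorem~\ref{coarsestructurethm} the coordinates split into pairwise orthogonal blocks of linear, power/Chebyshev, and disintegrated type, and an invariant subvariety of $\AA^n$ is a product of invariant subvarieties of the blocks. Inside a disintegrated block, Theorem~\ref{realchar} --- in its readable forms Theorem~\ref{hhprop} and Theorem~\ref{centralthm}, together with Proposition~\ref{powertriv}, which reduces higher-dimensional relations to binary ones --- shows that every invariant curve in $\AA^2$ is the graph or converse graph of a polynomial of the form $L \circ h^{\circ m}$ with $h^{\circ \ell} = f_i$ and $L$ linear commuting with a compositional power of $h$, or else a monomial relation; so the invariant subvarieties of the block are cut out by finitely many equations $x_i = L \circ h^{\circ m}(x_j)$, monomial relations $x_i^{c} = x_j^{d}$, and slices $x_k = \xi$ with $\xi$ preperiodic, and the set of such defining data is countable. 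This gives (i), and (ii) is immediate. In the general polarizable setting one would instead need a finiteness theorem for preperiodic subvarieties of bounded canonical height, and no such unconditional input is available, so the strategy does not go through for arbitrary $(X,f)$.

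The hard part is step (iii), and it is the genuine content of the conjecture: $k^{\alg}$ is \emph{countable}, so one cannot omit a countable union of proper subvarieties by the cardinality argument available over an uncountable field. For the reachable case I would build the point $a = (a_1,\ldots,a_n)$ over $k$ itself --- strengthening the conclusion by avoiding the passage to $k^{\alg}$ --- block by block, the group-like blocks handled by the explicit prime constructions of Propositions~\ref{lineardense} and~\ref{chebpowdense}. For a disintegrated block one chooses coordinates one at a time: by the Northcott property each $f_i$ has only finitely many preperiodic points in the number field at hand, so $a_1$ may be taken with infinite $f_1$-orbit, and having chosen $a_1,\ldots,a_{j-1}$ one chooses $a_j$ with infinite $f_j$-orbit avoiding, for each $i<j$, the countably many values at which $a_j$ would lie on an invariant curve linking coordinates $i$ and $j$. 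That bad set is avoidable inside $k$ because its elements of large height form a sparse sequence (the forward iterates $L(f_i^{\circ m}(a_i))$, whose heights grow geometrically) while its remaining elements are confined to bounded height (preimages $g^{-1}(a_i)$, bounded via height estimates on the iterates $f_i^{\circ m}$), so its complement in $k$ is infinite. Since disintegratedness makes all algebraic relations binary, avoiding the binary invariant curves and the preperiodic slices suffices to avoid every proper $\Phi$-invariant subvariety, so the resulting $a$ has Zariski dense forward orbit --- recovering Conjecture~\ref{Zhconj} for such $\Phi$ and yielding the sharper Theorem~\ref{Zhangconjdense}. The fully general polarizable case stays beyond these methods, since there is no classification of invariant subvarieties for an arbitrary polarizable $(X,f)$ to play the role of Theorems~\ref{coarsestructurethm} and~\ref{hhprop}, and no coordinate-by-coordinate construction to replace the cardinality count.
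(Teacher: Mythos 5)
You are reviewing a statement that the paper does not prove: Conjecture~\ref{Zhconj} is quoted verbatim from Zhang and left as a conjecture. Indeed the paper is explicit that its own dynamical systems do not even satisfy the hypotheses (affine $\AA^n$ admits no polarization, and coordinatewise polynomials of different degrees cannot be polarized), and what it proves is the variant Theorem~\ref{Zhangconjdense}. Your proposal, to its credit, recognizes this, but it then only sketches the special case and explicitly concedes the general polarizable case; so as a proof of the stated conjecture it is incomplete by construction, and there is no argument in the paper for you to have matched.

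For the special case that the paper does establish, your route genuinely diverges at the crucial avoidance step, and the divergence matters. The paper's step (iii) is the integrality argument of Lemma~\ref{integrality} and Lemma~\ref{trivdense}: choose the coordinates successively so that $a_{j}$ is \emph{not integral} over a finitely generated ring containing $a_1,\ldots,a_{j-1}$ and all relevant coefficients (with leading coefficients inverted); since, after pulling back via Proposition~\ref{curvetocomposition} to the common cover $h$, every invariant curve is by Theorem~\ref{hhprop} the graph or converse graph of a polynomial with unit leading coefficient, such a curve cannot contain a point with one integral and one non-integral coordinate, and this mismatch persists under forward iteration (so weak invariance is handled too). This works over an arbitrary field of characteristic zero, produces the point over $k$ itself, and never mentions heights. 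Your Northcott/height-growth substitute is number-field specific and, as written, has a gap: the bad set for $a_j$ is not ``a geometrically growing sequence plus a set of bounded height.'' One must avoid, for every $N$ and every linking polynomial $g$ in an infinite family of unbounded degree (and for curves in both directions), the fibres $f_j^{-N}\bigl(g(f_i^{\circ N}(a_i))\bigr)$ and $f_j^{-N}\bigl(g^{-1}(f_i^{\circ N}(a_i))\bigr)$; these heights are neither uniformly bounded nor confined to a sparse sequence, and since $k^{\alg}$ and $k$ are countable you cannot close the argument by countability alone, so a genuine counting or height estimate is missing exactly where the paper's non-integrality trick does the work. There are also smaller inaccuracies: the description of invariant varieties as literally cut out by $x_i = L\circ h^{\circ m}(x_j)$, monomial relations, and preperiodic slices conflates the $(h,h)$-case of Theorem~\ref{hhprop} (where the zero-dimensional slices are at \emph{fixed} points and no monomial correspondences occur) with the general pair case of Theorem~\ref{centralthm}, where the curves are images under $(\pi,\rho)$ rather than graphs in the original coordinates.
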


The dynamical systems we have been considering, namely, $(\AA^n,\Phi)$ given by coordinatewise univariate polynomials as above, do not fit Conjecture~\ref{Zhconj} as stated for a couple of reasons.  First, as $\AA^n$ is affine, no dynamical system on $\AA^n$ can be polarized.   More seriously, even if we pass to a projective closure, the hypothesis of polarizability forces all of the polynomials involved to have the same degree.  We shall prove that there are dense orbits without these restrictions.

In light of our results and a geometric version of Conjecture~\ref{Zhconj} due to Amerik and Campana~\cite{AC}, we propose a more general conjecture on the density of dynamical orbits.

\begin{conj}
\label{genZhconj}
Let $K$ be an algebraically closed field of characteristic zero, $X$ an irreducible algebraic variety over $K$, and $\Phi:X \to X$ a \emph{rational} self-map.  We suppose that there does not exist a positive dimensional algebraic variety $Y$  and dominant rational map $g:X \to Y$ for which $g \circ \Phi = g$ generically.  Then there is some point $a \in X(K)$ with a Zariski dense forward orbit.
\end{conj}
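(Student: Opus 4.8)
Since this statement is advanced as a conjecture rather than a theorem, what follows is a plan of attack together with an indication of where the present methods suffice and where the difficulty really lies. The first step is to translate the conclusion into a statement about the exceptional locus
\[
\operatorname{Exc}(\Phi) := \{\, a \in X(K) : \overline{\cO_\Phi(a)} \subsetneq X \,\}.
\]
Because the orbit closure $\overline{\cO_\Phi(a)}$ is always a (possibly reducible) weakly $\Phi$-invariant subvariety containing $a$, and conversely $a$ lies in some proper weakly $\Phi$-invariant subvariety exactly when $\overline{\cO_\Phi(a)} \neq X$, we have $\operatorname{Exc}(\Phi) = \bigcup_Y Y(K)$, the union ranging over the maximal proper weakly $\Phi$-invariant subvarieties $Y \subsetneq X$. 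Thus the conjecture asserts precisely that $\operatorname{Exc}(\Phi) \neq X(K)$ whenever $\Phi$ carries no invariant rational fibration, and the role of that hypothesis is already visible: if $X$ were swept out by the fibres of a dominant rational map $g : X \to Y$ with $\dim Y > 0$ and $g\circ\Phi = g$ generically, then every point would sit on a proper invariant subvariety and $\operatorname{Exc}(\Phi)$ would be all of $X(K)$.

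The plan then splits according to the cardinality of $K$. When $K$ is uncountable one would argue that there are only countably many maximal proper $\Phi$-invariant subvarieties, so that $\operatorname{Exc}(\Phi)$ is a countable union of proper Zariski-closed sets and its complement in $X(K)$ is nonempty by a Baire-category/cardinality argument. To bound the count I would stratify the Chow variety of $X$ — a countable disjoint union of finite-type schemes — by the locus of $\Phi$-invariant cycles; each finite-type stratum is either zero-dimensional, contributing finitely many invariant subvarieties, or positive-dimensional, in which case its universal family should, after an appropriate Stein-type factorization, produce an invariant rational fibration, contradicting the hypothesis. This is close in spirit to results of Amerik, Amerik--Bogomolov--Rovinsky and Amerik--Campana, and to the geometric conjecture of \cite{AC} in whose spirit the present statement is framed.

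When $K$ is countable — the decisive case, since $\overline{\QQ}$ falls here — the cardinality argument collapses and one must \emph{construct} a point with dense orbit. Here the natural route is the $p$-adic one: spread $X$ and $\Phi$ over a finitely generated subring, reduce modulo a prime $\fp$ of good reduction at which $\Phi$ has a fixed (or periodic) point $a_0$ whose differential $d\Phi_{a_0}$ has eigenvalues in $1 + \fp\Zp$ generating a $p$-adically large one-parameter analytic group; then for $a$ in the residue disk of $a_0$ the orbit $\cO_\Phi(a)$ interpolates analytically along an infinite-order $p$-adic arc, so by the dynamical Mordell--Lang method any proper subvariety meeting $\cO_\Phi(a)$ infinitely often must contain that arc, and, letting $a$ vary over the disk, these arcs sweep out a neighborhood of $a_0$, forcing $\overline{\cO_\Phi(a)} = X$ for suitable $a$. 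The non-existence of an invariant fibration is what should guarantee the requisite genericity of the eigenvalues of $d\Phi_{a_0}$, and also that a prime of good reduction carrying a fixed point of the needed type exists at all.

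The main obstacle is exactly the first step in full generality: absent any classification of $\Phi$-invariant subvarieties one cannot a priori exclude an \emph{uncountable} supply of invariant subvarieties not organized by a single dominant map $g$, nor produce the generic periodic point needed for the arithmetic case — which is why the statement remains a conjecture. By contrast, the present paper proves it unconditionally in the case $X = \AA^n_K$ with $\Phi(x_1,\ldots,x_n) = (f_1(x_1),\ldots,f_n(x_n))$ and each $\deg f_i \geq 2$ (Theorem~\ref{Zhangconjdense}): there Theorem~\ref{coarsestructurethm} and Theorem~\ref{hhprop} reduce every proper $\Phi$-invariant subvariety to finitely many equations $x_i = \zeta$ with $\zeta$ a fixed point and $x_j = g(x_k)$ with $g$ commuting with a compositional power of the relevant polynomial, so that only countably many occur, $\operatorname{Exc}(\Phi)$ is a countable union of proper subvarieties, and an explicit point — for instance a tuple of distinct primes placed off every coordinate hyperplane — avoids all of them.
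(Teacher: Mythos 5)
You are right that the paper offers no proof of this statement: it is advanced there purely as a conjecture (modelled on the geometric conjecture of Amerik--Campana), and the paper only establishes the special instance $X=\AA^n_K$, $\Phi$ given coordinatewise by univariate polynomials (Theorem~\ref{Zhangconjdense}). So there is nothing in the paper to compare your outline against step by step, and your honest framing of the general statement as a plan of attack is appropriate. As a proof, though, the plan has the two gaps you yourself flag, and they are genuine: in the uncountable case the passage from ``a positive-dimensional family of invariant subvarieties'' to ``an invariant rational fibration $g:X\to Y$'' via the Chow variety and a Stein-type factorization is precisely the hard geometric content (one must handle non-flat families, reducible and non-reduced cycles, and the fact that invariance is only generic), and in the countable case the $p$-adic/Mordell--Lang strategy requires a periodic point of good reduction whose differential has multiplicatively independent (or at least non-root-of-unity, well-positioned) eigenvalues, and nothing in the hypothesis ``no invariant fibration'' is known to produce such a point in general.

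One concrete correction to your last paragraph: the paper's proof of Theorem~\ref{Zhangconjdense} does \emph{not} run through a countability argument, and indeed could not, since the theorem is asserted for every field $K$ of characteristic zero, including countable ones such as $\QQ$. After the orthogonality reductions (Theorem~\ref{coarsestructurethm}, Lemma~\ref{orthdense}), the disintegrated block is handled by an integrality trick: in Lemma~\ref{integrality} one takes $\hat a$ integral and $\hat b$ non-integral over a finitely generated ring whose relevant leading coefficients are units, and observes that the classified invariant curves $y=L\circ\widetilde h^{\ell}(x)$ or $x=L\circ\widetilde h^{\ell}(y)$ from Theorem~\ref{hhprop} can never contain a point with one coordinate integral and the other not; Lemma~\ref{trivdense} then iterates this coordinate by coordinate. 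The ``tuple of distinct primes'' device you mention is used only for the power/Chebyshev component (Proposition~\ref{chebpowdense}), to avoid proper algebraic subgroups of $\Gm^n$, not to dodge a countable union of invariant subvarieties. So even in the special case, the mechanism that makes the argument work over countable fields is the explicit shape of the invariant curves plus integrality, not cardinality.
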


\begin{Rk}
In~\cite{ABR}, Amerik, Bogomolov and Ravinsky prove some instances of Conjecture~\ref{genZhconj},
without imposing any polarizability hypotheses, but instead arguing from the local behaviour of the dynamical system.
\end{Rk}

We shall prove the instance of Conjecture~\ref{genZhconj} in which $X$ is affine space and $\Phi$ is given by a sequence of univariate polynomials.

\begin{theorem}
\label{Zhangconjdense}
Let $K$ be a field of characteristic zero, $f_1, \ldots, f_n \in K[x]$ nonconstant polynomials over $K$ in one variable.  Suppose that the linear polynomials amongst the $f_i$'s are independent in the sense
Definition~\ref{independentpoly}.  Let $\Phi:\AA^n_K \to \AA^n_K$ be given by
$(x_1, \ldots, x_n) \mapsto (f_1(x_1), \ldots, f_n(x_n))$.  Then there is a point $a \in \AA^n(K)$ for which $\cO_\Phi(a)$ is Zariski dense.
\end{theorem}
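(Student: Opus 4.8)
The plan is to combine the coarse structure theorem for skew-invariant varieties with the fine classification of invariant plane curves for disintegrated polynomials, thereby reducing the search for a Zariski dense orbit to three model cases, two of which are already handled in the text. First I would pass to $\bar K$ equipped with $\operatorname{id}$, realized inside a difference closed field extending it (so $\bar K$ lies in its fixed field and a $\Phi$-invariant subvariety of $\AA^n_{\bar K}$ is automatically $\Phi$-skew-invariant). Then Theorem~\ref{coarsestructurethm} together with Proposition~\ref{productortho} shows that $(\AA^n,\Phi)$ is, up to reordering coordinates, a product of pairwise orthogonal $\sigma$-varieties grouped as follows: a block $L$ of all coordinates on which $f_i$ is linear (possibly itself a union of several blocks of the partition); blocks $T_u$ on which each $f_i$ is skew-conjugate to $P_{N_u}$ or $\pm C_{N_u}$ for one fixed degree $N_u$; and blocks $D_v$ of pairwise nonorthogonal disintegrated polynomials, which are then necessarily all of one common degree $d\ge 2$, since nonorthogonal polynomials of distinct degrees do not exist. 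Since every proper $\Phi$-invariant subvariety is, componentwise, a product of invariant subvarieties over these blocks, and since $\cO_\Phi(a)$ fails to be Zariski dense exactly when some iterate $\Phi^{\circ m}(a)$ lies on a proper invariant subvariety, it suffices to produce, in each block, a $K$-rational point with Zariski dense forward orbit; orthogonality between the blocks is what makes these assemble to a dense orbit in $\AA^n(K)$.

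The linear and power/Chebyshev blocks cost nothing new. For $L$, the hypothesis that the linear $f_i$ are independent in the sense of Definition~\ref{independentpoly} is precisely the hypothesis of Proposition~\ref{lineardense}, which supplies $a_L\in\AA^L(K)$. For each $T_u$, after absorbing the linear conjugacies one is in the situation of Proposition~\ref{chebpowdense}, whose proof exhibits an explicit $K$-point (e.g.\ a tuple of distinct primes in $\Gm$) with dense orbit.

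The remaining, and genuinely substantive, case is a disintegrated block $D$ with coordinatewise map $\Psi$ built from pairwise nonorthogonal disintegrated $f_i$ of common degree $d\ge 2$. By Proposition~\ref{powertriv} every proper irreducible $\Psi$-invariant subvariety $Y\subseteq\AA^D$ is a component of $\bigcap_{i<j}\pi_{ij}^{-1}\pi_{ij}(Y)$, so some projection $\pi_{ij}(Y)\subsetneq\AA^2$ is a proper $(f_i,f_j)$-invariant plane curve; and by Theorem~\ref{hhprop}, Theorem~\ref{centralthm}, Lemma~\ref{reducetohh} and Proposition~\ref{curvetocomposition} every such curve is, up to linear change of coordinates, either the graph or converse graph of a polynomial $g=L\circ\tilde h^{\circ\ell}$ intertwining $f_i$ and $f_j$ (with $L$ commuting with a compositional power of the common root $h$ and $\tilde h^{\circ r}=h^{\circ m}$), or a horizontal or vertical line through an $f_i$- or $f_j$-periodic point. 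Thus the set of points of $\AA^D$ with non-dense $\Psi$-orbit is contained in a countable union $\bigcup_N \Psi^{-N}(\mathcal C)$ of proper subvarieties, $\mathcal C$ ranging over this explicitly bounded family of invariant curves pulled back along the $\pi_{ij}$. Over an uncountable $K$ one concludes at once, since a countable union of proper subvarieties cannot cover $\AA^D(K)$. For countable $K$ — a number field or an algebraic extension thereof — I would argue with heights: preperiodic points of a degree-$d$ polynomial over a finitely generated base are a set of bounded Call–Silverman canonical height, and each of the special invariant curves carries a dynamics conjugate to a degree-$d$ polynomial on $\AA^1$ and is likewise height-rigid; feeding this through the finitely many curve-types and the countably many levels $N$, one selects the coordinates $a_i$ one at a time, each a non-preperiodic point avoiding the thin, height-bounded bad locus imposed by the earlier choices, to produce the desired $a=(a_i)_{i\in D}\in\AA^D(K)$.

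The classification of invariant subvarieties is entirely in hand (Theorems~\ref{hhprop} and~\ref{centralthm}), so essentially all the work lives in this last step, and the hard part is the countable-field case: promoting the statement ``the obstructions form a thin, highly structured countable family'' to ``there is an honest $K$-rational point dodging all of them'' cannot be done by cardinality alone and requires quantifying the height growth along the special invariant curves.
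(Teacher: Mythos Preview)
Your structural reduction is exactly the paper's: split via orthogonality into linear, monomial/Chebyshev, and disintegrated blocks; quote Propositions~\ref{lineardense} and~\ref{chebpowdense} for the first two; use disintegratedness (Proposition~\ref{powertriv}) and the classification of invariant plane curves (Theorem~\ref{hhprop}) to reduce the third to avoiding a countable family of explicit curves; and reassemble via Lemma~\ref{orthdense}.

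The gap is precisely where you locate it: producing a $K$-point in a disintegrated block when $K$ is countable. Your cardinality argument for uncountable $K$ is fine, but your height sketch is not: the phrase ``height-bounded bad locus'' is wrong as stated, since the invariant curves $y = L \circ \tilde h^{\ell}(x)$ range over all $\ell \in \NN$ and the resulting bad values of the new coordinate have unbounded (though discretely distributed) canonical height. A height argument along these lines is plausible but would require real work you have not done.

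The paper avoids the dichotomy entirely with a uniform integrality argument (Lemmata~\ref{integrality} and~\ref{trivdense}). Passing to the common cover $(\AA^1,h)$ from Corollary~\ref{nonorthotopoly}, one fixes a finitely generated subring $R \subseteq K$ over which the maximal compositional root $\tilde h$ of $h$, the finitely many linear symmetries $L$, and the covering maps $\pi,\rho$ are all defined with unit leading coefficients, and lets $\tilde R$ be its integral closure in its fraction field. One then picks $\hat a \in \tilde R$ and $\hat b \in K \smallsetminus \tilde R$, both non-preperiodic. By Theorem~\ref{hhprop} every irreducible $(h^{\circ M},h^{\circ M})$-invariant curve is the graph or converse graph of some $L \circ \tilde h^{\ell}$; a polynomial over $R$ with unit leading coefficient sends $\tilde R$ to $\tilde R$ and $K \smallsetminus \tilde R$ to $K \smallsetminus \tilde R$, so no iterate $(h^{\circ n}(\hat a), h^{\circ n}(\hat b))$ can lie on any such curve. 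Pushing down via $(\pi,\rho)$ gives the point for two coordinates, and the inductive step to more coordinates simply enlarges $R$ and chooses the next coordinate non-integral over the new ring. This single binary invariant (integral versus not) replaces your height comparisons and works over every characteristic-zero $K$ at once.
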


\begin{Rk}
As one sees from the proof, in some sense almost every point in $\AA^n(K)$ has a Zariski dense orbit.  We do not pursue the issue of giving a quantitative treatment of this observation.
\end{Rk}

\begin{Rk}
As the reader will see, the notion of \emph{independence} is exactly what is required so that there is no dominant
map from $(\AA^n,\Phi)$ to a positive dimensional trivial algebraic dynamical system.  We do not pretend that the inclusion of linear polynomials in this statement is deep, but we have included them as there is little extra work involved in doing so and they round out the statement.
\end{Rk}

\begin{Rk} \label{Rk67}
Theorem~\ref{Zhangconjdense} may be read as saying that there are points $a \in \AA^n(K)$ having the property that for no positive integer $N$ is $\Phi^{\circ N}(a)$ contained in any proper $\sigma$-subvariety of $(\AA^n,\Phi)$ when $K$ is treated as a difference field with $\sigma = \operatorname{id}_K$.  In fact, we will prove Theorem~\ref{Zhangconjdense} by explicitly describing the irreducible $\sigma$-subvarieties of $(\AA^n,\Phi^{\circ M})$ for all $M \in \ZZ_+$ and then observing that there are points in $\AA^n(K)$ whose forward orbits miss all such $\sigma$-subvarieties.
\end{Rk}

We prove Theorem~\ref{Zhangconjdense} as a consequence of a number of simple lemmata.

\begin{lemma}
\label{invardense}
Let $f:X \to X$ be an algebraic dynamical system over some field $K$ with $X$ being irreducible.  A point $a \in X(K)$ has a Zariski dense forward orbit if and only if there is no natural number $m$ and proper $f$-invariant subvariety (not necessarily irreducible) of $X$ containing $f^{\circ m}(a)$.
\end{lemma}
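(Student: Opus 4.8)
The plan is to prove both directions of the equivalence by unwinding the definitions of Zariski density of an orbit and of (weak) $f$-invariance, using Noetherianity of the Zariski topology to pass from ``every finite union of proper invariant subvarieties'' to a single proper invariant subvariety.

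For the forward direction, I would argue by contraposition: suppose there is some $m \in \NN$ and a proper (possibly reducible) $f$-invariant subvariety $Y \subsetneq X$ with $f^{\circ m}(a) \in Y(K)$. Since $f(Y) = Y$, we have $f^{\circ n}(a) \in Y$ for all $n \geq m$, so $\cO_f(a) \subseteq \{a, f(a), \ldots, f^{\circ(m-1)}(a)\} \cup Y$, which is contained in a proper closed subset of $X$ (here I use that $X$ is irreducible, so that a finite set together with a proper closed subset is still proper closed). Hence $\overline{\cO_f(a)} \neq X$, i.e.\ the forward orbit is not Zariski dense.

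For the converse direction, again by contraposition: suppose $\cO_f(a)$ is not Zariski dense, so $Z := \overline{\cO_f(a)}$ is a proper closed subvariety of $X$. As noted in the text following the definition of weakly invariant varieties, $Z$ is weakly $f$-invariant, i.e.\ $f(Z) \subseteq Z$. The issue is that $Z$ need not be $f$-invariant (one might have $f(Z) \subsetneq Z$), so I cannot directly take $Y = Z$. Instead I would consider the descending chain $Z \supseteq f(Z) \supseteq f^{\circ 2}(Z) \supseteq \cdots$ of closed subvarieties; by Noetherianity this stabilizes, say $f^{\circ m}(Z) = f^{\circ(m+1)}(Z) =: Y$ for all large $m$. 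Then $f(Y) = Y$, so $Y$ is $f$-invariant, and $Y$ is still proper in $X$ since $Y \subseteq Z \subsetneq X$. Finally $f^{\circ m}(a) \in f^{\circ m}(Z) = Y$ (using $a \in Z$), which produces the desired $m$ and $Y$. The argument that $f^{\circ m}(Z)$ is closed uses that $f$ is a morphism of an algebraic dynamical system; if one prefers to avoid questions of whether the set-theoretic image is closed, one can instead take $Y := \overline{f^{\circ m}(Z)}$ for $m \gg 0$ and note that the chain of closures stabilizes, with $f(Y) = Y$ following from $f$ being dominant onto its image together with the stabilization.

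The only mild obstacle is the bookkeeping in the converse: ensuring that the stabilized variety $Y$ is genuinely $f$-invariant (image equal, not merely contained) rather than just weakly invariant, and confirming $Y$ remains a proper subvariety. Both are immediate from the Noetherian stabilization of the chain $\{\overline{f^{\circ n}(Z)}\}_{n \in \NN}$ and the containment $Y \subseteq Z \subsetneq X$. No deeper input is needed; in particular this lemma is purely formal and does not use any of the structure theory of polynomial dynamical systems developed earlier.
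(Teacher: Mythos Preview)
Your proof is correct and follows essentially the same approach as the paper. The paper phrases the converse direction slightly differently, working directly with the descending chain $\overline{\cO_f(f^{\circ m}(a))}$ rather than with $\overline{f^{\circ m}(Z)}$ for $Z = \overline{\cO_f(a)}$, but since $\overline{f^{\circ m}(Z)} = \overline{\cO_f(f^{\circ m}(a))}$ these chains coincide and the arguments are the same Noetherian stabilization.
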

\begin{proof}
For any point $a \in X(K)$, as $f(\cO_f(a)) = \cO_f(f(a)) \subseteq \cO_f(a)$, for $m \gg 0$ the variety $\overline{ \cO_f (f^{\circ m}(a))}$ is an $f$-invariant subvariety of $X$.  Hence, if $\cO_f(a)$ is not Zariski dense in $X$, then $\overline{\cO_f(f^{\circ m}(a))}$ is a proper $f$-invariant subvariety of $X$.  Conversely, if $f^{\circ m}(a) \in Y \subsetneq X$ and $Y$ is $f$-invariant, then $\cO_f(a) \subseteq Y(K) \cup \{ f^{\circ i}(a) : 0 \leq i \leq m \}$ so that $\overline{\cO_f(a)} \subseteq Y \cup \{ f^{\circ i}(a) : 0 \leq i \leq m \} \subsetneq X$.
\end{proof}

\begin{lemma}
\label{iterdense}
If $f:X \to X$ is an algebraic dynamical system over some field $K$, $X$ is irreducible, and $a \in X(K)$ has a Zariski dense forward orbit, then for any $m \in {\mathbb Z}_+$, $X = \overline{\cO_{f^{\circ m}}(a)}$
\end{lemma}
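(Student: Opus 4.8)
The claim is that if $a$ has a Zariski dense forward orbit under $f$, then for every positive integer $m$ the forward orbit of $a$ under the iterate $f^{\circ m}$ is also Zariski dense in $X$. The natural approach is to contrast the closure $Z := \overline{\cO_{f^{\circ m}}(a)}$ with the full orbit closure $\overline{\cO_f(a)} = X$. First I would observe that the forward orbit $\cO_f(a)$ decomposes as the finite union of the $f^{\circ m}$-orbits of the $m$ points $a, f(a), \dots, f^{\circ(m-1)}(a)$; that is, $\cO_f(a) = \bigcup_{i=0}^{m-1} f^{\circ i}\big(\cO_{f^{\circ m}}(a)\big)$. Taking Zariski closures, $X = \overline{\cO_f(a)} = \bigcup_{i=0}^{m-1} \overline{f^{\circ i}(\cO_{f^{\circ m}}(a))} \subseteq \bigcup_{i=0}^{m-1} f^{\circ i}(Z)$, using that $f^{\circ i}$ is a morphism and hence $\overline{f^{\circ i}(W)} = \overline{f^{\circ i}(\overline{W})}$ for any $W$ (more precisely $f^{\circ i}(\overline W)$ is constructible with closure $\overline{f^{\circ i}(W)}$, which suffices). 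So $X$ is covered by the finitely many closed sets $\overline{f^{\circ i}(Z)}$ for $i = 0, \dots, m-1$.

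Since $X$ is irreducible and it is the union of the finitely many closed sets $\overline{f^{\circ i}(Z)}$, one of them must equal $X$; say $\overline{f^{\circ j}(Z)} = X$ for some $0 \le j \le m-1$. Now I would use Lemma~\ref{invardense}: because $a$ has a Zariski dense forward orbit, no iterate $f^{\circ \ell}(a)$ lies on a proper $f$-invariant subvariety. I want to leverage this to conclude $Z = X$ directly. The cleanest route is to note that $Z = \overline{\cO_{f^{\circ m}}(a)}$ is a \emph{weakly $f^{\circ m}$-invariant} subvariety of $X$: indeed $f^{\circ m}(\cO_{f^{\circ m}}(a)) = \cO_{f^{\circ m}}(f^{\circ m}(a)) \subseteq \cO_{f^{\circ m}}(a)$, so $f^{\circ m}(Z) \subseteq Z$, and for $\ell \gg 0$ one even has that $\overline{\cO_{f^{\circ m}}(f^{\circ \ell m}(a))}$ is $f^{\circ m}$-invariant. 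If $Z \subsetneq X$ were proper, then $W := \bigcup_{i=0}^{m-1} \overline{f^{\circ i}(Z)}$ would be an $f$-invariant subvariety of $X$ containing $a$; but then $W = X$ forces one of its components, hence one $\overline{f^{\circ i}(Z)}$, to be all of $X$, and I must rule this out.

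The main obstacle, then, is showing that $\overline{f^{\circ j}(Z)} = X$ with $Z$ proper is impossible. Here I would argue dimension-theoretically: $\dim \overline{f^{\circ j}(Z)} \le \dim Z$ since $f^{\circ j}$ is a morphism, so $\overline{f^{\circ j}(Z)} = X$ already gives $\dim Z = \dim X$, and since $X$ is irreducible $Z$ must contain a dense open of $X$, whence $\overline{Z} = Z = X$ after all (using that $Z$ is closed). Wait — this only shows $Z$ has full dimension, not that $Z = X$; but an irreducible variety has no proper closed subset of the same dimension, and $Z$, being a finite union of closures of orbit-pieces, has all components of dimension $\le \dim X$, with at least one (pulled back through $f^{\circ j}$) mapping densely onto $X$, forcing that component to be $X$ itself, hence $Z = X$. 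I would write this out carefully, being slightly attentive to the fact that $Z$ need not be irreducible. An alternative, perhaps cleaner, packaging: apply Lemma~\ref{invardense} with $f$ replaced by $f^{\circ m}$ — but that lemma's hypothesis is about $f$-density, not $f^{\circ m}$-density, so I would instead directly invoke the covering $X = \bigcup_i \overline{f^{\circ i}(Z)}$, irreducibility, and the dimension count above to finish. This is short and the only genuinely delicate point is the handling of reducibility of $Z$ and the morphism-image dimension inequality, both of which are standard.
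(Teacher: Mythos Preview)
Your argument is correct and follows the same decomposition as the paper: write $\cO_f(a)$ as the union of the $m$ sets $\cO_{f^{\circ m}}(f^{\circ i}(a))$, take closures $Z_i$, and use irreducibility of $X$ to get $Z_j = X$ for some $j$. The only difference is the final step of passing from $Z_j = X$ to $Z_0 = X$. You use a dimension count: since $f^{\circ j}$ maps $Z_0$ onto a dense subset of $Z_j = X$, one has $\dim Z_0 \ge \dim X$, and a proper closed subset of an irreducible variety has strictly smaller dimension, so $Z_0 = X$. The paper instead observes that $f$ is dominant (since some orbit is dense) and that $f$ carries $Z_i$ into $Z_{i+1 \bmod m}$; hence the property ``$Z_i = X$'' propagates cyclically from $Z_j$ to all $Z_i$, in particular to $Z_0$. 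Both are short and valid; the paper's version avoids dimension theory entirely, while yours avoids checking dominance of $f$. Your parenthetical worry about reducibility of $Z$ is unnecessary: every irreducible component of a proper closed $Z \subsetneq X$ is a proper irreducible closed subset, hence has dimension $< \dim X$, so $\dim Z < \dim X$ already.
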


\begin{proof}
For $i = 0, \ldots, m-1$, let $Z_i := \overline{\cO_{f^{\circ m}}(f^{\circ i}(a))}$.  Then as $\cO_f(a) = \bigcup_{i=0}^{m-1} \cO_{f^{\circ m}}(f^{\circ i}(a))$, we have $X = \bigcup_{i=0}^{m-1} Z_i$.  Hence, $X = Z_i$ for some $i$.  As $X$ has a dense $f$-orbit, the map $f:X \to X$ is necessarily dominant (otherwise, $\overline{\cO_f(a)} \subseteq \{ a \} \cup \overline{f(X)} \subsetneq X$).  As $f$ maps $Z_j$ to $Z_{j+1 \mod{m}}$, we must have $X = Z_j$ for all $j$. In particular, $X = Z_0 = \overline{\cO_{f^{\circ m}}(a)}$.
\end{proof}

\begin{lemma}
\label{orthdense}
Suppose that $f:X \to X$ and $g:Y \to Y$ are algebraic dynamical systems over the field $K$, $(X,f) \perp (Y,g)$, and that there are rational points $a \in X(K)$ and $b \in Y(K)$ with $\overline{\cO_f(a)} = X$ and $\overline{\cO_g(b)} = Y$.  Then $\overline{\cO_{(f,g)}(a,b)} = X \times Y$.
\end{lemma}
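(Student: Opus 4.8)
The plan is to show that $Z:=\overline{\mathcal{O}_{(f,g)}(a,b)}$, which \emph{a priori} is only a \emph{weakly} $(f,g)$-invariant subvariety of $X\times Y$, in fact equals all of $X\times Y$. First I would reduce to the case $K=\overline K$: the $\overline K$-Zariski closure of a set of $K$-rational points is $\mathrm{Gal}(\overline K/K)$-invariant, hence defined over $K$, hence equals the base change of the $K$-closure; so the hypotheses $\overline{\mathcal{O}_f(a)}=X$, $\overline{\mathcal{O}_g(b)}=Y$ persist over $\overline K$, orthogonality is inherited by the extension $(\overline K,\mathrm{id})$, and the conclusion over $\overline K$ descends to $K$. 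I would also record at the outset that $f$, $g$, and $(f,g)$ are dominant (if $\overline{f(X)}\subsetneq X$ then $\overline{\mathcal{O}_f(a)}\subseteq\{a\}\cup\overline{f(X)}\subsetneq X$), so $\overline{f^{\circ m}(X)}=X$ for all $m$, and therefore $\overline{\mathcal{O}_f(f^{\circ m}(a))}=X$, $\overline{\mathcal{O}_g(g^{\circ m}(b))}=Y$, and $\overline{\mathcal{O}_{(f,g)}(f^{\circ m}(a),g^{\circ m}(b))}=Z$ for every $m$. Thus replacing $(a,b)$ by $(f^{\circ m}(a),g^{\circ m}(b))$ preserves every hypothesis and $Z$ while only shrinking the orbit, so it suffices to prove the lemma after any such replacement.

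The main obstacle is that orthogonality speaks about genuinely $(f,g)$-invariant subvarieties, whereas an orbit closure need only be weakly invariant; I would overcome this by moving to a late point of the orbit. Since $Z$ is weakly $(f,g)$-invariant, Proposition~\ref{maxinv} gives $Z_\per=(f^{\circ N}\times g^{\circ N})(Z)$ for $N\gg 0$ (here $\sigma=\mathrm{id}_K$, so $f^{\lozenge N}=f^{\circ N}$ and $(f,g)^{\lozenge N}=f^{\circ N}\times g^{\circ N}$). For $m\ge N$ the point $(f^{\circ m}(a),g^{\circ m}(b))$ lies in $(f^{\circ N}\times g^{\circ N})(Z)=Z_\per$, whence $\mathcal{O}_{(f,g)}(f^{\circ N}(a),g^{\circ N}(b))\subseteq Z_\per$ and so $Z\subseteq Z_\per\subseteq Z$. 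After this replacement we may therefore assume $Z=Z_\per$ is $(f,g)$-invariant, so that $(f,g)\colon Z\to Z$ is surjective.

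Finally I would run the component argument. By almost orthogonality over $\overline K$, $Z=\bigcup_{i=1}^{r}V_i\times W_i$ with each $V_i$ a component of an $f$-invariant subvariety of $X$ and each $W_i$ a component of a $g$-invariant subvariety of $Y$; discarding redundant terms, the $V_i\times W_i$ are exactly the irreducible components of $Z$. As $(f,g)$ maps $Z$ onto $Z$ it permutes these components, so $\overline{f(V_i)}=V_{\pi(i)}$ and $\overline{g(W_i)}=W_{\pi(i)}$ for a permutation $\pi$; since $\dim V_{\pi(i)}\le\dim V_i$, this $\pi$ preserves the set of indices with $V_i\ne X$, and hence $V^{\ast}:=\bigcup_{V_i\ne X}V_i$ is a proper $f$-invariant subvariety of $X$. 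On the other hand $\mathcal{O}_{(f,g)}(a,b)$ is dense in $Z=\bigcup(V_i\times W_i)$, so its intersection with each irreducible component $V_i\times W_i$ is dense there, and therefore its image $\mathcal{O}_f(a)$ in $X$ meets every $V_i$. If some $V_i\ne X$, this puts a point $f^{\circ m}(a)$ on the proper $f$-invariant variety $V^{\ast}$, contradicting the density of $\mathcal{O}_f(a)$ via Lemma~\ref{invardense}. Hence $V_i=X$ for all $i$, and symmetrically $W_i=Y$ for all $i$, so $Z=X\times Y$, which is what we wanted.
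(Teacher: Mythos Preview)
Your overall strategy is sound, but there is one unjustified step. The claim that replacing $(a,b)$ by $(f^{\circ m}(a),g^{\circ m}(b))$ ``preserves $Z$'' does not follow from dominance of $(f,g)$ on $X\times Y$. Your parallel with $\overline{\cO_f(f^{\circ m}(a))}=X$ works because $\overline{\cO_f(f^{\circ m}(a))}=\overline{f^{\circ m}(X)}=X$; the analogous chain for $Z$ only gives $\overline{\cO_{(f,g)}((f,g)^{\circ m}(a,b))}=\overline{(f,g)^{\circ m}(Z)}$, and to conclude this equals $Z$ you would need $(f,g)\vert_Z$ to be dominant on $Z$ --- precisely the passage from weak to genuine invariance you are trying to arrange. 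The fix is painless: after the replacement the \emph{new} orbit closure $Z_N$ satisfies $Z_N\subseteq Z\subseteq X\times Y$, so it suffices to show $Z_N=X\times Y$; and since $Z_\per=(f,g)^{\circ N}(Z)\subseteq\overline{(f,g)^{\circ N}(\cO)}=Z_N$ while $\cO_N\subseteq Z_\per$, in fact $Z_N=\overline{Z_\per}$ and your component argument applies there. So your proof survives, but the sentence ``preserves $Z$'' should be replaced by ``replaces $Z$ by $Z_\per$, which is enough''.

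The paper's proof is much shorter and avoids this maneuver entirely. It does not try to make the orbit dense in an invariant variety and then show \emph{every} product piece is $X\times Y$ via Lemma~\ref{invardense}; instead it writes $Z$ as a finite union of products $A\times B$ with $A$ (a component of) an $f$-invariant set and $B$ likewise, picks one such $A\times B$ containing $(a,b)$, and uses Lemma~\ref{iterdense}: since $a\in A$ and (after passing to an iterate stabilizing that piece) $\cO_{f^{\circ d}}(a)\subseteq A$, Lemma~\ref{iterdense} gives $X=\overline{\cO_{f^{\circ d}}(a)}\subseteq A$, so $A=X$, and symmetrically $B=Y$. This sidesteps the reduction to $\overline K$, the weak-versus-strong invariance bookkeeping, and the permutation/``$V^\ast$ is $f$-invariant'' argument. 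Your route via Lemma~\ref{invardense} is a legitimate alternative and nicely dual to the paper's, but the extra scaffolding is where the slip crept in.
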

\begin{proof}
Let $Z := \overline{\cO_{(f,g)}(a,b)}$ be the Zariski closure of the forward $(f,g)$-orbit of $(a,b)$.  As $(f,g) (\cO_{(f,g)}(a,b)) \subseteq \cO_{(f,g)}(a,b)$, the variety $Z$ is $(f,g)$-invariant.  As $(X,f) \perp (Y,g)$, $Z$ must be a finite union of varieties of the form $A \times B$ where $A \subseteq X$ is $f$-invariant and $B \subseteq Y$ is $g$-invariant. Let $A \times B$ be a component containing $(a,b)$.  By Lemma~\ref{iterdense}, $X = \overline{\cO_{f}(a)} \subseteq A \subseteq X$ and $Y = \overline{\cO_{g}(b)} \subseteq B \subseteq Y$.  Hence, $X \times Y = \overline{\cO_{(f,g)}(a,b)}$.
\end{proof}

\begin{lemma}
\label{integrality}
Let $K$ be a field of characteristic zero and $f$ and $g$ two disintegrated polynomials over $K$.  Then there is a
 point $(a,b) \in \AA^2(K)$ for which $\cO_{(f,g)}(a,b)$ is Zariski dense in $\AA^2_K$.
\end{lemma}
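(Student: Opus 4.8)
The plan is to combine Lemma~\ref{invardense} with the classification of invariant curves in Theorems~\ref{hhprop} and~\ref{centralthm} and an ``integrality'' (valuation-theoretic) choice of the point. First I would reduce to the case that $K$ is finitely generated over $\QQ$: replacing $K$ by the subfield generated by the coefficients of $f$ and $g$ loses nothing, since a point of $\AA^2(K)$ over the smaller field with Zariski dense $(f,g)$-orbit still has Zariski dense orbit over the extension. Writing $\Phi := (f,g)$, Lemma~\ref{invardense} reduces the problem to finding $(a,b)\in\AA^2(K)$ whose $\Phi$-orbit meets no proper $\Phi$-invariant subvariety. The key structural point I would use is invariance-propagation: if $\Phi^{\circ m}(a,b)$ lies on a proper invariant $Z$, then (passing to an irreducible component and a power $\Phi^{\circ e}$) one obtains an irreducible $Z$ with $\Phi^{\circ e}(Z)=Z$ and $\Phi^{\circ(m+je)}(a,b)\in Z$ for all $j\ge 0$. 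Such a $Z$ is, by the classification: either (i) a finite invariant set or a line $\{p\}\times\AA^1$, $\AA^1\times\{q\}$ over an $f$- resp.\ $g$-periodic point (this is the case when $Z$ fails to project dominantly in both directions); or (ii) a curve projecting dominantly in both directions, which forces $\deg f=\deg g=:d$ (compare the degree of $\Phi^{\circ e}|_Z$ computed along the two projections), and by Theorems~\ref{hhprop} and~\ref{centralthm} — applied to the disintegrated polynomials $f^{\circ e},g^{\circ e}$ — such a curve admits a polynomial parametrization $s\mapsto(\xi(s),\eta(s))$ whose closure meets $\PP^1\times\{\infty\}$ and $\{\infty\}\times\PP^1$ only at $(\infty,\infty)$, with $\deg\xi,\deg\eta$ divisible only by primes dividing $d$; the extreme subcase $\deg\xi=\deg\eta=1$ gives the finitely many linear invariant curves $\{y=L(x)\}$ with $g^{\circ e}\circ L=L\circ f^{\circ e}$ and their converses.

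Next I would fix a discrete valuation $v$ of $K$ that is \emph{good} for $f$ and $g$: $f,g\in\cO_v[x]$ with leading coefficients in $\cO_v^\times$. Such $v$ exist since $K$ is finitely generated over $\QQ$ and only finitely many places are bad. Two automatic consequences: every $f$- and $g$-preperiodic point is $v$-integral (the relevant polynomials $g^{\circ n}(x)-x$, and more generally $g^{\circ n}(x)-c$ for $v$-integral $c$, lie in $\cO_v[x]$ with unit leading coefficient, so all their roots in $\overline K$ lie in the valuation ring); and for any $e$, any linear $L=\alpha x+\beta$ with $g^{\circ e}\circ L=L\circ f^{\circ e}$ (or the analogous relation for converse graphs) necessarily has $\alpha\in\cO_v^\times$ (compare leading coefficients) and $\beta\in\cO_v$ (evaluate the identity at $x=\beta$ and use $v$-integrality of $f^{\circ e}(0)$). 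Then I would choose a prime $q$ not dividing $(\deg f)(\deg g)$ and pick $a,b\in K$ with $v(a)=-q$ and $v(b)=-1$.

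To verify this works, suppose $\cO_{(f,g)}(a,b)$ is not Zariski dense. By Lemma~\ref{invardense} and the above, the orbit of $(a,b)$ meets some $Z$ as in (i) or (ii) along $\Phi^{\circ(m+je)}(a,b)$ for all $j\ge 0$. Note that $v(f^{\circ n}(a))=-q(\deg f)^n$ and $v(g^{\circ n}(b))=-(\deg g)^n$ both tend to $-\infty$, so $a$ is not $f$-preperiodic and $b$ is not $g$-preperiodic; this already rules out case (i), since a point on such a $Z$ has a periodic (hence $v$-integral) coordinate. If $Z=\{y=L(x)\}$ as above, then $g^{\circ(m+je)}(b)=\alpha f^{\circ(m+je)}(a)+\beta$; taking valuations — the $\beta$-term being negligible since $\alpha$ is a unit and $v(f^{\circ(m+je)}(a))\to-\infty$ — gives $-(\deg g)^{m+je}=-q(\deg f)^{m+je}$, i.e.\ $1=q$, a contradiction (and likewise for converse graphs). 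Finally, if $Z$ has a polynomial parametrization $(\xi,\eta)$ of degrees $e_1,e_2$ whose only prime factors divide $d=\deg f=\deg g$, I pick $j$ large enough that the $s_j$ with $\xi(s_j)=f^{\circ(m+je)}(a)$, $\eta(s_j)=g^{\circ(m+je)}(b)$ has $v(s_j)$ so negative that the leading terms of $\xi,\eta$ dominate; then a short valuation computation yields $(e_2q-e_1)d^{\,m+je}=(\text{constant depending only on }Z)$, which for large $j$ forces $e_1=qe_2$ and hence $q\mid e_1$, impossible. Therefore no such $Z$ exists and the orbit is dense.

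I expect the main obstacle to be exactly the equal-degree case, and within it the family of linear invariant curves $\{y=L(x)\}$: these are invisible both to the ``escape to infinity'' estimate and to the bidegree-ratio obstruction (their bidegree ratio is $1$), and defeating them is precisely why one insists on $v(a)\neq v(b)$; the argument there rests on the automatic $v$-integrality of $L$ and of periodic points noted above. The other point needing care is extracting from Theorems~\ref{hhprop} and~\ref{centralthm} that every dominantly-bi-projecting invariant curve really does have a single place at infinity, sitting over $(\infty,\infty)$, with bidegree supported on the primes of $\deg f$; granting this, the remaining estimates are routine valuation bookkeeping.
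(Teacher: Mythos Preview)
Your approach is genuinely different from the paper's, but the key structural claim you rely on is false. You assert that for any $(f^{\circ e},g^{\circ e})$-invariant curve projecting dominantly in both directions, the degrees $e_1=\deg\xi$ and $e_2=\deg\eta$ of the polynomial parametrization are divisible only by primes dividing $d=\deg f=\deg g$. Take $f(x)=x^2(x-1)^3$ and $g(x)=x^2(x^3-1)$: both have degree $5$, both are disintegrated, and one checks directly that the curve $x=y^3$ is $(f,g)$-invariant (since $f(t^3)=g(t)^3$). Its parametrization $t\mapsto(t^3,t)$ has $(e_1,e_2)=(3,1)$, and $3\nmid 5$. With your choice $q=3$ (which is coprime to $d=5$ and hence permitted), the valuation identity $(qe_2-e_1)d^{N}=\text{const}$ is satisfied identically, so your obstruction vanishes. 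The primes appearing in $e_1,e_2$ come from the in- and out-degrees of compositional factors (via Proposition~\ref{notskewtwist} and the $\epsilon_p,\delta_p$ in Theorem~\ref{realchar}), and those need not divide $d$. A repair might be possible by bounding the prime-to-$d$ part of $e_1/e_2$ and choosing $q$ larger than that bound, but this requires a genuine finiteness argument you have not supplied.

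The paper sidesteps this entirely by first passing to a common cover: using Corollary~\ref{nonorthotopoly} to find $h,\pi,\rho$ with $\pi:(\AA^1,h)\to(\AA^1,f^{\circ m})$ and $\rho:(\AA^1,h)\to(\AA^1,g^{\circ m})$, it suffices to find $(\hat a,\hat b)$ avoiding all $(h,h)$-invariant curves. For the diagonal case Theorem~\ref{hhprop} gives the full list: every such curve is the graph (or converse graph) of $L\circ\tilde h^{\ell}$ where $\tilde h^{\circ r}=h$ and $L$ is a linear symmetry. Now one needs no bidegree arithmetic at all: one takes $R$ a finitely generated ring over which $\tilde h$ and its symmetries are defined with unit leading coefficients, chooses $\hat a$ in the integral closure $\widetilde R$ and $\hat b\in K\smallsetminus\widetilde R$, and observes that no graph of an $R$-integral polynomial with unit leading coefficient can carry an integral point to a non-integral one (or vice versa). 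This is the ``integrality'' of the lemma's name, and it is coarser and more robust than your exact-valuation matching.
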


\begin{proof}
By Theorem~\ref{nonorthotopoly} there are a natural number $m$, a polynomial $h$ and dominant maps of dynamical systems
$\rho:(\AA^1,h) \to (\AA^1,f^{\circ m})$ and $\pi:(\AA^1,h) \to (\AA^1,g^{\circ m})$.  It follows from Ritt's theorem
on polynomials with common iterates~\cite{Ritt-iteration} that there is a maximal $k$ for which we may write
$h = \widetilde{h}^{\circ k}$.  Let $R$ be a finitely generated subring of $K$ over which $\widetilde{h}$, $\pi$, $\rho$, and all symmetries
of $\widetilde{h}$ are defined and which contains the multiplicative inverse of the leading coefficient
of each of these polynomials.   Let $\widetilde{R}$ be the integral closure of $R$ in its field of
fractions, regarded as a subfield of $K$.   Let $\hat{a} \in \widetilde{R}$ and $\hat{b} \in K \smallsetminus \widetilde{R}$ so that
  neither $\hat{a}$ nor $\hat{b}$ is $h$-pre-periodic.  Set $a := \pi(\hat{a})$ and $b := \rho(\hat{b})$.

We claim that there is no (possibly reducible) weakly $(h,h)$-invariant curve $C$ with $(\hat{a},\hat{b}) \in C(K)$. As
neither $\hat{a}$ nor $\hat{b}$ is preperiodic, we see that we may assume that each component of $C$ projects dominantly in
both directions.  If $(\hat{a},\hat{b}) \in C(K)$, then for $n \gg 0$, we would have $(h^{\circ n}(\hat{a}),h^{\circ n}(\hat{b})) \in C_\per(K)$ so
 that $(h^{\circ n}(\hat{a}),h^{\circ n}(\hat{b}))$ would lie on an irreducible $(h^{\circ nN},h^{\circ nN})$-invariant
  curve for some $N \gg 0$.  By Theorem~\ref{hhprop}, such a curve is defined by $y = L \circ \widetilde{h}^\ell(x)$ or
  $x = L \circ \widetilde{h}^\ell(y)$.  Neither such curve can contain a $K$-rational point of the form $(c,d)$ with $c \in R$ and
  $d \notin \widetilde{R}$, which is exactly the form of $(h^{\circ n}(\hat{a}),h^{\circ n}(\hat{b}))$ as the polynomial $h$ maps
   $R$ to $R$ and cannot map a non-integral point to an integral point as its leading coefficient is a unit.

It follows that $(a,b) = (f,g)(\hat{a},\hat{b})$ cannot lie on any weakly $(f,g)$-invariant curve as the pullback of such a curve would
be weakly $(h,h)$-invariant.   Thus, $\overline{\cO_{(f,g)}(a,b)} = \AA^2$.
\end{proof}

\begin{lemma}
\label{trivdense}
Let $K$ be a field of characteristic zero and $f_1, \ldots, f_n \in K[x]$ a sequence of nonconstant polynomials over $K$.  We assume that each $f_i$ has degree at least two and is not conjugate to a monomial,
Chebyshev polynomial or negative Chebyshev polynomial. Then there is a rational point $a = (a_1, \ldots, a_n) \in \AA^n(K)$ with a dense $(f_1,\ldots,f_n)$-orbit.
\end{lemma}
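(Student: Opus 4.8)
The plan is to prove Lemma~\ref{trivdense} by a double induction: an outer induction reducing to the case where all the $f_i$ are pairwise nonorthogonal, and then an inner analysis of that base case. First I would invoke Proposition~\ref{nonorthoequiv} and Proposition~\ref{productortho} (together with the remark that polynomials of different degrees are orthogonal) to partition $\{1,\dots,n\}$ into blocks $S_1,\dots,S_r$ so that within each block all the $f_i$ are pairwise nonorthogonal and between distinct blocks the corresponding factors are orthogonal. Since each $f_i$ is disintegrated by Fact~\ref{medpoly} (it has degree at least two and is not skew-conjugate to a monomial or $\pm$Chebyshev polynomial), the $\sigma$-varieties $(\AA^{S_j},f_{S_j})$ are pairwise orthogonal algebraic dynamical systems over $K$ regarded with $\sigma=\operatorname{id}_K$. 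By Lemma~\ref{orthdense} (applied inductively in $r$, using Lemma~\ref{iterdense} to upgrade ``dense orbit'' to ``$X=\overline{\cO_f(a)}$'' at each stage), a point $a=(a^{(1)},\dots,a^{(r)})$ built from points $a^{(j)}\in\AA^{S_j}(K)$ each having a Zariski dense $f_{S_j}$-orbit will itself have a Zariski dense $\Phi$-orbit. So it suffices to treat one block: we may assume all $f_1,\dots,f_n$ are disintegrated and pairwise nonorthogonal.

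For the base case I would run a secondary induction on $n$. The case $n=1$ is easy: a disintegrated polynomial $f$ of degree at least two has only finitely many fixed points and only graphs of (twisted) iterates and a few exceptional curves as proper invariant subvarieties by Theorem~\ref{hhprop}, so by Lemma~\ref{invardense} it suffices to choose $a_1\in K$ not preperiodic and avoiding the finitely many ``exceptional'' values; such an $a_1$ exists since $K$ is infinite. The case $n=2$ is exactly Lemma~\ref{integrality}, which produces a point $(a_1,a_2)$ with $\overline{\cO_{(f_1,f_2)}(a_1,a_2)}=\AA^2$ by an integrality argument: pulling back along the maps from Corollary~\ref{nonorthotopoly} to a common polynomial $h$, writing $h=\widetilde h^{\circ k}$ with $k$ maximal, and choosing one coordinate integral and the other not integral over a suitable finitely generated subring, so that no $(h^{\circ m},h^{\circ m})$-invariant curve of the form $y=L\circ\widetilde h^{\ell}(x)$ from Theorem~\ref{hhprop} can pass through the orbit. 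For $n\geq 3$, by Corollary~\ref{nonorthotopoly} pick a common polynomial $h$ and dominant maps of dynamical systems $\pi_i:(\AA^1,h^{\circ N})\to(\AA^1,f_i^{\circ N})$ for a suitable $N$; then any $\Phi^{\circ N}$-invariant subvariety of $\AA^n$ pulls back to a weakly $(h,\dots,h)$-invariant subvariety of $\AA^n$, and by Proposition~\ref{powertriv} (disintegratedness of $(\AA^1,h)$, which holds since $h$ is nonconstant and covers each disintegrated $f_i$) every component of such a variety is a component of the intersection of the pullbacks of its binary projections. Thus it suffices to find a point $\hat a=(\hat a_1,\dots,\hat a_n)\in\AA^n(K)$ lying on no weakly $(h,h)$-invariant plane curve in any pair of coordinates; by Theorem~\ref{hhprop} (applied to compositional powers of $h$), such curves are, after iterating, graphs $x_j=L\circ\widetilde h^{\ell}(x_i)$ or their converses. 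Choosing the $\hat a_i$ to lie in different ``integrality strata'' of a finitely generated ring as in the proof of Lemma~\ref{integrality} — concretely, $\hat a_i$ with $\hat a_{i+1}$ not integral over the ring generated by $\hat a_i$ and the coefficients — guarantees no such binary relation holds, and then pushing forward by $(\pi_1,\dots,\pi_n)$ and applying Lemma~\ref{invardense} yields a point of $\AA^n(K)$ with a Zariski dense $\Phi$-orbit.

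The main obstacle I anticipate is the bookkeeping in the $n\geq 3$ base case: one must ensure simultaneously that no \emph{pair} of coordinates of the pushed-forward orbit $\Phi^{\circ m}(a)$ ever lands on one of the finitely many (up to iteration and symmetry) $(h,h)$-invariant curves from Theorem~\ref{hhprop}, across \emph{all} pairs $(i,j)$ and \emph{all} $m$. The clean way to do this is to mimic Lemma~\ref{integrality} uniformly: fix a finitely generated subring $R\subseteq K$ containing the coefficients of $\widetilde h$, of all $\pi_i$, and of all relevant symmetries of compositional powers of $\widetilde h$, together with the inverses of their leading coefficients, let $\widetilde R$ be its integral closure in $K$, and pick $\hat a_1,\dots,\hat a_n$ with $\hat a_1\in R$ and $\hat a_{i}\notin$ (the integral closure of the ring generated by $R$ and $\hat a_1,\dots,\hat a_{i-1}$) for $i\geq 2$, and none $h$-preperiodic. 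Since $h$ and all $\widetilde h^{\ell}$ and the linear symmetries preserve these nested integrality strata (leading coefficients being units), no curve of the form $x_j = L\circ\widetilde h^{\ell}(x_i)$ with $j>i$ can contain a point of the form $(h^{\circ m}(\hat a_i),h^{\circ m}(\hat a_j))$, and the converse relations are handled symmetrically because also $h^{\circ m}(\hat a_j)$ fails to be integral over $R[h^{\circ m}(\hat a_i)]$. That $K$ contains enough such elements follows from $K$ being an infinite field of characteristic zero (so $\widetilde R$, being generated by finitely many elements, is not all of $K$, and there are infinitely many non-preperiodic elements in each stratum). The remaining verifications are the routine ones already carried out in the proofs of Lemmas~\ref{integrality} and~\ref{orthdense}.
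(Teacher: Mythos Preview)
Your approach is correct, and the core idea --- the integrality obstruction from Lemma~\ref{integrality} combined with disintegratedness to reduce to binary relations --- is the same as the paper's. However, the paper's proof is more direct and avoids two of your reductions.

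First, the paper does not partition into nonorthogonality classes at all. It runs a single induction on $i$: given $(a_1,\ldots,a_i)\in\AA^i(B)$ with dense orbit (for some finitely generated $B\supseteq R$), it picks $a_{i+1}\in K$ not integral over $B$ and not $f_{i+1}$-preperiodic. Then for every $m$ and every $j\leq i$, the pair $(f_j^{\circ m}(a_j),f_{i+1}^{\circ m}(a_{i+1}))$ lies on no $(f_j^{\circ m},f_{i+1}^{\circ m})$-invariant curve, by the same integrality argument as in the proof of Lemma~\ref{integrality} applied pairwise. If $f_j\perp f_{i+1}$ there are no such curves anyway (beyond horizontal and vertical lines, avoided by non-preperiodicity), so the orthogonality partition is unnecessary. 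Disintegratedness (Proposition~\ref{powertriv}) then finishes the inductive step.

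Second, and relatedly, the paper never constructs a single polynomial $h$ covering all $n$ of the $f_i$ simultaneously. Your appeal to Corollary~\ref{nonorthotopoly} to produce such an $h$ for $n\geq 3$ is slightly glib: that corollary only treats pairs, so one must iterate (take $h_2$ covering $f_1,f_2$; then $h_3$ covering $h_2,f_3$; etc., using Lemma~\ref{polycov} to stay in polynomials at each fibre product). This works, but it is extra overhead that the paper's pairwise argument sidesteps entirely. What your route buys is a cleaner picture once the common $h$ is in hand --- all binary obstructions live on a single $(\AA^1,h)$ --- at the cost of that iterated construction; the paper's route trades this for a slightly more telegraphic invocation of ``by Lemma~\ref{integrality}'' applied to each new pair $(f_j,f_{i+1})$.
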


\begin{proof}
Let $R \subseteq K$ be some finitely generated subring over which complete decompositions of each $f_i$ are defined and the leading coefficient of each indecomposable factor is a unit.  We argue by induction on $i$ that we can find some finitely generated ring $B$ containing $R$ and contained in $K$ for which there is a point $(a_1, \ldots, a_i) \in \AA^i(B)$ with $\cO_{(f_1,\ldots,f_i)}(a)$ Zariski dense in $\AA^i$.  In the case of $i = 1$, the result follows by  height considerations (for example, by embedding $R \subseteq {\mathbb C}$ if we take $a \in R$ with $|a| \gg 0$, then $\lim_{m \to \infty} f_1^\circ(a) = \infty$ so that, in particular, $a$ is not preperiodic).

In the inductive case, we have $(a_1,\ldots,a_i) \in \AA^i(B)$ with a Zariski dense $(f_1, \ldots, f_i)$-orbit.
Let $a_{i+1} \in K$ be any element of $K$ which is not integral over $B$.   Then for every $m$, $f^{\circ m}(a_{n+1})$ is also non-integral so by Lemma~\ref{integrality} $(f^{\circ m}(a_j),f^{\circ m}(a_{i+1}))$ does not belong to any $(f_j^{\circ m},f_{i+1}^{\circ m})$-invariant curve.   By triviality, it follows that $(f_1^{\circ m}(a),\ldots,f_{i+1}^{\circ m}(a))$ does not belong to any $(f_1^{\circ m},\ldots,f_{i+1}^{\circ m})$-invariant variety.
\end{proof}

Let us now combine these results to complete the proof of Theorem~\ref{Zhangconjdense}.

\begin{proof}
Reordering the indices if need be, we may express $(\AA^n,\Phi)$ as a product
$(\AA^{n_\ell},\lambda) \times (\AA^{n_G},\gamma) \times (\AA^{n_t},\tau)$ where $\lambda$ is given by a sequence of univariate linear polynomials,
$\gamma$ is given by a sequence of polynomials of degree at least two each conjugate to a monomial, Chebyshev polynomial, or
negative Chebyshev polynomial, and $\tau$ is
given by a sequence of disintegrated polynomials.  By Proposition~\ref{lineardense} there is some $a \in \AA^{n_\ell}(K)$ with $\cO_\lambda(a)$
Zariski dense in $\AA^{n_\ell}$, by Proposition~\ref{chebpowdense} there is some $b \in \AA^{n_G}(K)$ with $\cO_{\gamma}(b)$ Zariski
dense in $\AA^{n_G}$, and by Lemma~\ref{trivdense} there is some $c \in \AA^{n_t})(K)$ with $\cO_{\tau}(x)$ Zariski dense in $\AA^{n_t}$.
By Lemma~\ref{orthdense}, $\cO_\Phi((a,b,c))$ is Zariski dense in $\AA^n$.
\end{proof}

\subsection{Difference equations for Frobenius lifts}
\label{Froblift} \label{frobsect}
In this section we observe that for dynamical systems lifting the Frobenius, one can capture the periodic points with a difference equation.  Consequently, our results on the structure of difference varieties imply strong restrictions on
the algebraic relations amongst the periodic points of such dynamical systems.

\begin{nota}
In what follows, $K$ is a field with a  valuation $v$, ring of integers $R := \{ x \in K : v(x) \geq 0 \}$,  maximal ideal $\fm := \{ x \in R : v(x) > 0 \}$, and residue field $k := R/\fm$ of characteristic $p > 0$.  We assume that $\sigma:K \to K$ is an automorphism lifting the $p$-power Frobenius in the sense that $v(\sigma(x)) = v(x)$ for all $x \in K$ and $\sigma(x) \equiv x^p \mod \fm$ for $x \in R$.  We assume moreover that $K$ is maximally complete and algebraically closed.  The results we prove about periodic points descend from $K$ to subfields, so the reader may comfortably drop these last two hypotheses, but some of our intermediate results require at least completeness.  Ultimately, we shall assume that $K$ has characteristic zero, but for now, this is not necessary.
\end{nota}

\begin{nota}
If $X$ is a scheme over $R$, then we write $X_0$ for the base change of $X$ to $k$ and $X_\eta$ for the base change of $X$ to $K$.  We write $\pi:X(R) \to X_0(k)$ for the natural reduction map.
\end{nota}

With Theorem~\ref{frobeq} we show that difference equations given by liftings of the Frobenius give dynamical Teichm\"{u}ller maps.  Towards the end of this section we specialize to the case of dynamical systems given by sequences of univariate polynomials and thereby deduce form our earlier work that algebraic relations amongst periodic points of such systems are highly restricted.

\begin{theorem}
\label{frobeq}
Let $X$ be a separated scheme of finite type over $R$.  We assume that $X$ is smooth over $R$.  Suppose that $\Gamma \subseteq X \times X^{\sigma}$ is a closed subscheme of $X \times X^\sigma$ for which the projection $\Gamma \to X$ is \'{e}tale.  Suppose moreover that $q = p^n$ is a power of $p$ and $\Gamma$ lifts the Frobenius in the sense that some component of the special fibre $\Gamma_0$ is the graph of the geometric $q$-power Frobenius morphism $F:X_0 \to X_0^{(q)}$.  Then the reduction map $\pi:X(R) \to X_0(k)$ restricts to a bijection between $(X,\Gamma)^\sharp(R,\sigma^n)$ and $X_0(k)$.
\end{theorem}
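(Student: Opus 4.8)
The plan is to show that $\pi$ restricts to a bijection between $(X,\Gamma)^\sharp(R,\sigma^n)\cap D_{\bar a}$ and $\{\bar a\}$ for every $\bar a\in X_0(k)$, where $D_{\bar a}:=\pi^{-1}(\bar a)\subseteq X(R)$ is the residue disk; since the disks $D_{\bar a}$ partition $X(R)$, this is equivalent to the theorem. Fix $\bar a$. Because $\sigma(x)\equiv x^p\bmod\fm$ and $q=p^n$, the automorphism $\sigma^n$ reduces to the $q$-power Frobenius on $k$, so $\overline{(a,\sigma^n(a))}=(\bar a,\bar a^{(q)})=(\bar a,F(\bar a))$ for any $a\in D_{\bar a}$, and this point lies on $\Gamma_0$ because $\operatorname{graph}(F)$ is a component of $\Gamma_0$. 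As the projection $p_1\colon\Gamma\to X$ is étale and $R$ is complete local, Hensel's lemma provides, for each $a\in D_{\bar a}$, a unique point $s(a)\in\Gamma(R)$ with $p_1(s(a))=a$ reducing to $(\bar a,F(\bar a))$; consequently any point of $(X,\Gamma)^\sharp(R,\sigma^n)\cap D_{\bar a}$ — being a point of $\Gamma(R)$ lying over some $a$ and reducing to $(\bar a,F(\bar a))$ — must equal $s(a)$, so $a\in(X,\Gamma)^\sharp(R,\sigma^n)$ if and only if $s(a)=(a,\sigma^n(a))$.

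Next I would pass to local étale coordinates. Choose coordinates $x_1,\dots,x_d$ on $X$ over $R$ near $\bar a$, normalized so $x(\bar a)=0$; these identify $D_{\bar a}$ with $\fm^d$. The section $s$ of the étale map $p_1$ then has the form $s(x)=(x,\phi(x))$ for a map $\phi=(\phi_1,\dots,\phi_d)$ given by power series over $R$, with $\phi_i\equiv x_i^q\bmod\fm$ coefficientwise, since $s$ reduces to $\operatorname{graph}(F)$ and $F$ is the $q$-power Frobenius. Writing $a\in D_{\bar a}$ as a coordinate vector $u\in\fm^d$ and noting that $\sigma^n(a)=(\sigma^n u_1,\dots,\sigma^n u_d)$ in the induced coordinates on $X^{\sigma^n}$, the condition $s(a)=(a,\sigma^n(a))$ becomes
$$ a\in(X,\Gamma)^\sharp(R,\sigma^n)\iff \phi(u)=\sigma^n(u)\iff u=T(u),\qquad T(u):=\sigma^{-n}\bigl(\phi(u)\bigr), $$
the last equivalence because $v\circ\sigma=v$; moreover $\phi(\fm^d)\subseteq\fm^d$, so $T$ carries $\fm^d$ into itself.

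The core is then a convergence-and-uniqueness estimate for the iteration of $T$. Write $\phi_i=x_i^q+\psi_i$, where all coefficients of $\psi_i$ lie in $\fm$, set $\delta_0:=\min_i v(\phi_i(0))$, and let $\delta_1$ be the minimum of the valuations of the (finitely many) coefficients of the linear monomials occurring in the $\psi_i$; both are $>0$, the latter because the linear coefficients of $\phi_i$ reduce to those of $\bar x_i^q$ and hence vanish mod $\fm$ (if instead $\phi(0)=0$, the origin is already the sought point and one reduces to the uniqueness claim below). A short valuation bookkeeping from the identity $\sigma^n(u_i)=u_i^q+\psi_i(u)$ shows that every solution $u$, and every iterate $T^k(0)$ with $k\ge1$, has $\min_j v(u_j)\ge\delta_0$; and for $u,u'\in\fm^d$ with all coordinates of valuation $\ge\delta_0$ one gets $v\bigl(\phi(u)-\phi(u')\bigr)\ge v(u-u')+c$ with $c:=\min(\delta_0,\delta_1)>0$, since in $\phi_i(u)-\phi_i(u')$ the term $u_i^q-u_i'^q=(u_i-u_i')(u_i^{q-1}+\cdots+u_i'^{q-1})$ picks up an extra $(q-1)\delta_0$, the linear part of $\psi_i$ an extra $\ge\delta_1$, and the higher part an extra $\ge\delta_0$. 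Hence $v\bigl(T^{k+1}(0)-T^k(0)\bigr)\ge v\bigl(T^1(0)\bigr)+kc\to\infty$, so $\bigl(T^k(0)\bigr)_k$ is Cauchy and converges, by completeness of $R$, to a fixed point $u^\ast\in\fm^d$ of $T$; and if $u^\ast,\tilde u^\ast$ are two fixed points then, with $\mu:=\min_j(v(u^\ast_j),v(\tilde u^\ast_j))>0$, the same elementary estimate gives $v(u^\ast-\tilde u^\ast)=v\bigl(\phi(u^\ast)-\phi(\tilde u^\ast)\bigr)\ge v(u^\ast-\tilde u^\ast)+\min(\delta_1,\mu)$, forcing $u^\ast=\tilde u^\ast$. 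This produces the unique element of $(X,\Gamma)^\sharp(R,\sigma^n)$ in $D_{\bar a}$, and the theorem follows.

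The step I expect to be the main obstacle is the estimate in the third paragraph: $T$ is \emph{not} a contraction with a uniform Lipschitz constant below $1$, because when the value group of $v$ is non-discrete the ``$x^q$-direction'' contracts only by the variable amount $(q-1)\min_j v(u_j)$, which degenerates as $u\to0$. The resolution — isolating the uniform lower bound $\min_j v(u_j)\ge\delta_0$ that holds both along the iteration and on every genuine solution, after dispatching the degenerate case $\phi(0)=0$ — is precisely what makes the argument work. One must also be careful that passing from the scheme $\Gamma$ to the local graph map $\phi$ really does yield a single power-series map reducing to the Frobenius; this is where étaleness of $p_1$ together with Hensel's lemma (via smoothness of $\Gamma$ over the complete local ring $R$) enter. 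Only ordinary completeness of $R$ is used here; spherical completeness is not needed.
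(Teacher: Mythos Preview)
Your argument is correct and the overall strategy---reduce to an open residue disk, use étaleness of $p_1$ and smoothness of $X$ to pass to analytic coordinates, then solve $\phi(u)=\sigma^n(u)$ with $\phi\equiv(x_1^q,\ldots,x_d^q)\bmod\fm$---matches the paper. The iterative mechanism, however, is genuinely different. The paper runs a Hensel/Newton scheme: given an approximate solution modulo $I_\gamma$, it finds a correction $c\epsilon$ by observing that $g(x+c\epsilon)\equiv g(x)$ (since $\partial_i g\in\fm R[[X]]$) while $\rho(x+c\epsilon)\equiv\rho(x)+(c_1^q,\ldots,c_m^q)\rho(\epsilon)$, and then solves for $c$ by taking $q$-th roots in the perfect residue field $k$; the improvement is only from $I_\gamma$ to $I_{\gamma^+}$, so passing to a true solution implicitly uses the maximal (spherical) completeness hypothesis on $K$ when the value group is dense. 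You instead iterate $T=\sigma^{-n}\circ\phi$ directly and extract a uniform contraction gain $c=\min(\delta_0,\delta_1)>0$, which produces an honest Cauchy sequence and therefore needs only ordinary completeness---exactly as you note at the end. Your approach avoids appealing to perfectness of $k$ (it is hidden in $\sigma^{-n}$ being available) and is cleaner for non-discrete valuations; the paper's scheme is closer in spirit to classical Hensel lifting and makes the role of the Frobenius-like shape of $\sigma$ on corrections more visible.
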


\begin{proof}
To ease notation let us write $\rho := \sigma^n$.

Let us first show that $\pi:(X,\Gamma)^\sharp(R,\rho) \to X_0(k)$ is surjective.  Let $a \in X_0(k)$ be any $k$-rational point on $X_0$.  Pick any point $\widetilde{a} \in X(R)$ with $\pi(\widetilde{a}) = a$.  From the hypothesis that $X$ is smooth over $R$, we may fix an \'{e}tale covering $f:U \to \AA^m_R$ where $\widetilde{a} \in U(R)$, $U \subseteq X$ is an affine open subset and $f(a) = {\boldsymbol{0}}$.  Note that $f^\sigma:U^\sigma \to \AA^m_R$ gives analytic coordinates on $X^\sigma$ near $\sigma(\widetilde{a})$.

As $\Gamma \to X$ is \'{e}tale, the set $(f, f^\sigma)(\Gamma(R) \cap \pi^{-1}\{a\} \times (\pi^\sigma)^{-1}\{F(a)\})$ is the graph of an analytic function $g:\fm^m \to \fm^m$ where $g(x_1,\ldots,x_m) = (x_1^q,\ldots,x_m^q) \mod \fm \cdot R[[x_1,\ldots,x_m]]$.  That we can find a solution to $g({\boldsymbol x}) = \sigma({\boldsymbol x})$ follows from Newton's method (see~\cite{ScICM} in this context).

That is,
if for some $\gamma > 0$
we have a solution to $g(x) \equiv \sigma(x) \mod I_\gamma$ where $I_\gamma := \{ x \in R : v(x) \geq \gamma \}$, we can find some $x'$ with $x \equiv x' \mod I_\gamma$ but $g(x) \equiv \sigma(x) \mod I_{\gamma^+} := \{ x \in R : v(x) > \gamma \}$ and then taking limits we find a true solution with in the given neighborhood.  In our case, we already know that $g({\boldsymbol 0}) = {\boldsymbol 0} \mod \fm = I_{0^+}$.  Given an approximate solution $x$, suppose that $g(x) \equiv \sigma(x) \mod I_\gamma$ with $\gamma > 0$.  Let $\epsilon \in R$ with $v(\epsilon) = \gamma$.  We seek to find $x' = x + c \epsilon$ with $c = (c_1,\ldots,c_m)$ and $v(c_i) \geq 0$ for each $i$.  We have $g(x + c \epsilon) = g(x) + \sum_{i=1}^m \frac{\partial g}{\partial X_i}(x)  c \epsilon + \epsilon^2 \ast \equiv g(x) \mod I_{\gamma^+}$ as $\frac{\partial g}{\partial X_i}(X) \equiv  q X_i^q \mod \fm R[[X_1,\ldots,X_m]]$.  On the other hand, $\sigma(x + c \epsilon) = \sigma(x) + \sigma(c) \sigma(\epsilon) \equiv \sigma(x) + (c_1^q,\ldots,c_m^q) \sigma(\epsilon) \mod I_{\gamma^+}$.  Subtracting, we need only solve $\sigma(\epsilon)(c_1^q,\ldots,c_m^q) \equiv g(x) - \sigma(x) \mod I_{\gamma^+}$ .  By hypothesis, each component of $g(x) - \sigma(x)$ has valuation at least $\gamma = v(\sigma(\epsilon))$.  As $k$ is perfect, we may solve these equations.

These calculations demonstrate that the restriction of $\pi$ to $(X,\Gamma)^\sharp(R,\rho)$ is injective as well since the solution $c = (c_1,\ldots,c_n)$ is uniquely determined modulo $\fm$. Since we know the residue of the solution, this shows that the reduction map is injective.
\end{proof}

\begin{corollary}
\label{compositecor}
With $X$ and $\Gamma$ as in Theorem~\ref{frobeq}, for any natural number $N$ one has $(X,\Gamma)^\sharp(R,\rho) = (X,\Gamma^{\lozenge N})^\sharp(R,\rho^N)$.
\end{corollary}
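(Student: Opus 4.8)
\textbf{Proof plan for Corollary~\ref{compositecor}.}
The plan is to deduce the identity $(X,\Gamma)^\sharp(R,\rho) = (X,\Gamma^{\lozenge N})^\sharp(R,\rho^N)$ directly from Theorem~\ref{frobeq} applied to the two difference structures in play, namely $(R,\rho)$ and $(R,\rho^N)$, using the fact that both fixed-point sets are pinned down by the \emph{same} reduction map to $X_0(k)$. First I would record the easy inclusion: if $a \in (X,\Gamma)^\sharp(R,\rho)$, so that $(a,\rho(a)) \in \Gamma(R)$, then unwinding the definition of the skew-iterate $\Gamma^{\lozenge N}$ exactly as in the proof of Proposition~\ref{maxinv} (using $\pi^{\sigma^n}\circ f^{\lozenge n} = g^{\lozenge n}\circ\pi$-type bookkeeping, here with $\Gamma$ in place of a graph) gives $(a,\rho^N(a)) \in \Gamma^{\lozenge N}(R)$, i.e. $a \in (X,\Gamma^{\lozenge N})^\sharp(R,\rho^N)$. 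This direction is formal and needs no completeness hypotheses.

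For the reverse inclusion, the point is that $\Gamma^{\lozenge N}$ again satisfies the hypotheses of Theorem~\ref{frobeq}, now with respect to $\rho$ replaced by $\rho^N = \sigma^{nN}$ and $q$ replaced by $q^N = p^{nN}$. Concretely: the projection $\Gamma^{\lozenge N} \to X$ is a composite of pullbacks of the \'etale map $\Gamma \to X$ along powers of $\sigma$, hence \'etale; and the special fibre $(\Gamma^{\lozenge N})_0$ contains, as a component, the graph of the $N$-fold composite of the geometric $q$-power Frobenius, which is the geometric $q^N$-power Frobenius $F^{\lozenge N}:X_0 \to X_0^{(q^N)}$. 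So Theorem~\ref{frobeq} applies and tells us that $\pi$ restricts to a bijection $(X,\Gamma^{\lozenge N})^\sharp(R,\rho^N) \to X_0(k)$. But Theorem~\ref{frobeq} also tells us $\pi$ restricts to a bijection $(X,\Gamma)^\sharp(R,\rho)\to X_0(k)$. Combining the easy inclusion above with these two bijections forces the inclusion to be an equality: an element of the larger set $(X,\Gamma^{\lozenge N})^\sharp(R,\rho^N)$ has a well-defined reduction in $X_0(k)$, which is the reduction of a (unique) element of $(X,\Gamma)^\sharp(R,\rho)$, and by injectivity of $\pi$ on the larger set those two points coincide.

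The only mild subtlety — and the step I would be most careful about — is checking that $\Gamma^{\lozenge N}$ genuinely meets the hypotheses of Theorem~\ref{frobeq} on the nose, in particular that the relevant component of $(\Gamma^{\lozenge N})_0$ is the graph of the $q^N$-power \emph{geometric} Frobenius and not merely of some iterate of a correspondence that happens to contain the Frobenius graph among several components. Here I would argue componentwise: the fibre product defining $\Gamma^{\lozenge N}$ contains the fibre product of the Frobenius-graph components of the factors $\Gamma_0, \Gamma_0^{(q)}, \dots$, and that fibre product is exactly the graph of $F^{(q^{N-1})}\circ\cdots\circ F^{(q)}\circ F$, which is the geometric $q^N$-power Frobenius on $X_0$. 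The smoothness of $X$ over $R$ (hence of each $X^{\sigma^j}$) guarantees the pullbacks stay smooth and the \'etale-ness of the projections is preserved under base change and composition, so no new hypotheses are needed. Everything else is a direct appeal to the statement of Theorem~\ref{frobeq} twice over.
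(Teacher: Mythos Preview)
Your proposal is correct and matches the paper's own proof essentially step for step: the paper notes that a composite of \'etale maps is \'etale so that Theorem~\ref{frobeq} applies to $\Gamma^{\lozenge N}$ with $\rho^N$, giving a second bijection $\pi:(X,\Gamma^{\lozenge N})^\sharp(R,\rho^N)\to X_0(k)$, and then combines this with the formal inclusion $(X,\Gamma)^\sharp(R,\rho)\subseteq (X,\Gamma^{\lozenge N})^\sharp(R,\rho^N)$ to conclude equality. Your version is simply more explicit about verifying the Frobenius-component hypothesis for $\Gamma^{\lozenge N}$, which the paper leaves implicit.
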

\begin{proof}
A composite of \'{e}tale extensions is \'{e}tale.  Hence, the hypothesis of Theorem~\ref{frobeq} apply to $X$, $\Gamma^{\lozenge N}$, and $mN$.  So, $\pi:(X,\Gamma^{\lozenge N})^\sharp(R,\rho^N) \to X_0(k)$ is also a bijection.  As $(X,\Gamma)^\sharp(R,\rho) \subseteq (X,\Gamma^{\lozenge N})^\sharp(R,\rho^N)$, these sets must be equal.
\end{proof}

Specializing $\Gamma$ somewhat, we may use Theorem~\ref{frobeq} to find a difference equation for periodic points.

\begin{theorem}
\label{periodeq}
Let $X$ be a separated scheme of finite type over $R$, smooth over $R$ and $f:X \to X$ a morphism lifting the $q = p^n$-power Frobenius.  Let $\rho := \sigma^n$.  We assume that $f = f^\rho$ and $X = X^\rho$.  Then every $f$-periodic $R$-rational point belongs to $(X,f)^\sharp(R,\rho)$.
\end{theorem}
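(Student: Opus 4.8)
The plan is to deduce Theorem~\ref{periodeq} from Theorem~\ref{frobeq} and Corollary~\ref{compositecor} applied to the graph $\Gamma := \Gamma(f) \subseteq X \times X^\rho = X \times X$ (using $X = X^\rho$). Since $\Gamma(f)$ is the graph of a morphism, the first projection $\Gamma(f)\to X$ is an isomorphism, in particular \'etale; and since $f$ lifts the $q$-power Frobenius, the special fibre $\Gamma(f)_0$ is the graph of the geometric $q$-power Frobenius $F:X_0\to X_0$. Thus $X$ and $\Gamma(f)$ satisfy the hypotheses of Theorem~\ref{frobeq} with $\rho = \sigma^n$, and Corollary~\ref{compositecor} gives, for every $m\in\ZZ_+$,
\[
(X,f)^\sharp(R,\rho)\;=\;(X,\Gamma(f))^\sharp(R,\rho)\;=\;(X,\Gamma(f)^{\lozenge m})^\sharp(R,\rho^m).
\]
Because $f=f^\rho$ and $X=X^\rho$, the skew-iterate $f^{\lozenge m}$ is simply $f^{\circ m}$ and $\Gamma(f)^{\lozenge m}=\Gamma(f^{\circ m})$, so the right-hand side equals $\{a\in X(R) : f^{\circ m}(a)=\rho^m(a)\}$. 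Hence it suffices to prove: if $b\in X(R)$ satisfies $f^{\circ m}(b)=b$, then $\rho^m(b)=b$ — for then $f^{\circ m}(b)=b=\rho^m(b)$ places $b$ in $(X,f)^\sharp(R,\rho)$.

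The next step is to examine the reduction of $b$. From $f^{\circ m}(b)=b$ and $f_0=F$ we get $F^{\circ m}(\bar b)=\bar b$, where $\bar b:=\pi(b)$; that is, $\bar b\in X_0(\FF_{q^m})$ is fixed by $F^{\circ m}=F_{q^m}$. Since $\rho=\sigma^n$ lifts the $q$-power Frobenius — this is exactly the hypothesis already used in Theorem~\ref{frobeq} — one has $\overline{\rho^m(a)}=F_{q^m}(\bar a)$ for every $a\in X(R)$; applied to $a=b$ this yields $\pi(\rho^m(b))=F_{q^m}(\bar b)=\bar b=\pi(b)$, so $b$ and $\rho^m(b)$ lie in the same residue disc of $X(R)$. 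Moreover $\rho^m(b)$ is again a fixed point of $f^{\circ m}$: from $f^\rho=f$ we get $(f^{\circ m})^{\rho^m}=f^{\circ m}$, whence $f^{\circ m}(\rho^m(b))=\rho^m(f^{\circ m}(b))=\rho^m(b)$.

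It therefore remains only to show that $f^{\circ m}$ has at most one fixed point in each residue disc of $X(R)$; then $b=\rho^m(b)$ and the proof is complete. This is the one genuinely analytic point, and it is a re-run of the estimate already carried out in the proof of Theorem~\ref{frobeq}: working in \'etale-local coordinates $U\to\AA^d_R$ around an $F_{q^m}$-fixed residue point, $f^{\circ m}$ carries the residue disc, identified with $\fm^d$, into itself and is given there by a convergent power series whose linear part reduces mod $\fm$ to the Jacobian of the $q^m$-power map, hence has all coefficients in $\fm$ (note $q^m=p^{nm}\in\fm$). Consequently $v\big(g(x)-g(y)\big) > v(x-y)$ for distinct $x,y$ in the disc, where $g$ is this local model of $f^{\circ m}$, so $g$, and thus $f^{\circ m}$, cannot fix two distinct points of the disc. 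I expect this contraction estimate to be the only real obstacle, and it is a mild one — essentially the Newton's-method computation of Theorem~\ref{frobeq} — while everything else is formal manipulation of the $\sharp$-functor together with Corollary~\ref{compositecor}.
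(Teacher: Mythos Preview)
Your proof is correct. The overall route is the paper's: both arguments reduce, via Corollary~\ref{compositecor}, to showing that an $f$-periodic point $b$ of period $m$ is fixed by some power of $\rho$. The paper does this by noting that the set of solutions of $f^{\circ m}(x)=x$ in $X(R)$ is finite (since this holds on the special fibre) and is permuted by $\rho$, whence $\rho^N(b)=b$ for some $N$, and then applies Corollary~\ref{compositecor} with exponent $mN$. You instead prove the sharper statement $\rho^m(b)=b$ directly, by observing that $\rho^m(b)$ is another fixed point of $f^{\circ m}$ lying in the same residue disc as $b$ and then using the contraction estimate to get uniqueness within that disc. Your contraction argument is essentially the analytic content underlying the paper's terse finiteness claim (each residue disc contains at most one fixed point of $f^{\circ m}$, and the relevant residue discs are finitely many), so the two proofs are very close; yours simply makes that one analytic step explicit and thereby avoids the auxiliary exponent $N$.
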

\begin{proof}
Let $b \in X(R)$ be an $f$-periodic point of order $M$.  There are only finitely many solutions to $f^{\circ M}(x) = x$ (as, for instance, this is true on the special fibre).  Hence, $\rho^N(b) = b$ for some $N > 0$.  Thus, $b$ satisfies $\rho^{MN}(x) = f^{\circ MN}(x)$.  That is, $b \in (X,f^{\circ MN})^\sharp(R,\rho^{MN})$ which is $(X,f)^\sharp(R,\rho)$ by  Corollary~\ref{compositecor}.
\end{proof}

\begin{Rk}
Theorem~\ref{periodeq} holds for $f$ analytic.  This observation yields interesting information in the case that $X$ is a moduli space of abelian varieties, $\Gamma \subseteq X \times X$ is a $p$-power Hecke correspondence, and $f:X \to X$ (or, really, $f$ is defined on some dense open subset) is a branch of $\Gamma$ lifting the Frobenius.  In this case, the difference equation captures the canonical lifts. (See~\cite{ScAO} for more details.)
\end{Rk}

\begin{Rk}
If in Theorem~\ref{periodeq} we assume that $k = {\mathbb F}_p^{\text{alg}}$, then as every point in $X(k)$ is $f$-periodic, every point in $(X,f)^\sharp(R,\rho)$ is $f$-periodic.
\end{Rk}

\begin{Rk}
This method of obtaining interesting difference equations for periodic points by lifting equations on the Frobenius has been used in the study of Manin-Mumford questions~\cite{Hr-MM,PR}.  When more structure (for instance, a group) is available, then more complicated equations beyond simply $f(x) = \sigma(x)$ may be used to give deeper information.  We expect that these equations in the more general dynamical context will be useful, but we have not pursued this issue.
\end{Rk}

Let us conclude by specializing to the case of sequences of univariate polynomials.

\begin{theorem}
\label{frobpolyper}
Let $q = p^\ell$ be a power of $p$.  We suppose that $K$ has characteristic zero.  Let $f_1,\ldots,f_n \in R[x]$ be polynomials with $f_i(x) \equiv x^p \mod \fm R[x]$ for each $i \leq n$.  We suppose that for some $m > 0$ each
$f_i = f_i^{\sigma^m}$
for each $i$.  If $X \subseteq \AA^n_K$ is an irreducible subvariety containing a Zariski dense set of points of the form $(\zeta_1,\ldots,\zeta_n)$ where $\zeta_i \in R$ is $f_i$-periodic, then $X$ is a difference subvariety of $(\AA^n,(f_1^{\lozenge m},\ldots,f_n^{\lozenge m}))$ and has the shape described in Theorem \ref{centralthm}.
Moreover, if $\deg(f_i) = q$ for each $i$, then we may replace the hypothesis ``$\zeta_i \in R$'' by ``$\zeta_i \in K$.''
\end{theorem}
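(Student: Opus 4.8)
The plan is to deduce Theorem~\ref{frobpolyper} by combining the difference-equation trick of Theorem~\ref{periodeq} with the structure theorem for skew-invariant varieties, Theorem~\ref{centralthm}. First I would set $\rho := \sigma^{\ell}$ and observe that each $f_i$, being congruent to $x^p$ modulo $\fm R[x]$, induces the $p$-power Frobenius on the special fibre $\AA^1_k$; hence the coordinatewise map $\Phi := (f_1,\ldots,f_n):\AA^n_R \to \AA^n_R$ lifts the $q = p^{\ell}$-power Frobenius of $\AA^n_k$ after composing $\ell$ copies of $\sigma$, since $\deg$ considerations aside the residue map is a bijection onto the perfect residue field. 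Strictly, to match the hypotheses of Theorem~\ref{periodeq} I need $X = X^\rho$ and $f = f^\rho$; this is where the hypothesis $f_i = f_i^{\sigma^m}$ enters, and I would pass to $\rho' := \sigma^{\ell m}$ (so that both $\AA^1$ and each $f_i$ are fixed by $\rho'$ and the map still lifts the $p^{\ell m}$-power Frobenius, using Corollary~\ref{compositecor}). Then Theorem~\ref{periodeq} applied coordinatewise gives that every $f_i$-periodic point $\zeta_i \in R$ lies in $(\AA^1,f_i)^\sharp(R,\rho')$, i.e.\ satisfies $\rho'(\zeta_i) = f_i(\zeta_i)$; equivalently $\sigma^{\ell m}(\zeta_i) = f_i(\zeta_i) = f_i^{\lozenge \ell m}(\zeta_i)$ after unwinding the skew-iteration definition (since $f_i = f_i^{\sigma}$ in the relevant sense is not needed; $f_i = f_i^{\rho'}$ suffices to identify $f_i^{\lozenge \ell m}$ with the relevant iterate up to the fixed automorphism).

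Next I would embed $R$ (or $K$, which is maximally complete and algebraically closed) into a model $(\UU,\sigma)$ of $\ACFA_0$; this is possible because $(K,\rho')$ is a difference field of characteristic zero. The Zariski-dense set of periodic tuples $(\zeta_1,\ldots,\zeta_n)$ on $X$ all lie in $(\AA^n,\Phi^{\lozenge \ell m})^\sharp$ with respect to $\rho'$ by the previous paragraph, so $X(\UU) \cap (\AA^n,\Phi^{\lozenge \ell m})^\sharp$ is Zariski dense in $X$. By the discussion in Section~\ref{mtprelim} (specifically the characterization that an absolutely irreducible subvariety $Y$ is $\Phi^{\lozenge \ell m}$-skew-invariant iff $Y(\UU) \cap (\AA^n,\Gamma(\Phi^{\lozenge \ell m}))^\sharp$ is Zariski dense in $Y$), it follows that $X$ is a $\Phi^{\lozenge \ell m}$-skew-invariant subvariety of $(\AA^n,\Phi^{\lozenge \ell m})$ as a $\rho'$-variety. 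Now each $f_i^{\lozenge \ell m}$ is either linear, or conjugate to a power/Chebyshev/negative Chebyshev polynomial, or disintegrated; Theorem~\ref{coarsestructurethm} reduces the description of $X$ to the three component types, and Theorem~\ref{centralthm} (together with Theorem~\ref{hhprop} in the equal-degree case, and the elementary analyses of the linear and toric cases) gives exactly the claimed shape: $X$ is cut out by equations $x_i = \zeta$ for $f_i$-periodic $\zeta$ and $x_j = L \circ \alpha^{\circ m_0}(x_k)$ with $\alpha^{\circ N} = f$ and $L$ commuting with a compositional power of $\alpha$.

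For the ``moreover'' clause, when $\deg(f_i) = q$ for every $i$ the map $\Phi$ lifts the Frobenius with \emph{all} coordinates of the same degree, so Theorem~\ref{frobeq} (rather than only Theorem~\ref{periodeq}) applies and the reduction map $\pi:\AA^n(R) \to \AA^n(k)$ restricts to a \emph{bijection} $(\AA^n,\Phi)^\sharp(R,\rho') \xrightarrow{\sim} \AA^n(k)$; since $k$ has characteristic $p$ every point of $\AA^n(k)$ is $\Phi$-periodic, so conversely every point of $(\AA^n,\Phi)^\sharp(R,\rho')$ is $\Phi$-periodic, and a periodic point of $\Phi$ with coordinates in the algebraically closed field $K$ that is bounded (which a periodic point automatically is, since its forward orbit is finite and the $f_i$ have unit leading coefficients modulo $\fm$, forcing integrality) lies in $R$. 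Thus for degree-$q$ systems the hypothesis ``$\zeta_i \in R$'' can be relaxed to ``$\zeta_i \in K$'' with no change to the argument. The main obstacle I anticipate is the bookkeeping around which power of $\sigma$ to use: one must simultaneously arrange that the map lifts a power of Frobenius (forcing a multiple of $\ell$), that $\AA^1$ and the $f_i$ are literally fixed (forcing a multiple of $m$), and that the resulting skew-iteration $f_i^{\lozenge \ell m}$ is the honest iterate up to the fixed automorphism so that Theorem~\ref{centralthm}'s conclusion translates back into a statement about the original $f_i$; verifying that Corollary~\ref{compositecor} lets one pass freely between these powers without losing or gaining periodic points is the delicate point, but it is exactly what that corollary is designed to supply.
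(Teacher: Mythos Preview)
Your proposal is correct and follows the paper's proof: apply Theorem~\ref{periodeq} with $\rho = \sigma^{\ell m}$ to put the integral periodic points into the sharp set, conclude from Zariski density that $X$ is skew-invariant, and invoke the structure theorems; for the ``moreover,'' the paper's argument is exactly the integrality observation you give (when $\deg f_i = q$ the leading coefficient is a unit in $R$, so periodic points in $K$ are integral over $R$, hence already in $R$). Your embedding into a model of $\ACFA_0$ and your detour through the bijection of Theorem~\ref{frobeq} are both unnecessary---the implication ``dense sharp points $\Rightarrow$ skew-invariant'' is a pure Zariski-closure argument needing no difference-closedness, and for the ``moreover'' only integrality matters---but neither causes any harm.
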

\begin{proof}
By Theorem~\ref{periodeq}, the $(f_1,\ldots,f_n)$-periodic points in $\AA^n(R)$ are all contained in $(\AA^n,(f_1^{\lozenge m},\ldots,f_n^{\lozenge m}))^\sharp(R,\sigma^{\ell m})$.  Hence, if $X$ contains a Zariski dense set of periodic points from $\AA^n(R)$, then $X \cap (\AA^n,(f_1^{\lozenge m},\ldots,f_n^{\lozenge m}))^\sharp(R,\sigma^{\ell m})$ is Zariski dense in $X$ implying that $X$ is a difference subvariety of $(\AA^n, (f_1^{\lozenge m}, \ldots, f_n^{\lozenge m}) )$.  The description of $X$ now follows from our description of such difference varieties.

For the ``moreover'' clause observe that if $\deg(f_i) = q$, then every $f_i$-periodic point is integral over $R$, and, hence, actually an element of $R$ as $R$ is integrally closed in $K$.
\end{proof}

\begin{Rk}
Further specializing Theorem~\ref{frobpolyper} one obtains statements about algebraic relations amongst the periodic points of polynomial without reference to valuations as announced in the introduction.
For example, let $q$ be a power of a prime number $p$.  Suppose that $f(x) = x^q + p g(x)$ where $g(x) \in \ZZ[x]$ and $\deg(g) \leq q$.  Suppose moreover that $f$ is not linearly conjugate to a monomial or a Chebyshev polynomial.  Then every irreducible variety $X \subseteq \AA^n_\CC$ which contains a Zariski dense set of $n$-tuples of $f$-periodic points is defined by a sequence of equations of the form $f(x_i) = x_j$ or $g(x_\ell) = a$ for $a$ some fixed $f$-periodic point and $g$ a polynomial which commutes with $f$.
\end{Rk}

\bibliography{dynbibfinal}{}
\bibliographystyle{plain}

\end{document}